\documentclass[reqno]{amsart}

\usepackage{tikz-cd}

\usepackage{fullpage,dsfont}

\usepackage{hyperref} 

\usepackage{parskip}
\makeatletter 
\def\thm@space@setup{%
 \thm@preskip=\parskip \thm@postskip=0pt
}
\def\th@remark{%
  \thm@headfont{\itshape}%
  \normalfont 
  \thm@preskip\parskip \thm@postskip=0pt
}
\makeatother

\usepackage[nobysame,alphabetic,initials,msc-links]{amsrefs}

\usepackage[final]{pdfpages}

\usepackage{multirow}

\usepackage{array}

\DefineSimpleKey{bib}{how}
\DefineSimpleKey{bib}{mrclass}
\DefineSimpleKey{bib}{mrnumber}
\DefineSimpleKey{bib}{fjournal}
\DefineSimpleKey{bib}{mrreviewer}

\renewcommand{\PrintDOI}[1]{%
  \href{http://dx.doi.org/#1}{{\tt DOI:#1}}%
}
\renewcommand{\eprint}[1]{#1}
\BibSpec{book}{%
    +{}  {\PrintPrimary}                {transition}
    +{.} { \PrintDate}                  {date}
    +{.} { \textit}                     {title}
    +{.} { }                            {part}
    +{:} { \textit}                     {subtitle}
    +{,} { \PrintEdition}               {edition}
    +{}  { \PrintEditorsB}              {editor}
    +{,} { \PrintTranslatorsC}          {translator}
    +{,} { \PrintContributions}         {contribution}
    +{,} { }                            {series}
    +{,} { \voltext}                    {volume}
    +{,} { }                            {publisher}
    +{,} { }                            {organization}
    +{,} { }                            {address}
    +{,} { }                            {status}
    +{,} { \PrintDOI}                   {doi}
    +{,} { \PrintISBNs}                 {isbn}
    +{}  { \parenthesize}               {language}
    +{}  { \PrintTranslation}           {translation}
    +{;} { \PrintReprint}               {reprint}
    +{.} { }                            {note}
    +{.} {}                             {transition}
    +{}  {\SentenceSpace \PrintReviews} {review}
}
\BibSpec{article}{%
    +{}  {\PrintAuthors}                {author}
    +{,} { \textit}                     {title}
    +{.} { }                            {part}
    +{:} { \textit}                     {subtitle}
    +{,} { \PrintContributions}         {contribution}
    +{.} { \PrintPartials}              {partial}
    +{,} { }                            {journal}
    +{}  { \textbf}                     {volume}
    +{}  { \PrintDatePV}                {date}
    +{,} { \issuetext}                  {number}
    +{,} { \eprintpages}                {pages}
    +{,} { }                            {status}
    +{,} { \PrintDOI}                   {doi}
    +{,} { \eprint}        {eprint}
    +{}  { \parenthesize}               {language}
    +{}  { \PrintTranslation}           {translation}
    +{;} { \PrintReprint}               {reprint}
    +{.} { }                            {note}
    +{.} {}                             {transition}
    +{}  {\SentenceSpace \PrintReviews} {review}
}
\BibSpec{collection.article}{%
    +{}  {\PrintAuthors}                {author}
    +{,} { \textit}                     {title}
    +{.} { }                            {part}
    +{:} { \textit}                     {subtitle}
    +{,} { \PrintContributions}         {contribution}
    +{,} { \PrintConference}            {conference}
    +{}  {\PrintBook}                   {book}
    +{,} { }                            {booktitle}
    +{,} { \PrintDateB}                 {date}
    +{,} { pp.~}                        {pages}
    +{,} { }                            {publisher}
    +{,} { }                            {organization}
    +{,} { }                            {address}
    +{,} { }                            {status}
    +{,} { \PrintDOI}                   {doi}
    +{,} { \eprint}        {eprint}
    +{}  { \parenthesize}               {language}
    +{}  { \PrintTranslation}           {translation}
    +{;} { \PrintReprint}               {reprint}
    +{.} { }                            {note}
    +{.} {}                             {transition}
    +{}  {\SentenceSpace \PrintReviews} {review}
}
\BibSpec{misc}{%
  +{}{\PrintAuthors}  {author}
  +{,}{ \textit}      {title}
  +{.}{ }             {how}
  +{}{ \parenthesize} {date}
  +{,} { available at \eprint}        {eprint}
  +{,}{ available at \url}{url}
  +{,}{ }             {note}
  +{.}{}              {transition}
}

\usepackage{amssymb, amsfonts, amsxtra, amsmath}
\usepackage{mathrsfs}
\usepackage{mathdots}
\usepackage{wasysym}
\usepackage[all]{xy}
\usepackage{bbm}
\usepackage{calc}
\usepackage{accents}

\numberwithin{equation}{section}

\newtheorem{Theorem}{Theorem}[section]
\newtheorem*{Theorem*}{Theorem}
\newtheorem{Def}[Theorem]{Definition}
\newtheorem{Lem}[Theorem]{Lemma}
\newtheorem{Prop}[Theorem]{Proposition}
\newtheorem{Cor}[Theorem]{Corollary}
\newtheorem{Not}[Theorem]{Notation}
\newtheorem{Rem}[Theorem]{Remark}

\newcommand\bp{\begin{proof}}
\newcommand\ep{\end{proof}}

\mathchardef\mhyph="2D

\DeclareMathOperator{\diag}{\mathrm{diag}}

\DeclareMathOperator{\Ad}{\mathrm{Ad}}

\DeclareMathOperator{\End}{\mathrm{End}}

\DeclareMathOperator{\loc}{\mathrm{loc}}

\DeclareMathOperator{\Hom}{\mathrm{Hom}}
\DeclareMathOperator{\id}{\mathrm{id}}

\DeclareMathOperator{\Inv}{\mathrm{Inv}}

\DeclareMathOperator{\rd}{\mathrm{d}\!}

\DeclareMathOperator{\Tr}{\mathrm{Tr}}

\DeclareMathOperator{\Irr}{\mathrm{Irr}}
\DeclareMathOperator{\Char}{\mathrm{Char}}

\DeclareMathOperator{\Ker}{\mathrm{Ker}}
\DeclareMathOperator{\Spec}{\mathrm{mSpec}}
\DeclareMathOperator{\Stab}{\mathrm{Stab}}
\DeclareMathOperator{\supp}{\mathrm{Supp}}

\newcommand{\cop}{\mathrm{cop}}

\newcommand{\wt}{\mathrm{wt}}
\newcommand{\full}{\mathrm{full}}
\newcommand{\reg}{\mathrm{reg}}
\newcommand{\red}{\mathrm{r}}
\newcommand{\sred}{\mathrm{rr}}
\newcommand{\sym}{\mathrm{s}}
\newcommand{\ungauge}{\mathrm{ug}}
\newcommand{\gauge}{\mathrm{g}}

\newcommand{\msA}{\mathscr{A}}
\newcommand{\msB}{\mathscr{B}}

\newcommand{\msE}{\mathscr{E}}

\newcommand{\msK}{\mathscr{K}}

\newcommand{\msN}{\mathscr{N}}
\newcommand{\msP}{\mathscr{P}}
\newcommand{\msQ}{\mathscr{Q}}
\newcommand{\msR}{\mathscr{R}}

\newcommand{\wmsR}{\widetilde{\msR}}

\newcommand{\msZ}{\mathscr{Z}}

\newcommand{\mfa}{\mathfrak{a}}

\newcommand{\mfb}{\mathfrak{b}}

\newcommand{\mfg}{\mathfrak{g}}
\newcommand{\wmfg}{\widetilde{\mfg}}
\newcommand{\mfh}{\mathfrak{h}}

\newcommand{\mfn}{\mathfrak{n}}

\newcommand{\mfs}{\mathfrak{s}}
\newcommand{\mfsl}{\mathfrak{sl}}
\newcommand{\mfso}{\mathfrak{so}}
\newcommand{\mfsp}{\mathfrak{sp}}
\newcommand{\mfsu}{\mathfrak{su}}
\newcommand{\mft}{\mathfrak{t}}
\newcommand{\mfu}{\mathfrak{u}}
\newcommand{\wmfu}{\widetilde{\mfu}}

\newcommand{\mcG}{\mathcal{G}}
\newcommand{\mcH}{\mathcal{H}}
\newcommand{\Hsp}{\mathcal{H}}

\newcommand{\mcO}{\mathcal{O}}

\newcommand{\mcS}{\mathcal{S}}

\newcommand{\mbr}{\mathbf{r}}

\newcommand{\mbs}{\mathbf{s}}

\newcommand{\mbH}{\mathbf{H}}

\newcommand{\C}{\mathbb{C}}

\newcommand{\N}{\mathbb{N}}

\newcommand{\R}{\mathbb{R}}
\newcommand{\T}{\mathbb{T}}
\newcommand{\U}{\mathbb{U}}
\newcommand{\X}{\mathbb{X}}

\newcommand{\Z}{\mathbb{Z}}

\newcommand{\opp}{\mathrm{op}}

\newcommand{\inv}{\mathrm{inv}}
\newcommand{\sgn}{\mathrm{sgn}}
\newcommand{\hsgn}{\widehat{\sgn}}

\newcommand\rhdb{\blacktriangleright}

\newcommand{\HC}{\mathrm{HC}}

\newcommand{\br}{\mathrm{br}}

\newcommand{\dbbackslash}{\backslash \! \backslash}
\newcommand{\dbslash}{/\!/}

\title{Invariant quantum measure on $q$-deformed twisted adjoint orbits}

\author{Kenny De Commer}
\address{Vrije Universiteit Brussel\\ Vakgroep Wiskunde en Data Science}
\email{kenny.de.commer@vub.be}

\thanks{The work of K.~De Commer was supported by the FWO grants G025115N and G032919N, and the grant H2020-MSCA-RISE-2015-691246-QUANTUM DYNAMICS. The author thanks the Fields Institute for the nice working environment.}

\begin{document}
\maketitle

\begin{abstract}
Let $U$ be a compact semisimple Lie group with complexification $G$ and associated Cartan involution $\Theta$. Let $\nu$ be an involutive complex Lie group automorphism of $G$ commuting with $\Theta$, and consider the associated semisimple real Lie group $G_{\nu} = \{g\in G\mid \nu(g) = \Theta(g)\}$. We consider $q$-deformed analogues of the $U$-orbits of the quotient space $G_{\nu}\backslash G$, and determine for these the associated von Neumann algebra and invariant state.   
\end{abstract}

\section*{Introduction}

Let $U$ be a connected, simply connected compact Lie group with complexification $G=U_{\C}$. Let $\Theta$ be the Cartan involution of $G$ with fixed points $U$, and put $g^* = \Theta(g)^{-1}$. Let $\nu$ be an involutive complex Lie group automorphism of $G$ commuting with $\Theta$. It gives rise to the real form 
\[
G_{\nu} = \{g\in G \mid \nu(g)^* = g^{-1}\} \subseteq G.
\]
Consider the quotient map
\[
\pi: G_{\nu}\backslash G \rightarrow G_{\nu}\backslash G/U
\]
partitioning $G_{\nu}\backslash G$ into $U$-orbits
\[
O_x^{\nu} = \pi^{-1}(x),\qquad x\in G_{\nu}\backslash G/U
\]
with associated $U$-invariant probability measures $\mu_x$. Note that these can also be seen as twisted adjoint $U$-orbits on the space 
\[
Z_{\nu}=\{g\in G\mid \nu(g)^* = g\},\qquad \Ad_u^{\nu}(g) = \nu(u)^{*}gu
\]
through the equivariant embedding
\begin{equation}\label{EqEquivEmb}
G_{\nu}\backslash G \hookrightarrow Z_{\nu},\qquad G_{\nu}g \mapsto \nu(g)^*g.
\end{equation}

The main goal of this paper is to construct quantum analogues of the $(O_x^{\nu},\mu_x)$, as well as the degenerate limits of these orbits obtained by contraction, within the setting of von Neumann algebraic quantum homogeneous spaces. Note that any simply connected symmetric space of compact type can be realized as $O_{[e]}^{\nu}$ for some $(U,\nu)$, where $[e]=G_{\nu}U$ is the class of the unit $e\in G$ within $G_{\nu}\backslash G /U$. Similarly, any flag manifold for $U$ can be realized as a $U$-orbit at a point at infinity of $U\backslash G$. We mention that the degenerate limits can also be interpreted as orbits of $U$ within a wonderful compactification of $G_{\nu}\backslash G$, cf. \cite{EL01}.

Algebraic aspects of the quantum analogues of the above spaces were studied in \cite{DCM18}, to which we also refer for further intuition and motivation. In this paper, we will take into account some finer \emph{spectral conditions} to handle also analytic aspects. For the sake of the introduction, we explain some of the relevant considerations in an easy special case. 

Let $G = SL(N,\C) \subseteq M_N(\C)$ and let $X \mapsto X^*$ be the usual hermitian adjoint with associated Cartan involution $\Theta(g) = (g^*)^{-1}$. For $1\leq m,n$ with $m+n = N$ define 
\[
\msE=\begin{pmatrix} I_m & 0 \\ 0 & -I_n\end{pmatrix}.
\] 
We obtain on $SL(N,\C)$ the involution
\[
\nu(g) = \Ad_{\msE}(g) = \msE g \msE,
\]
and associated to $\nu$ we have the closed real Lie subgroup
\[
G_{\nu} = \{g\in G\mid \nu(g)^* = g^{-1}\} = \{g\in SL(N,\C)\mid g^*\msE g = \msE \} = SU(m,n) \subseteq SL(N,\C).
\] 

Consider the $*$-algebra $\mcO(G_{\R})$ of regular functions on $G$ as a real algebraic group, so $\mcO(G_{\R})$ is generated by the coordinate functions $x_{ij}(g) = g_{ij}$ as well as their $*$-conjugates $\overline{x}_{ij}(g) = \overline{g_{ij}}$. Inside we have the $*$-subalgebra
\[
\mcO(G_{\R})^{G_{\nu}} = \{f\in \mcO(G_{\R}) \mid \forall g\in G_{\nu},h\in G: f(gh) = f(h)\} \subseteq \mcO(G_{\R}),
\]
with associated maximal real spectrum
\[
G_{\nu}\dbbackslash G = \Spec_*\left(\mcO(G_{\R})^{G_{\nu}}\right) =  \{*\textrm{-homomorphisms }\mcO(G_{\R})^{G_{\nu}}\rightarrow \C\},
\]
where we borrow notation from geometric invariant theory. We have a natural embedding 
\begin{equation}\label{EqEmbEquiDB}
G_{\nu}\backslash G \hookrightarrow G_{\nu}\dbbackslash G,
\end{equation}
but it is not bijective. One has however a concrete model for $G_{\nu}\dbbackslash G$: consider the real affine variety $Z_{\nu}$ introduced above,
\[
Z_{\nu} = \{h\in G\mid \nu(h)^* = h\} = \{h\in SL(N,\C)\mid (\msE h)^* =\msE  h\},\quad \Ad_{g}^{\nu}(h)  = h \cdot g = \nu(g)^{*}hg,\qquad g\in G,h\in Z_{\nu}.
\]
It comes equipped with an associated unital $*$-algebra of regular functions $\mcO(Z_{\nu})$ generated by the coordinate functions
\[
z_{ij}(h) = (\msE h)_{ij},\qquad z_{ij}^* = z_{ji}.
\]
We can $G$-equivariantly identify $G_{\nu}\dbbackslash G \cong Z_{\nu}$ by 
\[
\mcO(Z_{\nu}) \cong \mcO(G_{\R})^{G_{\nu}},\quad z_{ij} \mapsto \sum_{k}  \overline{x}_{ki} \msE_{kk}x_{kj},
\]
and under this isomorphism \eqref{EqEmbEquiDB} becomes the injective $G$-equivariant map \eqref{EqEquivEmb}. 

In the following we simply view $G_{\nu}\backslash G \subseteq Z_{\nu}$ by this map. Then $G_{\nu}\backslash G$ is easily seen to be the connected component of the identity $I_N\in Z_{\nu}$, consisting of those $h$ with $\msE h$ selfadjoint and of the same signature as $\msE$. However, this is not sufficient to have an algebraic description of $G_{\nu}\backslash G$ inside $Z_{\nu}$ as a semialgebraic set determined by polynomial equalities and polynomial inequalities. Instead, we break up $G = UAN = ANU$ where $U = G_{\id} = SU(N) = \{g\in G\mid g^* =g^{-1}\}$ is the associated maximal compact subgroup of $SL(N,\C)$ and $AN \subseteq SL(N,\C)$ is the subgroup of upper triangular matrices with positive diagonal. Consider the $AN$-orbit 
\[
Z_{\nu}^{+,\reg} = I_N \cdot AN \subseteq G_{\nu}\backslash G
\] 
with Euclidian closure $Z_{\nu}^+$. With 
\[
a_k(h) = \det((h_{ij})_{i,j:1\rightarrow k})
\] 
the leading minors of $h$, we can realize $Z_{\nu}^{+,\reg}$ as the semialgebraic set
\[
Z_{\nu}^{+,\reg} = \{h \in Z_{\nu} \mid \forall k: a_k(h) = a_k(\msE h)/a_k(\msE)>0\},
\]
and $G_{\nu}\backslash G$ becomes the union of the $U$-orbits through points of $Z_{\nu}^{+,\reg}$,
\begin{equation}\label{EqGnuR}
G_{\nu}\backslash G = Z_{\nu}^{+,\reg} \cdot U. 
\end{equation}
There is also a natural cell decomposition of $G_{\nu}\backslash G$: consider the natural implementation $W = S_N \subseteq U(N)$ of the Weyl group of $SU(N)$, and let $W_{\nu}^+ = \Stab_{W}(\msE) = S_m\times S_n$. Using the natural extension of the right $G$-action to $GL(N,\C)$, put
\[
Z_{\nu}^{[w],\reg} = I_N \cdot w^{-1}AN,\qquad [w] = wW_{\nu}^+ \in W/W_{\nu}^+,
\]
and let $Z_{\nu}^{[w]}$ be its Euclidian closure. Consider
\[
Z_{\nu}^{\reg} =  \{h \in Z_{\nu} \mid \forall k: a_k(h) \neq 0\},\qquad (G_{\nu}\backslash G)^{\reg} = Z_{\nu}^{\reg}\cap G_{\nu}\backslash G. 
\]
We have that $(G_{\nu}\backslash G)^{\reg}$ is Euclidian dense in $G_{\nu}\backslash G$, with 
\[
G_{\nu}\backslash G = \bigcup_{[w]\in W/W_{\nu}^+} Z_{\nu}^{[w]},\qquad (G_{\nu}\backslash G)^{\reg} = \bigsqcup_{[w]\in W/W_{\nu}^+} Z_{\nu}^{[w],\reg}.
\]

Consider now the $U$-orbits 
\[
O_x^{\nu}\subseteq G_{\nu}\backslash G,\qquad x \in G_{\nu}\backslash G/U.
\] 
For example, $O_{[e]}^{\nu}$ is the symmetric space $K_{\nu}\backslash U$ with $K_{\nu} = U\cap G_{\nu} = S(U(m)\times U(n))$. The cell decomposition of $G_{\nu}\backslash G$ passes to each of these orbits, 
\begin{equation}\label{EqCellDecomp}
O_x^{\nu} = \bigcup_{[w]\in W/W_{\nu}^+} Z_{\nu}^{[w]}\cap O_x^{\nu},\qquad O_x^{\nu,\reg} :=  (G_{\nu}\backslash G)^{\reg} \cap O_x^{\nu}= \bigsqcup_{[w]\in W/W_{\nu}^+} Z_{\nu}^{[w],\reg}\cap O_x^{\nu},
\end{equation}
with $O_x^{\nu,\reg}$ dense in $O_x^{\nu}$. 

The above constructions can be performed for any compact semisimple Lie group $U$, and can then be quantized when $\nu$ is chosen in such a way that it preserves the Borel subgroup $B$ of $G = U_{\C}$ along which the quantization of $U$ is performed \cite{DCM18}. That is, for any $0<q<1$ a $*$-algebra $\mcO_q(Z_{\nu})$ can be constructed, along with a coaction by $\mcO_q(U)$, with $\mcO_q(U)$ the standard deformation of the Hopf $*$-algebra of regular functions on $U$, in such a way that the classical setting is obtained by a suitable limiting procedure $q \rightarrow 1$. A quantization of $\mcO_q(G_{\nu}\backslash G)$ can be obtained by endowing $\mcO_q(Z_{\nu})$ with suitable \emph{spectral conditions}.

It turns out that the $*$-algebra $\mcO_q(G_{\nu}\backslash G/U) = \mcO_q(G_{\nu}\backslash G)^{\mcO_q(U)}$ of $\mcO_q(U)$-coinvariants in $\mcO_q(G_{\nu}\backslash G)$ is central, so also quantized algebras $\mcO_q(O_x^{\nu})$ of associated orbit spaces can be obtained, parametrized by suitable characters $x$ on $\mcO_q(G_{\nu}\backslash G/U)$. Such a $*$-algebra comes equipped with a canonical von Neumann algebraic completion $L^{\infty}_q(O_x^{\nu})$, together with an invariant state $\int_{O_{x,q}^{\nu}}$. Our main result can now be stated qualitatively as follows. We denote $T$ for the chosen maximal torus of $U$, and $T^{\nu}$ for the subgroup of $\nu$-fixed points. We endow $T,T^{\nu}$ and $T/T^{\nu}$ with their normalized Lebesgue measure, denoted generically by $\lambda$. 

\begin{Theorem*}(Theorem \ref{TheoDecompvN} and Theorem \ref{TheoFormInvFunct})  Let $W$ be the Weyl group of $U$. Then there exist subgroups $W_{\nu}^+ \subseteq W_{\nu} \subseteq W$ and Hilbert spaces $\Hsp_{[w]}$ for $[w]\in W_{\nu}/W_{\nu}^+$ such that 
\[
L^{\infty}_q(O_x^{\nu}) \cong \left(\oplus_{[w]\in W_{\nu}/W_{\nu}^+} B(\Hsp_{[w]})\right) \overline{\otimes} L^{\infty}(T/T^{\nu}).
\] 
Moreover, the invariant state $\int_{O_{x,q}^{\nu}}$ is given by 
\[
\int_{O_{x,q}^{\nu}} = \frac{1}{\sum_{[w]\in W_{\nu}/W_{\nu}^+} \Tr(A_{[w]})}\left(\oplus_{[w]\in W_{\nu}/W_{\nu}^+} \Tr(A_{[w]} -)\right)\otimes \int_{T/T^{\nu}} -  \rd \lambda(\theta)
\]
where the $A_{[w]}$ are explicitly given as absolute values of a particular element $a_{\rho} \in \mcO_q(G_{\nu}\backslash G)$  in the associated representation.
\end{Theorem*}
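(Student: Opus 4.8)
The plan is to obtain $L^{\infty}_q(O_x^{\nu})$ by classifying all irreducible $*$-representations of the $*$-algebra $\mcO_q(O_x^{\nu})$ compatible with the spectral conditions, identifying the von Neumann algebra with the weak closure inside the direct integral of these, and then computing $\int_{O_{x,q}^{\nu}}$ as the associated direct integral of trace and Lebesgue states.

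First I would recall from \cite{DCM18} the presentation of $\mcO_q(Z_{\nu})$ by root-vector analogues of the coordinate functions $z_{ij}$, their braided commutation relations, the $\mcO_q(U)$-coaction, and the self-adjoint leading-minor-type elements $a_k$ whose spectra are pinned down by the spectral conditions cutting out $\mcO_q(G_{\nu}\backslash G) \subseteq \mcO_q(Z_{\nu})$. Fixing a character $x$ on the central $*$-subalgebra $\mcO_q(G_{\nu}\backslash G/U) = \mcO_q(G_{\nu}\backslash G)^{\mcO_q(U)}$ of coinvariants yields $\mcO_q(O_x^{\nu})$, and among its self-adjoint elements I would single out the distinguished element $a_{\rho}$ — built from the leading minors and attached to the half-sum of positive roots, rescaled via $x$ — whose spectrum, once the spectral conditions are in force, is forced to be a geometric sequence $\{c\,q^{2k}\}_{k \geq 0}$ up to the multiplicities recorded by the $\Hsp_{[w]}$.

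Next, following the Soibelman-type analysis of quantized coordinate algebras and using the triangular decomposition attached to the Borel $B$ preserved by $\nu$, I would build for each $w \in W$ a \emph{quantum Schubert cell} representation $\pi_w$ of $\mcO_q(Z_{\nu})$ on a Hilbert space of the form $\ell^2(\N)^{\otimes \ell(w)} \otimes L^2(T)$, on which the elements $a_k$ act as shifted geometric multiplication operators. The spectral conditions retain precisely those $w$ for which these operators have the correct sign and signature, and this surviving set is what I would take as the \emph{definition} of $W_{\nu}$, with $W_{\nu}^+$ the isotropy subgroup already appearing in the classical cell decomposition \eqref{EqCellDecomp}, so that $W_{\nu}^+ \subseteq W_{\nu} \subseteq W$. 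Imposing the central character $x$ then freezes the discrete spectral data and leaves only a free parameter $t$ ranging over $T/T^{\nu}$; the representations within a single $W_{\nu}^+$-coset become identified, so that to each $[w] \in W_{\nu}/W_{\nu}^+$ and $t \in T/T^{\nu}$ is attached an irreducible $*$-representation $\pi_{[w],t}$ on a Hilbert space $\Hsp_{[w]}$ independent of $t$. The crucial step — which I expect to be the main obstacle — is to show that this list is both exhaustive (every irreducible $*$-representation satisfying the spectral conditions is some $\pi_{[w],t}$) and irredundant (representations with distinct $[w]$ are disjoint, and the $\pi_{[w],t}$ assemble over $t$ into a genuine continuous field rather than a collapsing one), since this couples the representation theory of the quantized algebra with the positivity constraints carried by the real form. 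Granting this, the weak closure of $\mcO_q(O_x^{\nu})$ in the direct integral of all the $\pi_{[w],t}$ is $\left(\oplus_{[w]\in W_{\nu}/W_{\nu}^+} B(\Hsp_{[w]})\right) \overline{\otimes} L^{\infty}(T/T^{\nu})$.

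Finally, I would construct $\int_{O_{x,q}^{\nu}}$ by averaging an arbitrary state over the Haar state of the compact quantum group $\mcO_q(U)$ through the coaction, uniqueness following from ergodicity of the action (transitivity of the classical orbit). Invariance forces the $L^{\infty}(T/T^{\nu})$-component of this state to be normalized Lebesgue measure, while on each block $B(\Hsp_{[w]})$ it must take the form $\Tr(A_{[w]}\, \cdot\,)$ for a positive operator $A_{[w]}$; evaluating the state on powers of $a_{\rho}$, whose geometric spectrum makes $|\pi_{[w]}(a_{\rho})|$ trace-class precisely because of the spectral conditions, then identifies $A_{[w]} = |\pi_{[w]}(a_{\rho})|$ and fixes the normalizing constant as $\left(\sum_{[w]\in W_{\nu}/W_{\nu}^+} \Tr(A_{[w]})\right)^{-1}$. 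Since each $\pi_{[w],t}$ occurs in the GNS representation of this state with positive weight, the state is faithful on $L^{\infty}_q(O_x^{\nu})$, which simultaneously gives the isomorphism above and the displayed formula, proving both referenced theorems.
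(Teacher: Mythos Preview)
Your outline for the von Neumann algebra decomposition is plausible in spirit but differs in execution from the paper: rather than building Soibelman-type Schubert-cell representations directly for $\mcO_q(Z_\nu)$, the paper develops a highest-weight theory for $\mcO_q(Z_\nu^{\reg})$ via the triangular decomposition $\msN^-\msA\msN^+$, controls central characters by a Harish-Chandra homomorphism, and then computes the fusion of a highest-weight module $\Hsp_\lambda$ with the rank-one building blocks $\mbs_r$, $\chi_\theta^{(r)}$ of $\mcO_q(U)$ by reducing to the explicit low-rank cases $\mcO_{q_r}(H_2)$ and $\mcO_{q_r}(V_{\R})$. The parametrisation of $\Lambda_\nu(x)$ by $\mbH_\tau^{\gauge}\times W_\nu^-$ (with $W_\nu^-$ in bijection with $W_\nu/W_\nu^+$ via a purely combinatorial theorem about the $\epsilon$-twisted dot action) then yields the direct-integral decomposition. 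Note also that $W_\nu$ and $W_\nu^+$ are defined \emph{a priori} from the twisting datum $(\tau,\epsilon)$, not read off from the representation theory; your suggestion to take $W_\nu$ as ``the surviving set'' would require a separate argument that this coincides with the combinatorial definition.

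The genuine gap is in your treatment of the invariant state. You correctly identify, via the modular automorphism $\sigma = \Ad(a_\rho)$, that the state must take the form $\sum_w c_w\,\Tr(\pi_w(-)\,|\pi_w(a_\rho)|)$ on the discrete part; but what you do \emph{not} address is why the constants $c_w$ are all equal, which is exactly what is needed for the displayed normalisation $\big(\sum_{[w]}\Tr(A_{[w]})\big)^{-1}$. Evaluating on powers of $a_\rho$ alone only gives a single linear relation among the $c_w$. In the paper this equality is the content of Theorem~\ref{TheoFormInvFunct}, and its proof is substantial: one evaluates $\int a_\varpi$ for all $\tau$-fixed dominant $\varpi$ using the Harish-Chandra formula together with the twining Weyl character formula, separates the contributions of the different $w\in W_\nu^-$ by linear independence of the sign characters $\varpi\mapsto\epsilon_Q(\varpi - w^{-1}\varpi)$, and then extracts each $c_w$ by a limiting argument along $\omega = (n-1)\rho$ with $n\to\infty$. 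That limit is only well-behaved once one knows the positivity $(\rho-\gamma,\beta)>0$ for all $\beta\in\hat\Delta_c^+$, which is itself a non-trivial proposition established case by case over the Vogan diagrams. None of this machinery appears in your proposal, and without it the formula for the normalising constant is unjustified.
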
 

\begin{Rem}
\begin{enumerate}
\item Our actual theorems allow more general input than an involution $\nu$, so that also degenerate cases associated to general flag varieties can be treated. 
\item In case of the compact Lie group $U \times U$ with involution $\nu(u,v) = (v,u)$, we have an isomorphism $\mcO_q(O_{[e]}^{\nu})\cong \mcO_q(U)$. For this case, the above formula for the invariant state was obtained in \cite{RY01}. We will deal with this specific example in Section \ref{SecSpecExDiag}.
\item In the case of quantized Hermitian symmetric spaces, closely related formulas were obtained in a purely algebraic setting in \cite{Vak90}. However, the situation is slightly different there: only one of the cells appear, only the infinitesimal action of the quantized enveloping algebra is considered, and moreover the relevant $*$-structure of the acting quantized enveloping algebra corresponds to a non-compact real form. This is consistent with the fact that classically, one can indeed realize a non-compact hermitian Riemannian symmetric space as a cell within its compact Cartan dual, and thus as a bounded symmetric domain.
\item The above cell structure on symmetric spaces is closely related to the decomposition into maximal leafs for a natural Poisson structure on symmetric spaces \cite{FL04}. Although the Poisson structure there is constructed by choosing the Borel in Iwasawa-Borel position, so that the base point at the unit is itself a one-point symplectic leaf, one can also construct the Poisson structure directly from a Borel preserved by the involution. Note that in turn, the symplectic leafs for this Poisson structure are closely related to projections of $B$-orbits in $G_{\nu} \backslash G$. The latter are of course in one-to-one correspondence with the $G_{\nu}$-orbits on the flag space $G/B$ \cite{RS93}, whose structure can be examined by considering restrictions of the action to rank one subgroups. The same idea is applied here in the quantum setting, similar to what was done for the quantization of $U$ itself in \cite{LS91}. 
\end{enumerate}
\end{Rem}

The precise contents of this paper are as follows. In the \emph{first section}, we prepare some general theory on spectral conditions for a $*$-algebra in the presence of a compact quantum group action. In the \emph{second section}, we present some preliminaries on twisting data for semisimple Lie algebras and their associated Weyl groups. In the \emph{third section}, we recall some of the theory of \cite{DCM18}, mainly to introduce notation. In the \emph{fourth section}, we obtain a version of the Harish-Chandra isomorphism for quantized enveloping algebras twisted by a twisting datum. In the \emph{fifth section} we develop general results on the representation theory of $\mcO_q(Z_{\nu})$, which is then used in the \emph{sixth section} to obtain our main theorem. In the \emph{seventh section} some generalities on the representation theory of $\mcO_q(Z_{\nu})$ are stated, and some concrete examples are considered such as the diagonal action of $U\times U$ on $U$ as well as the quantization of the example mentioned in the introduction. In an \emph{appendix}, some calculations in low rank are gathered.

\section{Coactions on spectral $*$-algebras}

\subsection{Spectral $^*$-algebras}

In the following, all algebras are assumed unital. 

\begin{Def}
A \emph{spectral $*$-algebra} consists of a $*$-algebra $\msA$ together with a family $\msP$ of bounded $*$-representations of $\msA$ on Hilbert spaces, called the \emph{spectral structure}.
\end{Def}

In the following, we write a spectral $*$-algebra as $\mcO(\X)$. We write $\mcO(\X_{\C})$ for $\mcO(\X)$ as an algebra, forgetting the $*$-structure and the spectral structure. We write 
\[
\mcO^*(\X) = \mcO(\X_*) =  \mcO(\X)/I_{*}\qquad I_{*} = \cap_{\pi \in \msP} \Ker(\pi).
\]
We call $I_*$ the \emph{ideal of non-admissibility}. Note that we can have $I_* = \mcO(\X)$, in which case we write $\X_* = \emptyset$. 

If $\msA$ is a $*$-algebra, the associated \emph{full} spectral $*$-algebra $\mcO(\X_{\msA})$ is $\msA$ together with the family of \emph{all} its bounded $*$-representations on Hilbert spaces (possibly empty). For $\mcO(\X)$ a spectral $*$-algebra we write $\mcO(\X_{\full})$ for the associated full spectral $*$-algebra.

\begin{Rem}
This notion of spectral $*$-algebra is very weak, and will in practice have little value if we do not impose certain properties on $\msP$, such as being closed under arbitrary bounded direct sums. A more refined notion of `closed spectral conditions' was introduced in \cite[Definition 1.1]{DCF19}. The weak version above will however suffice for our purposes. 
\end{Rem}

If $\msA$ is a $*$-algebra and $\pi$ a bounded $*$-representation of $\msA$, we call $\pi$ irreducible if the representation space $\Hsp_{\pi}$ has no non-trivial closed $\msA$-invariant subspaces. For $\pi$ a general bounded $*$-representation of $\msA$, we denote by $\msP_{\pi}^{\Irr}$ the collection of all irreducible $*$-representations $\pi'$ which are weakly contained in $\pi$, meaning that $\pi'$ lifts to a $*$-representation of the C$^*$-algebra $\overline{\pi(\msA)}^{\|-\|}$, the norm-closure of $\pi(\msA)$. In this case we write $\pi' \preccurlyeq \pi$, so that
\[
\msP_{\pi}^{\Irr} = \{\textrm{irreducible }\pi'\mid \pi'\preccurlyeq \pi\}. 
\]

\begin{Def}
Let $\mcO(\X) = (\mcO(\X),\msP)$ be a spectral $*$-algebra. We denote 
\[
\msP^{\Irr} = \cup_{\pi \in \msP} \msP_{\pi}^{\Irr}.
\] 
We call a bounded $*$-representation $\pi$ of $\mcO(\X)$ \emph{admissible} if $\msP_{\pi}^{\Irr} \subseteq \msP^{\Irr}$, and we write 
\[
\msP_{\X} = \langle \msP \rangle = \{\pi \mid \pi \textrm{ admissible bounded }*\textrm{-representation}\}.
\] 
\end{Def}

The following lemma collects some easily verified properties.

\begin{Lem}
Let $\mcO(\X) = (\mcO(\X),\msP)$ be a spectral $*$-algebra. Then
\begin{enumerate}
\item $\msP \subseteq \msP_{\X}$ and $\msP^{\Irr}\subseteq \msP_{\X}$.
\item $(\msP_{\X})^{\Irr} = \msP^{\Irr}$.
\item $\msP_{\X} = \langle \msP_{\X}\rangle = \langle \msP^{\Irr}\rangle$.
\item The ideal of non-admissibility does not change upon passing from $\msP$ to $\msP^{\Irr}$ or $\msP_{\X}$.  
\end{enumerate}
\end{Lem}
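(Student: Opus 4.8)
The plan is to verify the four claims in order, each essentially unwinding the definitions of $\msP^{\Irr}$, weak containment, and admissibility.

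For item (1), I would argue as follows. If $\pi \in \msP$, then every irreducible $\pi' \preccurlyeq \pi$ lies in $\msP_\pi^{\Irr} \subseteq \msP^{\Irr}$ by definition, so $\msP_\pi^{\Irr} \subseteq \msP^{\Irr}$, which says exactly that $\pi$ is admissible; hence $\msP \subseteq \msP_{\X}$. For $\msP^{\Irr} \subseteq \msP_{\X}$, take an irreducible $\pi' \in \msP^{\Irr}$; I need $\msP_{\pi'}^{\Irr} \subseteq \msP^{\Irr}$. But an irreducible representation weakly contains only those irreducibles to which it is unitarily equivalent (an irreducible representation of a C$^*$-algebra generates the algebra of all bounded operators on its space, so its spectrum is a single point up to equivalence); hence $\msP_{\pi'}^{\Irr} = \{\pi'\} \subseteq \msP^{\Irr}$, as needed. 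The one point to be careful about here is the precise convention for ``weakly contained'' and whether $\preccurlyeq$ is taken up to unitary equivalence — but the paper's definition via lifting to $\overline{\pi(\msA)}^{\|-\|}$ makes this standard.

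For item (2), the inclusion $\msP^{\Irr} \subseteq (\msP_{\X})^{\Irr}$ follows from (1) together with the fact that every $\pi' \in \msP^{\Irr}$ is itself irreducible and weakly contained in itself, so $\pi' \in \msP_{\pi'}^{\Irr} \subseteq (\msP_{\X})^{\Irr}$. Conversely, if $\pi' \in (\msP_{\X})^{\Irr}$, then $\pi' \preccurlyeq \pi$ for some admissible $\pi \in \msP_{\X}$, i.e. $\pi' \in \msP_\pi^{\Irr}$; but admissibility of $\pi$ means $\msP_\pi^{\Irr} \subseteq \msP^{\Irr}$, so $\pi' \in \msP^{\Irr}$. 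This gives $(\msP_{\X})^{\Irr} = \msP^{\Irr}$. Item (3) is then immediate: $\langle \msP_{\X}\rangle = \{\pi \mid \msP_\pi^{\Irr} \subseteq (\msP_{\X})^{\Irr}\} = \{\pi \mid \msP_\pi^{\Irr} \subseteq \msP^{\Irr}\} = \msP_{\X}$ using (2), and similarly $\langle \msP^{\Irr}\rangle = \{\pi \mid \msP_\pi^{\Irr} \subseteq (\msP^{\Irr})^{\Irr}\}$; since $(\msP^{\Irr})^{\Irr} = \msP^{\Irr}$ (an irreducible representation weakly contains only itself, by the argument in (1)), this equals $\msP_{\X}$ as well.

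For item (4), I would use the general fact that for any bounded $*$-representation $\pi$ one has $\Ker(\pi) = \bigcap_{\pi' \preccurlyeq \pi,\ \pi' \text{ irred}} \Ker(\pi')$: indeed, $\pi$ factors through the C$^*$-algebra $A_\pi := \overline{\pi(\msA)}^{\|-\|}$, which is faithfully represented, and a C$^*$-algebra is the intersection of the kernels of its irreducible representations (its Jacobson radical is zero), so pulling back gives the claim. Therefore $I_* = \bigcap_{\pi \in \msP} \Ker(\pi) = \bigcap_{\pi \in \msP} \bigcap_{\pi' \in \msP_\pi^{\Irr}} \Ker(\pi') = \bigcap_{\pi' \in \msP^{\Irr}} \Ker(\pi')$, which is the ideal of non-admissibility computed from $\msP^{\Irr}$; and since $\msP \subseteq \msP_{\X}$ and every $\pi \in \msP_{\X}$ has $\msP_\pi^{\Irr} \subseteq \msP^{\Irr}$, the same double-intersection argument shows $\bigcap_{\pi \in \msP_{\X}} \Ker(\pi) = \bigcap_{\pi' \in \msP^{\Irr}} \Ker(\pi') = I_*$ as well. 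The only real obstacle is making sure the C$^*$-algebraic input is applied correctly — namely that $\pi(\msA)$ need not be norm-closed, so one genuinely passes to $A_\pi$ and uses that irreducible representations of $A_\pi$ are exactly (the lifts of) the elements of $\msP_\pi^{\Irr}$, together with the semisimplicity of C$^*$-algebras; all of this is standard, so the lemma is indeed ``easily verified'' once these facts are invoked.
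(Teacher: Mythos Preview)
Your argument has a genuine error in items (1) and (3). You claim that an irreducible representation $\pi'$ weakly contains only irreducibles unitarily equivalent to itself, justifying this by ``an irreducible representation of a C$^*$-algebra generates the algebra of all bounded operators on its space.'' This is false: irreducibility gives $\pi'(\msA)'' = B(\Hsp_{\pi'})$, but the \emph{norm} closure $\overline{\pi'(\msA)}^{\|-\|}$ can be a proper primitive C$^*$-subalgebra with many inequivalent irreducibles. For a concrete counterexample, let $\msA$ be the free unital $*$-algebra on one generator $s$ and let $\pi'$ send $s$ to the unilateral shift on $\ell^2(\N)$; then $\overline{\pi'(\msA)}^{\|-\|}$ is the Toeplitz algebra, whose spectrum contains the identity representation together with a full circle of one-dimensional characters. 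So $\msP_{\pi'}^{\Irr}$ is strictly larger than $\{\pi'\}$.

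The fix is to use \emph{transitivity} of weak containment rather than your single-point claim. If $\pi' \in \msP^{\Irr}$, say $\pi' \preccurlyeq \pi$ with $\pi \in \msP$, then $\overline{\pi'(\msA)}^{\|-\|}$ is a quotient of $\overline{\pi(\msA)}^{\|-\|}$; hence every irreducible $\pi'' \preccurlyeq \pi'$ also satisfies $\pi'' \preccurlyeq \pi$, giving $\msP_{\pi'}^{\Irr} \subseteq \msP_\pi^{\Irr} \subseteq \msP^{\Irr}$. This proves both $\msP^{\Irr} \subseteq \msP_{\X}$ in (1) and $(\msP^{\Irr})^{\Irr} = \msP^{\Irr}$ in (3). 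With this correction the rest of your argument (in particular item (4), where your use of the vanishing Jacobson radical of a C$^*$-algebra is fine) goes through. The paper itself omits the proof entirely, so there is no alternate approach to compare against.
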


\subsection{General theory of compact quantum groups}

We recall some general theory on compact quantum groups, see e.g.~ \cite{NT13}. 

Let $H = (H,\Delta,\varepsilon,S)$ be a Hopf $^*$-algebra with a (necessarily unique) invariant state. We view $H = \mcO(\U)$ as a full spectral $*$-algebra, and interpret $\mcO(\U)$ as the $*$-algebra of regular functions on a `compact quantum group' $\U$. 

We denote the unique invariant state on $\mcO(\U)$ as $\int_{\U}$, and refer to it as the \emph{Haar integral}. We write $L^2(\U)$ for the GNS-completion of $\mcO(\U)$ with respect to $\int_{\U}$, $C_r(\U) \subseteq B(L^2(\U))$ for the associated reduced C$^*$-algebra and $C_u(\U)$ for the associated universal C$^*$-algebra. When $\U$ is coamenable, i.e.~ when the natural $*$-homomorphism $C_u(\U) \rightarrow C_r(\U)$ is an isomorphism, we simply write $C(\U)$ for these C$^*$-algebras. We write $L^{\infty}(\U) = C_r(\U)'' \subseteq B(L^2(\U))$ for the associated von Neumann algebra. 

Recall that $\mcO(\U)$ is spanned linearly by the matrix entries of its unitary corepresentations $U \in B(\Hsp)\otimes \mcO(U)$ on finite-dimensional Hilbert spaces $\Hsp$. Endowing the linear dual $\mcO(\U)'$ with its convolution algebra structure and the $*$-operation 
\[
\omega^*(u) = \overline{\omega(S(u)^*)},
\]
we have a one-to-one correspondence between weak$^*$-continuous $*$-representations $\pi$ of $\mcO(\U)'$ on finite-dimensional Hilbert spaces and unitary corepresentations $U_{\pi}$ of $\mcO(\U)$ via 
\[
\pi(\omega) = (\id\otimes \omega)U_{\pi}.
\]
We denote by $\hat{\varepsilon}$ the trivial representation of $\mcO(\U)'$ with $U_{\hat{\varepsilon}} = 1$. 

For $U_{\pi}$ a unitary corepresentation on $\Hsp_{\pi}$ and $\xi,\eta\in \Hsp_{\pi}$ we write the associated matrix coefficient as
\[
U_{\pi}(\xi,\eta) = (\omega_{\xi,\eta}\otimes \id)U_{\pi},\qquad \omega_{\xi,\eta}(X) = \langle \xi,X\eta\rangle\textrm{ for }X\in B(\Hsp_{\pi}).
\]
Then 
\[
\omega(U_{\pi}(\xi,\eta)) = \langle \xi,\pi(\omega)\eta\rangle,\qquad \omega \in \mcO(\U)'.
\]
The Haar integral on $\mcO(\U)$ is determined by 
\[
\int_{\U} 1 = 1,\qquad \int_{\U} U_{\pi}(\xi,\eta) = 0,\qquad \pi \textrm{ irreducible and } \pi \ncong \hat{\varepsilon}.  
\]
More generally, if $\pi,\pi'$ are inequivalent irreducible unitary representations of $\U$ we have 
\[
\int_{\U} U_{\pi}(\xi,\eta)^*U_{\pi'}(\xi',\eta') = 0,
\]
and there exists a unique character 
\[
\hat{\delta}^{1/2} \in \mcO_q(\U)'
\]
with $\pi(\hat{\delta}^{1/2})$ strictly positive for all $\pi$ and
\begin{equation}\label{EqOrthoGen}
\int_{\U} U_{\pi}(\xi,\eta)^*U_{\pi}(\xi',\eta') =  \frac{\langle \eta,\eta'\rangle \langle \xi',\pi(\hat{\delta}^{1/2})\xi\rangle}{\Tr(\pi(\hat{\delta}^{1/2}))},\qquad \pi \textrm{ irreducible}.
\end{equation}
We call the associated family of functionals 
\[
\hat{\delta}^{iz} \in \mcO(\U)',\qquad \hat{\delta}^{iz}(U_{\pi}(\xi,\eta)) = \langle \xi,\pi(\hat{\delta}^{1/2})^{iz/2}\eta\rangle
\]
the family of \emph{Woronowicz characters}. They are pointwise analytic in $z$, are characters of $\mcO(\U)$ which are $*$-preserving for $z\in \R$, form a complex one-parametergroup 
\[
\hat{\delta}^{iz} \hat{\delta}^{iw} = \hat{\delta}^{i(w+z)}
\]
with respect to convolution, and satisfy 
\[
(\hat{\delta}^{iz})^* = \hat{\delta}^{-i\overline{z}}.
\]

One can express the antipode squared of $\mcO(\U)$ by means of the Woronowicz characters as
\[
S^2(u) = (\hat{\delta}^{-1/2}\otimes \id\otimes \hat{\delta}^{1/2})\Delta^{(2)}(u),\qquad \Delta^{(2)} = (\Delta\otimes \id)\Delta.
\]
We further define the \emph{modular group} $\sigma_z$ as the complex one-parameter group of (non-$^*$-preserving) algebra automorphisms of $\mcO(\U)$ determined by 
\[
\sigma_z(u) = (\hat{\delta}^{-iz/2}\otimes \id\otimes \hat{\delta}^{-iz/2})\Delta^{(2)}(u).
\]
Then $\sigma = \sigma_{-i}$ satisfies the \emph{modularity condition}
\[
\int_{\U} ab = \int_{\U} b\sigma(a).
\]

We extend in the following the convolution product to $*$-representations: if $\pi,\pi'\in \msP_{\U}$, we write
\[
\pi*\pi' = (\pi\otimes \pi')\Delta,\qquad \Hsp_{\pi*\pi'} = \Hsp_{\pi}\otimes \Hsp_{\pi'}. 
\]

\subsection{Actions of compact quantum groups on spectral $*$-algebras}\label{SecAct}

\begin{Def}
Let $\U$ be a compact quantum group, and let $\mcO(\X)$ be a spectral $*$-algebra. A \emph{spectral right coaction} of $\mcO(\U)$ on $\mcO(\X)$ is given by a $*$-preserving right coaction $\alpha$ by $\mcO(\U)$,
\[
\alpha: \mcO(\X) \rightarrow \mcO(\X) \otimes \mcO(\U),
\]
satisfying the \emph{spectrality condition}
\[
\pi \in \msP_{\X},\pi'\in \msP_{\U}\quad \Rightarrow \quad \pi*\pi' \in \msP_{\X},
\]
where
\[
\pi*\pi' = (\pi\otimes \pi')\alpha.
\]
We then call $\X$ a \emph{quantum $\U$-space}. 
\end{Def}
Note that $\alpha$ automatically descends to a coaction on $\mcO^*(\X)$. Also note that if $\mcO(\X)$ is a full spectral $*$-algebra, any right $*$-coaction is automatically spectral. 

If $\alpha$ does not satisfy the spectrality condition, we can enlarge $\msP_{\X}$ to the set
\[
\msP_{\X\U} = \langle \{\pi*\pi' \mid \pi\in \msP_{\X},\pi'\in \msP_{\U}\}\rangle \supseteq \msP_{\X},
\]
and we denote the associated spectral $*$-algebra as $\mcO(\X\U)$. In particular, $\mcO(\X\U)$ has the same underlying $*$-algebra as $\mcO(\X)$, and we can interpret $\X \subseteq \X\U\subseteq \X_{\full}$. We note the following properties. The second property states in a way that any $\U$-orbit of $\X\U$ meets $\X$ and consists of the $\U$-translates of any of its points in $\X$. This nice behaviour comes however at the price of a strong condition on $\U$, which will be satisfied for the particular $\U$ we are interested in.

\begin{Lem}\label{LemTransOrb}
Let $\U$ be a compact quantum group with $C(\U)$ a type $I$ C$^*$-algebra, and let $\mcO(\X)$ be a spectral $*$-algebra with right coaction by $\mcO(\U)$. The following holds:
\begin{enumerate}
\item The coaction $\alpha$ is spectral on $\mcO(\X\U)$.
\item If $\msP_{\X} = \langle \msP\rangle$, then 
\[
\msP_{\X\U} = \langle \{\pi*\pi'\mid \pi \in \msP,\pi'\in \msP_{\U}\}\rangle = \langle \{\pi*\pi_{\reg}\mid \pi \in \msP\}\rangle, 
\]
where $\pi_{\reg}: \mcO(\U) \rightarrow B(L^2(\U))$ is the regular representation. 
\end{enumerate} 
\end{Lem}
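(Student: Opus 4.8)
The plan is to prove both parts by a direct analysis of which irreducible $*$-representations are weakly contained in the relevant iterated convolutions, using that $C(\U)$ type $I$ makes all the relevant C$^*$-algebras well behaved under tensoring and taking quotients.

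For part (1), I would first unwind the definition: $\alpha$ is spectral on $\mcO(\X\U)$ iff $\pi\in\msP_{\X\U}$ and $\rho\in\msP_{\U}$ imply $\pi*\rho\in\msP_{\X\U}$, i.e. $\msP_{\pi*\rho}^{\Irr}\subseteq\msP_{\X\U}^{\Irr}$. By the previous Lemma, $\msP_{\X\U}^{\Irr}=\msP_{\X\U}$ restricted to irreducibles equals the irreducible constituents of $\langle\{\sigma*\tau\mid\sigma\in\msP_{\X},\tau\in\msP_{\U}\}\rangle$, so it suffices to control $\msP_{\pi*\rho}^{\Irr}$ when $\pi$ itself is an irreducible constituent of some $\sigma*\tau$ with $\sigma\in\msP_{\X}$, $\tau\in\msP_{\U}$. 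The key structural fact I would invoke is coassociativity of $\alpha$: $(\pi*\tau)*\rho=\pi*(\tau*\rho)$ as $*$-representations, using $(\alpha\otimes\id)\alpha=(\id\otimes\Delta)\alpha$. Since $\tau*\rho=(\tau\otimes\rho)\Delta\in\msP_{\U}$ (the spectral structure on $\U$ is the full one, closed under convolution by the Hopf-algebra axioms applied to $\mcO(\U)'$), we get $\sigma*(\tau*\rho)\in\{\sigma'*\tau'\mid\sigma'\in\msP_{\X},\tau'\in\msP_{\U}\}$, hence lies in $\msP_{\X\U}$. The remaining point is passing from $\sigma*\tau$ to an irreducible constituent $\pi$: here I would use that $\pi\preccurlyeq\sigma*\tau$ implies $\pi*\rho\preccurlyeq(\sigma*\tau)*\rho$ as representations of the relevant C$^*$-algebra completions, which is where type $I$-ness enters — weak containment is preserved under the minimal tensor product with a fixed C$^*$-algebra and under $\Delta$-pullback precisely because the C$^*$-algebra $\overline{(\sigma*\tau)(\mcO(\X))}$ and $C_u(\U)$ are nuclear/type $I$ so that $\overline{\pi(\mcO(\X))}\otimes_{\min}C(\U)$ sits inside $\overline{(\sigma*\tau)(\mcO(\X))}\otimes_{\min}C(\U)$ compatibly. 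Chasing irreducible constituents through this inclusion gives $\msP_{\pi*\rho}^{\Irr}\subseteq\msP_{\X\U}^{\Irr}$, which is the claim.

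For part (2), the first equality is immediate from part (1) together with the definition of $\msP_{\X\U}$ once we know $\msP_{\X}=\langle\msP\rangle$: indeed $\langle\{\pi*\pi'\mid\pi\in\msP_{\X},\pi'\in\msP_{\U}\}\rangle$ and $\langle\{\pi*\pi'\mid\pi\in\msP,\pi'\in\msP_{\U}\}\rangle$ have the same irreducible constituents, because any $\pi\in\msP_{\X}$ has $\msP_{\pi}^{\Irr}\subseteq\msP^{\Irr}=\cup_{\sigma\in\msP}\msP_{\sigma}^{\Irr}$, and convolving a weakly-contained representation on the left again preserves weak containment by the same type $I$ argument. For the second equality, the point is that for a type $I$ (equivalently, for our purposes, with $C(\U)$ type $I$) compact quantum group, the regular representation $\pi_{\reg}$ weakly contains every irreducible unitary representation of $\U$: $C_r(\U)$ is a direct sum/integral over $\Irr(\U)$ of full matrix blocks (each $\pi$ appears in $\pi_{\reg}$ with its conjugate as multiplicity), so $\msP_{\pi_{\reg}}^{\Irr}=\Irr(\U)=\msP_{\U}^{\Irr}$. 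Conversely every $\pi'\in\msP_{\U}$ is finite-dimensional hence a finite direct sum of irreducibles, so $\pi'\preccurlyeq\pi_{\reg}^{\oplus n}$ for suitable $n$, giving $\pi*\pi'\preccurlyeq(\pi*\pi_{\reg})^{\oplus n}$ and hence $\msP_{\pi*\pi'}^{\Irr}\subseteq\msP_{\langle\{\sigma*\pi_{\reg}\mid\sigma\in\msP\}\rangle}^{\Irr}$. This forces the two generated classes to coincide.

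The main obstacle I expect is the careful handling of weak containment under the two operations ``convolve by a fixed representation'' ($\sigma\mapsto\sigma*\tau=(\sigma\otimes\tau)\alpha$) and ``tensor with a fixed C$^*$-algebra'', and in particular verifying that these commute appropriately with passing to irreducible constituents — this is exactly the place where the hypothesis that $C(\U)$ is type $I$ is used (it guarantees nuclearity, so $\otimes_{\min}$ behaves well and weak containment $\pi'\preccurlyeq\pi$ is stable under $-\otimes_{\min}C(\U)$ and under restriction along $\Delta$). Once that lemma-level fact is isolated and proved, both parts follow by the coassociativity bookkeeping sketched above; the identification of the irreducible constituents of $\pi_{\reg}$ with all of $\Irr(\U)$ is standard for type $I$ compact quantum groups and I would simply cite \cite{NT13}.
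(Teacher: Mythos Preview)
Your overall strategy is sound and the organization differs from the paper's. The paper derives (1) from (2) in one line: applying (2) to $\X\U$ with generating set $\{\sigma*\tau:\sigma\in\msP_\X,\tau\in\msP_\U\}$ and using coassociativity $\pi*\pi'*\pi''=\pi*(\pi'*\pi'')$ gives $\msP_{(\X\U)\U}=\msP_{\X\U}$ immediately. For (2), the paper proves only the hardest inclusion $\msP_{\X\U}\subseteq\langle\{\pi*\pi_{\reg}:\pi\in\msP\}\rangle$ via a pure-state argument specific to type $I$: a pure state on $B_{\pi'}\otimes_{\min}C_{\pi''}$ with $C_{\pi''}$ type $I$ must arise from a tensor product $\pi_1\otimes\pi_2$ of irreducibles, so any irreducible weakly contained in $\pi'*\pi''$ is already weakly contained in some $\pi_1*\pi_2$ with $\pi_1\preccurlyeq\pi'$ and $\pi_2\preccurlyeq\pi''$. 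Your route instead uses only that type $I$ implies nuclear, hence that $\pi\preccurlyeq\rho$ is stable under $-*\tau$ and $\tau*-$; this is a legitimate alternative and arguably more modular, though it leaves the passage from $\pi\in\langle\msP\rangle$ to some controlling $\sigma\in\msP$ slightly implicit (one should note that $\pi\in\langle\msP\rangle$ forces $\ker\pi\supseteq\cap_{\sigma\in\msP}\ker\sigma$, i.e.\ $\pi\preccurlyeq\oplus\sigma$, before convolving).

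There are however two genuine slips in your sketch. First, you assert that every $\pi'\in\msP_{\U}$ is finite-dimensional. This is false: $\msP_{\U}$ is the \emph{full} spectral structure on $\mcO(\U)$, i.e.\ all bounded $*$-representations, and $\pi_{\reg}$ itself lies in it. What you actually need (and what the paper uses) is that type $I$ forces coamenability, so $C_u(\U)=C_r(\U)$ and hence every $\pi'\in\msP_\U$ satisfies $\pi'\preccurlyeq\pi_{\reg}$ directly; your nuclearity argument then gives $\pi*\pi'\preccurlyeq\pi*\pi_{\reg}$. Second, the sentence ``$C_r(\U)$ is a direct sum/integral over $\Irr(\U)$ of full matrix blocks (each $\pi$ appears in $\pi_{\reg}$ with its conjugate as multiplicity)'' conflates $*$-representations of $\mcO(\U)$ with unitary \emph{corepresentations} of $\U$: the Peter--Weyl decomposition you describe lives on the dual side and is not the relevant structure here. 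With these two points corrected, your argument goes through.
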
 
Note that, under the condition of the previous lemma, $\U$ is automatically coamenable by \cite[Proposition 2.5]{CS19}, upon noting that a type $I$ C$^*$-algebra always admits a finite-dimensional $*$-representation, cf.~ \cite{BMT03}.
\begin{proof}
The first property in the lemma follows from the second, since then
\[
\msP_{(\X\U)\U} = \langle \{\pi*\pi'*\pi''\mid \pi \in \msP_{\X},\pi',\pi''\in \msP_{\U}\}= \langle \{\pi*\pi'\mid \pi \in \msP_{\X},\pi' \in \msP_{\U}\}\rangle = \msP_{\X\U}.
\]

In the second property the inclusions $\supseteq$ are obvious. Let us show that 
\[
\msP_{\X\U} \subseteq \langle \{\pi*\pi_{\reg}\mid \pi \in \msP\}\rangle.
\]
Write $B_{\pi}  = \overline{\pi(\mcO(\X))}^{\|-\|}$ and $C_{\pi} = \overline{\pi(\mcO(\U))}^{\|-\|}$ for resp.~ $\pi \in \msP_{\X}$ and $\pi \in \msP_{\U}$. Pick an irreducible $\pi$ with $\pi \preccurlyeq \pi'*\pi''$ for $\pi'\in \msP_{\X}$ and $\pi''\in \msP_{\U}$. We are to show that there exists $\widetilde{\pi} \in \msP$ with  $\pi\preccurlyeq \widetilde{\pi}*\pi_{\reg}$.

Let $\omega$ be a pure state on $B_{\pi'*\pi''}$ associated to $\pi$. Then $\omega$ can be lifted to a pure state on 
\[
\overline{\pi'(\mcO(\X))\otimes \pi''(\mcO(\U))}^{\|-\|} = B_{\pi'}\underset{\min}{\otimes} C_{\pi''}.
\]
Since $C_{\pi''}$ is type $I$, this lift must be a vector state associated to a tensor product $\pi_1\otimes \pi_2$ of irreducible representations of respectively $B_{\pi'}$ and $C_{\pi''}$. It follows that $\pi \preccurlyeq \pi_1*\pi_2$. Since $\pi_1 \preccurlyeq \pi'$, we can pick $\widetilde{\pi} \in \msP$ with $\pi_1 \preccurlyeq \widetilde{\pi}$. Since $\U$ is coamenable, we also have $\pi_2 \preccurlyeq \pi_{\reg}$. But since we have the descent 
\[
B_{\widetilde{\pi}*\pi_{\reg}} \subseteq B_{\widetilde{\pi}}\underset{\min}{\otimes} C_{\pi_{\reg}} \overset{\pi_1 \otimes \pi_2}{\rightarrow} B_{\pi_1}\underset{\min}{\otimes} C_{\pi_2}\supseteq B_{\pi_1*\pi_2},
\]
we must also have $\pi\preccurlyeq \widetilde{\pi}*\pi_{\reg}$.
\end{proof}

\subsection{Quantum homogeneous spaces}

If $\X$ is a quantum $\U$-space, we denote the $*$-algebra of coinvariants as
\[
\mcO(\X)^{\alpha} = \{f\in \mcO(\X) \mid \alpha(f)= f\otimes 1\}.
\]
We write
\begin{equation}\label{EqNotDouble1}
\mcO(\X\dbslash \U) = \mcO(\X)^{\alpha} \textrm{ as a full spectral }*\textrm{-algebra, and}
\end{equation}
\begin{equation}\label{EqNotSingle1}
\mcO(\X/\U) = \mcO(\X)^{\alpha} \textrm{ as a spectral }*\textrm{-algebra for }\msP_{\X/\U} = \langle \{\pi_{\mid \mcO(\X)^{\alpha}} \mid \pi \in \msP_{\X}\}\rangle. 
\end{equation}

\begin{Def}\cite{Boc95}
We call $\X$ a \emph{quantum homogeneous space}, or the action of $\U$ on $\X$ \emph{ergodic}, if $\mcO(\X)^{\alpha} = \C$.
\end{Def}

If $\X$ is a general quantum $\U$-space, note that any $f\in \mcO(\X)$ lies in the linear span of a finite set of elements $f_i\in \mcO(\X)$ which transform according to a unitary corepresentation $U = (u_{ij})_{ij}$ for $\mcO(\U)$. It follows that $\sum_i f_i^*f_i \in \mcO(\X)^{\alpha}$. Assuming that $\X$ is a quantum homogeneous space, each $\sum_i f_i^*f_i$ as above is scalar. In particular, any $*$-representation $\pi$ of $\mcO(\X)$ on a (pre-)Hilbert space $\Hsp$ is bounded, and there exists for each $f\in \mcO(\X)$ a $\pi$-independent constant $C_f$ such that $\|\pi(f)\|\leq C_f$ for all such $\pi$. Hence we can associate to $\mcO(\X)$ the universal C$^*$-algebra $C_u(\X)$ obtained from the seminorm 
\[
\|x\| = \sup\{\|\pi(x)\| \mid \pi \in \msP_{\X}\}.
\]
We have 
\[
\mcO(\X) \twoheadrightarrow \mcO^*(\X) \hookrightarrow C_u(\X).
\] 
and the inclusion above has dense image.

For $\X$ a quantum homogeneous space, we write $\int_{\X}$ for the associated normalized invariant functional on $\mcO(\X)$, determined by 
\[
\int_{\X}f = \left(\id\otimes \int_{\U}\right)\alpha(f).
\] 
This functional descends to a faithful state on $\mcO^*(\X)$. We write $L^2(\X)$ for the GNS-space of $\mcO^*(\X)$ with respect to $\int_{\X}$, $C_r(\X)$ for the reduced C$^*$-algebra, and $L^{\infty}(\X)$ for the associated von Neumann algebra.  These come equipped with respective coactions of $C_r(\U)$ and $L^{\infty}(\U)$. When $\U$ is coamenable, the natural projection map $C_u(\X) \rightarrow C_r(\X)$ will be an isomorphism, and we simply write $C(\X)$ in this case. 

When $\U$ is coamenable, the spectral structure of $\mcO(\X)$ will necessarily be the full one. 

\begin{Lem}\label{LemErgoIsFull}
Let $\alpha: \mcO(\X) \rightarrow \mcO(\X)\otimes \mcO(\U)$ be a spectral ergodic coaction, with $\U$ coamenable. Then $\mcO(\X)$ is a full spectral $*$-algebra. 
\end{Lem}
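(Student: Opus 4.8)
The claim is that an ergodic spectral coaction $\alpha\colon\mcO(\X)\to\mcO(\X)\otimes\mcO(\U)$ with $\U$ coamenable forces the spectral structure $\msP$ on $\mcO(\X)$ to be the full one, i.e.\ every bounded $*$-representation of $\mcO(\X)$ is admissible. The plan is to show that the ideal of non-admissibility $I_* = \cap_{\pi\in\msP}\Ker(\pi)$ is zero and that $\mcO^*(\X) = \mcO(\X)$, and then to identify the C$^*$-norm coming from $\msP_{\X}$ with the universal one, so that every bounded $*$-representation is weakly contained in a representation already in $\msP_{\X}$.

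First I would recall, as noted in the text preceding the lemma, that ergodicity gives: for each $f\in\mcO(\X)$ there are finitely many $f_i$ spanning a subcoaction-space, with $\sum_i f_i^*f_i$ a scalar; hence every $*$-representation of $\mcO(\X)$ on a pre-Hilbert space is automatically bounded, with a uniform bound $C_f$ on $\|\pi(f)\|$. This already yields the universal C$^*$-algebra $C_u(\X)$ with seminorm $\|x\| = \sup_{\pi\in\msP_{\X}}\|\pi(x)\|$, and the surjection $\mcO(\X)\twoheadrightarrow\mcO^*(\X)\hookrightarrow C_u(\X)$. So the content to be proved is really two things: (a) $I_* = 0$, equivalently $\mcO^*(\X) = \mcO(\X)$; and (b) $\msP_{\X}$ captures all irreducible bounded $*$-representations, i.e.\ $\msP^{\Irr}$ is everything.

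For (a): I would use that $\msP$ is nonempty (the definition of spectral coaction presupposes some $\pi\in\msP_{\X}$, and in the ergodic setting one can in any case take the GNS representation of $\int_{\X}$, which is faithful on $\mcO^*(\X)$) together with the spectrality condition $\pi\in\msP_{\X},\pi'\in\msP_{\U}\Rightarrow\pi*\pi'\in\msP_{\X}$. Fix any nonzero $f\in\mcO(\X)$; since $\alpha$ is injective (a right coaction satisfies $(\mathrm{id}\otimes\varepsilon)\alpha=\mathrm{id}$), $\alpha(f)\neq0$, so there is a matrix coefficient $u$ of some $\pi'\in\msP_{\U}$ and a slice $(\mathrm{id}\otimes\omega)\alpha(f)\neq0$. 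Using $\pi\in\msP$ together with $\pi_{\reg}$ (available via Lemma~\ref{LemTransOrb}, or directly since $\U$ coamenable means $C_u(\U)=C_r(\U)$ so every irreducible $\pi'$ of $\mcO(\U)$ is weakly contained in $\pi_{\reg}$), the representation $\pi*\pi_{\reg}$ lies in $\msP_{\X}$ and detects $f$ because $\pi_{\reg}$ is faithful on $\mcO^*(\U) = C_r(\U)$ and the Haar state is faithful there; more precisely $(\pi\otimes\pi_{\reg})\alpha(f) = 0$ would, after slicing with the faithful Haar state on the $\mcO(\U)$-leg, force $\pi(f_i)=0$ for a spanning set, whence $\pi(\sum f_i^*f_i)=0$, whence the scalar $\sum f_i^*f_i = 0$, whence $f=0$ by ergodicity and faithfulness of $\int_\X$. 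This shows $I_*=0$, so $\mcO(\X)=\mcO^*(\X)$ embeds in $C_u(\X)$.

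For (b): let $\sigma$ be an arbitrary bounded $*$-representation of $\mcO(\X)$; I must show $\msP_\sigma^{\Irr}\subseteq\msP^{\Irr}$. Since $C_u(\X)$ is universal, $\sigma$ extends to $C_u(\X)$, so it suffices to show every irreducible representation of $C_u(\X)$ is weakly contained in some element of $\msP_{\X}$; and because $\|x\| = \sup_{\pi\in\msP_{\X}}\|\pi(x)\|$ realizes the universal norm, the direct sum $\Pi = \oplus_{\pi\in\msP_{\X}}\pi$ is a faithful (hence isometric) representation of $C_u(\X)$, so every irreducible representation of $C_u(\X)$ is weakly contained in $\Pi$, i.e.\ in $\langle\msP_{\X}\rangle$. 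Combined with $(\msP_{\X})^{\Irr} = \msP^{\Irr}$ from the Lemma in §1.1, this gives $\msP_\sigma^{\Irr}\subseteq\msP^{\Irr}$, so $\sigma$ is admissible. Hence $\msP_{\X}$ is the full spectral structure.

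The main obstacle, I expect, is verifying cleanly that slicing $(\pi\otimes\pi_{\reg})\alpha(f)$ by the Haar state genuinely recovers $\pi(f)$ up to the scalar trick — this is where coamenability of $\U$ (so that $\pi_{\reg}$ sees all of $\mcO^*(\U)$ and the Haar state is faithful on $C_r(\U)$) and ergodicity (so that $\sum f_i^*f_i$ is scalar and $\int_\X$ is faithful) are both essential; the C$^*$-algebra bookkeeping in (b) is then formal once one knows $\mcO(\X) = \mcO^*(\X) \hookrightarrow C_u(\X)$ with the sup-norm.
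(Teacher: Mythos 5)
Your argument for (b) is circular at the decisive step. In the paper, $C_u(\X)$ is by definition the C$^*$-completion of $\mcO(\X)$ with respect to $\|x\| = \sup_{\pi\in\msP_\X}\|\pi(x)\|$, the supremum over the \emph{admissible} representations only. When you assert that ``$\sigma$ extends to $C_u(\X)$'' for an arbitrary bounded $*$-representation $\sigma$, or that this $\msP_\X$-sup seminorm ``realizes the universal norm'', you are assuming exactly what the lemma claims: that every bounded $*$-representation is dominated by admissible ones. The uniform bounds $C_f$ coming from ergodicity give a finite universal norm over \emph{all} bounded $*$-representations, but say nothing about its comparison with the $\msP_\X$-sup.

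The missing ingredient is a reduction to one particular admissible representation, the GNS representation $\pi_{\X,\reg}$ of $\int_\X$ on $L^2(\X)$, and coamenability must be used on the $\X$-side, not only on the $\U$-side. Coamenability of $\U$ forces the genuinely universal C$^*$-envelope of $\mcO(\X)$ (over all bounded $*$-representations) to coincide with $C_r(\X)$; equivalently, every irreducible bounded $*$-representation of $\mcO(\X)$ is weakly contained in $\pi_{\X,\reg}$. Your proof invokes coamenability only for $\mcO(\U)$ (so that $\pi_{\U,\reg}$ weakly contains $\msP_\U$), and offers no substitute for this $\X$-level fact. The second, equally essential ingredient -- present in your proposal only as an unproven aside in (a) -- is that $\pi_{\X,\reg}$ \emph{is} admissible: for any $\pi\in\msP_\X$ and any vector state $\omega$ of $\pi$, one has $\int_\X = (\omega\circ\pi\otimes\int_\U)\circ\alpha$, so $\pi_{\X,\reg}$ is a subrepresentation of $\pi*\pi_{\U,\reg}$, which lies in $\msP_\X$ by spectrality of the coaction. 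Once both facts are in hand, the lemma is immediate. Part (a) of your proposal ($I_*=0$) is not needed and does not help close either gap.
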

\begin{proof}
By coamenability, any irreducible $*$-representation of $\mcO(\X_{\full})$ is weakly contained in the regular representation $\pi_{\X,\reg}$ of $\mcO(\X)$ on $L^2(\X)$. It hence suffices to show that the latter is admissible for $\mcO(\X)$. If however $\pi\in \msP_{\X}$ is arbitrary, we have that $\pi_{\X,\reg} \subseteq \pi*\pi_{\U,\reg}$, and the latter is admissible by spectrality of the coaction.
\end{proof}

Note that, for $\X$ homogeneous, $\mcO^*(\X)$ also admits a unique modular automorphism $\sigma: \mcO^*(\X) \rightarrow \mcO^*(\X)$ such that 
\[
\int_{\X} xy = \int_{\X} y \sigma(x),\qquad x,y\in \mcO^*(\X),
\]
see \cite[Proposition 2.10]{BDRV06}. The automorphism $\sigma$ extends uniquely to a complex analytic one-parameter group of diagonalizable automorphisms $\sigma_z: \mcO^*(\X) \rightarrow \mcO^*(\X)$ with $\sigma = \sigma_{-i}$. We have
\begin{equation}\label{EqAlphaSigma}
\alpha \circ \sigma = (\sigma \otimes S^2)\circ \alpha. 
\end{equation}

\subsection{Centrally coinvariant coactions and quantum orbits}

In this paper, we will be interested in a slightly more general class of actions than just the ergodic ones. We use in the following definition the notation introduced in \eqref{EqNotDouble1} and \eqref{EqNotSingle1}.

\begin{Def}
Let $\mcO(\X)$ be a spectral $*$-algebra with a spectral coaction by $\mcO(\U)$. We say that $\mcO(\X)$ has \emph{central coinvariants} if $\mcO(\X)^{\alpha} \subseteq \msZ(\mcO(\X))$, the center of $\mcO(\X)$. 

We write in this case 
\[
\X\dbslash \U = \Spec_*(\mcO(\X)^{\alpha}) =  \{*\textrm{-characters of }\mcO(\X)^{\alpha}\},
\]
\[
\X/\U= \{\textrm{admissible }*\textrm{-characters of }\mcO(\X/ \U)\} \subseteq \X\dbslash \U.
\] 
\end{Def}

We view $\X\dbslash \U$ and $\X/\U$ as topological spaces by the topology of pointwise convergence on $\mcO(\X)^{\alpha}$. 

\begin{Def}
Let $x\in \X\dbslash \U$. We call a bounded $*$-representation $\pi$ of $\mcO(\X_{\full})$ an \emph{$x$-representation} if $\pi$ restricts to $x$ on $\mcO(\X \dbslash \U)$. 
\end{Def}

By definition we have $x\in \X/\U$ if and only if there exists an admissible $x$-representation $\pi$ of $\mcO(\X)$. By elementary C$^*$-theory, we can also always choose this $\pi$ to be irreducible. 

For $x \in  \X\dbslash\U$ we write $\widetilde{I}_x$ for the 2-sided $*$-ideal
\[
\widetilde{I}_x =  \mcO(\X)\{f - x(f)\mid f\in \mcO(\X)^{\alpha}\} \subseteq \mcO(\X).
\]
By central coinvariance $\alpha$ descends to an ergodic spectral coaction on 
\[
\mcO(\X(x)) = \mcO(\X)/\widetilde{I}_{x},
\] 
and we call $\X(x)$ the \emph{fiber} of $\X$ at $x$. Here we interpret $\mcO(\X(x))$ as a spectral $*$-algebra by 
\[
\msP_{\X(x)} = \{\pi \in \msP_{\X}\mid \pi \textrm{ is an }x\textrm{-representation}\}.
\] 
We know from Lemma \ref{LemErgoIsFull} that this is simply the full spectral $*$-algebra structure if $\U$ is coamenable. It follows that in this case $\X/\U$ consists of those $x\in \X\dbslash \U$ for which the ideal of non-admissibility for $\mcO(\X(x))$ is not the whole algebra. We then write for $x \in  \X/\U$
\[
\mcO(\X_*(x)) = \mcO^*(\X(x)),
\]
and we call $\X_*(x)$ the fiber of $\X_*$ at $x$. 

Given an arbitrary coaction $\alpha$, let $E_{\alpha}$ be the projection map 
\[
E_{\alpha}: \mcO(\X)\rightarrow \mcO(\X)^{\alpha},\quad f \mapsto \left(\id\otimes \int_{\U}\right)\alpha(f). 
\]
The following lemma gives an easy characterisation of $\X_{\full}/\U$.

\begin{Lem}
Let $\mcO(\X)$ be a full spectral $*$-algebra with a centrally coinvariant $*$-coaction by $\mcO(\U)$. Let $x\in \X\dbslash \U$. Then $x\in \X/\U$ if and only if the invariant functional 
\[
\int_{\X(x)} = x\circ E_{\alpha}
\] 
on $\mcO(\X)$ is a positive functional, i.e. 
\[
\int_{\X(x)}f^*f\geq 0\qquad  \textrm{for all }f\in \mcO(\X).
\]
\end{Lem}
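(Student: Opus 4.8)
The plan is to unwind the definitions. Since $\mcO(\X)$ is a full spectral $*$-algebra, every bounded $*$-representation is admissible, so $x\in\X/\U$ means exactly that $\mcO(\X)$ admits a bounded $*$-representation $\pi$ (necessarily nonzero, since $\pi$ is unital and $x(1)=1$) whose restriction to $\mcO(\X)^{\alpha}$ is the character $x$. I would first record two routine facts. Because $E_{\alpha}$ is an $\mcO(\X)^{\alpha}$-bimodule map, it carries $\widetilde{I}_x=\mcO(\X)\{h-x(h)\mid h\in\mcO(\X)^{\alpha}\}$ into $\{h-x(h)\mid h\in\mcO(\X)^{\alpha}\}$, on which $x$ vanishes; hence $\int_{\X(x)}=x\circ E_{\alpha}$ descends to a functional $\bar\iota$ on the fiber $\mcO(\X(x))=\mcO(\X)/\widetilde{I}_x$, with $\bar\iota(1)=x(E_{\alpha}(1))=1$. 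Moreover, under the identification $\mcO(\X(x))^{\alpha}\cong\C$ induced by $x$, the functional $\bar\iota$ is precisely the canonical invariant functional of the ergodic $\U$-space $\X(x)$, and $\int_{\X(x)}$ is positive on $\mcO(\X)$ if and only if $\bar\iota$ is positive on $\mcO(\X(x))$ (as the quotient map is a surjective $*$-homomorphism).

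\emph{The ``only if'' direction} I would prove by realizing $\int_{\X(x)}$ as a vector state of an honest $*$-representation, whence positivity is automatic. Given a bounded $*$-representation $\pi$ of $\mcO(\X)$ on $\Hsp_{\pi}$ with $\pi_{\mid\mcO(\X)^{\alpha}}=x$, set $\rho=(\pi\otimes\pi_{\U,\reg})\circ\alpha$, a $*$-representation of $\mcO(\X)$ on $\Hsp_{\pi}\otimes L^2(\U)$, let $\xi\in\Hsp_{\pi}$ be a unit vector, and let $\Omega\in L^2(\U)$ be the cyclic GNS-vector of the Haar state, so that $\langle\Omega,\pi_{\U,\reg}(b)\Omega\rangle=\int_{\U}b$. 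Writing $\alpha(f)=\sum_j f^j_{(0)}\otimes f^j_{(1)}$, one computes $\langle\xi\otimes\Omega,\rho(f)(\xi\otimes\Omega)\rangle=\sum_j\langle\xi,\pi(f^j_{(0)})\xi\rangle\int_{\U}f^j_{(1)}=\langle\xi,\pi(E_{\alpha}(f))\xi\rangle=x(E_{\alpha}(f))=\int_{\X(x)}(f)$, where the last two equalities use $E_{\alpha}(f)\in\mcO(\X)^{\alpha}$ and $\pi_{\mid\mcO(\X)^{\alpha}}=x$. Hence $\int_{\X(x)}=\omega_{\xi\otimes\Omega}\circ\rho$ is a positive functional.

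\emph{The ``if'' direction} I would prove by GNS applied on the fiber. If $\int_{\X(x)}$ is positive, then $\bar\iota$ is a state on $\mcO(\X(x))$; performing the GNS construction for $\bar\iota$ yields a $*$-representation $\rho_0$ of $\mcO(\X(x))$ on a pre-Hilbert space with cyclic vector $\Omega_0$, and $\rho_0\neq 0$ since $\rho_0(1)\Omega_0=\Omega_0\neq 0$. Here the key point is that $\X(x)$ is a quantum homogeneous space (by central coinvariance $\alpha$ descends to an ergodic coaction on $\mcO(\X(x))$), so the boundedness argument recalled in the excerpt applies: $\rho_0$ is automatically bounded, hence extends to a nonzero bounded $*$-representation of $\mcO(\X(x))$ on the Hilbert completion. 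Pulling it back along $\mcO(\X)\twoheadrightarrow\mcO(\X(x))$ gives a nonzero bounded $*$-representation of $\mcO(\X)$; it restricts to $x$ on $\mcO(\X)^{\alpha}$ because for $h\in\mcO(\X)^{\alpha}$ the image of $h$ in $\mcO(\X(x))^{\alpha}\cong\C$ is the scalar $x(h)$ (this is exactly the content of the definition of $\widetilde{I}_x$ together with central coinvariance). As $\mcO(\X)$ is full spectral this representation is admissible, so $x\in\X/\U$.

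There is no deep obstacle here; the care required is structural rather than computational. The one point that must not be skipped is that the GNS construction has to be carried out on the ergodic fiber $\mcO(\X(x))$ and not on $\mcO(\X)$ itself, since it is only ergodicity of $\X(x)$ that supplies the automatic boundedness needed to pass from a representation on a pre-Hilbert space to one on a genuine Hilbert space; and, correspondingly, one must keep the identification $\mcO(\X(x))^{\alpha}\cong\C$ aligned with the character $x$ throughout, which is precisely what central coinvariance guarantees.
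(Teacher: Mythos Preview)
Your proof is correct and follows essentially the same approach as the paper. For the ``only if'' direction, the paper writes the functional directly as $((\omega_\xi\circ\pi)\otimes\int_{\U})\alpha(f^*f)$, which is your computation with the regular representation unpacked; for the ``if'' direction, the paper performs GNS and appeals to the same boundedness argument from the homogeneous case that you invoke, with the passage to the fiber left implicit rather than spelled out as you do.
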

\begin{proof}
If $x \in \X_{\full}/\U$, take any $x$-representation $\pi$ of $\mcO(\X)$, and take any non-zero vector $\xi\in \mcH_{\pi}$ with associated vector state $\omega_{\xi}$. Then we have 
\[
\int_{\X(x)}f^*f = \left((\omega_{\xi}\circ \pi)\otimes \int_{\U}\right)\alpha(f^*f)\geq 0,\qquad f\in \mcO(\X).
\]
Conversely, if $\int_{\X(x)}$ is positive, let $\pi_x$ be the associated GNS-representation. The same argument which showed the existence of the universal C$^*$-envelope shows that $\pi_x$ is bounded, and thus an $x$-representation of $\mcO(\X)$. 
\end{proof}

In particular, we see that $\X/\U$ is a closed subset of $\X\dbslash \U$. Note that if we start with a general spectral coaction with central coinvariants, we then have the (possibly all strict) inclusions 
\[
\X/\U \subseteq \X_{\full}/\U \subseteq \X\dbslash \U.
\]

\subsection{Two examples}\label{ExaRank1}
Most of the following material can be found, as far as the algebraic constructions are concerned, in standard reference works on quantum groups, see e.g.~ \cite{Maj95,KS97}.

Consider for $0<q<1$ the $R$-matrix
\[
R= q^{-1/2}(e_{11}\otimes e_{11} + e_{22}\otimes e_{22})+q^{1/2}(e_{11}\otimes e_{22}+ e_{22}\otimes e_{11}) + q^{1/2}(q^{-1}-q)e_{12}\otimes e_{21} \in M_2(\C)\otimes M_2(\C).
\]
Let $\mcO_q(M_2(\C))$ be the algebra generated by a matrix $M = \begin{pmatrix} a & b \\ c & d\end{pmatrix} \in M_2(\C)\otimes \mcO_q(M_2(\C))$ of indeterminates such that
\[
R_{12}M_{13}M_{23} = M_{23}M_{13}R_{12}. 
\]
By direct verification one sees that $\mcO_q(M_2(\C))$ has universal relations
\[
ac = qca,\quad ab =q ba,\quad bc = cb,\quad cd = qdc,\quad bd = qdb,\quad ad-da = (q-q^{-1})bc. 
\]
It is a bialgebra in a natural way such that $M$ becomes a corepresentation. It becomes a $*$-bialgebra by the $*$-structure 
\[
a^* = d,\qquad b^* = -qc.
\] 
With $V = \C^2$ interpreted as row vectors, we write this $*$-algebra as $\mcO_q(V_{\R}) = \mcO(V_{\R,q})$, as we view it as a quantization of the $*$-algebra of regular functions on $V$ as a real affine variety, with $a,b$ corresponding to the (holomorphic) coordinate functions and $d,c$ to their (anti-holomorphic) complex conjugates. We view here $\mcO_q(V_{\R})$ as a full spectral $*$-algebra. 

Let 
\[
D = ad - qbc = da-q^{-1}bc.
\] 
Then $D$ is a selfadjoint grouplike central element, and we write $\mcO_q(SU(2)) = \mcO(SU_q(2))$ for the Hopf $^*$-algebra obtained by imposing the extra relation $D=1$. We write in this case the fundamental corepresentation $M$ as $U$. 

Clearly the comultiplication gives rise to a coaction 
\[
\alpha: \mcO_q(V_{\R}) \rightarrow \mcO_q(V_{\R}) \otimes \mcO_q(SU(2)),\qquad (\id\otimes \alpha)M = M_{12}U_{13}.
\]
The following proposition is easily verified. 

\begin{Prop}
The coaction on $\mcO_q(V_{\R})$ is centrally coinvariant, with
\[
\mcO_q(V_{\R})^{\alpha}= \C[D].
\] 
Identifying $\Spec_*(\C[D])\cong \R$ via $\chi \mapsto \chi(D)$, we have
\[
V_{\R,q}\dbslash SU_q(2) = \R,\qquad V_{\R,q}/SU_q(2) = \R_{\geq 0}.
\] 

For $\lambda = 0$ we have $\mcO_q^*(V_{\R}(0)) = \C$ with the trivial coaction.

For $\lambda> 0$ we have $\mcO_q^*(V_{\R}(\lambda))\cong \mcO_q(SU(2))$ equivariantly via $M\mapsto \lambda^{1/2} U$.
\end{Prop}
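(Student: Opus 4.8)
The plan is to verify each assertion by direct computation in the finitely presented $*$-algebra $\mcO_q(V_{\R})$, using the explicit relations listed above. First I would check that $D = ad - qbc$ is central and selfadjoint: centrality is the standard verification that $D$ commutes with each of $a,b,c,d$ using the six defining relations (this is where the quantum determinant identity is doing its work), and selfadjointness follows from $a^*=d$, $b^*=-qc$ together with $d^* = a$, $c^* = -q^{-1}b$ — so $D^* = d^*a^* - q b^* c^* = ad - q(-qc)(-q^{-1}b) = ad - qcb = ad - qbc = D$ after using $bc=cb$. Grouplikeness of $D$ under the bialgebra comultiplication is the usual multiplicativity of the quantum determinant. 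From this, $\C[D] \subseteq \msZ(\mcO_q(V_{\R})) \cap \mcO_q(V_{\R})^{\alpha}$, where coinvariance comes from $(\id\otimes\alpha)M = M_{12}U_{13}$ together with the fact that $D$ is sent to $D \otimes D_U = D\otimes 1$ in $\mcO_q(SU(2))$.

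Next I would establish the reverse inclusion $\mcO_q(V_{\R})^{\alpha} \subseteq \C[D]$. The clean way is a PBW/weight argument: $\mcO_q(V_{\R})$ has a vector-space basis of ordered monomials $a^i b^j c^k d^l$ (or, more symmetrically, monomials in $a,b,c$ times powers of $D$), and the coaction $\alpha$ is homogeneous with respect to the $\Z^2$-grading coming from the corepresentation $M$. One checks that the invariants under the induced $SU_q(2)$-coaction on the polynomial part are spanned precisely by the powers of $D$; concretely, decompose $\mcO_q(V_{\R})$ as an $SU_q(2)$-corepresentation — it is a direct sum of the $\mcO_q$-deformed symmetric powers — and note the trivial corepresentation appears exactly once in each "total degree zero" piece, generated by $D^n$. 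This identifies $\mcO_q(V_{\R})^{\alpha} = \C[D]$, hence $V_{\R,q}\dbslash SU_q(2) = \Spec_*(\C[D]) \cong \R$ via $\chi\mapsto\chi(D)$ since $D$ is selfadjoint.

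For the fiber computations, fix $\lambda\in\R$ and consider $\mcO_q(V_{\R})/\widetilde{I}_\lambda$, i.e.\ impose $D=\lambda$. For $\lambda=0$ one shows the quotient $*$-algebra collapses: imposing $ad = qbc$ and exploiting the $*$-structure (so $a^*a = da = q^{-1}bc + \ldots$, etc.) one derives that $a,b,c,d$ all become $0$ in any $*$-representation, equivalently the ideal of non-admissibility is everything except the scalars — so $\mcO_q^*(V_{\R}(0)) = \C$. I would make this precise by exhibiting, for each generator $g$, an identity of the form $g^*g = (\text{positive combination involving }D)$ that forces $\|g\| = 0$ when $D=0$; the existence of an invariant positive functional then pins down $\mcO_q^*(V_{\R}(0))=\C$. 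For $\lambda>0$, rescale: set $U = \lambda^{-1/2} M$. The relation $D=\lambda$ becomes exactly the relation $D_U = 1$ defining $\mcO_q(SU(2))$, and one checks the $*$-structure matches, giving a surjection $\mcO_q(SU(2))\twoheadrightarrow \mcO_q^*(V_{\R}(\lambda))$; injectivity follows because $\mcO_q(SU(2))$ has a faithful $*$-representation (it is a CQG algebra), which pulls back to a faithful $x$-representation of the fiber, so the ideal of non-admissibility is trivial and the map is an isomorphism $\mcO_q^*(V_{\R}(\lambda)) \cong \mcO_q(SU(2))$.

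The main obstacle is the $\lambda=0$ degenerate fiber: showing that the $*$-algebra with $D=0$ admits no nonzero bounded $*$-representations requires finding the right combination of the quadratic relations and the $*$-structure to force each generator to vanish, rather than merely being nilpotent in the purely algebraic quotient. Everything else is bookkeeping with the six relations and the coaction formula.
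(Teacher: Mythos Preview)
The paper itself gives no proof of this proposition: it simply says ``the following proposition is easily verified.'' Your outline is essentially correct and would constitute a complete proof, so there is no real approach comparison to make. Two small points are worth noting.

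First, you never explicitly treat the case $\lambda<0$, which is needed for the assertion $V_{\R,q}/SU_q(2)=\R_{\geq 0}$. The cleanest way to handle this simultaneously with your degenerate fiber $\lambda=0$ is to observe the identity
\[
D \;=\; da - q^{-1}bc \;=\; a^*a + q^{-2}\,b^*b,
\]
obtained from $d=a^*$ and $bc=cb=-q^{-1}b^*b$. In any bounded $*$-representation this forces $D\geq 0$, so no $\lambda<0$ can occur; and if $D=0$ then $a^*a=b^*b=0$, hence $a=b=c=d=0$, giving $\mcO_q^*(V_{\R}(0))=\C$ immediately. This replaces your somewhat tentative ``find the right combination of quadratic relations'' step with a one-line positivity argument.

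Second, your phrasing for $\lambda=0$ (``admits no nonzero bounded $*$-representations'') is slightly off: the one-dimensional representation sending all generators to $0$ is a perfectly good bounded $*$-representation, and is what makes $0\in V_{\R,q}/SU_q(2)$. What you want to say is that every bounded $*$-representation kills the augmentation ideal, so the admissible quotient is $\C$. The identity above makes this precise.
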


Also the following theorem can be easily derived by hand, or deduced from the results in \cite{LS91, Koo91}. 
\begin{Theorem} \label{TheoListAllSUq2}
A full list of inequivalent irreducible $*$-representations of $\mcO_q^*(V_{\R}(\lambda))$ for $\lambda>0$ is given by the $*$-characters
\[
\chi_{\lambda,\theta}\begin{pmatrix} a & b \\ c & d \end{pmatrix} =  \begin{pmatrix} \lambda^{1/2}e^{i\theta} & 0 \\ 0 & \lambda^{1/2}e^{-i\theta}\end{pmatrix},\qquad \theta \in \T =  \R/2\pi\Z, 
\]
and the infinite-dimensional $*$-representations
\[
\mcS_{\lambda,\theta} = l^2(\N),\quad a e_n = \lambda^{1/2}(1-q^{2n})^{1/2}e_{n-1},\quad b e_n =\lambda^{1/2}e^{-i\theta} q^n e_n,\qquad \theta \in  \T.
\]
When $\theta =0$ we drop the symbol $\theta$, and when interpreting $\mcO_q(V_{\R}(1))\cong \mcO_q(SU(2))$ we drop the symbol $\lambda$. We then have
\[
\chi_{\lambda}*\chi_{\theta} = \chi_{\lambda,\theta},\quad \mcS_{\lambda} * \chi_{\theta} = \mcS_{\lambda,\theta},\qquad \chi_{\lambda}*\mcS = \mcS_{\lambda}
\]
and 
\[
\mcS_{\lambda}*\mcS \cong \int_{\T} \mcS_{\lambda,\theta} \rd \theta. 
\]
\end{Theorem}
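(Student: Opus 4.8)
The plan is to reduce everything to the classical representation theory of quantum $SU(2)$. By the preceding Proposition there is, for $\lambda>0$, a coaction-equivariant $*$-isomorphism $\mcO_q^*(V_{\R}(\lambda))\cong\mcO_q(SU(2))$ sending $M$ to $\lambda^{1/2}U$, and since the fibre carries an ergodic coaction of the coamenable quantum group $SU_q(2)$, every $*$-representation in sight is automatically bounded and weakly contained in the regular one. So I would first note that it suffices to classify the irreducible $*$-representations of $\mcO_q(SU(2))$ and transport them back, which merely rescales all generators by $\lambda^{1/2}$ --- that is, turns $\chi_{\theta},\mcS_{\theta}$ into $\chi_{\lambda,\theta},\mcS_{\lambda,\theta}$.

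For $\mcO_q(SU(2))$ I would run the standard ``extreme weight vector'' argument. Given an irreducible $*$-representation $\pi$ on $\Hsp$, write $a=\pi(U_{11})$, $c=\pi(U_{21})$. From the relations, $c$ is normal, one has the two identities $aa^*+q^2cc^*=1$ and $a^*a+cc^*=1$ coming from the two expressions for $D=1$, and $a(cc^*)=q^2(cc^*)a$, so $cc^*$ is a positive contraction whose eigenvalues are rescaled by $q^{\mp2}$ under $a,a^*$. Positivity of $1-cc^*=a^*a$ forces any eigenvector of $cc^*$ at the top of the spectrum to be killed by $a$ and hence to have eigenvalue $1$; feeding this downward gives $\mathrm{Spec}(cc^*)\subseteq\{0\}\cup\{q^{2n}:n\ge0\}$. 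If $cc^*=0$ then $c=0$, $a$ is unitary, $\mcO_q(SU(2))/(c)\cong\C[a,a^{-1}]$ is commutative, and the irreducible $*$-representations are the characters $a\mapsto e^{i\theta}$, giving the $\chi_{\lambda,\theta}$; this also shows every finite-dimensional irreducible $*$-representation arises this way, since in the other case one produces an infinite orthonormal system.

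If $cc^*\ne0$, I would pick a unit vector $e_0$ with $cc^*e_0=e_0$ (top eigenvalue), so $ae_0=0$, and set $e_n=\lVert(a^*)^ne_0\rVert^{-1}(a^*)^ne_0$. Using $a^*a=1-cc^*$ one checks the $e_n$ are orthonormal with $cc^*e_n=q^{2n}e_n$, that $a$ acts as a weighted shift $e_n\mapsto(\text{scalar of modulus }(1-q^{2n})^{1/2})\,e_{n-1}$, and --- since $c$ commutes with $cc^*$ and the eigenspaces are one-dimensional --- that $ce_n=\mu_ne_n$ with $\lvert\mu_n\rvert=q^n$; the relation $ac=qca$ then forces $\arg\mu_n$ to be constant in $n$, equal to some $\theta\in\T$, which is a genuine invariant since $e_0$ is unique up to a phase and rephasing $e_0$ rephases every $e_n$ identically. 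The closed span of the $e_n$ is invariant, hence everything by irreducibility, and after the obvious rescaling one recovers exactly $\mcS_{\lambda,\theta}$; irreducibility of the latter is clear (one-dimensional $cc^*$-eigenspaces, $e_0$ cyclic), and the representations are manifestly pairwise inequivalent (different spectrum of $cc^*$, and $\theta$ recovered from $c$ on the top eigenvector).

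For the convolution relations, from $(\id\otimes\alpha)M=M_{12}U_{13}$ one gets $(\pi*\pi')(M_{ij})=\sum_k\pi(M_{ik})\otimes\pi'(U_{kj})$, and $\chi_\lambda*\chi_\theta=\chi_{\lambda,\theta}$, $\mcS_\lambda*\chi_\theta\cong\mcS_{\lambda,\theta}$, $\chi_\lambda*\mcS=\mcS_\lambda$ follow by a couple of lines of computation (in the middle case up to a diagonal unitary absorbing a phase). For $\mcS_\lambda*\mcS$: it is a $\lambda$-representation on $l^2(\N)\otimes l^2(\N)$, hence by the classification it decomposes over the $\chi_{\lambda,\theta},\mcS_{\lambda,\theta}$ with parameter $\lambda$; I would check that $(\mcS_\lambda*\mcS)(cc^*)$ has no eigenvectors, so that no characters occur, and diagonalise the self-adjoint operator encoding the argument of $c$ to obtain $\mcS_\lambda*\mcS\cong\int_\T^{\oplus}\mcS_{\lambda,\theta}\,\rd\theta$; alternatively this is exactly the tensor-product decomposition of \cite{LS91,Koo91}. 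I expect the main obstacle to be the spectral rigidity together with the exhaustion step --- genuinely ruling out exotic $*$-representations and checking that the extreme-weight construction fills up $\Hsp$ --- and, for the continuous part of $\mcS_\lambda*\mcS$, pinning down the correct operator to diagonalise; everything else is routine, and in any case the statement is classical for $SU_q(2)$.
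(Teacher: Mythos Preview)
The paper does not actually prove this theorem: it only remarks that it ``can be easily derived by hand, or deduced from the results in \cite{LS91,Koo91}.'' Your proposal supplies precisely the standard by-hand argument the paper is gesturing at --- transport along the rescaling isomorphism $M\mapsto\lambda^{1/2}U$ to reduce to $\mcO_q(SU(2))$, then the classical spectral analysis of the normal element $c$ via the $q$-commutation $a(cc^*)=q^2(cc^*)a$ and the identities $a^*a=1-cc^*$, $aa^*=1-q^2cc^*$ --- together with the routine verification of the fusion rules from $(\id\otimes\alpha)M=M_{12}U_{13}$. This is correct and is exactly the approach the references \cite{LS91,Koo91} take, so there is nothing to contrast.

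One small point worth tightening if you write this out in full: your extreme-weight step assumes that the top of the spectrum of $cc^*$ is attained as an eigenvalue. In an arbitrary bounded $*$-representation this needs a one-line justification (e.g.\ the intertwining $a\,f(cc^*)=f(q^2cc^*)\,a$ for Borel $f$, applied to spectral projections, forces the nonzero spectrum of $cc^*$ to be a $q^2$-orbit and hence discrete with maximum $1$). You flag this yourself as the ``spectral rigidity'' obstacle, and it is indeed the only place where any care is required; the rest is bookkeeping.
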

We also note that the Haar integral of $SU_q(2)$ can be realized with respect to the $*$-representation
\[
\int_{\T} \mcS*\chi_{\theta}\rd \theta  = l^2(\N)\otimes l^2(\Z),\quad  a e_{n,m} =(1-q^{2n})^{1/2}e_{n-1,m},\quad b e_{n,m} =q^n e_{n,m-1},
\]
since then 
\[
\int_{SU_q(2)} x = (1-q^2)\sum_{n} \langle be_{n0},xbe_{n0}\rangle, 
\]
see \cite[Appendix A.1]{Wor87}. It follows easily from this that 
\[
L^{\infty}(SU_q(2)) \cong B(l^2(\N))\overline{\otimes} L^{\infty}(\T).
\]

Let us now consider a second example. Let $\mcO_q^{\br}(M_2(\C))$ be the algebra of \emph{braided} 2-by-2 matrices \cite[Definition 4.3.1]{Maj95}, i.e. the universal algebra generated by a matrix $Z = \begin{pmatrix} z & w \\ v & u\end{pmatrix}$ with relations
\[
R_{21}Z_{13}R_{12}Z_{23} = Z_{23}R_{21}Z_{13}R_{12}. 
\]
It has the $*$-structure $Z^* = Z$. As a full spectral $*$-algebra, we interpret it as $\mcO_q(H_2)= \mcO(H_{2,q})$ with $H_2$ the space of hermitian $2$-by-$2$ matrices. It carries a right coaction by $\mcO_q(SU(2))$ given as
\begin{equation}\label{EqEquivZ}
Z \mapsto U_{13}^*Z_{12}U_{13}. 
\end{equation}
Write 
\[
T = q^{-1}u + qz,\qquad D = uz -q^{-2}vw,
\]
which are called respectively the \emph{quantum trace} and \emph{quantum determinant}. We easily find by direct computation that the universal relations of $\mcO_q(H_2)$ are centrality and selfadjointness of $T,D$ together with the relations 
\[
z^* = z,\quad w^* = v,\quad zw = q^2wz,\qquad vz = q^2zv,
\]
and
\[
q^{-2}vw = -D+qTz - q^{2}z^2,\qquad q^{-2}wv = -D+q^{-1}Tz -q^{-2}z^2.
\]

The main results in the following proposition can be obtained from \cite{Pod87,MN90,MNW91}. The computation of the $*$-products of representations can be done by spectral analysis along the lines of e.g.~ \cite[Appendix B]{DeC11}.
\begin{Prop}\label{PropRank1Tensor}
The coaction on $\mcO_q(H_2)$ is centrally coinvariant with $\mcO_q(H_2)^{\alpha}= \C[D,T]$. Identifying $\Spec_*(\C[D,T])\cong \R^2$ via $\chi \mapsto (\chi(D),\chi(T)) = (d,t)$, we have that 
\[
H_{2,q}\dbslash SU_q(2) = \R^2,\qquad H_{2,q}/SU_q(2) = S_+ \cup S_0 \cup S_-
\] 
where, writing $s_n = q^{-n-1}+q^{n+1}$ for $n\in \N$,
\[
S_+ = \{(c^2, cs_n)\mid c\in \R^{\times}, n\in \N\},\qquad  S_0 = \{0\}\times \R,\qquad S_- = \R_{<0}\times \R.
\] 

Moreover, we have the following fusion rules, denoting by $\mcH$ a Hilbert space of countably infinite dimension encoding countable multiplicity:
\begin{enumerate}
\item[(1)] 
For $(c^2,cs_n)\in S_+$ we have $\mcO_q^*(H_2(c^2,cs_n))\cong M_{n+1}(\C)$ with unique irreducible $*$-representation
\[
\mcS_{c^2,n}\cong \C^{n+1},\qquad ze_k= cq^{-n+2k}e_k,\quad ve_0 = 0.
\]
Convolution with $\mcO_q(SU(2))$-representations is given by
\[
\mcS_{c^2,n}* \chi_{\theta}\cong \mcS_{c^2,n},\qquad \mcS_{c^2,n}*\mcS \cong \Hsp \otimes \mcS_{c^2,n}.
\]
The invariant state on $\mcO_q^*(H_2(c^2,cs_n))\cong M_{n+1}(\C)$ is given by 
\[
\int_{H_{2,q}(c^2,cs_n)} X = \frac{\Tr(zX)}{\Tr(z)}.
\]
\item[(2)]
For $(0,t) \in S_0$ we have the $*$-character $\chi(z) = \chi(v) = \chi(w) =0$. When $t=0$ this is the only irreducible $*$-representation, while if $t\neq 0$ the only other irreducible $*$-representation is given by 
\[
\mcS_{0,t} \cong l^2(\N),\quad ze_n = q^{2n+1}te_n,\quad ve_0 = 0.
\] 
Convolution with $\mcO_q(SU(2))$-representations is given by
\[
\chi* \chi_{\theta}\cong \chi\qquad \chi*\mcS \cong \mcS_{0,t},\qquad \mcS_{0,t}*\chi_{\theta}\cong \mcS_{0,t},\qquad \mcS_{0,t}*\mcS  \cong \Hsp \otimes \mcS_{0,t}.
\]
For $t\neq 0$ the associated von Neumann algebra can be identified as 
\[
L^{\infty}_q(H_2(0,t)) = B(\mcS_{0,t}),
\]
and under this isomorphism the invariant state on $\mcO_q^*(H_2(0,t))$ is given as 
\[
\int_{H_{2,q}(0,t)} X = \frac{\Tr(zX)}{\Tr(z)}.
\]
\item[(3)] 
For $(-c^2,t) \in S_-$ with $c>0$ and $t = (a-a^{-1})c$ for $a>0$ we have as complete list of non-equivalent irreducibles the $*$-characters $\chi_{\theta}$ with $\chi_{\theta}(z)= 0$ and $\chi_{\theta}(v) = qce^{i\theta}$ and the $*$-representations
\[
\mcS^{\pm}_{-c^2,a} \cong l^2(\N),\qquad ze_n = \pm ca^{\pm 1} q^{2n+1}e_n,\qquad ve_0 =0.
\]
Writing $\mcS_{-c^2,a} = \mcS_{-c^2,a}^+ \oplus \mcS_{-c^2,a}^-$, one has convolution with $\mcO_q(SU(2))$-representations given by
\[
\chi_{\theta}*\chi_{\theta'} = \chi_{\theta+2\theta'},\quad \chi_{\theta}* \mcS \cong \mcS_{-c^2,a},\qquad \mcS^{\pm}_{-c^2,a}*\chi_{\theta}\cong \mcS^{\pm}_{-c^2,a},\qquad \mcS^{\pm}_{-c^2,a}*\mcS \cong \Hsp \otimes \mcS_{-c^2,a}.
\]
The associated von Neumann algebra can be identified as 
\[
L^{\infty}_q(H_2(-c^2,t)) = B(\mcS_{-c^2,a}^+)\oplus B(\mcS_{-c^2,a}^-),
\] 
and under this isomorphism the invariant state on $\mcO_q^*(H_2(-c^2,t))$ is given (in obvious notation) as 
\[
\int_{H_{2,q}(-c^2,t)} X = \frac{\Tr_+(zX) - \Tr_-(zX)}{\Tr_+(z)-\Tr_-(z)}.
\]
\end{enumerate}
\end{Prop}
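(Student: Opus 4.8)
The plan is to push everything down to the representation theory of the fibre algebras $\mcO_q^*(H_2(d,t))$ and then feed the result into the general machinery of the preceding subsections. First I would verify, by a direct computation with the braided $R$-matrix relation, the stated presentation of $\mcO_q(H_2)$, the centrality and selfadjointness of $T$ and $D$, and the invariance of $T,D$ under \eqref{EqEquivZ}. To see that $\mcO_q(H_2)^{\alpha}$ is \emph{exactly} $\C[D,T]$ I would compare isotypical components with the classical picture: the entries $z,w,v,u$ span a sub-comodule isomorphic to $\bar V\otimes V$ for $V$ the fundamental corepresentation of $\mcO_q(SU(2))$, which splits as the trivial plus the adjoint corepresentation; $\mcO_q(H_2)$ is a flat deformation of $\mcO(H_2)$ as an $\mcO(SU(2))$-comodule algebra; and $\mcO(H_2)^{SU(2)}=\C[\det,\tr]$ with $D,T$ deforming $\det,\tr$, so matching Poincar\'e series forces equality. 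Granting this, $\Spec_*(\C[D,T])\cong\R^2$, every irreducible $*$-representation of $\mcO_q(H_2)$ factors through a fibre $\mcO_q^*(H_2(d,t))$ with $(d,t)\in\R^2$, the induced coaction on the fibre is ergodic so the representation is automatically bounded (by the boundedness argument for ergodic coactions used around Lemma~\ref{LemErgoIsFull}), and $(d,t)\in H_{2,q}/SU_q(2)$ exactly when $\mcO_q^*(H_2(d,t))\ne 0$.

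For the fibre algebras: imposing $D=d$, $T=t$ leaves generators $z=z^*$, $v$, $w=v^*$ with $vz=q^{2}zv$, $v^*v=P(z)$, $vv^*=P(q^{2}z)$ where $P(\lambda)=-\lambda^{2}+qt\lambda-q^{2}d$ --- the familiar shape of the algebras of quantum $2$-spheres, so the analysis follows \cite{Pod87,MN90,MNW91}. In an irreducible $*$-representation $z$ is bounded selfadjoint, $v$ carries the $\lambda$-eigenspace of $z$ into the $q^{-2}\lambda$-eigenspace, and positivity of $v^*v=P(z)$ and $vv^*=P(q^{2}z)$ confines $\sigma(z)$ to $\{P\ge 0\}$ and to its $q^{-2}$-dilate. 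As $q^{-2}>1$, boundedness of $z$ forces the upward $v$-chain to terminate at a vector killed by $v$, pinning the top eigenvalue of $z$ to the larger root of $P$; the usual weighted-shift arguments then show $z$ has simple point spectrum on a single $q^{2}$-geometric orbit with $v$ the corresponding weighted shift. The case split is dictated by the sign of the product of the roots of $P$, namely $q^{2}d$, and by whether $t=0$: for $d=c^{2}>0$ the two roots share a nonzero sign and, since $P<0$ outside the interval between them, the orbit must terminate at both ends, which fits the roots to the extremities and forces $t=cs_{n}$, whence the $(n+1)$-dimensional $\mcS_{c^{2},n}$ and $\mcO_q^*(H_2(c^{2},cs_n))\cong M_{n+1}(\C)$; for $d=0$ one has $P(\lambda)=-\lambda(\lambda-qt)$, the root $0$ giving the $*$-character $\chi$ and, when $t\ne 0$, the root $qt$ generating the infinite-dimensional $\mcS_{0,t}$ on which $z$ and $v$ act compactly, so $\chi$ is weakly contained in $\mcS_{0,t}$ and $\mcO_q^*(H_2(0,t))$ has C$^*$-closure $\mcK(\mcS_{0,t})+\C 1$; for $d=-c^{2}<0$ one has $P(0)=q^{2}c^{2}>0$ and two roots of opposite sign which, writing $t=(a-a^{-1})c$ with $a>0$, are $qca$ and $-qca^{-1}$, giving $\mcS^{+}_{-c^{2},a}$ from the positive orbit, $\mcS^{-}_{-c^{2},a}$ from the negative orbit, and the circle of $*$-characters $\chi_\theta$ with $|\chi_\theta(v)|^{2}=q^{2}c^{2}$ from the fixed point $z=0$. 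Reading off which $(d,t)$ carry a nonzero fibre gives $H_{2,q}/SU_q(2)=S_{+}\cup S_{0}\cup S_{-}$.

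The fusion rules come from $\pi*\pi'=(\pi\otimes\pi')\alpha$ together with $\alpha(Z)=U_{13}^*Z_{12}U_{13}$ and Theorem~\ref{TheoListAllSUq2}. Convolution with a $*$-character $\chi_\theta$ of $\mcO_q(SU(2))$ conjugates the operator matrix $\pi(Z)$ by $\diag(e^{-i\theta},e^{i\theta})$, fixing $z,u$ and rotating $v,w$ by a phase; this preserves the fibre and the equivalence class in the finite-dimensional case, and on the $*$-characters $\chi_\theta$ produces the doubling $\chi_\theta*\chi_{\theta'}=\chi_{\theta+2\theta'}$ from the weights $0,\pm2$ of the adjoint corepresentation. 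Convolution with $\mcS$ produces an operator matrix $Z'$ whose selfadjoint generator, after diagonalization, exhibits a countable direct sum of copies of the original fibre data, whence $\pi*\mcS\cong\Hsp\otimes\pi$; this is exactly the kind of spectral computation carried out in \cite[Appendix B]{DeC11}. In particular no new fibres appear (as they cannot, $\C[D,T]$ being central), and the $*$-character strata in cases (2) and (3) are weakly contained in the $\mcS$-type representations.

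Finally, by the general theory the invariant functional on $\mcO_q^*(H_2(d,t))$ equals $x_{(d,t)}\circ E_\alpha$, positive exactly because $(d,t)\in H_{2,q}/SU_q(2)$; its GNS completion gives $L^2_q(H_2(d,t))$ and its weak closure the asserted von Neumann algebra --- $M_{n+1}(\C)$ in case (1), and in cases (2),(3) all of $B$ on the infinite-dimensional summand(s) (since $\mcK$ acts irreducibly there), the character strata contributing nothing to the GNS space by the fusion rules. As the fibre action is ergodic its invariant state is unique, so it remains only to identify it; I would do this through \eqref{EqAlphaSigma}: because $S^2$ acts on the fundamental corepresentation of $\mcO_q(SU(2))$ by conjugation with a diagonal matrix, the modular automorphism of $\mcO_q(H_2)$ is conjugation of the matrix $Z$ by a diagonal matrix, hence in each representation it is implemented by a complex power of the positive operator built from $z$, which is trace class, so the invariant state is the normalized $\Tr(z\,\cdot\,)$, the signs in case (3) recording that $z$ is negative on $\mcS^{-}_{-c^{2},a}$. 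I expect the main obstacles to be the rigidity in the case $d>0$ --- excluding non-terminating (hence unbounded or spectrum-violating) representations so as to force $t=cs_n$ --- and, more substantially, the $\mcS$-convolutions together with the von Neumann algebra identifications, where one must confirm that the circle-of-characters strata genuinely vanish upon passing to $L^2$ and that the density matrix of the invariant state is exactly $z$, i.e.\ track the non-$*$-preserving modular automorphism through the coaction.
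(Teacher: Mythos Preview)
Your proposal is correct and follows essentially the same approach the paper indicates: the paper does not give a self-contained proof but simply refers to \cite{Pod87,MN90,MNW91} for the classification of the fibre representations (exactly the Podle\'s-sphere weighted-shift analysis you describe) and to \cite[Appendix B]{DeC11} for the spectral computation of the $*$-products with $\mcS$. Your write-up is in fact more detailed than what the paper provides; the only thing to watch is the bookkeeping of which root of $P$ is the ``top'' eigenvalue (for $c<0$ in case~(1) it is the more negative root), but this does not affect the argument.
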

The main goal of this paper will be to find higher rank analogues of the above formulas for the `quantum invariant measures' on the $V_{\R,q}(\lambda)$ and $H_{2,q}(d,t)$.

\section{Twisting data}\label{SecDefDat}

In this section we fix an index set $I$ of cardinality $|I|$. We further fix a Euclidian space $V \cong \R^{|I|}$ with inner product $(-,-)$ and a reduced finite root system $\Delta\subseteq V$ which spans $V$. We choose a system of positive roots $\Delta^+$ and associated simple roots $\Phi= \{\alpha_r \mid r\in I\}\subseteq \Delta^+$ with indexation by $I$. We denote by $\Gamma$ the associated Dynkin diagram on $I$, with arrows pointing towards the shorter roots. We write $\alpha^{\vee} = \frac{2}{(\alpha,\alpha)}\alpha$ for the coroots, $\varpi_r\in V$ for the fundamental weights such that $(\varpi_r,\alpha_s^{\vee}) = \delta_{rs}$, and $\varpi_r^{\vee}$ for the fundamental coweights such that $(\varpi_r^{\vee},\alpha_s) = \delta_{rs}$. We write the Cartan matrix
\[
A = (a_{rs})_{r,s} = ((\alpha_r^{\vee},\alpha_s))_{r,s}.
\]
We write $Q^+ =  \N\{\alpha_r\mid r\in I\}$ and $Q = Q^+ -Q^+$ for resp.~ the cone of positive roots and the root lattice, and similarly $P^+ = \N\{\varpi_r\mid r \in I\}$ and $P = P^+ - P^+$ for the cone of dominant integral weights and the weight lattice.

For $J \subseteq I$ a subset, we write $\Gamma_J$ for the subdiagram of $\Gamma$ with vertex set $J$ and associated Cartan matrix $A_J = (a_{rs})_{r,s\in J}$. We call $\Gamma_J$ a \emph{full Dynkin subdiagram}. We write $\Delta_J = \Delta \cap \Z\{\alpha_r \mid r\in J\}$ for the associated root system.

\subsection{Twisting data}

\begin{Not}
For $\tau$ an involution of $I$, we write 
\begin{itemize}
\item $I^{\tau}$ for the $\tau$-fixed vertices,
\item $I^*$ for a fixed fundamental domain of $\tau$ inside $I \setminus I^{\tau}$.
\end{itemize}   
\end{Not}

Assume now moreover that $\tau$ is a Dynkin diagram involution, allowing $\tau= \id$. Then we can extend $\tau$ uniquely to an isometric involution on $V$ such that
\[
\tau(\alpha_r) = \alpha_{\tau(r)}. 
\]
This extension preserves the root system and its positive roots,
\[
\tau(\Delta) = \Delta,\quad \tau(\Delta^+) = \Delta^+.
\]

\begin{Def}
We call $\Gamma$ connected if its root system is irreducible.

We call $\Gamma$ $\tau$\emph{-connected} if $I = J \cup \tau(J)$ with $\Gamma_J$ connected.

We call a subdiagram $\Gamma'  \subseteq \Gamma$ a \emph{$\tau$-connected component} if $\Gamma'$ is a maximal $\tau$-connected full subdiagram.
\end{Def}

Any $\Gamma$ is obviously the disjoint union of its $\tau$-connected components. 
\begin{Rem}
Note that $\Gamma$ is $\tau$-connected if and only if it is connected or it is the disjoint union of two isomorphic Dynkin diagrams which are switched under $\tau$. We call the latter $(\Gamma,\tau)$ of \emph{direct product} type. 
\end{Rem}

The following notion will play an important rôle in the paper. 
\begin{Def}
We call a couple $\nu = (\tau,\epsilon) = (\tau_{\nu},\epsilon_{\nu})$ a \emph{twisting datum} if $\tau$ is an involution of the Dynkin diagram $\Gamma$ and $\epsilon$ is a $\C$-valued function on $I$ with 
\begin{equation}\label{EqDefPropEps}
\epsilon_{\tau(r)} = \overline{\epsilon_r},\qquad \textrm{for all }r\in I.
\end{equation}  
We write 
\[
\mbH = \{\nu\}\subseteq \Inv(\Gamma)\times \C^{|I|}
\]
for the space of all twisting data.

We write
\[
J_{\nu} = J_{\epsilon} =  \supp(\epsilon) = \{r\in I \mid \epsilon_r \neq 0\}
\]
for the \emph{locus of non-degeneracy}. Note that $J_{\nu}$ is $\tau$-invariant.
\end{Def}

We will need various extra conditions on twisting data, depending on the context. We gather these conditions in the following definition. 

\begin{Def}
We call a twisting datum $\nu$ 
\begin{itemize}
\item \emph{regular} if $J_{\nu}=I$,
\item \emph{positive} if $\nu$ is regular and $\epsilon_r>0$ for all $r\in I^{\tau}$, 
\item \emph{strongly positive} if $\epsilon_r>0$ for all $r\in I$,
\item of \emph{symmetric pair type} if $|\epsilon_r| = 1$ for all $r\in I$.
\item a \emph{gauge} if $\epsilon_r = 1$ for $r\in I^{\tau}$ and $|\epsilon_r|=1$ for $r\in I^*$,
\item \emph{ungauged} if $\epsilon_r\geq 0$ for $r\in I^*$,  
\item \emph{reduced} if $\epsilon_r \in \{-1,0,1\}$ for $r\in I^{\tau}$ and $\epsilon_r \in \{0,1\}$ for $r\in I^*$,
\item \emph{strongly reduced} if $\nu$ is reduced and each $\tau$-connected component of $J_{\nu}$ has at most one $r$ with $\tau(r) = r$ and $\epsilon_r= -1$.
\end{itemize}
We write the respective spaces as 
\begin{equation}\label{EqDiffdefdat}
\mbH^{\times},\quad \mbH^{>},\quad \mbH^{\gg},\quad \mbH^{\sym},\quad \mbH^{\gauge},\quad \mbH^{\ungauge},\quad \mbH^{\red},\quad \mbH^{\sred}.
\end{equation}

The \emph{trivial twisting datum} $+$ is given by $\tau = \id$ and $\epsilon_r = 1$ for all $r$. 
\end{Def}

\begin{Rem} The notion of twisting datum is very similar to that of `extended $\tau$-signature' \cite{EL01}, but we allow more flexibility. 
\end{Rem}

We identify the space of all twisting data with underlying involution $\tau$ as 
\[
\mbH_{\tau} = \{\epsilon\} \subseteq \C^{|I|},
\]
and we index in this case all notations in \eqref{EqDiffdefdat} with $\tau$. Similarly, if we fix also a subset $J \subseteq I$ as the locus of non-degeneracy, we index all notations in \eqref{EqDiffdefdat} with $J$. 

The space $\mbH_{\tau}$ forms a monoid under pointwise multiplication, with group of invertibles $\mbH^{\times}_{\tau}$.

\begin{Def}
We call two twisting data $\nu,\nu'$ \emph{multiplicatively equivalent}, and write $\nu \sim_m \nu'$, if $\tau = \tau'$ and if $\epsilon$ and $\epsilon'$ lie in the same orbit under multiplication with $\mbH_{\tau}^{>}$. 
\end{Def}
We stress that under this equivalence the values at $I^{\tau}$ can only be multiplied with strictly positive numbers, whereas the values at $I^*\cup \tau(I^*)$ can be multiplied with non-zero conjugate complex numbers. Clearly each twisting datum is multiplicatively equivalent to a unique reduced twisting datum.

If $\nu = (\tau,\epsilon) \in \mbH_{J}$, we can extend $\epsilon$ to monoid homomorphisms
\begin{equation}\label{EqEpsQ}
\epsilon_Q: (Q^{J+},+) \rightarrow (\C,\cdot),\quad \epsilon_Q(\alpha_r) = \epsilon_r,
\end{equation}
\begin{equation}\label{EqEpsP}
\epsilon_P: (P^{J+},+) \rightarrow (\C,\cdot),\quad \epsilon_P(\varpi_r) = \epsilon_r,
\end{equation}
where 
\[
Q^{J+} = \Z\{\alpha_r\mid r\in J\}+ \N\{\alpha_r\mid r\notin J\},\qquad P^{J+} = \Z\{\varpi_r\mid r\in J\}+ \N\{\varpi_r\mid r\notin J\}.
\]
We then have in general that $\epsilon_Q(\tau(\alpha))  = \overline{\epsilon_Q(\alpha)}$ and $\epsilon_P(\tau(\varpi)) = \overline{\epsilon_P(\varpi)}$ for $\alpha \in Q^{J+}$ and $\varpi \in P^{J+}$.  

\subsection{Folding}

Let $\tau$ be an involution of the Dynkin diagram $\Gamma$. We write 
\[
V \rightarrow V^{\tau},\quad v \mapsto v_{+} = \frac{1}{2}(v + \tau(v)),\qquad V \rightarrow V^{-\tau},\quad v \mapsto v_{-} = \frac{1}{2}(v - \tau(v))
\]
for the projection maps onto resp.~ the $\tau$-fixed and the $-\tau$-fixed points. 

\begin{Prop}\cite[Theorem 32]{Ste67}
The subset 
\[
\hat{\Delta} = \{\alpha_+\mid \alpha \in \Delta\}\subseteq V^{\tau}
\]
is a (possibly non-reduced) root system with associated positive roots $\{\alpha_+\mid \alpha \in \Delta^+\}$ and associated system of simple roots $\hat{\Phi} =\{\alpha_{r,+} \mid r\in I\}$.
\end{Prop}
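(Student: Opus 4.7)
The approach is to verify the axioms of a (possibly non-reduced) root system for $\hat{\Delta}$ directly, and identify the claimed positive and simple root subsets.

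First I would check the easy structural properties. Finiteness of $\hat{\Delta}$ is immediate. Since $\Delta$ spans $V$ and $(-)_+\colon V \twoheadrightarrow V^{\tau}$ is a linear surjection, $\hat{\Delta}$ spans $V^{\tau}$. For nonvanishing: if $\alpha \in \Delta^+$ then $\tau(\alpha) \in \Delta^+$ since $\tau$ preserves $\Delta^+$, so $\alpha_+ = \frac{1}{2}(\alpha + \tau(\alpha))$ is a nonzero positive rational combination of simple roots, and symmetrically for $\alpha \in \Delta^-$. The half-sum $\rho = \frac{1}{2}\sum_{\alpha \in \Delta^+}\alpha$ lies in $V^{\tau}$ and satisfies $(\rho,\alpha)>0$ for all $\alpha \in \Delta^+$; hence $\{\alpha_+ \mid \alpha \in \Delta^+\}$ is exactly the subset of $\hat{\Delta}$ on which $\rho$ is positive, which identifies the claimed positive system and in particular gives $\hat{\Delta} = -\hat{\Delta}$.

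Next I would identify the simple roots. Every $\alpha \in \Delta^+$ expands as $\sum_{r\in I} n_r \alpha_r$ with $n_r \in \N$, so using $\alpha_{r,+} = \alpha_{\tau(r),+}$ one has $\alpha_+ = \sum_{[r] \in I/\langle \tau\rangle} (n_r + n_{\tau(r)})\,\alpha_{r,+}$, a non-negative (possibly half-)integer combination of the $\alpha_{r,+}$. The set $\{\alpha_{r,+} \mid r \in I^{\tau}\cup I^*\}$ is a basis of $V^{\tau}$, so the $\alpha_{r,+}$ are indecomposable in the positive system and thus are the simple roots of $\hat{\Delta}$.

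The main step is the reflection and integrality axiom, which I would verify by exhibiting, for each simple $\alpha_{r,+}$, an element of $W$ whose restriction to $V^{\tau}$ acts as $s_{\alpha_{r,+}}$. The case analysis is on $\tau$-orbits of simple roots. If $\tau(r)=r$, then $s_r$ already preserves $V^{\tau}$ and acts there as $s_{\alpha_{r,+}} = s_{\alpha_r}$. If $\tau(r) \neq r$ and $(\alpha_r,\alpha_{\tau(r)})=0$, then $s_r s_{\tau(r)}$ preserves $V^{\tau}$ and restricts there to $s_{\alpha_{r,+}}$, a one-line computation using $\tau$-isometry. Finally, if $\tau(r) \neq r$ and $(\alpha_r,\alpha_{\tau(r)}) \neq 0$, then necessarily $a_{r,\tau(r)}=-1$ (since higher Cartan entries are incompatible with a diagram involution swapping the two nodes), and $\{\alpha_r,\alpha_{\tau(r)}\}$ spans an $A_2$-subsystem whose longest Weyl element $s_r s_{\tau(r)} s_r = s_{\tau(r)} s_r s_{\tau(r)}$ preserves $V^{\tau}$ and acts as $s_{\alpha_{r,+}}$. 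These reflections preserve $\Delta$, hence preserve $\hat{\Delta}$, and conjugating by the subgroup $W^{\tau}\subseteq W$ extends reflection preservation to all $\alpha_+ \in \hat{\Delta}$. Integrality $(\beta_+, \alpha_+^{\vee}) \in \Z$ is then a short finite check from the explicit formulas for $(\alpha_+,\alpha_+)$ in the three cases.

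The principal obstacle is the $A_2$-folded case: there $\alpha_{r,+}$ is genuinely shorter than $\alpha_r$, namely $(\alpha_{r,+},\alpha_{r,+}) = \frac{1}{2}(\alpha_r,\alpha_r)$, while the $\tau$-invariant root $\alpha_r + \alpha_{\tau(r)} \in \Delta$ folds to $2\alpha_{r,+} \in \hat{\Delta}$. This is precisely the source of non-reducedness of $\hat{\Delta}$ (producing $BC$-type components when folding $A_{2n}$), and one must verify in this case that both the reflection formula and the integrality condition still hold; the computation rests on the displayed length relation and the fact that $s_r s_{\tau(r)} s_r$ is the correct lift of $s_{\alpha_{r,+}}$.
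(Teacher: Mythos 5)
The paper itself gives no proof — it simply cites Steinberg's Theorem 32 — and your direct verification of the root-system axioms is indeed the argument Steinberg gives, so the overall strategy is the right one. There is, however, one genuine logical gap and two small slips worth flagging.

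The gap is the final sentence of your third paragraph: ``conjugating by the subgroup $W^{\tau}\subseteq W$ extends reflection preservation to all $\alpha_+ \in \hat{\Delta}$.'' To use conjugation you must first know that every $\alpha_+\in\hat\Delta$ lies in the orbit of some $\alpha_{r,+}$ under the group generated by the $\hat s_r$, but proving that transitivity is normally done \emph{after} one knows $\hat\Delta$ is a root system, so this step is circular as written. The cleanest repair is to run your three-case analysis directly for an arbitrary $\alpha\in\Delta$, not only for simple roots. The only extra input required is that for $\alpha\neq\tau\alpha$ one always has $(\alpha,\tau\alpha)\le 0$ and the rank-$\le 2$ subsystem $\Delta\cap(\R\alpha+\R\tau\alpha)$ is of type $A_1\times A_1$ or $A_2$: since $\tau$ is an isometry, $\alpha$ and $\tau\alpha$ have the same length, and if $(\alpha,\tau\alpha)>0$ then $\alpha-\tau\alpha$ would be a root satisfying $\tau(\alpha-\tau\alpha)=-(\alpha-\tau\alpha)$, contradicting $\tau(\Delta^+)=\Delta^+$. (When $\tau$ is nontrivial on a connected component that component is simply laced, and the $\tau$-swaps-components case gives only the $A_1\times A_1$ situation, so no other rank-two types occur.) With that, each $\alpha_+\in\hat\Delta$ has a lift $w_\alpha\in W^\tau$ of $s_{\alpha_+}$ built from $s_\alpha,s_{\tau\alpha}$, and $W^\tau$ manifestly preserves $\hat\Delta$, which closes the reflection axiom and, with it, integrality. (Alternatively one can retain the conjugation idea but justify it by a height-reduction induction on $\hat\Delta^+$ using only the $\hat s_r$; you would need to spell that out.)

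Two smaller points. First, the coefficient claim should read: for $\alpha\in\Delta^+$ the expansion of $\alpha_+$ in the basis $\{\alpha_{\hat r,+}\}$ has \emph{non-negative integer} coefficients (not ``possibly half-integer''), since grouping the $\tau$-orbits gives the coefficient $n_r+n_{\tau(r)}$ on $\alpha_{\hat r,+}=\frac12(\alpha_r+\alpha_{\tau(r)})$. Second, in the $A_2$-folded case the length relation is $(\alpha_{r,+},\alpha_{r,+})=\tfrac14(\alpha_r,\alpha_r)$, not $\tfrac12(\alpha_r,\alpha_r)$; this is what makes $2\alpha_{r,+}=\alpha_r+\alpha_{\tau(r)}$ have the same length as $\alpha_r$ and produces the non-reduced $BC$-type folding of $A_{2n}$, as you correctly identify.
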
 
\begin{Rem}
When $\Gamma$ is $\tau$-connected, the case of non-reduced $\hat{\Delta}$ only appears for the Dynkin diagram $\begin{tikzpicture}[scale=.4,baseline]
\node (v1) at (0,0) {};
\node (v2) at (1.8,0) {};
\node (v3) at (3.2,0) {};
\node (v4) at (5,0) {};
\draw(0cm,0) circle (.2cm);
\draw (0.2cm,0) -- +(0.2cm,0);
\draw[dotted] (0.4cm,0) --+ (1cm,0);
\draw (1.6cm,0) --+ (0.2cm,0);
\draw (2cm,0) circle (.2cm);
\draw (2.2cm,0) -- +(0.6cm,0);
\draw(3cm,0) circle (.2cm);
\draw (3.2cm,0) -- +(0.2cm,0);
\draw[dotted] (3.4cm,0) --+ (1cm,0);
\draw (4.6cm,0) --+ (0.2cm,0);
\draw (5cm,0) circle (.2cm); 
\draw[<->]
(v1) edge[bend left] (v4);
\draw[<->]
(v2) edge[bend left=70] (v3);
\end{tikzpicture}$
of type $A_{2n}$ with non-trivial involution. 
\end{Rem}

In the following, we denote 
\[
\hat{I} = I/\tau,\qquad \hat{r} = \{r,\tau(r)\}.
\]
The simple roots in $\hat{\Phi}$ can be labeled by $\hat{I}$ upon putting
\[
\alpha_{\hat{r}} = \alpha_{r,+} = \alpha_{\tau(r),+}. 
\]
We call a simple root $\alpha_{\hat{r}}$ a \emph{folded simple root} if $|\hat{r}|= 2$.

We define the associated \emph{folded Dynkin diagram} $\hat{\Gamma} = \Gamma/\tau$ via the folded Cartan matrix 
\[
\hat{A}_{\hat{r}\hat{s}} = 2\frac{(\alpha_{\hat{r}},\alpha_{\hat{s}})}{(\alpha_{\hat{r}},\alpha_{\hat{r}})}. 
\]  

We denote by $\varpi_{\hat{r}} \in V^{\tau}$ the fundamental weights for $\hat{\Delta}$. We have the general formula
\begin{equation}\label{EqHatVarpi}
\varpi_{\hat{r}} = \sum_{s\in \hat{r}} \lambda_s \varpi_s,\qquad \lambda_r = \frac{1}{2} + \frac{1}{4}(\alpha_r^{\vee},\alpha_{\tau(r)}).
\end{equation}
Hence $\varpi_{\hat{r}} = \varpi_{r,+}$ \emph{except} for the middle two vertices in type $A_{2n}$ with non-trivial $\tau$, in which case $\varpi_{\hat{r}} = \frac{1}{2}\varpi_{r,+}$.

We list below the connected Dynkin diagrams with non-trivial involution, together with their folded version. 

\begin{table}[htpb!]
\begin{center}
\bgroup
\def\arraystretch{2.4}
{\setlength{\tabcolsep}{1.5em}
\begin{tabular}{|c|c|c|c|}
\hline
Type & Dynkin diagram & Folded Dynkin diagram & Folded type\\ \hline
$A_{2n}$&
\begin{tikzpicture}[scale=.4,baseline]
\node (v1) at (0,0) {};
\node (v2) at (1.8,0) {};
\node (v3) at (3.2,0) {};
\node (v4) at (5,0) {};

\draw (0cm,0) circle (.2cm) node[below]{\small $1$};
\draw (0.2cm,0) -- +(0.2cm,0);
\draw[dotted] (0.4cm,0) --+ (1cm,0);
\draw (1.6cm,0) --+ (0.2cm,0);
\draw (2cm,0) circle (.2cm) node[below]{\small $n$};
\draw (2.2cm,0) -- +(0.6cm,0);
\draw (3cm,0) circle (.2cm);
\draw (3.2cm,0) -- +(0.2cm,0);
\draw[dotted] (3.4cm,0) --+ (1cm,0);
\draw (4.6cm,0) --+ (0.2cm,0);
\draw (5cm,0) circle (.2cm); 

\draw[<->]
(v1) edge[bend left] (v4);
\draw[<->]
(v2) edge[bend left=70] (v3);
\end{tikzpicture}
& 
\begin{tikzpicture}[scale=.4,baseline]
\draw (1.8cm,0) circle (.2cm)node[below]{\small $\hat{1}$}; 
\draw (2cm,0) --+ (0.6cm,0);
\draw (2.8cm,0) circle (.2cm);
\draw (3cm,0) --+ (0.6cm,0);
\draw (3.8cm,0) circle (.2cm);
\draw (4.2cm,0) -- +(0.2cm,0);
\draw[dotted] (4.4cm,0) --+ (1cm,0);
\draw (5.4cm,0) --+ (0.2cm,0);
\draw (5.8cm,0) circle (.2cm);
\draw (6.8cm,0) circle (.2cm)  node[below]{\small $\hat{n}$};
\draw
(6,-.06) --++ (0.6,0)
(6,+.06) --++ (0.6,0);
\draw
(6.4,0) --++ (60:-.2)
(6.4,0) --++ (-60:-.2);
\end{tikzpicture}
& 
BC$_n$
\\ \hline
$A_{2n+1}$& 
\begin{tikzpicture}[scale=.4,baseline]
\node (v1) at (0,0) {};
\node (v2) at (2,0) {};
\node (v3) at (4,0) {};
\node (v4) at (6,0) {};

\draw (0cm,0) circle (.2cm)  node[below]{\small $1$};
\draw (0.2cm,0) -- +(0.2cm,0);
\draw[dotted] (0.4cm,0) --+ (1cm,0);
\draw (1.6cm,0) --+ (0.2cm,0);
\draw (2cm,0) circle (.2cm) node[below]{\small $n$};
\draw (2.2cm,0) -- +(0.6cm,0);
\draw(3cm,0) circle (.2cm) node[below]{\small $n'$};
\draw (3.2cm,0) -- +(0.6cm,0);
\draw (4cm,0) circle (.2cm);
\draw (4.2cm,0) -- +(0.2cm,0);
\draw[dotted] (4.4cm,0) --+ (1cm,0);
\draw (5.6cm,0) --+ (0.2cm,0);
\draw (6cm,0) circle (.2cm);

\draw[<->]
(v1) edge[bend left] (v4);
\draw[<->]
(v2) edge[bend left=50] (v3);
\end{tikzpicture}
& 
\begin{tikzpicture}[scale=.4,baseline]
\draw (0cm,0) circle (.2cm) node[below]{\small $\hat{1}$} ;
\draw (0.2cm,0) -- +(0.6cm,0);
\draw (1cm,0) circle (.2cm);
\draw (1.2cm,0) -- +(0.2cm,0);
\draw[dotted] (1.4cm,0) --+ (1cm,0);
\draw (2.4cm,0) --+ (0.2cm,0);
\draw (2.8cm,0) circle (.2cm) node[below]{\small $\hat{n}$}; 
\draw (3.8cm,0) circle (.2cm)  node[below]{\small $n'$};
\draw
(3,-.06) --++ (0.6,0)
(3,+.06) --++ (0.6,0);
\draw
(3.3,0) --++ (60:.2)
(3.3,0) --++ (-60:.2);
\end{tikzpicture}
& 
C$_{n+1}$
\\ \hline
$D_{n} $& 
\begin{tikzpicture}[scale=.4,baseline]
\node (v1) at (8.8,0.8) {};
\node (v2) at (8.8,-0.8) {};
\draw (0cm,0) circle (.2cm) node[above]{\small $1$} ;
\draw (0.2cm,0) -- +(0.6cm,0);
\draw (1cm,0) circle (.2cm);
\draw (1.2cm,0) -- +(0.2cm,0);
\draw[dotted] (1.4cm,0) --+ (1cm,0);
\draw (2.4cm,0) --+ (0.2cm,0);
\draw (2.8cm,0) circle (.2cm); 
\draw (3cm,0) --+ (0.6cm,0);
\draw (3.8cm,0) circle (.2cm) ;
\draw (4cm,0) --+ (0.6cm,0);
\draw (4.8cm,0) circle (.2cm);
\draw (5cm,0) -- +(0.2cm,0);
\draw[dotted] (5.2cm,0) --+ (1cm,0);
\draw (6.2cm,0) --+ (0.2cm,0);
\draw (6.6cm,0) circle (.2cm)  node[above]{\small $n$-$2$};
\draw (6.8cm,0) --+ (1.6,0.6);
\draw (6.8cm,0) --+ (1.6,-0.6);
\draw (8.6cm,0.6) circle (.2cm) node[above]{\small $n$-$1$}  ;
\draw (8.6cm,-0.6) circle (.2cm) node[below]{\small $n$} ;
\draw[<->]
(v1) edge[bend left] (v2);
\end{tikzpicture}
&
\begin{tikzpicture}[scale=.4,baseline]
\draw (1.8cm,0) circle (.2cm)node[below]{\small $1$}; 
\draw (2cm,0) --+ (0.6cm,0);
\draw (2.8cm,0) circle (.2cm);
\draw (3cm,0) --+ (0.6cm,0);
\draw (3.8cm,0) circle (.2cm);
\draw (4.2cm,0) -- +(0.2cm,0);
\draw[dotted] (4.4cm,0) --+ (1cm,0);
\draw (5.4cm,0) --+ (0.2cm,0);
\draw (5.8cm,0) circle (.2cm) node[below]{\footnotesize $n$-$2$};
\draw (6.8cm,0) circle (.2cm)  node[below]{\small $\hat{n}$};
\draw
(6,-.06) --++ (0.6,0)
(6,+.06) --++ (0.6,0);
\draw
(6.4,0) --++ (60:-.2)
(6.4,0) --++ (-60:-.2);
\end{tikzpicture}
&
B$_{n-1}$
\\ \hline
$E_6$ & 
 \begin{tikzpicture}[scale=.4,baseline=-6pt]
\node (v1) at (0,0.2) {};
\node (v2) at (4,0.2) {};
\node (v3) at (1,0.2) {};
\node (v4) at (3,0.2) {};
\draw (0cm,0) circle (.2cm) node[below]{\small $1$};
\draw (0.2cm,0) -- +(0.6cm,0);
\draw (1cm,0) circle (.2cm) node[below]{\small $2$};
\draw (1.2cm,0) -- +(0.6cm,0);
\draw(2cm,0) circle (.2cm) node[below]{\small $\quad3$};
\draw (2.2cm,0) -- +(0.6cm,0);
\draw (3cm,0) circle (.2cm);
\draw (3.2cm,0) -- +(0.6cm,0);
\draw (4cm,0) circle (.2cm);
\draw (2cm,-0.2) -- +(0cm,-0.6);
\draw(2cm,-1) circle (.2cm) node[below]{\small $6$};
\draw[<->]
(v1) edge[bend left] (v2);
\draw[<->]
(v3) edge[bend left] (v4);
\end{tikzpicture}
&
\begin{tikzpicture}[scale=.4,baseline]
\draw (0cm,0) circle (.2cm) node[below]{\small $\hat{1}$};
\draw (0.2cm,0) --+ (0.6cm,0);
\draw (1cm,0) circle (.2cm) node[below]{\small $\hat{2}$};
\draw (2cm,0) circle (.2cm)  node[below]{\small $3$};
\draw
(1.2,-.06) --++ (0.6,0)
(1.2,+.06) --++ (0.6,0);
\draw
(1.5,0) --++ (60:.2)
(1.5,0) --++ (-60:.2);
\draw (2.2cm,0) --+ (0.6cm,0);
\draw (3cm,0) circle (.2cm) node[below]{\small $6$};
\end{tikzpicture}
&
F$_4$\\
\hline
\end{tabular}
}
\egroup
\end{center}
\end{table}

\begin{Def}
We define the \emph{folding operation} on ungauged twisting data as
\[
\mbH^{\ungauge} \rightarrow \hat{\mbH},\quad (\tau,\epsilon)\mapsto (\id,\hat{\epsilon})\textrm{ where } \hat{\epsilon}_{\hat{r}} = \epsilon_r.
\]
\end{Def}

Note that the folding operation is well-defined since $\epsilon_r = \epsilon_{\tau(r)}$ for ungauged twisting data. It then satisfies in general the rule
\[
\hat{\epsilon}_{\hat{Q}}(\alpha_+)= \epsilon_Q(\alpha),\quad \alpha \in \hat{\Delta}.
\]
Note also that a twisting datum $(\id,\eta)$ in $\hat{\mbH}$ appears as the folding of a deformation in $\mbH^{\ungauge}$ if and only if $\eta_{\hat{r}} \geq 0$ for all folded simple roots $\hat{r}$.

\subsection{Weyl group actions on twisting data}

Let $W \subseteq \End(V)$ be the Weyl group of $\Delta$, with associated simple reflections $s_r$ for $r\in I$. For $J \subseteq I$, we denote $W_J\subseteq W$ for the parabolic subgroup  generated by the $s_r$ with $r\in J$. 

Let $\tau$ be an involution of $\Gamma$. Then $\tau$ defines a group automorphism of $W$ in the obvious way, denoted again $\tau$. We let $W^{\tau}\subseteq W$ be the subgroup of $\tau$-fixed elements. On the other hand, we let $\hat{W} \subseteq \End(V^{\tau})$ be the Weyl group of $\hat{\Gamma}$. 

\begin{Theorem}\label{LemFunctjvarpi1}\cite[Theorem 32]{Ste67}
\begin{enumerate}
\item The restriction 
\[
W^{\tau} \rightarrow \End(V^{\tau}),\qquad w\mapsto w_{\mid V^{\tau}}
\]
induces an isomorphism $W^{\tau}\cong \hat{W}$. 
\item Under this correspondence $\hat{s}_r \mapsto s_{\hat{r}}$ with $\hat{s}_r$ the longest element in the parabolic Weyl group $W_{\{r,\tau(r)\}}$.
\end{enumerate}
\end{Theorem}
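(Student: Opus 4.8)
The statement to be proved is Theorem \ref{LemFunctjvarpi1}, which is attributed to Steinberg \cite[Theorem 32]{Ste67}. The plan is to give a self-contained argument that $W^{\tau}$ maps isomorphically onto $\hat{W}$ via restriction to $V^{\tau}$, and to identify the image of the generator $\hat{s}_r$ with the longest element $w_{0,\{r,\tau(r)\}}$ of the parabolic subgroup $W_{\{r,\tau(r)\}}$.

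First I would observe that every $w \in W^{\tau}$ commutes with $\tau$ as a linear map on $V$, hence preserves the decomposition $V = V^{\tau}\oplus V^{-\tau}$ and in particular restricts to an orthogonal transformation of $V^{\tau}$. Since $w$ permutes $\Delta$ and commutes with $\tau$, it permutes the set $\hat\Delta = \{\alpha_+ \mid \alpha\in\Delta\}$; thus $w_{\mid V^{\tau}}$ normalizes $\hat\Delta$, and because it is a product of reflections in hyperplanes orthogonal to roots of $\hat\Delta$ (see below), it lies in $\hat W$. This gives a homomorphism $W^{\tau}\to\hat W$. Injectivity is the easy direction: if $w\in W^{\tau}$ fixes $V^{\tau}$ pointwise, then since $w$ also acts on $V^{-\tau}$, one must rule out a nontrivial action there — here I would use that $\Delta$ spans $V$ together with the fact that an element of $W$ fixing a spanning set of its "positive part" and normalizing $\Delta^+$ must be trivial; more concretely, a $\tau$-fixed $w\neq e$ has a reduced expression whose support meets some $\tau$-orbit $\{r,\tau(r)\}$, and one checks it cannot act trivially on the $V^{\tau}$-component of $\alpha_r$, i.e. on $\alpha_{\hat r}$.

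For surjectivity and the identification of generators, the key point is to show that the longest element $c_{\hat r} := w_{0,\{r,\tau(r)\}}$ of $W_{\{r,\tau(r)\}}$ is $\tau$-fixed and restricts on $V^{\tau}$ to the simple reflection $s_{\hat r}$ of $\hat W$. That $c_{\hat r}$ is $\tau$-invariant follows because $\tau$ permutes $\{r,\tau(r)\}$, hence is an automorphism of the parabolic root subsystem $\Delta_{\{r,\tau(r)\}}$ carrying its positive system to itself, so it must fix the (unique) longest element. When $r = \tau(r)$ we have $c_{\hat r} = s_r$ and there is nothing to check; when $r\neq\tau(r)$ the subsystem $\Delta_{\{r,\tau(r)\}}$ is of type $A_1\times A_1$, $A_2$, $B_2$, or $G_2$ (the possibilities for a rank-two parabolic with a diagram automorphism), and in each case one computes directly that $c_{\hat r}$ acts on $V^{\tau}$ as the reflection $v\mapsto v - (v,\alpha_{\hat r}^{\vee})\alpha_{\hat r}$: it sends $\alpha_{\hat r} = \alpha_{r,+}$ to its negative (since $c_{\hat r}$ sends $\Delta^+_{\{r,\tau(r)\}}$ to $\Delta^-_{\{r,\tau(r)\}}$, hence $\alpha_{r}\mapsto$ a negative combination whose $+$-part is $-\alpha_{\hat r}$) and fixes $V^{\tau}\cap (\text{span }\Delta_{\{r,\tau(r)\}})^{\perp}$ pointwise because $c_{\hat r}$ acts trivially outside $\text{span }\Delta_{\{r,\tau(r)\}}$. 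Since the $s_{\hat r}$ generate $\hat W$, the subgroup of $W^{\tau}$ generated by the $c_{\hat r}$ already surjects onto $\hat W$; combined with injectivity this forces the map $W^{\tau}\to\hat W$ to be an isomorphism sending $c_{\hat r}\mapsto s_{\hat r}$, which is exactly statement (2) with $\hat s_r = c_{\hat r}$.

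The main obstacle is the surjectivity/well-definedness bookkeeping: one must be careful that the map $W^{\tau}\to \hat W$ actually lands in $\hat W$ (not merely in the orthogonal group of $V^{\tau}$ preserving $\hat\Delta$ — though for a root system these coincide up to the automorphism group, and the generation-by-$c_{\hat r}$ argument sidesteps this entirely), and that the rank-two case analysis for the non-reduced situation (type $A_2$ with its flip, giving $\hat\Delta$ of type $BC_1$) is handled with the correct normalization of $\alpha_{\hat r}$ and $\varpi_{\hat r}$ as recorded in \eqref{EqHatVarpi}. Since the result is classical and we are entitled to cite \cite{Ste67}, an acceptable alternative is simply to record that (1) and (2) are \cite[Theorem 32]{Ste67} and to add the one-line remark that $\hat s_r$ is $\tau$-fixed because $\tau$ preserves the positive system of $\Delta_{\{r,\tau(r)\}}$ and hence fixes its longest element; I would include the short generator computation above as the substance of the proof and leave the general structural claims to the citation.
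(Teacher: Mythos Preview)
The paper does not prove this theorem: it is stated with the citation \cite[Theorem 32]{Ste67} and no argument is given beyond the explicit list of the three cases for $\hat s_r$ that follows the statement. So there is no ``paper's proof'' to compare against; your proposal supplies strictly more than the paper does, and your final remark that one may simply cite Steinberg is exactly what the paper chooses to do.

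Two small points on your sketch. First, when $r\neq\tau(r)$ the rank-two subsystem $\Delta_{\{r,\tau(r)\}}$ can only be of type $A_1\times A_1$ or $A_2$, since $\tau$ must swap the two simple roots and hence they have equal length; the cases $B_2$ and $G_2$ do not occur. This matches the three cases $(\alpha_r^{\vee},\alpha_{\tau(r)})\in\{2,0,-1\}$ listed right after the theorem in the paper. Second, your deduction ``injectivity plus surjectivity-on-generators forces $W^{\tau}\to\hat W$ to be an isomorphism'' has a small gap: injectivity of $W^{\tau}\to O(V^{\tau})$ together with $\langle c_{\hat r}\rangle\twoheadrightarrow\hat W$ gives $\langle c_{\hat r}\rangle\cong\hat W$, but not yet $W^{\tau}=\langle c_{\hat r}\rangle$. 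The missing step is that any $w\in W^{\tau}$ permutes the $\hat\Delta$-chambers in $V^{\tau}$, so after multiplying by some element of $\langle c_{\hat r}\rangle$ it fixes the dominant $\hat\Delta$-chamber and hence fixes $\rho\in V^{\tau}$, forcing $w\in\langle c_{\hat r}\rangle$. Your injectivity argument can be streamlined the same way: $\tau(\rho)=\rho$ since $\tau$ preserves $\Delta^+$, so $\rho\in V^{\tau}$, and any $w\in W^{\tau}$ acting trivially on $V^{\tau}$ fixes $\rho$ and is therefore the identity.
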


 More concretely, we have
\begin{itemize}
\item $\hat{s}_r = s_r$ for $(\alpha_r^{\vee},\alpha_{\tau(r)}) = 2$, so $r\in I^{\tau}$,
\item $\hat{s}_r= s_r s_{\tau(r)}$ for $(\alpha_r^{\vee},\alpha_{\tau(r)}) = 0$,
\item $\hat{s}_r = s_r s_{\tau(r)}s_r$ for $(\alpha_r^{\vee},\alpha_{\tau(r)}) = -1$.
\end{itemize}

As a direct consequence of the above theorem, we have the following corollary. 
\begin{Cor}\label{CorOrbitPosInv}
For each $\tau$-invariant $\omega \in P$ there exists $w\in W^{\tau}$ with $w\omega \in P^+$.
\end{Cor}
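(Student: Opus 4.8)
The plan is to deduce Corollary \ref{CorOrbitPosInv} directly from Theorem \ref{LemFunctjvarpi1} together with the classical fact that the Weyl group of any root system acts with a dominant representative in each orbit. Let $\omega \in P$ be $\tau$-invariant. The key observation is that the decomposition $V = V^{\tau}\oplus V^{-\tau}$ is orthogonal (since $\tau$ is an isometry), so the $\tau$-invariant weight $\omega$ lies in $V^{\tau}$; that is, $\omega_+ = \omega$. Hence $\omega$ is a weight for the folded root system $\hat{\Delta}\subseteq V^{\tau}$, and we may apply the standard theory of the finite reflection group $\hat{W}\subseteq \End(V^{\tau})$.

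The steps I would carry out are as follows. First, recall that for any finite root system there is an element of its Weyl group carrying a given vector into the closed dominant chamber; apply this to $\hat{\Delta}$ and $\omega \in V^{\tau}$ to obtain $\hat w \in \hat W$ with $\hat w\,\omega$ dominant with respect to $\hat\Phi$, i.e.\ $(\hat w\,\omega,\alpha_{\hat r}) \geq 0$ for all $\hat r\in\hat I$. Second, use part (1) of Theorem \ref{LemFunctjvarpi1} to lift $\hat w$ to the unique $w\in W^{\tau}$ with $w_{\mid V^{\tau}} = \hat w$; since $\omega\in V^{\tau}$, we have $w\omega = \hat w\,\omega \in V^{\tau}$, and in particular $w\omega$ is again a $\tau$-invariant weight in $P$ (it lies in $P$ because $W$ preserves $P$). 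Third, I must upgrade "$\hat\Phi$-dominant" to "$\Phi$-dominant", i.e.\ show $(w\omega,\alpha_r^{\vee}) \geq 0$ for every $r\in I$, not just for the folded simple roots. For $r\in I^{\tau}$ this is immediate since then $\alpha_{\hat r} = \alpha_{r,+} = \alpha_r$. For $r\notin I^{\tau}$, write $\mu = w\omega$; since $\mu$ is $\tau$-invariant, $(\mu,\alpha_r^{\vee}) = (\mu,\tau(\alpha_r^{\vee})) = (\mu,\alpha_{\tau(r)}^{\vee})$, and by \eqref{EqHatVarpi} (or directly, since $\mu\in V^{\tau}$ so $(\mu,\alpha_r) = (\mu,\alpha_{r,+}) = (\mu,\alpha_{\hat r})$ up to the positive scalar relating $\alpha_{\hat r}$ and $\alpha_{r,+}$) the pairing $(\mu,\alpha_r^{\vee})$ is a positive multiple of $(\mu,\alpha_{\hat r}^{\vee}) \geq 0$. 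One should be slightly careful in type $A_{2n}$ with the middle vertices, where $\hat\Delta$ is non-reduced and $\alpha_{\hat r} = \tfrac12(\alpha_r + \alpha_{\tau(r)})$ with $\alpha_r,\alpha_{\tau(r)}$ adjacent; there one uses that $\mu$ being $\tau$-fixed forces $(\mu,\alpha_r^\vee) = (\mu,\alpha_{\tau(r)}^\vee)$ and this common value is again governed by $(\mu,\alpha_{\hat r}) \geq 0$. Either way, $w\omega\in P^+$.

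The only mild obstacle is the last bookkeeping step — translating dominance for the folded system into dominance for the original simple roots, handling the non-reduced $A_{2n}$ case and the scalar factors from \eqref{EqHatVarpi}. Everything else is a direct invocation of Theorem \ref{LemFunctjvarpi1} and the elementary chamber-transitivity of finite Weyl groups. I would present the argument in that order: reduce to $V^{\tau}$, apply $\hat W$-chamber transitivity, lift to $W^{\tau}$ via Theorem \ref{LemFunctjvarpi1}(1), and verify $\Phi$-dominance using $\tau$-invariance case by case.
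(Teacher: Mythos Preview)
Your proposal is correct and is exactly the argument the paper has in mind: the paper simply states the corollary as ``a direct consequence'' of Theorem~\ref{LemFunctjvarpi1} without spelling out a proof, and your steps (reduce to $V^{\tau}$, apply chamber-transitivity for $\hat{W}$, lift via the isomorphism $W^{\tau}\cong\hat{W}$, and note that for $\mu\in V^{\tau}$ one has $(\mu,\alpha_r)=(\mu,\alpha_{\hat r})$ so $\hat{\Phi}$-dominance gives $\Phi$-dominance) are precisely the intended unpacking. Your case analysis for $A_{2n}$ is more care than strictly needed, since the identity $(\mu,\alpha_r)=(\mu,\alpha_{r,+})=(\mu,\alpha_{\hat r})$ already handles all cases uniformly.
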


In the following we will identify $W^{\tau}\cong \hat{W}$.

\begin{Def}\label{DefWnu}
Let $\nu= (\tau,\epsilon) \in \mbH_J$ be a twisting datum with locus of non-degeneracy $J$. We define the associated Weyl group as
\[
W_{\nu} = W^{\tau}_{J}:= W^{\tau}\cap W_{J} \subseteq W.
\]
We define the \emph{Weyl group action} 
\[
W_{\nu} \times \mbH_{\tau,J}\rightarrow \mbH_{\tau,J},\quad (w\epsilon)_Q(\alpha) = \epsilon_Q(w^{-1}\alpha).
\]
\end{Def}

Note that this is well-defined since $W_J$ preserves $Q^{J+}$. Note also that the action of $W_{\nu}$ descends to each $\mbH_{\tau,J'}$ with $J' \supseteq J$.

\begin{Def}
We call two twisting data $\nu = (\tau,\epsilon)$ and $\nu'=(\tau',\epsilon')$ \emph{Weyl group equivalent}, and write $\nu \sim_{W} \nu'$, if 
\begin{itemize}
\item $\tau = \tau'$,
\item $\epsilon$ and $\epsilon'$ have the same support, and 
\item there exists $w\in W_{\nu}=W_{\nu'}$  such that $\epsilon' = w\epsilon$.
\end{itemize}
\end{Def}

\begin{Lem}
The group $\mbH_{\tau}^{>}$ is stable under $W^{\tau}$. 
\end{Lem}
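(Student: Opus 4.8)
To prove stability of $\mbH_{\tau}^{>}$ under $W^{\tau}$, the plan is to unwind the definitions and verify, for a fixed $\epsilon \in \mbH_{\tau}^{>}$ and $w\in W^{\tau}$, the three clauses defining a positive twisting datum for $w\epsilon$. Since a positive twisting datum is regular, its locus of non-degeneracy is $J=I$; hence $Q^{I+}=Q$, the map $\epsilon_Q$ of \eqref{EqEpsQ} is a genuine group homomorphism $Q\to\C^{\times}$ (regularity makes each $\epsilon_r$ invertible), and by Definition \ref{DefWnu} the relevant group is $W_{\nu}=W^{\tau}$, acting by $(w\epsilon)_Q(\alpha)=\epsilon_Q(w^{-1}\alpha)$, so that $(w\epsilon)_r=\epsilon_Q(w^{-1}\alpha_r)$. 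Thus I must check: (a) $w\epsilon$ satisfies \eqref{EqDefPropEps}; (b) $w\epsilon$ is regular; (c) $(w\epsilon)_r>0$ for every $r\in I^{\tau}$.

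Items (a) and (b) are routine. For (a): since $w\in W^{\tau}$, the element $w$ (hence $w^{-1}$) of $\End(V)$ commutes with $\tau$, so using $\alpha_{\tau(r)}=\tau(\alpha_r)$ and the identity $\epsilon_Q(\tau\alpha)=\overline{\epsilon_Q(\alpha)}$ recorded after \eqref{EqEpsP} (valid on all of $Q$ as $J=I$), one gets $(w\epsilon)_{\tau(r)}=\epsilon_Q(w^{-1}\tau(\alpha_r))=\epsilon_Q(\tau(w^{-1}\alpha_r))=\overline{\epsilon_Q(w^{-1}\alpha_r)}=\overline{(w\epsilon)_r}$. For (b): $w^{-1}\alpha_r$ is a root, hence lies in $Q$, and $\epsilon_Q$ is $\C^{\times}$-valued, so $(w\epsilon)_r\neq 0$.

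The content is in (c). Fix $r\in I^{\tau}$ and put $\beta=w^{-1}\alpha_r\in Q$. Since $w$ commutes with $\tau$ and $\alpha_r$ is $\tau$-fixed, $\beta$ is $\tau$-fixed; writing $\beta=\sum_{s\in I}n_s\alpha_s$ with $n_s\in\Z$ and using that $\tau$ permutes the simple roots, $\tau$-invariance of $\beta$ forces $n_{\tau(s)}=n_s$ for all $s$. By multiplicativity, $(w\epsilon)_r=\epsilon_Q(\beta)=\prod_{s\in I}\epsilon_s^{n_s}$, and I group the factors by the $\tau$-orbits in $I$: a fixed vertex $s\in I^{\tau}$ contributes $\epsilon_s^{n_s}$, a positive real by the positivity hypothesis on $\epsilon$; a two-element orbit $\{s,\tau(s)\}$ with $s\in I^*$ contributes $\epsilon_s^{n_s}\,\epsilon_{\tau(s)}^{n_{\tau(s)}}=\epsilon_s^{n_s}\,\overline{\epsilon_s}^{\,n_s}=|\epsilon_s|^{2n_s}>0$ by \eqref{EqDefPropEps} and $n_{\tau(s)}=n_s$. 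Hence $\epsilon_Q(\beta)>0$, which is (c), so $w\epsilon\in\mbH_{\tau}^{>}$.

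I do not expect a genuine obstacle here; the only point deserving care is the bookkeeping in (c) — that a $\tau$-fixed vector of the root lattice has coefficients constant along $\tau$-orbits, so that $\epsilon_Q$ carries it into $\R_{>0}$. This is exactly where both clauses in the definition of ``positive'' are used: real positivity at the vertices of $I^{\tau}$, and the constraint \eqref{EqDefPropEps} turning the contributions from $I^*\cup\tau(I^*)$ into squared absolute values.
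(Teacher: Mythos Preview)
Your proof is correct. The approach differs from the paper's: the paper verifies stability on the generators $\hat{s}_r$ of $W^{\tau}$, splitting according to whether $r\in I^{\tau}$ (where $s_r$ multiplies each entry $\epsilon_t$ by a power of $\epsilon_r>0$) or $r\in I^*$ (where $\hat{s}_r\alpha_t=\alpha_t-m_{rt}(\alpha_r+\alpha_{\tau(r)})$, so each entry gets multiplied by a power of $|\epsilon_r|^2$). You instead treat an arbitrary $w\in W^{\tau}$ directly, using the structural observation that $w^{-1}\alpha_r$ is $\tau$-fixed for $r\in I^{\tau}$, whence its simple-root coefficients are constant on $\tau$-orbits and $\epsilon_Q$ of such an element is automatically positive. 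Your route avoids the case split on generators and is a bit more conceptual; the paper's route is more explicit about how individual entries transform under each simple reflection, which is useful later (e.g.\ in Lemma~\ref{LemDotAction}). Both arguments ultimately rest on the same fact: contributions from two-element $\tau$-orbits collapse to squared absolute values by~\eqref{EqDefPropEps}.
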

\begin{proof}
Let $\epsilon \in \mbH_{\tau}^{>}$. If $r\in I^{\tau}$, then $s_r$ multiplies each entry $\epsilon_t$ with a power of $\epsilon_r>0$. If $r\in I^*$ and $t\in I$, then $\hat{s}_r\alpha_t =  \alpha_t - m_{rt} (\alpha_r+\alpha_{\tau(r)})$ for some integer $m_{rt}$, hence $\hat{s}_r$ multiplies $\epsilon_t$ with a power of $|\epsilon_r|^2>0$ by the $\tau$-conjugate symmetry of $\epsilon$. 
\end{proof}

It follows that we can consider the action of $W_{\nu}\ltimes \mbH_{\tau}^>$ on $\mbH_{\tau,J}$. 

\begin{Def}
We call two twisting data $\nu,\nu'\in \mbH$ \emph{equivalent}, and write $\nu \sim \nu'$, if $\nu,\nu'\in \mbH_{\tau,J}$ for some $\tau,J$ and $\nu,\nu'$ lie in the same orbit under the $W_{\nu}\ltimes \mbH_{\tau}^>$-action.
\end{Def}

\begin{Lem}\label{LemStrongRed}
Each twisting datum $\nu$ is equivalent to a strongly reduced twisting datum.
\end{Lem}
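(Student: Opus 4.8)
The plan is to strip off the non-sign data using multiplicative equivalence, localise to a single $\tau$-connected component, and then solve a purely combinatorial problem about sign patterns under a Weyl group.

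\emph{Reduction to one component.} Since $\nu\sim_m\nu'$ implies $\nu\sim\nu'$ and every twisting datum is multiplicatively equivalent to a unique reduced one, I assume from the start that $\nu=(\tau,\epsilon)$ is reduced; write $J=J_{\nu}=\supp\epsilon$, so $\epsilon_r\in\{-1,1\}$ for $r\in I^{\tau}\cap J$, $\epsilon_r=1$ for $r\in J\setminus I^{\tau}$, and $\epsilon_r=0$ off $J$. Both $W_{\nu}=W^{\tau}_{J}$ and $\mbH_{\tau}^{>}$ act on $\mbH_{\tau,J}$ componentwise with respect to the decomposition of $\Gamma_J$ into its $\tau$-connected components $\Gamma_K$ (the values off $J$ being irrelevant, and $W^{\tau}_{J}=\prod_K W^{\tau}_{K}$ since $\tau$ permutes the connected components of $\Gamma_J$), so it suffices to bring $\epsilon|_K$ into the shape ``all values $1$ except at most one $r\in I^{\tau}\cap K$ with $\epsilon_r=-1$'' for each $K$, using $W^{\tau}_{K}$ together with the sign flips at $I^{*}$-vertices coming from $\mbH_{\tau}^{>}$. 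If $\Gamma_K$ is of direct product type there are no $\tau$-fixed vertices, so the reduced datum on $K$ already has all values $1$ and nothing is needed; hence I may assume $\Gamma_K$ connected, i.e.\ $\hat\Gamma_{\hat K}$ an irreducible Dynkin diagram.

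\emph{Folding.} The datum $\epsilon|_K$ is ungauged, so it folds to a sign function $\hat\epsilon$ on $\hat\Gamma_{\hat K}$ with $\hat\epsilon_{\hat r}=\epsilon_r\in\{\pm1\}$ at the non-folded vertices $\hat r=\{r\}$ ($r\in I^{\tau}\cap K$) and $\hat\epsilon_{\hat r}=1$ at the folded ones. Via the identification $W^{\tau}_{K}\cong\hat W_{\hat K}$ of Theorem~\ref{LemFunctjvarpi1} and the compatibility $\hat\epsilon_{\hat Q}(\alpha_+)=\epsilon_Q(\alpha)$, the $W^{\tau}_{K}$-action on $\epsilon|_K$ corresponds to the $\hat W_{\hat K}$-action on sign functions, whereas multiplying $\epsilon$ at a pair $\{r,\tau(r)\}$ with $r\in I^{*}\cap K$ by $(-1,-1)$ — an element of $\mbH_{\tau}^{>}$ — amounts to flipping $\hat\epsilon$ at the single folded vertex $\hat r$. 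So the claim reduces to the following: by $\hat W_{\hat K}$-moves and arbitrary sign flips at folded vertices, any sign function on an irreducible Dynkin diagram can be brought to ``$+1$ at every folded vertex and $-1$ at most one non-folded vertex''.

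\emph{The sign game.} A simple reflection $s_{\hat j}$ acts on a sign function by $\hat\epsilon_{\hat k}\mapsto\hat\epsilon_{\hat k}\,\hat\epsilon_{\hat j}^{-\hat A_{\hat j\hat k}}$, and since $\hat\epsilon_{\hat j}=\pm1$ only the parity of the Cartan integer matters: $s_{\hat j}$ toggles $\hat\epsilon$ exactly at the neighbours of $\hat j$ joined by an odd bond, and leaves $\hat\epsilon_{\hat j}$ fixed. So lowering the number of $-1$'s is a ``lights-out'' game on the tree $\hat\Gamma_{\hat K}$, and the decisive point is that every $\hat W_{\hat K}$-orbit of sign functions on an irreducible Dynkin diagram contains one with at most one $-1$ — equivalently, every Weyl orbit on $\Hom(Q,\{\pm1\})$ contains the trivial character or a character that is $-1$ on exactly one simple root. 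I would prove this either by running the light-chasing with a suitable monovariant and inspecting the Dynkin types (uniformly for $A,B,C,D,BC$ via the realisation of the Weyl group as (even) signed permutations, and by hand for $E_6,E_7,E_8,F_4,G_2$), or by quoting the classification of inner involutions of simple Lie algebras (each has a Vogan diagram with at most one painted node). Granting this, I pass to a representative with at most one $-1$ and then use the folded-vertex flips to reset all folded-vertex signs to $+1$; as that operation affects nothing at non-folded vertices, the result is a reduced datum on $K$ with at most one $-1$, necessarily at some $r\in I^{\tau}\cap K$. Reassembling over the $\tau$-connected components yields a reduced twisting datum equivalent to $\nu$ with at most one $\tau$-fixed vertex carrying $\epsilon_r=-1$ in each $\tau$-connected component of its locus of non-degeneracy, i.e.\ a strongly reduced one.

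\emph{Main obstacle.} The hard part is the combinatorial statement in the sign game: that a sign pattern on an irreducible Dynkin diagram is Weyl-equivalent to one with at most one $-1$. Everything else is routine; the only delicate bookkeeping point is the non-reduced folding $A_{2n}\rightsquigarrow BC_n$ (where $\varpi_{\hat r}\neq\varpi_{r,+}$ at the two middle nodes), but this is harmless, since only $\epsilon_Q$ — not $\epsilon_P$ — enters and the identity $\hat\epsilon_{\hat Q}(\alpha_+)=\epsilon_Q(\alpha)$ holds without restriction; moreover that component has no $\tau$-fixed vertices, so the sign game there is vacuous.
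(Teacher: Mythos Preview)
Your argument is correct and rests on the same external input as the paper's --- the Borel--de Siebenthal theorem --- but reaches it by a longer route. The paper's proof is essentially three sentences: pass to a reduced $\nu$ via the $\mbH_\tau^{>}$-action, reduce to $I$ connected with $\supp(\epsilon)=I$, interpret the data directly as a Vogan diagram (involution $\tau$, painted nodes $\{r\in I^{\tau}:\epsilon_r=-1\}$), and quote \cite[Section~6.96]{Kna02}, which already handles arbitrary $\tau$ and says every Vogan diagram is equivalent to one with at most one painted vertex. Your folding detour translates the problem to a sign function on $\hat{\Gamma}_{\hat K}$, invokes Borel--de Siebenthal only in its inner form ($\tau=\id$) there, and then uses $\mbH_\tau^{>}$-flips at folded vertices to reset the $I^*$-signs afterwards. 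This buys you the ability to cite only the more familiar inner case of the theorem, at the cost of the extra folding bookkeeping --- which you handle correctly, including the vacuous $A_{2n}\rightsquigarrow BC_n$ case and the explicit treatment of direct-product $\tau$-components. Since Knapp states and proves the result for arbitrary Vogan diagrams, the folding is not needed; the paper simply quotes the general version (and remarks that Knapp's proof in fact uses only $W_{I^{\tau}}$, a point exploited later in Corollary~\ref{CorRefineBS}).
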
 

\begin{proof} 
By the multiplicative action of $\mbH_{\tau}^{>}$, the twisting datum $\nu$ is equivalent to a reduced one. It is then enough to prove the lemma when $I$ is connected, $\nu$ is reduced and $\supp(\epsilon) = I$, so in particular $W_{\nu} = W^{\tau}$. In this case $\nu$ can be interpreted as a Vogan diagram. The result thus follows from the Borel and de Siebenthal theorem \cite[Section 6.96]{Kna02}.
\end{proof}

In fact, the proof loc. cit.~ shows that one can achieve the last step using only $W_{I^{\tau}}$. We will at a later point refine this further, see Corollary \ref{CorRefineBS}. 

\subsection{$\nu$-compact roots}

\begin{Def}\label{DefnuCompactRoot}
Let $\nu = (\tau,\epsilon) \in \mbH_{J}^{\ungauge}$ be an ungauged twisting datum. We say that a root $\beta \in \hat{\Delta}$ is \emph{$\nu$-compact} if $\beta \in \hat{\Delta}_{\hat{J}}$ and at least one of the following two conditions is satisfied:
\begin{itemize}
\item $\hat{\epsilon}_{\hat{Q}}(\beta) >0$, or
\item  there exists $r\in I^*\cap J$ with $(\varpi_{\hat{r}}^{\vee},\beta)$ odd. 
\end{itemize} 
We write $\hat{\Delta}_c$ for the set of $\nu$-compact roots in $\hat{\Delta}$. 
\end{Def}

\begin{Rem}
Note that the second condition is in general not subsumed by the first. For example, consider $\begin{tikzpicture}[scale=.4,baseline]
\node (v1) at (0,0) {};
\node (v2) at (2.4,0) {};

\draw (0cm,0) circle (.2cm) node[below]{\small $1$};
\draw (0.2cm,0) -- +(1cm,0);
\draw[fill=black] (1.2cm,0) circle (.2cm) node[below]{\small $2$};
\draw (1.4cm,0) -- +(0.8cm,0);
\draw (2.4cm,0) circle (.2cm) node[below]{\small $3$};

\draw[<->]
(v1) edge[bend left=50] (v2);
\end{tikzpicture}$, the Dynkin diagram $A_3$ with non-trivial involution and $\epsilon_1 = \epsilon_3 =1$ and $\epsilon_2 = -1$. Put $\alpha = \alpha_1+\alpha_2$. One sees that $\hat{\epsilon}_{\hat{Q}}(\alpha_{+}) = -1$, but $\alpha_{+}= \alpha_{\hat{1}} + \alpha_{\hat{2}}$, so $\hat{\alpha}$ is $\nu$-compact.
\end{Rem}

\begin{Prop}
The subset $\hat{\Delta}_c \subseteq \hat{\Delta}$ is a root subsystem. 
\end{Prop}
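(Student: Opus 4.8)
The plan is to show that $\hat{\Delta}_c$ is closed under the standard operations characterising a root subsystem: it is stable under the reflections $s_\beta$ for $\beta \in \hat{\Delta}_c$, and equivalently that it is closed under taking sums whenever the sum lies in $\hat{\Delta}$, together with being symmetric under negation. Symmetry under $\beta \mapsto -\beta$ is immediate: the condition $\beta \in \hat{\Delta}_{\hat J}$ is negation-invariant, $\hat\epsilon_{\hat Q}(-\beta) = \hat\epsilon_{\hat Q}(\beta)^{-1}$ has the same sign (it is real since $\hat\epsilon$ is a folded, hence real-valued, twisting datum on $\hat I$), and $(\varpi_{\hat r}^\vee, -\beta)$ has the same parity as $(\varpi_{\hat r}^\vee, \beta)$. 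So the content is in the closure-under-addition (equivalently reflection) statement.

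First I would reduce to working inside $\hat\Delta_{\hat J}$, so WLOG $J = I$ and we deal with $\hat\Delta$ itself. Next I would record the two "parity/sign invariants" attached to a root $\beta \in \hat\Delta$: the sign $\sigma(\beta) = \operatorname{sgn}\hat\epsilon_{\hat Q}(\beta) \in \{+,-\}$ (a homomorphism $\hat Q \to \{\pm 1\}$ composed with the sign of $\hat\epsilon_{\hat Q}$, well-defined on roots since $\hat\epsilon_{\hat Q}(\beta) \neq 0$ as $J = I$), and for each $r \in I^* $ the parity $p_r(\beta) = (\varpi_{\hat r}^\vee, \beta) \bmod 2$, which is $\Z/2$-linear in $\beta \in \hat Q$. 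A root $\beta$ is $\nu$-compact iff $\sigma(\beta) = +$ or $p_r(\beta) = 1$ for some $r \in I^*$. Now take $\beta, \gamma \in \hat\Delta_c$ with $\beta + \gamma \in \hat\Delta$; I want $\beta + \gamma \in \hat\Delta_c$, i.e. $\sigma(\beta+\gamma) = +$ or some $p_r(\beta+\gamma) = 1$. The key algebraic fact is that both $\sigma$ and each $p_r$ are group homomorphisms out of $\hat Q$ (to $\{\pm 1\}$ and $\Z/2$ respectively), so $\sigma(\beta + \gamma) = \sigma(\beta)\sigma(\gamma)$ and $p_r(\beta+\gamma) = p_r(\beta) + p_r(\gamma)$. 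The problematic case is when $\sigma(\beta + \gamma) = -$, i.e. exactly one of $\sigma(\beta), \sigma(\gamma)$ is $-$; say $\sigma(\beta) = +$ and $\sigma(\gamma) = -$. Then since $\gamma$ is $\nu$-compact but $\sigma(\gamma) = -$, there must be $r \in I^*$ with $p_r(\gamma) = 1$. If moreover $p_r(\beta) = 0$ we are done since then $p_r(\beta+\gamma) = 1$; the genuinely delicate subcase is $p_r(\beta) = 1$ as well, forcing $p_r(\beta+\gamma) = 0$, and we would need some other invariant to certify $\beta + \gamma$.

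I expect this last subcase to be the main obstacle, and I would resolve it by a more careful structural analysis rather than pure linear algebra: the parity conditions $p_r$ for different $r \in I^*$ are not independent once one restricts to actual roots of $\hat\Delta$, and one must exploit the constraint that $\beta$, $\gamma$, and $\beta + \gamma$ are all honest roots. Concretely, I would use the classification of rank-two root subsystems: $\{\beta, \gamma, \beta+\gamma\}$ (and possibly $2\beta+\gamma$, etc., in the non-simply-laced or $BC$ cases) spans a rank-two sub-root-system, and within such a small system the possible values of $(\sigma, (p_r)_{r})$ are highly constrained — in particular one checks that $\sigma(\beta) = +,\ \sigma(\gamma) = -$ together with $p_r(\beta) = p_r(\gamma) = 1$ for the relevant $r$ cannot simultaneously occur for a root pair summing to a root, by tracking how $\varpi_{\hat r}^\vee$ pairs with a chain of roots (the coefficient of $\alpha_{\hat r}$ in roots of a rank-two system takes only the values $0, \pm 1, \pm 2$, and the parity jumps are controlled). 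Equivalently, and perhaps more cleanly, I would phrase $\nu$-compactness via the associated $\Z/2$-grading: define $\phi: \hat Q \to \Z/2 \oplus \bigoplus_{r \in I^*} \Z/2$ sending $\beta \mapsto (\text{"}\sigma(\beta) = -\text{"}, (p_r(\beta))_r)$, note $\nu$-compact roots are exactly those $\beta$ with $\phi(\beta) \neq 0$ only in... no — those with $\phi(\beta)$ lying outside the "distinguished coordinate being the sole nonzero one" — and then the root subsystem $\hat\Delta_c = \ker$-of-a-suitable-further-quotient would follow formally; I would check whether $\hat\Delta_c$ is in fact $\hat\Delta \cap \ker(\psi)$ for a single homomorphism $\psi: \hat Q \to \Z/2$ (a "Vogan-diagram sign character"), in which case closure is automatic and the two-condition description is just an unpacking of what $\psi$ is on roots. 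I would verify this reduction case-by-case against the table of folded diagrams, which is the routine computational core I would not grind through here but which I expect closes the argument.
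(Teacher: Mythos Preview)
Your proposal contains a genuine gap rooted in a false equivalence. You assert that stability under the reflections $s_\beta$ for $\beta \in \hat\Delta_c$ is ``equivalently'' closure under taking sums whenever the sum lies in $\hat\Delta$. This is false in non-simply-laced root systems: sum-closure implies reflection-closure (via root strings), but not conversely. Since $\hat\Delta$ is non-simply-laced precisely when $\tau \neq \id$, this matters here. In fact $\hat\Delta_c$ is \emph{not} sum-closed in general. Take the paper's own example from the Remark preceding the Proposition: $A_3$ with nontrivial $\tau$ and $\epsilon_1 = \epsilon_3 = 1$, $\epsilon_2 = -1$, folding to $C_2$ with short simple root $\alpha_{\hat 1}$ and long simple root $\alpha_2$. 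One computes $\hat\Delta_c = \{\pm\alpha_{\hat 1}, \pm(\alpha_{\hat 1} + \alpha_2)\}$, the set of all short roots. But $\alpha_{\hat 1} + (\alpha_{\hat 1} + \alpha_2) = 2\alpha_{\hat 1} + \alpha_2 \in \hat\Delta$ is long with $\hat\epsilon_{\hat Q}(2\alpha_{\hat 1} + \alpha_2) = -1$ and even $\alpha_{\hat 1}$-coefficient, hence \emph{not} $\nu$-compact. So your ``problematic subcase'' is not merely delicate: it is a case where the conclusion you are trying to prove is false. The proposed rank-two analysis and the search for a single homomorphism $\psi$ with $\hat\Delta_c = \hat\Delta \cap \ker\psi$ cannot succeed.

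The paper proceeds differently. After reducing to $J = I$ and $\Gamma$ connected (the direct-product case being trivial by ungaugedness, and the $\tau = \id$ case being the kernel of the sign character $\alpha \mapsto \operatorname{sgn}\epsilon_Q(\alpha)$), it proves the key structural fact (Lemma~\ref{LemSubsLongShort}(1)): for $\Gamma$ connected with $\tau \neq \id$ and not of type $A_{2n}$, the second condition in Definition~\ref{DefnuCompactRoot} holds for $\beta$ exactly when $\beta$ is a \emph{short} root of $\hat\Delta$. Hence $\hat\Delta_c = \hat\Delta_s \cup \hat\Delta_{l,c}$ where $\hat\Delta_{l,c} = \{\beta \in \hat\Delta_l : \hat\epsilon_{\hat Q}(\beta) > 0\}$. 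Now one checks reflection-closure directly: both $\hat\Delta_s$ and $\hat\Delta_{l,c}$ are root subsystems; reflections preserve root lengths, so short roots go to short roots; and $s_\gamma$ for $\gamma$ short preserves $\hat\Delta_{l,c}$ because $(\beta,\gamma^\vee) \in 2\Z$ for $\beta$ long, making $\hat\epsilon_{\hat Q}(s_\gamma\beta) = \hat\epsilon_{\hat Q}(\beta)\hat\epsilon_{\hat Q}(\gamma)^{-(\beta,\gamma^\vee)}$ have the same sign as $\hat\epsilon_{\hat Q}(\beta)$. The identification of your parity condition with ``being short'' is the missing idea that converts an intractable sign/parity bookkeeping into a clean length-based argument.
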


\begin{proof}
We may assume that $\Gamma$ is $\tau$-connected with $J =I$. Since the proposition is automatic for $\Gamma$ of direct product type, by ungaugedness, we may assume in fact that $\Gamma$ is connected. If $\tau = \id$, then $\hat{\Delta}_c = \Delta_c$ consists of all $\alpha \in \Delta$ with $\epsilon_Q(\alpha)>0$, which clearly form a root subsystem. If $\tau \neq \id$, we may again by ungaugedness assume that $\Gamma$ is not of type $A_{2n}$. The proposition then follows from the following lemma. 
\end{proof}
When $\Delta$ is any irreducible reduced root system, we write resp.~ $\Delta_s$ and $\Delta_l$ for the root subsystems of short and long roots. 
\begin{Lem}\label{LemSubsLongShort}
Assume that $\Gamma$ is a connected Dynkin diagram, not of type $A_{2n}$, with non-trivial involution. Then
\begin{enumerate}
\item  $\hat{\Delta}_s$ consists precisely of those $\beta$ with $(\varpi_{\hat{r}}^{\vee},\beta)$ odd for at least one $r\in I^*$.
\item If $\epsilon \in \mbH_{\tau}^{\ungauge,\times}$, then $\hat{\Delta}_{c}$ is a root subsystem of $\hat{\Delta}$. 
\end{enumerate} 
\end{Lem}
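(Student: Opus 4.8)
The plan is to establish (1) first and then deduce (2) from it together with one elementary parity observation. Two reductions prepare the ground for (1). First, under the standing hypotheses $\hat{\Delta}$ is irreducible, reduced, with two root lengths (of type $C_{n+1}$, $B_{n-1}$, $F_4$ when $\Gamma$ is $A_{2n+1}$, $D_n$, $E_6$ respectively), and its \emph{short simple} roots are exactly the $\alpha_{\hat{r}}$ with $r\in I^*$: indeed $\alpha_{\hat{r}} = \alpha_{r,+}$, and since $\alpha_r\perp\alpha_{\tau(r)}$ for $r\in I^*$ in each of these three diagrams one gets $(\alpha_{\hat{r}},\alpha_{\hat{r}}) = \frac{1}{2}(\alpha_r,\alpha_r)$, strictly smaller than $(\alpha_{\hat{s}},\alpha_{\hat{s}}) = (\alpha_s,\alpha_s)$ for $s\in I^{\tau}$ (all simple roots of the simply-laced $\Gamma$ having equal length). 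Hence $\hat{\Delta}_s$ is the $\hat{W}$-orbit of $\{\alpha_{\hat{r}}\mid r\in I^*\}$. Second, for $\beta = \sum_{\hat{s}} m_{\hat{s}}\alpha_{\hat{s}}\in\hat{\Delta}$ one has $(\varpi_{\hat{r}}^{\vee},\beta) = m_{\hat{r}}$, so the condition in (1) says precisely that some short simple root occurs in $\beta$ with an odd coefficient.

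Thus (1) reduces to the assertion that $\beta\in\hat{\Delta}$ is long if and only if every short simple root occurs in $\beta$ with even coefficient. For the forward direction I would note that the highest root $\theta$ (which is long) has this property, and that the property is preserved under each simple reflection $s_{\hat{r}}$ applied to a long root: reflection in a long simple root does not affect the coefficients of the short simple roots, while for $\alpha_{\hat{r}}$ short and $\gamma$ long the Cartan integer $(\gamma,\alpha_{\hat{r}}^{\vee})$ is even — normalizing short roots to squared length $1$ one has $\alpha_{\hat{r}}^{\vee} = 2\alpha_{\hat{r}}$, so $(\gamma,\alpha_{\hat{r}}^{\vee}) = 2(\gamma,\alpha_{\hat{r}})\in 2\Z$. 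Since $\hat{\Delta}_l = \hat{W}\theta$, this gives the forward direction. For the reverse direction one exhibits, for each short root, a short simple root appearing with odd coefficient; in types $B$ and $C$ this is immediate from the standard expansions of the roots $\pm e_i\pm e_j$ in terms of simple roots, and in type $F_4$ it is a small finite inspection.

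For (2), let $\epsilon\in\mbH_{\tau}^{\ungauge,\times}$. Then every $\hat{\epsilon}_{\hat{s}} = \epsilon_s$ is a \emph{nonzero real number} — positive for $s\in I^*$ by ungaugedness, and real and nonzero for $s\in I^{\tau}$ because $\tau(s)=s$ forces $\epsilon_s = \overline{\epsilon_s}$ and $J_{\nu}=I$ forces $\epsilon_s\neq 0$ — hence $\hat{\epsilon}_{\hat{Q}}(\beta)\in\R^{\times}$ for all $\beta\in\hat{\Delta}$, with $\hat{\epsilon}_{\hat{Q}}(-\beta) = \hat{\epsilon}_{\hat{Q}}(\beta)^{-1}$ of the same sign. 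By (1), a root $\beta$ is $\nu$-compact exactly when it is short or $\hat{\epsilon}_{\hat{Q}}(\beta)>0$; in particular a \emph{long} $\nu$-compact root must satisfy $\hat{\epsilon}_{\hat{Q}}(\beta)>0$. Since $\hat{\Delta}$ is reduced, it remains to check that $\hat{\Delta}_c$ is stable under $\beta\mapsto-\beta$ (clear) and that $s_{\beta}(\gamma)\in\hat{\Delta}_c$ for all $\beta,\gamma\in\hat{\Delta}_c$. If $\gamma$ is short then $s_{\beta}(\gamma)$ is short, hence in $\hat{\Delta}_c$. If $\gamma$ is long then $\hat{\epsilon}_{\hat{Q}}(\gamma)>0$, the root $s_{\beta}(\gamma) = \gamma-(\gamma,\beta^{\vee})\beta$ is again long, and
\[
\hat{\epsilon}_{\hat{Q}}(s_{\beta}(\gamma)) = \hat{\epsilon}_{\hat{Q}}(\gamma)\,\hat{\epsilon}_{\hat{Q}}(\beta)^{-(\gamma,\beta^{\vee})};
\]
if $\beta$ is long this is positive because $\hat{\epsilon}_{\hat{Q}}(\beta)>0$, and if $\beta$ is short then $(\gamma,\beta^{\vee})$ is even by the parity observation from (1), so $\hat{\epsilon}_{\hat{Q}}(\beta)^{-(\gamma,\beta^{\vee})}$ is an even power of a nonzero real and hence positive. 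In all cases $s_{\beta}(\gamma)\in\hat{\Delta}_c$, so $\hat{\Delta}_c$ is a root subsystem.

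\textbf{Main obstacle.} Everything above is uniform except the ``short if and only if odd coefficient'' characterization in (1): the parity of $(\gamma,\beta^{\vee})$ for $\gamma$ long and $\beta$ short, the identity $(\varpi_{\hat{r}}^{\vee},\beta)=m_{\hat{r}}$, and the reflection-closure argument in (2) are all general, but I expect the equivalence in (1) to require either the uniform reflection argument for its forward direction combined with a case-by-case verification of the reverse direction, or simply a direct inspection of the three folded root systems $B$, $C$, $F_4$.
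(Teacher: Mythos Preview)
Your proof is correct and follows essentially the same route as the paper. The only cosmetic difference is in the Weyl-orbit argument for part~(1): the paper starts from a \emph{simple} long root (where the short-simple coefficients are trivially zero, hence even) rather than from the highest root $\theta$, which saves you the small check that $\theta$ itself has even short-simple coefficients; and for part~(2) the paper packages your case analysis as $\hat{\Delta}_c = \hat{\Delta}_{l,c}\cup\hat{\Delta}_s$ with each piece a root subsystem preserved by reflections from the other, but the content is identical to what you wrote.
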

\begin{proof}
Necessarily $\Gamma$ is simply laced, and the only $\hat{\Gamma}$ which can arise are those of type $B_m,C_m$ or $F_4$. Since we can only have $(\alpha_r,\alpha_{\tau(r)}) = 0$ for $\tau(r)\neq r$, as we have excluded the case $A_{2n}$, it is further clear that $r\in I^{\tau}$ if and only if $\alpha_{\hat{r}}$ is long. The first statement of the lemma thus says that $\beta \in \hat{\Delta}_s$ if and only if $\beta$ contains at least one short simple root with odd multiplicity in its decomposition $\beta = \sum_{\hat{r}} k_{\hat{r}}\alpha_{\hat{r}}$. That the latter property is true for $\beta \in \hat{\Delta}_s$ follows immediately by inspection \cite[Appendix, \S 2, Table 1]{OV90}. The converse direction can either again be verified by inspection, or one can argue that any long root is Weyl group conjugate to a simple long root, with the action of $s_{\hat{r}}$ for $\hat{r}$ short adding even multiples of $\alpha_{\hat{r}}$. 

For the second property, note that by the first part
\[
\hat{\Delta}_c = \hat{\Delta}_{l,c} \cup \hat{\Delta}_s,\qquad \textrm{where }\hat{\Delta}_{l,c}=  \{\beta \in \hat{\Delta}_l\mid \hat{\epsilon}_{\hat{Q}}(\beta)>0\}.
\]
Clearly $\hat{\Delta}_{l,c}$ and $\hat{\Delta}_s$ are root subsystems. Since $(\beta,\gamma^{\vee}) \in 2\Z$ for all $\beta \in \hat{\Delta}_l$ and $\gamma\in \hat{\Delta}_s$, we have $s_{\gamma}(\hat{\Delta}_{l,c}) = \hat{\Delta}_{l,c}$ for all $\gamma \in \hat{\Delta}_s$. Since $\hat{\Delta}_s$ is automatically preserved by the whole Weyl group $\hat{W}$, it follows that $\hat{\Delta}_c$ is a root system. 
\end{proof}

\begin{Def}
We define $W_{\nu}^+ \subseteq W_{\nu}$ to be the Weyl group of the root system $\hat{\Delta}_c$.
\end{Def}

\subsection{Twisted dot-action}

We now come to the definition of the twisted dot-action, which will be important for a Harish-Chandra-type formula later on. Note that, in contrast to the classical dot-action, we work here in the multiplicative setting, and the dot-action is with respect to a shift by $\epsilon$ in stead of by the halfsum of all positive roots.

\begin{Def}\label{DefDot}
 Let $\nu = (\tau,\epsilon)\in \mbH^{\ungauge}$ be an ungauged twisting datum. We define an action 
\[
\cdot_{\epsilon}: W_{\nu}\times \mbH_{\tau}^{\times} \rightarrow \mbH_{\tau}^{\times},\quad 
(w\cdot_{\epsilon} \lambda)_P(\omega) = \epsilon_Q(\omega - w^{-1}\omega)\lambda_P(w^{-1}\omega),\qquad \omega \in P.
\]
\end{Def}

Note that this indeed defines an action. The following lemma follows by a direct computation.

\begin{Lem}\label{LemDotAction}
The value of $\hat{s}_r\cdot_{\epsilon}\lambda$ remains everywhere the same as $\lambda$ except at the positions $t\in \{r,\tau(r)\}$, where the value $\lambda_t$ gets multiplied by
\begin{itemize}
\item $\epsilon_t  \lambda_P(\alpha_t)^{-1}$ when $(\alpha_r^{\vee},\alpha_{\tau(r)}) \in \{2,0\}$,
\item $|\epsilon_t|^2 |\lambda_P(\alpha_t)|^{-2}$ when $(\alpha_r^{\vee},\alpha_{\tau(r)}) =-1$.
\end{itemize}
\end{Lem}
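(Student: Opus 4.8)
The plan is to unwind the definitions in Definition~\ref{DefDot} and compute $\hat{s}_r\cdot_\epsilon\lambda$ directly on fundamental weights. Recall that an element $\mu\in\mbH_\tau^\times$ is determined by its values $\mu_P(\varpi_t)$ for $t\in I$, and that $\mu$ is $\tau$-conjugate symmetric, so $\hat{s}_r\cdot_\epsilon\lambda$ is determined once we know its values at $t\in\{r,\tau(r)\}$. For $t\notin\{r,\tau(r)\}$, Theorem~\ref{LemFunctjvarpi1} (or the explicit list of $\hat{s}_r$ right after it) shows $\hat{s}_r\alpha_t$ differs from $\alpha_t$ only by a combination of $\alpha_r$ and $\alpha_{\tau(r)}$; but one checks $\varpi_t-\hat{s}_r^{-1}\varpi_t$ lands in the span of $\alpha_r,\alpha_{\tau(r)}$ while $\hat{s}_r^{-1}\varpi_t=\varpi_t$ off the $\{r,\tau(r)\}$ coordinates — wait, more carefully: $\hat{s}_r$ fixes $\varpi_t$ for $t\neq r,\tau(r)$ since $\hat{s}_r\in W_{\{r,\tau(r)\}}$ acts trivially on the weights orthogonal to $\alpha_r^\vee,\alpha_{\tau(r)}^\vee$. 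Hence $(w\cdot_\epsilon\lambda)_P(\varpi_t)=\epsilon_Q(0)\lambda_P(\varpi_t)=\lambda_P(\varpi_t)$, confirming the value is unchanged outside $\{r,\tau(r)\}$.

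Next I would treat the three cases according to the value of $(\alpha_r^\vee,\alpha_{\tau(r)})$, using the explicit form of $\hat{s}_r$ from the bulleted list after Theorem~\ref{LemFunctjvarpi1}. In the case $(\alpha_r^\vee,\alpha_{\tau(r)})=2$, so $r\in I^\tau$ and $\hat{s}_r=s_r$: here $\tau(r)=r$, and $s_r\varpi_r=\varpi_r-\alpha_r$, so $\varpi_r-s_r^{-1}\varpi_r=\alpha_r$, giving $(\hat{s}_r\cdot_\epsilon\lambda)_P(\varpi_r)=\epsilon_Q(\alpha_r)\lambda_P(s_r^{-1}\varpi_r)=\epsilon_r\lambda_P(\varpi_r-\alpha_r)=\epsilon_r\lambda_P(\varpi_r)\lambda_P(\alpha_r)^{-1}$, i.e.\ $\lambda_r$ is multiplied by $\epsilon_r\lambda_P(\alpha_r)^{-1}$, as claimed (with $t=r$ the only position in $\{r,\tau(r)\}$). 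In the case $(\alpha_r^\vee,\alpha_{\tau(r)})=0$, so $\hat{s}_r=s_rs_{\tau(r)}=s_{\tau(r)}s_r$ with the two reflections commuting: then $s_rs_{\tau(r)}\varpi_r=\varpi_r-\alpha_r$ and similarly $s_rs_{\tau(r)}\varpi_{\tau(r)}=\varpi_{\tau(r)}-\alpha_{\tau(r)}$, so for $t\in\{r,\tau(r)\}$ we get $\varpi_t-\hat{s}_r^{-1}\varpi_t=\alpha_t$ and thus $\lambda_t$ is multiplied by $\epsilon_Q(\alpha_t)\lambda_P(\alpha_t)^{-1}=\epsilon_t\lambda_P(\alpha_t)^{-1}$, again matching.

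Finally, for $(\alpha_r^\vee,\alpha_{\tau(r)})=-1$, so $\hat{s}_r=s_rs_{\tau(r)}s_r=s_{\tau(r)}s_rs_{\tau(r)}$ is the longest element of $W_{\{r,\tau(r)\}}\cong S_3$: this element sends $\alpha_r\mapsto-\alpha_{\tau(r)}$ and $\alpha_{\tau(r)}\mapsto-\alpha_r$ on the rank-two subsystem, and one computes $\hat{s}_r\varpi_r=\varpi_r-\alpha_r-\alpha_{\tau(r)}$, hence $\varpi_r-\hat{s}_r^{-1}\varpi_r=\alpha_r+\alpha_{\tau(r)}$, so $\lambda_r$ is multiplied by $\epsilon_Q(\alpha_r+\alpha_{\tau(r)})\lambda_P((\hat{s}_r^{-1}-1)(-\varpi_r))^{-1}$; since $\epsilon_Q(\alpha_r+\alpha_{\tau(r)})=\epsilon_r\epsilon_{\tau(r)}=\epsilon_r\overline{\epsilon_r}=|\epsilon_r|^2=|\epsilon_t|^2$ and $\lambda_P(\alpha_r+\alpha_{\tau(r)})=\lambda_P(\alpha_r)\lambda_P(\alpha_{\tau(r)})$ with $\lambda_P(\alpha_{\tau(r)})=\overline{\lambda_P(\alpha_r)}$ by $\tau$-conjugate symmetry of $\lambda$, we get the multiplier $|\epsilon_t|^2|\lambda_P(\alpha_t)|^{-2}$, and symmetrically for $t=\tau(r)$. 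I expect the only real care needed is bookkeeping the sign conventions in $\varpi_t-w^{-1}\varpi_t$ versus $w\varpi_t$ and keeping the $\tau$-conjugation straight in the $-1$ case; there is no conceptual obstacle, just the rank-one and rank-two $\mathfrak{sl}$ computations, which can be recorded once and for all.
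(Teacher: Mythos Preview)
Your proposal is correct and is exactly the direct computation the paper has in mind; the paper itself gives no details beyond ``follows by a direct computation.'' Your case-by-case evaluation of $(\hat{s}_r\cdot_\epsilon\lambda)_P(\varpi_t)$ using the explicit forms of $\hat{s}_r$ and the $\tau$-conjugate symmetry of $\epsilon$ and $\lambda$ is precisely what is needed.
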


\begin{Def}\label{DefWMin}
Let $\nu = (\tau,\epsilon) \in \mbH^{\ungauge}_J$. We define $W_{\nu}^-$ as the set of elements $w =s_{r_n}\ldots s_{r_1}\in W_{\nu}$ with $r_1,\ldots,r_n\in J^{\tau}$ such that  for all $0\leq k \leq n-1$
\begin{equation}\label{EqCondWminus}
(s_{r_{k+1}}\ldots s_{r_1})\cdot_{\epsilon} \mbH^{\gg}_{\tau} \neq  (s_{r_{k}}\ldots s_{r_1})\cdot_{\epsilon} \mbH^{\gg}_{\tau}.
\end{equation}
\end{Def}

Clearly \eqref{EqCondWminus} is the same as asking $(s_{r_{k+1}}\ldots s_{r_1})\cdot_{\epsilon} + \notin  (s_{r_{k}}\ldots s_{r_1})\cdot_{\epsilon} \mbH^{\gg}_{\tau}$, or that at each successive step a sign change is introduced. 

\begin{Theorem}\label{TheoSec}
\begin{enumerate}
\item The map 
\begin{equation}\label{EqBijminplus}
W_{\nu}^- \rightarrow W_{\nu}/W_{\nu}^+,\quad w \mapsto wW_{\nu}^+
\end{equation}
is bijective.
\item If $w_1,w_2\in W_{\nu}^-$ and $w_1\cdot_{\epsilon} \mbH^{\gg}_{\tau} = w_2\cdot_{\epsilon} \mbH^{\gg}_{\tau}$, then $w_1 = w_2$. 
\end{enumerate}
\end{Theorem}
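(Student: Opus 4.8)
The plan is to reduce everything to the folded setting and then to exploit the combinatorics of the root subsystem $\hat\Delta_c$ inside $\hat\Delta$. First I would unwind the definitions: by Theorem~\ref{LemFunctjvarpi1} we identify $W_\nu = W^\tau_J \cong \hat W_{\hat J}$, and I would translate the $\cdot_\epsilon$-action and the sign-change condition \eqref{EqCondWminus} into the folded picture, where each $\hat s_r$ with $r\in J^\tau$ becomes a genuine simple reflection $s_{\hat r}$ of $\hat\Delta$ and Lemma~\ref{LemDotAction} tells us that a sign change is introduced at a given step $s_{\hat r_{k+1}}\cdots s_{\hat r_1}$ precisely when the (folded) root being reflected lands outside $\hat\Delta_c$. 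Concretely, I expect that $w\cdot_\epsilon\mbH^{\gg}_\tau = \mbH^{\gg}_\tau$ if and only if $w\in W_\nu^+$ (the Weyl group of $\hat\Delta_c$), and more generally that the orbit $w\cdot_\epsilon\mbH^{\gg}_\tau$ depends only on the coset $wW_\nu^+$ — this is the key bookkeeping lemma to isolate and prove first, essentially by checking that $W_\nu^+$ is generated by reflections $s_\gamma$ with $\gamma$ $\nu$-compact and that these fix $\mbH^{\gg}_\tau$ under $\cdot_\epsilon$ (using that $\hat\epsilon_{\hat Q}(\gamma)>0$ or the parity condition, exactly as in Definition~\ref{DefnuCompactRoot}).

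Granting that lemma, part (2) of the theorem is immediate: if $w_1,w_2\in W_\nu^-$ satisfy $w_1\cdot_\epsilon\mbH^{\gg}_\tau = w_2\cdot_\epsilon\mbH^{\gg}_\tau$, then $w_2^{-1}w_1$ fixes $\mbH^{\gg}_\tau$, hence $w_2^{-1}w_1\in W_\nu^+$, so $w_1W_\nu^+ = w_2W_\nu^+$; thus part (2) reduces to the injectivity assertion in part (1), and it suffices to prove (1). For surjectivity of \eqref{EqBijminplus}, I would take an arbitrary $w\in W_\nu$ and build a reduced-in-the-$\cdot_\epsilon$-sense word for it: starting from $w$, if $w\notin W_\nu^+$ there is some simple reflection $s_{\hat r}$ ($r\in J^\tau$) for which moving from $w$ to $s_{\hat r}w$ (or rather writing $w$ as a product with $s_{\hat r}$ at the relevant end) strictly changes the orbit $\,\cdot\,\cdot_\epsilon\mbH^{\gg}_\tau$; iterating and using that there are only finitely many orbits (bounded by $|W_\nu/W_\nu^+|$) produces an element of $W_\nu^-$ in the coset $wW_\nu^+$. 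The cleanest way to organize this is to argue that the orbits of $W_\nu\curvearrowright W_\nu/W_\nu^+$ under left multiplication by the $s_{\hat r}$, recorded together with the "sign vector" $w\cdot_\epsilon +$, form a graph on which $W_\nu^-$ is exactly the set of geodesic representatives from the base vertex $W_\nu^+$.

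The main obstacle I anticipate is the injectivity half of part (1): showing that distinct elements of $W_\nu^-$ lie in distinct cosets $wW_\nu^+$, equivalently (by the bookkeeping lemma) that the map $W_\nu^-\to \{\text{orbits }w\cdot_\epsilon\mbH^{\gg}_\tau\}$ is injective. Surjectivity onto orbits is relatively soft, but injectivity requires that the "no backtracking" condition \eqref{EqCondWminus} genuinely pins down a unique minimal-length representative — i.e., that if two sign-change-minimal words end in the same orbit they must be equal as group elements. I would approach this by a length/exchange argument in the Coxeter group $\hat W_{\hat J}$: show that for $w\in W_\nu^-$ the chosen word is in fact a reduced word for $w$ in the usual sense (each $s_{\hat r_{k+1}}$ makes a root positive $\to$ negative, because a sign change under $\cdot_\epsilon$ forces the reflected folded root out of $\hat\Delta_c$ and in particular forces it to be a genuinely new positive root), so that $W_\nu^-$ consists of the minimal-length coset representatives of $W_\nu/W_\nu^+$, which are well known to be unique. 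The delicate point to nail down — and where I'd expect to spend the most care — is the equivalence "sign change at step $k$" $\iff$ "the folded positive root $(s_{\hat r_k}\cdots s_{\hat r_1})^{-1}\alpha_{\hat r_{k+1}}$ is not $\nu$-compact", for which I would lean on Lemma~\ref{LemDotAction} together with Definition~\ref{DefnuCompactRoot}, treating the three cases $(\alpha_r^\vee,\alpha_{\tau(r)})\in\{2,0,-1\}$ separately and, in the $A_{2n}$-excluded folded types $B_m,C_m,F_4$, using the short/long root dichotomy from Lemma~\ref{LemSubsLongShort}.
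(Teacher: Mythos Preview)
Your overall architecture is close to the paper's, but there is one genuine gap and one point where your route diverges in a way that creates extra work.

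\textbf{The gap: the converse of your ``bookkeeping lemma''.} You want $w\cdot_\epsilon\mbH^{\gg}_\tau=\mbH^{\gg}_\tau \iff w\in W_\nu^+$, and you propose to prove it ``essentially by checking that $W_\nu^+$ is generated by reflections $s_\gamma$ with $\gamma$ $\nu$-compact and that these fix $\mbH^{\gg}_\tau$''. That is only the implication $\Leftarrow$, which is indeed easy. The implication $\Rightarrow$ --- that every $w\in W_\nu$ stabilizing $\mbH^{\gg}_\tau$ already lies in the Weyl group of $\hat\Delta_c$ --- is the substantive part. The condition $\epsilon_Q(\omega-w^{-1}\omega)>0$ for all $\omega$ gives that $w$ preserves the partition $\hat\Delta=\hat\Delta_c\sqcup\hat\Delta_{nc}$, but a Weyl group element preserving a root subsystem need not act on it as an element of that subsystem's Weyl group. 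The paper does \emph{not} prove this combinatorially: for $\tau=\id$ it passes to a simply connected compact group $U$, builds the real form $G_\nu$ from the Vogan diagram $\sgn(\epsilon)$, and uses the classical identification $W^+=N_{G_\nu}(T)/T$ together with an explicit computation on highest weight representations to show that any $w$ fixing $\mbH^{\gg}$ has a lift in $G_\nu$. Your outline gives no substitute for this Lie-theoretic input, and since you make both the reduction of (2) to (1) and the surjectivity/injectivity arguments rest on the full bookkeeping lemma, the gap propagates everywhere.

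\textbf{The divergence: injectivity via minimal-length coset representatives.} Even granting the lemma, your plan for injectivity is to show that $W_\nu^-$ is the set of minimal-length representatives of $W_\nu/W_\nu^+$, ``which are well known to be unique''. Two cautions. First, $W_\nu^+$ is the Weyl group of the root subsystem $\hat\Delta_c$, which is in general \emph{not} parabolic in $\hat W_{\hat J}$ (the simple $\nu$-compact roots need not be simple in $\hat\Delta$); uniqueness of minimal-length coset representatives then requires the reflection-subgroup theory of Dyer rather than the elementary parabolic statement. Second, the equivalence you isolate --- ``sign change at step $k$'' $\iff$ ``$\gamma_k=s_{r_1}\cdots s_{r_{k-1}}\alpha_{r_k}$ is non-compact'' --- is exactly right (and is what the paper uses too), but it gives only $N(w^{-1})\subseteq\hat\Delta_{nc}$; turning that into ``$w$ is the minimal-length element of $wW_\nu^+$'' is another step. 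The paper bypasses all of this with a concatenation trick: from $w_1\cdot_\epsilon\mbH^{\gg}=w_2\cdot_\epsilon\mbH^{\gg}$ one sees that the word for $w_2^{-1}w_1$ obtained by juxtaposing the (reversed) word of $w_2$ with the word of $w_1$ is again sign-changing, hence $w_2^{-1}w_1\in W_\nu^-\cap W_\nu^+$, and the latter is shown to be $\{e\}$ directly from the fact that a nontrivial element of $W(\hat\Delta_c)$ must invert some compact positive root. This is shorter and avoids any appeal to minimal-length representatives.

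For $\tau\neq\id$ the paper does not work in the folded picture as you propose, but instead restricts to the ordinary (trivially-involuted) subdiagram $\Gamma_{I^\tau}$, observes that $W_\nu^-$ is already computed there, and then recovers the full statement from $W_\nu=W_{I^\tau}\cdot W_\nu^+$ via Lemma~\ref{LemSubsLongShort}. Your folded approach could likely be made to work as well, but it is not how the paper proceeds.
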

\begin{proof} 
Note that all $\lambda \in W_{\nu}\cdot_{\epsilon} \mbH^{\gg}_{\tau}$ satisfy $\lambda_r>0$ when $\epsilon_r=0$. Using Lemma \ref{LemDotAction}, it is then easily seen that we may restrict to the case where $I$ is $\tau$-connected with $I= \supp(\epsilon)$. Since the theorem is trivial in case we are in the product type situation, we may in fact assume that $I$ is connected. 

Assume now first that $\tau= \id$, so in particular $W_{\nu} =W$. Note that in this case the $\nu$-compact roots in $\hat{\Delta} = \Delta$ are simply the $\alpha$ with $\epsilon_Q(\alpha)>0$.

\textbf{Claim}: $W^+$ is the global stabilizer of $\mbH_{\id}^{\gg}$ under the $\epsilon$-twisted dot-action. 

Choosing a compact root $\alpha$ and $\lambda\in \mbH_{\id}^{\gg}$, we have 
\[
(s_{\alpha}\cdot_{\epsilon} \lambda)_r = \epsilon_Q(\alpha)^{(\varpi_r,\alpha^{\vee})}\lambda_P(s_{\alpha}(\varpi_r)) >0,
\] 
hence $W^+$ leaves $\mbH_{\id}^{\gg}$ globally stable. 

For the converse direction, we first fix a connected and simply connected compact Lie group $U$ with complexification $G=U_{\C}$, together with a maximal torus $T\subseteq U$ and Borel subgroup $T_{\C} \subseteq B \subseteq G$ realizing the root system $\Delta$ with its particular choice of positive roots. Let $\Theta$ be the Cartan involution of $G$ with fixed points $U$, and put $g^* = \Theta(g)^{-1}$. We further make $\nu$ into a complex Lie group involution on $G$ uniquely determined by $\nu(X_{\alpha}) = \sgn (\epsilon_Q(\alpha))X_{\alpha}$ for $X_{\alpha} \in \mfg$ a root vector at weight $\alpha$ in the Lie algebra $\mfg$ of $G$. Let 
\[
G_{\nu} = \{g\in G \mid \nu(g)^* = g^{-1}\},
\] 
so that $G_{\nu}$ is the real semisimple Lie group associated to the Vogan diagram $\sgn(\epsilon)$. As is well-known, 
\begin{equation}\label{EqIdWeylGroup}
W^+ = W(T:G_{\nu}) = N_{G_{\nu}}(T)/T,
\end{equation}
see e.g.~ \cite[Chapter IX.\S7]{Kna01}.

Pick $w\in W$ which leaves $\mbH_{\id}^{\gg}$ globally stable, and pick a representative $\widetilde{w} \in N_{U}(T)$.  It is by \eqref{EqIdWeylGroup} sufficient to show that also $\widetilde{w} \in G_{\nu}$. But let $V_{\varpi}$ be an irreducible unitary representation of $U$ at highest weight $\varpi$, and let $\msE_{\varpi}$ be the operator on $V_{\varpi}$ determined by $\msE_{\varpi} v = \sgn(\epsilon_Q(\varpi - \wt(v)))v$ for $v$ a weight vector at weight $\wt(v)\in P$.  Then $\pi_{\varpi}(\nu(u)) = \msE_{\varpi} \pi_{\varpi}(u)\msE_{\varpi}$ for $u\in U$. It follows that
\[
\pi_{\varpi} (\nu(\widetilde{w})^*\widetilde{w})v = \msE_{\varpi} \pi_{\varpi}(\widetilde{w})^{-1} \msE_{\varpi} \pi_{\varpi}(\widetilde{w})v  = \sgn(\epsilon_Q(\wt(v) - w\wt(v)))v = v
\]
by the assumption on $w$. This proves $\nu(\widetilde{w})^*=\widetilde{w}^{-1}$, hence $\widetilde{w}\in G_{\nu}$ and so $w\in W^+$. This finishes the proof of the claim. 

Let us show now that the map \eqref{EqBijminplus} is bijective. 

To see that the map \eqref{EqBijminplus} is surjective, write any $w \in W$ as a product of $s_r$ and remove those $s_r$ which act by multiplication with $\mbH_{\id}^{\gg}$ within the expression $w\cdot_{\epsilon} +$. Then we obtain an element $w_-$ such that $w_-\cdot_{\epsilon} \mbH_{\id}^{\gg} =w\cdot_{\epsilon}\mbH_{\id}^{\gg}$. By the previous claim, $w = w_-w_+$ for $w_+\in W^+$, proving surjectivity. 

To see that the map \eqref{EqBijminplus} is injective, we again first make a claim. 

\textbf{Claim}: $W^+ \cap W^- = \{e\}$. 

Indeed, assume that $w\in W^-$, and write $w = s_{r_n}\ldots s_{r_1}$ as in \eqref{EqCondWminus}. We may clearly take this to be a reduced expression. Consider the roots 
\[
\gamma_{k} = s_{r_1}\ldots s_{r_{k-1}}(\alpha_{r_k}).
\]
By the assumption on $w$, we have
\[
\R_{<0} =  \frac{(s_{r_k} \ldots s_{r_1}\cdot +)_{r_k}}{(s_{r_{k-1}}\ldots s_{r_1}\cdot +)_{r_k}} \R_{>0} = \epsilon_Q(\alpha_{r_k}) (s_{r_{k-1}} \ldots s_{r_1}\cdot+)_P(\alpha_{r_k})^{-1} \R_{>0} = \epsilon_Q(\gamma_k)\R_{>0},
\]
hence all the $\gamma_k$ are non-compact. Since 
\begin{equation}\label{EqIntPosComp}
w^{-1}\Delta^+ \cap \Delta^+ = \Delta^+\setminus \{\gamma_1,\ldots,\gamma_n\},
\end{equation} 
we can not have $w\in W^+$ unless $w$ is trivial. This proves the claim. 

Assume now that $w_1,w_2 \in W^-$ with $w_1W^+ = w_2W^+$, or equivalently $w_1\cdot_{\epsilon} \mbH^{\gg}_{\id} = w_2\cdot_{\epsilon}\mbH^{\gg}_{\id}$. Write $w_1 = s_{r_n}\ldots s_{r_1}$ and $w_2 = s_{t_m}\ldots s_{t_1}$ as in \eqref{EqCondWminus}. Then $w = s_{t_1}\ldots s_{t_m}s_{r_n}\ldots s_{r_1}$ is seen to be an expression still satisfying \eqref{EqCondWminus}, hence $w \in W^-$. On the other hand, since $w$ stabilizes $\mbH^{\gg}_{\id}$ we have $w \in W^+$. From the above claim, it follows that $w= e$ and hence $w_1 = w_2$. 

This ends the proof that \eqref{EqBijminplus} is bijective, and it also follows at once that property (2) in the theorem holds. 

Let us turn now to the case $\tau \neq \id$. We may assume that $\Gamma$ is not of type $A_{2n}$, since in this case $W_{\nu}^-$ is trivial and $W_{\nu}^+  = W^{\tau}$. Consider the Dynkin subdiagram $\Gamma' = \Gamma_{I^{\tau}}$ with the twisting datum $\nu' = (\id,\epsilon')$ where $\epsilon'$ is the restriction of $\epsilon$. Since any $\alpha_r$ with $r \in I^{\tau}$ has $(\alpha_r,\alpha_s^{\vee}) = (\alpha_r,\alpha_{\tau(s)}^{\vee})$ for all $s$, it follows by Lemma \ref{LemDotAction} and \eqref{EqDefPropEps} that $W_{\nu}^-$ coincides with its counterpart $W_{\nu'}^-$ for $\Gamma'$. By the first part of the proof, we have a bijection
\begin{equation}\label{EqPartialBij}
W_{\nu}^- \times W_{\nu'}^+ \rightarrow W_{I^{\tau}}. 
\end{equation}
The first part of the proof also shows already that property (2) in the theorem holds. 

Let us show now that \eqref{EqBijminplus} is injective. It is sufficient to show that 
\begin{equation}\label{EqIntRoots}
W_{\nu'}^+  = W_{\nu}^+ \cap W_{I^{\tau}}.
\end{equation}
Since $W_{I^{\tau}}$ is parabolic, it follows by Chevalley's lemma that $W_{\nu}^+ \cap W_{I^{\tau}}$ is generated by reflections $s_{\beta}$ across roots $\beta \in \hat{\Delta}_c\cap \Delta_{I^{\tau}} = (\Delta_{I^{\tau}})_c$, implying \eqref{EqIntRoots}.

To see that \eqref{EqBijminplus} is surjective, it is by \eqref{EqPartialBij} sufficient to show that 
\[
W_{I^{\tau}} \times W_{\nu}^+ \rightarrow W_{\nu}
\]
is surjective. However, we have by Lemma \ref{LemSubsLongShort} that $W_{\nu}^+ \supseteq \hat{W}_s$, the Weyl group of the  short roots $\hat{\Delta}_s$. Since $\hat{W}_s$ is normal in $\hat{W}$, we find 
\[
W_{I^{\tau}} W_{\nu}^+ \supseteq W_{I^{\tau}}\hat{W}_s  = W_{I^{\tau}}\hat{W}_s W_{I^{\tau}}.
\]
Hence $W_{I^{\tau}}\hat{W}_s$ is a group containing all $s_{\hat{r}}$, so $W_{I^{\tau}} W_{\nu}^+ = W_{I^{\tau}}\hat{W}_s = W_{\nu}$. 
\end{proof} 

We end this section by recording the following results. Fix $\nu$ an ungauged deformation datum.

\begin{Lem}\label{LemDecompRho}
Let $w\in W_{\nu}^-$ and $v\in W_{\nu}^+\setminus \{e\}$. Let 
\[
\rho = \frac{1}{2}\sum_{\alpha \in \Delta^+} \alpha = \sum_{r\in I}\varpi_r. 
\]
Then $w^{-1}\rho -(wv)^{-1}\rho \in \hat{Q}_c^+\setminus\{0\}$. 
\end{Lem}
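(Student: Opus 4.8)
The plan is to recognise the displayed element as $\mu - v^{-1}\mu$, where $\mu := w^{-1}\rho$ (using $(wv)^{-1} = v^{-1}w^{-1}$), and then to invoke the classical fact that a dominant weight minus any Weyl translate of it is a non‑negative sum of positive roots. Since $\tau$ permutes the fundamental weights, $\rho = \sum_r \varpi_r$ is $\tau$-fixed, so both $\mu$ and $v^{-1}\mu$ lie in $V^{\tau}$; under the identification $W^{\tau}\cong \hat{W}$ of Theorem \ref{LemFunctjvarpi1} the element $v$ lies in $W_{\nu}^+$, the Weyl group of $\hat{\Delta}_c$. If $\mu$ is dominant for the induced positive system $\hat{\Delta}_c^+ := \hat{\Delta}_c\cap\hat{\Delta}^+$, then the usual induction on the length of $v$ in $W_{\nu}^+$ gives $\mu - v^{-1}\mu \in \hat{Q}_c^+$, with vanishing only when $v$ fixes $\mu$. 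So it will suffice to prove \textbf{(i)} $\mu$ is $\hat{\Delta}_c^+$-dominant, and \textbf{(ii)} no $v\in W_{\nu}^+\setminus\{e\}$ fixes $\mu$.

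Claim (ii) I would get for free from regularity of $\rho$: for $\gamma = \alpha_+\in\hat{\Delta}$ with $\alpha\in\Delta$ one has $(\rho,\gamma) = \tfrac12\bigl[(\rho,\alpha)+(\rho,\tau\alpha)\bigr] = (\rho,\alpha)\neq 0$, using $\tau\rho = \rho$. Hence $\mu = w^{-1}\rho$ is $\hat{\Delta}$-regular too, so $(\mu,\beta)\neq 0$ for every $\beta\in\hat{\Delta}_c$; the stabiliser of $\mu$ in $W_{\nu}^+$, being generated by those reflections $s_\beta$ ($\beta\in\hat{\Delta}_c$) that fix $\mu$, is therefore trivial. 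In particular $\mu - v^{-1}\mu\neq 0$ for $v\neq e$, which gives the ``$\setminus\{0\}$'' part of the statement.

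For (i), set $N := \{\beta\in\hat{\Delta}^+\mid w\beta\in-\hat{\Delta}^+\}$. For $\beta\in\hat{\Delta}_c^+$ we have $(\mu,\beta^{\vee}) = (\rho,(w\beta)^{\vee})$, which — as $\rho$ is regular dominant for $\hat{\Delta}$ and $w\beta$ is a root — is $\geq 0$ exactly when $w\beta\in\hat{\Delta}^+$, i.e. when $\beta\notin N$; so (i) is equivalent to $N\cap\hat{\Delta}_c = \emptyset$. By Definition \ref{DefWMin} we may write $w = s_{r_n}\cdots s_{r_1}$ with all $r_i\in J^{\tau}$, so $w\in W_{J^{\tau}}$ and thus $N\subseteq\Delta_{J^{\tau}}^+\subseteq\Delta_{I^{\tau}}^+$, with $N = \{\gamma_1,\dots,\gamma_n\}$, $\gamma_k = s_{r_1}\cdots s_{r_{k-1}}(\alpha_{r_k})$. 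As recorded in the proof of Theorem \ref{TheoSec}, $W_{\nu}^-$ coincides with the analogous set for the ungauged twisting datum $\nu' = (\id,\epsilon|_{I^{\tau}})$ on $\Gamma_{I^{\tau}}$, and the $\tau = \id$ portion of that proof shows $\epsilon_Q(\gamma_k)\in\R_{<0}$ for all $k$; since $\tau' = \id$ makes the second alternative of Definition \ref{DefnuCompactRoot} vacuous, this means $N\cap(\Delta_{I^{\tau}})_c = \emptyset$. Finally, each $\alpha\in\Delta_{I^{\tau}}$ is $\tau$-fixed, so $\alpha_+ = \alpha$ has squared length $2$, which is the long value (in the relevant $\tau$-connected, non–product, $\tau\neq\id$ case $\Gamma$ is simply laced, and type $A_{2n}$ is excluded here as there $I^{\tau} = \emptyset$); hence $\alpha$ is a long root of $\hat{\Delta}$, for which Lemma \ref{LemSubsLongShort} again rules out the second alternative, so $\alpha\in\hat{\Delta}_c$ iff $\hat{\epsilon}_{\hat{Q}}(\alpha)>0$ iff $\alpha\in(\Delta_{I^{\tau}})_c$. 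Thus $\hat{\Delta}_c\cap\Delta_{I^{\tau}} = (\Delta_{I^{\tau}})_c$, and combining, $N\cap\hat{\Delta}_c = N\cap\hat{\Delta}_c\cap\Delta_{I^{\tau}} = N\cap(\Delta_{I^{\tau}})_c = \emptyset$, proving (i). (When $W_{\nu}^- = \{e\}$ — e.g. $\Gamma$ of type $A_{2n}$ with $\tau\neq\id$, or of direct product type — one simply has $w = e$ and $\mu = \rho$ already $\hat{\Delta}^+$-dominant; when $\tau = \id$ one has $\hat{\Delta} = \Delta$ and the above is exactly the cited $\tau = \id$ argument.)

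The step I expect to be the main obstacle is (i): one has to match the notion of $\nu$-compact root on the folded system $\hat{\Delta}$ with that on the Levi sub-datum $\nu'$ on $\Gamma_{I^{\tau}}$ that actually governs the inversion set $N$, the crux being that the short $\nu$-compact roots of $\hat{\Delta}$ never enter $N$ — which holds precisely because $w$ lies in the parabolic subgroup $W_{I^{\tau}}$ and $\Delta_{I^{\tau}}$ consists entirely of long roots of $\hat{\Delta}$. Everything else is the routine length induction for ``dominant minus translate'' together with the regularity of $\rho$.
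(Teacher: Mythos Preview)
Your proof is correct and follows essentially the same route as the paper's. Both arguments reduce the claim to showing $(w^{-1}\rho,\beta)>0$ for every $\beta\in\hat{\Delta}_c^+$, equivalently $w\beta\in\hat{\Delta}^+$, and both establish this by noting that the inversion set of $w$ consists precisely of the roots $\gamma_k = s_{r_1}\cdots s_{r_{k-1}}(\alpha_{r_k})$, which are shown to be non-$\nu$-compact in the proof of Theorem~\ref{TheoSec}; the identification $\hat{\Delta}_c\cap\Delta_{I^{\tau}} = (\Delta_{I^{\tau}})_c$ you invoke is exactly \eqref{EqIntRoots} from that proof. The only cosmetic difference is that you split the strict positivity into a dominance part (i) and a regularity part (ii), whereas the paper handles both at once via the explicit telescoping formula $w^{-1}\rho-(wv)^{-1}\rho=\sum_p (w^{-1}\rho,s_{\beta_{r_1}}\cdots s_{\beta_{r_{p-1}}}\beta_{r_p}^{\vee})\beta_{r_p}$ and the integrality of $(\rho,\alpha_+^{\vee})$.
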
 

\begin{proof}
If $v = s_{\beta_{r_1}}\ldots s_{\beta_{r_n}}$ is a reduced decomposition of $v$, with $\beta_k$ the simple roots of $\hat{\Delta}_c$, then  
\[
w^{-1}\rho -(wv)^{-1}\rho = \sum_{p=1}^n (w^{-1}\rho, s_{\beta_{r_1}}\ldots s_{\beta_{r_{p-1}}}\beta_{r_p}^{\vee})\beta_{r_p}.
\]
Since all $s_{\beta_{r_n}}\ldots s_{\beta_{r_{p+1}}}\beta_{r_p}\in \hat{\Delta}_c^+$, and since $(\rho,\alpha_+^{\vee})\in \Z$ for all $\alpha \in \Delta$, it is hence sufficient to show that $(w^{-1}\rho,\beta) > 0$ for all $\beta \in \hat{\Delta}_c^+$. In turn, this will follow if we can show $w\beta \in \hat{\Delta}^+$. But assume the latter were not the case. Writing $w = \hat{s}_{r_m}\ldots \hat{s}_{r_1}$ as in \eqref{EqCondWminus}, it follows that we must then have $\hat{s}_{r_p}\ldots \hat{s}_{r_1}\beta = -\alpha_{\hat{r}_p}$ for some $p$. This however implies $\beta = \hat{s}_{r_1}\ldots \hat{s}_{r_{p-1}}\alpha_{\hat{r}_p}$, which is non-compact by the proof of Theorem \ref{TheoSec}. This contradiction ends the proof.  
\end{proof}

\begin{Lem}\label{LemPosWPlus}
For all $w\in W_{\nu}^+$ and $\omega \in P$ one has
\begin{equation}\label{EqPosWPlus}
\epsilon_Q(\omega - w\omega) >0. \tag{*}
\end{equation}
Conversely, if $\beta \in \hat{\Delta}$ is such that \eqref{EqPosWPlus} holds for $w=s_{\beta}$ and all $\omega \in P^{\tau}$, then $\beta$ is $\nu$-compact.
\end{Lem}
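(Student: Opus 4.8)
The plan is to transport everything through the folding operation to a statement about the character $\hat\epsilon_{\hat Q}$ on the root lattice $\hat Q$ of $\hat\Delta$, and then to decompose the Weyl group element at hand into reflections in $\nu$-compact roots. The first observation I would record is that, since $\nu$ is ungauged, every $\epsilon_r$ is real, so on the sublattice $Q\cap V^{\tau}$ — generated by the $\alpha_r$ with $r\in I^{\tau}$ together with the $\alpha_r+\alpha_{\tau(r)}=2\alpha_{\hat r}$ with $r\in I^{*}$ — the characters $\epsilon_Q$ and $\hat\epsilon_{\hat Q}$ agree (both send $\alpha_r\mapsto\epsilon_r$ and $\alpha_r+\alpha_{\tau(r)}\mapsto\epsilon_r^2=|\epsilon_r|^2$). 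Since $\omega-w\omega\in Q\cap V^{\tau}$ whenever $w\in W^{\tau}$ and $\omega\in P^{\tau}$, this lets me replace $\epsilon_Q$ by $\hat\epsilon_{\hat Q}$ throughout and take $\omega\in P^{\tau}$ (which is the case needed later, and is also the case in which the first assertion is correct).

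For the forward direction I would write $w=s_{\beta_1}\cdots s_{\beta_k}$ with each $\beta_i\in\hat\Delta_c$ — legitimate because $\hat\Delta_c$ is a root subsystem whose Weyl group is $W_{\nu}^+$ — and use the telescoping identity $\omega-w\omega=\sum_i(v_{i+1}\omega,\beta_i^{\vee})\beta_i$, where $v_i=s_{\beta_i}\cdots s_{\beta_k}$ and $v_{i+1}\omega\in P^{\tau}$. This reduces the claim to positivity of each factor $\hat\epsilon_{\hat Q}(\beta_i)^{(v_{i+1}\omega,\beta_i^{\vee})}$. If $\hat\epsilon_{\hat Q}(\beta_i)>0$ there is nothing to do. Otherwise $\hat\epsilon_{\hat Q}(\beta_i)<0$, and here is where the structure theory enters: by ungaugedness (and $I^{\tau}=\emptyset$ for type $A_{2n}$), $\hat\epsilon_{\hat Q}$ is positive on all $\hat J$-supported roots of every $\tau$-connected component except those of connected non-$A_{2n}$ type with $\tau\neq\id$; on such a component $\hat\Delta_c=\hat\Delta_{l,c}\sqcup\hat\Delta_s$ by Lemma \ref{LemSubsLongShort}, so $\beta_i\notin\hat\Delta_{l,c}$ forces $\beta_i$ short, say $\beta_i=\gamma_+$. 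Since $\hat\Delta$ is then reduced, $(\beta_i,\beta_i)$ is half the long length, which pins down $(\gamma,\tau\gamma)=0$ and $\beta_i^{\vee}=\gamma^{\vee}+\tau\gamma^{\vee}$; for $\tau$-invariant $\lambda$ this yields $(\lambda,\beta_i^{\vee})=2(\lambda,\gamma^{\vee})\in2\Z$, so the exponent is even and the factor is again positive.

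For the converse I would argue by contraposition, assuming (*) holds for $w=s_{\beta}$ and all $\omega\in P^{\tau}$. If $\beta\notin\hat\Delta_{\hat J}$, I would pick a simple summand $\alpha_{\hat r}$ of $\beta$ with $r\notin J$ and test $\omega=\varpi_r$ (if $r\in I^{\tau}$) or $\omega=\varpi_r+\varpi_{\tau(r)}$ (if $r\in I^{*}$): then $\omega-s_{\beta}\omega$ is a nonzero integer multiple of $\beta$ with nonzero $\alpha_r$-coefficient, so $\epsilon_Q(\omega-s_{\beta}\omega)$ is zero or undefined, against (*); thus $\beta\in\hat\Delta_{\hat J}$. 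If $\hat\epsilon_{\hat Q}(\beta)>0$, $\beta$ is $\nu$-compact by the first clause of Definition \ref{DefnuCompactRoot}; if $\beta$ is short, Lemma \ref{LemSubsLongShort}(1) provides an index $r\in I^{*}$ with $(\varpi_{\hat r}^{\vee},\beta)$ odd, and that index lies in $I^{*}\cap J$, so $\beta$ is $\nu$-compact by the second clause. The only remaining case is $\beta$ long with $\hat\epsilon_{\hat Q}(\beta)<0$; outside the components where $\hat\epsilon_{\hat Q}\ge0$ this means $\beta$ is a $\tau$-fixed root of the (simply laced) $\Delta$, so $\beta^{\vee}=\beta=\sum_r n_r\alpha_r$ and $\hat\epsilon_{\hat Q}(\beta)=\prod_r\epsilon_r^{n_r}$; since $\epsilon_r>0$ for $r\in I^{*}$, negativity forces some $s\in I^{\tau}\cap J$ with $\epsilon_s<0$ and $n_s$ odd, and then $\omega=\varpi_s\in P^{\tau}$ gives $\omega-s_{\beta}\omega=n_s\beta$ with $\epsilon_Q(\omega-s_{\beta}\omega)=\hat\epsilon_{\hat Q}(\beta)^{n_s}<0$, contradicting (*). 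Hence $\beta$ is $\nu$-compact.

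The part I expect to be the main obstacle is the dichotomy in the forward direction: proving that a $\nu$-compact root on which $\hat\epsilon_{\hat Q}$ takes a negative value must be short — this is where the case analysis underlying Lemma \ref{LemSubsLongShort} and the inspection of the $\tau$-connected component types have to be marshalled — together with the parity identity $(\lambda,\beta^{\vee})\in2\Z$ for short $\beta$ and $\tau$-invariant $\lambda$, which is precisely what makes the negative factors harmless.
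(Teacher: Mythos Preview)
Your approach is the same as the paper's: both reduce the forward direction to reflections $s_\beta$ with $\beta\in\hat\Delta_c$, dispose of the case $\hat\epsilon_{\hat Q}(\beta)>0$ immediately, and in the remaining case note that $\beta$ is short with $(\omega,\beta^\vee)\in 2\Z$ for $\tau$-invariant $\omega$. The converse is likewise argued along the same lines, only organised by sign and length rather than by parity of the multiplicities as in the paper.

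You are right to restrict to $\omega\in P^\tau$. In fact the forward direction as stated for all $\omega\in P$ is false: take $\Gamma=A_3$ with the nontrivial $\tau$ and $\epsilon_1=\epsilon_3=1$, $\epsilon_2=-1$; then $\beta=(\alpha_1+\alpha_2)_+=\alpha_{\hat 1}+\alpha_2\in\hat\Delta_c$, and the corresponding $w=s_{\alpha_1+\alpha_2}s_{\alpha_2+\alpha_3}\in W_\nu^+$ gives $\varpi_1-w\varpi_1=\alpha_1+\alpha_2$ with $\epsilon_Q(\alpha_1+\alpha_2)=-1$. The paper's proof silently switches to $P^\tau$ in its display~(**), and only the $P^\tau$-case is ever used downstream (Corollary~\ref{CorRefineBS} and the computation preceding Theorem~\ref{TheoFormInvFunct}).

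One small imprecision in your converse: in the ``remaining case'' you write that $\Delta$ is simply laced and hence $\beta^\vee=\beta$, so that $\omega-s_\beta\omega=n_s\beta$ for $\omega=\varpi_s$. This is fine on $\tau$-connected components with $\tau\ne\id$, but fails for $\tau=\id$ on a non-simply-laced component (e.g.\ $B_2$), where $(\varpi_s,\beta^\vee)$ need not equal the multiplicity $n_s$ of $\alpha_s$ in $\beta$. The repair is immediate: for $\tau=\id$ one has $P^\tau=P$, and since $\beta^\vee$ is $W$-conjugate to a simple coroot there exists $\omega\in P$ with $(\omega,\beta^\vee)=1$, so (**) gives $\epsilon_Q(\beta)>0$ directly and $\beta$ is $\nu$-compact by the first clause.
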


\begin{proof}
It is clear that \eqref{EqPosWPlus} will hold once it holds for all $w = s_{\beta}$ with $\beta  \in \hat{\Delta}_c$. In this case, \eqref{EqPosWPlus} states that 
\begin{equation}\label{EqPosWPlusAlt}
\hat{\epsilon}_{\hat{Q}}(\beta)^{(\omega,\beta^{\vee})} >0, \qquad \forall \omega \in P^{\tau}.\tag{**}
\end{equation}
We may assume that $\supp(\epsilon) = I$ and that $\Gamma$ is connected and not of type $A_{2n}$ with non-trivial involution. 

The condition is clearly satisfied for the first case in Definition \ref{DefnuCompactRoot}. In the second case we have that $\beta$ is a short root in $\hat{\Delta}$, and so $(\omega,\beta^{\vee})\in 2\Z$. 

Assume now conversely that $\beta\in \hat{\Delta}$ is such that \eqref{EqPosWPlus} holds for $s_{\beta}$, or equivalently \eqref{EqPosWPlusAlt} holds for all $\omega \in P^{\tau}$. Then clearly $\beta$ must be supported on $\hat{J} = \supp(\hat{\epsilon})$, so we may suppose that $J = I$. If then $\beta$ contains $\alpha_r$ with $r\in I^{\tau}$ with odd multiplicity, it follows upon chosing $\omega = \varpi_r$ that $\hat{\epsilon}_{\hat{Q}}(\beta) >0$, hence $\beta$ is $\nu$-compact. If on the other hand $\beta$ contains all $\alpha_r$ with $r\in I^{\tau}$ with even multiplicity, then necessarily $\beta$ must contain a folded root with odd multiplicity. Since the latter are short, we are done.
\end{proof}

\begin{Cor}\label{CorRefineBS}
If $\nu \sim \nu'$ are equivalent ungauged twisting data, there exists $w\in W_{\nu}^-$ and $f\in \mbH_{\tau}^{>}$ such that $\epsilon' = f w\epsilon$.
\end{Cor}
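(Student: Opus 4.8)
The plan is to separate the Weyl-group factor in the equivalence from the positive torus factor, normalise the Weyl element so that it lies in $W_{\nu}^{-}$, and then check that the residual $W_{\nu}^{+}$-contribution can be absorbed into $\mbH_{\tau}^{>}$. First, unwinding $\nu\sim\nu'$: one has $\tau_{\nu}=\tau_{\nu'}=:\tau$, a common locus of non-degeneracy $J$, and, since $\mbH_{\tau}^{>}$ is stable under $W^{\tau}$ (so that the orbit of $\epsilon$ under $W_{\nu}\ltimes\mbH_{\tau}^{>}$ is precisely $\{\,g\cdot(v\epsilon)\mid v\in W_{\nu},\ g\in\mbH_{\tau}^{>}\,\}$, where $v\epsilon$ denotes the Weyl action and $\cdot$ pointwise multiplication), one can write $\epsilon'=g\cdot(v\epsilon)$ for some $v\in W_{\nu}$ and $g\in\mbH_{\tau}^{>}$. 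Then I would invoke Theorem \ref{TheoSec}(1) to choose the unique $w\in W_{\nu}^{-}$ with $wW_{\nu}^{+}=vW_{\nu}^{+}$ and factor $v=wu$ with $u\in W_{\nu}^{+}$, so that $v\epsilon=w(u\epsilon)$.

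The core of the argument will be the claim that $u\epsilon=h\cdot\epsilon$ for a suitable $h\in\mbH_{\tau}^{>}$. For $r\in J$ one has $u^{-1}\alpha_r-\alpha_r\in Q_J:=\Z\{\alpha_s\mid s\in J\}$ (as $u\in W_{\nu}^{+}\subseteq W_{J}$), and since $\epsilon_Q$ restricts to a homomorphism $(Q_J,+)\to(\C^{\times},\cdot)$ this gives
\[
\frac{(u\epsilon)_r}{\epsilon_r}=\epsilon_Q\bigl(u^{-1}\alpha_r-\alpha_r\bigr)=\epsilon_Q\bigl(\alpha_r-u^{-1}\alpha_r\bigr)^{-1}.
\]
Applying Lemma \ref{LemPosWPlus} to $u^{-1}\in W_{\nu}^{+}$ and $\omega=\alpha_r\in P$ shows this ratio is a strictly positive real for every $r\in J$; off $J$ both $(u\epsilon)_r$ and $\epsilon_r$ vanish since $W_{J}$ preserves the support. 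Setting $h_r$ equal to this ratio for $r\in J$ and to $1$ otherwise then produces an $h\in\mbH_{\tau}^{>}$ with $u\epsilon=h\cdot\epsilon$.

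The remainder is formal bookkeeping: $v\epsilon=w(h\cdot\epsilon)=(wh)\cdot(w\epsilon)$ with $wh\in\mbH_{\tau}^{>}$ by $W^{\tau}$-stability, hence $\epsilon'=g\cdot(v\epsilon)=\bigl(g\cdot(wh)\bigr)\cdot(w\epsilon)=f\,w\epsilon$ with $f:=g\cdot(wh)\in\mbH_{\tau}^{>}$ and $w\in W_{\nu}^{-}$, which is the assertion. I expect the only real obstacle to be the middle step — that $W_{\nu}^{+}$ moves $\epsilon$ only inside its $\mbH_{\tau}^{>}$-orbit — and this is exactly the positivity recorded in Lemma \ref{LemPosWPlus}; everything else amounts to manipulations with the monoid homomorphisms $\epsilon_Q$ and with the semidirect-product action of $W_{\nu}\ltimes\mbH_{\tau}^{>}$.
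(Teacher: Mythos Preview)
Your proof is correct and follows essentially the same route as the paper: decompose the Weyl element via Theorem~\ref{TheoSec} into a $W_\nu^-$-part and a $W_\nu^+$-part, show that the $W_\nu^+$-part moves $\epsilon$ only inside its $\mbH_\tau^{>}$-orbit using Lemma~\ref{LemPosWPlus}, and absorb the resulting factor. The one minor difference is that you apply Lemma~\ref{LemPosWPlus} directly to the full element $u^{-1}\in W_\nu^+$ with $\omega=\alpha_r\in P$, whereas the paper first reduces to generators $s_\beta$ of $W_\nu^+$ and applies the lemma with $\omega=\alpha_+$, which forces a separate remark for type $A_{2n}$ (where $\alpha_+\notin P$); your choice of $\omega=\alpha_r$ sidesteps this case distinction.
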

\begin{proof}
Note first that if $\beta\in \hat{\Delta}_c^+$, then for $\alpha \in \Delta_+$ we have 
\[
(s_{\beta}\epsilon)_Q(\alpha) = \epsilon_Q(\alpha) \hat{\epsilon}_{\hat{Q}}(\beta)^{(\alpha,\beta^{\vee})}.
\]
Now except for type $A_{2n}$ with non-trivial involution, we always have $\alpha_+ \in P$ for $\alpha \in \Delta^+$. Since $\epsilon$ is however positive anyway for type $A_{2n}$ by the ungaugedness assumption, we deduce from the previous lemma that $s_{\beta}\epsilon = g\epsilon$ with $g\in \mbH_{\tau}^{\gg}$. 

Assume now that $\epsilon ' = f (w\epsilon)$ for $w\in W_{\nu}$ and $f\in \mbH_{\tau}^{>}$. We can write $w = uv$ with $u\in W_{\nu}^-$ and $v\in W_{\nu}^+$. By the above we see that $\epsilon' = f'(u\epsilon)$ for some $f'\in \mbH_{\tau}^{>}$. 
\end{proof}

\section{Quantized enveloping algebras and their deformations}

\subsection{Quantized enveloping algebras}

Let $\mfg$ be a complex semisimple Lie algebra of rank $l$. We fix a set of Chevalley generators
\[
\mcS = \{h_r,e_r,f_r\mid r\in I\} \subseteq \mfg,
\] 
indexed by a set $I$ with $|I|=l$. We write $\mfb^{\pm}$ for the associated Borel subalgebras, $\mfh$ for the Cartan subalgebra and $\mfn^{\pm}$ for the associated nilpotent subalgebras. We write $\mfu \subseteq \mfg_{\R}$ for the compact form of $\mfg$ determined by the above choice of Chevalley generators, and we write $\mft = \mfh \cap \mfu$ for the maximal torus of $\mfu$. We write $\mfa = i \mft \subseteq \mfh_{\R}$. When viewing $\mfg$ as a real Lie algebra by forgetting the complex structure, we write $\mfg_{\R}$. 

We use the notation of the previous section for the associated root system $\Delta$ on $\mfa\cong \mfa'$. We will also use the shorthand notation
\[
d_r = \frac{1}{2}(\alpha_r,\alpha_r).
\]

Fix in the following $0<q<1$. Algebraically there is no issue allowing also $q>1$, but this will influence spectral conditions later on. 
\begin{Def}
The quantized enveloping algebra $U_q(\mfb^+)$ is the algebra generated by $K_{\omega}^+,E_r$ for $r\in I$ and $\omega\in P$, with $K_0^+ = 1$  and commutation relations
\[
K_{\omega}^+K_{\chi}^+ = K_{\omega+ \chi}^+, \qquad K_{\omega}^+ E_r = q^{(\omega,\alpha_r)} E_rK_{\omega}^+
\]
and the \emph{quantum Serre relations} whose precise form we will not need in what follows, see e.g. \cite[Section 6.1.2]{KS97}.
\end{Def}
Similarly one defines $U_q(\mfb^-)$. We write its generators as $K_{\omega}^-$ and $F_r^-$, with in particular $K_0^{-} = 1$ and
\[
K_{\omega}^-K_{\chi}^- = K_{\omega+ \chi}^-, \qquad K_{\omega}^- F_r = q^{-(\omega,\alpha_r)} F_rK_{\omega}^-.
\]
The algebras $U_q(\mfb^{\pm})$ form Hopf algebras for the comultiplication
\begin{equation}\label{EqComultBorel}
 K_{\omega}^{\pm}\mapsto K_{\omega}^{\pm}\otimes K_{\omega}^{\pm},\quad E_r \mapsto E_r\otimes 1+ K_r^+\otimes E_r,\quad F_r \mapsto F_r\otimes (K_r^-)^{-1} + 1\otimes F_r. 
\end{equation}

We let $U_q(\mfg)$ be the associated quantized enveloping algebra of $\mfg$, which is generated by a copy of the Hopf algebras $U_q(\mfb^+)$ and $U_q(\mfb^-)$ with the relations $K_{\omega}^+  = K_{\omega}^- =: K_{\omega}$ and interchange relations
\[
\lbrack E_r,F_s\rbrack =\delta_{r,s} \frac{ K_{r} -  K_r^{-1}}{q_r-q_r^{-1}},
\]
where we use the shorthand $K_r = K_{\alpha_r}$ and $q_r = q^{d_r}$. Endowed with the $*$-structure 
\[
K_{\omega}^* = K_{\omega},\qquad E_r^* = F_rK_r
\]
we obtain a Hopf $*$-algebra which we denote $U_q(\mfu)$. Its antipode squared is given by 
\begin{equation}\label{EqAntiSquared}
S^2 = \Ad(K_{2\rho}^{-1}).
\end{equation}
Occasionally, we will use $U_q'(\mfu)$ as generated by the $E_r,F_r$ and $K_r$. 

We view $U_q(\mfu)$ as a spectral $*$-algebra with admissible finite-dimensional $*$-representations those for which all $K_{\omega}$ have positive spectrum. Up to unitary equivalence the irreducible admissible $*$-representations are parametrized by $P^+$, and we write $V_{\varpi}$ for a $*$-representative associated to $\varpi \in P^+$. The correspondence is determined by the fact that $V_{\varpi}$ has a highest weight vector $\xi_{\varpi}$, which we normalize by $\|\xi_{\varpi}\|=1$, with 
\[
K_{\omega}\xi_{\varpi} = q^{(\omega,\varpi)}\xi_{\varpi},\qquad E_r \xi_{\varpi} = 0 \textrm{ for all }r.
\] 
When $V_{\pi}$ is a finite-dimensional admissible $*$-representation and $v\in V_{\pi}$ is a joint eigenvector of the $K_{\omega}$, we write $\wt(v)$ for the unique element of $P$ such that 
\[
K_{\omega}v = q^{(\omega,\wt(v))}v. 
\]
We write $U_{\varpi}(\xi,\eta)$ for the matrix coefficients of a highest weight $*$-representation $\pi_{\varpi}$ of highest weight $\varpi$.

\begin{Def} 
We write
\[
\mcO_q(U)  = \mcO(U_q) = \{U_{\pi}(\xi,\eta)\}
\] 
for the Hopf $*$-algebra of matrix coefficients of finite-dimensional admissible $*$-representations of $U_q(\mfu)$.
\end{Def} 
Here the $*$-structure is determined by
\[
(f^*,X) = \overline{(f,S(X)^*)},\qquad f\in \mcO_q(U),X\in U_q(\mfu).
\]
The definition remains the same if we replace $U_q(\mfu)$ by $U_q'(\mfu)$.

We view $\mcO_q(U)$ as a $q$-deformation of the $*$-algebra $\mcO(U)$ of regular functions on the simply connected compact Lie group $U$ integrating $\mfu$. The algebra $\mcO_q(U)$ separates elements in $U_q(\mfu)$, and we may identify $U_q(\mfu)\subseteq \mcO_q(U)'$, the linear dual of $\mcO_q(U)$. Moreover, $\mcO_q(U)$ has a positive Haar integral, and is hence associated to a compact quantum group which we write $U_q$. We have 
\[
\int_{U_q} U_{\varpi}(\xi,\eta) = \delta_{\varpi,0}\langle \xi,\eta\rangle,
\]
and the Woronowicz character is given by 
\begin{equation}\label{EqDeltaUq}
\hat{\delta}^{iz}(U_{\varpi}(\xi,\eta)) = q^{4iz(\rho,\wt(\eta))}\langle \xi,\eta\rangle,
\end{equation}
where 
\[
\rho = \frac{1}{2}\sum_{\alpha \in \Delta^+} \alpha = \sum_{r=1}^l \varpi_r.
\] 
Formally, we can write $\hat{\delta}^{iz} = K_{\rho}^{4iz}$. We further note the following.

\begin{Prop}[{\cite[Section 4.6.7]{LS91}, \cite[Theorem 2.7.14]{NT13}}]\label{PropCoamTyp1}
The compact quantum group $U_q$ is coamenable, and $C_q(U)  = C(U_q)$ is a type $I$ C$^*$-algebra. 
\end{Prop}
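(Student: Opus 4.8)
The plan is to deduce both assertions from the Levendorskii--Soibelman structure theory of $C_u(U_q)$, reducing everything to the rank-one building blocks $\mcO_{q_r}(SU(2))$ described in Theorem \ref{TheoListAllSUq2}. First I would recall, for each node $r\in I$, the dual inclusion $U_{q_r}(\mfsu(2))\hookrightarrow U_q(\mfu)$ and the resulting surjective Hopf $*$-algebra map $\mcO_q(U)\twoheadrightarrow \mcO_{q_r}(SU(2))$, as well as the classical quotient $\mcO_q(U)\twoheadrightarrow \mcO(T) = C(\T^l)$ dual to the maximal torus $T\subseteq U$. The essential input, which is the content of \cite[Section 4.6.7]{LS91}, is Soibelman's classification: up to unitary equivalence the irreducible $*$-representations of $C_u(U_q)$ are exactly the representations $\pi_w*\chi_t$ indexed by $(w,t)\in W\times\T^l$, where for a fixed reduced word $w = s_{r_1}\cdots s_{r_k}$ one puts $\pi_w = \pi_{s_{r_1}}*\cdots *\pi_{s_{r_k}}$ with $\pi_{s_r}$ the pullback along $\mcO_q(U)\to \mcO_{q_r}(SU(2))$ of the infinite-dimensional representation $\mcS$ on $l^2(\N)$ from Theorem \ref{TheoListAllSUq2}, and $\chi_t$ the pullback of evaluation at $t$ along $\mcO_q(U)\to C(\T^l)$. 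I would also import the complementary fact, again from \cite{LS91}, that the regular representation $\pi_{\reg}$ of $\mcO_q(U)$ on $L^2(U_q)$ weakly contains every $\pi_w*\chi_t$.

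Granting this, \emph{type $I$} follows by a standard filtration argument inside $A := \overline{\pi_{\reg}(\mcO_q(U))}^{\|-\|} = C_r(U_q)$. Let $J_k\subseteq A$ be the intersection of the kernels of all $\pi_w*\chi_t$ with $\ell(w)<k$; since $W$ is finite this is a finite decreasing chain of closed two-sided ideals $A = J_0\supseteq J_1\supseteq\cdots\supseteq 0$. By the classification, the subquotient $J_k/J_{k+1}$ has spectrum $\{w:\ell(w)=k\}\times\T^l$ and is realized as a $C(\T^l)$-algebra whose fibres are finite direct sums of copies of $\mcK(l^2(\N)^{\otimes k})$; such an algebra is of continuous trace, in particular type $I$. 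Since a C$^*$-algebra carrying a finite composition series with type $I$ subquotients is itself type $I$, the algebra $A$ is type $I$.

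For \emph{coamenability} I would observe that the counit $\varepsilon$ of $\mcO_q(U)$ is the representation $\pi_e*\chi_e$, hence occurs in the classification and is therefore weakly contained in $\pi_{\reg}$ by the fact imported above; equivalently $\varepsilon$ extends to a character of $C_r(U_q)$, which is exactly coamenability of $U_q$. (Alternatively one may invoke Banica's criterion, which reduces coamenability to the norm equality $\|\chi_{V_\varpi}\|_{C_r(U_q)} = \dim V_\varpi$ for $\varpi$ fundamental, again accessible from the rank-one models.) Once coamenability is in hand the natural surjection $C_u(U_q)\to C_r(U_q)$ is an isomorphism, so writing $C_q(U) = C(U_q)$ is legitimate, and by the previous paragraph this C$^*$-algebra is type $I$. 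For the actual write-up I would present the proof as a direct appeal to \cite[Section 4.6.7]{LS91} and \cite[Theorem 2.7.14]{NT13} together with the above reduction; the one genuinely nontrivial ingredient is Soibelman's classification and his analysis of the regular representation, and reproving it lies well outside the scope of this paper.
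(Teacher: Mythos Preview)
Your proposal is correct and matches the paper's approach: the paper gives no proof at all for this proposition, simply citing \cite[Section 4.6.7]{LS91} and \cite[Theorem 2.7.14]{NT13}, and your outline is precisely the argument behind those references (Soibelman's classification of irreducibles, the composition series with continuous-trace subquotients giving type~$I$, and weak containment of the counit in $\pi_{\reg}$ giving coamenability). Your closing remark that the write-up should just be a direct appeal to the cited sources is exactly what the paper does.
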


We write $L^{\infty}_q(U) = L^{\infty}(U_q)$ for the associated von Neumann algebra.

When forgetting the $*$-structure, we write $\mcO_q(U)$ rather as 
\[
\mcO_q(G) = \mcO(G_q),
\] 
with $G_q$ viewed as a $q$-deformation of the simply connected \emph{complex} algebraic group $G$ integrating $\mfg$.

Note that when $l=1$, we indeed obtain that $\mcO_q(SU(2))$, as defined in Section \ref{ExaRank1}, is dual to $U_q(\mfsu(2))$. More precisely, the duality is determined by the 2-dimensional $*$-representation $\pi_{1/2}$ with associated corepresentation $U$ such that 
\[
(U,K_{\varpi}) = \begin{pmatrix} q & 0 \\0 & q^{-1}\end{pmatrix},\qquad (U,E_{\alpha}) = \begin{pmatrix} 0 & q^{1/2} \\ 0 & 0 \end{pmatrix},
\]
where $\alpha$ is the simple root, normalized such that $(\alpha,\alpha)=2$, and $\varpi = \alpha/2$ is the fundamental weight. In the following, we will in this case write $E = E_{\alpha}$ and $K = K_{\alpha}$. Then $U_q'(\mfsu(2))$ is generated by $K,E,E^*$.

\subsection{Deformations of quantized enveloping algebras from twisting data, and associated coactions}

We keep the setting of the previous section. 

Let $\mu,\nu$ be twisting data. One can create a $(\mu,\nu)$-twisted Drinfeld double of $U_q(\mfb^+)$ and $U_q(\mfb^-)$ as follows, see \cite[Section 3.4]{DCNTY19} and \cite[Section 2.1]{DCM18}.\footnote{We allow somewhat more generality by allowing complex values for the parameters, but this extra generality is inessential and will be removed in a moment.}

\begin{Def}
For $\mu,\nu\in \mbH$ twisting data, the \emph{$(\mu,\nu)$-deformed quantized enveloping algebra} $U_q^{\mu,\nu}(\wmfg)$, with $\wmfg = \mfg \oplus \mfh$, is generated by $U_q(\mfb^+)$ and $U_q(\mfb^-)$ with interchange relations 
\[
K_{\omega}^+ K_{\chi}^- = q^{(\omega,\tau_{\nu}(\chi)-\tau_{\mu}(\chi))}K_{\chi}^-K_{\omega}^+, 
\]
\begin{equation}\label{EqCommRelBR}
K_{\omega}^- E_r = q^{(\alpha_r,\tau_{\mu}(\omega))}E_rK_{\omega}^-,\qquad K_{\omega}^+F_r =  q^{-(\alpha_r,\tau_{\nu}(\omega))} F_r K_{\omega}^+,
\end{equation}
\[
\lbrack E_r,F_s\rbrack = \frac{\delta_{r,\tau_{\nu}(s)} \epsilon_{\nu,r} K_{r}^+ - \delta_{r,\tau_{\mu}(s)} \overline{\epsilon_{\mu,s}} (K_{s}^-)^{-1}}{q_r-q_r^{-1}}.
\]
\end{Def}

It is easy to see that the multiplication maps 
\[
U_q(\mfb^+) \otimes U_q(\mfb^-) \rightarrow U_q(\wmfg),\qquad U_q(\mfb^-)\otimes U_q(\mfb^+) \rightarrow U_q(\wmfg)
\]
are vector space isomorphisms. The above algebra becomes a $*$-algebra for 
\[
(K_{\omega}^+)^* = K_{\omega}^-,\qquad E_r^* = F_rK_r^-, 
\]
in which case we write it as $U_q^{\mu,\nu}(\wmfu)$, where $\wmfu$ is interpreted as $\mfu \oplus \mfa$. The $U_q^{\mu,\nu}(\wmfu)$ form a $*$-compatible Hopf-Galois system, also called co-groupoid \cite{Bic14}, for the maps 
\[
\Delta: U_q^{\mu,\nu}(\wmfu) \rightarrow U_q^{\mu,\kappa}(\wmfu)\otimes U_q^{\kappa,\nu}(\wmfu),
\]
given by \eqref{EqComultBorel} on the components $U_q(\mfb^{\pm})$. In particular, the $U_q^{\nu,\nu}(\wmfu)$ are Hopf $*$-algebras. We write $U_q(\wmfu) = U_q^{++}(\wmfu)$. We see that $U_q(\mfu)$ coincides with the  quotient of $U_q(\wmfu)$ by putting $K_{\omega}^+ = K_{\omega}^-$.

In the following, we will only be interested in the case where $\mu=+$ is the trivial twisting datum. We write in this case the associated deformed quantized enveloping $*$-algebra 
\[
U_q^{\nu}(\wmfu) = U_q^{+,\nu}(\wmfu)
\]
with $\nu = (\tau,\epsilon)$. It comes equipped with a left coaction by $U_q^{++}(\widetilde{\mfu})$ and hence by $U_q(\mfu)$.

Fix now $\nu= (\tau,\epsilon)$. We use again the following shorthand notations for $\omega \in \mfa'\cong Q\otimes_{\Z}\R$ and $r\in I$,
\begin{equation}\label{EqShorthandPM}
\omega_{\pm} = \frac{1}{2}(\id\pm \tau)\omega,\qquad c_{\omega} = q^{(\omega_-,\omega_-)},\qquad c_r = c_{\alpha_r}. 
\end{equation}
Write $U_q^{\nu}(\mfu)$ for the $*$-subalgebra of $U_q^{\nu}(\wmfu)$ generated by the $E_r,E_r^*$ and the 
\[
T_{\omega} := c_{\omega}^{-1}  K_{-\tau(\omega)}^+K_{-\omega}^-.
\]
We have for $U_q^{\nu}(\mfu)$ the universal relations $T_0 = 1$ and
\[
T_{\omega + \chi}  =  T_{\omega}T_{\chi},\qquad T_{\omega}^* = T_{\tau(\omega)},
\]
\begin{equation}\label{CommTE}
T_{\omega} E_r = q^{-2(\omega_+,\alpha_r)}E_rT_{\omega} ,\qquad T_{\omega}E_r^* = q^{2(\omega_+,\alpha_r)}E_r^*T_{\omega} 
\end{equation}
\[
E_r E_s^*  - q^{-(\alpha_r,\alpha_s)}  E_s^*E_r = \frac{\delta_{r,\tau(s)} \epsilon_r c_r T_{-\alpha_s} - \delta_{r,s}}{q_r-q_r^{-1}}, 
\]
together with the usual Serre relations between the $E_r$. As one notices, the Cartan part of $U_q^{\nu}(\mfu)$ should rather be interpreted as $U_q(\mfh_{\nu})$, with
\[
\mfh_{\nu}  = \mft^{\tau} \oplus \mfa^{-\tau}.
\]
Note also that when $\nu = +$, we obtain an embedding $U_q^+(\mfu) \hookrightarrow U_q(\mfu)$, but it is not surjective as we only reach elements in the Cartan algebra with weights in $2P$. 

It is easily checked that the left coaction of $U_q(\wmfu)$ on $U_q^{\nu}(\wmfu)$  gives rise to a $*$-preserving left coaction
\[
\gamma: U_q^{\nu}(\mfu)\rightarrow U_q(\mfu) \otimes U_q^{\nu}(\mfu),
\]
\begin{equation}\label{EqDefGamma}
T_{\omega} \mapsto K_{-\omega -\tau(\omega)}\otimes T_{\omega},\quad E_r \mapsto E_r\otimes 1 + K_r \otimes E_r. 
\end{equation}
Moreover, the $*$-algebra $U_q^{\nu}(\wmfu)$ is through its Galois object structure endowed with a left $U_q(\wmfu)$-module $*$-algebra structure, called the \emph{Miyashita-Ulbrich action}, given in this concrete case by 
\begin{equation}\label{EqMiyaUlbrich}
X \rhdb Y = X_{(1)}YS(X_{(2)}),\qquad Y \in U_q^{\nu}(\wmfu), X \in U_q(\mfb)\cup U_q(\mfb^-).
\end{equation}
This is easily seen to descend to a left $U_q(\mfu)$-module $*$-algebra structure on $U_q^{\nu}(\mfu)$ (although the latter action will not be inner anymore if $\tau \neq \id$). The triple $(U_q^{\nu}(\mfu),\gamma,\rhdb)$ forms a $*$-compatible Yetter-Drinfeld $U_q(\mfu)$-module algebra. 

The next lemma shows that all $U_q^{\nu}(\mfu)$, with $\nu$ ranging in an equivalence class of twisting data, are isomorphic as Yetter-Drinfeld $U_q(\mfu)$-module $*$-algebras. The isomorphism can moreover be chosen such that it is compatible with certain `spectral conditions', which will be explored further in the next sections. 

\begin{Lem}\label{LemWeakEq}
Let $\nu,\nu' \in \mbH_{\tau}$ be equivalent twisting data. Then there exists $f \in \mbH_{\tau}^{\times}$ such that 
\[
\chi_f: U_q^{\nu'}(\mfu) \rightarrow U_q^{\nu}(\mfu), \qquad T_{\omega} \mapsto f_P(\tau(\omega))T_{\omega},\quad E_r \mapsto E_r
\]
extends to an isomorphism of Yetter-Drinfeld $U_q(\mfu)$-module $*$-algebras. Moreover, if $\epsilon' = \lambda (w\epsilon)$ for $w\in W_{\nu}$ and $\lambda \in \mbH_{\tau}^{>}$, then $f$ can be chosen so that the following condition with respect to $(w,\epsilon)$ holds:
\begin{equation}\label{EqSpecCondIso}
f_P(\omega) \epsilon_Q(\omega - w^{-1}\omega)>0,\qquad \forall \omega \in P^{\tau}.
\end{equation}
\end{Lem}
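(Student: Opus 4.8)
The plan is to write $f$ down essentially explicitly and then verify the required properties by inspecting the defining relations, handling the positivity condition by a change of variables.

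First I would determine the constraint on $f$. Applying the prescription $E_r\mapsto E_r$, $T_\omega\mapsto f_P(\tau(\omega))T_\omega$ to the defining relations of $U_q^{\nu'}(\mfu)$, the relations $T_0=1$, $T_{\omega+\chi}=T_\omega T_\chi$, the commutation relations \eqref{CommTE}, and the Serre relations among the $E_r$ are preserved for any character $f_P$ of $P$ (since $f_P$ is multiplicative and $\chi_f$ fixes the $E_r$), while the $*$-relations $T_\omega^*=T_{\tau(\omega)}$ are preserved exactly because $f_P(\tau(\omega))=\overline{f_P(\omega)}$, i.e. because $f\in\mbH_\tau$. The only relation forcing a condition is $E_rE_s^*-q^{-(\alpha_r,\alpha_s)}E_s^*E_r=\frac{\delta_{r,\tau(s)}\epsilon_r c_rT_{-\alpha_s}-\delta_{r,s}}{q_r-q_r^{-1}}$: as $\chi_f(T_{-\alpha_s})=f_P(-\alpha_{\tau(s)})T_{-\alpha_s}$, it is preserved iff $f_P(\alpha_r)=\epsilon'_r/\epsilon_r$ for all $r\in J_\nu=\supp(\epsilon)=\supp(\epsilon')$. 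I would stress that $f_P(\alpha_r)=\prod_s f_s^{a_{sr}}$ is the weight-lattice character at the root $\alpha_r=\sum_s a_{sr}\varpi_s$, not $f_r$; so the constraint only involves $f$ through $J_\nu$ and leaves considerable freedom. Any $f\in\mbH_\tau^\times$ satisfying it gives a $*$-isomorphism $\chi_f$ (inverse $\chi_{1/f}$), and $\chi_f$ intertwines $\gamma$ by direct inspection of \eqref{EqDefGamma}.

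Next I would handle the Miyashita--Ulbrich action $\rhdb$. Since it is defined via the Galois structure on $U_q^{\nu'}(\wmfu)$, I would extend $\chi_f$ to an algebra isomorphism $\widetilde{\chi}_f\colon U_q^{\nu'}(\wmfu)\to U_q^{\nu}(\wmfu)$ by $K_\omega^+\mapsto K_\omega^+$, $K_\omega^-\mapsto f_P(-\tau(\omega))K_\omega^-$, $E_r\mapsto E_r$, $F_r\mapsto f_P(\alpha_r)^{-1}F_r$; checking the relations of the deformed double is routine (the interchange relation again reduces to $f_P(\alpha_r)=\epsilon'_r/\epsilon_r$), and $\widetilde{\chi}_f$ restricts to $\chi_f$ on $U_q^{\nu'}(\mfu)$ because $T_\omega=c_\omega^{-1}K_{-\tau(\omega)}^+K_{-\omega}^-$ and $E_r^*=F_rK_r^-$. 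The key observation is that $\widetilde{\chi}_f$ is the identity on $U_q(\mfb^+)$ and a rescaling on $U_q(\mfb^-)$ whose factors on $F_r$ and $K_r^-$ are mutually inverse, so $\widetilde{\chi}_f$ commutes with the coproduct $\Delta\colon U_q^{\nu}(\wmfu)\to U_q(\wmfu)\otimes U_q^{\nu}(\wmfu)$ of \eqref{EqComultBorel}, hence with the left $U_q(\wmfu)$-coaction, and thus intertwines $X\rhdb Y=X_{(1)}YS(X_{(2)})$. Restricting then gives that $\chi_f$ is an isomorphism of Yetter--Drinfeld $U_q(\mfu)$-module $*$-algebras.

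For the refined statement I would write $\epsilon'=\lambda(w\epsilon)$, so the constraint reads $f_P(\alpha_r)=\lambda_r\,\epsilon_Q(w^{-1}\alpha_r-\alpha_r)$ on $J_\nu$, and introduce the auxiliary character $g_P(\omega):=f_P(\omega)\,\epsilon_Q(\omega-w^{-1}\omega)$ on $P$ (well defined since $\omega-w^{-1}\omega\in Q$, and $\tau$-conjugate symmetric since $w\in W^\tau$ and $\epsilon_Q(\tau(\gamma))=\overline{\epsilon_Q(\gamma)}$). Then the constraint becomes $g_P(\alpha_r)=\lambda_r$ on $J_\nu$ and \eqref{EqSpecCondIso} becomes $g_P(\omega)>0$ for all $\omega\in P^\tau$. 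A $\tau$-conjugate symmetric character of $P$ that is positive on each $\varpi_r$ with $r\in I^\tau$ is automatically positive on $P^\tau$ (the pairs $\{r,\tau(r)\}$ with $r\in I^*$ contribute $|g_P(\varpi_r)|^2$), so it suffices to build such a $g_P$: take $g_P(\varpi_r)=1$ for $r\notin J_\nu$, and on $J_\nu$ solve $\prod_{s\in J_\nu}g_P(\varpi_s)^{a_{sr}}=\lambda_r$ using invertibility of the Cartan matrix $A_{J_\nu}$, with branches chosen $\tau$-equivariantly; since $\lambda_r>0$ on $J_\nu\cap I^\tau$ and the $I^*$-pairs recombine into positive numbers, the values at $\varpi_r$ ($r\in I^\tau$) come out positive. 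Finally $f_P(\omega):=g_P(\omega)\,\epsilon_Q(\omega-w^{-1}\omega)^{-1}$, $f_r:=f_P(\varpi_r)$, yields the desired $f\in\mbH_\tau^\times$.

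The main obstacle I anticipate is twofold: the Miyashita--Ulbrich compatibility cannot be seen on $U_q^{\nu}(\mfu)$ alone and forces the detour through the full deformed double $U_q^{\nu}(\wmfu)$; and one must be careful that the isomorphism constraint is on $f_P(\alpha_r)$ — the weight-lattice character evaluated at a root — rather than on $f_r$. This distinction is precisely what supplies enough freedom to meet the positivity condition \eqref{EqSpecCondIso} together with the algebra-isomorphism constraint, and is the heart of the ``moreover'' part.
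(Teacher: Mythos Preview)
Your verification that $\chi_f$ is a $*$-algebra isomorphism and intertwines the coaction $\gamma$ is correct and matches the paper (which simply asserts that $\chi_f$ is a Yetter--Drinfeld $*$-isomorphism once $\epsilon'_r=f_P(\alpha_r)\epsilon_r$). Your treatment of the ``moreover'' part is also correct and essentially the same as the paper's: both reduce to producing a $\tau$-conjugate symmetric character $g_P$ on $P$ with $g_P(\alpha_r)=\lambda_r$ and $g_P>0$ on $P^\tau$, then set $f_P(\omega)=g_P(\omega)\,\epsilon_Q(\omega-w^{-1}\omega)^{-1}$. The paper builds $g$ by choosing $N$ with $P\subseteq\frac{1}{N}Q$ and taking $\tau$-equivariant $N$-th roots $\chi_r$ of $\lambda_r$, while you invert the Cartan submatrix $A_{J_\nu}$; both constructions work.

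There is however a genuine gap in your argument for the Miyashita--Ulbrich action. Your claim that the rescaling factors of $\widetilde{\chi}_f$ on $F_r$ and $K_r^-$ are ``mutually inverse'' amounts to $f_P(\alpha_r+\alpha_{\tau(r)})=1$, which fails in general: the factor on $F_r$ is $f_P(\alpha_r)^{-1}$ while that on $(K_r^-)^{-1}$ is $f_P(\alpha_{\tau(r)})$, and applying $(\id\otimes\widetilde{\chi}_f)$ to $\Delta(F_r)=F_r\otimes(K_r^-)^{-1}+1\otimes F_r$ versus $\Delta(\widetilde{\chi}_f(F_r))$ gives different results unless $f_P(\alpha_r)f_P(\alpha_{\tau(r)})=1$. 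Consequently $\widetilde{\chi}_f$ does \emph{not} intertwine the left $U_q(\wmfu)$-coaction on the $U_q(\mfb^-)$-part, and the adjoint action $F_r\rhdb Y=[F_r,Y]K_r^-$ picks up the spurious factor $f_P(\alpha_r+\alpha_{\tau(r)})^{-1}$.

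The fix is immediate and does not need the detour through the full double: since $\widetilde{\chi}_f$ is the identity on $U_q(\mfb^+)$, the adjoint actions by $K_\omega$ and $E_r$ \emph{are} intertwined by $\chi_f$; and because $\rhdb$ is a $*$-module-algebra action with $(E_r\rhdb Y)^*=S(E_r)^*\rhdb Y^*=-F_r\rhdb Y^*$ and $\chi_f$ is a $*$-map, the $F_r$-intertwining follows formally from the $E_r$-intertwining.
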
 
\begin{proof}
It is clear that $\chi_f$ will be a $*$-isomorphism of Yetter-Drinfeld module $*$-algebras as soon as $f \in \mbH_{\tau}^{\times}$ with $\epsilon_r' = f_P(\alpha_r) \epsilon_r$ for all $r\in I$. 

Assume now that $\epsilon' = \lambda (w\epsilon)$ for $w\in W_{\nu}$ and $\lambda \in \mbH_{\tau}^{>}$. Choose $N\in \N$ such that $P \subseteq \frac{1}{N}Q$, and pick $\chi \in \mbH_{\tau}^{>}$ such that $\chi_r^N = \lambda_r$ for all $r\in I$. Define $g\in \mbH_{\tau}^{\times}$ by $g_P(\omega) = \prod \chi_r^{k_r}$ if $\omega = \sum_r k_r (N^{-1}\alpha_r)$ with $k_r\in \N$. Then clearly $g_P(\alpha_r) =\lambda_r$, and $g_P(\omega)>0$ for $\omega \in P^{\tau}$. Define now
\[
f_P(\omega) = g_P(\omega) \epsilon_Q(\omega -w^{-1}\omega)^{-1},\qquad \omega \in P.
\]
Then $f$ clearly satisfies the requirements. 
\end{proof}

Remark that this allows us to restrict ourselves to deal with strongly reduced twisting data only. The general case is however convenient to keep around for using scaling arguments, though we will from now on always assume that at least $\nu \in \mbH^{\ungauge}$ when considering $U_q^{\nu}(\mfu)$.

\subsection{Interpretation of $U_q^{\nu}(\mfu)$ as quantized function algebra}

We want to introduce certain spectral conditions on $U_q^{\nu}(\mfu)$. These are more easily justified for the quantized function algebra interpretation of $U_q^{\nu}(\mfu)$ which we now introduce.

\begin{Def}
Let $\nu= (\tau,\epsilon)$ be an ungauged twisting datum. We define $\mcO_q(Z_{\nu}^{\reg})$ as the $U_q(\mfu)$-module $*$-algebra generated by elements $a_{\omega}$ for $\omega \in P$ and $x_r,x_r^*$ with $r\in I$ such that the map 
\[
\iota_{\nu}: \mcO_q(Z_{\nu}^{\reg}) \rightarrow U_q^{\nu}(\mfu),\quad a_{\omega}\mapsto T_{\omega},\quad x_r \mapsto E_r
\]
is a $U_q(\mfu)$-equivariant $*$-isomorphism.
\end{Def}

\begin{Rem}
\begin{itemize}
\item Note that, with respect to the notation in \cite[(2.36)]{DCM18}, we have rescaled the $a_{\omega}$ so that $\omega \mapsto a_{\omega}$ is a homomorphism. 
\item As we are working in the concrete setting where $q$ is a definite number, the above renaming process is of course purely formal. Within the deformation setting where $q$ is a parameter, the choice of generators for $\mcO_q(Z_{\nu}^{\reg})$ has to be made more judiciously as to obtain the correct limit as a function algebra when $q$ is set equal to $1$.
\end{itemize}
\end{Rem}

The following theorem collects some of the results in \cite[Section 2]{DCM18}. Note first that $\tau$ can be interpreted as a Hopf $*$-algebra isomorphism of $U_q(\mfu)$ by 
\begin{equation}\label{EqTauHopfIso}
\tau(K_{\omega}) = K_{\tau(\omega)},\qquad \tau(E_r) = E_{\tau(r)},\qquad \tau(F_r) = F_{\tau(r)}. 
\end{equation}
By duality, we also have that $\tau$ determines a Hopf $*$-algebra automorphism of $\mcO_q(U)$. 

\begin{Theorem}\label{TheoOqZ}
Endow $\mcO_q(U)$ with the $\tau$-twisted adjoint $U_q(\mfu)$-module structure 
\begin{equation}\label{EqTwistedAdj}
X \rhdb U_{\pi}(\xi,\eta) = U_{\pi}(\tau(S(X_{(1)}))^*\xi,X_{(2)}\eta).
\end{equation}
Then there exists a unique injective $U_q(\mfu)$-module map 
\begin{equation}\label{EqInclusioninu}
i_{\nu}: \mcO_q(U) \rightarrow \mcO_q(Z_{\nu}^{\reg})
\end{equation}
such that 
\begin{equation}\label{EqInclU}
i_{\nu}(U_{\varpi}(\xi_{\varpi},\xi_{\varpi})) = c_{\varpi}a_{\varpi}.
\end{equation}
\end{Theorem}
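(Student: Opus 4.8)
The plan is to construct $i_\nu$ by specifying it on matrix coefficients and checking equivariance and well-definedness. The key observation is that $\mcO_q(U)$ is linearly spanned by the $U_\varpi(\xi,\eta)$, and that the $\tau$-twisted adjoint action \eqref{EqTwistedAdj} is a genuine $U_q(\mfu)$-module algebra action (this is the content that makes the target $\mcO_q(Z_\nu^{\reg})$, viewed as a $U_q(\mfu)$-module $*$-algebra, a natural recipient). Since $U_\varpi(\xi_\varpi,\xi_\varpi)$ generates (under the twisted adjoint action together with the algebra structure) the isotypical pieces built from $V_\varpi\otimes \overline{V_\varpi}$, the formula \eqref{EqInclU} together with $U_q(\mfu)$-equivariance forces the definition of $i_\nu$ on all of $\mcO_q(U)$: one sets
\[
i_\nu\bigl(X \rhdb U_\varpi(\xi_\varpi,\xi_\varpi)\bigr) = X \rhdb (c_\varpi a_\varpi),\qquad X\in U_q(\mfu).
\]
So the first step is to show this is well-defined, i.e.\ that the left kernel $\{X : X\rhdb U_\varpi(\xi_\varpi,\xi_\varpi)=0\}$ is contained in the left kernel of $X\mapsto X\rhdb(c_\varpi a_\varpi)$ inside $\mcO_q(Z_\nu^{\reg})$.

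For this I would use the highest-weight structure on both sides. On the $\mcO_q(U)$ side, $U_\varpi(\xi_\varpi,\xi_\varpi)$ is (up to the twist built into \eqref{EqTwistedAdj}) a highest/lowest weight vector for the adjoint-type action: $E_r$ and $F_r$ act in a controlled way, and the cyclic $U_q(\mfu)$-submodule it generates is the image of $V_\varpi\otimes \overline{V_\varpi}$ under the coefficient map, with a completely explicit module structure. On the $\mcO_q(Z_\nu^{\reg})\cong U_q^\nu(\mfu)$ side, $c_\varpi a_\varpi = c_\varpi T_\varpi$, and the Miyashita–Ulbrich / Yetter–Drinfeld action $\rhdb$ on $T_\varpi$ is computable from \eqref{EqMiyaUlbrich} and the commutation relations \eqref{CommTE}: one checks that $E_r \rhdb T_\varpi$ and $F_r\rhdb T_\varpi$ behave exactly as the corresponding actions on the generating coefficient, matching weights and the relevant $q$-integer scalars. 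Concretely, $\wt$-considerations give $K_\omega\rhdb T_\varpi = q^{(\omega,\ \cdot\ )}T_\varpi$ matching $K_\omega \rhdb U_\varpi(\xi_\varpi,\xi_\varpi)$, and the top-level generator maps correctly by \eqref{EqInclU}, so the two cyclic modules are quotients of the same Verma-type module by the same relations — forcing a well-defined $U_q(\mfu)$-module map on the submodule generated by $U_\varpi(\xi_\varpi,\xi_\varpi)$. Running over all $\varpi\in P^+$ and patching (using that distinct isotypical components are independent, hence there is no compatibility obstruction between different $\varpi$) yields $i_\nu$ on all of $\mcO_q(U)$.

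Injectivity is then the remaining point. Here I would argue by a $q\to 1$ / classical-limit comparison, or more directly: the image $i_\nu(\mcO_q(U))$ contains the $a_\varpi$ (hence all $a_\omega$ with $\omega\in P^+$, and by the $*$-operation and the homomorphism property $\omega\mapsto a_\omega$, a large Cartan part) and, applying lowering operators to $a_\varpi$ via $\rhdb$, one produces the $x_r$'s and enough of the algebra to see that $i_\nu$ is nonzero on every isotypical component. Since $\mcO_q(U)$ decomposes multiplicity-freely-per-isotype under this action and $i_\nu$ is equivariant, nonvanishing on each isotypical component gives injectivity. Alternatively, one can invoke the known algebraic results of \cite[Section 2]{DCM18} directly — the theorem is explicitly stated to be a collection of results from there — so the cleanest route is to cite the relevant propositions for well-definedness and injectivity and merely re-derive the normalization \eqref{EqInclU} in the present rescaled generators.

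The main obstacle is the well-definedness step: one must verify that the scalars produced by the $\rhdb$-action on $c_\varpi T_\varpi$ — in particular the $\epsilon_r c_r$ and $q$-integer factors appearing in the interchange relation $[E_r,F_s]=\tfrac{\delta_{r,\tau(s)}\epsilon_r c_r T_{-\alpha_s}-\delta_{r,s}}{q_r-q_r^{-1}}$ of $U_q^\nu(\mfu)$ — exactly reproduce the structure constants of the $\tau$-twisted adjoint module generated by $U_\varpi(\xi_\varpi,\xi_\varpi)$, including the twist $\tau(S(X_{(1)}))^*$ in \eqref{EqTwistedAdj} which is precisely what makes the $\tau$-dependence match. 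This is a finite but delicate bookkeeping of weights and coefficients; once it checks out on the generators $E_r,F_r,K_\omega$, the Hopf-algebraic machinery propagates it to all of $U_q(\mfu)$ and the theorem follows.
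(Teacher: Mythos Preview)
The paper does not give a self-contained proof of this theorem: it is explicitly introduced as collecting results from \cite[Section~2]{DCM18}, and you correctly note this option at the end of your proposal. So there is no internal argument to compare against beyond the later global formula \eqref{EqGlobalIota}, which indicates that in \cite{DCM18} the map is realized concretely via the $R$-matrix expression $f\mapsto (f\otimes\id)(\msR_{\tau,21}(\msE\otimes 1)\msR)$ rather than built piecewise from cyclic modules.

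Your existence/uniqueness sketch is reasonable: each Peter--Weyl block $\mathrm{span}\{U_\varpi(\xi,\eta)\}$ is the cyclic module $V_{\tau(\varpi)}^*\otimes V_\varpi$ generated by $U_\varpi(\xi_\varpi,\xi_\varpi)$, so the value on that vector determines $i_\nu$ on the block. The injectivity argument, however, has a genuine gap. The assertion that $\mcO_q(U)$ ``decomposes multiplicity-freely-per-isotype'' under the twisted adjoint action is false: $V_{\tau(\varpi)}^*\otimes V_\varpi$ is not irreducible, and the same irreducible constituent can occur for many $\varpi$, so ``nonzero on each isotypical component'' does not imply injectivity of an equivariant linear map. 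What must actually be shown is that, for every $\varpi$, the cyclic $U_q(\mfu)$-module $U_q(\mfu)\rhdb T_\varpi$ inside $U_q^\nu(\mfu)$ is the \emph{full} $V_{\tau(\varpi)}^*\otimes V_\varpi$ rather than a proper quotient. That is a Joseph--Letzter--type statement about the locally finite part of a twisted quantized enveloping algebra and is not a bookkeeping check; it is precisely the substance supplied in \cite{DCM18}, where the $R$-matrix realization together with PBW arguments makes it tractable. Your classical-limit suggestion is closer to a viable alternative, but would still require controlling possible degenerations for $0<q<1$.
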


\begin{Def}\label{DefOqZ}
We define 
\[
\mcO_q(Z_{\nu}) = i_{\nu}(\mcO_q(U)) \subseteq \mcO_q(Z_{\nu}^{\reg})
\]
to be the image of $i_{\nu}$.
\end{Def}

\begin{Theorem}\label{TheoPropOqZ}
The following properties hold for $\mcO_q(Z_{\nu})$.
\begin{itemize}
\item $\mcO_q(Z_{\nu})$ is a $*$-subalgebra of $\mcO_q(Z_{\nu}^{\reg})$,
\item $\mcO_q(Z_{\nu})\cap \C[a_{\omega}\mid \omega \in P] = \C[a_{\varpi}\mid \varpi \in P^+]$,
\item $\mcO_q(Z_{\nu}^{\reg})$ is generated as an algebra by $\mcO_q(Z_{\nu})$ and $a_{\rho}^{-1}$, and
\item $\mcO_q(Z_{\nu})$ is generated as a $U_q(\mfu)$-module by the $a_{\varpi}$. 
\end{itemize}
\end{Theorem}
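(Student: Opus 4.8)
The plan is to derive all four statements from one explicit description of $i_{\nu}$ as a family of $L$-operators, together with the triangular (PBW-type) decomposition of $\mcO_q(Z_{\nu}^{\reg})\cong U_q^{\nu}(\mfu)$; most of the technical input is already assembled in \cite[Section 2]{DCM18}, and the remaining task is to arrange it in the right order. Concretely, for every finite-dimensional admissible $*$-representation $\pi$ of $U_q(\mfu)$ on $V_{\pi}$ one has an element $\mcL_{\pi}\in B(V_{\pi})\otimes\mcO_q(Z_{\nu}^{\reg})$, built from the universal $R$-matrix of $U_q(\mfg)$ and the Cartan generators $a_{\omega}$, which is $U_q(\mfu)$-equivariant, multiplicative --- $\mcL_{\pi\otimes\pi'}=(\mcL_{\pi})_{13}(\mcL_{\pi'})_{23}$ --- intertwines the $*$-operation with passage to the conjugate representation $\overline{\pi}$, and realises $i_{\nu}$ through $i_{\nu}(U_{\pi}(\xi,\eta))=(\omega_{\xi,\eta}\otimes\id)\mcL_{\pi}$, with \eqref{EqInclU} pinning down its Cartan part. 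Writing $P_0$ for the projection onto the middle factor of $\C[x_r\mid r\in I]\otimes\C[a_{\omega}\mid\omega\in P]\otimes\C[x_r^*\mid r\in I]\cong\mcO_q(Z_{\nu}^{\reg})$ (the decomposition attached to $\mfn^+\cdot\mfh_{\nu}\cdot\mfn^-$), one has $(\id\otimes P_0)\mcL_{\pi}=\sum_{\mu}E_{\mu\mu}\otimes(\text{scalar})\,a_{\mu}$ with $\mu$ running over the weights of $V_{\pi}$, since $R\equiv 1$ modulo $\mfn^+\otimes\mfn^-$; in particular $P_0(i_{\nu}(U_{\varpi}(\xi_{\varpi},\xi_{\varpi})))=c_{\varpi}a_{\varpi}$ up to a nonzero scalar.

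Property~1 then follows at once: multiplicativity makes a product $i_{\nu}(f)i_{\nu}(g)$ of matrix coefficients of $\pi,\pi'$ a matrix coefficient of $\mcL_{\pi\otimes\pi'}$, and decomposing $\pi\otimes\pi'$ into irreducibles shows it lies again in $i_{\nu}(\mcO_q(U))$; the $*$-behaviour gives closure under the $*$-operation. Property~4, transported along the injection $i_{\nu}$, is the statement that every Peter--Weyl block $\mathcal{C}(\varpi)$ of $\mcO_q(U)$ is generated under the $\tau$-twisted adjoint action by $U_{\varpi}(\xi_{\varpi},\xi_{\varpi})$ --- exactly the cyclicity underlying the uniqueness of $i_{\nu}$ in Theorem~\ref{TheoOqZ} --- and it is proved by a matrix-unit argument: from $U_{\varpi}(\xi_{\varpi},\xi_{\varpi})$ one reaches the full row $U_{\varpi}(\xi_{\varpi},v)$ by repeatedly applying the $F_r$ (whose second contribution kills $\xi_{\varpi}$), and then all of $B(V_{\varpi})$ by applying the $E_r$ (whose second contribution lies in the row already obtained). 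Hence $\mcO_q(Z_{\nu})=\sum_{\varpi\in P^+}U_q(\mfu)\rhdb a_{\varpi}$, the reverse inclusion $a_{\varpi}\in\mcO_q(Z_{\nu})$ being \eqref{EqInclU} with Property~1.

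For Property~2 the inclusion $\supseteq$ is Property~1 together with $a_{\varpi}a_{\varpi'}=a_{\varpi+\varpi'}$; for $\subseteq$, take $f\in\mcO_q(Z_{\nu})$ purely Cartan, decompose $f=\sum_{\varpi}f_{\varpi}$ into blocks, apply $P_0$, and use that $P_0(f_{\varpi})$ lies in the span of $\{a_{\mu}\mid\mu\in\wt(V_{\varpi})\}$ with $a_{\varpi}$ as unique maximal term; a leading-term induction over the dominance order forces $f\in\C[a_{\varpi}\mid\varpi\in P^+]$. For Property~3, $a_{\rho}=\prod_r a_{\varpi_r}\in\mcO_q(Z_{\nu})$ is invertible in $\mcO_q(Z_{\nu}^{\reg})$ with inverse $a_{-\rho}$; every $a_{\omega}$ is $a_{\omega+n\rho}a_{\rho}^{-n}$ with $\omega+n\rho\in P^+$ for $n\gg0$, and the generators $x_r,x_r^*$ occur, up to a Cartan factor $a_{\varpi_r}$, among $i_{\nu}(E_r\rhdb U_{\varpi_r}(\xi_{\varpi_r},\xi_{\varpi_r}))$ and its adjoint, hence lie in $\mcO_q(Z_{\nu})[a_{\rho}^{-1}]$; therefore $\mcO_q(Z_{\nu}^{\reg})=\mcO_q(Z_{\nu})[a_{\rho}^{-1}]$.

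The genuine obstacle is the first step --- producing the $L$-operators $\mcL_{\pi}$ with their multiplicativity, equivariance, $*$-behaviour and exact diagonal part, which is the technical heart of \cite[Section 2]{DCM18}; once it is in hand, Properties 1--4 are essentially bookkeeping. A secondary point needing care is the induction in Property~2: $P_0$ does not preserve $\mcO_q(Z_{\nu})$ (directions $a_{\mu}$ with $\mu$ a non-dominant weight of $V_{\varpi}$ do appear in $P_0(\mcO_q(Z_{\nu}))$), so one must compare coefficients block by block rather than apply $P_0$ to $f$ globally and read off the answer.
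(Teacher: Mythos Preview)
The paper does not give its own proof of this theorem: it is stated as a summary of results already established in \cite[Section~2]{DCM18}, and the text introduces it with ``The following theorem collects some of the results in \cite[Section 2]{DCM18}.'' So there is no in-paper argument to compare against; your sketch is effectively a reconstruction of what the cited reference does.

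As such a reconstruction, your outline is sound and hits the right mechanisms. The $L$-operator description of $i_{\nu}$ with its multiplicativity and $*$-compatibility is indeed what drives Property~1; your cyclicity argument for Property~4 is correct (and the paper later relies on exactly this fact, e.g.\ in the proof of Lemma~\ref{LemYD}); and your argument for Property~3 matches what one finds in practice --- the paper's Lemma~\ref{LemRank1Prep} makes explicit that $x_r a_{\varpi_r}$ lies in $\mcO_q(Z_{\nu})$, and then $a_{\varpi_r}^{-1}=a_{\rho-\varpi_r}a_{\rho}^{-1}$ with $\rho-\varpi_r\in P^+$ gives $x_r\in\mcO_q(Z_{\nu})[a_{\rho}^{-1}]$.

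The one place your sketch is genuinely loose is Property~2. You correctly flag the difficulty (that $P_0$ does not preserve $\mcO_q(Z_{\nu})$), but the phrase ``compare coefficients block by block'' does not quite close the gap: the blocks $f_{\varpi}$ of a Cartan element $f$ need not themselves be Cartan, so subtracting off a multiple of $a_{\varpi_0}$ for the maximal $\varpi_0$ does not obviously leave something whose block decomposition has strictly smaller support. A clean way to finish is to observe that under the $Q^+$-grading of $\mcO_q(Z_{\nu}^{\reg})$ by $\mathrm{wt}(Z_\varpi(\xi,\eta)) = \wt(\eta)-\tau(\wt(\xi))$ (or equivalently the grading used in \eqref{EqCommapi}), the Cartan algebra $\C[a_\omega]$ sits in specific degrees, and the intersection with the $\varpi$-block forces $\xi,\eta$ to be weight vectors of matching weight; one then reads off that only the weights of $V_\varpi$ appear, with the unique dominant one being $\varpi$ itself. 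Your induction then goes through. Also note that your triangular order $\msN^+\cdot\msA\cdot\msN^-$ is opposite to the one the paper fixes in \eqref{EqTriangZ}; both are valid PBW orders, but it is worth being consistent with whichever reference you cite.
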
 

It follows in particular from the above theorem that the $\rhdb$-action on $\mcO_q(Z_{\nu}^{\reg})$ restricts to a module $*$-algebra structure on $\mcO_q(Z_{\nu})$ which integrates to an $\mcO_q(U)$-coaction 
\[
\rho_{\nu}: \mcO_q(Z_{\nu}) \rightarrow \mcO_q(Z_{\nu})\otimes \mcO_q(U),\quad f \mapsto \rho_{\nu}(f)
\]
where
\[
(\id\otimes X)\rho_{\nu}(f) = X\rhdb f,\qquad X \in U_q(\mfu).
\]
In the following, we will also write
\[
Z_{\pi}(\xi,\eta) = i_{\nu}(U_{\pi}(\xi,\eta)),
\]
and we pair $\mcO_q(Z_{\nu})$ with $U_q(\mfu)$ along $i_{\nu}$. Then 
\begin{equation}\label{EqGlobalCoactRho}
(\id\otimes \rho_{\nu})Z_{\pi} = (\id\otimes \tau)(U_{\pi})_{13}^*Z_{\pi,12}U_{\pi,13}.
\end{equation}

We can also transport the coaction $\gamma$ of \eqref{EqDefGamma} through the isomorphism $\iota_{\nu}$ into a coaction $\widetilde{\gamma}$ by $U_q(\mfu)$ on $\mcO_q(Z_{\nu}^{\reg})$, so
\begin{equation}\label{EqDefGammaTilde}
\widetilde{\gamma}: \mcO_q(Z_{\nu}^{\reg}) \rightarrow U_q(\mfu)\otimes  \mcO_q(Z_{\nu}^{\reg}),\qquad a_{\omega} \mapsto K_{-\omega -\tau(\omega)}\otimes a_{\omega},\quad x_r \mapsto E_r\otimes 1 + K_r \otimes x_r. 
\end{equation}

\begin{Lem}\label{LemYD}
The coaction $\widetilde{\gamma}$ restricts to a coaction of $U_q(\mfu)$ on $\mcO_q(Z_{\nu})$. 
\end{Lem}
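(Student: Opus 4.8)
The plan is to combine the structural results of Theorem~\ref{TheoPropOqZ} with the Yetter--Drinfeld compatibility between $\widetilde\gamma$ and the Miyashita--Ulbrich action $\rhdb$. Since $\iota_{\nu}$ is a $U_q(\mfu)$-equivariant $*$-isomorphism carrying $\gamma$ to $\widetilde\gamma$, the triple $(\mcO_q(Z_{\nu}^{\reg}),\widetilde\gamma,\rhdb)$ is a $*$-compatible Yetter--Drinfeld $U_q(\mfu)$-module algebra transported from $(U_q^{\nu}(\mfu),\gamma,\rhdb)$. In Sweedler notation $\widetilde\gamma(f) = f_{(-1)}\otimes f_{(0)}$ and $\Delta^{(2)}(X) = X_{(1)}\otimes X_{(2)}\otimes X_{(3)}$, the left--left Yetter--Drinfeld identity then reads
\[
\widetilde\gamma(X\rhdb f) = X_{(1)}\,f_{(-1)}\,S(X_{(3)})\otimes\bigl(X_{(2)}\rhdb f_{(0)}\bigr),\qquad X\in U_q(\mfu),\quad f\in \mcO_q(Z_{\nu}^{\reg}).
\]

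First I would check that $\widetilde\gamma$ behaves well on the module generators: by \eqref{EqDefGammaTilde} one has $\widetilde\gamma(a_{\varpi}) = K_{-\varpi-\tau(\varpi)}\otimes a_{\varpi}$, which lies in $U_q(\mfu)\otimes\mcO_q(Z_{\nu})$ because $a_{\varpi} = c_{\varpi}^{-1}\,i_{\nu}(U_{\varpi}(\xi_{\varpi},\xi_{\varpi}))\in\mcO_q(Z_{\nu})$ for all $\varpi\in P^+$ by \eqref{EqInclU}. Then I would consider the linear subspace
\[
\mcL = \{f\in \mcO_q(Z_{\nu})\mid \widetilde\gamma(f)\in U_q(\mfu)\otimes\mcO_q(Z_{\nu})\}\subseteq \mcO_q(Z_{\nu}),
\]
and show it is stable under $\rhdb$. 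Indeed, for $f\in\mcL$ and $X\in U_q(\mfu)$ we have $X\rhdb f\in\mcO_q(Z_{\nu})$ since $\mcO_q(Z_{\nu})$ is a $\rhdb$-submodule, and writing $\widetilde\gamma(f) = f_{(-1)}\otimes f_{(0)}$ with $f_{(0)}\in\mcO_q(Z_{\nu})$, the displayed Yetter--Drinfeld identity shows that the second leg of $\widetilde\gamma(X\rhdb f)$ lies in the $\rhdb$-submodule $\mcO_q(Z_{\nu})$ as well, while the first leg lies in $U_q(\mfu)$; hence $X\rhdb f\in\mcL$. By Theorem~\ref{TheoPropOqZ}, $\mcO_q(Z_{\nu})$ is generated as a $U_q(\mfu)$-module by the $a_{\varpi}$, all of which lie in $\mcL$, so $\mcL = \mcO_q(Z_{\nu})$, i.e. $\widetilde\gamma(\mcO_q(Z_{\nu}))\subseteq U_q(\mfu)\otimes\mcO_q(Z_{\nu})$. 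Since $\mcO_q(Z_{\nu})$ is a $*$-subalgebra, the restriction of $\widetilde\gamma$ is an algebra $*$-homomorphism, and coassociativity and counitality are inherited from $\widetilde\gamma$ on $\mcO_q(Z_{\nu}^{\reg})$; so it is indeed a coaction of $U_q(\mfu)$ on $\mcO_q(Z_{\nu})$.

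The only point requiring care is the precise form of the Yetter--Drinfeld identity, that is, fixing the conventions (left coaction $\widetilde\gamma$, left action $\rhdb$) so that the twisting factor $X_{(1)}f_{(-1)}S(X_{(3)})$ comes out correctly; this can be read off from the already-recorded fact that $(U_q^{\nu}(\mfu),\gamma,\rhdb)$ is a Yetter--Drinfeld module algebra. Once that is settled, the argument is purely formal, resting entirely on the two facts from Theorem~\ref{TheoPropOqZ} that $\mcO_q(Z_{\nu})$ is $\rhdb$-stable and is generated over $U_q(\mfu)$ by the $a_{\varpi}$.
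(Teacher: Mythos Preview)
Your proof is correct and follows essentially the same approach as the paper: both use the Yetter--Drinfeld identity $\widetilde\gamma(X\rhdb f) = X_{(1)}f_{(-1)}S(X_{(3)})\otimes(X_{(2)}\rhdb f_{(0)})$, verify directly that $\widetilde\gamma(a_{\varpi})\in U_q(\mfu)\otimes\mcO_q(Z_{\nu})$, and then conclude via the fact that the $a_{\varpi}$ generate $\mcO_q(Z_{\nu})$ as a $U_q(\mfu)$-module. Your formulation via the subspace $\mcL$ is a slightly more explicit packaging of the same argument.
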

\begin{proof}
Let us write 
\[
\widetilde{\gamma}(f) = f_{(-1)}\otimes f_{(0)},\qquad f\in \mcO_q(Z_{\nu}^{\reg}).
\]

Using the Yetter-Drinfeld module structure, we have
\begin{equation}\label{EqYDStruct}
\widetilde{\gamma}(X\rhdb f) = X_{(1)} f_{(-1)}S(X_{(3)}) \otimes (X_{(2)}\rhdb f_{(0)}),\qquad f\in \mcO_q(Z_{\nu}^{\reg}),X\in U_q(\mfu).
\end{equation}
In particular,
\[
\widetilde{\gamma}(X\rhdb a_{\varpi}) = X_{(1)} K_{-\varpi -\tau(\varpi)}S(X_{(3)}) \otimes (X_{(2)}\rhdb a_{\varpi}),\qquad \varpi \in P^+,X\in U_q(\mfu).
\]
Since the $a_{\varpi}$ generate $\mcO_q(Z_{\nu})$ as a $U_q(\mfu)$-module, this proves the lemma.
\end{proof}

On the subalgebra $\mcO_q(Z_{\nu})$ the map $\iota_{\nu}$ can also be given by a global formula. Let
\[
\msR \in U_q(\mfb^+)\hat{\otimes}U_q(\mfb^-)
\] 
be the universal $R$-matrix of $U_q(\mfg)$ in some completed tensor product, normalized such that 
\[
\msR(\xi\otimes \eta) = q^{-(\wt(\xi),\wt(\eta))}\xi\otimes \eta
\]
for $\xi$ a highest weight and $\eta$ a lowest weight vector.\footnote{Note that the normalization of the $R$-matrix in Section \ref{ExaRank1} was chosen compatibly with this convention.} Viewing $\mcO_q(Z_{\nu})$ again as functionals on $U_q(\mfg)$,  we have the following. Let $\msE$ be the unique element in the completion $\mcO_q(U)'\cong \prod_{\varpi} B(V_{\varpi})$ of $U_q(\mfg)$ such that in the highest weight $*$-representation $V_{\varpi}$ we have 
\begin{equation}\label{DefMsE}
\msE v = \epsilon_Q(\varpi - \wt(v))v.
\end{equation}
Let $\msR_{\tau} = (\tau\otimes \id)\msR = (\id\otimes \tau)\msR$.
\begin{Prop}\cite[Proposition 2.30]{DCM18}
For all $f\in \mcO_q(Z_{\nu})$ one has
\begin{equation}\label{EqGlobalIota}
\iota_{\nu}(f) = (f\otimes \id)(\msR_{\tau,21}(\msE\otimes 1)\msR).
\end{equation}
\end{Prop}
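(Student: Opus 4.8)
The plan is to exhibit both sides of \eqref{EqGlobalIota} as $U_q(\mfu)$-equivariant maps out of $\mcO_q(Z_\nu)$ which agree on a set of module generators. By the last item of Theorem~\ref{TheoPropOqZ}, $\mcO_q(Z_\nu)$ is generated as a $U_q(\mfu)$-module, for the action $\rhdb$, by the elements $a_\varpi$ with $\varpi\in P^+$; equivalently by the $i_\nu(U_\varpi(\xi_\varpi,\xi_\varpi)) = c_\varpi a_\varpi$. Since $\iota_\nu$ is by definition a $U_q(\mfu)$-equivariant $*$-isomorphism $\mcO_q(Z_\nu^{\reg})\to U_q^\nu(\mfu)$, it suffices to prove: (i) the assignment $\Phi\colon f\mapsto (f\otimes\id)\,\msC_\nu$, with $\msC_\nu := \msR_{\tau,21}(\msE\otimes 1)\msR$, is $U_q(\mfu)$-equivariant on $\mcO_q(Z_\nu)$ (into a completion of $U_q^\nu(\wmfu)$), for the $\rhdb$-action on the source and the module structure on the target inherited from $\iota_\nu$; and (ii) $\Phi(a_\varpi) = T_\varpi$. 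Granting these, $\Phi$ and $\iota_\nu$ agree on the module generators, hence $\Phi = \iota_\nu|_{\mcO_q(Z_\nu)}$; in particular the image of $\Phi$ lands in $U_q^\nu(\mfu)$, as it should.

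For (ii) one only needs that $\msE$ fixes highest weight vectors, $\msE\xi_\varpi = \epsilon_Q(0)\xi_\varpi = \xi_\varpi$, and commutes with all $K_\omega$. Decompose $\msR = q^{-T}\Theta$ with $\Theta = \sum_{\mu\in Q^+}\Theta_\mu$, $\Theta_\mu\in U_q(\mfn^+)_\mu\otimes U_q(\mfn^-)_{-\mu}$ and $\Theta_0 = 1$, where $q^{-T}$ is the Cartan factor fixed by the normalization $\msR(\xi\otimes\eta) = q^{-(\wt(\xi),\wt(\eta))}\xi\otimes\eta$ ($\xi$ highest, $\eta$ lowest). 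Slicing the first leg of $\msC_\nu$ by $\langle\xi_\varpi,\pi_\varpi(-)\xi_\varpi\rangle$: the $U_q(\mfn^+)$-components of $\msR$ annihilate $\xi_\varpi$, so only $\Theta_0=1$ contributes from $\msR$; the $U_q(\mfn^-)$-components of $\msR_{\tau,21}$ pair nontrivially against $\langle\xi_\varpi,-\rangle$ only through their weight-zero part; and $\msE$ is only felt through $\msE\xi_\varpi=\xi_\varpi$. What remains is the slice of the product of Cartan factors $\bigl((\tau\otimes\id)q^{-T}\bigr)_{21}\,(q^{-T})$, and a short computation with weights identifies it with $K^+_{-\tau(\varpi)}K^-_{-\varpi} = c_\varpi T_\varpi$; dividing by $c_\varpi$ gives $\Phi(a_\varpi) = T_\varpi$.

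For (i), the identification $\mcO_q(Z_\nu)\cong\mcO_q(U)$ along $i_\nu$ turns $\rhdb$ into the $\tau$-twisted adjoint action \eqref{EqTwistedAdj}, which at the level of functionals on $U_q(\mfg)$ sends $Y\mapsto f\bigl(\tau(S(X_{(1)}))\,Y\,X_{(2)}\bigr)$; and the module structure on $U_q^\nu(\mfu)$ is the (extension of the) Miyashita--Ulbrich action $X\rhdb Z = X_{(1)}ZS(X_{(2)})$. Thus equivariance of $\Phi$ amounts to the operator identity
\[
\bigl(\tau(S(X_{(1)}))\otimes 1\bigr)\,\msC_\nu\,(X_{(2)}\otimes 1) \;=\; (1\otimes X_{(1)})\,\msC_\nu\,(1\otimes S(X_{(2)})),\qquad X\in U_q(\mfg),
\]
and one checks it on the generators $K_\omega, E_r, F_r$ by pushing the $X$'s through the three factors of $\msC_\nu$: through $\msR$ and $\msR_{\tau,21}$ using quasi-triangularity $\msR\Delta(X)=\Delta^{\op}(X)\msR$ and its $\tau$-twisted counterpart, and through $\msE\otimes 1$ using $E_r\msE = \epsilon_Q(\alpha_r)\msE E_r$, $\msE F_r = \epsilon_Q(\alpha_r)F_r\msE$, $\msE K_\omega = K_\omega\msE$. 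The factors $\epsilon_Q(\alpha_r)$ thereby generated are precisely those entering the interchange relations \eqref{EqCommRelBR} of $U_q^\nu(\wmfu)$, which is why the identity closes up.

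I expect step (i) — the verification of the displayed operator identity — to be the main obstacle: it is the only place where the twisting datum $\nu$ genuinely enters, and it requires careful bookkeeping of the two Cartan copies $K^+_\omega, K^-_\omega$ inside $U_q^\nu(\wmfu)$ and of which tensor leg of $\msR$, $\msR_{\tau,21}$ and $\msE$ lands where. The remaining ingredients — the module-generation statement from Theorem~\ref{TheoPropOqZ} and the Cartan-part computation of step (ii) — are routine. (Alternatively one could compute $(U_\pi(\xi,\eta)\otimes\id)\msC_\nu$ directly for arbitrary matrix coefficients, but this merely repeats the same $R$-matrix manipulation in less organized form.)
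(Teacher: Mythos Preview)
The paper does not prove this proposition; it is quoted verbatim from \cite[Proposition 2.30]{DCM18} and only the well-definedness of the right-hand side is remarked upon. So there is no in-paper argument to compare against.

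Your strategy is sound and is essentially how such identities are established: both $\iota_\nu$ and $\Phi$ are $U_q(\mfu)$-module maps, and $\mcO_q(Z_\nu)$ is module-generated by the $a_\varpi$, so it suffices to match them there. Step (ii) is correct as written; only the Cartan part of $\msR$ survives against $\xi_\varpi$, and the resulting product $K^+_{-\tau(\varpi)}K^-_{-\varpi}$ is exactly $c_\varpi T_\varpi$.

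Two comments on step (i). First, your displayed identity is the right one, but be careful about what the second-leg $X_{(1)},S(X_{(2)})$ mean: the Miyashita--Ulbrich action is a priori by $U_q(\wmfu)$, so you are implicitly choosing lifts of $X\in U_q(\mfu)$ along $U_q(\mfb^\pm)\hookrightarrow U_q(\wmfu)$; the paper's remark that the action descends to $U_q(\mfu)$ guarantees independence of this choice, but you should say so. Second, your commutation rules for $\msE$ are stated in the form $E_r\msE=\epsilon_r\,\msE E_r$ (and dually for $F_r$); check that this holds even when $\epsilon_r=0$ --- it does, since both sides vanish, but the reverse relation $\msE E_r=\epsilon_r^{-1}E_r\msE$ is meaningless in that case, so phrase the pushing-through argument so that only the former direction is used. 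With these bookkeeping points handled, the generator check for $K_\omega,E_r,F_r$ goes through using $\msR\,\Delta(X)=\Delta^{\op}(X)\,\msR$ and its $\tau$-twist, exactly as you outline.
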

Here the right hand side is easily seen to be a well-defined element in $U_q(\mfb^+)U_q(\mfb^-) \subseteq U_q^{\nu}(\wmfg)$.

We record for the computations in the following section that, writing 
\begin{equation}\label{EqKappa1}
\kappa_-(f) = (f\otimes \id)\msR,\qquad \kappa_+(f) = (\id\otimes f)\msR^{-1},\qquad f\in O_q(G),
\end{equation}
we have the following general commutation rule
\begin{equation}\label{EqSwitch}
\kappa_+(g)\kappa_-(f) = \mbr_{\nu}(f_{(1)},g_{(1)}) \kappa_-(f_{(2)})\kappa_+(g_{(2)}) \mbr(f_{(3)},S^{-1}(g_{(3)}))
\end{equation}
inside $U_q(\wmfu)$, where $\mbr(f,g) = (f\otimes g,\msR)$ and $\mbr_{\nu}(f,g) =(f\otimes g,\msR_{\nu})$.

\section{Centrally coinvariant coactions from twisting data and the Harish-Chandra morphism}

\subsection{Centrally coinvariant coactions}

Fix an ungauged twisting datum $\nu = (\tau,\epsilon) \in \mbH^{\ungauge}$, and recall the integrable $U_q(\mfu)$-module $*$-algebra $\mcO_q(Z_{\nu})$ introduced in Definition \ref{DefOqZ}, with associated $\mcO_q(U)$-coaction $\rho_{\nu}$. Let 
\[
\mcO_q(Z_{\nu}\dbslash U) = \mcO_q(Z_{\nu})^{\rho_{\nu}}
\] 
be the $\rho_{\nu}$-coinvariants, or equivalently the $U_q(\mfu)$-invariant part of $\mcO_q(Z_{\nu})$ under $\rhdb$. In using this notation, we interpret $\mcO_q(Z_{\nu})$ as a full spectral $*$-algebra.

\begin{Lem}\label{LemInvPartZ}
We have 
\[
\mcO_q(Z_{\nu}\dbslash U)  \subseteq \msZ(\mcO_q(Z_{\nu})),
\]
where the right hand side denotes the center of $\mcO_q(Z_{\nu})$. Hence the coaction $\rho_{\nu}$ is centrally coinvariant. 
\end{Lem}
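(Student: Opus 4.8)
The key structural input is Theorem~\ref{TheoPropOqZ}: $\mcO_q(Z_\nu)$ is generated as a $U_q(\mfu)$-module by the elements $a_\varpi$, $\varpi\in P^+$. So to prove that a $U_q(\mfu)$-invariant element $f$ is central, it suffices to show that $f$ commutes with each $a_\varpi$, and then propagate this to all of $\mcO_q(Z_\nu)$ using invariance of $f$ together with the module-algebra axiom. The first task is therefore to pin down how $a_\varpi$ moves past a general element, and the second is to leverage invariance.

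First I would record the commutation behaviour of the $a_\varpi$. Since $a_\varpi$ is a weight vector for the $\rhdb$-action — concretely $i_\nu(U_\varpi(\xi_\varpi,\xi_\varpi)) = c_\varpi a_\varpi$ and $\xi_\varpi$ is a highest weight vector of weight $\varpi$, so $K_\omega\rhdb a_\varpi = q^{\text{(something)}}a_\varpi$ and $E_r\rhdb a_\varpi$ has a controlled form — one sees from the $R$-matrix formula \eqref{EqGlobalIota}, or directly from the relations \eqref{CommTE} in $U_q^\nu(\mfu)$ (where $a_\varpi\mapsto T_\varpi$ and $T_\omega E_r = q^{-2(\omega_+,\alpha_r)}E_r T_\omega$ etc.), that in $\mcO_q(Z_\nu^{\reg})$ the element $a_\varpi$ $q$-commutes with $x_r,x_r^*$ up to a scalar and commutes with all $a_\chi$. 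The cleanest route is to use \eqref{EqGlobalCoactRho}: $(\id\otimes\rho_\nu)Z_\pi = (\id\otimes\tau)(U_\pi)_{13}^* Z_{\pi,12} U_{\pi,13}$, which expresses the coaction in terms of conjugation by the corepresentation matrix, making the weight-grading transparent. In any case the outcome I want is: for $f\in\mcO_q(Z_\nu\dbslash U)$, the element $a_\varpi f a_\varpi^{-1}$ (computed inside $\mcO_q(Z_\nu^{\reg})$, where $a_\rho$ and hence the relevant monomials are invertible) again lies in $\mcO_q(Z_\nu)$ and is again $U_q(\mfu)$-invariant, being a product of invariant or suitably graded pieces.

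The decisive step is then a weight/grading argument. Both $f$ and $a_\varpi f a_\varpi^{-1}$ are $U_q(\mfu)$-invariant elements of $\mcO_q(Z_\nu)$, i.e.\ they lie in $\mcO_q(Z_\nu\dbslash U)$; and conjugation by $a_\varpi$ multiplies the $Q$-graded component of weight $\beta$ by a fixed scalar $q^{c(\varpi,\beta)}$ (coming from the commutation relations, with $\beta$ read off from the $T_\omega$–$E_r$ relations). But an invariant element, by the highest-weight structure of $\mcO_q(U)$ and the fact that the trivial corepresentation sits in weight $0$, is supported in $Q$-degree $0$ — more precisely, its image under $i_\nu$ is a polynomial in the $a_\chi$ by the second bullet of Theorem~\ref{TheoPropOqZ} combined with invariance, and these are exactly the weight-zero, mutually commuting generators. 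Hence the conjugation scalar is $1$ on all of $f$, giving $a_\varpi f = f a_\varpi$ for every $\varpi\in P^+$. Finally, given any $g\in\mcO_q(Z_\nu)$, write $g = \sum_i X_i\rhdb a_{\varpi_i}$ with $X_i\in U_q(\mfu)$; then
\[
fg = \sum_i f(X_i\rhdb a_{\varpi_i}) = \sum_i X_i\rhdb(f a_{\varpi_i}) = \sum_i X_i\rhdb(a_{\varpi_i}f) = \sum_i (X_i\rhdb a_{\varpi_i})f = gf,
\]
where the crucial middle equalities use that $f$ is $U_q(\mfu)$-invariant so that $X\rhdb(fh) = \eps(X)fh\cdot(\text{no})$ — rather, more carefully, $X\rhdb(fh) = (X_{(1)}\rhdb f)(X_{(2)}\rhdb h) = f(X\rhdb h)$ by invariance of $f$, and similarly on the other side. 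This shows $f\in\msZ(\mcO_q(Z_\nu))$, and central coinvariance of $\rho_\nu$ is then the definition.

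\textbf{Main obstacle.} The real content is the claim that a $U_q(\mfu)$-invariant element of $\mcO_q(Z_\nu)$ is ``weight-zero'' in the sense needed — i.e.\ that conjugation by $a_\varpi$ acts trivially on it. One has to be careful that the grading on $\mcO_q(Z_\nu^{\reg})$ by which $a_\varpi$-conjugation is diagonal is \emph{not} literally the $\rhdb$-weight grading (the $\rhdb$-action is the twisted adjoint one and is not simply graded), so I would phrase the argument instead via the coaction $\widetilde\gamma$ of \eqref{EqDefGammaTilde}, under which $a_\omega\mapsto K_{-\omega-\tau(\omega)}\otimes a_\omega$ and $x_r\mapsto E_r\otimes1+K_r\otimes x_r$: the Cartan part of $\widetilde\gamma$ records exactly the grading governing commutation with the $a_\omega$, and one checks that $\rhdb$-invariance forces $\widetilde\gamma(f)$ to have trivial $K$-component. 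This interplay between the two coactions $\rho_\nu$ and $\widetilde\gamma$ (they form a Yetter–Drinfeld pair, Lemma~\ref{LemYD}) is where the argument needs the most care, but it is exactly the kind of bookkeeping the earlier sections were set up to handle.
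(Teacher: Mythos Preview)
Your overall strategy is sound and genuinely different from the paper's one-line argument, which simply transports via $\iota_\nu$ to $U_q^\nu(\mfu)$ and invokes the general fact that for the Miyashita--Ulbrich action on a Hopf--Galois object the invariants coincide with the center (so $U_q^\nu(\wmfu)_{\inv}=\msZ(U_q^\nu(\wmfu))$, and intersecting with the subalgebra $U_q^\nu(\mfu)$ finishes). Your step~3, propagating $fa_\varpi=a_\varpi f$ to $fg=gf$ via the module-algebra identity $X\rhdb(fh)=f(X\rhdb h)$ for invariant $f$, is correct and is a natural elementary substitute for that structural fact.

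The gap is entirely in step~2, and two of your justifications are actually false. First, an invariant element is \emph{not} a polynomial in the $a_\chi$: the basis elements $z_\varpi$ of $\mcO_q(Z_\nu\dbslash U)$ (see Corollary~\ref{CorIntroz}) are sums of $Z_\varpi(e_i,e_j)$ over all pairs with $\wt(e_j)=\tau(\wt(e_i))$, not just the highest-weight term, so the second bullet of Theorem~\ref{TheoPropOqZ} does not apply. Second, the Cartan part of $\widetilde\gamma$ does \emph{not} record the $a_\varpi$-conjugation grading: from \eqref{EqDefGammaTilde} one has $\widetilde\gamma(a_\omega)=K_{-2\omega_+}\otimes a_\omega$, so $a_\omega$ carries nonzero $\widetilde\gamma$-Cartan weight yet commutes with every $a_\varpi$. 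The correct fix is short: $K_\omega$-invariance (part of full $\rhdb$-invariance, since \eqref{EqTwistedAdj} gives $K_\omega\rhdb Z(\xi,\eta)=q^{(\omega,\wt(\eta)-\tau(\wt(\xi)))}Z(\xi,\eta)$) forces an invariant $f$ to be supported on terms $Z(\xi,\eta)$ with $\wt(\xi)-\wt(\eta)\in V^{-\tau}$; then \eqref{EqCommapi} yields $a_\varpi Z(\xi,\eta)=q^{2(\varpi_+,\wt(\xi)-\wt(\eta))}Z(\xi,\eta)a_\varpi=Z(\xi,\eta)a_\varpi$ because $\varpi_+\in V^\tau$ is orthogonal to $V^{-\tau}$. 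With this repair your argument goes through.
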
 
\begin{proof}
By means of the equivariant embedding $\iota_{\nu}$ it is sufficient to show that the $U_q(\mfu)$-invariant elements $U_q^{\nu}(\mfu)_{\inv}$ in $U_q^{\nu}(\mfu)$ for the action \eqref{EqMiyaUlbrich} are central. But for the $U_q(\wmfu)$-module $*$-algebra $U_q^{\nu}(\wmfu)$ we have that $U_q^{\nu}(\wmfu)_{\inv} = \msZ(U_q^{\nu}(\wmfu))$ since the associated $U_q(\wmfu)$-action is the Miyashita-Ulbrich action for a Galois object, see e.g.~ \cite[Theorem 3.2.7]{CGW06}. One can also check this property directly in the case at hand .
\end{proof}

Consider the projection map
\begin{equation}\label{DefE}
E = E_{\rho_{\nu}}: \mcO_q(Z_{\nu}) \rightarrow \mcO_q(Z_{\nu}\dbslash U),\qquad E(f) =  \left(\id\otimes \int_{U_q}\right)\rho_{\nu}(f).
\end{equation}

To have a more concrete expression for $E$, we first make the following observations. 

Recall that $\tau$ defines a Hopf $*$-algebra involution of $U_q(\mfu)$ by \eqref{EqTauHopfIso}. Fix $\varpi \in P^+$. We have that the $Z_{\varpi}(\xi,\eta)$ span a $U_q(\mfu)$-representation isomorphic to $V_{\tau(\varpi)}^* \otimes V_{\varpi}$. We hence see that this span contains no invariant element unless $\tau(\varpi) = \varpi$, in which case it contains a one-dimensional subspace of invariant elements. Let for $\varpi = \tau(\varpi)$ the operator
\[
J_{\varpi}: V_{\varpi} \rightarrow V_{\varpi}
\]
be uniquely determined by the requirements
\[
J_{\varpi}\xi_{\varpi} = \xi_{\varpi},\qquad J_{\varpi}Xv = \tau(X)J_{\varpi}v,\qquad X\in U_q(\mfu),v\in V_{\varpi}.
\]
One easily sees that $J_{\varpi}$ is a selfadjoint operator with $J_{\varpi}^2 = 1$. For $\omega\in P$ with $\tau(\omega) = \omega$, we write 
\begin{equation}\label{EqForj}
j_{\varpi}(\omega) = \Tr(J_{\varpi}{}_{\mid V_{\varpi}(\omega)}),
\end{equation}
with $V_{\varpi}(\omega)$ the weight $\omega$ subspace.

\begin{Lem}\label{LemFunctjvarpi}
\begin{enumerate}
\item The functions $\omega \mapsto j_{\varpi}(\omega)$ are independent of $q$.
\item Each function $\omega \mapsto j_{\varpi}(\omega)$ is $W^{\tau}$-invariant. 
\end{enumerate}
\end{Lem}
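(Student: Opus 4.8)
The plan is to prove the two assertions separately, doing the $W^{\tau}$-invariance first since it can be carried out uniformly in $q$ and does not use part~(1).

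\emph{Part (2).} Fix $\varpi\in P^{+}$ with $\tau(\varpi)=\varpi$. First note that $J_{\varpi}$ sends the weight space $V_{\varpi}(\omega)$ to $V_{\varpi}(\tau(\omega))$, because $J_{\varpi}K_{\chi}=K_{\tau(\chi)}J_{\varpi}$ and $\tau$ is an isometry; in particular $J_{\varpi}$ preserves every $\tau$-fixed weight space, which is what makes $j_{\varpi}(\omega)$ meaningful. I would then invoke the braid group action on integrable $U_{q}(\mfu)$-modules: for $r\in\hat I$ let $T_{\hat s_{r}}$ be Lusztig's operator on $V_{\varpi}$ associated to $\hat s_{r}\in W_{\{r,\tau(r)\}}$, realized as the corresponding product $T_{s_{r}}$, $T_{s_{r}}T_{s_{\tau(r)}}$, or $T_{s_{r}}T_{s_{\tau(r)}}T_{s_{r}}$ according to the three cases listed after Theorem~\ref{LemFunctjvarpi1}. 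I need two facts: $T_{\hat s_{r}}$ restricts to linear isomorphisms $V_{\varpi}(\omega)\to V_{\varpi}(\hat s_{r}\omega)$ (standard), and $T_{\hat s_{r}}$ commutes with $J_{\varpi}$ on $V_{\varpi}$. For the latter I would view $J_{\varpi}$ as a module map from $V_{\varpi}$ to the $\tau$-twisted module ${}^{\tau}V_{\varpi}$, use naturality of the braid operators with respect to module maps, and observe that the braid operator of $\hat s_{r}$ computed on ${}^{\tau}V_{\varpi}$ coincides with the one on $V_{\varpi}$: applying $\tau$ to the defining product of $T_{\hat s_{r}}$ interchanges $E_{r}\leftrightarrow E_{\tau(r)}$ and $F_{r}\leftrightarrow F_{\tau(r)}$, after which a braid relation of length $2$ or $3$ brings it back to $T_{\hat s_{r}}$. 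Since the $\hat s_{r}$ generate $W^{\tau}$ by Theorem~\ref{LemFunctjvarpi1}, for any $w\in W^{\tau}$ I obtain an operator $T_{w}$ on $V_{\varpi}$ commuting with $J_{\varpi}$ and restricting to an isomorphism $V_{\varpi}(\omega)\to V_{\varpi}(w\omega)$. Taking $\omega$ with $\tau(\omega)=\omega$ and $w\in W^{\tau}$, so that $w\omega$ is again $\tau$-fixed, the identity $J_{\varpi}T_{w}=T_{w}J_{\varpi}$ restricts to show that $T_{w}$ conjugates $J_{\varpi}|_{V_{\varpi}(\omega)}$ onto $J_{\varpi}|_{V_{\varpi}(w\omega)}$, hence $j_{\varpi}(w\omega)=j_{\varpi}(\omega)$. (Equivalently: the $j_{\varpi}(\omega)$ are the weight multiplicities of the $\tau$-twisted character $u\mapsto\Tr(J_{\varpi}\pi_{\varpi}(u))$ restricted to $T$, and this restriction is $W^{\tau}$-invariant.)

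\emph{Part (1).} Here I would pass to Lusztig's integral form. Let $\mcA=\Z[q,q^{-1}]$ and let $V_{\varpi,\mcA}\subseteq V_{\varpi}$ be the $\mcA$-lattice spanned by the canonical basis: a free $\mcA$-module, stable under the divided-power form of $U_{q}(\mfg)$, with $V_{\varpi,\mcA}=\bigoplus_{\omega}V_{\varpi,\mcA}(\omega)$ where each summand is free of rank the $q$-independent weight multiplicity $\dim V_{\varpi}(\omega)$. Since $\tau$ permutes the Chevalley generators together with their divided powers and sends $K_{\chi}\mapsto K_{\tau(\chi)}$, it preserves this form; since moreover $V_{\varpi,\mcA}$ is generated over $U_{\mcA}$ by $\xi_{\varpi}$ and the module relations satisfied by $\xi_{\varpi}$ are stable under $\tau$ (using $\tau(\varpi)=\varpi$ and that $\tau$ is an isometric diagram automorphism), the assignment $X\xi_{\varpi}\mapsto\tau(X)\xi_{\varpi}$ defines an $\mcA$-linear endomorphism of $V_{\varpi,\mcA}$, which is exactly $J_{\varpi}$. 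Consequently, for $\tau(\omega)=\omega$ the number $j_{\varpi}(\omega)=\Tr(J_{\varpi}|_{V_{\varpi}(\omega)})$ is the trace of this $\mcA$-linear operator on the free $\mcA$-module $V_{\varpi,\mcA}(\omega)$, hence a Laurent polynomial in $q$. On the other hand, for each fixed $q\in(0,1)$ the operator $J_{\varpi}$ is a selfadjoint involution, so $j_{\varpi}(\omega)$ is then an integer bounded by $\dim V_{\varpi}(\omega)$ in absolute value; a Laurent polynomial taking only finitely many values on the infinite set $(0,1)$ is constant. Evaluating the constant at $q=1$ identifies it with the corresponding trace for the classical simple $\mfg$-module, though this is not needed for the statement.

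\emph{Main obstacle.} The one genuinely delicate point is the commutation $T_{\hat s_{r}}J_{\varpi}=J_{\varpi}T_{\hat s_{r}}$ in part~(2): that applying $\tau$ to the braid operator attached to $\hat s_{r}$ returns the same operator. This is precisely why one must use $\hat s_{r}$, the longest element of $W_{\{r,\tau(r)\}}$ from Theorem~\ref{LemFunctjvarpi1}, rather than $s_{r}$ itself when $\tau(r)\neq r$, and it has to be verified through the three cases for $(\alpha_{r}^{\vee},\alpha_{\tau(r)})$, the relevant braid relations having length $2$ and $3$. Everything else is bookkeeping with weight spaces, the integral form, and the elementary fact that a bounded integer-valued Laurent polynomial is constant.
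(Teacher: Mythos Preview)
Your argument is correct, but it takes a considerably longer road than the paper's. The paper disposes of (1) in one line: since $J_{\varpi}$ is a selfadjoint involution, each $j_{\varpi}(\omega)$ is an integer, and as it depends continuously on $q$ (the representations $V_{\varpi}$ vary continuously with $q$), it must be constant. No integral form or Laurent-polynomial argument is needed. For (2) the paper then uses (1) to reduce to $q=1$ and simply cites \cite[(5.56)]{FSS96}. Your approach reverses the logical order and proves (2) directly for all $q$ via Lusztig's braid operators $T_{\hat s_{r}}$ and their commutation with $J_{\varpi}$; this is self-contained and avoids the external reference, and the case analysis you flag as the ``main obstacle'' is exactly what makes it work (commutativity of $T_{s_r},T_{s_{\tau(r)}}$ in the orthogonal case, the length-$3$ braid relation in the $A_2$ case). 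Your proof of (1) via the $\mcA$-lattice is also valid but heavier than necessary: once you know $j_{\varpi}(\omega)\in\Z$ and continuity in $q$, constancy is immediate without ever invoking the integral form. In short: both routes are sound; the paper's is terser but leans on the literature, while yours is more explicit at the cost of the braid-operator computation.
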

\begin{proof}
The first statement follows since $j_{\varpi}(\omega) \in \Z$ depends continuously on $q$. The second statement then follows (for $q=1$) by \cite[(5.26)]{FSS96}.
\end{proof}

\begin{Lem}\label{LemImE}
With $\{e_i\}$ an orthonormal basis of $V_{\varpi}$, we have
\begin{equation}\label{EqEonZ}
E(Z_{\varpi}(\xi,\eta)) = \delta_{\varpi,\tau(\varpi)}\frac{\langle \xi,J_{\varpi}\eta\rangle}{\dim_q(V_{\varpi})}\sum_{i,j} \langle e_j,K_{2\rho}J_{\varpi}e_i\rangle Z_{\varpi}(e_i,e_j),
\end{equation}
where  $\dim_q(V_{\varpi}) = \Tr(K_{2\rho})$. 
\end{Lem}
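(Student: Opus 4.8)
The plan is to expand the global coaction formula \eqref{EqGlobalCoactRho} in an orthonormal basis of $V_\varpi$, apply $\id\otimes\int_{U_q}$, and then reduce the resulting integrals to the Schur orthogonality relations \eqref{EqOrthoGen}, using the identification of $\pi_\varpi(\hat{\delta}^{1/2})$ with $K_{2\rho}$ on $V_\varpi$ coming from \eqref{EqDeltaUq}.

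First I would fix $\varpi\in P^+$ and, writing $U_\varpi=\sum_{k,l}e_{kl}\otimes U_\varpi(e_k,e_l)$ and $Z_\varpi=\sum_{k,l}e_{kl}\otimes Z_\varpi(e_k,e_l)$ in terms of matrix units of $B(V_\varpi)$, read off from \eqref{EqGlobalCoactRho} the coefficient-wise identity
\[
\rho_\nu\bigl(Z_\varpi(\xi,\eta)\bigr)=\sum_{i,j} Z_\varpi(e_i,e_j)\otimes\tau\bigl(U_\varpi(e_i,\xi)^*\bigr)\,U_\varpi(e_j,\eta),
\]
using that $\tau$ is applied only to the $\mcO_q(U)$-leg and commutes with $*$, and then extending by (conjugate-)linearity in $\xi,\eta$. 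Applying $\id\otimes\int_{U_q}$ turns the statement into the evaluation of $\int_{U_q}\tau\bigl(U_\varpi(e_i,\xi)\bigr)^*\,U_\varpi(e_j,\eta)$.

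Next I would use that $\tau$ is a Hopf $*$-automorphism of $\mcO_q(U)$, so that $(\id\otimes\tau)U_\varpi$ is again an irreducible unitary corepresentation, corresponding dually to $\pi_\varpi\circ\tau$, whose highest weight is $\tau(\varpi)$; hence $(\id\otimes\tau)U_\varpi\cong U_{\tau(\varpi)}$. If $\tau(\varpi)\neq\varpi$, the two corepresentations occurring in the integrand are inequivalent irreducibles and all integrals vanish, producing the factor $\delta_{\varpi,\tau(\varpi)}$ — equivalently this is immediate from the fact, noted just before the lemma, that the span of the $Z_\varpi(\xi,\eta)$ contains no $U_q(\mfu)$-invariant element unless $\tau(\varpi)=\varpi$. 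When $\tau(\varpi)=\varpi$, the selfadjoint involution $J_\varpi$ (which intertwines $\pi_\varpi$ with $\pi_\varpi\circ\tau$) gives, by a short computation from the dual pairing together with $J_\varpi^*=J_\varpi^{-1}=J_\varpi$, the identity $\tau\bigl(U_\varpi(\zeta,\eta)\bigr)=U_\varpi(J_\varpi\zeta,J_\varpi\eta)$. Substituting this and invoking \eqref{EqOrthoGen} with $\pi=\pi_\varpi$, for which $\pi_\varpi(\hat{\delta}^{1/2})$ acts as $K_{2\rho}$ on $V_\varpi$ by \eqref{EqDeltaUq} and $\Tr(\pi_\varpi(\hat{\delta}^{1/2}))=\dim_q(V_\varpi)$, yields
\[
\int_{U_q}U_\varpi(J_\varpi e_i,J_\varpi\xi)^*\,U_\varpi(e_j,\eta)=\frac{\langle J_\varpi\xi,\eta\rangle\,\langle e_j,K_{2\rho}J_\varpi e_i\rangle}{\dim_q(V_\varpi)};
\]
summing over $i,j$ and rewriting $\langle J_\varpi\xi,\eta\rangle=\langle\xi,J_\varpi\eta\rangle$ by selfadjointness of $J_\varpi$ gives exactly \eqref{EqEonZ}.

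The main obstacle is bookkeeping rather than anything structural: pinning down the conjugate-linearity conventions for matrix coefficients so that the coefficient-wise form of \eqref{EqGlobalCoactRho} comes out correctly, and carefully tracking the interaction of $\tau$ with $*$ and with the two slots of $U_\varpi$ (i.e. establishing $\tau(U_\varpi(\zeta,\eta))=U_\varpi(J_\varpi\zeta,J_\varpi\eta)$). Once these are fixed, the remaining steps are routine applications of the orthogonality relations, with no convergence issues since $V_\varpi$ is finite-dimensional.
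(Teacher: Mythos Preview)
Your proposal is correct and follows essentially the same route as the paper's own proof: expand $\rho_\nu(Z_\varpi(\xi,\eta))$ via \eqref{EqGlobalCoactRho}, apply the Haar integral, rewrite $\tau(U_\varpi(e_i,\xi))$ as $U_\varpi(J_\varpi e_i,J_\varpi\xi)$ when $\tau(\varpi)=\varpi$, and invoke the orthogonality relations \eqref{EqOrthoGen} with $\pi_\varpi(\hat\delta^{1/2})=K_{2\rho}$. The paper's proof is just a terser version of exactly these steps.
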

\begin{proof}

Using \eqref{EqGlobalCoactRho}, we obtain
\[
E(Z_{\varpi}(\xi,\eta))= \sum_{i,j} \left(\int_{U_q} \tau(U_{\varpi}(e_i,\xi))^*U_{\varpi}(e_j,\eta)\right)Z_{\varpi}(e_i,e_j).
\]
From the orthogonality relations \eqref{EqOrthoGen} and \eqref{EqDeltaUq} for $\mcO_q(U)$, we obtain that this is $0$ for $\tau(\varpi)\neq \varpi$, while for $\tau(\varpi) = \varpi$ this reduces to  
\[ 
\sum_{i,j} \left(\int_{U_q} U_{\varpi}(J_{\varpi}e_i,J_{\varpi}\xi)^*U_{\varpi}(e_j,\eta)\right)Z_{\varpi}(e_i,e_j),
\]
from which \eqref{EqEonZ} follows immediately. 
\end{proof}

Using \eqref{EqInclU}, we obtain
\begin{Cor}\label{CorIntroz}
Let $\varpi\in P^+$ with $\tau(\varpi) =\varpi$ and define
\[
z_{\varpi} = \dim_q(V_{\varpi}) E(a_{\varpi}) =\sum_{i,j} \langle e_j,K_{2\rho}J_{\varpi}e_i\rangle Z_{\varpi}(e_i,e_j).
\]
Then the $z_{\varpi}$ form a linear basis for $\mcO_q(Z_{\nu}\dbslash U)$.
\end{Cor}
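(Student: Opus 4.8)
The plan is to read off $\mcO_q(Z_{\nu}\dbslash U)$ as the image of the averaging projection $E$ from \eqref{DefE}, and then to compute that image by applying Lemma~\ref{LemImE} to the standard linear basis of $\mcO_q(Z_{\nu})$ consisting of matrix coefficients. First, note that $E(f)=f$ for every $\rho_{\nu}$-coinvariant $f$ (since $\int_{U_q}1=1$), while $E(f)$ is always $\rho_{\nu}$-coinvariant by right-invariance of the Haar integral; hence $E$ is a surjection of $\mcO_q(Z_{\nu})$ onto $\mcO_q(Z_{\nu}\dbslash U)$, and it suffices to determine $\Imm(E)$ together with a basis of it.

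Next, by Definition~\ref{DefOqZ} and Theorem~\ref{TheoOqZ}, $\mcO_q(Z_{\nu}) = i_{\nu}(\mcO_q(U))$ with $i_{\nu}$ injective, so $\mcO_q(Z_{\nu})$ is linearly spanned by the matrix coefficients $Z_{\varpi}(\xi,\eta) = i_{\nu}(U_{\varpi}(\xi,\eta))$ as $\varpi$ runs over $P^+$ and $\xi,\eta$ over $V_{\varpi}$. Applying $E$ and invoking Lemma~\ref{LemImE}, $E(Z_{\varpi}(\xi,\eta))$ vanishes whenever $\tau(\varpi)\neq\varpi$, and for $\tau(\varpi)=\varpi$ it equals $\langle\xi,J_{\varpi}\eta\rangle\dim_q(V_{\varpi})^{-1}$ times the single element $\sum_{i,j}\langle e_j, K_{2\rho}J_{\varpi}e_i\rangle Z_{\varpi}(e_i,e_j)$, which does not depend on $\xi,\eta$. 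Therefore $\Imm(E)$ is spanned by those elements, one for each $\tau$-fixed $\varpi\in P^+$. To match this with $z_{\varpi}$ I would take $\xi=\eta=\xi_{\varpi}$, use $J_{\varpi}\xi_{\varpi}=\xi_{\varpi}$ and $\|\xi_{\varpi}\|=1$, combine with $i_{\nu}(U_{\varpi}(\xi_{\varpi},\xi_{\varpi}))=c_{\varpi}a_{\varpi}$ from \eqref{EqInclU}, and observe that $\tau(\varpi)=\varpi$ forces $\varpi_-=0$, hence $c_{\varpi}=1$; this yields exactly $z_{\varpi}=\dim_q(V_{\varpi})E(a_{\varpi})=\sum_{i,j}\langle e_j, K_{2\rho}J_{\varpi}e_i\rangle Z_{\varpi}(e_i,e_j)$, so $\Imm(E)=\mcO_q(Z_{\nu}\dbslash U)$ is spanned by the $z_{\varpi}$ with $\varpi\in P^+$, $\tau(\varpi)=\varpi$.

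Finally, for linear independence: the matrix coefficients $Z_{\varpi}(e_i,e_j)$ of the irreducible $V_{\varpi}$ are linearly independent (Peter--Weyl orthogonality for $\mcO_q(U)$, transported by the injective map $i_{\nu}$), and those attached to inequivalent $V_{\varpi}$, $V_{\varpi'}$ span subspaces meeting only in $0$; so it is enough to check that each $z_{\varpi}\neq 0$. For that I would note that $z_{\varpi}$ is nonzero as soon as one coefficient $\langle e_j, K_{2\rho}J_{\varpi}e_i\rangle$ is, and summing over an orthonormal basis of the highest weight space $V_{\varpi}(\varpi)=\C\xi_{\varpi}$ gives $q^{(2\rho,\varpi)}j_{\varpi}(\varpi)=q^{(2\rho,\varpi)}\neq 0$ by \eqref{EqForj}. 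I expect no serious obstacle here: the only substantive point is this non-vanishing of $z_{\varpi}$, which reduces to the trivial identity $j_{\varpi}(\varpi)=1$; everything else is a direct assembly of Lemma~\ref{LemImE}, surjectivity of $E$ onto the coinvariants, and the description $\mcO_q(Z_{\nu})=i_{\nu}(\mcO_q(U))$.
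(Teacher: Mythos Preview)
Your proposal is correct and follows essentially the same route as the paper, which simply records the corollary as an immediate consequence of Lemma~\ref{LemImE} together with \eqref{EqInclU}. You have additionally spelled out the linear independence via the nonvanishing of the highest-weight coefficient $\langle \xi_{\varpi},K_{2\rho}J_{\varpi}\xi_{\varpi}\rangle=q^{(2\rho,\varpi)}$, which the paper leaves implicit in the Peter--Weyl decomposition of $\mcO_q(U)$ transported through $i_{\nu}$.
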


We recall now from \cite[Lemma 2.33]{DCM18} that the $a_{\varpi}$ satisfy the commutation
\begin{equation}\label{EqCommapi}
a_{\varpi} Z(\xi,\eta) = q^{2(\varpi_+,\wt(\xi)-\wt(\eta))}Z(\xi,\eta)a_{\varpi}.
\end{equation}
This still holds with $\varpi$ replaced by a general weight in $P$ when interpreting the identity within $\mcO_q(Z_{\nu}^{\reg})$. 

\begin{Lem}
Define 
\begin{equation}\label{EqSigmaForm}
\sigma_z: \mcO_q(Z_{\nu}) \rightarrow \mcO_q(Z_{\nu}),\quad \sigma_z(Z(\xi,\eta)) = q^{iz(2\rho,\wt(\xi)-\wt(\eta))}Z(\xi,\eta),
\end{equation}
and write $\sigma = \sigma_{-i}$. Then the $\sigma_z$ are algebra automorphisms with
\[
E(fg) = E(g\sigma(f)),\qquad \forall f,g\in \mcO_q(Z_{\nu}).
\] 
\end{Lem}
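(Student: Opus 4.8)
The plan is to verify the two claims — that the $\sigma_z$ are algebra automorphisms, and that $E(fg) = E(g\sigma(f))$ — by reducing everything to the matrix coefficients $Z_\pi(\xi,\eta)$ spanning $\mcO_q(Z_\nu)$ and their multiplicative behaviour under the Cartan-type element $a_\varpi$.

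First I would establish that each $\sigma_z$ is a well-defined algebra automorphism. The map is specified on the spanning set $\{Z_\pi(\xi,\eta)\}$ by a weight-dependent scalar, so well-definedness amounts to checking it respects the linear relations among matrix coefficients; this is automatic because the scalar $q^{iz(2\rho,\wt(\xi)-\wt(\eta))}$ depends only on the weights $\wt(\xi),\wt(\eta)$ and the assignment $\xi\otimes\eta\mapsto q^{iz(2\rho,\wt(\xi)-\wt(\eta))}\xi\otimes\eta$ is $U_q(\mfu)$-module-map-compatible, i.e.\ it is implemented by conjugation, $Z_\pi(\xi,\eta)\mapsto Z_\pi(K_{-z'}\xi, K_{z'}\eta)$ with $K_{z'}$ shorthand for the appropriate power of $K_{2\rho}$ acting on each $V_\pi$. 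Concretely $\sigma_z$ coincides with the restriction to $\mcO_q(Z_\nu)$ of conjugation by the Woronowicz-type character $\hat\delta^{z}$ acting through $\iota_\nu$; since $\msR$, $\msE$ and $\msR_\tau$ all behave well under this conjugation (each $K_{2\rho}$-conjugation is a Hopf algebra automorphism by \eqref{EqAntiSquared}), $\sigma_z$ is an algebra homomorphism, and $\sigma_{-z}$ is its inverse. One can also see multiplicativity directly from the $R$-matrix commutation relation \eqref{EqSwitch} together with the $K_{2\rho}$-grading, but the cleanest route is the conjugation description.

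Next I would prove the modularity identity. By bilinearity and the fact that $\mcO_q(Z_\nu)$ is linearly spanned by the $Z_\pi(\xi,\eta)$, it suffices to prove $E(fg) = E(g\sigma(f))$ for $f = Z_\pi(\xi,\eta)$ and $g = Z_{\pi'}(\xi',\eta')$, and moreover one can assume $\xi,\eta,\xi',\eta'$ are weight vectors. Apply $E$ using the formula \eqref{EqEonZ} from Lemma \ref{LemImE}: $E$ kills any product not isomorphic (as a $U_q(\mfu)$-corepresentation) to a trivial corepresentation, and on the trivial part it extracts a coefficient built from $\int_{U_q}$ of a product of matrix coefficients. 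The key input is the generalized orthogonality relation \eqref{EqOrthoGen} together with the explicit Woronowicz character \eqref{EqDeltaUq} for $\mcO_q(U)$, which carries exactly a factor $q^{4iz(\rho,\wt(\cdot))}$ — this is the source of the $q^{(2\rho,\wt(\xi)-\wt(\eta))}$ appearing in $\sigma = \sigma_{-i}$. Swapping the two matrix coefficients inside $\int_{U_q}$ via the modularity of the Haar state $\int_{U_q}$ (i.e.\ $\int_{U_q} ab = \int_{U_q} b\,\sigma^{U_q}(a)$ with $\sigma^{U_q}$ the modular automorphism $\Ad$-type twist by $\hat\delta$) produces precisely the weight-scaling that $\sigma$ applies to $f$; one then recognizes the resulting expression as $E(g\sigma(f))$.

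The main obstacle will be bookkeeping: correctly tracking how the $\tau$-twist in the coaction \eqref{EqGlobalCoactRho} and the operator $J_\varpi$ interact with the modular automorphism of $\int_{U_q}$, and checking that the $\tau$-twist does not spoil the sign/scalar matching — this uses that $\tau$ is a Hopf $*$-automorphism commuting with $S^2 = \Ad(K_{2\rho}^{-1})$, so $\tau(K_{2\rho}) = K_{2\rho}$ and $\rho$ is $\tau$-fixed, hence the Woronowicz character is $\tau$-invariant. Once that compatibility is in place, the computation is a direct substitution: expand $E(fg)$ and $E(g\sigma(f))$ via \eqref{EqEonZ}, use \eqref{EqOrthoGen}, and compare. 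Alternatively, and perhaps more efficiently, I would note that by Lemma \ref{LemInvPartZ} the coinvariants are central, so $E$ is a conditional-expectation-type projection onto the center; then the identity $E(fg)=E(g\sigma(f))$ follows from the general fact \eqref{EqAlphaSigma} relating the modular automorphism $\sigma$ of an ergodic-type state to the coaction and $S^2$, applied fiberwise — but since $\mcO_q(Z_\nu)$ is not itself ergodic, I would instead invoke the direct matrix-coefficient computation above to keep the argument self-contained.
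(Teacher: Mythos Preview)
Your treatment of the automorphism claim is fine and matches the paper's: writing $\sigma=\Ad(a_\rho)$ inside $\mcO_q(Z_\nu^{\reg})$ (equivalently, conjugation by the $K_{2\rho}$-type element through $\iota_\nu$) immediately gives that each $\sigma_z$ is multiplicative.

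The modularity argument, however, has a genuine gap. Writing $E(fg)=(\id\otimes\int_{U_q})\rho_\nu(f)\rho_\nu(g)$ and using the modular automorphism of $\int_{U_q}$ lets you swap the two factors in the \emph{second} tensor leg, producing the correct scalar $q^{(2\rho,\wt(\xi)-\wt(\eta))}$. But the \emph{first} tensor leg still carries the product $Z_\pi(e_i,e_j)Z_{\pi'}(e_k',e_l')$ in the original order, whereas $E(g\sigma(f))$ has $Z_{\pi'}(e_k',e_l')Z_\pi(e_i,e_j)$. Since $\mcO_q(Z_\nu)$ is non-commutative, you cannot ``recognize the resulting expression as $E(g\sigma(f))$'' without an additional argument swapping these first-leg factors. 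Your reference to \eqref{EqEonZ} does not help here: that formula applies to a single $Z_\varpi(\xi,\eta)$, not to products, and applying it to $fg$ would require first decomposing the product, which is exactly where the non-commutativity lives.

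The paper bypasses this obstruction by a different route: it first proves $E(a_\varpi z)=E(z a_\varpi)$ using the explicit $q$-commutation \eqref{EqCommapi} together with the fact that, since $\varpi_+$ is $\tau$-invariant, the commutation scalar $q^{2(\varpi_+,\wt(\xi)-\wt(\eta))}$ equals $1$ on the weight components that survive under $E$. Then it upgrades to arbitrary $f=X\rhdb a_\varpi$ via two structural identities: $\sigma(X\rhdb Y)=S^2(X)\rhdb\sigma(Y)$ (from \eqref{EqTwistedAdj}, \eqref{EqAntiSquared}) and $E((X\rhdb Y)Z)=E(Y(S(X)\rhdb Z))$ (from $U_q(\mfu)$-invariance of $E$). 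Chaining these gives $E((X\rhdb a_\varpi)Y)=E(Y\sigma(X\rhdb a_\varpi))$, and one concludes since the $X\rhdb a_\varpi$ span $\mcO_q(Z_\nu)$. The non-commutativity in the first leg is thus handled not by a direct swap but by repeatedly passing the $\rhdb$-action back and forth between the two arguments.
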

\begin{proof}
Within $\mcO_q(Z_{\nu}^{\reg})$ we can write 
\begin{equation}\label{EqSigmaAda}
\sigma = \Ad(a_{\rho})
\end{equation} 
It follows from this that all $\sigma_z$ are algebra automorphisms. 

By \eqref{EqCommapi}, Lemma \ref{LemImE} and $\tau$-invariance of the $\varpi_+$, we have that 
\begin{equation}\label{EqModulara}
E(a_{\varpi}z) = E(za_{\varpi}),\qquad \forall \varpi\in P^+,\forall z\in \mcO_q(Z_{\nu}). 
\end{equation}
Note further that we can write 
\[
\sigma(Z(\xi,\eta)) = Z(K_{2\rho}\xi,K_{2\rho}^{-1}\eta).
\]
Using \eqref{EqTwistedAdj} and \eqref{EqAntiSquared} it follows that
\begin{equation}\label{EqSigmaS}
\sigma(X \rhdb Y) = S^2(X) \rhdb \sigma(Y),\qquad X\in U_q(\mfu),Y\in \mcO_q(Z_{\nu}).
\end{equation}
From invariance of $E$, we also have
\begin{equation}\label{EqStrongInv}
E((X\rhdb Y)Z) = E(Y (S(X)\rhdb Z)).
\end{equation}
Hence
\begin{multline*}
E((X\rhdb a_{\varpi})Y) \underset{\eqref{EqStrongInv}}{=} E(a_{\varpi}(S(X)\rhdb Y)) \underset{\eqref{EqModulara}}{=} E((S(X)\rhdb Y)a_{\varpi})  \\ \underset{\eqref{EqStrongInv}}{=} E(Y (S^2(X) \rhdb a_{\varpi})) \underset{\eqref{EqSigmaS}}{=} E(Y\sigma(X\rhdb a_{\varpi})). 
\end{multline*}
Since the $X\rhdb a_{\varpi}$ span $\mcO_q(Z_{\nu})$ linearly, we obtain the lemma. 
\end{proof}

\subsection{Harish-Chandra homomorphism} 

Let $U_q(\mfh_{\nu})\subseteq U_q^{\nu}(\mfu)$ be the  algebra linearly spanned by the $T_{\omega}$. Let $U_q(\mfn)$ be the unital algebra generated by the $E_r$, and $U_q(\mfn^-)$ the algebra generated by the $E_r^*$. 

\begin{Lem}\label{LemCommNilp}
An element in $U_q(\mfn)$ commutes with the $T_{\omega}$ if and only if it is a scalar.
\end{Lem}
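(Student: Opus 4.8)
The plan is to exploit the $Q^+$-grading on $U_q(\mfn)$ together with the fact that conjugation by the $T_\omega$ acts diagonally with respect to it. First I would recall that $U_q(\mfn)$, being the subalgebra of $U_q^{\nu}(\mfu)$ generated by the $E_r$ subject only to the quantum Serre relations, carries the standard $Q^+$-grading $U_q(\mfn) = \bigoplus_{\beta \in Q^+} U_q(\mfn)_\beta$ with $U_q(\mfn)_0 = \C 1$; this is immediate since the Serre relations are $Q^+$-homogeneous and $U_q(\mfn^+)$ admits a PBW basis, cf.\ \cite{KS97}. Since $T_\omega$ is invertible with $T_\omega^{-1} = T_{-\omega}$, relation \eqref{CommTE} shows that conjugation $\Ad(T_\omega)$ preserves each graded piece and acts there by a scalar:
\[
T_\omega\, x_\beta\, T_\omega^{-1} = q^{-2(\omega_+,\beta)}\, x_\beta, \qquad \beta \in Q^+,\ \omega \in P,\ x_\beta \in U_q(\mfn)_\beta,
\]
where $\omega_+ = \tfrac12(\omega + \tau(\omega))$.

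Next, let $x \in U_q(\mfn)$ commute with all $T_\omega$ and decompose $x = \sum_\beta x_\beta$ along the grading. By uniqueness of this decomposition, $\Ad(T_\omega)x = x$ forces $q^{-2(\omega_+,\beta)} = 1$ for every $\beta$ with $x_\beta \neq 0$ and every $\omega \in P$; since $0 < q < 1$ this means $(\omega_+,\beta) = 0$ for all $\omega \in P$. As $\tau$ is an isometry, $2(\omega_+,\beta) = (\omega,\beta + \tau(\beta)) = 2(\omega,\beta_+)$ with $\beta_+ = \tfrac12(\beta + \tau(\beta))$, so $\beta_+$ is orthogonal to all of $P$; since $Q \subseteq P$ spans $V$, this gives $\beta_+ = 0$, i.e.\ $\tau(\beta) = -\beta$.

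To finish, I would use that $\beta \in Q^+$ and that $\tau$ permutes the simple roots: writing $\beta = \sum_r n_r \alpha_r$ with $n_r \in \N$, the identity $\tau(\beta) = -\beta$ becomes $n_r + n_{\tau(r)} = 0$ for all $r$, hence $n_r = 0$ and $\beta = 0$. Thus the only surviving component of $x$ lies in $U_q(\mfn)_0 = \C 1$, so $x$ is a scalar; the converse is trivial. The only delicate point — the ``obstacle'', such as it is — is making sure the $Q^+$-grading with one-dimensional degree-zero part is genuinely available inside the deformed algebra $U_q^{\nu}(\mfu)$, and this holds here precisely because the relations among the $E_r$ in $U_q^{\nu}(\mfu)$ are exactly the (homogeneous) quantum Serre relations, so the subalgebra they generate is a graded quotient of $U_q(\mfn^+)$ with degree-zero part still $\C 1$.
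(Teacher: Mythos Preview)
Your proof is correct and follows essentially the same approach as the paper: decompose along the $Q^+$-grading of $U_q(\mfn)$, use \eqref{CommTE} to see that commuting with all $T_\omega$ forces $(\omega_+,\beta)=0$ for all $\omega\in P$, conclude $\beta_+=0$, and then use $\beta\in Q^+$ together with $\tau(Q^+)=Q^+$ to get $\beta=0$. Your write-up is in fact a bit more careful than the paper's (making explicit why $\beta_+=0$ forces $\beta=0$ and why the degree-zero part is $\C 1$), but there is no substantive difference.
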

\begin{proof}
If $X \in U_q(\mfn)$ commutes with the $T_{\omega}$, we may assume it is a sum of elements of the form $E_{\alpha_{i_1}}\ldots E_{\alpha_{i_n}}$, each of which commutes with $T_{\omega}$. Putting $\alpha = \sum \alpha_{i_k} \in Q^+$, we then have by \eqref{CommTE} that $(\omega_+,\alpha) = 0$ for all $\omega \in P$, hence $\alpha_+ =0$. But then $\alpha = -\tau(\alpha)$. Since $\alpha\in Q^+$, this entails $\alpha = 0$. Hence $X$ is scalar. 
\end{proof}

Let 
\begin{equation}\label{EqProjection}
P: U_q^{\nu}(\mfg)  \rightarrow U_q(\mfh_{\nu}),\quad XYZ \rightarrow \varepsilon(X)Y\varepsilon(Z),\qquad X\in U_q(\mfn^-),Y\in U_q(\mfh_{\nu}),Z\in U_q(\mfn) 
\end{equation}
be the projection map on the Cartan part with respect to this particular triangular decomposition. Using Lemma \ref{LemCommNilp}, the standard argument gives that $P$ is a $*$-homomorphism on the degree zero-part of $U_q^{\nu}(\mfu)$ with respect to the natural $P$-grading. Hence we have in particular the \emph{Harish-Chandra $*$-homomorphism}
\[
\chi_{\HC}: \msZ(U_q^{\nu}(\mfu)) \rightarrow U_q(\mfh_{\nu}),\quad X\mapsto P(X).
\] 
From the proof of Lemma \ref{LemInvPartZ}, we see that $\iota_{\nu}$ maps $\mcO_q(Z_{\nu}\dbslash U)$ into $\msZ(U_q^{\nu}(\mfu))$. Hence we can consider the $*$-homomorphism
\[
\chi_{\HC}\circ \iota_{\nu}: \mcO_q(Z_{\nu}\dbslash U) \rightarrow U_q(\mfh_{\nu}).
\]

Recall the elements $z_{\varpi}$ introduced in Corollary \ref{CorIntroz}.

\begin{Theorem}[Harish-Chandra formula]
For $\varpi \in P^+$ with $\tau(\varpi) = \varpi$ we have
\begin{equation}\label{EqHCForm}
\chi_{\HC}(\iota_{\nu}(z_{\varpi})) = \underset{\tau(\omega) = \omega}{\underset{\varpi -\omega \in Q^+}{\sum_{\omega\in P^+}}} j_{\varpi}(\omega)  |\Stab_{W_{\nu}}(\omega)|^{-1} \epsilon_{Q}(\varpi - \omega) \left(\sum_{w\in W_{\nu}} \epsilon_Q(\omega - w\omega) q^{-2(\rho,w\omega)}T_{w\omega}\right).
\end{equation}
\end{Theorem}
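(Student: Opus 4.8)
The plan is to compute $\iota_\nu(z_\varpi)$ from the global formula \eqref{EqGlobalIota} and then apply the projection $P$ of \eqref{EqProjection}. Plugging the expression for $z_\varpi$ from Corollary \ref{CorIntroz} into $\iota_\nu(f) = (f\otimes\id)(\msR_{\tau,21}(\msE\otimes 1)\msR)$ and using that $Z_\varpi(\xi,\eta)$ pairs with $X$ as $\langle\xi,\pi_\varpi(X)\eta\rangle$, the sum over the orthonormal basis of $V_\varpi$ telescopes into a trace:
\[
\iota_\nu(z_\varpi) = \big(\Tr_{V_\varpi}(K_{2\rho}J_\varpi\,\pi_\varpi(-))\otimes\id\big)\big(\msR_{\tau,21}(\msE\otimes 1)\msR\big).
\]
So $\iota_\nu(z_\varpi)$ is a $J_\varpi$-twisted, $\msE$-inserted version of the usual quantum trace (Casimir-type) central element, and its Harish-Chandra image will be computed along the lines of the classical quantum Harish-Chandra computation (cf.\ \cite{KS97}). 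Two observations streamline the bookkeeping: since $\nu$ is ungauged all $\epsilon_r$ are real, so by \eqref{DefMsE} the operator $\msE$ is selfadjoint on each $V_\varpi$ and commutes with $J_\varpi$; and since $\rho$ is $\tau$-fixed, $K_{2\rho}$ commutes with $J_\varpi$. Hence the twisted trace behaves well under cyclic permutations combined with $J_\varpi\pi_\varpi(X) = \pi_\varpi(\tau(X))J_\varpi$.

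Next I would insert the Gauss decomposition $\msR = \msR_0\Theta$, where $\msR_0$ is the Cartan part and $\Theta = \sum_{\beta\in Q^+}\Theta_\beta$, $\Theta_0 = 1\otimes 1$, $\Theta_\beta\in U_q(\mfn^+)_\beta\otimes U_q(\mfn^-)_{-\beta}$, together with the corresponding decomposition of $\msR_{\tau,21}$, whose second leg contributes a $U_q(\mfn^+)_{\tau(\beta)}$-part. Applying $P$ to the second leg of $\msR_{\tau,21}(\msE\otimes 1)\msR$ and comparing weights (Lemma \ref{LemCommNilp}), a term survives only if the $U_q(\mfn^+)$-weight coming from $\msR_{\tau,21}$ equals, after applying $\tau$, the $U_q(\mfn^-)$-weight coming from $\msR$; this pins the two $R$-matrix indices to $\beta$ and $\tau(\beta)$ and reduces the sum to a single sum over $\beta\in Q^+$. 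For fixed $\beta$, reordering the second-leg product into the normal form $U_q(\mfn^-)U_q(\mfh_\nu)U_q(\mfn^+)$ and keeping the Cartan part produces (by the same identity that governs the quasi-$R$-matrix in the classical computation) a scalar multiple of $T_\mu$, where $\mu$ is the weight of $V_\varpi$ currently being traced; simultaneously, the trace over $V_\varpi$ vanishes on all non-$\tau$-fixed weight spaces (because of $J_\varpi$), contributes the integer $j_\varpi(\mu)$ of \eqref{EqForj} on the $\tau$-fixed ones, and yields the eigenvalue $\epsilon_Q(\varpi-\mu)$ of $\msE$ together with a $q$-power coming from $K_{2\rho}$ and $\msR_0$.

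Summing over $\beta\in Q^+$ against the weight data stored in $\Theta$ (a quantum partition-function sum, handled by the same type of argument as classically) collapses the $\beta$-sum into the $\rho$-shift, leaving $\chi_{\HC}(\iota_\nu(z_\varpi)) = \sum_{\mu}j_\varpi(\mu)\,\epsilon_Q(\varpi-\mu)\,q^{-2(\rho,\mu)}T_\mu$, summed over all $\tau$-fixed weights $\mu$ of $V_\varpi$ with $\varpi-\mu\in Q^+$. It remains to reorganize this as a sum over dominant $\omega$ together with their $W_\nu$-orbits. Here the terms with $\varpi-\mu$ not supported on $J$ drop out because $\epsilon_Q$ vanishes on $\alpha_r$ for $r\notin J$, so effectively $\mu\in\varpi-\N\{\alpha_r\mid r\in J\}$; such $\tau$-fixed $\mu$ are $W_\nu = W^{\tau}_{J}$-conjugate to a unique $\omega\in P^+$ (using Corollary \ref{CorOrbitPosInv} and the $\tau$-invariance of $J$), $j_\varpi$ is $W^\tau$-invariant by Lemma \ref{LemFunctjvarpi}, and $\epsilon_Q$ is multiplicative, so $\epsilon_Q(\varpi-w\omega) = \epsilon_Q(\varpi-\omega)\epsilon_Q(\omega-w\omega)$; dividing by $|\Stab_{W_\nu}(\omega)|$ to correct the overcounting gives \eqref{EqHCForm}. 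The main obstacle is the heart of the second and third steps: verifying that the partition-function sum over $\Theta$ produces exactly the factors $q^{-2(\rho,w\omega)}$ and $\epsilon_Q(\omega-w\omega)$ dictated by the twisted dot-action of Definition \ref{DefDot}, and that the surviving Weyl-group sum is over $W_\nu$ and not a larger or smaller subgroup of $W$ — this is the $\tau$-twisted, $\epsilon$-shifted analogue of the classical Harish-Chandra/Weyl bookkeeping, and the delicate part is matching the normalizations.
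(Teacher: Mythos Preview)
Your framing and your final reorganisation step are both correct: the intermediate target
\[
\chi_{\HC}(\iota_\nu(z_\varpi)) \;=\; \sum_{\substack{\mu\in P,\ \tau(\mu)=\mu\\ \varpi-\mu\in Q^+}} j_\varpi(\mu)\,\epsilon_Q(\varpi-\mu)\,q^{-2(\rho,\mu)}\,T_\mu
\]
is exactly what the paper establishes as \eqref{EqExtraId}, and your argument from there to \eqref{EqHCForm} (support on $J$, Corollary~\ref{CorOrbitPosInv}, $W^\tau$-invariance of $j_\varpi$, multiplicativity of $\epsilon_Q$, stabiliser correction) is the same as the paper's.

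The gap is precisely where you flag it. In $\msR_{\tau,21}(\msE\otimes 1)\msR$ the second tensor leg sits in $U_q(\mfn^+)\,U_q(\mfh)\,U_q(\mfn^-)$, the \emph{opposite} triangular order to the one defining $P$ in \eqref{EqProjection}. Your plan is to reorder term by term and absorb the commutators $[E_r,F_s]$ into a ``partition-function sum'' that miraculously collapses to the $\rho$-shift. That collapse is not a formality: the cross terms are governed by the twisted commutation relations of $U_q^{\nu}(\mfg)$, not the untwisted ones, and there is no a priori reason the resulting sum telescopes to the clean $q^{-2(\rho,\mu)}$ without further input. In particular, the ``same identity that governs the quasi-$R$-matrix in the classical computation'' does not apply directly here because the two factors you are commuting come from $\msR_{\tau,21}$ and $\msR$, not from $\msR_{21}$ and $\msR$.

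The paper avoids this computation entirely. The key observation you are missing is that $z_\varpi$ is $U_q(\mfu)$-invariant, hence satisfies the twisted trace identity $f(XY)=f(\tau(S^2(Y))X)$ of \eqref{EqTwistQuantTrace}. Combined with the switch rule \eqref{EqSwitch} for $\kappa_\pm$, this lets one rewrite $\iota_\nu(f)$ globally, before expanding the $R$-matrices, as
\[
\iota_\nu(f)\;=\;f\!\big(\tau(\msR_2)\,\msE\,S^2(\msR_{1'})\,K_{2\rho}^{-2}\big)\,\msR_{2'}\msR_1,
\]
where now the second leg lies in $U_q(\mfn^-)U_q(\mfn^+)$, the \emph{correct} order for $P$. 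Projecting then kills all nilpotent contributions at once, leaving only the Cartan parts $\msQ$ of both $R$-matrices, and the intermediate formula drops out in a few lines with no partition-function bookkeeping. So the missing idea is: exploit invariance of $z_\varpi$ to swap the two $R$-matrix factors \emph{before} projecting, rather than trying to reorder PBW monomials \emph{after}.
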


\begin{proof}
We would like to use formula \eqref{EqGlobalIota}, but unfortunately the expression in that formula is with respect to the opposite triangular decomposition of the one for the Harish-Chandra projection \eqref{EqProjection}. We hence have to first bring \eqref{EqGlobalIota} in the right form. 

Endow $\mcO_q(Z_{\nu})$ with the coalgebra structure inherited from $\mcO_q(U)$ through the map $i_{\nu}$ in \eqref{EqInclusioninu}. If $f\in \mcO_q(Z_{\nu}\dbslash U)$ we have that
\[
f_{(2)}\otimes S(\tau(f_{(1)}))f_{(3)} = f\otimes 1,
\]
from which one obtains
\[
\Delta(f) = (\id\otimes S^2\tau)\Delta^{\cop}(f)
\]
and hence 
\begin{equation}\label{EqTwistQuantTrace}
f(XY) = f(\tau(S^2(Y))X),\qquad X,Y \in U_q(\mfg).
\end{equation}
Using the shorthand notation $\msR = \msR_1\otimes \msR_2$ for the universal $R$-matrix, and using the maps $\kappa_{\pm}$ from \eqref{EqKappa1}, we find using \eqref{EqGlobalIota} that for $f\in \mcO_q(Z_{\nu}\dbslash U)$
\begin{eqnarray*}
\iota_{\nu}(f) &=& f_{(1)}(\tau(\msR_{2}))f_{(2)}(\msE)f_{(3)}(\msR_{1'})) \msR_{1}\msR_{2'} \\ 
&=& f_{(2)}(\msE) \kappa_+(S(\tau(f_{(1)}))) \kappa_-(f_{(3)}) \\
&\underset{\eqref{EqSwitch}}{=}& f_{(4)}(\msE) \mbr_{\nu}(f_{(5)},S(\tau(f_{(3)}))) \kappa_-(f_{(6)})\kappa_+(S(\tau(f_{(2)}))) \mbr(f_{(7)},\tau(f_{(1)})) \\
&=& f(\tau(\msR_2\msR_{2'}S(\nu(\msR_{2''})))\msE \msR_{1''}\msR_{1^{'''}}\msR_1) \msR_{2^{'''}} \msR_{1'}\\
&\underset{\eqref{EqTwistQuantTrace}}{=}&  f(\tau(\msR_{2'}S(\nu(\msR_{2''})))\msE \msR_{1''}\msR_{1^{'''}}\msR_1S^{-2}(\msR_2)) \msR_{2^{'''}} \msR_{1'} \\ 
&=&  f(\tau(\msR_{2'})\msE S(\msR_{2''})\msR_{1''}\msR_{1^{'''}}\msR_1S^{-2}(\msR_2)) \msR_{2^{'''}} \msR_{1'},
\end{eqnarray*}
where in the last line we used $\tau(\nu(Y))\msE = \msE Y$ for $Y \in U_q(\mfb^-)$. 

Let us now write
\[
u = S(\msR_{2})\msR_1,\quad v = S(u) = \msR_1S(\msR_2),
\]
so that also
\[
v^{-1} = \msR_1 S^{-2}(\msR_2),\quad u^{-1} = \msR_2S^2(\msR_1).
\]
We have that $\Ad(u)$ implements $S^2$ and $uv^{-1} = K_{2\rho}^{-2}$, the grouplike implementing $S^4$, see e.g.~ \cite[Proposition 2.1.8 and Corollary 2.1.9]{Maj95}.

Since we can write
\begin{eqnarray*}
\iota_{\nu}(f) &=&  f(\tau(\msR_{2})\msE u\msR_{1^{'}}v^{-1}) \msR_{2^{'}} \msR_{1},
\end{eqnarray*}
we thus obtain 
\[
\iota_{\nu}(f) = f(\tau(\msR_{2})\msE S^2(\msR_{1'})K_{2\rho}^{-2})\msR_{2'}\msR_1.
\]
Since $\msR = \widetilde{\msR}\msQ$ with $\widetilde{\msR} = \sum_{\alpha\in Q^+}\widetilde{\msR}_{\alpha}$, where $\wmsR_0 = 1\otimes 1$ and $\widetilde{\msR}_{\alpha} \in  U_q(\mfn)_{\alpha}\otimes U_q(\mfn^-)_{-\alpha}$, and where 
\[
\msQ(\xi\otimes \eta) = q^{-(\wt(\xi),\wt(\eta))}\xi\otimes \eta,
\]
we see that 
\begin{eqnarray} 
\nonumber \chi_{\HC}(\iota_{\nu}(z_{\varpi})) &=& \sum_{i,j} \langle e_j,K_{2\rho}J_{\varpi} e_i\rangle P(\iota_{\nu}(Z_{\varpi}(e_i,e_j)))\\
\nonumber &=& \sum_{i,j} \langle e_j,K_{2\rho}J_{\varpi} e_i\rangle \langle e_i, \tau(\msR_2)\msE S^2(\msR_{1'})K_{2\rho}^{-2}e_j\rangle P(\msR_{2'}\msR_1)\\
\nonumber &=& \sum_{i,j} \langle e_j,K_{2\rho}J_{\varpi} e_i\rangle \langle e_i, \tau(\msQ_2)\msE S^2(\msQ_{1'})K_{2\rho}^{-2}e_j\rangle \msQ_{2'}^-\msQ_1^+\\
\nonumber &=& \sum_{i} \langle e_i,J_{\varpi} e_i\rangle\epsilon_Q(\varpi -\wt(e_i))q^{-2(\rho,\wt(e_i))}K_{- \wt(e_i)}^-K_{-\tau(\wt(e_i))}^+\\
\label{EqExtraId}&=& \underset{\tau(\omega) = \omega}{\underset{\varpi - \omega \in Q^+}{\sum_{\omega \in P}}} j_{\varpi}(\omega) \epsilon_Q(\varpi - \omega) q^{-2(\rho,\omega)}  T_{\omega}.
\end{eqnarray}
Now if $\omega \in P$ with $\tau(\omega) = \omega$, $\varpi- \omega \in Q^+$ and $\epsilon_Q(\varpi - \omega) \neq 0$, it follows that $\varpi - \omega \in Q_J^+$, where $J = \supp(\epsilon)$. From Corollary \ref{CorOrbitPosInv}, we can find $w\in W_{\nu}$ such that $w\omega$ is dominant integral on $Q_J$. Since however we also must have $\varpi -w\omega \in Q_J^+$ and $\varpi\in P^+$, we see that in fact $w\omega \in P^+$. It follows that 
\[
\chi_{\HC}(\iota_{\nu}(z_{\varpi})) =  \underset{\tau(\omega) = \omega}{\underset{\varpi - \omega \in Q^+}{\sum_{\omega \in P^+}}} |\Stab_{W_{\nu}}(\omega)|^{-1}  \epsilon_Q(\varpi -\omega)\sum_{w\in W_{\nu}} j_{\varpi}(w\omega) \epsilon_Q(\omega - w\omega) q^{-2(\rho,w\omega)}  T_{w\omega}.
\]
We can now deduce \eqref{EqHCForm} from Lemma \ref{LemFunctjvarpi}.(2).
\end{proof}

Let us now transport the above triangular decomposition and Harish-Chandra homomorphism to $\mcO_q(Z_{\nu}^{\reg})$. Write $\msN^+$ for the unital algebra generated by the $x_r$, $\msN^-$ for the unital algebra generated by the $x_r^*$, and $\msA$ for the algebra generated by the $a_{\omega}$. Consider the associated triangular decomposition
\begin{equation}\label{EqTriangZ}
\msN^- \otimes \msA \otimes \msN^+ \cong \mcO_q(Z_{\nu}^{\reg})
\end{equation}
where the isomorphism is a vector space isomorphism induced by multiplication. The projection to the $\msA$-component gives the $*$-homomorphism 
\begin{equation}\label{EqFormHCZ}
\widetilde{\chi}_{\HC}: \mcO_q(Z_{\nu}\dbslash U) \rightarrow \msA,\quad z_{\varpi} \mapsto \underset{\tau(\omega) = \omega}{\underset{\varpi -\omega \in Q^+}{\sum_{\omega\in P^+}}} j_{\varpi}(\omega)  |\Stab_{W_{\nu}}(\omega)|^{-1} \epsilon_Q(\varpi - \omega) \left(\sum_{w\in W_{\nu}} \epsilon_Q(\omega - w\omega) q^{-2(\rho,w\omega)}a_{w\omega}\right).
\end{equation}

View $\msA$ as functions on $\mbH_{\tau}^{\times}$ by 
\begin{equation}\label{EqCorrAFunc}
a_{\omega}(\mu) = \mu_P(\omega),
\end{equation}
using the notation of \eqref{EqEpsP}. We will use a $q$-deformation of the $\epsilon$-twisted dot action introduced in Definition \ref{DefDot}.

\begin{Def}\label{DefDotq}
We define the $q$-deformed $\epsilon$-twisted dot action of $W_{\nu}$ on $\mbH_{\tau}^{\times}$ by $\cdot_{\epsilon,q}= \cdot_{q^{2\rho}\epsilon}$, so
\[
(w\cdot_{\epsilon,q} \lambda)_P(\omega) = \epsilon_Q(\omega - w^{-1}\omega)q^{(2\rho,\omega -w^{-1}\omega)}\lambda_P(w^{-1}\omega)
\]
\end{Def}

\begin{Cor}\label{CorOrbitHC}
Let $\mu,\mu' \in \mbH_{\tau}^{\times}$, and assume that 
\[
\widetilde{\chi}_{\HC}(z_{\varpi})(\mu) = \widetilde{\chi}_{\HC}(z_{\varpi})(\mu'),\qquad \textrm{for all } \varpi \in P^+\textrm{ with }\tau(\varpi)= \varpi.
\] 
Then there exists $w\in W_{\nu}$ and a gauge $\gamma \in \mbH_{\tau}^{\gauge}$ such that 
\[
\mu' = \gamma(w\cdot_{\epsilon,q} \mu).
\] 
\end{Cor}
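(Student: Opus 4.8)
The plan is to deduce this corollary from the Harish-Chandra formula \eqref{EqFormHCZ} by an induction on the partial order on $P^+$ given by $\omega \leq \omega'$ iff $\omega' - \omega \in Q^+$, using the fundamental weights as building blocks. The key observation is that each $z_\varpi$, evaluated via $\widetilde{\chi}_{\HC}$, is a sum over $\tau$-fixed dominant weights $\omega \leq \varpi$, and the ``leading term'' $\omega = \varpi$ carries the factor $\sum_{w\in W_\nu}\epsilon_Q(\varpi - w\varpi)q^{-2(\rho,w\varpi)}a_{w\varpi}$, which (using $j_\varpi(\varpi)=1$ and the coefficient $|\Stab_{W_\nu}(\varpi)|^{-1}$ absorbing multiplicities) is exactly the $W_\nu$-orbit sum under the $q$-deformed $\epsilon$-twisted dot action of Definition \ref{DefDotq}, evaluated on $\varpi$.

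First I would set up, for a $\tau$-fixed $\varpi \in P^\tau$, the function $F_\varpi(\mu) = \sum_{w\in W_\nu}(w\cdot_{\epsilon,q}\mu)_P(\varpi)$ obtained from the $\omega = \varpi$ term, and show by induction on $\varpi$ in the above order that the hypothesis of the corollary is equivalent to $F_\varpi(\mu) = F_\varpi(\mu')$ for all $\tau$-fixed $\varpi \in P^+$. Indeed, writing $\widetilde\chi_{\HC}(z_\varpi) = c_\varpi F_\varpi + (\text{lower terms in }\omega<\varpi)$, where $c_\varpi$ is a nonzero scalar, and noting the lower terms are $\C$-linear combinations of the $\widetilde\chi_{\HC}(z_\omega)$ for $\omega < \omega$ in the appropriate sense — more precisely, by downward manipulation one sees the span of $\{\widetilde\chi_{\HC}(z_\varpi)\}$ equals the span of $\{F_\varpi\}$ — the two families of equalities are equivalent. (One must be slightly careful that the $F_\omega$ for $\omega$ ranging over $\tau$-fixed dominant weights appearing in the expansion of $z_\varpi$ are themselves in the span; this is where positivity of $j_\varpi(\varpi) = 1$ and an induction are used.)

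Next, the core step: given that $\sum_{w\in W_\nu}(w\cdot_{\epsilon,q}\mu)_P(\varpi) = \sum_{w\in W_\nu}(w\cdot_{\epsilon,q}\mu')_P(\varpi)$ for all $\varpi \in P^{+,\tau}$, I want to conclude $\mu' = \gamma(w\cdot_{\epsilon,q}\mu)$ for some $w\in W_\nu$ and gauge $\gamma$. The idea is that the $W_\nu$-orbit of $\mu$ under $\cdot_{\epsilon,q}$ is determined by these orbit-sum ``characters'' up to the ambiguity of replacing an element of the orbit by a gauge-translate. Concretely, $W_\nu$ acting on $\mbH_\tau^\times$ factors through its action on the torus $\Hom(P^\tau, \C^\times)$, and the functions $\mu \mapsto (w\cdot_{\epsilon,q}\mu)_P(\varpi)$ are, up to fixed nonzero constants and a fixed shift by $q^{2\rho}\epsilon$, the characters of this torus. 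Linear independence of characters (a Vandermonde/Artin-type argument) shows that the multiset $\{w\cdot_{\epsilon,q}\mu\}_{w\in W_\nu}$, restricted to $P^\tau$, equals the multiset $\{w\cdot_{\epsilon,q}\mu'\}_{w\in W_\nu}$ restricted to $P^\tau$; hence there is $w$ with $\mu'$ and $w\cdot_{\epsilon,q}\mu$ agreeing on $P^\tau$. The remaining discrepancy lives on $P/P^\tau \cong P^{-\tau}$-directions, i.e.\ is a character of $\mbH_\tau$ trivial on $P^\tau$ and of modulus $1$ (by the $\tau$-conjugate symmetry \eqref{EqDefPropEps}), which is precisely a gauge $\gamma$; absorbing it gives $\mu' = \gamma(w\cdot_{\epsilon,q}\mu)$.

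The main obstacle I expect is the bookkeeping in the second paragraph: extracting clean orbit-sum invariants $F_\varpi$ from the Harish-Chandra formula \eqref{EqFormHCZ}, in which the coefficients $j_\varpi(\omega)$ and stabilizer factors intervene and in which the ``lower order'' terms are themselves orbit sums of smaller weights rather than single monomials. One has to argue that the transition matrix between $\{\widetilde\chi_{\HC}(z_\varpi)\mid \varpi\in P^{+,\tau}\}$ and $\{F_\varpi \mid \varpi \in P^{+,\tau}\}$ is unitriangular with respect to the dominance order (with diagonal entries $c_\varpi \ne 0$, using $j_\varpi(\varpi)=1$), so that the two families have the same $\C$-span and the hypothesis of the corollary indeed forces all $F_\varpi(\mu) = F_\varpi(\mu')$. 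Once that reduction is in place, the character-theoretic rigidity argument and the identification of the residual ambiguity with a gauge are routine, using only Definition \ref{DefWnu}, Corollary \ref{CorOrbitPosInv}, and the definitions of $\mbH_\tau^{\gauge}$ and $\cdot_{\epsilon,q}$.
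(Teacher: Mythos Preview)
Your proposal is correct and follows essentially the same route as the paper. The paper also introduces the orbit sums $\hat a_\varpi(\mu)=\sum_{w\in W_\nu}a_\varpi(w\cdot_{\epsilon,q}\mu)$ (your $F_\varpi$), proves by the same unitriangularity (using $j_\varpi(\varpi)=1$) that the range of $\widetilde\chi_{\HC}$ is exactly the span of the $\hat a_\varpi$, and then concludes by a separation-of-orbits argument. The only cosmetic difference is in that last step: the paper averages over the compact gauge torus $\mbH_\tau^{\gauge}$ to see that only $\tau$-fixed $\varpi$ survive, and then invokes the fact that the $a_\varpi$ separate points to deduce that the $\hat a_\varpi$ separate the compact $\mbH_\tau^{\gauge}\times W_\nu$-orbits; you instead use Artin linear independence of characters on $P^\tau$ to match the multisets $\{(w\cdot_{\epsilon,q}\mu)_{|P^\tau}\}_w$ and then identify the residual factor as a gauge via $\gamma_{\tau(r)}=\overline{\gamma_r}$. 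Both arguments are equivalent; your version is slightly more explicit. One small wording fix: the residual character is trivial on $P^\tau$, but $P/P^\tau$ is not literally $P^{-\tau}$; what you actually use is that a $\tau$-twisting datum trivial on $P^\tau$ has modulus one on $I^*$ and is $1$ on $I^\tau$, hence is a gauge, which is exactly what you conclude.
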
 
\begin{proof}
For $\varpi \in (P^+)^{\tau}$  we write
\[
\hat{a}_{\varpi} = \sum_{w\in W_{\nu}} \epsilon_Q(\varpi - w\varpi) q^{-2(\rho,w\varpi-\varpi)}a_{w\varpi},
\]
and we let $\msA_{\tau}^{W_{\nu}}$ be the subspace of $\msA$ spanned by the $\hat{a}_{\varpi}$. By triangularity of the map 
 \[
(P^+)^{\tau}\rightarrow \msA_{\tau}^{W_{\nu}},\quad \varpi \mapsto \widetilde{\chi}_{\HC}(z_{\varpi}) = \underset{\tau(\omega) = \omega}{\underset{\varpi -\omega \in Q^+}{\sum_{\omega\in P^+}}} j_{\varpi}(\omega)  |\Stab_{W_{\nu}}(\omega)|^{-1} \epsilon_Q(\varpi - \omega) q^{-2(\rho,\omega)}\hat{a}_{\omega}
\]
with respect to the natural partial order on $(P^+)^{\tau}$ given by $\varpi \geq \varpi'$ if and only if $\varpi - \varpi' \in Q^+$, it follows that $\widetilde{\chi}_{\HC}$ is surjective onto $\msA_{\tau}^{W_{\nu}}$. 

Note now that the functions $\hat{a}_{\varpi}$ can be written as 
\[
\mu \mapsto  \hat{a}_{\varpi}(\mu)  = \sum_{w\in W_{\nu}} a_{\varpi}(w\cdot_{\epsilon,q} \mu).
\]  
Consider the action of $\mbH_{\tau}^{\gauge} \cong \T^{|I^*|}$ on $\mbH^{\times}_{\tau}$ by multiplication. Then the associated action on $\msA$ satisfies
\[
\int_{\mbH_{\tau}^{\gauge}} a_{\varpi}(\chi-)\rd \chi = \delta_{\varpi,\tau(\varpi)}a_{\varpi}. 
\]
Since the $a_{\varpi}$ for $\varpi \in P^+$ separate points in $\mbH_{\tau}^{\times}$, we must thus have that the range of $\widetilde{\chi}_{\HC}$ separates the (compact) $\mbH_{\tau}^{\gauge}\times W_{\nu}$-orbits of $\mbH_{\tau}^{\times}$, from which the corollary follows. 
\end{proof}

\section{Highest weight $*$-representations for $\mcO_q(Z_{\nu})$ and $\mcO_q(G_{\nu}\backslash G_{\R})$}\label{SecAdmis}

\subsection{The spectral $*$-algebra $\mcO_q(G_{\nu}\backslash G_{\R})$}\label{SubSecSpecOrConc}

Let $\nu = (\tau,\epsilon)\in \mbH^{\ungauge}$ be an ungauged twisting datum. We recall that we view $\mcO_q(Z_{\nu})$ as a full spectral $*$-algebra. In the following we also write this as 
\[
\mcO_q(Z_{\nu}) =  \mcO(Z_{\nu,q}) = \mcO_q(G_{\nu}\dbbackslash G_{\R}) = \mcO(G_{\nu,q}\dbbackslash G_{\R,q}).
\]
Here $G$ is again the connected, simply connected complex Lie group integrating $\mfg$, written as $G_{\R}$ when viewing it specifically as a real Lie group in stead of a complex one. The symbol $G_{\nu}$ evokes a closed real Lie subgroup of $G_{\R}$ determined by $\nu$, see \cite{DCM18} for more information on this interpretation. When $\nu$ is a reduced symmetric twisting datum, $G_{\nu}$ is the real form of $G$ determined by $\nu$ as its Vogan diagram.

Recall from Lemma \ref{LemInvPartZ} that the coaction $\rho_{\nu}$ is centrally coinvariant. We can hence consider the topological spaces
\[
 G_{\nu,q}\dbbackslash G_{\R,q}/ U_q = Z_{\nu,q}/ U_q \subseteq Z_{\nu,q}\dbslash U_q = G_{\nu,q}\dbbackslash G_{\R,q}\dbslash U_q.
\]

However, as one can glean from this notation, there is also a more refined spectral condition one can impose.

\begin{Def}
If $\pi$ is a bounded $*$-representation of $\mcO_q(Z_{\nu})$ on a Hilbert space $\Hsp_{\pi}$, we call a joint eigenspace of the $\pi(a_{\varpi})$ a \emph{weight space}. We call $\pi$ a \emph{weight $*$-representation} if $\Hsp_{\pi}$ is the closure of the sum of its weight spaces.  

If $\xi$ lies in a weight space, we call $\lambda \in \mbH_{\tau}$ with  
\[
a_{\varpi}\xi = \lambda_P(\varpi)\xi,\qquad \forall \varpi \in P^+
\]
the associated weight.
\end{Def}
Note that $*$-compatibility ensures that $\lambda$ is indeed a $\tau$-twisting datum.

\begin{Lem}\label{LemIrrRepZZ}
Any irreducible $*$-representation $\pi$ of $\mcO_q(Z_{\nu})$ is a weight $*$-representation, and all its weights lie in the same $\mbH_{\tau}^{\gg}$-orbit for the multiplication action.   
\end{Lem}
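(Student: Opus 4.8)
The plan is to exploit the triangular decomposition \eqref{EqTriangZ}, $\msN^-\otimes\msA\otimes\msN^+\cong\mcO_q(Z_\nu^{\reg})$, together with the commutation rules \eqref{CommTE}--\eqref{EqCommapi} that say the $a_\omega$ quasi-commute with everything. Concretely, from \eqref{EqCommapi} (interpreted inside $\mcO_q(Z_\nu^{\reg})$) we know $a_\varpi x_r = q^{2(\varpi_+,\alpha_r)}x_r a_\varpi$ and $a_\varpi x_r^* = q^{-2(\varpi_+,\alpha_r)}x_r^* a_\varpi$, while the $a_\varpi$ commute among themselves. This means that on any bounded $*$-representation $\pi$, the operators $\pi(a_\varpi)$ generate a commutative $*$-algebra, each $\pi(a_\varpi)$ is normal with $\pi(a_\varpi)^* = \pi(a_{\tau(\varpi)})$, and $\pi(x_r)$, $\pi(x_r^*)$ act as "raising/lowering" operators shifting the joint spectrum by the character $\varpi\mapsto q^{\pm 2(\varpi_+,\alpha_r)}$.

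First I would set up the spectral picture: let $A_\pi$ be the (abelian, as the $a_\varpi$ commute) von Neumann algebra generated by the $\pi(a_\varpi)$ inside $B(\Hsp_\pi)$, with joint spectral decomposition $\Hsp_\pi = \int^\oplus \Hsp_\lambda\,d\mu(\lambda)$ over characters $\lambda$ of $\msA$; because of $*$-compatibility each such $\lambda$ that occurs is automatically a $\tau$-twisting datum, i.e.\ lies in $\mbH_\tau$. Next I would observe, using the intertwining relations above, that $\pi(x_r)$ maps the weight-$\lambda$ spectral subspace into the weight-$(\sigma_r^+\cdot\lambda)$ subspace and $\pi(x_r^*)$ into the weight-$(\sigma_r^-\cdot\lambda)$ subspace, where $\sigma_r^\pm$ denote multiplication by the characters $\varpi\mapsto q^{\pm2(\varpi_+,\alpha_r)}$ (and the full group generated by these is contained in $\mbH_\tau^{\gg}$). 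Hence the closure $\msH_0$ of the span of all weight spaces of $\pi$ is an $\mcO_q(Z_\nu^{\reg})$-invariant (in fact $\mcO_q(Z_\nu)$-invariant) closed subspace: it is invariant under $\pi(a_\varpi)$ by construction, and under $\pi(x_r)$, $\pi(x_r^*)$ since these map weight vectors to weight vectors. By irreducibility, once we know $\msH_0\neq 0$ we conclude $\msH_0 = \Hsp_\pi$, so $\pi$ is a weight $*$-representation. Likewise the closed span of weight spaces whose weights lie in a single $\mbH_\tau^{\gg}$-orbit is invariant (the $x_r$, $x_r^*$ only move within such an orbit), and irreducibility forces a single orbit.

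The one genuine point requiring care — and the main obstacle — is showing $\msH_0\neq 0$, i.e.\ that $\pi$ has \emph{some} weight vector at all; equivalently that the abelian von Neumann algebra $A_\pi$ has at least one atom, or that the joint spectrum of the $\pi(a_\varpi)$ contains a point mass. Here is where one must use that $\pi$ is a $*$-representation of $\mcO_q(Z_\nu)$, not merely of the localized algebra $\mcO_q(Z_\nu^{\reg})$, plus the structure of $\mcO_q(Z_\nu\dbslash U)\subseteq\msZ(\mcO_q(Z_\nu))$ from Lemma \ref{LemInvPartZ} and Corollary \ref{CorIntroz}: the central elements $z_\varpi$ act as scalars on $\Hsp_\pi$ by irreducibility, and $\widetilde\chi_{\HC}(z_\varpi)$ from \eqref{EqFormHCZ} expresses $z_\varpi$ as a $W_\nu$-symmetrized polynomial in the $a_{w\omega}$. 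I would argue that fixing the scalars $\pi(z_\varpi)$ pins the joint spectrum of the $\pi(a_\varpi)$ down to a finite union of $\mbH_\tau^{\gauge}\times W_\nu$-orbits (this is exactly the content of Corollary \ref{CorOrbitHC}), which is a compact set in the character space; combined with the fact that the full generating set $\{a_\varpi, x_r\}$ of $\mcO_q(Z_\nu)$ acts boundedly, one gets that the spectrum of $A_\pi$ is contained in a bounded $W_\nu\ltimes\mbH_\tau^{\gg}$-invariant region, and a standard maximality/extreme-point argument on the weights (take a weight $\lambda$ whose "$q$-size", measured by $\prod_\varpi|\lambda_P(\varpi)|$ over a suitable finite set, is extremal in the spectrum) shows that at an extreme weight the raising operators $\pi(x_r)$ must annihilate the corresponding spectral subspace, forcing that subspace to be a genuine (atomic) weight space — in fact a highest-weight space. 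Once one atomic weight space exists, the previous paragraph's invariance argument propagates it through the whole orbit and exhausts $\Hsp_\pi$ by irreducibility, completing the proof; moreover this is precisely the setup for the "highest weight $*$-representations" announced in the section title, which presumably the next results make explicit.
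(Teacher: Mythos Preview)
Your overall strategy --- set up the joint spectral decomposition for the commuting family $\{\pi(a_\varpi)\}$, observe that the other generators shift weights, and then argue by irreducibility --- is reasonable, but the step you yourself flag as ``the main obstacle'' is not resolved. The extreme-point argument does not show that the spectral measure has an atom: an extremal point of the support of a continuous measure is still a null set, so its ``spectral subspace'' is zero and there is nothing for the $x_r$ to annihilate. Invoking Corollary~\ref{CorOrbitHC} does not help either: it only constrains the joint spectrum up to a $\mbH_\tau^{\gauge}\times W_\nu$-orbit, which is still a continuum (a torus), and in any case that corollary sits logically after the present lemma. There is also a smaller issue: you work with $\pi(x_r)$, but $x_r\in\mcO_q(Z_\nu^{\reg})$ need not lie in $\mcO_q(Z_\nu)$, so $\pi(x_r)$ is not defined until one already knows the representation is of regular type.

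The paper's argument avoids all of this by working directly with \eqref{EqCommapi}, which says $a_\varpi$ $q$-commutes with \emph{every} element $Z(\xi,\eta)$ spanning $\mcO_q(Z_\nu)$. Consequently $|\pi(a_\varpi)|$ $q$-commutes with all of $\pi(\mcO_q(Z_\nu))$, so for any Borel set $E\subseteq[0,\infty)$ invariant under the relevant multiplicative $q$-shifts the spectral projection $P_E$ commutes with $\pi(\mcO_q(Z_\nu))$; irreducibility forces $P_E\in\{0,1\}$. Since $0<q<1$, each such orbit in $(0,\infty)$ is discrete with $0$ as the only possible accumulation point, so the spectrum of $|\pi(a_\varpi)|$ is already discrete --- no extremality argument is needed. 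The same $q$-commutation shows the polar part of $\pi(a_\varpi)$ commutes with everything and is therefore scalar, and the lemma follows at once. The moral is that irreducibility plus $q$-commutation already forces discreteness of the spectrum; you do not need to produce a highest-weight vector first.
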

\begin{proof}
It  follows from \eqref{EqCommapi} that the absolute values $|\pi(a_{\varpi})|$ obey $q$-commutation relations with $\pi(\mcO_q(Z_{\nu}))$. By irreducibility, we then either have $|\pi(a_{\varpi})|=0$ or $\Ker(|\pi(a_{\varpi})|) =0$ with the spectrum of $|\pi(a_{\varpi})|$ a set with $0$ as its only possible accumulation point. It also follows from \eqref{EqCommapi} and irreducibility that the polar parts of the $\pi(a_{\varpi})$ are central, hence scalar. It is now immediate that $\pi$ is a weight $*$-representation.
\end{proof}

\begin{Def}
We call an irreducible $*$-representation $\pi$ of $\mcO_q(Z_{\nu})$ 
\begin{itemize}
\item \emph{of regular type} if $\pi(a_{\rho})$ has zero kernel,
\item \emph{positive} if $\pi(a_{\varpi_r})\geq0$ for all $r\in I^{\tau}$.
\end{itemize}
\end{Def} 

Note that if $\pi$ is an irreducible $*$-representation of regular type, it follows by Lemma \ref{LemIrrRepZZ} that there exists a dense subspace $V_{\pi}\subseteq \Hsp_{\pi}$, namely the algebraic direct sum of the weight spaces, on which $\pi$ extends to a (non-bounded) $*$-representation of $\mcO_q(Z_{\nu}^{\reg})$.

\begin{Def}
We write $\mcO_q(Z_{\nu}^+)$ for $\mcO_q(Z_{\nu})$ as a spectral $*$-algebra with respect to the family $\msP^+$ of all irreducible, positive representations of regular type. 
\end{Def}

Borrowing notation from the introduction, the irreducible positive $*$-representations of regular type of $\mcO_q(Z_{\nu}^+)$ correspond to the `open cell' $Z_{\nu,q}^{+,\reg}$. 

The coaction $\rho_{\nu}$ will in general not be spectral on $\mcO_q(Z_{\nu}^+)$. We therefore apply the construction described in 
Lemma \ref{LemTransOrb}, noting that the hypotheses are satisfied by Proposition \ref{PropCoamTyp1}.

\begin{Def}
We define $\mcO_q(G_{\nu}\backslash G_{\R})$ as the spectral $*$-algebra $\mcO_q(Z_{\nu}^+U)$ with spectral coaction $\rho_{\nu}$.
\end{Def} 

We can then also consider
\[
Z_{\nu,q}^+U_q/U_q  = G_{\nu,q}\backslash G_{\R,q}/U_q. 
\]
It is not true of course that any $x$-representation for $x\in  G_{\nu,q}\backslash G_{\R,q}/U_q$ will be either positive or of regular type. We do however have the following.

\begin{Lem}\label{LemExPosReg}
For any $x\in G_{\nu,q}\backslash G_{\R,q}/U_q$ there exists a positive $x$-representation of regular type. 
\end{Lem}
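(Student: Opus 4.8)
The plan is to start from an arbitrary admissible $x$-representation of $\mcO_q(G_{\nu}\backslash G_{\R})$ and manufacture from it a positive one of regular type. Recall that $x \in G_{\nu,q}\backslash G_{\R,q}/U_q$ means precisely that there is an admissible irreducible $x$-representation $\pi$ of $\mcO_q(Z_{\nu}^+ U)$; by Lemma \ref{LemTransOrb}(2) (applied with $\msP = \msP^+$, whose hypotheses hold by Proposition \ref{PropCoamTyp1}) we have $\msP_{\X\U} = \langle\{\sigma * \pi_{\reg}\mid \sigma \in \msP^+\}\rangle$, so $\pi$ is weakly contained in $\sigma * \pi_{\reg}$ for some irreducible positive $\sigma$ of regular type. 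The key point is that the convolution product $\sigma * \pi_{\reg}$, and more generally $\sigma * \pi'$ for $\pi'$ any $*$-representation of $\mcO_q(U)$, preserves both positivity and regularity of type. This is where the explicit formula \eqref{EqGlobalCoactRho} enters: the coaction sends $Z_{\pi}$ to $(\id\otimes\tau)(U_{\pi})_{13}^* Z_{\pi,12} U_{\pi,13}$, so on matrix coefficients $(\sigma*\pi')(a_{\varpi})$ is a "conjugate" of $\sigma(a_{\varpi})$ by the unitary $\pi'$-part, twisted by $\tau$ acting on the $U_q(\mfu)$-side.

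First I would make the regularity-of-type claim precise. Using $\rho_\nu(a_\rho)$ and that $a_\rho$ is (up to the grouplike twist coming from $K_{2\rho}$ and $\tau$) essentially a "group-like-type" element in the coalgebra structure inherited from $\mcO_q(U)$, one computes that $(\sigma*\pi')(a_\rho)$ has trivial kernel whenever $\sigma(a_\rho)$ does: the extra $\pi'$-factors are unitaries, hence invertible, on each weight space, and $\tau$ only permutes weights. Concretely I would expand $\rho_\nu(a_\rho)$ via \eqref{EqGlobalCoactRho} restricted to the highest/lowest weight line in $V_\rho$ and observe that the resulting operator is a product of $\sigma(a_\rho)$ with invertible operators. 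For positivity, I would argue that $(\sigma*\pi')(a_{\varpi_r})$ for $r \in I^\tau$ is conjugate by a (possibly $\tau$-twisted) unitary to $\sigma(a_{\varpi_r})$, hence has the same signature; positivity of $\sigma(a_{\varpi_r})$ then transfers. The $\tau$-invariance of $\varpi_r$ for $r\in I^\tau$ is what makes the twist harmless here.

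Next I would reduce to the regular representation: take $\pi' = \pi_{\reg}$, so $\sigma * \pi_{\reg}$ is a genuine $x$-representation (it restricts to $x$ on $\mcO_q(Z_\nu\dbslash U)$ since the coinvariants are central and $\int_{U_q}$ fixes them; more carefully, $\sigma*\pi_{\reg}$ restricted to the coinvariant subalgebra equals $\sigma$ restricted there tensored with the trivial part of $\pi_{\reg}$ on coinvariants, because the coinvariants are fixed by the coaction). Then $\sigma*\pi_{\reg}$ is positive and of regular type by the previous paragraph, though it may be reducible. Finally I would pass to an irreducible subrepresentation: a direct integral / direct sum decomposition of $\sigma*\pi_{\reg}$ into irreducibles consists of $x$-representations (by centrality of coinvariants, each component restricts to $x$), and since positivity and regularity of type are inherited by restriction to invariant subspaces — positivity is obvious, and regularity of type follows because $a_\rho$ having trivial kernel on the whole space forces trivial kernel on any reducing subspace — at least one (in fact every) irreducible component is a positive $x$-representation of regular type.

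The main obstacle I anticipate is verifying cleanly that $(\sigma * \pi')(a_{\varpi_r})$ is unitarily conjugate (up to the $\tau$-twist) to $\sigma(a_{\varpi_r})$, since $a_{\varpi_r}$ is not literally group-like in $\mcO_q(Z_\nu)$ — it only spans a one-dimensional piece of a larger corepresentation $V_{\tau(\varpi_r)}^* \otimes V_{\varpi_r}$, so \eqref{EqGlobalCoactRho} mixes $a_{\varpi_r}$ with the other matrix coefficients $Z_{\varpi_r}(e_i,e_j)$. The resolution is that $\xi_{\varpi_r}$ is a highest weight vector and $\tau(\varpi_r) = \varpi_r$, so the specific matrix coefficient $a_{\varpi_r} = c_{\varpi_r}^{-1} Z_{\varpi_r}(\xi_{\varpi_r},\xi_{\varpi_r})$ transforms under $\rho_\nu$ into a sum $\sum_{i,j} (\text{stuff}) \otimes Z_{\varpi_r}(e_i,e_j)$ whose $\pi'$-coefficients assemble into a positive operator when $\pi'$ is a $*$-representation, by a standard "diagonal matrix coefficient of a unitary is positive" argument; one then checks the $\sigma$-part is exactly $\sigma(a_{\varpi_r})$ sandwiched appropriately. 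Alternatively, one can sidestep the computation entirely by invoking that $a_\rho$ controls everything: show directly that any irreducible component of $\sigma * \pi_{\reg}$ with $\sigma$ positive of regular type is forced to be positive of regular type by a maximal-weight argument on the dense weight-space domain $V_\pi$, using that the top weight of such a component lies in the $\mbH_\tau^{\gg}$-orbit of a weight of $\sigma$.
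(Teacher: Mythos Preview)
You overcomplicate the argument and miss the simple observation that the paper's proof uses. Once you invoke Lemma~\ref{LemTransOrb} to obtain $\sigma \in \msP^+$ with $\pi \preccurlyeq \sigma * \pi_{\reg}$, you are already done: $\sigma$ itself is the desired positive $x$-representation of regular type. Indeed, $\sigma \in \msP^+$ means by definition that $\sigma$ is irreducible, positive, and of regular type. To see it is an $x$-representation, note (as you yourself observe) that for coinvariant $z$ one has $\rho_\nu(z) = z \otimes 1$, so $(\sigma * \pi_{\reg})(z) = \sigma(z) \otimes 1$; since $\sigma$ is irreducible this is a scalar, and since $\pi \preccurlyeq \sigma * \pi_{\reg}$ with $\pi$ having central character $x$, that scalar must be $x(z)$. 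This is exactly the paper's one-line proof.

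Your detour through showing that $\sigma * \pi_{\reg}$ is itself positive and of regular type is not only unnecessary but contains a genuine error: positivity is \emph{not} preserved under convolution with $\mcO_q(U)$-representations. The coaction formula \eqref{EqGlobalCoactRho} mixes $a_{\varpi_r}$ with all the other matrix coefficients $Z_{\varpi_r}(e_i,e_j)$, and these do not assemble into a simple unitary conjugation of $\sigma(a_{\varpi_r})$. In fact Theorem~\ref{TheoFusion} (proved later in the paper) shows explicitly that for $r \in I^\tau$ with $\lambda_P(\alpha_r)\epsilon_r < 0$, the convolution $\pi_\lambda * \mbs_r$ contains $\pi_{s_r \cdot_{\epsilon,q} \lambda}$, which has a different sign pattern than $\pi_\lambda$. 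So $\sigma * \pi_{\reg}$ will in general have irreducible components that are not positive, and your ``diagonal matrix coefficient of a unitary is positive'' heuristic does not apply here.
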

\begin{proof}
Let $x\in G_{\nu,q}\backslash G_{\R,q}/U_q$, and let $\pi$ be an irreducible admissible $x$-representation of $\mcO_q(G_{\nu}\backslash G_{\R})$. By Lemma \ref{LemTransOrb} we can find $\pi' \in \msP^+$ and a $*$-representation $\pi''$ of $\mcO_q(\U)$ such that $\pi \preccurlyeq \pi'*\pi''$. Clearly $\pi'$ must be an $x$-representation. 
\end{proof}

For $x\in  G_{\nu,q}\backslash G_{\R,q}/U_q$ we let $\mcO_q(O_x^{\nu})$ be the associated fiber of $\mcO_q(G_{\nu}\backslash G_{\R})$, which by Lemma \ref{LemErgoIsFull} may be interpreted as a full spectral $*$-algebra. We let $C_q(O_x^{\nu})$ be the associated C$^*$-envelope. We denote by $L^{\infty}_q(O_{x}^{\nu})$ the von Neumann algebraic envelope of $\mcO_q(O_x^{\nu})$. 

Our goal is to understand the $L^{\infty}_q(O_{x}^{\nu})$ for $x \in G_{\nu,q}\backslash G_{\R,q}/U_q$, together with their invariant states $\int_{O_{x,q}^{\nu}}$. The final result will be presented in the next section. For now, we will develop some of the representation theory of $\mcO_q(Z_{\nu})$ and $\mcO_q(Z_{\nu}^{\reg})$ as preparation.

\subsection{Admissible representations}

Recall the triangular decomposition $\mcO_q(Z_{\nu}^{\reg}) = \msN^-\msA\msN^+$ from \eqref{EqTriangZ}. Recall also the correspondence \eqref{EqCorrAFunc}. 

\begin{Def}
Let $\lambda \in \mbH_{\tau}^{\times}$. We call \emph{highest weight $*$-representation} of $\mcO_q(Z_{\nu}^{\reg})$ at weight $\lambda$ any non-zero $\mcO_q(Z_{\nu}^{\reg})$-module with $*$-compatible pre-Hilbert space structure for which there exists a cyclic weight vector $\xi_{\lambda}$ vanishing under $\msN^+$ and such that 
\[
a_{\omega}\xi_{\lambda} = a_{\omega}(\lambda)\xi_{\lambda},\qquad \omega \in P.
\]
\end{Def}

\begin{Lem}\label{LemIsoHW}
For each $\lambda  \in \mbH_{\tau}^{\times}$ there can exist up to isomorphism at most one highest weight $*$-representation at weight $\lambda$.
\end{Lem}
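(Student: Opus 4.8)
The plan is to run the standard uniqueness argument for highest weight modules, using the triangular decomposition \eqref{EqTriangZ} to produce a $V$-independent ``Shapovalov form'' whose radical pins down the representation. So let $V$ be a highest weight $*$-representation of $\mcO_q(Z_{\nu}^{\reg})$ at weight $\lambda \in \mbH_{\tau}^{\times}$ with cyclic weight vector $\xi_{\lambda}$; after rescaling we may assume $\|\xi_{\lambda}\| = 1$. Since $x_r\xi_{\lambda}=0$ and $\msN^+$ is generated by the $x_r$, we have $\msN^+\xi_{\lambda} = \C\xi_{\lambda}$, and likewise $\msA\xi_{\lambda}=\C\xi_{\lambda}$; hence by \eqref{EqTriangZ} the cyclicity gives $V = \mcO_q(Z_{\nu}^{\reg})\xi_{\lambda} = \msN^-\xi_{\lambda}$. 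Recall that $\msN^-$ is graded by $-Q^+$ with degree-$0$ component $\C 1$.

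The key step is to observe that the homogeneous pieces $\msN^-_{-\beta}\xi_{\lambda}$ land in pairwise orthogonal weight spaces, and that this uses $\lambda \in \mbH_{\tau}^{\times}$. Indeed, from \eqref{CommTE} and \eqref{EqCommapi} a homogeneous $y \in \msN^-_{-\beta}$ satisfies $a_{\varpi}y = q^{-2(\varpi_+,\beta)}y a_{\varpi}$, so $a_{\varpi}(y\xi_{\lambda}) = q^{-2(\varpi_+,\beta)}\lambda_P(\varpi)\,y\xi_{\lambda}$; thus $\msN^-_{-\beta}\xi_{\lambda}$ lies in the joint eigenspace of the operators $\pi_V(a_{\varpi})$, which are bounded and normal because $a_{\varpi}a_{\varpi}^* = a_{\varpi + \tau(\varpi)} = a_{\varpi}^* a_{\varpi}$. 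For $0 \neq \beta \in Q^+$ one has $\beta_+ \neq 0$, since $\tau$ preserves $Q^+$ and hence the only $\tau$-anti-invariant element of $Q^+$ is $0$; therefore some $(\varpi_+,\beta)\neq 0$, and as $0<q<1$ and $\lambda_P(\varpi)\neq 0$ this eigenvalue character differs from $\lambda$. Consequently the $\msN^-_{-\beta}\xi_{\lambda}$ are mutually orthogonal, the weight-$\lambda$ space of $V$ is exactly $\C\xi_{\lambda}$, and $\xi_{\lambda}$ is the unique cyclic highest weight vector of $V$ up to a scalar.

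Now define the contravariant form $F_{\lambda}(n,m) = \langle \xi_{\lambda}, \pi_V(n^* m)\xi_{\lambda}\rangle$ for $n,m \in \msN^-$, noting $n^* \in \msN^+$. Expanding $n^* m$ through \eqref{EqTriangZ} and using $\msN^+\xi_{\lambda}=\C\xi_{\lambda}$ together with the orthogonality above (which annihilates the contributions of all $\msN^-_{-\beta}$ with $\beta\neq 0$), one finds that $F_{\lambda}(n,m)$ is computed purely from the $\msA$-component of $n^* m$ in the decomposition \eqref{EqTriangZ} evaluated at $\lambda$; in particular it does not depend on $V$. Moreover $F_{\lambda}$ is positive semidefinite, being the Gram form of the family $\{n\xi_{\lambda}\}_{n\in\msN^-}$ in the pre-Hilbert space $V$, so its radical $N_{\lambda} = \{n\in\msN^- \mid F_{\lambda}(n,n)=0\}$ is a linear subspace, and by positive-definiteness of the inner product of $V$ we have $n\xi_{\lambda}=0 \iff n\in N_{\lambda}$ for every such $V$. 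Hence $n\mapsto n\xi_{\lambda}$ descends to a linear bijection $\msN^-/N_{\lambda} \xrightarrow{\ \sim\ } V$ which transports the inner product (given by $F_{\lambda}$) and the $\mcO_q(Z_{\nu}^{\reg})$-module structure (with $f\cdot(n\xi_{\lambda})$ read off from the triangular expansion of $fn$ and the scalar $\lambda$) to intrinsic structures on $\msN^-/N_{\lambda}$. Thus every highest weight $*$-representation at weight $\lambda$ is unitarily isomorphic to this canonical model, and hence any two are isomorphic. The only delicate point is the orthogonality step, where the invertibility of $\lambda$, the bound $0<q<1$, and $\beta_+\neq 0$ for $0\neq\beta\in Q^+$ are all needed; the rest is the classical uniqueness mechanism.
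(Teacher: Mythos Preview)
Your argument is correct and is essentially the paper's proof: both compute the Gram form on $\msN^-\xi_\lambda$ via the Cartan projection $\widetilde P(n^*m)(\lambda)$, observe this is independent of $V$, and conclude that $V\cong \msN^-/N_\lambda$ canonically; the paper phrases this through the Verma module $M_\lambda$ and its unique invariant hermitian form, while you work directly inside $V$, but the mechanism is identical.

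One small correction: your intermediate claim that the $\msN^-_{-\beta}\xi_\lambda$ are \emph{mutually} orthogonal is false when $\tau\neq\id$, since distinct $\beta,\beta'\in Q^+$ can have $\beta_+=\beta'_+$ (e.g.\ $\alpha_r$ and $\alpha_{\tau(r)}$) and hence land in the same joint eigenspace of the $a_\varpi$. What you actually prove, and all you subsequently use, is that $\msN^-_{-\beta}\xi_\lambda\perp\xi_\lambda$ for $\beta\neq 0$, which suffices to identify the weight-$\lambda$ space as $\C\xi_\lambda$ and to reduce $F_\lambda(n,m)$ to the $\msA$-component of $n^*m$ evaluated at $\lambda$.
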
 
\begin{proof}
The argument is standard, see e.g.~ \cite{JL92}. By the triangular decomposition \eqref{EqTriangZ}, one can form the highest weight Verma module $M_{\lambda}\cong \msN^-$ at weight $\lambda$, say with highest weight vector $\xi_{\lambda}\cong 1$. There then exists a unique invariant Hermitian form on $M_{\lambda}$, determined by 
\[
\langle x\xi_{\lambda},y\xi_{\lambda}\rangle = \widetilde{P}(x^*y)(\lambda),\qquad x,y\in \msN^-, 
\]  
with $\widetilde{P} = \iota_{\nu}^{-1}\circ P \circ \iota_{\nu}$ and $P$ the projection map \eqref{EqProjection}. The subspace 
\[
N_{\lambda} = \{v\in M_{\lambda}\mid \langle v,-\rangle = 0\} \subseteq M_{\lambda}
\]
must be a maximal proper submodule by cyclicity of $\xi_{\lambda}$ and invariance of the hermitian form. The hermitian form must then descend to a non-degenerate hermitian form on $V_{\lambda} = M_{\lambda}/N_{\lambda}$. 

If now $W_{\lambda}$ is any highest weight $*$-representation at $\lambda$, we obtain a unique-up-to-scalar non-zero morphism $\pi: M_{\lambda}\rightarrow W_{\lambda}$ by universality. Since the positive-definite form on $W_{\lambda}$ must retract to a multiple of the unique invariant hermitian form on $M_{\lambda}$, it follows that $N_{\lambda}= \Ker(\pi)$, and $\pi$ must descend to an injective map $V_{\lambda}\rightarrow W_{\lambda}$, which is then bijective by cyclicity of the highest weight vector. In particular, the hermitian form on $V_{\lambda}$ must be positive-definite, and the map $V_{\lambda}\cong W_{\lambda}$ a multiple of a unitary.
\end{proof}

In the following we will denote by $V_{\lambda}$ the highest weight $*$-representation at $\lambda$, if it exists.

\begin{Lem}
Let $V_{\lambda}$ be a highest weight $*$-representation of $\mcO_q(Z_{\nu}^{\reg})$ at weight $\lambda\in \mbH_{\tau}^{\times}$. Then $V_{\lambda}$ can be completed to an irreducible $\mcO_q(Z_{\nu})$-representation $(\pi_{\lambda},\Hsp_{\lambda})$ of regular type, and any irreducible $\mcO_q(Z_{\nu})$-representation of regular type arises in this way for a unique $\lambda$. 
\end{Lem}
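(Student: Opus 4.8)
The statement has two halves: (i) any highest weight $*$-representation $V_\lambda$ of $\mcO_q(Z_\nu^{\reg})$ at weight $\lambda \in \mbH_\tau^\times$ completes to an irreducible bounded $*$-representation $(\pi_\lambda,\Hsp_\lambda)$ of $\mcO_q(Z_\nu)$ of regular type; and (ii) every irreducible $*$-representation of $\mcO_q(Z_\nu)$ of regular type arises this way, for a unique $\lambda$. The natural starting point is Lemma~\ref{LemIrrRepZZ}: an irreducible $*$-representation $\pi$ of $\mcO_q(Z_\nu)$ is automatically a weight $*$-representation, and if it is of regular type then $\pi(a_\rho)$ has zero kernel, so all the $\pi(a_\varpi)$ are injective with spectrum accumulating only at $0$, and their polar parts are scalar. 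Hence $\pi$ extends to the dense algebraic sum $V_\pi$ of weight spaces as a (possibly unbounded) $*$-representation of the localization $\mcO_q(Z_\nu^{\reg}) = \msN^-\msA\msN^+$, using that $\mcO_q(Z_\nu^{\reg})$ is generated by $\mcO_q(Z_\nu)$ together with $a_\rho^{-1}$ (Theorem~\ref{TheoPropOqZ}).

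\textbf{Existence and boundedness (part i).} Given $V_\lambda$, I would first check it is a weight module: the commutation relation \eqref{CommTE} between $T_\omega$ and $E_r$ (equivalently between $a_\omega$ and $x_r$) forces $\msN^- \xi_\lambda$ to decompose into weight spaces, with weights of the form $\lambda$ multiplied by $q$-powers, and moreover every weight other than $\lambda$ itself is strictly ``lower'' (its $a_{\varpi_r}$-eigenvalue is $\lambda_P(\varpi_r)$ times a nontrivial positive power of $q$ for a suitable $r$, since $\alpha\in Q^+$ and $\alpha_+\ne 0$ unless $\alpha=0$, cf.\ the argument in Lemma~\ref{LemCommNilp}). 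The key boundedness input: since $\mcO_q(Z_\nu)$ sits inside the ergodic-fiber picture — indeed $\mcO_q(Z_\nu\dbslash U)$ is central (Lemma~\ref{LemInvPartZ}), so each irreducible $*$-representation factors through a fiber $\mcO_q(O_x^\nu)$, which for $U_q$ coamenable is a quantum homogeneous space (Proposition~\ref{PropCoamTyp1}, Lemma~\ref{LemErgoIsFull}) — the argument in Section~1 producing $C_u(\X)$ shows every $*$-representation of $\mcO_q(O_x^\nu)$ on a pre-Hilbert space is automatically bounded, with a representation-independent bound on each generator. So once we know the highest weight $\lambda$ determines a character $x$ of $\mcO_q(Z_\nu\dbslash U)$ via the Harish--Chandra formula \eqref{EqFormHCZ} (the $a_\omega\xi_\lambda = a_\omega(\lambda)\xi_\lambda$ forces $\widetilde\chi_{\HC}(z_\varpi)$ to act by the scalar $\widetilde\chi_{\HC}(z_\varpi)(\lambda)$), the completion $\Hsp_\lambda$ is a bona fide bounded $*$-representation. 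Irreducibility follows because $V_\lambda$ is the irreducible quotient $M_\lambda/N_\lambda$ of the Verma module (Lemma~\ref{LemIsoHW}): any nonzero closed invariant subspace is a weight submodule, contains a highest weight vector by the ``lowering'' property, hence is everything. Regular type is clear since $\pi_\lambda(a_\rho)$ acts on the highest weight line by the nonzero scalar $\lambda_P(\rho)$ (so the kernel, being a weight submodule avoiding the top weight, is zero by irreducibility).

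\textbf{Classification and uniqueness (part ii).} Conversely, take an irreducible $\pi$ of regular type. By the discussion above it is a weight module and extends to $V_\pi = \bigoplus_\chi V_\pi(\chi)$. I would show $V_\pi$ contains a highest weight vector: among the weights of $V_\pi$, pick one, say $\chi$; applying $x_r$ raises the weight by shifting the $a_{\varpi_r}$-eigenvalue by a positive power of $q$ in the appropriate direction, and since (by Lemma~\ref{LemIrrRepZZ}) all weights lie in a single $\mbH_\tau^{\gg}$-orbit, the $|a_\varpi|$-spectra are discrete with $0$ the only accumulation point — so the ``raising'' cannot continue indefinitely and some weight vector $\xi$ is killed by all $x_r$, i.e.\ by $\msN^+$. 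Cyclicity of $\xi$ follows from irreducibility, so $V_\pi$ is a highest weight $*$-representation at some $\lambda\in\mbH_\tau^\times$; by Lemma~\ref{LemIsoHW} it is $V_\lambda$, and $\pi\cong\pi_\lambda$. For uniqueness of $\lambda$: if $\pi_\lambda\cong\pi_{\lambda'}$ then the spaces of $\msN^+$-annihilated weight vectors must match; in the irreducible quotient $V_\lambda$ this is exactly the one-dimensional top weight line (any other $\msN^+$-highest weight vector would generate a proper submodule), so $\lambda=\lambda'$.

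\textbf{Main obstacle.} The delicate point is the existence of a highest weight vector in an arbitrary irreducible regular-type $\pi$ — i.e.\ ruling out that the $\msN^+$-raising process has no terminus. This is where one genuinely needs the combination of Lemma~\ref{LemIrrRepZZ} (discreteness of spectra of $|\pi(a_\varpi)|$, single $\mbH_\tau^{\gg}$-orbit of weights) with the precise sign/monotonicity in the $q$-commutation \eqref{CommTE}, to see that $x_r$ strictly increases a fixed real-valued ``height'' function on the weight orbit and that this height is bounded above on $V_\pi$. A secondary subtlety is making sure the Hermitian form from Lemma~\ref{LemIsoHW} is positive-definite precisely when $V_\lambda$ exists as a $*$-representation — but that is already built into the statement of Lemma~\ref{LemIsoHW}, so here it suffices to invoke it.
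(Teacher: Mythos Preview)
Your proposal is correct and follows essentially the same route as the paper. The paper's one refinement in part (ii) is to note that $a_{\varpi_r} x_r$ is a multiple of $E_r \rhdb a_{\varpi_r} \in \mcO_q(Z_\nu)$, hence already bounded, so its $q$-commutation with $a_\rho$ (bounded, discrete spectrum) directly yields a weight vector killed by all $a_{\varpi_r} x_r$, and regular type then lets one cancel $a_{\varpi_r}$ --- this is exactly your height argument with $|a_\rho|$ as height, just carried out without first passing to the unbounded extension on $V_\pi$.
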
 

\begin{proof}
The fact that $V_{\lambda}$ completes to a bounded $*$-representation of $\mcO_q(Z_{\nu})$ follows by the same argument as in Section \ref{SecAct}. Clearly $\Hsp_{\lambda}$ is then irreducible and of regular type. Conversely, assume $(\pi,\Hsp_{\pi})$ is an irreducible $\mcO_q(Z_{\nu})$-representation of regular type. As in the proof of Lemma \ref{LemIrrRepZZ}, $a_{\rho}$ must necessarily have a bounded and discrete spectrum in any irreducible $*$-representation. Since $a_{\varpi_r}x_r$ is a multiple of $E_r \rhdb a_{\varpi_r} \in \mcO_q(Z_{\nu})$, and $a_{\rho} a_{\varpi_r}x_r = q^{-2(\rho,\alpha_r)}a_{\varpi_r}x_r a_{\rho}$, we must have that $a_{\varpi_r} x_r \xi = 0$ for all $r$ for some non-zero weight vector $\xi$, say at weight $\lambda$. If $\pi$ is of regular type, we must also have $x_r\xi= 0$ for all $r$. Hence $\xi$ is a highest weight vector. By irreducibility, $\xi$ must be cyclic for $\Hsp_{\pi}$ and hence also for $V_{\pi}$, so $V_{\pi}\cong V_{\lambda}$. 
\end{proof}

Now clearly any $V_{\lambda}$ will factor over a character of the center of $\mcO_q(Z_{\nu})$, and in particular over a character $x$ of $\mcO_q(Z_{\nu}/ U)$. We call $x = x_{\lambda}$ the \emph{associated central character}. 

\begin{Def}
We call $\lambda \in \mbH_{\tau}^{\times}$ a \emph{highest weight} (associated to the twisting datum $\nu$) if there exists a highest weight $*$-representation $V_{\lambda}$ of $\mcO_q(Z_{\nu}^{\reg})$ at weight $\lambda$. We call $\lambda$ an \emph{admissible highest weight} if moreover the associated central character $x_{\lambda}$ lies in $G_{\nu,q}\backslash G_{\R,q}/U_q$. We then also say that $V_{\lambda}$ is an admissible highest weight $*$-representation. 

We denote
\[
\widetilde{\Lambda}_{\nu} = \{\lambda\textrm{ a highest weight associated to }\nu\}\subseteq \mbH_{\tau}^{\times},
\]
\[
\Lambda_{\nu} = \{\textrm{admissible highest weights}\}\subseteq \widetilde{\Lambda}_{\nu},
\]
and 
\[
\Lambda_{\nu}^{>} = \Lambda_{\nu}\cap \mbH_{\tau}^{>},\qquad \Lambda_{\nu}^{\gg} = \Lambda_{\nu}\cap \mbH_{\tau}^{\gg}.
\]
For $x\in G_{\nu,q} \dbbackslash G_{\R,q}/U_q$ we further define 
\[
\Lambda_{\nu}(x) = \{\lambda \in \widetilde{\Lambda}_{\nu}\mid x = x_{\lambda}\}.
\]
\end{Def}

The following proposition shows that there is no clash of terminology. 

\begin{Prop}
Let $\lambda \in \widetilde{\Lambda}_{\nu}$. Then the following are equivalent:
\begin{enumerate}
\item $\Hsp_{\lambda}$ is admissible as a $*$-representation of $\mcO_q(G_{\nu}\backslash G_{\R})$
\item $\lambda \in \Lambda_{\nu}$ 
\item $x_{\lambda} \in G_{\nu,q}\backslash G_{\R,q}/U_q$. 
\end{enumerate}
\end{Prop}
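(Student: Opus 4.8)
The implications $(1)\Rightarrow(3)$ and $(2)\Leftrightarrow(3)$ are essentially immediate from the definitions. For $(1)\Rightarrow(3)$: if $\Hsp_\lambda$ is admissible as a $*$-representation of $\mcO_q(G_\nu\backslash G_\R)$, then since $\Hsp_\lambda$ restricts to $x_\lambda$ on $\mcO_q(Z_\nu\dbslash U)$, it is by definition an admissible $x_\lambda$-representation, which exactly says $x_\lambda \in G_{\nu,q}\backslash G_{\R,q}/U_q$. And $(2)\Leftrightarrow(3)$ is the definition of an admissible highest weight: $\lambda\in\Lambda_\nu$ means precisely that $x_\lambda\in G_{\nu,q}\backslash G_{\R,q}/U_q$. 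So the only content of the proposition is the implication $(3)\Rightarrow(1)$ (equivalently $(2)\Rightarrow(1)$): admissibility of the central character upgrades to admissibility of the highest weight $*$-representation itself.

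The strategy for $(3)\Rightarrow(1)$ is to use Lemma \ref{LemExPosReg} together with Lemma \ref{LemTransOrb}(2). Assume $x_\lambda\in G_{\nu,q}\backslash G_{\R,q}/U_q$. By Lemma \ref{LemExPosReg} there exists a positive $x_\lambda$-representation $\pi'$ of regular type, i.e.\ $\pi'\in\msP^+$. Now I would argue that $\pi'$ decomposes into irreducible positive $*$-representations of regular type, all of which are highest weight $*$-representations whose weights lie in a single $\mbH_\tau^{\gg}$-orbit (via Lemma \ref{LemIrrRepZZ} and the preceding lemma identifying irreducibles of regular type with the $V_\mu$). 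The key point is that the irreducible constituent $V_\mu$ occurring in $\pi'$ has the same central character $x_\mu = x_{\lambda}$, and one must show $V_\mu \cong V_\lambda$, i.e.\ that the central character together with the highest weight structure pins down $\mu$ up to the $\mbH_\tau^\gg$-action in a way that forces $\mu=\lambda$. Here I would invoke Corollary \ref{CorOrbitHC}: since $x_\mu=x_\lambda$ means $\widetilde\chi_{\HC}(z_\varpi)(\mu)=\widetilde\chi_{\HC}(z_\varpi)(\lambda)$ for all $\tau$-fixed $\varpi\in P^+$, we get $\mu = \gamma(w\cdot_{\epsilon,q}\lambda)$ for some $w\in W_\nu$ and gauge $\gamma$; then Lemma \ref{LemWeakEq} (applied to the multiplicative/Weyl-group equivalence) gives a Yetter-Drinfeld isomorphism $\mcO_q(Z_\nu^{\reg})$-modules identifying $V_\mu$ with $V_\lambda$ compatibly with the spectral structure, so $V_\lambda$ is weakly contained in $\pi'$.

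Once $V_\lambda$ (i.e.\ $\Hsp_\lambda$) is weakly contained in some $\pi'\in\msP^+$, it follows that $\Hsp_\lambda$ is admissible for $\mcO_q(Z_\nu^+)$, hence a fortiori for $\mcO_q(Z_\nu^+U) = \mcO_q(G_\nu\backslash G_\R)$, since $\msP^+\subseteq\msP_{Z_\nu^+U}$. This gives $(1)$. Then I would close the loop by noting $(1)\Rightarrow(3)$ as above and $(2)\Leftrightarrow(3)$ by definition, completing the cycle.

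\textbf{Main obstacle.} The delicate point is the rigidity step: showing that within a positive representation of regular type with the prescribed central character $x_\lambda$, the highest weight $\lambda$ itself must occur (not merely some $\mu$ in the same fiber $\Lambda_\nu(x_\lambda)$). A priori $\pi'$ might only contain $V_\mu$ for $\mu\neq\lambda$ with $x_\mu=x_\lambda$; one needs that $V_\lambda$ and $V_\mu$ are then isomorphic, or at least that both are admissible as soon as one is. Resolving this requires using Corollary \ref{CorOrbitHC} to see that the fiber $\Lambda_\nu(x_\lambda)$ is a single $\mbH_\tau^\gg\rtimes W_\nu$-orbit (intersected with the set of actual highest weights), combined with Lemma \ref{LemWeakEq}'s equivariant isomorphism $\chi_f$ to transport the highest weight $*$-representation along the orbit — and checking that $\chi_f$ respects the relevant spectral (positivity, regular-type) conditions, which is exactly what condition \eqref{EqSpecCondIso} in Lemma \ref{LemWeakEq} was set up to guarantee.
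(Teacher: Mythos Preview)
Your handling of $(2)\Leftrightarrow(3)$ and $(1)\Rightarrow(3)$ is fine. The problem is entirely in $(3)\Rightarrow(1)$, and the ``main obstacle'' you flag is not just delicate --- your proposed resolution does not work.

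First, Lemma~\ref{LemWeakEq} does not do what you claim. That lemma produces a $*$-isomorphism $\chi_f: U_q^{\nu'}(\mfu)\to U_q^{\nu}(\mfu)$ between the algebras attached to two \emph{equivalent twisting data} $\nu\sim\nu'$; it says nothing about identifying two highest weight modules $V_\mu$ and $V_\lambda$ for the \emph{same} algebra $\mcO_q(Z_\nu^{\reg})$. The Weyl group action in Lemma~\ref{LemWeakEq} is on the parameter $\epsilon$, whereas the $\cdot_{\epsilon,q}$-action in Corollary~\ref{CorOrbitHC} is on the highest weight $\lambda$; these are different actions and you cannot transport one into the other via $\chi_f$. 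In fact the later Theorem~\ref{TheoBijCorr} shows that the $V_{w\cdot_{\epsilon,q}\lambda}$ for distinct $w\in W_\nu^-$ are genuinely \emph{inequivalent} irreducibles with the same central character, so your hoped-for isomorphism $V_\mu\cong V_\lambda$ is simply false in general.

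Second, and more importantly, the paper bypasses this entire difficulty. Its argument for $(3)\Rightarrow(1)$ is a one-liner using Lemma~\ref{LemErgoIsFull}: once $x_\lambda\in G_{\nu,q}\backslash G_{\R,q}/U_q$, the fiber $\mcO_q(O_{x_\lambda}^\nu)$ carries a spectral ergodic coaction by $\mcO_q(U)$, and since $U_q$ is coamenable, Lemma~\ref{LemErgoIsFull} forces the spectral structure on the fiber to be full. Hence \emph{every} bounded $x_\lambda$-representation --- in particular $\Hsp_\lambda$ --- is automatically admissible. No comparison between $\mu$ and $\lambda$ is ever needed; the point is that admissibility, for ergodic actions of coamenable quantum groups, is automatic once the fiber is nonempty. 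You were trying to prove much more than required.
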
 
\begin{proof}
The equivalence between (2) and (3) is simply the definition of $\Lambda_{\nu}$. To see that (1) $\Rightarrow$ (3), we can use Lemma \ref{LemTransOrb} together with the fact that tensoring with an $\mcO_q(U)$-representation does not change the central character by Lemma \ref{LemInvPartZ}. Together with Lemma \ref{LemErgoIsFull}, the latter lemma also gives the reverse implication (3) $\Rightarrow$ (1).
\end{proof}

\subsection{Properties of the set $\Lambda_{\nu}$ of admissible highest weights}

We are not yet able to give a complete description of the sets $\Lambda_{\nu}$ for general $\mfu$ and general $\nu$, but see Section \ref{SecExist} for some partial results. For now, we derive some structural properties of these sets.

\begin{Lem}\label{LemCloseGauge}
For any $x\in G_{\nu,q}\dbbackslash G_{\R,q}/U_q$ the set $\Lambda_{\nu}(x)$ is closed under multiplication by $\mbH_{\tau}^{\gauge}$. 
\end{Lem}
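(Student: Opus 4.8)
The claim is that for a fixed $x\in G_{\nu,q}\dbbackslash G_{\R,q}/U_q$, the set $\Lambda_\nu(x)$ of highest weights with central character $x$ is stable under multiplication by the gauge group $\mbH_\tau^{\gauge}\cong\T^{|I^*|}$. The plan is to exhibit, for each gauge $\gamma\in\mbH_\tau^{\gauge}$, an explicit $*$-algebra automorphism of $\mcO_q(Z_\nu^{\reg})$ (and of $\mcO_q(Z_\nu)$) which (i) multiplies the highest weight by $\gamma$ and (ii) fixes the subalgebra $\mcO_q(Z_\nu\dbslash U)$ pointwise, hence preserves central characters. Given such an automorphism $\theta_\gamma$, if $V_\lambda$ is a highest weight $*$-representation at $\lambda$ with highest weight vector $\xi_\lambda$, then $V_\lambda\circ\theta_\gamma^{-1}$ is a highest weight $*$-representation at $\gamma\lambda$ with the same $\xi_\lambda$; and since $\theta_\gamma$ is the identity on $\mcO_q(Z_\nu\dbslash U)$, the central character is unchanged, so $x_{\gamma\lambda}=x_\lambda=x$. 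Combined with Lemma~\ref{LemIsoHW} (uniqueness of highest weight $*$-representations), this shows $\gamma\lambda\in\widetilde\Lambda_\nu$ and $\gamma\lambda\in\Lambda_\nu(x)$, which is exactly the claim.

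**Constructing $\theta_\gamma$.** The natural candidate comes from the $\mbH_\tau^{\gauge}$-action already appearing in the excerpt: in the proof of Corollary~\ref{CorOrbitHC} one uses the multiplication action of $\mbH_\tau^{\gauge}$ on $\mbH_\tau^\times$, under which $a_\omega\mapsto\gamma_P(\omega)a_\omega$ and the $\tau$-invariant weights (hence the $z_\varpi$, hence all of $\mcO_q(Z_\nu\dbslash U)$ by Corollary~\ref{CorIntroz}) are fixed. Concretely I would define $\theta_\gamma$ on generators of $\mcO_q(Z_\nu^{\reg})$ by $a_\omega\mapsto \gamma_P(\omega)\,a_\omega$ and $x_r\mapsto \gamma_r^{c}x_r$ for an appropriate exponent (to be pinned down by requiring the defining relations of $\mcO_q(Z_\nu^{\reg})$ to be preserved and $\theta_\gamma$ to be $*$-preserving; recall $|\gamma_r|=1$, and the only relations coupling $a$'s and $x$'s are the commutation relations \eqref{EqCommapi}, which are scaled by a character, and the bracket relation $E_rE_s^*-q^{-(\alpha_r,\alpha_s)}E_s^*E_r$, which must be checked). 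One then verifies $\theta_\gamma$ restricts to $\mcO_q(Z_\nu)\subseteq\mcO_q(Z_\nu^{\reg})$ — this follows because $\mcO_q(Z_\nu)$ is generated as a $U_q(\mfu)$-module by the $a_\varpi$ (Theorem~\ref{TheoPropOqZ}) and $\theta_\gamma$ should intertwine with the $U_q(\mfu)$-module structure, or alternatively because $i_\nu(U_\varpi(\xi_\varpi,\xi_\varpi))=c_\varpi a_\varpi$ and the gauge rescaling lifts to the weight grading on $\mcO_q(U)$.

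**Main obstacle.** The delicate point is to choose the scaling on the $x_r$ and check compatibility with *all* defining relations of $\mcO_q(Z_\nu^{\reg})$ simultaneously — in particular the interchange relation
\[
E_rE_s^*-q^{-(\alpha_r,\alpha_s)}E_s^*E_r=\frac{\delta_{r,\tau(s)}\epsilon_r c_r T_{-\alpha_s}-\delta_{r,s}}{q_r-q_r^{-1}},
\]
where the right-hand side involves $\epsilon_r$ and $T_{-\alpha_s}$, so the scaling of $E_r, E_s^*$ and $T_{-\alpha_s}$ must be mutually consistent, and one needs $\gamma_{\tau(r)}=\overline{\gamma_r}$ (true for gauges) together with the $\tau$-conjugate symmetry of $\epsilon$. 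If a uniform algebra automorphism of this kind does not quite exist for $\mcO_q(Z_\nu^{\reg})$ because of the $\epsilon_r$ appearing on the right, the fallback — which I expect to be the cleaner route — is to instead compose with the isomorphism $\chi_f\colon U_q^{\nu'}(\mfu)\to U_q^{\nu}(\mfu)$ of Lemma~\ref{LemWeakEq}: taking $\nu'=(\tau,\gamma\epsilon)$ (note $\gamma\epsilon$ is again ungauged-equivalent data since $|\gamma_r|=1$, so $\nu'\sim_m\nu$, indeed $\nu'\sim\nu$) yields a Yetter–Drinfeld $*$-isomorphism $\mcO_q(Z_{\nu'}^{\reg})\cong\mcO_q(Z_\nu^{\reg})$ which shifts highest weights by a gauge and is compatible with the Harish–Chandra data, and one transports highest weight representations along it. Either way the essential content is a bookkeeping verification; the conceptual step — gauge rescaling fixes coinvariants, hence central characters — is immediate from Corollary~\ref{CorIntroz}.
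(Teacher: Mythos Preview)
Your plan --- build a $*$-automorphism that rescales highest weights by a gauge while fixing the coinvariants --- is the right shape, but the paper's proof is a one-liner that bypasses all your bookkeeping. Since the coaction $\rho_\nu$ descends to $\mcO_q(O_x^\nu)$, every torus character $e^{iH}$ of $\mcO_q(U)$ gives a $*$-automorphism $f\mapsto(\id\otimes e^{iH})\rho_\nu(f)$; this fixes $\mcO_q(Z_\nu\dbslash U)$ \emph{by the definition of coinvariants}, and a two-line computation on $a_\varpi=c_\varpi^{-1}Z_\varpi(\xi_\varpi,\xi_\varpi)$ shows it scales by $e^{i(H-\tau(H),\varpi)}$, which realizes exactly $\mbH_\tau^{\gauge}$ as $H$ ranges over $\mfa$. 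No relations need to be checked by hand.

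Your direct construction of $\theta_\gamma$ on generators can be made to work and is ultimately the same automorphism, but the argument you give for why coinvariants are fixed is incomplete. You write ``$\tau$-invariant weights (hence the $z_\varpi$\ldots) are fixed'', but $z_\varpi$ is not $a_\varpi$; it is the linear combination of $Z_\varpi(e_i,e_j)$ in Corollary~\ref{CorIntroz}. Knowing $\gamma_P(\varpi)=1$ for $\varpi\in P^\tau$ is not enough; you would have to establish the $P$-grading $\wt(Z_\varpi(\xi,\eta))=\wt(\eta)-\tau(\wt(\xi))$ on $\mcO_q(Z_\nu)$, check your $\theta_\gamma$ respects it, and observe $z_\varpi$ has degree $0$ --- at which point you have rediscovered the coaction argument. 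Your alternative suggestion that $\theta_\gamma$ ``should intertwine with the $U_q(\mfu)$-module structure'' is false: the torus automorphism commutes only with the Cartan part of the $\rhdb$-action, not with $E_r\rhdb$ or $F_r\rhdb$.

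The Lemma~\ref{LemWeakEq} fallback does not do what you want. It produces a Yetter--Drinfeld $*$-isomorphism $\mcO_q(Z_{\nu'}^{\reg})\cong\mcO_q(Z_\nu^{\reg})$ between algebras attached to \emph{different} twisting data $\nu'=(\tau,\gamma\epsilon)$ and $\nu$. Transporting a highest weight representation along this isomorphism lands you in the representation theory of $\mcO_q(Z_{\nu'})$, not back in $\Lambda_\nu(x)$; it tells you about a bijection $\Lambda_\nu\leftrightarrow\Lambda_{\nu'}$, which is a different statement.
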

\begin{proof}
By the descent of the coaction $\rho_{\nu}$ to $\mcO_q(O_{x}^{\nu})$, we have an action of the maximal torus $T$ on $\mcO_q(O_x^{\nu})$. More precisely, the character 
\[
e^{iH}: U_{\varpi}(\xi,\eta) \mapsto e^{i (H,\wt(\eta))},\qquad H \in \mfa
\]
acts on $a_{\varpi}$ by multiplication with $e^{i (H-\tau(H),\varpi)}$, from which the lemma follows. 
\end{proof}

From Corollary \ref{CorOrbitHC}, Lemma \ref{LemExPosReg} and the fact that 
\[
\widetilde{\chi}_{\HC}(z_{\varpi})(\lambda)\xi_{\lambda} = \widetilde{\chi}_{\HC}(z_{\varpi})\xi_{\lambda} = x(z_{\varpi})\xi_{\lambda},\qquad \varpi \in P^{\tau,+},
\] 
we obtain the following. 

\begin{Cor}\label{CorTwoWWW}
\begin{enumerate}
\item For $x\in G_{\nu,q}\dbbackslash G_{\R,q}/U_q$ the set $\Lambda_{\nu}(x)$ consists of finitely many $\mbH_{\tau}^{\gauge}$-orbits. 
\item $\Lambda_{\nu}$ is contained in the $W_{\nu}$-orbit of $\Lambda_{\nu}^{>}$ under the $\cdot_{\epsilon,q}$-action.
\end{enumerate}
\end{Cor}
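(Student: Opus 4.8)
The statement to prove is Corollary \ref{CorTwoWWW}, which asserts two finiteness/structure facts about the admissible highest weights $\Lambda_\nu$. The plan is to derive both from the Harish-Chandra description \eqref{EqFormHCZ}, Corollary \ref{CorOrbitHC}, Lemma \ref{LemExPosReg}, Lemma \ref{LemCloseGauge}, and the observation recorded just before the corollary, namely that for a highest weight $*$-representation $V_\lambda$ with central character $x$, the highest weight vector $\xi_\lambda$ satisfies $\widetilde\chi_{\HC}(z_\varpi)(\lambda)\xi_\lambda = x(z_\varpi)\xi_\lambda$ for all $\varpi \in P^{\tau,+}$, so that $\widetilde\chi_{\HC}(z_\varpi)(\lambda) = x(z_\varpi)$ depends only on $x$, not on the particular $\lambda \in \Lambda_\nu(x)$.

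\textbf{Step 1 (part (1)).} Fix $x \in G_{\nu,q}\dbbackslash G_{\R,q}/U_q$ and let $\lambda, \lambda' \in \Lambda_\nu(x)$. Then $\widetilde\chi_{\HC}(z_\varpi)(\lambda) = x(z_\varpi) = \widetilde\chi_{\HC}(z_\varpi)(\lambda')$ for every $\tau$-fixed dominant $\varpi$. This is exactly the hypothesis of Corollary \ref{CorOrbitHC}, so there is $w \in W_\nu$ and a gauge $\gamma \in \mbH_\tau^{\gauge}$ with $\lambda' = \gamma(w\cdot_{\epsilon,q}\lambda)$. Since $W_\nu$ is a finite group, the $W_\nu$-orbit of $\lambda$ under $\cdot_{\epsilon,q}$ is finite; intersecting with $\Lambda_\nu(x)$ and then saturating under the $\mbH_\tau^{\gauge}$-action shows $\Lambda_\nu(x)$ is covered by at most $|W_\nu|$ many $\mbH_\tau^{\gauge}$-orbits. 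Combined with Lemma \ref{LemCloseGauge}, which guarantees $\Lambda_\nu(x)$ is itself a union of $\mbH_\tau^{\gauge}$-orbits, this gives that $\Lambda_\nu(x)$ consists of finitely many such orbits, which is (1).

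\textbf{Step 2 (part (2)).} Let $\lambda \in \Lambda_\nu$, with central character $x = x_\lambda \in G_{\nu,q}\backslash G_{\R,q}/U_q$. By Lemma \ref{LemExPosReg} there exists a positive $x$-representation of regular type; by the representation-theory results of the preceding subsection such a representation is (a completion of) a highest weight $*$-representation $V_{\lambda_0}$ for some highest weight $\lambda_0$, and positivity forces $\lambda_0 \in \mbH_\tau^{>}$, i.e. $\lambda_0 \in \Lambda_\nu^{>}$ (here one uses that positivity of $\pi(a_{\varpi_r})$ for $r \in I^\tau$ translates directly into $(\lambda_0)_r > 0$ for $r\in I^\tau$, while $\mbH_\tau^>$ only constrains the $I^\tau$-entries). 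Since $\lambda_0$ and $\lambda$ share the central character $x$, applying Corollary \ref{CorOrbitHC} again yields $w\in W_\nu$ and $\gamma\in\mbH_\tau^{\gauge}$ with $\lambda = \gamma(w\cdot_{\epsilon,q}\lambda_0)$; absorbing $\gamma$ does not change the $I^\tau$-entries being positive (a gauge is trivial on $I^\tau$ and unimodular on $I^*$), so in fact $\gamma\lambda_0 \in \Lambda_\nu^{>}$ as well and $\lambda \in W_\nu \cdot_{\epsilon,q} \Lambda_\nu^{>}$. Hence $\Lambda_\nu \subseteq W_\nu \cdot_{\epsilon,q}\Lambda_\nu^{>}$, which is (2).

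\textbf{Main obstacle.} The substantive content is entirely packaged in Corollary \ref{CorOrbitHC} (separation of $\mbH_\tau^{\gauge}\times W_\nu$-orbits by the Harish-Chandra image) and Lemma \ref{LemExPosReg} (existence of a positive representative of regular type for any point of $G_{\nu,q}\backslash G_{\R,q}/U_q$); granting those, the corollary is a short bookkeeping argument. The only point requiring care is the interplay between the three equivalence relations at play — multiplication by $\mbH_\tau^{\gauge}$, the $W_\nu$-action, and the $q$-deformed dot action $\cdot_{\epsilon,q}$ — and in particular checking that a gauge $\gamma$ can be absorbed without disturbing the positivity of the $I^\tau$-coordinates used to certify membership in $\Lambda_\nu^{>}$; this follows from the definition of $\mbH_\tau^{\gauge}$ (value $1$ on $I^\tau$) but should be stated explicitly.
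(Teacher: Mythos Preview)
Your proof is correct and follows exactly the approach the paper takes: the paper simply cites Corollary~\ref{CorOrbitHC}, Lemma~\ref{LemExPosReg}, and the identity $\widetilde{\chi}_{\HC}(z_\varpi)(\lambda) = x(z_\varpi)$, and you have spelled out the bookkeeping behind that one-line reference. One minor remark: in Step~2 the sentence ``$\gamma\lambda_0 \in \Lambda_\nu^{>}$ as well and $\lambda \in W_\nu\cdot_{\epsilon,q}\Lambda_\nu^{>}$'' is slightly imprecise, since $\gamma(w\cdot_{\epsilon,q}\lambda_0)$ is not $w\cdot_{\epsilon,q}(\gamma\lambda_0)$ in general; what one actually uses is that gauge multiplication and the $\cdot_{\epsilon,q}$-action commute up to replacing $\gamma$ by the gauge $\gamma'$ defined by $\gamma'_P(\omega)=\gamma_P(w\omega)$ (which is again a gauge because $\gamma_P$ is trivial on $P^\tau$ and $w\in W^\tau$), and then $\gamma'\lambda_0\in\Lambda_\nu^{>}$ by Lemma~\ref{LemCloseGauge}.
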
 

\begin{Cor}\label{CorInt}
Let $\mcH$ be a bounded representation of $\mcO_q(O_x^{\nu})$, and assume that all $a_{\varpi_r}^*a_{\varpi_r}$ have purely discrete spectrum not containing zero. Then there exists a measurable field of multiplicity Hilbert spaces $\lambda \mapsto \mcG_{\lambda}$ over $\Lambda_{\nu}(x)$ such that 
\[
\mcH \cong \int_{\Lambda_{\nu}(x)}^{\oplus} \rd \lambda \; \mcH_{\lambda}\otimes \mcG_{\lambda}. 
\]
\end{Cor}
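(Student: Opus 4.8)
The plan is to decompose $\mcH$ by simultaneously diagonalizing the commuting family of positive operators $\pi(a_{\varpi_r})^*\pi(a_{\varpi_r})$, and then identify the support of the resulting spectral measure with $\Lambda_\nu(x)$. First I would note that, by \eqref{EqCommapi}, the absolute values $|\pi(a_{\varpi_r})|$ are central in the von Neumann algebra generated by $\pi(\mcO_q(O_x^\nu))$ (the proof of Lemma \ref{LemIrrRepZZ} shows they $q$-commute with everything, and since they are assumed to have purely discrete spectrum bounded away from zero, their polar parts are central and the spectral projections commute with the whole algebra). Hence the $|\pi(a_{\varpi_r})|$, $r\in I$, generate an abelian von Neumann subalgebra of the center, and $\mcH$ disintegrates over the joint spectrum of this family. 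Because $\tau$-compatibility forces a joint eigenvalue to define a $\tau$-twisting datum $\lambda \in \mbH_\tau$, and regularity/positivity of $x$-representations (Lemma \ref{LemExPosReg}) together with the hypothesis that zero is not in the spectrum puts $\lambda\in\mbH_\tau^\times$, the joint spectrum is a subset of $\mbH_\tau^\times$; the disintegration thus reads $\mcH \cong \int^\oplus_{X} \mcH'_\lambda\,\rd\mu(\lambda)$ for some measure $\mu$ supported on a measurable subset $X\subseteq \mbH_\tau^\times$, with $\mcO_q(O_x^\nu)$ acting fibrewise.

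Next I would show that $\mu$-almost every $\lambda$ in the support lies in $\widetilde\Lambda_\nu$, i.e.\ that each fibre representation is built from a highest weight $*$-representation at that weight. The point is that in each fibre $\mcH'_\lambda$ the operators $\pi(a_\omega)$ act by the scalars $a_\omega(\lambda)$ up to unitaries which one can gauge into the polar decomposition; the argument of the second lemma in Section \ref{SecAdmis}, \textsuperscript{}producing a highest weight vector killed by $\msN^+$, applies verbatim to the fibre (it only uses that $a_\rho$ has discrete spectrum bounded away from $0$ and the commutation $a_\rho a_{\varpi_r}x_r = q^{-2(\rho,\alpha_r)}a_{\varpi_r}x_r a_\rho$). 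So each fibre contains a highest weight subrepresentation $V_\lambda$, whence $\lambda\in\widetilde\Lambda_\nu$, and by Lemma \ref{LemIsoHW} the irreducible constituents of $\mcH'_\lambda$ are all isomorphic to $\Hsp_\lambda$. Writing $\mcH'_\lambda \cong \Hsp_\lambda\otimes \mcG_\lambda$ with $\mcG_\lambda$ the multiplicity space gives the stated field. That the support of $\mu$ lies inside $\Lambda_\nu(x)$ rather than merely $\widetilde\Lambda_\nu$ follows because $\mcH$ is a representation of $\mcO_q(O_x^\nu)$: the central elements $z_\varpi \in \mcO_q(Z_\nu\dbslash U)$ act by the scalar $x(z_\varpi)$ on all of $\mcH$, hence $\widetilde\chi_{\HC}(z_\varpi)(\lambda) = x(z_\varpi)$ for a.e.\ $\lambda$, i.e.\ $x_\lambda = x$, so $\lambda\in\Lambda_\nu(x)$.

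Finally, to pass from a measure $\mu$ on $\Lambda_\nu(x)$ to an integral written as $\int^\oplus_{\Lambda_\nu(x)}\rd\lambda\,\Hsp_\lambda\otimes\mcG_\lambda$ with ``$\rd\lambda$'' the reference measure used throughout, I would invoke Corollary \ref{CorTwoWWW}: $\Lambda_\nu(x)$ is a finite union of $\mbH_\tau^{\gauge}\cong\T^{|I^*|}$-orbits, and by Lemma \ref{LemCloseGauge} the set $\Lambda_\nu(x)$ is stable under the $\mbH_\tau^{\gauge}$-action while the decomposition of $\mcH$ is manifestly $\mbH_\tau^{\gauge}$-equivariant (the torus acts on $a_\varpi$ by a phase, cf.\ the proof of Lemma \ref{LemCloseGauge}), so $\mu$ is absolutely continuous with respect to the natural (Lebesgue $\times$ counting) measure on this finite union of tori and the Radon–Nikodym derivative can be absorbed into $\mcG_\lambda$. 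The main obstacle I anticipate is the measurability/disintegration bookkeeping: one must check that $\mcO_q(Z_\nu^{\reg})$ — not just the bounded algebra $\mcO_q(Z_\nu)$ — acts on a measurable field of the algebraic highest weight modules $V_\lambda$ in a way compatible with the direct integral, so that the identification $\mcH'_\lambda\cong\Hsp_\lambda\otimes\mcG_\lambda$ is genuinely measurable in $\lambda$; this is routine given that $\lambda\mapsto V_\lambda$ is a fixed countable-dimensional model ($V_\lambda\cong\msN^-$ as a vector space with structure constants polynomial in the $a_\omega(\lambda)$), but it requires care.
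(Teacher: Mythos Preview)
Your first step contains a genuine error: the absolute values $|\pi(a_{\varpi_r})|$ are \emph{not} central in the von Neumann algebra generated by $\pi(\mcO_q(O_x^\nu))$. From \eqref{EqCommapi} one has $a_{\varpi_r} Z(\xi,\eta) = q^{2((\varpi_r)_+,\wt(\xi)-\wt(\eta))} Z(\xi,\eta)\, a_{\varpi_r}$ with a strictly positive real scalar, so the \emph{unitary} polar part commutes with everything, but $|a_{\varpi_r}|$ inherits exactly the same $q$-commutation. Its spectral projections are therefore \emph{shifted} by the generators $Z(\xi,\eta)$, not fixed by them; the parenthetical justification you give (``spectral projections commute with the whole algebra'') is simply false, and you cannot disintegrate $\mcH$ over the joint spectrum of the $|a_{\varpi_r}|$ into $\mcO_q(O_x^\nu)$-subrepresentations.

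The paper's argument uses precisely the part that \emph{is} central: the unitary polar parts $u_s$ of $\pi(a_{\varpi_s})$ for $s\in I^*$ only (these $a_{\varpi_s}$ are normal since $a_{\varpi_s}^* = a_{\varpi_{\tau(s)}}$ commutes with $a_{\varpi_s}$). One disintegrates $\mcH \cong \int_{\T^{|I^*|}}^\oplus \mcH_\theta\,\rd\theta$ over this gauge torus. In each fiber $\mcH_\theta$ the phases of all $a_{\varpi_s}$, $s\in I^*\cup\tau(I^*)$, are fixed scalars, so a highest weight occurring in $\mcH_\theta$ is pinned down within its $\mbH_\tau^{\gauge}$-orbit; combined with the discreteness hypothesis and Corollary \ref{CorTwoWWW}.(1) this forces $\mcH_\theta$ to be a genuine direct \emph{sum} of copies of finitely many $\Hsp_\lambda$, and the corollary follows. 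Your step (4) is then both unnecessary and unjustified: $\mcH$ is a single fixed representation, not a torus-equivariant family, so nothing forces the disintegrating measure to be absolutely continuous with respect to Haar measure.
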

Here we view $\Lambda_{\nu}(x)$ as a closed subset of $\C^{|I|}$.
\begin{proof}
Let $u_s$ be the polar part of $\pi_{\mcH}(a_{\varpi_s})$ for $s\in I^{*}$. Then the $u_s$ commute amongst themselves and with the operators in $\pi_{\mcH}(\mcO_q(O_x^{\nu}))$. It follows that 
\[
\mcH \cong \int_{\T^{|I^*|}}^{\oplus} \rd \theta \; \mcH_{\theta}. 
\]
where each $\mcH_{\theta}$ is a representation of $\mcO_q(O_x^{\nu})$ still satisfying the given assumption. However, as now the polar part of $a_{\varpi_s}$ for $s\in I^*$ is a fixed scalar, it follows from Corollary \ref{CorTwoWWW} that each $\mcH_{\theta}$ must be a direct sum with components from a finite family of highest weight representations $\Hsp_{\lambda}$. The corollary is now clear. 
\end{proof}

In the upcoming Theorem \ref{TheoBijCorr}, we aim to provide a more refined version of Corollary \ref{CorTwoWWW}.(2). We first need to examine some low-rank cases.

Recall $\mcO_q(H_2)$ with its $\mcO_q(SU(2))$-coaction introduced in Section \ref{ExaRank1}. 

\begin{Lem}\label{LemUniH2}
Consider the localisation $\mcO_q(H_2^{\reg}) := \mcO_q(H_2)[z^{-1}]$. Then $\mcO_q(H_2)\subseteq \mcO_q(H_2^{\reg})$, and $\mcO_q(H_2^{\reg})$ is the universal $*$-algebra generated by elements $a^{\pm 1},x,D$ with $a,D$ selfadjoint, with $D$ central and with $ax = q^{-2}xa$ and 
\[
xx^* - q^{-2}x^*x = \frac{D a^{-2} - 1}{q-q^{-1}}.
\]
The precise correspondence is given by 
\[
z = a,\qquad v = q^{-1/2}(q^{-1}-q)xa.
\]
Moreover, the infinitesimal action of $U_q'(\mfsu(2))$ on $\mcO_q(H_2)$ extends to an inner action on $\mcO_q(H_2^{\reg})$, determined by 
\begin{equation}\label{EqInfActH2}
K \rhdb X =  a^{-1}Xa,\qquad E \rhdb X = x X -a^{-1}Xax,\qquad E^* \rhdb X = x^*X -a^{-1}Xax^*.
\end{equation}
\end{Lem}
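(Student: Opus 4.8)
The plan is to verify the four assertions — injectivity of the localisation, the presentation, the correspondence of generators, and the inner action — essentially by explicit computation inside $\mcO_q(H_2^{\reg})$, using the relations of $\mcO_q(H_2)$ recorded in Section~\ref{ExaRank1}. First I would check that $z$ is a non-zero-divisor in $\mcO_q(H_2)$: the relations $zw=q^2wz$, $vz=q^2zv$, the centrality of $T=q^{-1}u+qz$ and $D=uz-q^{-2}vw$, and the identities $q^{-2}vw=-D+qTz-q^2z^2$, $q^{-2}wv=-D+q^{-1}Tz-q^{-2}z^2$ yield (e.g.\ by a diamond-lemma argument) a PBW-type basis of $\mcO_q(H_2)$ by ordered monomials in $z,v$ (resp.\ $z,w$) and $T,D$; left and right multiplication by $z$ permutes this basis up to non-zero scalars, so $z$ is regular, and since $z$ $q$-commutes with every generator, $\{z^n\}$ is a two-sided Ore set. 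Hence $\mcO_q(H_2^{\reg}):=\mcO_q(H_2)[z^{-1}]$ is well defined and $\mcO_q(H_2)\hookrightarrow\mcO_q(H_2^{\reg})$. I then set $a=z$ and $x=q^{1/2}(q^{-1}-q)^{-1}vz^{-1}$, so that $v=q^{-1/2}(q^{-1}-q)xa$, with $a$ a selfadjoint invertible and $D$ central selfadjoint.

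Next I would establish the presentation. From $vz=q^2zv$ one gets $ax=q^{-2}xa$ immediately; substituting the expressions for $q^{-2}vw$ and $q^{-2}wv$ and tracking the powers of $q$ gives $xx^*=(q^{-1}-q)^{-2}(Tz^{-1}-q^{-1}Dz^{-2}-q)$ and $q^{-2}x^*x=(q^{-1}-q)^{-2}(Tz^{-1}-qDz^{-2}-q^{-1})$, whence
\[
xx^*-q^{-2}x^*x=\frac{Da^{-2}-1}{q-q^{-1}}.
\]
The first identity also gives $T=(q^{-1}-q)^2xx^*a+q^{-1}Da^{-1}+qa$, hence $u=qT-q^2a$ and $w=v^*$, so $a^{\pm1},x,D$ generate $\mcO_q(H_2^{\reg})$ and there is a surjective $*$-homomorphism $\Phi$ from the universal $*$-algebra $\mathcal B$ on generators $a^{\pm1},x,D$ ($a,D$ selfadjoint, $D$ central) with the two relations above onto $\mcO_q(H_2^{\reg})$. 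To show $\Phi$ is an isomorphism I would construct its inverse: the assignment $z\mapsto a$, $v\mapsto q^{-1/2}(q^{-1}-q)xa$, $w\mapsto q^{-1/2}(q^{-1}-q)ax^*$, $D\mapsto D$, $T\mapsto(q^{-1}-q)^2xx^*a+q^{-1}Da^{-1}+qa$, $u\mapsto qT-q^2a$ respects the defining relations of $\mcO_q(H_2)$, the only slightly laborious point being that the image of $T$ is central — which follows from $ax=q^{-2}xa$ together with the commutator relation above (the $D$-terms cancel in $[T,x]$). Since the image of $z$ is invertible, this extends to $\mcO_q(H_2^{\reg})\to\mathcal B$, and a comparison on generators shows it is two-sided inverse to $\Phi$; this gives the universality statement and the stated correspondence simultaneously.

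For the inner action I would exhibit the implementing homomorphism. After adjoining a central square root $D^{1/2}$ (legitimate since $D$ generates a polynomial subalgebra), I claim $K\mapsto D^{1/2}a^{-1}$, $E\mapsto x$, $F\mapsto D^{-1/2}x^*a$ defines a $*$-algebra homomorphism $\ell\colon U_q'(\mfsu(2))\to\mcO_q(H_2^{\reg})[D^{1/2}]$: the relations $KEK^{-1}=q^2E$ and $KFK^{-1}=q^{-2}F$ come from $a^{-1}xa=q^2x$ and $a^{-1}x^*a=q^{-2}x^*$, while $[E,F]=(K-K^{-1})/(q-q^{-1})$ follows from the commutator relation above, the $D$ appearing in $xx^*-q^{-2}x^*x$ being exactly what is needed to match $\ell(K)=D^{1/2}a^{-1}$. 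The induced adjoint action $Y\rhdb X=\ell(Y_{(1)})X\ell(S(Y_{(2)}))$ is then automatically a left $U_q'(\mfsu(2))$-module $*$-algebra structure on $\mcO_q(H_2^{\reg})$, and evaluating it on $K$, $E$, $E^*$ (where the factors $D^{1/2}$ cancel) reproduces the formulas \eqref{EqInfActH2}; in particular it preserves $\mcO_q(H_2)$. Finally, since the original action coming from the coaction \eqref{EqEquivZ} is also a $U_q'(\mfsu(2))$-module $*$-algebra structure on $\mcO_q(H_2)$, it suffices to check that the two agree on the algebra generators $z,v,T,D$, which is a short computation. (Equivalently one can verify directly at the operator level that the maps in \eqref{EqInfActH2} satisfy the relations of $U_q'(\mfsu(2))$; the inner-ness is also an instance of the Miyashita--Ulbrich picture \eqref{EqMiyaUlbrich} in rank one, where $\mcO_q(H_2^{\reg})$ is a localisation of $U_q^{\nu}(\mfu)$.)

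The main obstacle is bookkeeping rather than ideas: the bulk of the work is tracking the powers of $q$ in the derivation of $xx^*-q^{-2}x^*x=(Da^{-2}-1)/(q-q^{-1})$, in checking that the image of the quantum trace $T$ is central, and in verifying that $\ell$ is a homomorphism and that the resulting action matches the original one on generators. No step is conceptually deep; the only external input is the standard PBW basis for $\mcO_q(H_2)$ used to see that $z$ is regular.
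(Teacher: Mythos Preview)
Your proof is correct, but the route differs from the paper's in how the inner action is obtained. The paper treats the presentation and the inclusion $\mcO_q(H_2)\subseteq\mcO_q(H_2^{\reg})$ as routine (as you do, in more detail), and for the action instead identifies $\mcO_q(H_2)$ with the rank-one algebra $\mcO_q(Z_{\nu})$ for $\nu=(\id,\epsilon)$ with $\epsilon$ promoted to a formal central variable: the reflection equation for $Z_{\varpi}$ (cf.\ \cite[Lemma~2.20]{DCM18}) produces an equivariant $*$-map $\mcO_q(H_2)\to\mcO_q(Z_{\nu})$, which is checked to be an isomorphism by computing $\pi(a),\pi(x),\pi(D)$; the inner action on $\mcO_q(Z_{\nu}^{\reg})\cong U_q^{\nu}(\mfu)$ is then already built into the Miyashita--Ulbrich picture \eqref{EqMiyaUlbrich}. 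Your approach instead constructs the implementing homomorphism $\ell$ by hand (with the $D^{1/2}$ trick) and checks agreement on generators. The paper's route is shorter because it piggybacks on the ambient theory of $\mcO_q(Z_{\nu})$; yours is more self-contained and makes the inner-ness completely explicit without invoking \cite{DCM18}. Both are valid, and you correctly note the Miyashita--Ulbrich interpretation at the end.
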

\begin{proof}
The fact that $\mcO_q(H_2)\subseteq \mcO_q(H_2^{\reg})$ is straightforward, as is the fact that $\mcO_q(H_2^{\reg})$ is presented by the above generators and relations under the given correspondence. 

Consider now $\mcO_q(Z_{\nu})$ in rank 1 for $\nu = (\id,\epsilon)$ with $\epsilon$ a formal (central) variable. The fact that $\epsilon$ is here a central abstract element, and not a scalar, is inessential to apply the theory from the previous sections. In particular, we know from \cite[Lemma 2.20]{DCM18} that the generating matrix $Z_{\varpi}$ of $\mcO_q(Z_{\nu})$ satisfies the reflection equation by \cite[Lemma 2.20]{DCM18}. We hence obtain a $U_q'(\mfsu(2))$-linear $*$-homomorphism 
\begin{equation}\label{EqMapH2}
\pi: \mcO_q(H_2) \rightarrow \mcO_q(Z_{\nu}),\qquad Z \mapsto Z_{\varpi}.
\end{equation}
We claim that
\begin{equation}\label{EqFormulaPi}
\pi(a) = a_{\varpi},\quad \pi(x) = x_{\alpha},\qquad \pi(D) = \epsilon,
\end{equation}
Then from the universal relations of source and target it is clear that $\pi$ must be an isomorphism. Moreover, it will then follow from the $U_q'(\mfsu(2))$-modularity of the inclusion \eqref{EqInclusioninu} that indeed the infinitesimal $U_q'(\mfsu(2))$-module $*$-algebra structure on $\mcO_q(H_2)$ extends to $\mcO_q(H_2^{\reg})$ by means of the given inner actions. 

To see that \eqref{EqFormulaPi} holds, note that automatically $\pi(a) = \pi(z) = a_{\varpi}$ by definition of $a_{\varpi}$. We then also find $\pi(v) = q^{-1/2}(q^{-1}-q)x_{\alpha}a_{\varpi}$ upon writing out $\pi(E \rhdb z) = E\rhdb \pi(z)$. Hence $\pi(x) = x_{\alpha}$. Now writing out $\pi(E \rhdb w) = E \rhdb \pi(w)$, we also find that 
\begin{equation}\label{EqFormu}
\pi(u) = \pi(z) - q^{1/2} \pi(E \rhdb w) = a_{\varpi} - (q^{-1}-q) (x_{\alpha}a_{\varpi}x_{\alpha}^* - x_{\alpha}^* a_{\varpi}x_{\alpha}) = \epsilon a_{\varpi}^{-1}  + q(q^{-1}-q)^2x_{\alpha}x_{\alpha}^*a_{\varpi}.
\end{equation}
Hence
\[
\pi(D) = \pi(uz - q^{-2}vw)  =\epsilon  + q(q^{-1}-q)^2x_{\alpha}x_{\alpha}^*a_{\varpi}^2 - q^{-3}(q^{-1}-q)^2 x_{\alpha}a_{\varpi}^2x_{\alpha}^* = \epsilon
\]
\end{proof}

\begin{Cor}\label{LemRank1CompZ}
Consider $\mfu = \mfsu(2)$ with simple root $\alpha$ and fundamental weight $\varpi = \frac{1}{2}\alpha$. Let $\nu = (\id,\epsilon)$ be a twisting datum with $\epsilon_Q(\alpha) = \epsilon \in \R$. Then there exists a unique $\mcO_q(SU(2))$-colinear $*$-homomorphism 
\[
\pi: \mcO_q(H_2) \rightarrow \mcO_q(Z_{\nu})
\]
such that $z \mapsto a_{\varpi}$. The kernel of this map is the ideal generated by $D-\epsilon$.  
\end{Cor}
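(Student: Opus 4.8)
The plan is to obtain this as the scalar specialisation of Lemma~\ref{LemUniH2}. First I would write down $\pi$ itself. By \cite[Lemma~2.20]{DCM18} the matrix $Z_\varpi=(Z_\varpi(e_i,e_j))_{i,j}$ attached to the fundamental corepresentation of $U_q'(\mfsu(2))$ (with $e_1=\xi_\varpi$ the highest weight vector) satisfies the reflection equation, hence the braided $2\times 2$ matrix relations defining $\mcO_q(H_2)$; since $\tau=\id$ one has $Z_\varpi(e_i,e_j)^{*}=Z_\varpi(e_j,e_i)$, which matches $Z^{*}=Z$, so $Z\mapsto Z_\varpi$ defines a $*$-homomorphism $\pi\colon\mcO_q(H_2)\to\mcO_q(Z_\nu)$. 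Comparing the coaction \eqref{EqEquivZ} on $\mcO_q(H_2)$ with \eqref{EqGlobalCoactRho} specialised to $\tau=\id$ shows that $\pi$ is $\mcO_q(SU(2))$-colinear. As in the proof of Lemma~\ref{LemUniH2}, using \eqref{EqInclU} together with $c_\varpi=q^{(\varpi_{-},\varpi_{-})}=1$ one gets $\pi(z)=Z_\varpi(\xi_\varpi,\xi_\varpi)=a_\varpi$, and running that same computation verbatim with $\epsilon$ the given scalar gives $\pi(x)=x_\alpha$ and $\pi(D)=\epsilon$.

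For uniqueness and surjectivity I would invoke the last item of Theorem~\ref{TheoPropOqZ}. Together with Lemma~\ref{LemUniH2}, which identifies $\mcO_q(H_2)$ with $\mcO_q(Z_{(\id,\epsilon)})$ ($z\leftrightarrow a_\varpi$, $\epsilon$ a central variable) as $U_q'(\mfsu(2))$-module $*$-algebras, it shows that $\mcO_q(H_2)$ is generated as a $U_q'(\mfsu(2))$-module algebra by $z$; hence a colinear, hence $U_q'(\mfsu(2))$-equivariant, $*$-homomorphism out of $\mcO_q(H_2)$ is determined by the image of $z$, which gives uniqueness. Surjectivity follows because $\pi(\mcO_q(H_2))$ is a $\rhdb$-stable $*$-subalgebra of $\mcO_q(Z_\nu)$ containing $a_\varpi$, hence all of $\mcO_q(Z_\nu)$ by the same item.

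It remains to identify $\Ker\pi$. The inclusion $(D-\epsilon)\mcO_q(H_2)\subseteq\Ker\pi$ is immediate from $\pi(D)=\epsilon$ and centrality of $D$. For the reverse inclusion I would pass to the localisations: since $\pi(z)=a_\varpi$ is invertible in $\mcO_q(Z_\nu^{\reg})\supseteq\mcO_q(Z_\nu)$, the map $\pi$ extends to $\widetilde\pi\colon\mcO_q(H_2^{\reg})\to\mcO_q(Z_\nu^{\reg})$ with $z^{-1}\mapsto a_\varpi^{-1}$. By the universal description of $\mcO_q(H_2^{\reg})$ in Lemma~\ref{LemUniH2} and the rank-one defining relations of $U_q^{\nu}(\mfu)\cong\mcO_q(Z_\nu^{\reg})$ --- in which $c_\alpha=q^{(\alpha_{-},\alpha_{-})}=1$ and $T_{-\alpha}=a_\varpi^{-2}$, so the reflection relation becomes $x_\alpha x_\alpha^{*}-q^{-2}x_\alpha^{*}x_\alpha=(\epsilon a_\varpi^{-2}-1)/(q-q^{-1})$ --- the algebras $\mcO_q(H_2^{\reg})/(D-\epsilon)$ and $\mcO_q(Z_\nu^{\reg})$ are presented by the \emph{same} generators and relations, and $\widetilde\pi$ realises this identification; in particular $\Ker\widetilde\pi=(D-\epsilon)\mcO_q(H_2^{\reg})$. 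Then $\Ker\pi=\Ker\widetilde\pi\cap\mcO_q(H_2)=(D-\epsilon)\mcO_q(H_2^{\reg})\cap\mcO_q(H_2)$.

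The hard part is the final step, showing $(D-\epsilon)\mcO_q(H_2^{\reg})\cap\mcO_q(H_2)=(D-\epsilon)\mcO_q(H_2)$. I would prove it with a PBW basis: the braided $2\times 2$ matrix algebra $\mcO_q(H_2)$ is free over its central polynomial subalgebra $\C[D]$ on a basis $\mathcal{B}$ of ordered monomials in $z,w,v$ and $T$ not involving $D$, on which left multiplication by $z$ acts injectively up to nonzero scalars (using $zw=q^{2}wz$ and $zv=q^{-2}vz$). If $z^{n}f\in(D-\epsilon)\mcO_q(H_2)$, then writing $z^{n}f=(D-\epsilon)g$, expanding $f,g$ in the $\C[D]$-basis $\mathcal{B}$, and comparing coefficients forces $(D-\epsilon)$ to divide every $\C[D]$-coefficient of $f$, so $f\in(D-\epsilon)\mcO_q(H_2)$; since $\mcO_q(H_2^{\reg})=\mcO_q(H_2)[z^{-1}]$ is the Ore localisation at the powers of the normal element $z$, this is exactly the asserted equality and the proof is complete. (More slickly, one may instead note that the family $\mcO_q(Z_{(\id,\epsilon)})$ over the parameter ring $\C[\epsilon]$ is $\C[\epsilon]$-free --- it has $\C[\epsilon]$-free PBW bases via the triangular decomposition \eqref{EqTriangZ} --- and specialise the parameter directly; the content is the same.)
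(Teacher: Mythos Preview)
Your proof is correct and follows the same route as the paper: the Corollary is stated there without proof, being the scalar specialisation of Lemma~\ref{LemUniH2}, and your argument is precisely a careful unpacking of that specialisation. Your parenthetical remark at the end (freeness over $\C[\epsilon]$ via the triangular decomposition \eqref{EqTriangZ}, then specialise) is exactly the step the paper leaves implicit; your main kernel computation via the Ore localisation and the PBW basis of $\mcO_q(H_2)$ over $\C[D]$ is a slightly more hands-on but equivalent way to see the same thing.
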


From Proposition \ref{PropRank1Tensor} we now obtain the following. 

\begin{Cor}\label{CorRank1Comp}
Consider $\mfu = \mfsu(2)$ with twisting datum $\epsilon \in \R$. Let $W_{\epsilon}$ be the Weyl group, so $W_0$ is trivial and $W_{\epsilon} = W = \{1,s\}$ if $\epsilon\neq0$. Identify $\mbH_{\id}^{\times}\cong \R^{\times}$.
\begin{itemize}
\item If $\epsilon >0$, then $\Lambda_{\nu} = \Lambda_{\nu}^{>} = q^{-\N}\epsilon^{1/2} $ while $\widetilde{\Lambda}_{\nu} = \pm \Lambda_{\nu}$. Moreover, $(s \cdot_{\epsilon,q}\widetilde{\Lambda}_{\nu}) \cap \widetilde{\Lambda}_{\nu} = \emptyset$.
\item If $\epsilon =0$, then $\Lambda_{\nu} = \Lambda_{\nu}^{>} =\R_{>0}$ while $\widetilde{\Lambda}_{\nu} = \R^{\times}$.
\item If $\epsilon<0$, then $\Lambda_{\nu} = \widetilde{\Lambda}_{\nu} =  \R^{\times}$ with $\Lambda_{\nu}^{>} = \R_{>0}$ and $s\cdot_{\epsilon,q} \Lambda_{\nu}^{>}= -\Lambda_{\nu}^{>}$. 
\end{itemize}
\end{Cor}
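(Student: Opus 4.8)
The plan is to reduce everything to the rank-one computations in Proposition~\ref{PropRank1Tensor} via the isomorphism $\pi:\mcO_q(H_2)\xrightarrow{\sim}\mcO_q(Z_\nu)/(D-\epsilon)$ from Corollary~\ref{LemRank1CompZ}. Under this identification, $z\mapsto a_\varpi$, so highest weight $*$-representations of $\mcO_q(Z_\nu^{\reg})$ at weight $\lambda$ correspond (after localising $\mcO_q(H_2)$ at $z$, using Lemma~\ref{LemUniH2}) exactly to those irreducible $*$-representations of $\mcO_q^*(H_2(d,t))$ with $d=\epsilon$ on which $z$ acts with highest eigenvalue $\lambda$ in its (discrete, nonzero) spectrum and with the highest-weight vector killed by $x_\alpha$, i.e. by $v$. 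The parameters $(d,t)$ lying in the spectrum of the center are fixed by $d=\epsilon$ and a choice of $t$; the value $t$ itself is exactly the datum of a choice of irreducible representation once $d$ is fixed, except that $t$ must be compatible with the list in Proposition~\ref{PropRank1Tensor}. So the whole statement is a dictionary translation of that proposition's classification of $S_+,S_0,S_-$.

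First I would handle $\epsilon>0$: here $(d,t)=(\epsilon, \epsilon^{1/2}s_n)$ for $n\in\N$ (taking $c=\epsilon^{1/2}>0$ in $S_+$), and by part~(1) of Proposition~\ref{PropRank1Tensor} the unique irreducible is $\mcS_{c^2,n}\cong\C^{n+1}$ with $ze_k=cq^{-n+2k}e_k$, $ve_0=0$. The highest eigenvalue of $z$ is $ca^0$... more precisely the eigenvalues are $cq^{-n},cq^{-n+2},\dots,cq^{n}$, the largest in absolute value being $cq^{-n}$, and the highest-weight vector (killed by $v$, hence by $x_\alpha$) is $e_0$ with $z$-eigenvalue $cq^{-n}$. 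Hence $\lambda=\epsilon^{1/2}q^{-n}$, giving $\Lambda_\nu=\Lambda_\nu^{>}=q^{-\N}\epsilon^{1/2}$. Taking instead $c=-\epsilon^{1/2}$ gives the highest weight $-\epsilon^{1/2}q^{-n}$, so $\widetilde\Lambda_\nu=\pm\Lambda_\nu$ (these are \emph{all} the regular-type highest weights since $S_0\cup S_-$ have $d=0<0$ not equal to $\epsilon>0$). For the disjointness $(s\cdot_{\epsilon,q}\widetilde\Lambda_\nu)\cap\widetilde\Lambda_\nu=\emptyset$: compute $s\cdot_{\epsilon,q}\lambda$ using Lemma~\ref{LemDotAction} (or Definition~\ref{DefDotq}), namely $(s\cdot_{\epsilon,q}\lambda)_P(\varpi)=\epsilon_Q(\varpi-s\varpi)q^{(2\rho,\varpi-s\varpi)}\lambda_P(s\varpi)$; with $\alpha^\vee=\alpha$, $\varpi=\alpha/2$, $2\rho=\alpha$, one gets $s\cdot_{\epsilon,q}\lambda=\epsilon\, q^{2}\lambda^{-1}$ (up to checking the exact power of $q$), and since $|\lambda|\in q^{-\N}\epsilon^{1/2}$ one sees $|s\cdot_{\epsilon,q}\lambda|\in q^{2+\N}\epsilon^{1/2}$, disjoint from $q^{-\N}\epsilon^{1/2}$.

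For $\epsilon=0$ I would use part~(2) of Proposition~\ref{PropRank1Tensor}: $d=0$, and for any $t$, either the $*$-character $\chi(z)=0$ (not of regular type since $a_\rho=z$ would be zero, or rather $a_\varpi$ acts as $0$), or the infinite-dimensional $\mcS_{0,t}$ with $ze_n=q^{2n+1}te_n$, $ve_0=0$. The highest-weight vector is $e_0$ with $z$-eigenvalue $qt$, and for a highest weight $*$-representation we need positivity of the hermitian form, which by Lemma~\ref{LemUniH2}'s presentation forces $t>0$ after normalising; thus $\Lambda_\nu=\Lambda_\nu^{>}=\R_{>0}$, and the full set $\widetilde\Lambda_\nu$ (allowing $t<0$ too, via the other sign / the character with $t\ne0$) is $\R^\times$. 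For $\epsilon<0$, part~(3) applies with $-c^2=\epsilon$, $t=(a-a^{-1})c$, and the representations $\mcS^{\pm}_{-c^2,a}$ have $ze_n=\pm ca^{\pm1}q^{2n+1}e_n$, $ve_0=0$; the highest-weight vector $e_0$ has $z$-eigenvalue $\pm ca^{\pm1}q$, which as $a$ ranges over $\R_{>0}$ and the sign over $\{\pm\}$ sweeps out all of $\R^\times$ — so $\Lambda_\nu=\widetilde\Lambda_\nu=\R^\times$, with $\Lambda_\nu^{>}=\R_{>0}$ corresponding to the $+$ choice, and $s\cdot_{\epsilon,q}$ sending $\lambda\mapsto\epsilon q^2\lambda^{-1}$ which maps $\R_{>0}$ to $\R_{<0}=-\R_{>0}$ since $\epsilon<0$.

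The main obstacle I expect is \emph{bookkeeping of normalisations}: matching the generator $a_\varpi$ to $z$ and the exact $q$-powers in the dot-action (Definition~\ref{DefDotq} with $\rho$, $\varpi$, $d_r$ in rank one) so that the claimed identities $\Lambda_\nu=q^{-\N}\epsilon^{1/2}$ and $s\cdot_{\epsilon,q}\Lambda_\nu^{>}=-\Lambda_\nu^{>}$ come out with the stated $q$-exponents and signs; and verifying that ``highest weight'' in the sense of the definition (cyclic vector killed by $\msN^+$, i.e.\ by $x_\alpha$, with \emph{positive-definite} invariant form) really does pin down $e_0$ in each of the three families and excludes spurious candidates — in particular that the Verma-module form of Lemma~\ref{LemIsoHW} is positive-definite precisely in the listed cases, which is where the positivity constraints $c>0$, $t>0$, $a>0$ enter. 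Everything else is a direct translation of Proposition~\ref{PropRank1Tensor}.
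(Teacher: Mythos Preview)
Your approach is exactly the paper's: the corollary is stated with the one-line justification ``From Proposition~\ref{PropRank1Tensor} we now obtain the following,'' and you correctly unfold that dictionary via Corollary~\ref{LemRank1CompZ}.

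There is, however, a confusion in your $\epsilon=0$ paragraph that inverts the roles of $\widetilde\Lambda_\nu$ and $\Lambda_\nu$. You write that positivity of the hermitian form ``forces $t>0$,'' giving $\Lambda_\nu=\R_{>0}$, and that $\widetilde\Lambda_\nu=\R^\times$ arises by ``allowing $t<0$.'' This is backwards. The $\mcS_{0,t}$ in Proposition~\ref{PropRank1Tensor}(2) are already bounded $*$-representations on Hilbert spaces for \emph{every} $t\neq 0$, so the invariant form is positive-definite regardless of the sign of $t$; hence $\widetilde\Lambda_\nu=\R^\times$ directly. What cuts down to $\Lambda_\nu=\R_{>0}$ is \emph{admissibility}: one needs $x_\lambda\in G_{\nu,q}\backslash G_{\R,q}/U_q$, i.e.\ a positive regular-type representation with central character $x_\lambda$. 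For $\epsilon=0$ the group $W_\nu$ is trivial and the fusion rule $\mcS_{0,t}*\mcS\cong\Hsp\otimes\mcS_{0,t}$ shows the $U_q$-saturation of $\pi_\lambda$ contains only $\pi_\lambda$ itself, so admissibility holds iff $\pi_\lambda$ is positive, iff $\lambda>0$. The same distinction governs the $\epsilon>0$ case: the representations with $c=-\epsilon^{1/2}$ are genuine $*$-representations (so they lie in $\widetilde\Lambda_\nu$), but they have central character $(\epsilon,-\epsilon^{1/2}s_n)$, distinct from every positive one, and hence fail admissibility --- that, not any failure of the hermitian form, is why they are excluded from $\Lambda_\nu$.
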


\begin{Lem}\label{LemTypeA1A1}
Consider $\mfu = \mfsu(2) \oplus \mfsu(2)$ with roots $\alpha_1,\alpha_2$. Let $\nu  = (\tau,\epsilon)$ be the ungauged twisting datum given by $\tau(1) = 2$ and $\epsilon = \epsilon_1 = \epsilon_2 \geq 0$. 
\begin{enumerate}
\item[(1)] If $\epsilon>0$, then $\lambda\in \Lambda_{\nu}$ if and only if $\lambda_P(\varpi_1) = q^{\frac{1-n}{2}}\epsilon^{1/2}e^{i\theta}$ for $n \in \N$ and $\theta\in \R$.
\item[(2)] If $\epsilon =0$, then  $\Lambda_{\nu} = \mbH_{\tau}^{\times} \cong \C^{\times}$. 
\end{enumerate}
\end{Lem}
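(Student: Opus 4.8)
The strategy is to reduce the rank-two diagram $A_1 \times A_1$ with the swap involution $\tau$ to the rank-one computation already available, via the folding operation and the relationship between $\mcO_q(Z_\nu)$ and the braided/reflection algebra $\mcO_q(H_2)$ established in Lemma \ref{LemUniH2} and Corollary \ref{LemRank1CompZ}. Concretely, for $\nu = (\tau,\epsilon)$ with $\tau(1)=2$ the Weyl group $W_\nu = W^\tau_J$ is generated by the single element $\hat{s}_1 = s_1 s_2$ (the longest element of $W_{\{1,2\}}$, which here is just the product of the two commuting reflections), and the Cartan part $U_q(\mfh_\nu)$ is spanned by the $T_\omega$ for $\tau$-invariant $\omega$, i.e.\ a single one-parameter family. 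Thus $\mbH_\tau^\times \cong \C^\times$ via $\lambda \mapsto \lambda_P(\varpi_1)$ (note $\lambda_P(\varpi_2) = \overline{\lambda_P(\varpi_1)}$ by \eqref{EqDefPropEps}, but since we are dealing with complex highest weights the relevant coordinate is $\lambda_P(\varpi_1)$). First I would identify the triangular decomposition \eqref{EqTriangZ} of $\mcO_q(Z_\nu^{\reg})$ in this case: $\msN^+$ is generated by $x_1, x_2$, and one checks using the commutation relations \eqref{CommTE} and the defining relations of $\mcO_q(Z_\nu^{\reg})$ (equivalently $U_q^\nu(\mfu)$) that $x_1$ and $x_2$ play asymmetric roles with respect to the $\tau$-twisted Cartan, so that a highest weight vector must be annihilated by both.

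The key step is to analyse when the Verma module $M_\lambda \cong \msN^-$ admits a positive-definite invariant Hermitian form, which by Lemma \ref{LemIsoHW} is what determines membership in $\widetilde{\Lambda}_\nu$, and then to cut down to $\Lambda_\nu$ using the central character constraint. I expect that the $\tau$-fixed part of the algebra generated by $x_2 x_1$ (or an appropriate normal-ordered monomial) together with $a_{\varpi_1}$ forms a copy of (a localisation of) $\mcO_q(H_2)$, analogously to the rank-one statement: the element $\epsilon_1 c_1 T_{-\alpha_2}$ appearing in the relation $E_1 E_2^* - q^{-(\alpha_1,\alpha_2)} E_2^* E_1 = \frac{\delta_{1,\tau(2)}\epsilon_1 c_1 T_{-\alpha_2} - \delta_{1,2}}{q_1 - q_1^{-1}}$ is exactly the sort of term that produced the reflection-equation structure before. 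The contraction of the two $\mfsu(2)$ factors under the diagonal $\tau$-action should yield that the relevant positivity condition is governed by a single $q$-deformed Casimir-type quantity, whose discrete spectrum forces $\lambda_P(\varpi_1)^2 = q^{1-n}\epsilon$ for $n \in \N$ when $\epsilon > 0$ (the square appearing because the $\tau$-fold identifies $\varpi_1$ and $\varpi_2$, so the natural positive quantity is $\lambda_P(\varpi_1)\overline{\lambda_P(\varpi_1)} = |\lambda_P(\varpi_1)|^2$ together with a phase freedom $e^{i\theta}$ coming from the gauge action $\mbH_\tau^{\gauge}\cong \T$, cf.\ Lemma \ref{LemCloseGauge}). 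For $\epsilon = 0$ the interchange relation degenerates ($E_1 E_2^* = q^{-(\alpha_1,\alpha_2)} E_2^* E_1$ with no Cartan term), the module $\msN^-$ carries no obstruction, and every $\lambda$ works, giving $\Lambda_\nu = \mbH_\tau^\times \cong \C^\times$; this case should follow from a direct check that the Shapovalov-type form is positive for all $\lambda$, or alternatively by a scaling/contraction argument from the $\epsilon > 0$ case combined with Corollary \ref{CorRank1Comp} applied factor-wise.

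After establishing that these $\lambda$ are the only candidates, I would confirm that each of them \emph{is} realised by an actual $*$-representation — that the positive-semidefinite form on $M_\lambda$ descends to a genuine Hilbert space representation and that the resulting central character lies in $G_{\nu,q}\backslash G_{\R,q}/U_q$, so that $\lambda \in \Lambda_\nu$ rather than just $\widetilde{\Lambda}_\nu$. For the admissibility part, I would invoke Lemma \ref{LemExPosReg} together with the explicit construction: in the $\epsilon > 0$ case the representation is built on $\ell^2(\N) \otimes (\text{phase})$ paralleling the $\mcS_{c^2,n}$ and $\mcS_{\lambda,\theta}$ of Theorem \ref{TheoListAllSUq2} and Proposition \ref{PropRank1Tensor}, and one reads off directly that the central character is the right one. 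Writing $\lambda_P(\varpi_1) = q^{(1-n)/2}\epsilon^{1/2} e^{i\theta}$ and noting $\tau(\varpi_1) \ne \varpi_1$ so the gauge parameter $\theta$ is free, we get the stated parametrisation.

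\textbf{Main obstacle.} The delicate point is the asymmetry between the two nilpotent generators $x_1, x_2$ and keeping track of exactly which normal-ordered combination generates the $\tau$-invariant subalgebra that maps isomorphically onto the localised braided matrix algebra $\mcO_q(H_2^{\reg})$ — in the rank-one case this was Lemma \ref{LemUniH2}, but here the ``folded'' generator is a product $x_2 x_1$ (up to a scalar and Cartan correction) rather than a single $x_r$, and verifying that it satisfies the correct reflection equation / quantum-determinant relations requires a careful computation with \eqref{CommTE} and the cross relations of $\mcO_q(Z_\nu^{\reg})$. Once that identification is in place, the positivity analysis and the classification of discrete spectra are mechanical consequences of Proposition \ref{PropRank1Tensor}.
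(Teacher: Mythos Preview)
Your overall framing --- reduce by the gauge action (Lemma \ref{LemCloseGauge}) to $\lambda>0$, then test positivity of the invariant Hermitian form on the Verma module $M_\lambda$ as in Lemma \ref{LemIsoHW} --- is correct and is exactly what the paper does. Also, in this case $I^\tau=\emptyset$, so every highest weight $*$-representation is automatically positive of regular type and hence lies in $\msP^+$; thus $\Lambda_\nu=\widetilde{\Lambda}_\nu$ and the admissibility discussion you worry about is not needed.

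The gap is in your proposed mechanism for checking positivity. You hope to find a $\tau$-folded generator (something like $x_2x_1$) so that a subalgebra of $\mcO_q(Z_\nu^{\reg})$ becomes a localised $\mcO_q(H_2)$, and then invoke Proposition \ref{PropRank1Tensor}. This identification does not exist in the form you want: here $\msN^-\cong \C[x_1^*,x_2^*]$ is a polynomial algebra in \emph{two} commuting variables, so the Verma module $M_\lambda$ has weight spaces of dimensions $1,2,3,\ldots$, strictly larger than any rank-one Verma module. A single folded nilpotent cannot generate $\msN^-$, and restricting to any rank-one subalgebra loses the information you need (the cross relation \eqref{Eqx1ax2starA11} between $x_1$ and $x_2^*$, which carries the $\epsilon$-dependence). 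The map $\kappa_r$ of Lemma \ref{LemRank1Prep2} that you allude to goes to $\mcO_{q_r}(V_\R)$, not $\mcO_q(H_2)$, and uses only $x_r$ and $a_{\varpi_r}$; it does not see $x_{\tau(r)}$ at all.

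What the paper actually does (Appendix, Theorem \ref{TheoAppA1A1}) is a direct Shapovalov computation organised as a branching argument, not a folding. One writes the PBW basis $\xi_{kn}=(x_2^*)^k(x_1^*)^n\xi_\lambda$, computes the action of all generators on it, and then finds, for each weight level $t$, the unique (up to scalar) vector $\eta_0^{(t)}$ annihilated by $x_1$. Since the subalgebra $\langle x_1,x_1^*,a\rangle$ satisfies Heisenberg relations with positive structure constants, positivity of the full form reduces to positivity of the numbers $c_t=\langle \eta_0^{(t)},\eta_0^{(t)}\rangle$. One then computes $x_2\eta_0^{(t)}$ and $x_2^*\eta_0^{(t)}$ explicitly, obtaining the recursion
\[
c_t = q^{-t+1}\,\frac{q^{-t}-q^t}{(q^{-1}-q)^2}\bigl(1-|\epsilon|^2\lambda^{-4}q^{-2t+4}\bigr)\,c_{t-1},
\]
from which the stated condition on $\lambda$ is read off immediately. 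The computation is elementary and short; no subalgebra identification with $\mcO_q(H_2)$ is used or needed.
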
 
\begin{proof}
See Appendix, Theorem \ref{TheoAppA1A1}.
\end{proof}

\begin{Cor}\label{CorTypeA1A1}
In the situation of Lemma \ref{LemTypeA1A1} with $\epsilon \neq 0$, one has $W_{\nu} = \{1,\hat{s}\}$ with $\hat{s} = s_1s_2$ and $(\hat{s}\cdot_{\epsilon,q}  \Lambda_{\nu}) \cap \Lambda_{\nu} = \emptyset$.
\end{Cor}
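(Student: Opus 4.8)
The plan is to read off the relevant data from Lemma \ref{LemTypeA1A1} and combine it with the description of the $q$-deformed dot-action in Definition \ref{DefDotq}. First I would identify $W_\nu$: since $\nu$ is ungauged with $\tau(1)=2$ and $\supp(\epsilon)=\{1,2\}$, we have $J=I$ and $W_\nu = W^\tau_J = W^\tau$. By Theorem \ref{LemFunctjvarpi1} (Steinberg), $W^\tau \cong \hat W$, and since the folded diagram of type $A_1\times A_1$ with the flip is a single $A_1$, we get $W_\nu = \{1,\hat s\}$ with $\hat s = s_1 s_2$ (using the case $(\alpha_1^\vee,\alpha_2)=0$ in the list following Theorem \ref{LemFunctjvarpi1}). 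Alternatively this is immediate from $\hat s_1 = s_1 s_{\tau(1)} = s_1 s_2$.

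Next I would compute the action of $\hat s\cdot_{\epsilon,q}$ on a highest weight $\lambda \in \Lambda_\nu$. By Lemma \ref{LemDotAction} applied to $\hat s_1$ with $(\alpha_1^\vee,\alpha_{\tau(1)}) = 0$, together with the $q$-shift built into $\cdot_{\epsilon,q}= \cdot_{q^{2\rho}\epsilon}$, the value $\lambda_P(\varpi_1)$ gets multiplied by $q^{2(\rho,\alpha_1)}\epsilon_1 \lambda_P(\alpha_1)^{-1}$, and correspondingly at the $\tau(1)=2$ position. Concretely, writing $t = \lambda_P(\varpi_1)$ and using $\lambda_P(\alpha_1) = \lambda_P(\varpi_1)^2 \cdot (\text{correction})$ — here I should be careful: $\alpha_1 = 2\varpi_1$ in the $\mathfrak{sl}_2$-normalization, so $\lambda_P(\alpha_1) = t^2$ (the $*$-structure forces $\lambda_P(\varpi_2) = \overline{t}$, and $\lambda_P(\alpha_1)$ is determined by $t$ alone since $\alpha_1 = 2\varpi_1$), and $(\rho,\alpha_1) = 1$. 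Thus $(\hat s\cdot_{\epsilon,q}\lambda)_P(\varpi_1) = q^{2}\epsilon\, t^{-1}$. By Lemma \ref{LemTypeA1A1}(1), membership of $\mu$ in $\Lambda_\nu$ is equivalent to $|\mu_P(\varpi_1)|^2 = q^{1-n}\epsilon$ for some $n\in\N$, i.e. $|\mu_P(\varpi_1)|^2 \in q^{1-\N}\epsilon = \{q\epsilon, \epsilon, q^{-1}\epsilon, q^{-2}\epsilon,\dots\}$.

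Then I would simply combine the two: if $\lambda \in \Lambda_\nu$, then $|t|^2 = q^{1-n}\epsilon$ for some $n\ge 0$, so $|(\hat s\cdot_{\epsilon,q}\lambda)_P(\varpi_1)|^2 = q^{4}\epsilon^2 |t|^{-2} = q^{4}\epsilon^2 q^{n-1}\epsilon^{-1} = q^{n+3}\epsilon$. For this to lie in $q^{1-\N}\epsilon$ we would need $n+3 \le 1$, i.e. $n \le -2$, which is impossible for $n\in\N$. Hence $\hat s\cdot_{\epsilon,q}\lambda \notin \Lambda_\nu$, proving $(\hat s\cdot_{\epsilon,q}\Lambda_\nu)\cap \Lambda_\nu = \emptyset$.

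The main obstacle — and the only real point requiring care — is getting the normalization exactly right: the precise power of $q$ in the $\cdot_{\epsilon,q}$-action (namely $q^{(2\rho,\omega - w^{-1}\omega)}$ from Definition \ref{DefDotq}), the identification $\lambda_P(\alpha_1) = \lambda_P(\varpi_1)^2$, and the exact exponent set $q^{1-\N}\epsilon$ coming out of Lemma \ref{LemTypeA1A1}(1) (where the statement is phrased as $q^{\frac{1-n}{2}}\epsilon^{1/2}$ for the value of $\lambda_P(\varpi_1)$ itself, so squaring gives $q^{1-n}\epsilon$). As long as these bookkeeping facts are pinned down, the inequality $n+3 > 1$ is automatic and the disjointness follows; the argument is robust in that even a small error in the $q$-exponent would still leave a strict gap, since $|t|^2$ is bounded above by $q\epsilon$ while the image has $|{\cdot}|^2 \ge q^{3}\epsilon < q\epsilon$ in the wrong direction. (If instead one works with $\Lambda_\nu^{>}$ and the non-deformed action as in the analogous Corollary \ref{CorRank1Comp}, the same computation applies verbatim.)
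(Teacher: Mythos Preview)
Your argument is correct and is exactly the direct computation the paper has in mind: identify $W_\nu=\{1,\hat s\}$ via Steinberg, compute $(\hat s\cdot_{\epsilon,q}\lambda)_P(\varpi_1)=q^2\epsilon\,\lambda_P(\varpi_1)^{-1}$, and check that $q^{n+3}\epsilon$ never equals $q^{1-m}\epsilon$ for $n,m\in\N$. One small cosmetic point: in your closing parenthetical the inequalities are flipped (since $0<q<1$, one has $|t|^2\ge q\epsilon$ bounded \emph{below} while the image satisfies $|{\cdot}|^2\le q^3\epsilon$), but this does not affect the actual proof.
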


\begin{Lem}\label{LemTypeA2}
Consider $\mfu = \mfsu(3)$ with roots $\alpha_1,\alpha_2$. Assume that $\tau(1) = 2$, and let $\epsilon = \epsilon_1 =\epsilon_2 \geq 0$ be a twisting datum. 
\begin{enumerate}
\item[(1)] If $\epsilon >0$, then $\lambda\in \Lambda_{\nu}$ if and only if $\lambda_P(\varpi_1) = q^{\frac{3-n}{2}} \epsilon e^{i\theta}$ for $n \in \N$ and $\theta \in \R$.
\item[(2)] If $\epsilon =0$, then  $\Lambda_{\nu} = \mbH_{\tau}^{\times}\cong \C^{\times}$. 
\end{enumerate}
\end{Lem}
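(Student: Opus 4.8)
\smallskip
\noindent\textit{Proof strategy.} The plan is to first reduce the statement to the computation of the set $\widetilde\Lambda_\nu$ of all highest weights, and then to determine $\widetilde\Lambda_\nu$ by analysing the contravariant form on Verma modules; the details are the content of the Appendix.

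For the reduction, observe that $I^\tau=\emptyset$ (the two nodes are interchanged), so positivity of a representation is vacuous, and any $\lambda\in\widetilde\Lambda_\nu$ has $\lambda_P(\rho)=|\lambda_P(\varpi_1)|^2\neq0$, so $a_\rho$ acts on the highest weight module $V_\lambda$ with strictly positive spectrum and $V_\lambda$ is of regular type. Since $\pi_\lambda$ factors through its central character $x_\lambda$ and hence through an ergodic coaction, it is bounded, as in Section~\ref{SecAct}; thus $\pi_\lambda\in\msP^+$, so $V_\lambda$ is admissible for $\mcO_q(G_\nu\backslash G_\R)$ and $x_\lambda\in G_{\nu,q}\backslash G_{\R,q}/U_q$. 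Therefore $\Lambda_\nu=\widetilde\Lambda_\nu$ in both cases. Finally, the torus action on $\mcO_q(Z_\nu)$ from the proof of Lemma~\ref{LemCloseGauge} conjugates $V_\lambda$ onto $V_{\gamma\lambda}$ for $\gamma\in\mbH_\tau^{\gauge}\cong\T$, so $\widetilde\Lambda_\nu$ is a union of circles $\{|\lambda_P(\varpi_1)|=\mathrm{const}\}$ and one may restrict to $\lambda\in\mbH_\tau^{\gg}$, i.e.\ $\lambda_P(\varpi_1)=\lambda_P(\varpi_2)=t\in\R_{>0}$.

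For the computation, recall from Lemma~\ref{LemIsoHW} that a highest weight $*$-representation at $t$ exists iff the contravariant Hermitian form $\langle x\xi_\lambda,y\xi_\lambda\rangle=\widetilde P(x^*y)(\lambda)$ on the Verma module $M_\lambda\cong\msN^-$ is positive semidefinite, where $\msN^-$ is the subalgebra of $\mcO_q(Z_\nu^{\reg})$ generated by $x_1^*,x_2^*$. Using the presentation of $\mcO_q(Z_\nu^{\reg})$ in this rank-two case --- the commutations \eqref{CommTE}, the relations $x_rx_r^*-q^{-2}x_r^*x_r=(q^{-1}-q)^{-1}$ and $x_1x_2^*-q\,x_2^*x_1=\epsilon_1 c_1(q-q^{-1})^{-1}a_{-\alpha_2}$ with their $\tau$-images, and the quantum Serre relations between $x_1^*$ and $x_2^*$ --- I would compute the Gram matrices $G_\mu(t)$ of this form on the weight spaces $M_\lambda[-\mu]$, $\mu\in Q^+$. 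When $\epsilon=0$ the right-hand sides of the mixed relations vanish, $M_\lambda$ is of $q$-oscillator type, every $G_\mu(t)$ is positive definite for all $t>0$, and $\widetilde\Lambda_\nu=\mbH_\tau^\times$; with the reduction above this is (2). When $\epsilon>0$, Corollary~\ref{CorOrbitHC} applied to the generator $z_\rho$ of $\mcO_q(Z_\nu\dbslash U)$ shows that two highest weights with the same central character lie in one $W_\nu\ltimes\mbH_\tau^{\gauge}$-orbit under $\cdot_{\epsilon,q}$, with $W_\nu=\{1,s_1s_2s_1\}$; this confines $\widetilde\Lambda_\nu$ to countably many circles. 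I would then show that exactly the circles $|\lambda_P(\varpi_1)|=q^{(3-n)/2}\epsilon$, $n\in\N$, occur: at these values a distinguished null weight vector generates the radical of the form, so that $V_\lambda$ is finite-dimensional with positive definite form --- verified by restricting along the two rank-one homomorphisms of Corollary~\ref{LemRank1CompZ} (each imposing an $\mcO_q(H_2)$-module structure whose finite-dimensional $*$-representations are those of Proposition~\ref{PropRank1Tensor}) and gluing via the Serre relation --- while at any other positive $t$ some $G_\mu(t)$ has a strictly negative eigenvalue. Reinstating the gauge $\T$ gives $\widetilde\Lambda_\nu=\Lambda_\nu=\{\lambda:\lambda_P(\varpi_1)=q^{(3-n)/2}\epsilon\,e^{i\theta},\ n\in\N,\ \theta\in\R\}$, which is (1).

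The hard part will be this last positivity analysis for $\epsilon>0$: unlike in rank one, where the norms along $M_\lambda$ reduce to a single $q$-shifted factorial, the quantum Serre relation couples the two simple directions, so positivity at all weights cannot be read off from the first few Gram determinants. I expect the way around it to be to pin down $\dim V_\lambda$ at each critical $t$ from the rank-one restrictions together with the Serre relation, and then to identify $V_\lambda$ with an explicit finite-dimensional unitary module.
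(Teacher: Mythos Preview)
Your reduction $\Lambda_\nu=\widetilde\Lambda_\nu$ is correct and is exactly what the paper uses implicitly: with $I^\tau=\emptyset$ positivity is vacuous, and $a_\rho=a_{\varpi_1}a_{\varpi_1}^*$ has trivial kernel on any $V_\lambda$, so every highest weight $*$-representation lies in $\msP^+$. The gauge reduction to $\lambda>0$ is also what the Appendix does at the outset.

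The gap is in your plan for $\epsilon>0$. Two points. First, the modules $V_\lambda$ are \emph{not} finite-dimensional at the admissible values: the relations $x_rx_r^*-q^{-2}x_r^*x_r=(q^{-1}-q)^{-1}$ are $q$-oscillator relations, so each $x_r^*$-string is infinite regardless of $\epsilon$. What happens at $\lambda=|\epsilon|q^{(3-n)/2}$ is that a certain null vector kills one ``direction'' in the Verma module, but the quotient remains infinite-dimensional. Your proposed endgame of ``identifying $V_\lambda$ with an explicit finite-dimensional unitary module'' therefore cannot succeed. Second, Corollary~\ref{LemRank1CompZ} does not apply: that map $\mcO_{q_r}(H_2)\to\mcO_q(Z_\nu)$ exists only for $r\in I^\tau$, and here $I^\tau=\emptyset$. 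The available rank-one maps are those of Lemma~\ref{LemRank1Prep2} from $\mcO_{q_r}(V_\R)$, whose admissible representations are infinite-dimensional and impose no quantization on $\lambda$.

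The paper's computation avoids Gram matrices on weight spaces altogether. The key idea you are missing is to introduce the PBW element $x_{12}=x_1x_2-qx_2x_1$ and the $*$-subalgebra $\msB$ generated by $a^{\pm1},x_1,x_{12}$: the $\msB$-Verma module is always unitarizable (an explicit positive product formula), and the full Verma module decomposes orthogonally as $\oplus_{t\ge0}V^{(t)}$ where $V^{(t)}$ is the $\msB$-submodule generated by the unique $\msB$-highest weight vector $\eta_{00}^{(t)}$ at weight $q^t\lambda$. Positivity of the form then reduces to the single scalar recursion obtained from $\langle\eta_{00}^{(t)},\eta_{00}^{(t)}\rangle=\langle x_2\eta_{00}^{(t)},\eta_{00}^{(t-1)}\rangle$, and computing $x_2\eta_{00}^{(t)}$ produces the factor $(1-|\epsilon|^2\lambda^{-2}q^{-t+4})$ that forces the quantization $\lambda=|\epsilon|q^{(3-n)/2}$. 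This is the missing structural step; the weight-space Gram matrices grow in size and the Serre relation couples them in a way that is hard to control directly.
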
 
\begin{proof}
See Appendix, Theorem \ref{TheoClassIrrepA2}.
\end{proof}

\begin{Cor}\label{CorTypeA2}
In the situation of Lemma \ref{LemTypeA2} with $\epsilon \neq 0$, one has $W_{\nu} = \{1,\hat{s}\}$ with $\hat{s} = s_1s_2s_1$ and $(\hat{s}\cdot_{\epsilon,q} \Lambda_{\nu}) \cap \Lambda_{\nu} = \emptyset$.
\end{Cor}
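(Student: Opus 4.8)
The plan is to reduce the statement to a one-variable computation, exploiting that in this situation a twisting datum is determined by a single scalar. Since $\tau$ swaps the two nodes, every $\lambda \in \mbH_{\tau}^{\times}$ satisfies $\lambda_P(\varpi_2) = \overline{\lambda_P(\varpi_1)}$, so $\lambda$ is pinned down by $\lambda_P(\varpi_1) \in \C^{\times}$; moreover, by Lemma \ref{LemTypeA2}(1) (note that $\epsilon = \epsilon_1 = \epsilon_2 \geq 0$ together with $\epsilon \neq 0$ forces $\epsilon > 0$), membership in $\Lambda_{\nu}$ is a condition on $\lambda_P(\varpi_1)$ alone, namely $\lambda_P(\varpi_1) \in \{q^{(3-n)/2}\epsilon\, e^{i\theta} : n\in\N,\ \theta\in\R\}$.

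First I would identify $W_{\nu}$. As $\epsilon \neq 0$ the locus of non-degeneracy is $J = \{1,2\} = I$, so $W_{\nu} = W^{\tau} \cap W_{J} = W^{\tau}$. By Theorem \ref{LemFunctjvarpi1} and the explicit list following it, $W^{\tau} \cong \hat{W}$ is generated by $\hat{s}_1 = s_1 s_2 s_1$ (using $(\alpha_1^{\vee},\alpha_2) = -1$), which is the longest element $w_0$ of $W = S_3$; hence $W_{\nu} = \{1,\hat{s}\}$ with $\hat{s} = s_1 s_2 s_1$, as claimed. For the main estimate I would record the relevant $A_2$ facts: $w_0$ is an involution with $w_0 = -\tau$ on $V$, so $w_0\varpi_1 = -\varpi_2$ and $\varpi_1 - w_0\varpi_1 = \varpi_1 + \varpi_2 = \rho = \alpha_1 + \alpha_2$, and $(2\rho,\rho) = 4$.

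Next I would compute the $q$-deformed $\epsilon$-twisted dot action of $\hat{s}$ on the coordinate $\varpi_1$ from Definition \ref{DefDotq}:
\[
(\hat{s}\cdot_{\epsilon,q}\lambda)_P(\varpi_1) = \epsilon_Q(\varpi_1 - w_0\varpi_1)\, q^{(2\rho,\varpi_1 - w_0\varpi_1)}\, \lambda_P(w_0\varpi_1) = \epsilon_Q(\rho)\, q^{4}\, \overline{\lambda_P(\varpi_1)}^{-1} = \epsilon^2 q^{4}\, \overline{\lambda_P(\varpi_1)}^{-1},
\]
using $\epsilon_Q(\rho) = \epsilon_1\epsilon_2 = \epsilon^2$ (recall $\epsilon>0$ is real here) and $\lambda_P(-\varpi_2) = \lambda_P(\varpi_2)^{-1} = \overline{\lambda_P(\varpi_1)}^{-1}$. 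Plugging in $\lambda \in \Lambda_{\nu}$, i.e. $\lambda_P(\varpi_1) = q^{(3-n)/2}\epsilon\, e^{i\theta}$ for some $n\in\N$, $\theta\in\R$, gives
\[
(\hat{s}\cdot_{\epsilon,q}\lambda)_P(\varpi_1) = \epsilon\, q^{4 - (3-n)/2}\, e^{i\theta} = \epsilon\, q^{(5+n)/2}\, e^{i\theta}.
\]

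Finally I would compare moduli: the modulus $\epsilon q^{(5+n)/2}$ satisfies $\epsilon q^{(5+n)/2} \leq \epsilon q^{5/2} < \epsilon q^{3/2} \leq \epsilon q^{(3-m)/2}$ for all $m,n\in\N$, since $0<q<1$; hence $(\hat{s}\cdot_{\epsilon,q}\lambda)_P(\varpi_1)$ is never of the form $q^{(3-m)/2}\epsilon\, e^{i\phi}$, and by Lemma \ref{LemTypeA2}(1) this means $\hat{s}\cdot_{\epsilon,q}\lambda \notin \Lambda_{\nu}$. This yields $(\hat{s}\cdot_{\epsilon,q}\Lambda_{\nu})\cap\Lambda_{\nu} = \emptyset$. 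The argument is an entirely direct computation, so the only real obstacle is keeping the conventions consistent — the precise form of the twisted dot action, the action $w_0\varpi_1 = -\varpi_2$, and the $\tau$-conjugate symmetry $\lambda_P(\varpi_2) = \overline{\lambda_P(\varpi_1)}$; as a sanity check, the same modulus comparison underlies the rank-one statements in Corollary \ref{CorRank1Comp} and Corollary \ref{CorTypeA1A1}.
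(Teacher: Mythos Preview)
Your proof is correct and is exactly the direct computation the paper intends: the corollary has no separate proof in the paper, as it follows immediately from the explicit description of $\Lambda_{\nu}$ in Lemma \ref{LemTypeA2}(1) together with the formula for the $\cdot_{\epsilon,q}$-action, which you have carried out carefully. The identification $W_{\nu} = \{1,s_1s_2s_1\}$, the use of $w_0\varpi_1 = -\varpi_2$, and the modulus comparison are all correct.
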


We now consider $\mfu$ of general rank. Recall again the $\mcO_q(SU(2))$-coactions introduced in Section \ref{ExaRank1}.

\begin{Lem}\label{LemRank1Prep}
For $r= \tau(r)$, there exists a unique $U_{q_r}(\mfsu(2))$-equivariant $^*$-homomorphism
\[
\kappa_r: \mcO_{q_r}(H_{2}) \rightarrow \mcO_q(Z_{\nu}),\quad \begin{pmatrix} z & w \\ v & u \end{pmatrix} \mapsto \begin{pmatrix} z_r & w_r\\ v_r & u_r\end{pmatrix} 
\]
such that $z_r = a_{\varpi_r}$. Then 
\[
v_r = q_r^{-1/2}(q_r^{-1}-q_r)x_ra_{\varpi_r},
\]
and furthermore $\kappa_r(D) = D_r := \epsilon_r a_{2\varpi_r-\alpha_r}$ and
\[
\kappa_r(T) =  T_r := q_r^{-2}(q_r-q_r^{-1})^2 x_r^*x_ra_{\varpi_r} + q_r^{-1}a_{\varpi_r} + \epsilon_rq_ra_{\varpi_r-\alpha_r}. 
\]
\end{Lem}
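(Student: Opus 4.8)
The strategy is to reduce to the rank-one computation already carried out in Lemma \ref{LemUniH2} and Corollary \ref{LemRank1CompZ}. Since $r = \tau(r)$, the sub-root datum attached to the single node $r$ is of type $A_1$ with the trivial involution, and the twisting datum $\nu$ restricts there to $(\id,\epsilon_r)$ with $\epsilon_r \in \R$ (ungaugedness of $\nu$ forces real values at $\tau$-fixed nodes). The plan is: first observe that, because the generating matrix $Z_{\varpi_r}$ of $\mcO_q(Z_\nu)$ built from the highest weight vector $\xi_{\varpi_r}$ and the $q_r$-weight-$(-\alpha_r)$ vector $f_r \rhdb \xi_{\varpi_r}$ satisfies the reflection equation (cf.\ \cite[Lemma 2.20]{DCM18}, used already in the proof of Lemma \ref{LemUniH2}), there is a well-defined $U_{q_r}'(\mfsu(2))$-equivariant $*$-homomorphism $\kappa_r : \mcO_{q_r}(H_2) \to \mcO_q(Z_\nu)$ sending the braided matrix $Z$ to the $2\times2$ block $Z_{\varpi_r}$ spanned by these two vectors. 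Uniqueness is immediate since $\mcO_{q_r}(H_2)$ is generated by the entries of $Z$, and colinearity pins the images down once one image is fixed.

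Next I would identify the entries. By construction of $a_{\varpi_r}$ one has $\kappa_r(z) = z_r = a_{\varpi_r}$, exactly as in the rank-one case. The formula $v_r = q_r^{-1/2}(q_r^{-1}-q_r)x_r a_{\varpi_r}$ is obtained by evaluating the equivariance relation $\kappa_r(E_r \rhdb z) = E_r \rhdb \kappa_r(z)$ and using the explicit infinitesimal action $E_r \rhdb a_{\varpi_r}$ in $\mcO_q(Z_\nu^{\reg})$, precisely the computation that produced $\pi(v) = q^{-1/2}(q^{-1}-q)x_\alpha a_\varpi$ in the proof of Lemma \ref{LemUniH2}; here all the relevant structure constants are the rank-one ones with $q$ replaced by $q_r$, since the node $r$ carries $d_r$ and $q_r = q^{d_r}$. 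Then $w_r = v_r^*$, and $u_r = \kappa_r(u)$ is computed from $u = z - q_r^{1/2}(E_r \rhdb w)$ exactly as in \eqref{EqFormu}, giving $u_r = \epsilon_r a_{2\varpi_r - \alpha_r} + q_r(q_r^{-1}-q_r)^2 x_r x_r^* a_{\varpi_r}$ after using the commutation relations; note $a_{\varpi_r}^{-1}$ does not literally appear in $\mcO_q(Z_\nu)$, but $a_{\varpi_r}^{-1}$ times $\epsilon_r$ is represented inside $\mcO_q(Z_\nu)$ by $\epsilon_r a_{2\varpi_r-\alpha_r}$, which is where the shift $2\varpi_r - \alpha_r$ enters. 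From here $\kappa_r(D) = \kappa_r(uz - q_r^{-2}vw) = \epsilon_r a_{2\varpi_r-\alpha_r}$ (the $x_r$-dependent terms cancel, as in the last display of the proof of Lemma \ref{LemUniH2}, using the $q$-commutation $a_{\varpi_r} Z(\xi,\eta) = q^{2(\varpi_{r,+},\wt\xi-\wt\eta)}Z(\xi,\eta)a_{\varpi_r}$ from \eqref{EqCommapi}), and $\kappa_r(T) = \kappa_r(q_r^{-1}u + q_r z) = q_r^{-2}(q_r-q_r^{-1})^2 x_r^* x_r a_{\varpi_r} + q_r^{-1}a_{\varpi_r} + \epsilon_r q_r a_{\varpi_r - \alpha_r}$, again rewriting $x_r x_r^*$ in terms of $x_r^* x_r$ via the relation $x_r x_r^* - q_r^{-(\alpha_r,\alpha_r)/d_r}\cdots$ coming from the defining relations of $\mcO_q(Z_\nu^{\reg})$ and absorbing the Cartan part into the $a$'s.

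The only genuine subtlety — the place I expect to spend real effort — is the bookkeeping of weight shifts: one must check that the two weight vectors $\xi_{\varpi_r}$ and $f_r\rhdb\xi_{\varpi_r}$ in $V_{\varpi_r}$ are exactly the vectors whose matrix coefficients reproduce the $2\times2$ braided matrix with the correct $q_r$-powers, i.e.\ that the restriction of the $U_q(\mfu)$-action to the $U_{q_r}'(\mfsu(2))$ generated by $E_r, F_r, K_r$ on this two-dimensional subspace is the standard spin-$\tfrac12$ representation with deformation parameter $q_r$, and that the $R$-matrix block relevant for the reflection equation is the rank-one $R$-matrix at $q_r$ (this is where the footnote about the normalization of the $R$-matrix in Section \ref{ExaRank1} is used). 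Once that normalization is in place, every identity above is the corresponding rank-one identity from Lemma \ref{LemUniH2} with $q \rightsquigarrow q_r$ and the scalars $\epsilon$, $D$, $T$ promoted to the elements $\epsilon_r a_{2\varpi_r-\alpha_r}$, $D_r$, $T_r$ of $\mcO_q(Z_\nu)$, and no new computation is required beyond tracking the $a_\omega$-exponents through \eqref{EqCommapi}.
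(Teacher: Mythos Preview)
Your overall strategy---reduce to the rank-one computation of Lemma \ref{LemUniH2}---matches the paper's, but you construct $\kappa_r$ differently. You restrict the full matrix $Z_{\varpi_r}$ to the $2$-dimensional $U_{q_r}(\mfsu(2))$-submodule spanned by $\xi_{\varpi_r}$ and $F_r\xi_{\varpi_r}$ and invoke the reflection equation for this $2\times 2$ block. The paper instead works at the level of the \emph{localized} algebras: by Lemma \ref{LemUniH2}, $\mcO_{q_r}(H_2^{\reg})$ is universally presented by $a^{\pm 1},x,D$ with one $q$-commutation and one quadratic relation, and the assignment $a \mapsto a_{\varpi_r}$, $x \mapsto x_r$, $D \mapsto \epsilon_r a_{2\varpi_r-\alpha_r}$ visibly satisfies those relations inside $\mcO_q(Z_\nu^{\reg})$ (using $c_r=1$ since $r=\tau(r)$). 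Equivariance is then a one-line check against the inner action \eqref{EqInfActH2}, and since $z$ generates $\mcO_{q_r}(H_2)$ as a $U_{q_r}(\mfsu(2))$-module $*$-algebra, the map restricts to $\mcO_{q_r}(H_2)\to\mcO_q(Z_\nu)$ and uniqueness follows.

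The advantage of the paper's route is that it bypasses the subtlety you flag. Your argument needs the $R$-matrix on $V_{\varpi_r}\otimes V_{\varpi_r}$ to restrict to the rank-one $R$-matrix on the $2$-dimensional block and the reflection equation for the full $Z_{\varpi_r}$ to descend there; but $Z_{13}$ in that equation does not a priori preserve the $2$-dimensional subspace in the first tensor slot, so this descent is not mere bookkeeping. The presentation route sidesteps the issue. Your subsequent computations of $v_r$, $D_r$, $T_r$ agree with the paper's, modulo a slip: in your formula for $u_r$ the first term should be $\epsilon_r a_{\varpi_r-\alpha_r}$, since what replaces $\epsilon a_\varpi^{-1}$ from \eqref{EqFormu} is $D_r\, a_{\varpi_r}^{-1}=\epsilon_r a_{2\varpi_r-\alpha_r}\cdot a_{\varpi_r}^{-1}$.
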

\begin{proof}
The existence of a $*$-homomorphism
\[
\kappa_r: \mcO_{q_r}(H_{2}^{\reg}) \rightarrow \mcO_q(Z_{\nu}^{\reg}),\quad z \mapsto a_{\varpi_r},\quad v \mapsto q_r^{-1/2}(q_r^{-1}-q_r)x_ra_{\varpi_r},\quad D \mapsto \epsilon_r a_{2\varpi_r-\alpha_r}
\]
follows immediately from the defining relations in Lemma \ref{LemUniH2}. This map is then seen to be $U_q'(\mfsu(2))$-linear, and hence $U_q(\mfsu(2))$-linear, by the explicit formula \eqref{EqInfActH2}. Since $z$ generates $\mcO_q(H_2)$ as an $U_q(\mfsu(2))$-module $*$-algebra, we obtain that $\kappa_r$ maps $\mcO_q(H(2))$ equivariantly into $\mcO_q(Z_{\nu})$, and that moreover this is the unique such map sending $z$ to $a_{\varpi_r}$. The formula for $\kappa_r(T)$ now follows by an easy explicit computation, recycling for example the formula \eqref{EqFormu} (upon replacing $\epsilon$ by $\epsilon_r a_{2\varpi_r-\alpha_r}$).
\end{proof}

\begin{Lem}\label{LemRank1Prep2}

For $r \neq \tau(r)$, there exists a unique $U_{q_r}(\mfsu(2))$-equivariant $*$-homomorphism
\[
\kappa_r: \mcO_{q_r}(V_{\R}) \rightarrow \mcO_q(Z_{\nu}),\quad \begin{pmatrix} a & b \\ c & d \end{pmatrix} \mapsto \begin{pmatrix} a_r & b_r\\ c_r & d_r\end{pmatrix} 
\]
such that $c_r = a_{\varpi_r}$. Then 
\[
d_r = q_r^{-1/2}(q_r^{-1}-q_r)a_{\varpi_r}x_r^* 
\] 
and $\kappa_r(D) = D_r$ with 
\begin{equation}\label{EqFormDV}
D_r := (1+q_r^{-1}(q_r^{-1}-q_r)^2x_r^*x_r)a_{\varpi_r+\varpi_{\tau(r)}}. 
\end{equation}
\end{Lem}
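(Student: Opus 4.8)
The plan is to mirror the proof of Lemma \ref{LemRank1Prep} (the case $r=\tau(r)$), with $\mcO_{q_r}(H_2)$ replaced by $\mcO_{q_r}(V_{\R})$. The reason $V_{\R}$ appears rather than $H_2$ is structural: for $\tau(r)\neq r$ one has $(\alpha_r^{\vee},\varpi_{\tau(r)})=0$, so under the rank-one subalgebra $U_{q_r}(\mfsu(2))_r\subseteq U_q(\mfu)$ attached to the node $r$ the irreducible $V_{\varpi_r}$ restricts to the fundamental while $V_{\varpi_{\tau(r)}}$ restricts to the trivial representation. Hence by \eqref{EqGlobalCoactRho} the element $a_{\varpi_r}$ — which is no longer selfadjoint, since $a_{\varpi_r}^*=a_{\varpi_{\tau(r)}}$ — transforms, under the $\mfsu(2)_r$-part of $\rho_{\nu}$, as the fundamental corepresentation acted upon on the right only, exactly like a row of the defining matrix of $\mcO_{q_r}(V_{\R})$.

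First I would establish existence. Using the presentation of $\mcO_{q_r}(V_{\R})$ from Section \ref{ExaRank1} (generators $a,b,c,d$ with the listed relations, $*$-structure $a^*=d$, $b^*=-q_rc$, central selfadjoint $D=ad-q_rbc$), I would set $c\mapsto a_{\varpi_r}$ and $d\mapsto q_r^{-1/2}(q_r^{-1}-q_r)\,a_{\varpi_r}x_r^*$, so that $a,b$ are forced by $*$ to $q_r^{-1/2}(q_r^{-1}-q_r)\,x_r a_{\varpi_{\tau(r)}}$ and $-q_r a_{\varpi_{\tau(r)}}$, and check that these images obey the defining relations. As in Lemma \ref{LemRank1Prep} it is convenient to work first with the localization $\mcO_{q_r}(V_{\R}^{\reg})=\mcO_{q_r}(V_{\R})[c^{-1}]$ mapping into $\mcO_q(Z_{\nu}^{\reg})$, where $a_{\varpi_r}$ is invertible. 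The ingredients are: the $q$-commutation relations \eqref{CommTE} between the $a_{\omega}$ and $x_r,x_r^*$, specialized to $\omega\in\{\varpi_r,\varpi_{\tau(r)},\varpi_r+\varpi_{\tau(r)}\}$ (using $(\varpi_r)_+=\tfrac12(\varpi_r+\varpi_{\tau(r)})$ and $(\alpha_r^{\vee},\varpi_{\tau(r)})=0$, so the only relevant exponent is $(\varpi_r,\alpha_r)=d_r$); the relation $x_rx_r^*-q_r^{-2}x_r^*x_r=(q_r^{-1}-q_r)^{-1}$, which is the defining relation of $U_q^{\nu}(\mfu)$ at $(r,r)$ with $\delta_{r,\tau(r)}=0$; and the homomorphism property $a_{\omega}a_{\omega'}=a_{\omega+\omega'}$.

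Next, equivariance, uniqueness and the formula for $d_r$. Since $c$ generates $\mcO_{q_r}(V_{\R})$ as a $U_{q_r}(\mfsu(2))$-module $*$-algebra — the pair $(c,d)$ spans the fundamental submodule containing $c$, and $(a,b)=(d^*,-q_rc^*)$ — it suffices to verify $\kappa_r(X\rhdb c)=X\rhdb a_{\varpi_r}$ for $X\in\{K_r,E_r,E_r^*\}$. On the source side these are read off from the coaction $M\mapsto M_{12}U_{13}$ on $\mcO_{q_r}(V_{\R})$ (equivalently from the rank-one analogue of the inner formulas \eqref{EqInfActH2}); on the target side one computes $K_r\rhdb a_{\varpi_r}$, $E_r\rhdb a_{\varpi_r}=0$ and $E_r^*\rhdb a_{\varpi_r}$ directly from \eqref{EqTwistedAdj}, using that $\xi_{\varpi_r}$ is a singlet for $\mfsu(2)_{\tau(r)}$. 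It is precisely the evaluation of $E_r^*\rhdb a_{\varpi_r}$ that pins down the constant $q_r^{-1/2}(q_r^{-1}-q_r)$ in $d_r=\kappa_r(d)$, $d$ being the lower component of $(c,d)$. Uniqueness of a $U_{q_r}(\mfsu(2))$-equivariant $*$-homomorphism with $c\mapsto a_{\varpi_r}$ is then immediate, and since $a_{\varpi_r}\in\mcO_q(Z_{\nu})$ and $\mcO_q(Z_{\nu})$ is $U_q(\mfu)$-stable (Lemma \ref{LemYD}), $\kappa_r$ restricts to the asserted map $\mcO_{q_r}(V_{\R})\to\mcO_q(Z_{\nu})$.

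Finally, the formula for $D_r=\kappa_r(D)$ follows by substituting the images of $a,b,c,d$ into $D=ad-q_rbc$ and simplifying: one collapses $a_{\varpi_{\tau(r)}}a_{\varpi_r}$ to $a_{\varpi_r+\varpi_{\tau(r)}}$, moves $x_r$ past $a_{\varpi_r+\varpi_{\tau(r)}}$ via \eqref{CommTE} (picking up $q_r^{2}$ since $(\varpi_r+\varpi_{\tau(r)})_+ = \varpi_r+\varpi_{\tau(r)}$ pairs with $\alpha_r$ to $d_r$), and rewrites $x_rx_r^*$ through the $(r,r)$ relation above, arriving at $(1+q_r^{-1}(q_r^{-1}-q_r)^2x_r^*x_r)\,a_{\varpi_r+\varpi_{\tau(r)}}$. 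This last computation is routine; the main delicacy throughout is bookkeeping of the normalizations ($d_r$, $q_r$, the sign of $q_r^{-1}-q_r$) and keeping straight that $a_{\varpi_r}$ is not selfadjoint when $\tau(r)\neq r$, which is exactly what forces $\mcO_{q_r}(V_{\R})$ rather than $\mcO_{q_r}(H_2)$ to appear here.
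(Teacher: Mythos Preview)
Your proposal is correct and follows essentially the same approach as the paper: define $\kappa_r$ on $c,d$ (and hence on $a,b$ via the $*$-structure), check the defining relations of $\mcO_{q_r}(V_{\R})$ directly using \eqref{CommTE} and the $(r,r)$-relation $x_rx_r^*-q_r^{-2}x_r^*x_r=(q_r^{-1}-q_r)^{-1}$, verify equivariance on the generating two-dimensional module spanned by $c,d$, and compute $\kappa_r(D)$ explicitly. The paper's proof is terser but structurally identical; your added motivation for why $V_{\R}$ replaces $H_2$ and the explicit bookkeeping of $a_{\varpi_r}^*=a_{\varpi_{\tau(r)}}$ are helpful glosses but not new ingredients.
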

\begin{proof}
It is immediate from the defining relations of $\mcO_{q_r}(V_{\R})$ and $\mcO_q(Z_{\nu}^{\reg})$ that there exists a unique $*$-homomorphism
\[
\kappa_r: \mcO_{q_r}(V_{\R}) \rightarrow \mcO_q(Z_{\nu}),\qquad c \mapsto a_{\varpi_r},\quad d \mapsto  q_r^{-1/2}(q_r^{-1}-q_r)a_{\varpi_r}x_r^*.
\]
It is then also immediate that $\kappa_r(D)$ is given by \eqref{EqFormDV}. 

To see that $\kappa_r$ is equivariant (and is then uniquely determined by its value on $c$), we note that
\[
K\rhdb \begin{pmatrix} c & d \end{pmatrix} = \begin{pmatrix} q_rc & q_r^{-1}d\end{pmatrix},\quad E\rhdb \begin{pmatrix} c & d \end{pmatrix} = \begin{pmatrix} 0 & q_r^{1/2}c\end{pmatrix}, \quad E^*\rhdb \begin{pmatrix} c & d \end{pmatrix} = \begin{pmatrix} q_r^{1/2}d & 0\end{pmatrix}.
\]
It is straightforwardly verified that $\kappa_r$ is $U_{q_r}(\mfsu(2))$-equivariant on the two-dimensional module spanned by $c,d$. Hence $\kappa_r$ is $U_{q_r}(\mfsu(2))$-equivariant as $c,d$ generate $\mcO_{q_r}(V_{\R})$ as an $U_{q_r}(\mfsu(2))$-module $*$-algebra.
\end{proof}

We introduce the following $*$-subalgebras on $\mcO_q(Z_{\nu})$. 

\begin{Def}
We let $\msA_r$ be the $*$-algebra generated by the range of $\kappa_r$ in Lemma \ref{LemRank1Prep} and Lemma \ref{LemRank1Prep2}. We let $\msA_r^{\loc}$ be the $*$-algebra generated by $\msA_r$ and the $a_{\alpha_r}^{\pm 1},a_{\varpi_r}^{\pm 1}$. For $r\neq \tau(r)$, we let $\msA_{r,\tau(r)}^{\loc}$ be the $*$-algebra generated by $\msA_r^{\loc}$ and $\msA_{\tau(r)}^{\loc}$.  
\end{Def}

Note that for $r\in I^{\tau}$, $\msA_r^{\loc}$ is simply the unital $*$-algebra generated by $x_r,a_{\alpha_r}^{\pm 1}$ and $a_{\varpi_r}^{\pm 1}$, corresponding up to the Cartan part with the rank one case. When $r\notin I^{\tau}$, $\msA_{r,\tau(r)}^{\loc}$ is the unital $*$-algebra generated by the $a_{\varpi_r}^{\pm 1},a_{\alpha_r}^{\pm 1}$, $x_r$ and $x_{\tau(r)}$, corresponding up to the Cartan part with a rank two case with non-trivial involution.

Consider now the `big cell' $*$-representation $(\mbs,\mcS)$ of the $*$-algebra $\mcO_q(SU(2))$, given by $\mcS = l^2(\N)$ and 
\[
\mbs(a)e_n = (1-q^{2n})^{1/2}e_{n-1},\qquad \mbs(b)e_n = q^n e_n,
\]
as well as  the $*$-characters
\[
\chi_{\theta}: a \mapsto e^{i\theta},\qquad b \mapsto 0
\]
for $\theta \in \R/{2\pi}\Z$, cf. Section \ref{ExaRank1}.

Let
\[
\pi_r: \mcO_q(U) \rightarrow \mcO_{q_r}(SU(2)_r)
\]
be the natural restriction map obtained by restricting $\mcO_q(U)$ to functionals on the isomorphic copy of $U_{q_r}(\mfsu(2))$ generated by $E_r,F_r,K_{\alpha_r} \in U_q(\mfu)$. By these factorisations, the above $*$-representations can be interpreted as $*$-representations $(\mbs_r,\mcS_r)$ and $\chi_{\theta}^{(r)}$ of $\mcO_q(U)$. 

For $\Hsp_{\lambda}$ an irreducible $*$-representation of regular type of $\mcO_q(G_{\nu}\backslash G_{\R})$, the following theorem describes its fusion rules with the $*$-representations $\mbs_r$ and $\chi_{\theta}^{(r)}$. We will use the following notation: for $r\in I^*\cup \tau(I^*)$ and $\theta \in \R$ we define $\eta_r(\theta) \in \mbH_{\tau}^{\gauge}$ by $\eta_r(\theta)_s = 1$ for $s\notin \{r,\tau(r)\}$ and
\[
\eta_r(\theta)_r = e^{i\theta},\quad \eta_r(\theta)_{\tau(r)} = e^{-i\theta}.
\] 

\begin{Theorem}\label{TheoFusion}
Let $x \in G_{\nu,q}\backslash G_{\R,q}/U_q$, and let $\lambda \in \Lambda_{\nu}(x)$. We denote by $\Hsp$ some generic multiplicity Hilbert space.
\begin{enumerate}
\item If $r \in I^{\tau}$ and $\lambda_P(\alpha_r) \epsilon_r \geq 0$, then 
\[
\pi_{\lambda}*\mbs_r \cong \Hsp \otimes \pi_{\lambda},\qquad \pi_{\lambda}*\chi_{\theta}^{(r)}\cong \pi_{\lambda}.
\] 
\item If $r \in I^{\tau}$ and $\lambda_P(\alpha_r)\epsilon_r < 0$, then also $s_r\cdot_{\epsilon,q} \lambda \in \Lambda_{\nu}(x)$ and
\[
 \pi_{\lambda}*\mbs_r \cong \Hsp \otimes (\pi_{\lambda}  \oplus \pi_{s_r\cdot_{\epsilon,q} \lambda}),\qquad \pi_{\lambda}*\chi_{\theta}^{(r)}\cong \pi_{\lambda}.
\] 
\item If $r\in I^*\cup \tau(I^*)$, then 
\[
\pi_{\lambda}*\mbs_r\cong \Hsp \otimes  \int_{\T} \pi_{\lambda \eta_r(\theta)} \rd\theta,\qquad \pi_{\lambda}*\chi_{\theta}^{(r)}\cong \pi_{\lambda \eta_r(\theta)}.
\]
\end{enumerate} 
\end{Theorem}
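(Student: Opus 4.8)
The plan is to reduce the fusion rules for $\pi_\lambda \ast \mathbf s_r$ and $\pi_\lambda \ast \chi_\theta^{(r)}$ to the rank-one (or rank-two, in the folded case) situations already analysed in Section~\ref{ExaRank1} and in Lemmas~\ref{LemRank1Prep}--\ref{LemRank1Prep2}, using the embeddings $\kappa_r$. First I would observe that by construction of $\mathbf s_r$ and $\chi_\theta^{(r)}$ as $*$-representations of $\mcO_q(U)$ pulled back along $\pi_r\colon \mcO_q(U)\to \mcO_{q_r}(SU(2)_r)$, the convolution $\pi_\lambda \ast \mathbf s_r = (\pi_\lambda\otimes \mathbf s_r)\rho_\nu$ is controlled, on the generators $a_\varpi$ and $x_r$, by the formula \eqref{EqGlobalCoactRho}: writing $\rho_\nu$ in terms of the copy of $U_{q_r}(\mfsu(2))$ generated by $E_r,F_r,K_{\alpha_r}$, only the $r$-component of the corepresentation matrix acts nontrivially after applying $\pi_r$, so $\pi_\lambda\ast\mathbf s_r$ and $\pi_\lambda\ast\chi_\theta^{(r)}$ restricted to $\msA_r^{\loc}$ (or $\msA_{r,\tau(r)}^{\loc}$) are precisely convolutions of the rank-one data with the $\mcO_{q_r}(SU(2))$-representations $\mathbf s$ and $\chi_\theta$ from Proposition~\ref{PropRank1Tensor} and Theorem~\ref{TheoListAllSUq2}.

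The key steps, in order, are: (i) show that an irreducible highest weight $*$-representation $\Hsp_\lambda$ of $\mcO_q(Z_\nu^{\reg})$, restricted to $\msA_r^{\loc}$ resp.\ $\msA_{r,\tau(r)}^{\loc}$, decomposes into highest weight modules for the rank-one resp.\ rank-two subalgebra, with highest weights read off from $\lambda_P(\varpi_r)$ (and $\lambda_P(\varpi_{\tau(r)})$); here one uses that $a_{\varpi_r}$ is a joint eigenvector datum and that $x_r$ lowers weight by $\alpha_r$, so the weight spaces of $\Hsp_\lambda$ organise into ladders under $x_r,x_r^\ast$. (ii) Apply Corollary~\ref{CorRank1Comp} (case $r\in I^\tau$) or Corollaries~\ref{CorTypeA1A1}/\ref{CorTypeA2} (case $r\notin I^\tau$, via the isomorphism $\kappa_r$ onto $\mcO_{q_r}(H_2)$ resp.\ $\mcO_{q_r}(V_\R)$) to read off the fusion: for $r\in I^\tau$ with $\lambda_P(\alpha_r)\epsilon_r\ge 0$ the rank-one $\mathbf s$-convolution gives $\Hsp\otimes(\text{same rep})$ and $\chi_\theta$-convolution is trivial, while for $\lambda_P(\alpha_r)\epsilon_r<0$ one gets the extra summand at $s_r\cdot_{\epsilon,q}\lambda$, exactly as in Proposition~\ref{PropRank1Tensor}(3) with the sign change $a>0$ vs.\ the reflected eigenvalue; for $r\notin I^\tau$ the $\mcO_{q_r}(V_\R)$ fusion $\mcS_\lambda\ast\mcS \cong \int_\T \mcS_{\lambda,\theta}\,\rd\theta$ of Theorem~\ref{TheoListAllSUq2} produces the integral over $\T$ with weight twisted by $\eta_r(\theta)$, and $\chi_\theta^{(r)}$ twists $\lambda$ by $\eta_r(\theta)$. (iii) Bootstrap from the restriction to $\msA_r^{\loc}$ back to all of $\mcO_q(Z_\nu)$: since by Theorem~\ref{TheoPropOqZ} the $a_\varpi$ generate $\mcO_q(Z_\nu)$ as a $U_q(\mfu)$-module and the convolution is $U_q(\mfu)$-equivariant, knowing the action of $a_\varpi$ and the $\mcO_q(U)$-coaction determines the representation up to the multiplicity space $\Hsp$; one checks that the weights appearing are precisely those listed (using Corollary~\ref{CorTwoWWW} and the admissibility of $x$ from Lemma~\ref{LemInvPartZ}, which guarantees $s_r\cdot_{\epsilon,q}\lambda\in\Lambda_\nu(x)$ in case (2)).

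I expect the main obstacle to be step~(iii), namely controlling the \emph{global} multiplicity and verifying that no further weights appear beyond those forced by the rank-one analysis. The rank-one subalgebras $\msA_r^{\loc}$ see only the $r$-direction, so in principle a subtle interaction across different simple roots could produce a representation that restricts correctly to each $\msA_r^{\loc}$ but is not one of the claimed highest weight modules; ruling this out requires using that $\Hsp_\lambda$ is an \emph{irreducible weight} representation (Lemma~\ref{LemIrrRepZZ}) together with the fact that convolution with an $\mcO_q(U)$-representation does not change the central character (Lemma~\ref{LemInvPartZ}), so that $\pi_\lambda\ast\mathbf s_r$ is a direct integral of highest weight modules all lying over the \emph{same} $x$, whose weight set is then pinned down by Corollary~\ref{CorTwoWWW}(1) (finitely many $\mbH_\tau^{\gauge}$-orbits) and matched against the rank-one output. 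A secondary technical point is the need, in case (2), to identify the reflected highest weight precisely as $s_r\cdot_{\epsilon,q}\lambda$ rather than merely some weight in its gauge orbit; this is handled by the explicit eigenvalue bookkeeping in Proposition~\ref{PropRank1Tensor}(3) transported through $\kappa_r$ via Lemma~\ref{LemRank1Prep}, together with Lemma~\ref{LemDotAction} describing how $s_r\cdot_{\epsilon,q}$ acts on the $r$-component.
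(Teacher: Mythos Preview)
Your overall strategy matches the paper's approach: reduce to rank one (or rank two) via the embeddings $\kappa_r$, apply the known fusion rules from Section~\ref{ExaRank1}, and then argue globally using the shared central character and Corollary~\ref{CorInt}. The fusion with $\chi_\theta^{(r)}$ is indeed immediate, and your identification of step~(iii) as the crux is correct.

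However, your proposed resolution of step~(iii) has a genuine gap. You say the weights appearing are ``matched against the rank-one output'', but the restriction to $\msA_r^{\loc}$ constrains only the $r$-component $\lambda_r'$ of any highest weight $\lambda'$ that appears; it says nothing directly about $\lambda_s'$ for $s \notin \{r,\tau(r)\}$. Corollary~\ref{CorTwoWWW}(1) tells you there are finitely many gauge orbits over $x$, not which ones occur. The paper's mechanism is different and more specific: for each irreducible component $\Hsp_{\lambda'}$ of $\pi_\lambda * \mbs_r$ there is a surjective $\mcO_q(Z_\nu)$-intertwiner $\Hsp_\lambda \otimes \mcS_r \twoheadrightarrow \Hsp_{\lambda'}$. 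Since $\Hsp_\lambda = \overline{\msN^- \xi_\lambda}$ and $0<q<1$, one gets $|\lambda_s'| \leq |\lambda_s|$ for $s \neq r$. If this were strict for some $s$, the intertwiner would vanish on $\xi_\lambda \otimes \mcS_r$; the identity
\[
\rho_\nu(\mcO_q(Z_\nu))(1 \otimes \mcO_q(U)) = \mcO_q(Z_\nu) \otimes \mcO_q(U)
\]
then forces it to vanish everywhere, a contradiction. Hence $\lambda_s' = \lambda_s$ for $s \notin \{r,\tau(r)\}$, and the highest weight vector of $\Hsp_{\lambda'}$ is forced to lie in $\overline{\msA_r \xi_\lambda} \otimes \mcS_r$ (respectively $\overline{\msA_{r,\tau(r)}^{\loc} \xi_\lambda} \otimes \mcS_r$). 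It is this localisation of the highest weight vector, not a global matching, that legitimately reduces the problem to rank one or two; without it your argument does not exclude other $W_\nu$-translates of $\lambda$.

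One further correction: your claim that ``the admissibility of $x$ from Lemma~\ref{LemInvPartZ} \ldots guarantees $s_r\cdot_{\epsilon,q}\lambda\in\Lambda_\nu(x)$ in case~(2)'' is backwards. Lemma~\ref{LemInvPartZ} is only central coinvariance. The membership $s_r\cdot_{\epsilon,q}\lambda \in \Lambda_\nu(x)$ is a \emph{conclusion} of the theorem, obtained precisely because $\pi_{s_r\cdot_{\epsilon,q}\lambda}$ appears as a summand of the admissible representation $\pi_\lambda * \mbs_r$ once the localisation argument above has been carried out.
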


\begin{proof}
The fusion rules with the $\chi_{\theta}^{(r)}$ are immediate, so we concentrate on the fusion rules with the $\mbs_r$.

In all cases, we have that 
\[
\pi_{\lambda}* \mbs_r(a_{\varpi_s}) = a_{\varpi_s}\otimes 1,\qquad s\notin \{r,\tau(r)\}
\]
by \eqref{EqTwistedAdj} and the definition of $a_{\varpi_s}$. On the other hand, by Lemma \ref{LemRank1Prep} and Lemma \ref{LemRank1Prep2} we see that $\pi_\lambda \circ \kappa_r$ is a $*$-representation of either $\mcO_{q_r}(H_2)$ or $\mcO_{q_r}(V_{\R})$ under which respectively $z$ and $c$ get sent to operators with discrete spectrum not containing zero. It then follows from the computations in Section \ref{ExaRank1} that $\pi_{\lambda} \circ\kappa_r$ is a direct sum of irreducible $*$-representations satisfying the same property. From the fusion rules computed there, we moreover see that $\pi_{\lambda}* \mbs_r(a_{\varpi_r}a_{\varpi_r}^*)$ must have discrete spectrum not containing $0$. 

Since $\pi_{\lambda}*\mbs_r$ still has the same central character $x$, it follows from Corollary \ref{CorInt} that  $\pi_{\lambda}*\mbs_r$ is a direct integral over amplifications of the $\pi_{\lambda'}$ with $\lambda'$ ranging inside the $W_{\nu} \ltimes \mbH_{\tau}^{\gauge}$-orbit of $\lambda$.

Now if $r \in I^{\tau}$ the spectrum of the $a_{\varpi_s}$ for $s\in I^* \cup \tau(I^*)$ remains the same, so we must actually have a direct sum (possibly with repetition) over the elements in the $W_{\nu}$-orbit of $\lambda$ under the $\cdot_{\epsilon,q}$-action. For each $\lambda'$ which appears, we  have a surjective $\mcO_q(Z_{\nu})$-intertwiner $\Hsp_{\lambda}\otimes \mcS_r \twoheadrightarrow \Hsp_{\lambda'}$. Since $\Hsp_{\lambda} = \msN^- \xi_{\lambda}$, and since $0<q<1$, it follows that $|\lambda_s'|\leq |\lambda_s|$ for $s\neq r$. On the other hand, if this is a strict inequality for some $s\neq r$, the above surjection must be zero on $\xi_{\lambda}\otimes \mcS_r$, and must hence be zero everywhere by using the general identity
\[
\rho_{\nu}(\mcO_q(Z_{\nu}))(1\otimes \mcO_q(U)) = \mcO_q(Z_{\nu})\otimes \mcO_q(U).
\]
It follows that $\lambda_s' = \lambda_s$ for $s\neq r$, and that a highest weight vector $\xi_{\lambda'}$ of $\mcH_{\lambda} \otimes \mcS_r$ lies in $\overline{\msA_r\xi_{\lambda}}  \otimes \mcS_r$, the latter space vanishing under all $x_s$ with $s\neq r$. Since $\lambda_P(\alpha_r)\epsilon_r \geq0$ if and only if $\lambda_P(2\varpi_r - \alpha_r)\epsilon_r \geq0$, it now easily follows from Lemma \ref{LemRank1Prep} and Proposition \ref{PropRank1Tensor} that we must have $\lambda' \in \{\lambda,s_{\alpha_r}\cdot_{\epsilon,q} \lambda\}$, with $\lambda' = \lambda$ if $\lambda_P(\alpha_r)\epsilon_r \geq0$ and both weights $\lambda$ and $s_{r}\cdot_{\epsilon,q} \lambda$ appearing as highest weights if $\lambda_P(\alpha_r)\epsilon_r <0$. 

Assume now that $r\neq \tau(r)$. Then again we can conclude as above that any vector in $\Hsp_{\lambda}\otimes \mcS_r$ vanishing under all $x_s$ with $s\notin \{r,\tau(r)\}$ must lie in $\overline{\msA_{r,\tau(r)}^{\loc}\xi_{\lambda}}\otimes \mcS_r$. If now $\Hsp_{\lambda'}$ appears in the direct integral decomposition of $\Hsp_{\lambda}\otimes \mcS_r$, we must have $|\lambda_r'| \in \{|\lambda_r|,|(\hat{s}_r\cdot_{\epsilon,q} \lambda)_r|\}$ by applying Corollary \ref{CorTwoWWW} to the rank 2 case with non-trivial involution. By Corollary \ref{CorTypeA1A1} and Corollary \ref{CorTypeA2}, we see however that only the value $|\lambda_r|$ can appear. 

Finally, to see that we must have a direct integral over the full circle, we simply note that $\Hsp_{\lambda}$ must decompose as a direct sum of $\msA_r$-representations, each of which has full spectrum with uniform multiplicity for the unitary part of $a_{\varpi_r}$ inside its tensor product with $\mcS_r$  by Lemma \ref{LemRank1Prep2} and the fusion rules described in Theorem \ref{TheoListAllSUq2}.
\end{proof}

\begin{Theorem}\label{TheoBijCorr}
Let $x \in G_{\nu,q}\backslash G_{\R,q}/U_q$. The map 
\[
\mbH^{\gauge}_{\tau}\times W_{\nu}^-  \times \Lambda_{\nu}^{\gg}(x) \rightarrow \Lambda_{\nu}(x), \quad (f,w,\lambda) \mapsto f(w\cdot_{\epsilon,q} \lambda)
\]
is a well-defined bijection.
\end{Theorem}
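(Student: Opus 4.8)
The plan is to establish the two halves of the bijection statement separately, using the fusion-rule analysis in Theorem \ref{TheoFusion} together with the abstract combinatorics of the twisted dot-action developed in Section \ref{SecDefDat}. First I would verify that the map is well-defined: for $\lambda \in \Lambda_{\nu}^{\gg}(x)$, $w \in W_{\nu}^-$ and $f\in \mbH_\tau^{\gauge}$, the element $f(w\cdot_{\epsilon,q}\lambda)$ must lie in $\Lambda_{\nu}(x)$. That $w\cdot_{\epsilon,q}\lambda$ and $f(w\cdot_{\epsilon,q}\lambda)$ have the same central character $x$ as $\lambda$ follows from Corollary \ref{CorOrbitHC} and Lemma \ref{LemCloseGauge}; that $f(w\cdot_{\epsilon,q}\lambda)$ is again a highest weight (i.e.\ in $\widetilde{\Lambda}_\nu$) is the content that needs the fusion rules. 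Writing $w = s_{r_n}\cdots s_{r_1}$ as in \eqref{EqCondWminus}, one applies Theorem \ref{TheoFusion}.(2) repeatedly: at each step the reflection $s_{r_{k+1}}$ is applied precisely at a sign change, i.e.\ when $(s_{r_k}\cdots s_{r_1})\cdot_{\epsilon,q}\lambda$ has $\lambda_P(\alpha_{r_{k+1}})\epsilon_{r_{k+1}} < 0$, so the weight $s_{r_{k+1}}\cdot_{\epsilon,q}(s_{r_k}\cdots s_{r_1})\cdot_{\epsilon,q}\lambda$ appears as a highest weight in the admissible representation $\pi_\lambda * \mbs_{r_1}*\cdots*\mbs_{r_{k+1}}$ (which is admissible by spectrality of the coaction). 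Induction on $n$ gives $w\cdot_{\epsilon,q}\lambda \in \Lambda_\nu(x)$, and Theorem \ref{TheoFusion}.(3) (convolving with $\mbs_r$ for $r\in I^*\cup\tau(I^*)$) supplies the $\mbH_\tau^{\gauge}$-factor.

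Next I would prove surjectivity. By Corollary \ref{CorTwoWWW}.(2) every $\lambda'\in\Lambda_\nu(x)$ lies in the $W_\nu\ltimes\mbH_\tau^{\gauge}$-orbit, under $\cdot_{\epsilon,q}$, of some $\mu\in\Lambda_\nu^{>}(x)$. Modulo the gauge action one may write $\lambda' \sim w\cdot_{\epsilon,q}\mu$ with $w\in W_\nu$; by Theorem \ref{TheoSec}.(1) choose the representative $w\in W_\nu^-$ of the coset $wW_\nu^+$, so that $w\cdot_{\epsilon,q}\mu$ and $w'\cdot_{\epsilon,q}\mu$ agree for $w' \in wW_\nu^+$ up to the stabilizer — here one needs that $W_\nu^+$ acts trivially-up-to-gauge on the $\cdot_{\epsilon,q}$-orbit, which follows from Lemma \ref{LemPosWPlus} (the positivity $\epsilon_Q(\omega - w\omega)>0$ for $w\in W_\nu^+$) combined with the definition of the $q$-deformed dot-action, since $q^{(2\rho,\omega-w\omega)}>0$ always. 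It then remains to upgrade $\mu \in \Lambda_\nu^{>}(x)$ to an element $\lambda\in\Lambda_\nu^{\gg}(x)$ in the same gauge orbit: a strictly positive-versus-merely-nonnegative adjustment at the degenerate indices. For $r$ with $\epsilon_r = 0$ the condition $\lambda\in\mbH^{\gg}_\tau$ versus $\mbH^{>}_\tau$ only concerns coordinates $\lambda_r$; one uses that such coordinates are forced positive on $W_\nu\cdot_{\epsilon,q}\mbH^{\gg}_\tau$ (noted in the proof of Theorem \ref{TheoSec}) and that $W_\nu^-$ is built from reflections in $J^\tau$, so the map factors through the non-degenerate locus as expected. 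I expect to absorb the remaining freedom into $f\in\mbH^{\gauge}_\tau$.

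For injectivity, suppose $f(w\cdot_{\epsilon,q}\lambda) = f'(w'\cdot_{\epsilon,q}\lambda')$ with $f,f'\in\mbH^{\gauge}_\tau$, $w,w'\in W_\nu^-$, $\lambda,\lambda'\in\Lambda_\nu^{\gg}(x)$. Comparing absolute values kills the gauge factors: $|w\cdot_{\epsilon,q}\lambda| = |w'\cdot_{\epsilon,q}\lambda'|$ as functions on $P^\tau$. Since $\lambda,\lambda'\in\mbH^{\gg}_\tau$ and the $\cdot_{\epsilon,q}$-action only introduces signs and powers of $q$ on coordinates, the ``strongly positive'' representative of a $\cdot_{\epsilon,q}$-orbit is unique, so $\lambda = \lambda'$ and $w\cdot_{\epsilon,q}\lambda$, $w'\cdot_{\epsilon,q}\lambda$ lie in the same $\mbH^{\gauge}_\tau$-orbit; applying $\epsilon_Q$-signs this forces $w\cdot_{\epsilon}\mbH^{\gg}_\tau = w'\cdot_{\epsilon}\mbH^{\gg}_\tau$ (the classical, $q=1$ dot-action orbits), whence $w = w'$ by Theorem \ref{TheoSec}.(2); finally $f = f'$ follows since the gauge action of $\mbH^{\gauge}_\tau \cong \T^{|I^*|}$ on $\mbH^{\times}_\tau$ is free on the relevant coordinates.

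The main obstacle I anticipate is the inductive surjectivity-plus-well-definedness step, i.e.\ pinning down exactly which weight(s) appear as highest weights in the iterated convolution $\pi_\lambda * \mbs_{r_1}*\cdots*\mbs_{r_n}$ and checking that the condition \eqref{EqCondWminus} defining $W_\nu^-$ is precisely the condition that makes each successive reflection ``productive'' in Theorem \ref{TheoFusion}.(2) without ever leaving $\Lambda_\nu(x)$ or producing spurious extra weights; this requires carefully tracking the interplay between the sign condition $\lambda_P(\alpha_{r_{k+1}})\epsilon_{r_{k+1}}<0$ appearing in the fusion rules, the orbit-stabilizer structure of $W_\nu^+$ from Lemma \ref{LemPosWPlus}, and the rank-one and rank-two base cases (Corollaries \ref{CorRank1Comp}, \ref{CorTypeA1A1}, \ref{CorTypeA2}) that guarantee no new weights beyond the single reflection appear.
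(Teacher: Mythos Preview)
Your well-definedness and injectivity arguments are essentially correct and match the paper's approach (the paper's injectivity is the one-line observation that the sign pattern of $f(w\cdot_{\epsilon,q}\lambda)$ on $I^\tau$ is that of $w\cdot_\epsilon +$, which by Theorem \ref{TheoSec}.(2) determines $w$, whence $f$ from the phases on $I^*$, whence $\lambda$).

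The gap is in surjectivity. Your claim that ``$W_\nu^+$ acts trivially-up-to-gauge on the $\cdot_{\epsilon,q}$-orbit'' is false: Lemma \ref{LemPosWPlus} and Theorem \ref{TheoSec} only say that $W_\nu^+$ preserves $\mbH_\tau^{\gg}$ \emph{as a set} under $\cdot_{\epsilon,q}$; it does not fix individual elements, even modulo gauge. So after writing $w = w_-w_+$ you get $w\cdot_{\epsilon,q}\mu = w_-\cdot_{\epsilon,q}(w_+\cdot_{\epsilon,q}\mu)$ with $w_+\cdot_{\epsilon,q}\mu$ a \emph{new} strongly positive element, and nothing in Corollary \ref{CorTwoWWW}.(2) or Corollary \ref{CorOrbitHC} tells you that $w_+\cdot_{\epsilon,q}\mu \in \Lambda_\nu$, i.e.\ that it actually supports a highest weight $*$-representation. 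The Harish-Chandra data only control which weights share a central character, not which ones are genuine highest weights.

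The paper's surjectivity argument is different and does not pass through Corollary \ref{CorTwoWWW}.(2). It uses Lemma \ref{LemTransOrb} together with the factorization $\pi_{\reg} \cong \Hsp \otimes \mbs_{r_1}*\cdots*\mbs_{r_N}*\int_T\chi_\theta\,d\theta$ (for $w_0 = s_{r_1}\cdots s_{r_N}$ the longest Weyl word): given $\lambda\in\Lambda_\nu(x)$, admissibility gives $\pi_\lambda \preccurlyeq \pi_{\lambda'}*\pi_{\reg}$ for some $\lambda'\in\Lambda_\nu^{\gg}(x)$, and then Theorem \ref{TheoFusion} applied one $\mbs_r$ at a time shows that $\pi_\lambda$ must be of the form $\pi_{s_{r_{i_K}}\cdots s_{r_{i_1}}\cdot_{\epsilon,q}\lambda'}*\chi_\theta$ for a subsequence $r_{i_1},\ldots,r_{i_K}\in J_\nu^\tau$ in which each successive reflection flips a sign. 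Such a subsequence is by construction a word in $W_\nu^-$. This produces the $W_\nu^-$-element and the base point $\lambda'\in\Lambda_\nu^{\gg}(x)$ simultaneously from the representation theory, never needing to know that abstract $W_\nu^+$-translates of highest weights are again highest weights.
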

\begin{proof}
Recall that $W_{\nu}^-$ was defined in Definition \ref{DefWMin}. Using then Lemma \ref{LemDotAction}, we see from Lemma \ref{LemCloseGauge} and Theorem \ref{TheoFusion} that the above map is well-defined. 

To see that it is surjective, pick $\lambda \in \Lambda_{\nu}$ with associated central character $x= x_{\lambda}$. By definition of admissible weight and Lemma \ref{LemTransOrb}, there exists a $\lambda' \in \Lambda_{\nu}^{\gg}(x)$ such that $\pi_{\lambda}$ is weakly contained in $\pi_{\lambda'}*\pi_{\reg}$, where $\pi_{\reg}$ is the regular representation of $\mcO_q(U)$ on $L^2_q(U)$. However, with $w_0 = s_{r_1}\ldots s_{r_N}$ a longest word in $W$, one has by \cite{LS91}
\[
\pi_{\reg} \cong \Hsp \otimes  \mbs_{r_1}*\ldots *\mbs_{r_N}* \int_{T} \chi_{\theta} \rd \theta
\]
with $\Hsp$ a multiplicity Hilbert space and with 
\[
\chi_{\theta}(U_{\varpi}(\xi,\eta)) = e^{2\pi i (\wt(\eta),\theta)} \langle \xi,\eta\rangle,\qquad \theta \in T \cong \mfa/Q^{\vee}.
\]
Hence $\pi_{\lambda}$ must be weakly contained in some $\pi_{\lambda'}*\mbs_{r_1}*\ldots *\mbs_{r_N}* \chi_{\theta}$ with $\lambda'\in \Lambda_{\nu}^{\gg}$. By Theorem \ref{TheoFusion}, there must exist a finite subsequence $r_{i_1},\ldots,r_{i_K}$ in $J_{\nu}^{\tau}$ such that  $\pi_{\lambda} = \pi_{s_{r_{i_K}}\ldots s_{r_{i_1}}\cdot_{\epsilon,q} \lambda'}*\chi_{\theta}$, where in the finite subsequence we may assume that at each step, there is a place in $J_{\nu}^{\tau}$ where the sign is changed. Hence we may assume $s_{r_{i_K}}\ldots s_{r_{i_1}} \in W^-_{\nu}$, and so our map is surjective. 

To see that the map is injective, note that the sign patterns of $\lambda$ and $f(w\cdot_{\epsilon,q} \lambda)$ determine $w$ and $f$ by Theorem \ref{TheoSec}.
\end{proof}

From Theorem \ref{TheoSec} we obtain also the following. 

\begin{Cor}
For each $x \in G_{\nu,q}\backslash G_{\R,q}/U_q$ the set $\Lambda_{\nu}^{\gg}(x)$ consists of at most one point. 
\end{Cor}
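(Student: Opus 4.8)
The plan is to combine the two halves of Theorem~\ref{TheoSec} with Theorem~\ref{TheoBijCorr} and the $q$-deformed Harish-Chandra picture from Corollary~\ref{CorOrbitHC}. The statement is that $\Lambda_{\nu}^{\gg}(x)$ has at most one element, so suppose $\lambda_1,\lambda_2 \in \Lambda_{\nu}^{\gg}(x)$ are both strongly positive admissible highest weights with the same central character $x$. First I would recall that having the same central character $x$ means $\widetilde{\chi}_{\HC}(z_{\varpi})(\lambda_1) = x(z_{\varpi}) = \widetilde{\chi}_{\HC}(z_{\varpi})(\lambda_2)$ for all $\varpi \in P^+$ with $\tau(\varpi) = \varpi$, since $z_\varpi$ acts on $\xi_{\lambda_i}$ by the scalar $\widetilde{\chi}_{\HC}(z_\varpi)(\lambda_i)$ on the one hand and by $x(z_\varpi)$ on the other. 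By Corollary~\ref{CorOrbitHC} this forces $\lambda_2 = \gamma(w \cdot_{\epsilon,q} \lambda_1)$ for some $w \in W_{\nu}$ and some gauge $\gamma \in \mbH_{\tau}^{\gauge}$.

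Next I would use Theorem~\ref{TheoBijCorr} to promote this to the normal form with $w \in W_{\nu}^-$: since $\lambda_1 \in \Lambda_{\nu}^{\gg}(x)$, the element $\gamma(w \cdot_{\epsilon,q} \lambda_1)$ is exactly the image of the triple $(\gamma, w', \lambda_1)$ under the bijection of Theorem~\ref{TheoBijCorr} once we replace $w$ by its representative $w' \in W_{\nu}^-$ of the coset $wW_{\nu}^+$ (using that $W_{\nu}^+$ is the global stabilizer of $\mbH_\tau^{\gg}$ under $\cdot_{\epsilon,q}$, established inside the proof of Theorem~\ref{TheoSec}, so that $w' \cdot_{\epsilon,q} \lambda_1$ and $w \cdot_{\epsilon,q} \lambda_1$ agree up to an element of $\mbH_\tau^{\gg}$, and any residual positive scaling can be absorbed after noting both endpoints are strongly positive). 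So $\lambda_2 = \gamma(w'\cdot_{\epsilon,q}\lambda_1)$ with $w' \in W_\nu^-$. But $\lambda_2 \in \Lambda_\nu^{\gg}(x)$ as well, i.e. $\lambda_2$ is itself strongly positive; in the language of the sign patterns used at the end of the proof of Theorem~\ref{TheoBijCorr}, $\lambda_2$ has the trivial sign pattern, which is the sign pattern realized only by $w' = e$. Concretely, strong positivity of $w'\cdot_{\epsilon,q}\lambda_1$ together with strong positivity of $\lambda_1$ means $w'$ stabilizes $\mbH_\tau^{\gg}$ under $\cdot_{\epsilon,q}$, hence $w' \in W_\nu^+ \cap W_\nu^- = \{e\}$ by the claim $W^+\cap W^- = \{e\}$ proved in Theorem~\ref{TheoSec} (and its $\tau \neq \id$ analogue). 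Therefore $\lambda_2 = \gamma \lambda_1$ with $\gamma$ a gauge.

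Finally I would argue that the gauge $\gamma$ must be trivial. This is where one uses strong positivity most directly: both $\lambda_1$ and $\lambda_2 = \gamma\lambda_1$ lie in $\mbH_\tau^{\gg}$, so $\epsilon_r$-components aside, each $(\lambda_i)_r > 0$ for all $r \in I$, whereas a gauge $\gamma$ has $\gamma_r = 1$ for $r \in I^\tau$ and $|\gamma_r| = 1$ for $r \in I^*$, with $\gamma_{\tau(r)} = \overline{\gamma_r}$. For $r \in I^*$ the number $(\lambda_1)_r$ is real and positive, and $(\lambda_2)_r = \gamma_r (\lambda_1)_r$ is required to be real and positive as well; since $\gamma_r$ has modulus one this forces $\gamma_r = 1$. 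Hence $\gamma$ is trivial and $\lambda_1 = \lambda_2$, proving the corollary. The only real subtlety — and the step I would be most careful about — is the passage from the raw $W_\nu$-element $w$ produced by Corollary~\ref{CorOrbitHC} to its canonical $W_\nu^-$-representative while keeping track of the extra positive and gauge factors, so that the conclusion "$w' = e$ and $\gamma$ trivial" can be cleanly extracted; but all the ingredients for this bookkeeping (Theorem~\ref{TheoSec}, Lemma~\ref{LemDotAction}, Theorem~\ref{TheoBijCorr}) are already in place, so it is a matter of assembling them rather than proving anything new.
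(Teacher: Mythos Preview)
Your argument has a genuine gap at precisely the step you flag as ``the only real subtlety.'' From Corollary~\ref{CorOrbitHC} you get $\lambda_2 = \gamma(w\cdot_{\epsilon,q}\lambda_1)$ with $w\in W_\nu$, and writing $w = w'v$ with $w'\in W_\nu^-$, $v\in W_\nu^+$ gives $\lambda_2 = \gamma\bigl(w'\cdot_{\epsilon,q}(v\cdot_{\epsilon,q}\lambda_1)\bigr)$. The sign-pattern argument then correctly forces $w' = e$, and the gauge argument forces $\gamma = 1$, but what remains is $\lambda_2 = v\cdot_{\epsilon,q}\lambda_1$ with $v\in W_\nu^+$ --- and nothing in Theorem~\ref{TheoSec} lets you conclude $v = e$. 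Elements of $W_\nu^+$ act on $\mbH_\tau^{\gg}$ by nontrivial (sign-preserving) transformations which are neither gauges nor positive scalars that can be ``absorbed'': they genuinely move $\lambda_1$ around inside $\mbH_\tau^{\gg}$. Your sentence ``$\lambda_2 = \gamma(w'\cdot_{\epsilon,q}\lambda_1)$ with $w'\in W_\nu^-$'' is therefore not what the decomposition actually gives, and the bijection of Theorem~\ref{TheoBijCorr} cannot be invoked with third coordinate $\lambda_1$ unless you already know $\Lambda_\nu^{\gg}(x) = \{\lambda_1\}$, which is what you are trying to prove.

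The paper's route sidesteps this by not using the Harish-Chandra orbit at all. The surjectivity proof of Theorem~\ref{TheoBijCorr} (via the fusion rules of Theorem~\ref{TheoFusion}) shows that whenever $\pi_{\lambda}\preccurlyeq \pi_{\lambda'}*\pi_{\reg}$ with $\lambda'\in\Lambda_\nu^{\gg}(x)$, one obtains $\lambda = f(w\cdot_{\epsilon,q}\lambda')$ with $w\in W_\nu^-$ \emph{directly} --- the low-rank disjointness results (Corollaries~\ref{CorRank1Comp}, \ref{CorTypeA1A1}, \ref{CorTypeA2}) already built into Theorem~\ref{TheoFusion} are exactly what exclude the $W_\nu^+$-shifts. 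Since by coamenability (as in the proof of Lemma~\ref{LemErgoIsFull}) one has $\pi_{\lambda_2}\preccurlyeq \pi_{\lambda_1}*\pi_{\reg}$ for \emph{any} chosen $\lambda_1\in\Lambda_\nu^{\gg}(x)$, this yields $\lambda_2 = f(w\cdot_{\epsilon,q}\lambda_1)$ with $w\in W_\nu^-$, and then Theorem~\ref{TheoSec}(2) forces $w = e$, $f = 1$. The Harish-Chandra picture only sees the coarser $W_\nu$-orbit and cannot distinguish the $W_\nu^+$-part; the representation-theoretic fusion rules do.
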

We will in the following write 
\[
\Lambda_{\nu}^{\gg}(x) = \{\lambda_x\},
\]
and call $\lambda_x$ the \emph{positive highest weight associated to} $x$. 

The above results on $\Lambda_{\nu}(x)$ were obtained using the coaction by $\mcO_q(U)$. For the next results, we will obtain information about the total set $\Lambda_{\nu}$ using the coaction $\widetilde{\gamma}$ by $U_q(\mfu)$ on $\mcO_q(Z_{\nu})$ defined in \eqref{EqDefGammaTilde}. 

\begin{Lem}\label{LemGoFurther}
If $\lambda \in \Lambda_{\nu}^{\gg}$, then also $\lambda q^{-(\varpi + \tau(\varpi))} \in \Lambda_{\nu}^{\gg}$ for all $\varpi \in P^+$.
\end{Lem}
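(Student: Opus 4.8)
The statement asserts that the positive cone $\Lambda_\nu^{\gg}$ of admissible highest weights is stable under multiplication by the functionals $q^{-(\varpi+\tau(\varpi))}$ for $\varpi\in P^+$. The natural mechanism is to use the coaction $\widetilde\gamma\colon \mcO_q(Z_\nu)\to U_q(\mfu)\otimes\mcO_q(Z_\nu)$ from \eqref{EqDefGammaTilde}, which on $a_\omega$ acts by $a_\omega\mapsto K_{-\omega-\tau(\omega)}\otimes a_\omega$. Composing a highest weight $*$-representation $\pi_\lambda$ with $\widetilde\gamma$ and then with an admissible $*$-representation of $U_q(\mfu)$ at highest weight $\varpi$ — concretely, evaluating the first leg at a highest weight vector $\xi_\varpi$ of $V_\varpi$, for which $K_{-\omega-\tau(\omega)}$ acts by the scalar $q^{-(\omega+\tau(\omega),\varpi)}$ — should produce (a subrepresentation of) a $*$-representation whose restriction to $\msA$ has weights shifted by the character $\omega\mapsto q^{-(\omega+\tau(\omega),\varpi)} = q^{-(\omega,\varpi+\tau(\varpi))}$, i.e. by the twisting datum $q^{-(\varpi+\tau(\varpi))}$ (note $\varpi+\tau(\varpi)\in P^\tau$, so this is a genuine element of $\mbH_\tau^{>}$, in fact of $\mbH_\tau^{\gg}$ since $q<1$). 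The point is that this recovers again a highest weight $*$-representation, now at weight $\lambda q^{-(\varpi+\tau(\varpi))}$, and that admissibility is preserved.

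First I would make precise that $\widetilde\gamma$ is a coaction of the full spectral $*$-algebra $U_q(\mfu)$ (equivalently $\mcO_q(U)$, via $U_q(\mfu)\subseteq \mcO_q(U)'$), so that if $\pi$ is any $*$-representation of $\mcO_q(Z_\nu)$ and $\sigma$ a $*$-representation of $U_q(\mfu)$, the composite $(\sigma\otimes\pi)\widetilde\gamma$ is again a bounded $*$-representation of $\mcO_q(Z_\nu)$; boundedness is automatic because $\mcO_q(Z_\nu)$, being a module $*$-algebra with the quantum-homogeneous-type estimates of Section~\ref{SecAct}, has $\pi$-independent norm bounds on its generators. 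Then I would take $\sigma=\pi_\varpi$ the irreducible admissible $U_q(\mfu)$-representation at highest weight $\varpi$, with highest weight unit vector $\xi_\varpi$, and consider the subspace $\xi_\varpi\otimes\Hsp_\lambda$. Since $\widetilde\gamma(a_\omega) = K_{-\omega-\tau(\omega)}\otimes a_\omega$, this subspace is a joint eigenspace-compatible piece: on $\xi_\varpi\otimes\xi_\lambda$ the operator $\pi_\varpi(K_{-\omega-\tau(\omega)})\otimes\pi_\lambda(a_\omega)$ acts by $q^{-(\omega+\tau(\omega),\varpi)}\,a_\omega(\lambda) = a_\omega\big(\lambda q^{-(\varpi+\tau(\varpi))}\big)$. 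For the $x_r$-action, $\widetilde\gamma(x_r) = E_r\otimes 1 + K_r\otimes x_r$ kills $\xi_\varpi\otimes\xi_\lambda$ because $E_r\xi_\varpi=0$ (highest weight) and $x_r\xi_\lambda=0$ (highest weight of $\pi_\lambda$). So $\xi_\varpi\otimes\xi_\lambda$ is a highest weight vector at weight $\lambda q^{-(\varpi+\tau(\varpi))}$; taking the cyclic $\mcO_q(Z_\nu)$-submodule it generates and invoking Lemma~\ref{LemIsoHW} identifies it with $V_{\lambda q^{-(\varpi+\tau(\varpi))}}$ — in particular the invariant hermitian form is positive definite, so $\lambda q^{-(\varpi+\tau(\varpi))}\in\widetilde\Lambda_\nu$.

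Next, admissibility: since the central character is computed through $\mcO_q(Z_\nu\dbslash U)=\msZ(\mcO_q(Z_\nu))^{\rho_\nu}$ and tensoring with an $\mcO_q(U)$- (equivalently $U_q(\mfu)$-) representation does not change it — exactly as in the proof that $x_\lambda$ is unchanged under convolution, using Lemma~\ref{LemInvPartZ} — the new highest weight representation $V_{\lambda q^{-(\varpi+\tau(\varpi))}}$ has the same central character $x=x_\lambda$ as $V_\lambda$, which lies in $G_{\nu,q}\backslash G_{\R,q}/U_q$ by assumption. Hence $\lambda q^{-(\varpi+\tau(\varpi))}\in\Lambda_\nu$. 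Finally, $\lambda\in\mbH_\tau^{\gg}$ means $\lambda_r>0$ for all $r\in I$ in the appropriate sense; multiplying by $q^{-(\varpi+\tau(\varpi))}$, whose value on each $\varpi_r$ is the strictly positive real $q^{-(\varpi_r+\tau(\varpi_r),\varpi)}>0$ (note $\varpi_r+\tau(\varpi_r)\in P^\tau$ and pairing a $\tau$-invariant weight against $\varpi$ is well-defined), stays in $\mbH_\tau^{\gg}$; so $\lambda q^{-(\varpi+\tau(\varpi))}\in\Lambda_\nu^{\gg}$, as claimed.

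**The main obstacle.** The delicate point is the identification of the cyclic submodule $\mcO_q(Z_\nu)(\xi_\varpi\otimes\xi_\lambda)$ inside $\Hsp_\varpi\otimes\Hsp_\lambda$ with the abstract highest weight $*$-module $V_{\lambda q^{-(\varpi+\tau(\varpi))}}$: one must check that $\xi_\varpi\otimes\xi_\lambda$ is genuinely a weight vector for all of $\msA$ (not just for the $a_\varpi$, $\varpi\in P^+$) — this requires passing to $\mcO_q(Z_\nu^{\reg})$ and using that $\pi_\lambda$ is of regular type, so that $a_\rho$, hence all $a_\omega$, $\omega\in P$, act; and that the resulting vector is still annihilated by $\msN^+$ and is cyclic for the completion. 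Here one should be slightly careful that the shift $q^{-(\varpi+\tau(\varpi))}$ is applied consistently: $\widetilde\gamma(a_\omega)=K_{-\omega-\tau(\omega)}\otimes a_\omega$ only literally for $\omega\in P$, and the extension to $\mcO_q(Z_\nu^{\reg})$ via $\widetilde\gamma(a_\rho^{-1})$ behaves correctly since $\rho$ is $\tau$-invariant. Granting this, Lemma~\ref{LemIsoHW} (uniqueness of highest weight $*$-representations) and the regular-type characterisation of irreducibles finish the argument; the rest is the routine bookkeeping of boundedness and of central characters sketched above.
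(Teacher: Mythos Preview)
Your construction of the highest weight vector $\xi_\varpi\otimes\xi_\lambda$ in $(\pi_\varpi\otimes\pi_\lambda)\circ\widetilde\gamma$, and the verification that it sits at weight $\lambda q^{-(\varpi+\tau(\varpi))}$ and is annihilated by the $x_r$, is exactly the paper's argument (stated there in one line). So the core of your proposal matches the paper and is correct: this shows $\lambda q^{-(\varpi+\tau(\varpi))}\in\widetilde\Lambda_\nu\cap\mbH_\tau^{\gg}$.

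Your admissibility paragraph, however, contains a genuine error. You claim the central character is unchanged because ``tensoring with an $\mcO_q(U)$- (equivalently $U_q(\mfu)$-) representation does not change it,'' invoking Lemma~\ref{LemInvPartZ}. But these are \emph{not} equivalent operations: Lemma~\ref{LemInvPartZ} concerns the coaction $\rho_\nu$ by $\mcO_q(U)$, for which the coinvariants are precisely $\mcO_q(Z_\nu\dbslash U)$, so convolving via $\rho_\nu$ visibly fixes the central character. The coaction $\widetilde\gamma$ by $U_q(\mfu)$ is a different structure, and $\widetilde\gamma(z)\neq 1\otimes z$ for $z\in\mcO_q(Z_\nu\dbslash U)$ in general (already $\widetilde\gamma(a_\varpi)=K_{-\varpi-\tau(\varpi)}\otimes a_\varpi$ shows the first leg is nontrivial on the generators of the center). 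In fact the central character \emph{must} change: by the corollary following Theorem~\ref{TheoBijCorr}, $\Lambda_\nu^{\gg}(x)$ is a singleton for each $x$, so distinct strongly positive admissible highest weights have distinct central characters.

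The paper handles this by separating the two steps: the present lemma's proof only produces the highest weight vector, and the admissibility is supplied in the very next proposition via the Yetter--Drinfeld compatibility \eqref{EqYDStruct}, which yields
\[
(\pi_\varpi\otimes\id\otimes\id)(\id\otimes\rho_\nu)\widetilde\gamma(f)=U_{\varpi,13}^*\bigl((\pi_\varpi\otimes\id\otimes\id)(\widetilde\gamma\otimes\id)\rho_\nu(f)\bigr)U_{\varpi,13},
\]
so that $(\pi_\varpi\otimes\pi_\lambda\otimes\pi_{\reg})(\id\otimes\rho_\nu)\widetilde\gamma$ is unitarily equivalent to a representation manifestly in $\msP_{\X\U}$. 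That is the ``one extra argument'' the paper refers to; it proves admissibility of the tensored representation (hence of its highest weight summand), not preservation of the central character.
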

\begin{proof}
Choosing $\pi_{\lambda}$ with $\lambda\in \Lambda_{\nu}^{\gg}$, we see that $(\pi_{\varpi}\otimes \pi_{\lambda})\circ \widetilde{\gamma}$ contains the highest weight vector $\xi_{\varpi}\otimes \xi_{\lambda}$ at weight $\lambda q^{-(\varpi + \tau(\varpi))}$.
\end{proof} 

Also $\Lambda_{\nu}$ is stable under the above operation, but we need one extra argument. 

\begin{Prop}
If $\lambda \in \Lambda_{\nu}$, then also $\lambda q^{-(\varpi + \tau(\varpi))} \in \Lambda_{\nu}$ for all $\varpi \in P^+$.
\end{Prop}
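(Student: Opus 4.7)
Set $\mu := \lambda q^{-(\varpi+\tau(\varpi))}$. I would adapt the argument of Lemma~\ref{LemGoFurther} to first establish $\mu \in \widetilde{\Lambda}_\nu$, then upgrade to $\mu \in \Lambda_\nu$ using the Yetter--Drinfeld compatibility between $\widetilde{\gamma}$ and $\rho_\nu$.

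For the first step, form the bounded $*$-representation $\pi := (\pi_\varpi \otimes \pi_\lambda)\circ \widetilde{\gamma}$ of $\mcO_q(Z_\nu)$ on $V_\varpi \otimes \Hsp_\lambda$. Using the formulas $\widetilde{\gamma}(a_\omega) = K_{-\omega-\tau(\omega)}\otimes a_\omega$ and $\widetilde{\gamma}(x_r) = E_r\otimes 1+K_r\otimes x_r$ together with $E_r\xi_\varpi=0$ and $x_r\xi_\lambda =0$, one checks immediately that $\xi_\varpi \otimes \xi_\lambda$ is annihilated by each $\pi(x_r)$ and is a joint $\pi(a_\omega)$-eigenvector with eigenvalue $q^{-(\varpi,\omega+\tau(\omega))}\lambda_P(\omega)=\mu_P(\omega)$. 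Lemma~\ref{LemIsoHW} then identifies $\Hsp_\mu$ as the closure of the cyclic $\mcO_q(Z_\nu)$-submodule generated by this vector inside $V_\varpi\otimes\Hsp_\lambda$.

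For admissibility, since $\Hsp_\mu$ embeds in $\pi$ it suffices to show that $\pi$ itself lies in $\msP_{Z_\nu^+U}$. I would use the Yetter--Drinfeld compatibility \eqref{EqYDStruct} to construct a unitary braiding $c: V_\varpi\otimes\Hsp_\lambda \to \Hsp_\lambda\otimes V_\varpi$ that conjugates $\pi$ onto the $\rho_\nu$-tensor product $\pi_\lambda \ast V_\varpi$, where $V_\varpi$ is regarded as an $\mcO_q(U)$-corepresentation via $U_\varpi$. Granted this identification, admissibility follows from a chain of weak containments: by Lemma~\ref{LemTransOrb} and Lemma~\ref{LemExPosReg}, admissibility of $\lambda$ yields $\pi_\lambda \preccurlyeq \pi_{\lambda_x}\ast \pi_{\reg}$ for some $\lambda_x \in \Lambda_\nu^\gg$, and hence
\[
\pi \cong \pi_\lambda \ast V_\varpi \preccurlyeq \pi_{\lambda_x}\ast \pi_{\reg}\ast V_\varpi \preccurlyeq \pi_{\lambda_x}\ast\pi_{\reg},
\]
the last inclusion because Peter--Weyl for $\mcO_q(U)$ realizes $\pi_{\reg}\ast V_\varpi$ as a finite-multiplicity amplification of $\pi_{\reg}$.

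The main obstacle is the construction and verification of the braiding $c$: one must write it explicitly, likely in terms of the universal $R$-matrix of $U_q(\mfg)$ and the corepresentation $U_\varpi$, and then verify the intertwining $c\,\pi(f)\,c^{-1} = (\pi_\lambda \ast V_\varpi)(f)$ on the generators $a_\omega$ and $x_r$, using the explicit form of $\widetilde{\gamma}$ on one side and the formula \eqref{EqGlobalCoactRho} for $\rho_\nu$ on the other. Such an identification is predicted by the general braided Yetter--Drinfeld structure on $\mcO_q(Z_\nu)$-modules, but its explicit realization requires careful bookkeeping in the present setting due to the twists $\tau$ and $\epsilon$.
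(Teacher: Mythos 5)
Your first step is fine and matches Lemma~\ref{LemGoFurther} (applied to $\lambda$ after the reduction below), but the admissibility argument has a genuine gap. The convolution $\pi * \pi'$ in this paper is defined as $(\pi \otimes \pi')\rho_\nu$ with $\pi'$ a bounded $*$\emph{-representation} of $\mcO_q(U)$ on a Hilbert space; and for $\varpi \neq 0$ the finite-dimensional space $V_\varpi$ does \emph{not} carry a $*$-representation of $\mcO_q(U)$ (the only finite-dimensional irreducible $*$-representations of the type~I C$^*$-algebra $C_q(U)$ are the torus characters $\chi_\theta$). So the object you write as ``$\pi_\lambda * V_\varpi$'' does not typecheck: $V_\varpi$ naturally sits on the \emph{other} side of the duality, as a $U_q(\mfu)$-module / $\mcO_q(U)$-corepresentation, which is exactly why it is slotted into $\widetilde{\gamma}$ and not into $\rho_\nu$. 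There is consequently no braiding $c: V_\varpi \otimes \Hsp_\lambda \to \Hsp_\lambda \otimes V_\varpi$ playing the role you want, and the chain $\pi \cong \pi_\lambda * V_\varpi \preccurlyeq \pi_{\lambda_x}*\pi_{\reg}*V_\varpi \preccurlyeq \pi_{\lambda_x}*\pi_{\reg}$ does not make sense in the given framework. This is not just ``careful bookkeeping with the twists'': even untwisted, the two coactions $\widetilde{\gamma}$ (valued in $U_q(\mfu)\otimes\mcO_q(Z_\nu)$) and $\rho_\nu$ (valued in $\mcO_q(Z_\nu)\otimes\mcO_q(U)$) live on opposite sides, and one cannot push $V_\varpi$ from one to the other.

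The paper's argument uses the Yetter--Drinfeld compatibility differently: it never moves $V_\varpi$ across, but instead swaps the \emph{order} of $\widetilde{\gamma}$ and $\rho_\nu$. Concretely, one first reduces (via Lemmas~\ref{LemTransOrb} and \ref{LemExPosReg}, since $\pi_\lambda \preccurlyeq \pi_{\lambda_x}*\pi_{\reg}$ with $\lambda_x\in\Lambda_\nu^{\gg}$) to showing that $(\pi_\varpi\otimes\pi_{\lambda_x}\otimes\pi_{\reg})(\id\otimes\rho_\nu)\widetilde{\gamma}$ is admissible. The YD identity \eqref{EqYDStruct} then yields
\[
(\pi_\varpi\otimes\id\otimes\id)(\id\otimes\rho_\nu)\widetilde{\gamma}(f) = U_{\varpi,13}^{*}\bigl((\pi_\varpi\otimes\id\otimes\id)(\widetilde{\gamma}\otimes\id)\rho_\nu(f)\bigr)U_{\varpi,13},
\]
a \emph{unitary conjugation on} $V_\varpi\otimes\Hsp_{\lambda_x}\otimes L^2_q(U)$ (by the corepresentation unitary, not a braiding exchanging factors), identifying $(\pi_\varpi\otimes(\pi_{\lambda_x}*\pi_{\reg}))\widetilde{\gamma}$ with $\bigl((\pi_\varpi\otimes\pi_{\lambda_x})\widetilde{\gamma}\bigr)*\pi_{\reg}$. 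The latter is admissible because $(\pi_\varpi\otimes\pi_{\lambda_x})\widetilde{\gamma}$ lies in $\langle\msP^+\rangle$: since $\widetilde{\gamma}(a_\omega)=K_{-\omega-\tau(\omega)}\otimes a_\omega$ and both $\pi_\varpi(K_{-\omega-\tau(\omega)})$ and $\pi_{\lambda_x}(a_\omega)$ are positive with $a_\rho$ injective, this representation decomposes as a sum of positive irreducible representations of regular type. You should replace the braiding step by this unitary conjugation; the Peter--Weyl step of your argument then becomes unnecessary.
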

\begin{proof}
We have to show that $(\pi_{\varpi}\otimes \pi_{\lambda})\circ \widetilde{\gamma}$ is again admissible as an $\mcO_q(G_{\nu}\backslash G_{\R})$-representation. By definition of admissibility, it is actually sufficient to show that 
\[
(\pi_{\varpi}\otimes \pi_{\lambda} \otimes \pi_{\reg})\circ (\id\otimes \rho_{\nu})\widetilde{\gamma}
\]
is admissible for all $\lambda \in \Lambda_{\nu}^{\gg}$, where $\pi_{\reg}$ is the regular representation of $\mcO_q(U)$ on $L^2_q(U)$. However, by the Yetter-Drinfeld compatibility \eqref{EqYDStruct} we have
\[
(\pi_{\varpi} \otimes \id\otimes \id)(\id\otimes \rho_{\nu})\widetilde{\gamma}(f) = U_{\varpi,13}^*((\pi_{\varpi} \otimes \id\otimes \id)(\widetilde{\gamma}\otimes \id)\rho_{\nu}(f))U_{\varpi,13},
\]
from which the result is obvious.
\end{proof}

Combining this with Theorem \ref{TheoBijCorr}, we obtain the following useful corollary extending Lemma \ref{LemGoFurther}.

\begin{Cor}
If $\lambda \in \Lambda_{\nu}^{\gg}$, then also $\lambda q^{-w^{-1}(\varpi + \tau(\varpi))} \in \Lambda_{\nu}^{\gg}$ for all $\varpi \in P^+$ and $w\in W_{\nu}^-$.
\end{Cor}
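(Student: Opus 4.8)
The statement to prove is the final Corollary: if $\lambda \in \Lambda_{\nu}^{\gg}$, then $\lambda q^{-w^{-1}(\varpi + \tau(\varpi))} \in \Lambda_{\nu}^{\gg}$ for all $\varpi \in P^+$ and $w\in W_{\nu}^-$. The plan is to combine the just-proved Proposition (stability of $\Lambda_{\nu}$ under $\lambda \mapsto \lambda q^{-(\varpi+\tau(\varpi))}$) with the bijection of Theorem \ref{TheoBijCorr} and the $q$-deformed dot-action bookkeeping.

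First I would observe that by the Proposition, $\lambda q^{-(\varpi+\tau(\varpi))} \in \Lambda_{\nu}$ whenever $\lambda \in \Lambda_{\nu}$; and by Lemma \ref{LemGoFurther} this actually stays inside $\Lambda_{\nu}^{\gg}$ when $\lambda\in \Lambda_{\nu}^{\gg}$, since $q^{-(\varpi+\tau(\varpi))}$ has strictly positive entries. So the point is entirely about the $w^{-1}$-twist: one must show that applying $w^{-1}$ to the shift weight $\varpi+\tau(\varpi)$ does not leave $\Lambda_{\nu}^{\gg}$. Here the key is that $\mu := \varpi + \tau(\varpi) \in P^{\tau}$, and $w\in W_{\nu}\subseteq W^{\tau}$, so $w^{-1}\mu \in P^{\tau}$ as well; moreover $\mu - w^{-1}\mu \in \hat Q^+_c$ by (the argument of) Lemma \ref{LemDecompRho}, or more precisely $w^{-1}\mu \le \mu$ in the dominance order when $w\in W_\nu^-$. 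Then I would write $\lambda q^{-w^{-1}\mu} = \lambda q^{-\mu}\cdot q^{\mu - w^{-1}\mu}$ and argue that the correction factor $q^{\mu-w^{-1}\mu}$, being $q^{(\text{element of } \hat Q_c^+)}$, has a controlled sign — by Lemma \ref{LemPosWPlus} combined with the behaviour of $\epsilon_Q$ on $\hat\Delta_c$, $\epsilon_Q$ is positive on $\hat Q_c^+$, so multiplying by $q^{\mu-w^{-1}\mu}$ preserves membership in $\mbH_\tau^{\gg}$. Then $\lambda q^{-w^{-1}\mu}$ differs from the element $\lambda q^{-\mu}\in \Lambda_\nu^{\gg}$ only by a strictly-positive gauge-trivial rescaling, hence lies in $\Lambda_\nu^{\gg}$.

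More structurally — and this is probably the cleaner route — I would use Theorem \ref{TheoBijCorr} directly. Fix $\lambda\in\Lambda_\nu^{\gg}$ with central character $x=x_\lambda$, and write $w\in W_\nu^-$. Since $w\cdot_{\epsilon,q}\lambda \in \Lambda_\nu(x)$ by the theorem, and $w\cdot_{\epsilon,q}(\lambda q^{-\mu})$ is obtained from $w\cdot_{\epsilon,q}\lambda$ by multiplying by $q^{-(\mu - w^{-1}\mu)}$ up to a gauge (reading off Definition \ref{DefDotq}: $(w\cdot_{\epsilon,q}\lambda)_P(\mu) = \epsilon_Q(\mu - w^{-1}\mu) q^{(2\rho,\mu-w^{-1}\mu)}\lambda_P(w^{-1}\mu)$), the Proposition applied to $w\cdot_{\epsilon,q}\lambda\in\Lambda_\nu$ gives $(w\cdot_{\epsilon,q}\lambda)q^{-(\varpi'+\tau(\varpi'))}\in\Lambda_\nu$; choosing $\varpi'$ appropriately and feeding the result back through the inverse of the bijection in Theorem \ref{TheoBijCorr} — whose $W_\nu^-$-component is determined by the sign pattern and is unchanged by a positive rescaling — recovers that the $\Lambda_\nu^{\gg}$-component is $\lambda q^{-w^{-1}\mu}$ up to a positive factor, hence in $\Lambda_\nu^{\gg}$.

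The main obstacle is the interplay between the $w^{-1}$-twist and the sign/positivity constraints: one has to be sure that conjugating the shift vector by $w\in W_\nu^-$ does not introduce a sign change that would push the weight out of $\mbH_\tau^{\gg}$. This is exactly controlled by the fact that $\mu - w^{-1}\mu$ lies in the positive cone of $\nu$-compact roots $\hat Q_c^+$ (the content of Lemma \ref{LemDecompRho} and its proof, using $w\in W_\nu^-$ and $\mu\in P^\tau$), together with positivity of $\epsilon_Q$ there (Lemma \ref{LemPosWPlus}); I would make this the crux of the argument and keep the rest routine.
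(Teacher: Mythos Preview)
Your second route is the paper's intended argument and is essentially correct, but both your presentation and your first route contain a recurring error that you should fix.

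The first route fails. You claim that $\mu - w^{-1}\mu \in \hat Q_c^+$ for $w\in W_\nu^-$, appealing to Lemma~\ref{LemDecompRho}. That lemma concerns $w^{-1}\rho - (wv)^{-1}\rho$ for $v\in W_\nu^+$, which is a different statement; in fact for $w\in W_\nu^-\setminus\{e\}$ the element $\mu - w^{-1}\mu$ is typically \emph{not} supported on $\nu$-compact roots (already for $\mfu=\mfsu(2)$ with $\epsilon=-1$ and $w=s$, one gets $2\alpha$ while $\hat\Delta_c=\emptyset$). More importantly, even granting that $q^{\mu - w^{-1}\mu}\in\mbH_\tau^{\gg}$ --- which is automatic since any $q^{\chi}$ with $\chi\in P^\tau$ is strictly positive --- the conclusion does not follow: $\Lambda_\nu^{\gg}$ is \emph{not} stable under arbitrary positive rescalings. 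For instance, by Corollary~\ref{CorRank1Comp} with $\epsilon>0$ the set $\Lambda_\nu^{\gg}$ is the discrete set $q^{-\N}\epsilon^{1/2}$. So ``differs by a strictly positive factor from an element of $\Lambda_\nu^{\gg}$'' is worthless here; you repeat the same slip in your last paragraph.

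For the second route, you have the right pieces but the bookkeeping is off. The identity you want is the exact one
\[
(w\cdot_{\epsilon,q}\lambda)\,q^{-\mu} \;=\; w\cdot_{\epsilon,q}\bigl(\lambda\,q^{-w^{-1}\mu}\bigr),\qquad \mu=\varpi+\tau(\varpi),
\]
which follows directly from Definition~\ref{DefDotq} (there is no ``up to a gauge'' and no need to ``choose $\varpi'$ appropriately'': take $\varpi'=\varpi$). Then: $w\cdot_{\epsilon,q}\lambda\in\Lambda_\nu$ by Theorem~\ref{TheoBijCorr}, so $(w\cdot_{\epsilon,q}\lambda)q^{-\mu}\in\Lambda_\nu$ by the Proposition; since $\lambda q^{-w^{-1}\mu}\in\mbH_\tau^{\gg}$ (trivially, as $w^{-1}\mu\in P^\tau$), the injectivity of the map in Theorem~\ref{TheoBijCorr} --- equivalently Theorem~\ref{TheoSec}(2), reading off the sign pattern on $I^\tau$ --- forces the $\Lambda_\nu^{\gg}$-component of $(w\cdot_{\epsilon,q}\lambda)q^{-\mu}$ to equal $\lambda q^{-w^{-1}\mu}$ on the nose, not merely up to a positive factor. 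That is the proof.
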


To end this section, we will show that a certain positivity assumption is unperturbed by equivalence of twisting data. In the next section, we will in fact show that this positivity assumption is satisfied for all twisting data, see Proposition \ref{PropAlwaysPositive}. Let us write again $\lambda = q^{2\gamma}$ for $\gamma \in \mfa^{\tau}$ with $\lambda_r = q^{(2\gamma,\varpi_r)}$.

\begin{Lem}\label{LemInvPosDefDat}
Consider the following property for a reduced twisting datum $\nu$: 
\begin{equation}\label{EqPosnu}
\textrm{For all }\gamma \in \mfa^{\tau} \textrm{ with } \lambda = q^{2\gamma}\textrm{ in } \Lambda_{\nu}^{\gg}\textrm{ one has }(\rho-\gamma,\beta)>0,\quad \forall \beta \in \hat{\Delta}_c^+.\tag{*}
\end{equation}
Then Property \eqref{EqPosnu} is preserved under equivalence of twisting data.  
\end{Lem}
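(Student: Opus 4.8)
The plan is to reduce the statement, via Corollary~\ref{CorRefineBS} and the rescaling isomorphisms of Lemma~\ref{LemWeakEq}, to a comparison between the sets $\Lambda_\nu^{\gg}$ and between the compact root subsystems $\hat\Delta_c$ attached to $\nu$ and $\nu'$, and then to observe that a single element $w\in W_\nu^-$ controls both comparisons compatibly, with the $q^{2\rho}$-twist in $\cdot_{\epsilon,q}$ matching the $\rho$-shift in \eqref{EqPosnu}.

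First I would carry out the usual reductions. Since $\sim$ is an equivalence relation and \eqref{EqPosnu} concerns a single datum, it suffices to take $\nu,\nu'$ both reduced with $\nu\sim\nu'$ and $\nu$ satisfying \eqref{EqPosnu}, and to deduce \eqref{EqPosnu} for $\nu'$. By Corollary~\ref{CorRefineBS} write $\epsilon'=f(w\epsilon)$ with $w\in W_\nu^-$ and $f\in\mbH_\tau^{>}$. Exactly as in the proof of Theorem~\ref{TheoSec} one may assume $\Gamma$ is $\tau$-connected with $\supp(\epsilon)=I$; the direct-product and $A_{2n}$ cases are trivial (there $W_\nu^-=\{e\}$ and $\epsilon$ is forced positive, so $\nu'=\nu$), so we may take $\Gamma$ connected and not of type $A_{2n}$, hence $W_\nu=W^\tau\cong\hat W$. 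Using the folding rule together with the $\tau$-conjugate symmetry of $f$ and its positivity on $I^\tau$ (so that $f_Q$ is positive on $\tau$-fixed elements of $Q$), one checks $\hat\Delta_{l,c}'=\hat w(\hat\Delta_{l,c})$ for the long compact roots; since $\hat\Delta_s$ is $\hat W$-stable and $\nu$-independent (Lemma~\ref{LemSubsLongShort}), this gives $\hat\Delta_c'=\hat w(\hat\Delta_c)$ and $W_{\nu'}^+=\hat wW_\nu^+\hat w^{-1}$. Moreover, since $w\in W_\nu^-\subseteq W_{I^\tau}$, the proof of Theorem~\ref{TheoSec} (the identity \eqref{EqIntPosComp}, transported to $\hat\Delta$) shows that the inversion set of $w$ in $\hat\Delta$ consists of $\nu$-noncompact roots only and lies in $\Delta_{I^\tau}\subseteq\hat\Delta_l$; hence $w$ keeps every positive root of $\hat\Delta_c$ positive, and therefore $(\hat\Delta_c')^+=\hat w(\hat\Delta_c^{+})$, equivalently $\hat w^{-1}\bigl((\hat\Delta_c')^+\bigr)=\hat\Delta_c^{+}$.

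Next I would feed in Lemma~\ref{LemWeakEq}: pick $f'\in\mbH_\tau^\times$ with $\epsilon_r'=f'_P(\alpha_r)\epsilon_r$ satisfying \eqref{EqSpecCondIso} with respect to $(w,\epsilon)$, and — using the freedom in its construction, and that $\epsilon$ is reduced — normalized so that $f'_P(\omega)=\epsilon_Q(\omega-w^{-1}\omega)$ for $\omega\in P^\tau$. Transporting $\chi_{f'}$ through $\iota_\nu,\iota_{\nu'}$ gives a $U_q(\mfu)$-equivariant $*$-isomorphism $\mcO_q(Z_{\nu'}^{\reg})\to\mcO_q(Z_\nu^{\reg})$ sending $a_\omega\mapsto f'_P(\tau(\omega))a_\omega$, $x_r\mapsto x_r$, which restricts to a $*$-isomorphism $\mcO_q(Z_{\nu'})\to\mcO_q(Z_\nu)$ intertwining $\rho_{\nu'}$ with $\rho_\nu$ and the spectral structures defining $\mcO_q(G_{\nu'}\backslash G_{\R})$ and $\mcO_q(G_\nu\backslash G_{\R})$ (regularity of type and $a_\rho$ are only rescaled by the nonzero scalar $f'_P(\rho)$, and admissibility is preserved since the map is equivariant; cf.\ Lemma~\ref{LemTransOrb} and Proposition~\ref{PropCoamTyp1}). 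Hence it induces a central-character-compatible bijection $\Lambda_{\nu'}\to\Lambda_\nu$, $\lambda'\mapsto\mu$, with $\mu_P(\omega)=f'_P(\tau(\omega))^{-1}\lambda'_P(\omega)$. Combining this with Theorem~\ref{TheoBijCorr}, and pinning down the $W_\nu^-$-component by matching sign patterns at $I^\tau$ (Theorem~\ref{TheoSec}(2)), a direct computation shows that $\lambda'=q^{2\gamma'}\in\Lambda_{\nu'}^{\gg}$ corresponds to $\mu=f_0\,(w\cdot_{\epsilon,q}\kappa_0)$ with $f_0$ a gauge and $\kappa_0=q^{2\eta}\in\Lambda_\nu^{\gg}$, where — because of the $q^{2\rho}$-twist built into $\cdot_{\epsilon,q}$ — the $\rho$-shifted weights transform linearly: $\rho-\gamma'=w(\rho-\eta)$. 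Then for $\beta\in(\hat\Delta_c')^+$ we have $\hat w^{-1}\beta\in\hat\Delta_c^{+}$ by the first paragraph, so
\[
(\rho-\gamma',\beta)=\bigl(w(\rho-\eta),\beta\bigr)=\bigl(\rho-\eta,\hat w^{-1}\beta\bigr)>0
\]
by \eqref{EqPosnu} for $\nu$ applied to $\kappa_0$, which is exactly \eqref{EqPosnu} for $\nu'$.

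The main obstacle is the third step: extracting, through the rescaling isomorphism, that $\Lambda_{\nu'}^{\gg}$ corresponds precisely to the gauge orbit of $w\cdot_{\epsilon,q}(\Lambda_\nu^{\gg})$ for the \emph{same} $w$ that governs $\hat\Delta_c'=\hat w(\hat\Delta_c)$. This requires the compatible normalization of $f'$ supplied by Lemma~\ref{LemWeakEq}, together with the uniqueness statements in Theorem~\ref{TheoSec} and Theorem~\ref{TheoBijCorr}, and is the point where the algebraic side (highest weights, the isomorphism $\chi_{f'}$, the dot-action) and the root-theoretic side (the behaviour of $W_\nu^-$ on $\hat\Delta_c$, in particular on the always-$\nu$-compact short roots $\hat\Delta_s$) must be matched up.
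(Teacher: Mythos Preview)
Your proposal is correct and follows essentially the same approach as the paper's proof: reduce via Corollary~\ref{CorRefineBS} to $\epsilon'=f(w\epsilon)$ with $w\in W_\nu^-$, use Lemma~\ref{LemWeakEq} and Theorem~\ref{TheoBijCorr} to identify $\Lambda_{\nu'}^{\gg}$ with $|w\cdot_{\epsilon,q}\Lambda_\nu^{\gg}|$ so that $\rho-\gamma'=w(\rho-\gamma)$, and use that $\hat\Delta_{\nu',c}^+=w\hat\Delta_{\nu,c}^+$ (from the inversion-set argument \eqref{EqIntPosComp}) to conclude. The paper's proof is simply the condensed version of yours: it skips your preliminary reductions to the $\tau$-connected case, writes the bijection $\Lambda_\nu^{\gg}\to\Lambda_{\nu'}^{\gg}$, $\lambda\mapsto|w\cdot_{\epsilon,q}\lambda|=q^{2(w(\gamma-\rho)+\rho)}$ in one line, and cites \eqref{EqIntPosComp} directly for the root-system identity rather than spelling out the long/short decomposition and the normalization of $f'$ that you track explicitly.
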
  
\begin{proof}
In the proof, let us write $\hat{\Delta}_c = \hat{\Delta}_{\nu,c}$ for emphasis on the $\nu$-dependence. 

Assume that $\nu,\nu'$ are equivalent, and assume that Property \eqref{EqPosnu} holds for $\nu$. By Corollary \ref{CorRefineBS}, we have $\epsilon' = f w\epsilon$ for $w\in W_{\nu}^-$ and $f$ a gauge. By Lemma \ref{LemWeakEq} and Theorem \ref{TheoBijCorr} we then have for $\gamma \in \mfa^{\tau}$ and $\lambda = q^{2\gamma} \in \Lambda_{\nu}^{\gg}$ as above that 
\[
|w \cdot_{\epsilon,q} \lambda| = q^{2(w(\gamma-\rho)+\rho)} \in \Lambda_{\nu'}^{\gg},
\]
establishing a bijection 
\[
\Lambda_{\nu}^{\gg} \rightarrow \Lambda_{\nu'}^{\gg},\quad \lambda \mapsto |w \cdot_{\epsilon,q} \lambda|.
\]
We are now to verify that $(w\rho - w\gamma,\beta) >0$ for all $\beta\in \hat{\Delta}_{\nu',c}^+$. But this is clear since $\hat{\Delta}_{\nu',c}^+ =w \hat{\Delta}_{\nu,c}^+$, cf. \eqref{EqIntPosComp}.
\end{proof}

\section{The von Neumann algebra $L^{\infty}_q(O_x^{\nu})$ and its invariant integral $\int_{O_{x,q}^{\nu}}$}

Fix again an ungauged twisting datum $\nu \in \mbH_{\tau}^{\ungauge}$, and let $x\in G_{\nu,q}\backslash G_{\R,q}/U_q$. Recall from the end of Section \ref{SubSecSpecOrConc} that $\mcO_q(G_{\nu}\backslash G_{\R}) \rightarrow \mcO_q(O_x^{\nu})$ denotes the associated fiber at $x$, with von Neumann algebraic completion $L^{\infty}_q(O_x^{\nu})$ and associated invariant integral $\int_{O_{x,q}^{\nu}}$. Our goal is to present concrete models for the couple 
\[
\left(L^{\infty}_q(O_x^{\nu}),\int_{O_{x,q}^{\nu}}\right).
\]

The structure of $L^{\infty}_q(O_x^{\nu})$ is easily obtained from the results of the previous section.

\begin{Theorem}\label{TheoDecompvN}
Let $\lambda_x \in \Lambda_{\nu}^{\gg}(x)$ be the positive highest weight associated to $x$. Then 
\begin{equation}\label{EqIsovN}
L^{\infty}_q(O_x^{\nu}) \cong \left( \oplus_{w\in W_{\nu}^-} B(\Hsp_{w\cdot_{\epsilon,q} \lambda_x})\right) \overline{\otimes} L^{\infty}(\T^{s}), 
\end{equation}
where $s = \dim \mfa^{-\tau}$.
\end{Theorem}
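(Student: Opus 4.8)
The plan is to build the isomorphism \eqref{EqIsovN} directly from the representation-theoretic results already established, in particular Theorem \ref{TheoBijCorr}, Theorem \ref{TheoFusion} and Corollary \ref{CorInt}. First I would choose a concrete faithful admissible $*$-representation of $\mcO_q(O_x^{\nu})$ on which the von Neumann completion is modelled. The natural candidate is the regular representation $\pi_{x,\reg}$ on $L^2_q(O_x^{\nu})$; by Lemma \ref{LemErgoIsFull} (applicable since $U_q$ is coamenable by Proposition \ref{PropCoamTyp1}) the fiber $\mcO_q(O_x^{\nu})$ is a full spectral $*$-algebra, so every irreducible $*$-representation is weakly contained in $\pi_{x,\reg}$, and $L^{\infty}_q(O_x^{\nu}) = \pi_{x,\reg}(\mcO_q(O_x^{\nu}))''$. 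Using Lemma \ref{LemTransOrb}.(2) and the decomposition of $\pi_{\reg}$ for $\mcO_q(U)$ from \cite{LS91} (as recalled in the proof of Theorem \ref{TheoBijCorr}), I would write $\pi_{x,\reg}$ as a direct integral/direct sum built from $\pi_{\lambda_x}*\mbs_{r_1}*\cdots*\mbs_{r_N}*\int_T \chi_\theta\,\rd\theta$, and then apply Theorem \ref{TheoFusion} repeatedly to identify the irreducible constituents.

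The key step is the bookkeeping of constituents. By Theorem \ref{TheoFusion}, each application of $\mbs_r$ for $r\in I^\tau$ either leaves the highest weight fixed or splits into the two weights $\lambda$ and $s_r\cdot_{\epsilon,q}\lambda$ (with the split happening exactly when a sign change is forced), while each application of $\mbs_r$ for $r\in I^*\cup\tau(I^*)$ produces a direct integral over $\T$ of the gauge-translates $\lambda\,\eta_r(\theta)$. Tracking these, every irreducible constituent of $\pi_{x,\reg}$ is (an amplification of) some $\pi_{f(w\cdot_{\epsilon,q}\lambda_x)}$ with $f\in\mbH^{\gauge}_\tau$ and $w\in W_\nu^-$, and conversely all of these appear. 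Theorem \ref{TheoBijCorr} tells us this parametrisation is a bijection onto $\Lambda_\nu(x)$, and that the gauge parameter ranges over $\mbH^{\gauge}_\tau\cong\T^{|I^*|}$ while $w$ ranges over the finite set $W_\nu^-\cong W_\nu/W_\nu^+$. Since each $\pi_\lambda$ for $\lambda$ of regular type is an irreducible $*$-representation on a separable infinite-dimensional Hilbert space $\Hsp_\lambda$ (it is built from the Verma-type module $\msN^-\xi_\lambda$, cf. Lemma \ref{LemIsoHW}), the von Neumann algebra generated is $B(\Hsp_\lambda)$, and within a fixed $w$ the gauge family $\lambda\mapsto f(w\cdot_{\epsilon,q}\lambda_x)$ is a measurable field over $\T^{|I^*|}$ on which $\mcO_q(O_x^{\nu})$ acts, via the $\eta_r(\theta)$-twists, by a field of mutually unitarily equivalent copies of $\pi_{w\cdot_{\epsilon,q}\lambda_x}$ up to a diagonal unitary (the torus action from Lemma \ref{LemCloseGauge}); hence the generated von Neumann algebra is $B(\Hsp_{w\cdot_{\epsilon,q}\lambda_x})\,\overline{\otimes}\,L^\infty(\T^{|I^*|})$. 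Summing over the finitely many $w\in W_\nu^-$ and noting $|I^*| = \dim\mfa^{-\tau} = s$ gives \eqref{EqIsovN}.

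Concretely I would realize the right-hand side as follows: put $\mcK = \oplus_{w\in W_\nu^-}\Hsp_{w\cdot_{\epsilon,q}\lambda_x}\otimes L^2(\T^s)$ and define a $*$-representation $\Pi$ of $\mcO_q(O_x^{\nu})$ on $\mcK$ by $\Pi = \oplus_{w}\int_{\T^s}^{\oplus}\pi_{w\cdot_{\epsilon,q}\lambda_x}\cdot(\text{gauge twist by }\theta)\,\rd\theta$, using the $\eta_r$-parametrisation of Theorem \ref{TheoFusion} to make the $\theta$-dependence explicit. The content is then: (i) $\Pi$ is admissible, which is immediate since each fiber $\pi_{w\cdot_{\epsilon,q}\lambda_x\cdot\eta(\theta)}$ lies in $\msP^+_{\X\U}$ by Theorem \ref{TheoBijCorr}; (ii) $\Pi$ is faithful on $\mcO^*_q(O_x^{\nu})$, equivalently $\Pi$ weakly contains every irreducible, which follows from the fusion-rule computation above together with Lemma \ref{LemTransOrb}; and (iii) $\Pi(\mcO_q(O_x^{\nu}))'' = \oplus_w B(\Hsp_{w\cdot_{\epsilon,q}\lambda_x})\,\overline{\otimes}\,L^\infty(\T^s)$. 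For (iii) the inclusion $\subseteq$ is clear; for $\supseteq$ one checks that the commutant is diagonal — the off-diagonal blocks between different $w$'s are killed because the central characters (equivalently the sign patterns of weights, by Theorem \ref{TheoSec}) differ, and within a block the commutant reduces to $L^\infty(\T^s)$ acting as decomposable diagonal operators because $\pi_{w\cdot_{\epsilon,q}\lambda_x}$ is irreducible on each fiber and the $\T^s$-action on weights is free, so the field has no nontrivial decomposable intertwiners beyond scalars fiberwise.

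The main obstacle I anticipate is step (iii), precisely the measurable-field argument showing the commutant of the gauge-twisted field $\int^\oplus_{\T^s}\pi_{w\cdot_{\epsilon,q}\lambda_x\cdot\eta(\theta)}\,\rd\theta$ is exactly $1\otimes L^\infty(\T^s)$: one must verify that distinct $\theta$ give non-(unitarily-)equivalent $*$-representations — or more carefully, that although the representations are unitarily equivalent after a diagonal rescaling, the equivalence cannot be chosen measurably in a way that commutes with $\Pi$ — so that no measurable intertwiner between fibers over a positive-measure set of $\theta$'s exists except the scalar one. This is where the freeness of the torus action on the highest weights (via the $\eta_r(\theta)$) does the real work, and where a little care with direct-integral theory (von Neumann's disintegration of the commutant of a decomposable algebra) is needed; everything else is a direct assembly of Theorems \ref{TheoBijCorr}, \ref{TheoFusion} and \ref{TheoSec}.
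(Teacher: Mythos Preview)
Your approach is essentially the same as the paper's: build the representation $\Pi$ on $\bigoplus_{w\in W_\nu^-}\int_{\T^s}^{\oplus}\Hsp_{f_\theta(w\cdot_{\epsilon,q}\lambda_x)}\,\rd\theta$, check it is admissible, and use the fusion rules of Theorem~\ref{TheoFusion} together with the decomposition of $\pi_{\reg}$ into the $\mbs_r$ and $\chi_\theta$ to conclude that the $\sigma$-weak closure of $\mcO_q(O_x^{\nu})$ in this representation is already all of $L^{\infty}_q(O_x^{\nu})$. The paper organizes this slightly differently --- it verifies directly that $\Hsp\otimes\Hsp_x$ is stable under $*\mbs_r$ and $*\chi_\theta$, which immediately identifies the closure in $\Hsp_x$ with the closure in $\pi_{\lambda_x}*\pi_{\reg}$ --- but the content is the same.

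The one place you overcomplicate matters is your ``main obstacle'' in step~(iii). The representations $\pi_{f_\theta(w\cdot_{\epsilon,q}\lambda_x)}$ for distinct pairs $(w,\theta)$ are \emph{genuinely pairwise inequivalent} as $*$-representations of $\mcO_q(O_x^{\nu})$: by Theorem~\ref{TheoBijCorr} the map $(f,w,\lambda_x)\mapsto f(w\cdot_{\epsilon,q}\lambda_x)$ is a bijection onto $\Lambda_\nu(x)$, so distinct parameters give distinct highest weights, and by Lemma~\ref{LemIsoHW} distinct highest weights give inequivalent representations. There is no ``unitarily equivalent after diagonal rescaling'' phenomenon to worry about --- the operators $\pi(a_{\varpi_r})$ for $r\in I^*$ have different spectra at different $\theta$. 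Once you have a direct integral of mutually inequivalent irreducibles over a standard measure space, the generated von Neumann algebra is $\int^{\oplus}B(\Hsp_\theta)\,\rd\theta$ by standard disintegration theory, and the paper simply invokes this in one line.
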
 
\begin{proof}
For $\theta \in \T^s \cong \mfa^{-\tau}/(Q^{\vee})^{-\tau}$, consider $f_{\theta} \in \Hsp_{\tau}^{\gauge}$ given by $(f_{\theta})_P(\omega) = e^{2\pi i (\omega,\theta)}$. From Theorem \ref{TheoBijCorr}, it follows that all $\pi_{f_{\theta}(w\cdot_{\epsilon,q}\lambda_x)}$ are mutually inequivalent irreducible $*$-representations of $\mcO_q(G_{\nu}\backslash G_{\R})$. Hence also the direct integral
\[
\Hsp_x = \oplus_{w\in W_{\nu}^-} \int_{\T^s} \Hsp_{f_{\theta}(w\cdot_{\epsilon,q}\lambda_x)} \rd \theta
\]
is an admissible representation of $\mcO_q(G_{\nu}\backslash G_{\R})$. On the other hand, it follows from Theorem \ref{TheoFusion} that the operations $*\mbs_r$ and $*\chi_{\theta}$ leave $\Hsp \otimes \Hsp_x$ stable (with $\Hsp$ some multiplicity Hilbert space of countable dimension). Since the regular representation $\pi_{\reg}$ of $\mcO_q(U)$ is a product of the various $\mbs_r$ and $\chi_{\theta}$ up to multiplicity, and since $L^{\infty}_q(O^{\nu}_x)$ is the $\sigma$-weak closure of $(\pi_{\lambda_x}*\pi_{\reg})(\mcO_q(O_x^{\nu}))$, it follows that $L^{\infty}_q(O^{\nu}_x)$ is the $\sigma$-weak closure of $\mcO_q(O_x^{\nu})$ in its representation on $\Hsp_x$. Since however the $\pi_{f_{\theta}(w\cdot_{\epsilon,q}\lambda_x)}$ are mutually inequivalent, the latter von Neumann algebra is simply 
\[
L^{\infty}_q(O_x^{\nu}) \cong \oplus_{w\in W_{\nu}^-} \int_{\T^s} B(\Hsp_{f_{\theta}(w\cdot_{\epsilon,q}\lambda_x)})d\theta \cong \left( \oplus_{w\in W_{\nu}^-} B(\Hsp_{w\cdot_{\epsilon,q} \lambda_x})\right) \overline{\otimes} L^{\infty}(\T^{s}).
\]
\end{proof}

Let us now describe in more detail the associated invariant integral $\int_{O_{x,q}^{\nu}}$ where we will use the isomorphism  \eqref{EqIsovN} implicitly. We will also abbreviate
\[
\pi_w = \pi_{w\cdot_{\epsilon,q}\lambda_x},\qquad w\in W_{\nu}^-,
\]
and use the notation
\[
\wt(Z_{\varpi}(\xi,\eta)) = \wt(\eta) - \tau(\wt(\xi)).
\]
Then 
\[
(\id\otimes \chi_{\theta})\rho_{\nu}(f) = e^{2\pi i (\wt(f), \theta)}f,\qquad \theta \in T \cong \mfa/Q^{\vee}.
\]

\begin{Prop}
There exist $c_w =  c_w(\lambda_x) >0$ for $w\in W_{\nu}^-$ such that 
\begin{equation}\label{EqIntf}
\int_{O_{x,q}^{\nu}}f = \sum_{w\in W_{\nu}^-} c_w \Tr(\pi_{w}(f)A_w) \int_{\T^s} e^{2\pi i (\wt(f),\theta)}\rd \theta, 
\end{equation}
where $A_w = |\pi_w(a_{\rho})|$ and where we identify once more $\T^s \cong \mfa^{-\tau}/(Q^{\vee})^{-\tau}$.  
\end{Prop}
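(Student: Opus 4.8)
The plan is to pin down the invariant integral by exploiting its characterization as the unique $U_q$-invariant state together with the modularity relation $\int_{O_{x,q}^{\nu}} fg = \int_{O_{x,q}^{\nu}} g\sigma(f)$, where $\sigma = \Ad(a_\rho)$ by \eqref{EqSigmaAda}. First I would record that by Theorem \ref{TheoDecompvN} every normal state on $L^\infty_q(O_x^\nu)$ is of the form $f \mapsto \sum_{w} \Tr(\pi_w(f) B_w)\otimes \mu(-)$ for positive trace-class operators $B_w$ on $\Hsp_{w\cdot_{\epsilon,q}\lambda_x}$ and a probability measure on $\T^s$; the torus-equivariance under the $\chi_\theta$-coaction forces the $\T^s$-part to be normalized Lebesgue measure $\int_{\T^s}$, and forces each $\pi_w(f)$ to be integrated only against its weight-zero (i.e. $\wt = 0$) component, which reproduces the factor $\int_{\T^s} e^{2\pi i(\wt(f),\theta)}\rd\theta$. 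So the content is to identify the $B_w$ up to an overall constant, and to show they are positive and faithful.

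Next I would use the modularity property. Since $\int_{O_{x,q}^{\nu}}$ is a faithful invariant state on $\mcO_q^*(O_x^\nu)$ with modular automorphism $\sigma_{-i}$, and $\sigma_z(Z(\xi,\eta)) = q^{iz(2\rho,\wt(\xi)-\wt(\eta))}Z(\xi,\eta)$ by \eqref{EqSigmaForm}, the operator $B_w$ implementing the trace on the block $B(\Hsp_{w\cdot_{\epsilon,q}\lambda_x})$ must satisfy $B_w \pi_w(f) = \pi_w(\sigma(f)) B_w$ for all $f$. On the other hand $\sigma = \Ad(a_\rho)$ inside $\mcO_q(Z_\nu^{\reg})$, so in the representation $\pi_w$ — which extends to $\mcO_q(Z_\nu^{\reg})$ on the dense space of weight vectors since $\pi_w$ is of regular type — we have $\pi_w(\sigma(f)) = \pi_w(a_\rho)\pi_w(f)\pi_w(a_\rho)^{-1}$. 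Hence $\pi_w(a_\rho)^{-1} B_w$ commutes with all $\pi_w(f)$, and by irreducibility of $\pi_w$ it is a scalar $c_w$. Therefore $B_w = c_w\, \pi_w(a_\rho)$. Taking adjoints and using selfadjointness/positivity of $\int_{O_{x,q}^\nu}$ on positive elements forces $B_w \geq 0$, so $c_w \pi_w(a_\rho) = B_w = B_w^* = \overline{c_w}\pi_w(a_\rho)^*$; combined with the polar decomposition this shows $B_w = c_w A_w$ with $A_w = |\pi_w(a_\rho)|$ and $c_w > 0$ (strict positivity because $\int_{O_{x,q}^\nu}$ is faithful and $A_w$ has trivial kernel, being the absolute value of $\pi_w(a_\rho)$ for a representation of regular type). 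Here I must also check $A_w$ is trace-class: this follows since $a_\rho^*a_\rho$ has purely discrete spectrum accumulating only at $0$ in any irreducible $*$-representation (as in the proof of Lemma \ref{LemIrrRepZZ}), hence $A_w$ is a positive operator with discrete spectrum, and one needs the summability coming from the explicit $q$-exponential decay along weight chains — this is the one genuinely computational point, handled rank-by-rank via Lemma \ref{LemRank1Prep}, Lemma \ref{LemRank1Prep2} and Proposition \ref{PropRank1Tensor}.

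Assembling these pieces gives
\[
\int_{O_{x,q}^{\nu}}f = \sum_{w\in W_{\nu}^-} c_w \Tr(\pi_{w}(f)A_w) \int_{\T^s} e^{2\pi i (\wt(f),\theta)}\rd \theta
\]
for some strictly positive constants $c_w = c_w(\lambda_x)$, which is \eqref{EqIntf}. I expect the main obstacle to be the trace-class (summability) verification for $A_w$ — establishing that $\Tr(A_w) < \infty$ so that the formula defines an honest state — since everything else is a soft argument combining uniqueness of the invariant state, the modularity identity, $\sigma = \Ad(a_\rho)$, irreducibility via Schur's lemma, and torus-equivariance; the normalization constant $\sum_w \Tr(A_w) c_w = 1$ and the finer identification of the $c_w$ themselves (pinning them all to be equal after suitable normalization, as in the qualitative theorem in the introduction) would then be extracted in the subsequent discussion by testing \eqref{EqIntf} against the unit and against the matrix coefficients produced by the fusion rules of Theorem \ref{TheoFusion}.
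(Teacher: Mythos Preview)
Your proposal is correct and follows essentially the same route as the paper: both combine the von Neumann algebra decomposition of Theorem~\ref{TheoDecompvN} with modular theory to pin down the density on each type~I block, and then use invariance under the $\chi_\theta$ to force the $\T^s$-component to be Lebesgue. The paper is marginally more direct---it observes that $\sigma_t$ is implemented by $\Ad(A_w^{it}\otimes 1)$, so faithfulness gives density $c_w A_w \cdot g_w(\theta)$ with $g_w$ then constant by torus invariance---whereas your detour through $B_w = c_w\,\pi_w(a_\rho)$ and Schur is equivalent but tacitly uses that $\pi_w(a_\rho)$ has a definite sign in each irreducible block (true, since lowering by $x_r^*$ multiplies the $a_\rho$-eigenvalue by the positive scalar $q^{2(\rho_+,\alpha_r)}$). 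Your worry about trace-class summability of $A_w$ is misplaced at this stage: it is automatic from normality of the faithful invariant state on a type~I factor, and the paper does not address it separately.
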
 
\begin{proof}
From \eqref{EqSigmaForm}, it follows that the modular automorphism of $\mcO_q(O_x^{\nu})$ is implemented as $\Ad(\oplus_{w\in W_{\nu}^-} A_w^{it} \otimes 1)$ on $\oplus_{w\in W_{\nu}^-} \Hsp_{w}\otimes L^2(\T^s)$. Since $\int_{O_{x,q}^{\nu}}$ is faithful, this already implies that
\[
\int_{O_{x,q}^{\nu}}f = \sum_{w\in W_{\nu}^-} c_w \Tr(\pi_{w}(f)A_w) \int_{\T^s} e^{2\pi i (\wt(f),\theta)} g_w(\theta)\rd \theta, 
\]
for $c_w >0$ and certain strictly positive $g_w\in L^1(\T^s,\rd \theta)$. Since however $\int_{O_{x,q}^{\nu}}*\chi_{\theta} = \int_{O_{x,q}^{\nu}}$ for all $\theta \in \mfa/Q^{\vee}$, it follows that $g_w$ is a constant, which can then be absorbed into $c_w$.
\end{proof}

In the remainder of this section, we will determine the $c_w$ more explicitly. To simplify some of the computations in what follows, \emph{we will from now on make the standing assumption that $\epsilon$ is reduced.}

\begin{Lem}
Assume $\varpi \in P^+$ with $\tau(\varpi) = \varpi$. Let $\lambda = \lambda_x = q^{2\gamma} \in \Hsp_{\tau}^{\gg}$, where $\gamma \in \mfa^{\tau}$. Then
\begin{equation}\label{EqIntAlt}
\int_{O_{x,q}^{\nu}}a_{\varpi} = \frac{1}{\dim_q(V_{\varpi})}\frac{\sum_{w\in W_{\nu}} \hsgn(w) \epsilon_Q(\varpi + \rho - w(\varpi + \rho)) q^{2(\gamma - \rho,w(\varpi +\rho))}}{\sum_{w\in W_{\nu}} \hsgn(w) \epsilon_Q(\rho - w\rho)q^{2(\gamma - \rho,w\rho)}}. 
\end{equation}
\end{Lem}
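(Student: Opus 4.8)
The plan is to evaluate $\int_{O_{x,q}^{\nu}}a_{\varpi}$ by combining the general formula \eqref{EqIntf} with the Harish-Chandra formula \eqref{EqFormHCZ}, reducing everything to traces of the weight-space decomposition of the highest weight representations $\Hsp_{w\cdot_{\epsilon,q}\lambda_x}$ against the modular operator $A_w = |\pi_w(a_\rho)|$. First I would note that since $\tau(\varpi)=\varpi$, the element $a_\varpi$ has $\wt(a_\varpi)=0$, so the torus integral $\int_{\T^s}e^{2\pi i(\wt(a_\varpi),\theta)}\rd\theta = 1$ and \eqref{EqIntf} collapses to $\int_{O_{x,q}^{\nu}}a_\varpi = \sum_{w\in W_\nu^-}c_w\Tr(\pi_w(a_\varpi)A_w)$. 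The key is then to compute each $\Tr(\pi_w(a_\varpi)A_w)$ explicitly. Since $\pi_w$ is a highest weight $*$-representation at weight $w\cdot_{\epsilon,q}\lambda_x$, on a weight vector $\xi$ of weight $\mu$ the operator $a_\varpi$ acts by $\mu_P(\varpi)$ and $a_\rho$ acts by $\mu_P(\rho)$ (positively, as $A_w$ is the absolute value); thus $\Tr(\pi_w(a_\varpi)A_w) = \sum_{\mu} m_w(\mu)\,\mu_P(\varpi)\,|\mu_P(\rho)|$ where $m_w(\mu)$ is the multiplicity of the weight $\mu$ in $\Hsp_{w\cdot_{\epsilon,q}\lambda_x}$.

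The next step is to recognize the weight multiplicities: since $\Hsp_\lambda = \overline{\msN^-\xi_\lambda}$ and $\msN^-$ is generated by the $x_r^*$ with the weight of $x_r^*$ being $-\alpha_r$ (up to $\tau$-symmetrization), the weights appearing are $\lambda$ shifted down by $\hat Q^+$; for the purposes of evaluating against a trace that is a ratio, the full weight multiplicities should be accounted for via a Weyl-Kac–type character identity. Here I expect the cleanest route is to not compute $\Tr(\pi_w(a_\varpi)A_w)$ in closed form directly, but rather to use two facts: (i) applying \eqref{EqIntf} to $a_\rho$ itself gives $\int_{O_{x,q}^{\nu}}a_\rho = \sum_w c_w \Tr(\pi_w(a_\rho)A_w) = \sum_w c_w\Tr(A_w^2)$, and more usefully applying it to $1$ gives $1 = \sum_w c_w\Tr(A_w)$; and (ii) the ratio structure of the right-hand side of \eqref{EqIntAlt} suggests writing $\int_{O_{x,q}^{\nu}}a_\varpi = \frac{\sum_w c_w\Tr(\pi_w(a_\varpi)A_w)}{\sum_w c_w\Tr(A_w)}$ after normalizing, and then identifying numerator and denominator separately with Weyl-character-type sums over $W_\nu$. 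The denominator $\sum_w c_w\Tr(A_w)$, being the normalization $\int_{O_{x,q}^{\nu}}1 = 1$ — actually I should be careful: $\int_{O_{x,q}^{\nu}}$ is already normalized, so $\sum_w c_w \Tr(A_w) = 1$, and the task is purely to show $\sum_w c_w\Tr(\pi_w(a_\varpi)A_w)$ equals the asserted ratio.

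The mechanism for getting the ratio is the Harish-Chandra formula together with the bijection $W_\nu^-\leftrightarrow W_\nu/W_\nu^+$ of Theorem \ref{TheoSec}: the sum over $w\in W_\nu^-$ of a trace over $\Hsp_{w\cdot_{\epsilon,q}\lambda_x}$ should reassemble, via Lemma \ref{LemDecompRho} (giving $w^{-1}\rho - (wv)^{-1}\rho \in \hat Q_c^+$ for $v\in W_\nu^+$) and the fact that $W_\nu^+$ is the Weyl group of $\hat\Delta_c$, into a sum over the full group $W_\nu$ with signs $\hsgn(w)$. Concretely I would: express $\Tr(\pi_w(a_\varpi)A_w)$ as a $q$-deformed weight-character sum, sum over $w\in W_\nu^-$, use that $\pi_w$ has highest weight $w\cdot_{\epsilon,q}\lambda_x$ with $|w\cdot_{\epsilon,q}\lambda_x| = q^{2(w(\gamma-\rho)+\rho)}$, and apply the Weyl-Kac character formula / Weyl denominator identity for the root subsystem — the $\nu$-compact roots — to collapse the geometric-series weight sums. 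The $\epsilon_Q(\varpi+\rho - w(\varpi+\rho))$ factors arise from the twisted dot-action shifts tracked in Lemma \ref{LemDotAction} and the definition of $\cdot_{\epsilon,q}$, and the signs $\hsgn(w)$ come from the alternating sum in the Weyl character formula, where only compact reflections contribute coherently because of Lemma \ref{LemPosWPlus}.

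The main obstacle I anticipate is the bookkeeping in the character-formula step: making precise that the trace over a single highest weight module $\Hsp_{w\cdot_{\epsilon,q}\lambda_x}$ against $A_w^{?}$ — here against $A_w$, i.e.\ weighted by $q^{2(\ast,\rho)}$ — is exactly a partial Weyl sum, and that summing over the coset representatives $W_\nu^-$ and then "completing" by $W_\nu^+$ via the compact Weyl group recovers the full alternating sum over $W_\nu$ in both numerator and denominator, with the constants $c_w$ being precisely what is needed to make this work (they must turn out to be $\hsgn(w)$-times a common positive factor divided by $\dim_q(V_\varpi)$-type normalizations, consistent with the formula in the qualitative main theorem where $A_{[w]}$ appears and $c_w = 1/\sum_{[w]}\Tr(A_{[w]})$). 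In particular one must check the positivity assumption \eqref{EqPosnu} (to be established in Proposition \ref{PropAlwaysPositive}) so that $(\gamma-\rho, w\rho)$-type exponents and the resulting geometric series actually converge and the denominator is nonzero; this is where Lemma \ref{LemInvPosDefDat} and the reduction to strongly reduced $\nu$ (the standing assumption that $\epsilon$ is reduced) enter. Once the single-module trace is identified with a Weyl-type sum and the $W_\nu^-$-to-$W_\nu$ completion is justified, \eqref{EqIntAlt} follows by dividing by the $\varpi=0$ case, which gives the denominator.
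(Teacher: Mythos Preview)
Your approach has the logical order reversed and is therefore circular. In the paper, the formula \eqref{EqIntf} is established with \emph{unknown} positive constants $c_w$; the lemma \eqref{EqIntAlt} is then proved independently by a purely algebraic route, and only afterwards are the two compared (yielding \eqref{EqFormcDepOm}) to determine the $c_w$ and eventually prove Theorem~\ref{TheoFormInvFunct}. You are trying to use \eqref{EqIntf} --- with $c_w$ still undetermined --- to derive \eqref{EqIntAlt}, which cannot work without additional input. You acknowledge this implicitly when you say the $c_w$ ``must turn out to be $\hsgn(w)$-times a common positive factor'', but that is precisely the content of Theorem~\ref{TheoFormInvFunct}, which the paper proves \emph{using} \eqref{EqIntAlt}. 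A second obstruction is that your plan requires the weight multiplicities of the infinite-dimensional modules $\Hsp_{w\cdot_{\epsilon,q}\lambda_x}$, and these are not given by any Weyl--Kac formula; nothing in the paper computes them, and in fact the limiting argument in the proof of Theorem~\ref{TheoFormInvFunct} is designed exactly to avoid needing them.

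The paper's actual proof is much shorter and bypasses all representation theory of the fibers. Since $E(a_\varpi) = z_\varpi/\dim_q(V_\varpi)$ lies in $\mcO_q(Z_\nu\dbslash U)$, one has $\int_{O_{x,q}^\nu} a_\varpi = x(E(a_\varpi)) = \widetilde{\chi}_{\HC}(z_\varpi)(\lambda)/\dim_q(V_\varpi)$. The intermediate Harish-Chandra identity \eqref{EqExtraId} gives this as $\dim_q(V_\varpi)^{-1}\sum_{\alpha\in (Q^+)^\tau} j_\varpi(\varpi-\alpha)\,\epsilon_Q(\alpha)\,q^{2(\gamma-\rho,\varpi-\alpha)}$. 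The passage to the ratio \eqref{EqIntAlt} is then a single application of the \emph{twining Weyl character formula} of Jantzen/Fuchs--Schellekens--Schweigert \cite{FSS96}, first for regular $\epsilon$ (where one can extend $\epsilon_Q$ to $\widetilde\epsilon_P$) and then by continuity for general $\epsilon$, replacing $W^\tau$ by $W_\nu$ because $\epsilon_Q$ kills the extra terms. This is the step you are missing: the ratio structure comes from a classical character identity applied to the functions $j_\varpi$, not from summing traces over $W_\nu^-$.
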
 
Note that the sign function $\hsgn$ on $\hat{W}$ is to be taken with respect to the simple reflections $s_{\hat{r}}$ for $\hat{r}\in I/\tau$. Also note that the proof will show that the right hand side, up to the factor $\dim_q(V_{\varpi})^{-1}$, is actually a polynomial in $q$, so that there is no issue evaluating the quotient. 
\begin{proof}
Recall first the projection map $E$ from \eqref{DefE}, the Harish-Chandra homomorphism $\widetilde{\chi}_{\HC}$ from \eqref{EqFormHCZ} and the functions $j_{\varpi}$ introduced in \eqref{EqForj}. Then since $z \xi_{\lambda} = \widetilde{\chi}_{\HC}(z) \xi_{\lambda}$ for $z\in \msZ(\mcO_q(Z_{\nu}))$, we have
\[
\int_{O_{x,q}^{\nu}}a_{\varpi}  =  ((\widetilde{\chi}_{\HC}\circ E)(a_{\varpi}))(\lambda), 
\]
which by \eqref{EqExtraId} becomes
\[
\int_{O_{x,q}^{\nu}}a_{\varpi}  = \frac{1}{\dim_q(V_{\varpi})} \sum_{\alpha \in (Q^+)^{\tau}} j_{\varpi}(\varpi - \alpha) \epsilon_Q(\alpha) q^{2(\gamma - \rho,\varpi - \alpha)}.
\]
It now suffices to show that 
\begin{equation}\label{EqTempExpr}
\sum_{\alpha \in (Q^+)^{\tau}} j_{\varpi}(\omega - \alpha) \epsilon_Q(\alpha) q^{2(\gamma - \rho,\varpi - \alpha)}= \frac{\sum_{w\in W_{\nu}} \hsgn(w) \epsilon_Q(\varpi + \rho - w(\varpi + \rho)) q^{2(\gamma - \rho,w(\varpi +\rho))}}{\sum_{w\in W_{\nu}} \hsgn(w) \epsilon_Q(\rho - w\rho)q^{2(\gamma - \rho,w\rho)}}. 
\end{equation}
Assume first that $\supp(\epsilon) = I$. Then we can take $\widetilde{\epsilon} \in \mbH_{\tau}^{\times}$ such that $\widetilde{\epsilon}_P(\alpha) = \epsilon_Q(\alpha)$ for $\alpha \in Q^+$. We can hence write
\[
\sum_{\alpha \in (Q^+)^{\tau}} j_{\varpi}(\omega - \alpha) \epsilon_Q(\alpha) q^{2(\gamma - \rho,\varpi - \alpha)} =\widetilde{\epsilon}_P(\varpi) \sum_{\alpha \in (Q^+)^{\tau}} j_{\varpi}(\varpi - \alpha) \widetilde{\epsilon}_P(\varpi - \alpha)^{-1} q^{2(\gamma - \rho,\varpi - \alpha)}.
\]
Using now the `twining Weyl character formula' \cite{Jan73}, where we use the form as given in \cite[(5.56)]{FSS96}, we find
\[
\sum_{\alpha \in (Q^+)^{\tau}} j_{\varpi}(\varpi - \alpha) \widetilde{\epsilon}_P(\varpi - \alpha)^{-1} q^{2(\gamma - \rho,\varpi - \alpha)} = \frac{\sum_{w\in W^{\tau}} \hsgn(w) \widetilde{\epsilon}_P(w(\varpi + \rho))^{-1} q^{2(\gamma - \rho,w(\varpi +\rho))}}{\sum_{w\in W^{\tau}} \hsgn(w) \widetilde{\epsilon}_P(w\rho)^{-1}q^{2(\gamma - \rho,w\rho)}},
\]
which after multiplication with $\widetilde{\epsilon}_P(\varpi)$ leads to 
\[
\sum_{\alpha \in (Q^+)^{\tau}} j_{\varpi}(\omega - \alpha) \epsilon_Q(\alpha) q^{2(\gamma - \rho,\varpi - \alpha)}= \frac{\sum_{w\in W^{\tau}} \hsgn(w) \epsilon_Q(\varpi + \rho - w(\varpi + \rho)) q^{2(\gamma - \rho,w(\varpi +\rho))}}{\sum_{w\in W^{\tau}} \hsgn(w) \epsilon_Q(\rho - w\rho)q^{2(\gamma - \rho,w\rho)}}. 
\]
Since all sums concerned are finite, this formula is by continuity still valid for arbitrary $\epsilon \in \mbH_{\tau}^{\ungauge}$. Since moreover any $\varpi + \rho - w(\varpi + \rho)$ contains $\alpha_r$ with a strictly positive coefficient whenever $w$ contains $s_r$ in its reduced expression, it follows that we can take the above sums over $W_{\nu}$, obtaining \eqref{EqTempExpr}.
\end{proof}

Let now $V_{w\cdot_{\epsilon,q}q^{2\gamma}}(\alpha_+)$ be the weight space inside $V_{w\cdot_{\epsilon,q}q^{2\gamma}}$ at weight $(w\cdot_{\epsilon,q} q^{2\gamma}) q^{2\alpha_+}$ for $\alpha \in Q^+$. Comparing \eqref{EqIntAlt} with \eqref{EqIntf}, we find for $\varpi \in P^+$ with $\varpi = \tau(\varpi)$ and $\lambda_x = q^{2\gamma} \in \Lambda_{\nu}^{\gg}$, by using the usual Weyl character formula and noting that $|\epsilon_Q(\rho-w^{-1}\rho)|=1$ for $w\in W_{\nu}$ by the assumption that $\epsilon$ is reduced,
\begin{eqnarray*}
\int_{O_{x,q}^{\nu}}a_{\varpi} &=& \sum_{w\in W_{\nu}^-}\sum_{\alpha_+\in \hat{Q}^+}  c_w \epsilon_Q(\varpi - w^{-1}\varpi) q^{2(\varpi +\rho,\alpha_+ + \rho + w(\gamma -\rho))} \dim(V_{w\cdot_{\epsilon,q} q^{2\gamma}}(\alpha_+)) \\&=&  \frac{1}{\dim_q(V_{\varpi})}\frac{\sum_{w\in W_{\nu}} \hsgn(w) \epsilon_Q(\varpi + \rho - w(\varpi + \rho)) q^{2(\gamma - \rho,w(\varpi +\rho))}}{\sum_{w\in W_{\nu}} \hsgn(w) \epsilon_Q(\rho - w\rho)q^{2(\gamma - \rho,w\rho)}}\\
&=& \frac{\sum_{w\in W} \sgn(w) q^{-2(\rho,w\rho)}}{\sum_{w\in W_{\nu}} \hsgn(w) \epsilon_Q(\rho - w\rho)q^{2(\gamma - \rho,w\rho)}}\frac{\sum_{w\in W_{\nu}} \hsgn(w) \epsilon_Q(\varpi + \rho - w(\varpi + \rho)) q^{2(\gamma - \rho,w(\varpi +\rho))}}{\sum_{w\in W} \sgn(w) q^{-2(\rho,w(\varpi +\rho))}} \\
&=& \frac{e_{\gamma}}{G(\varpi)} H_w^{(\gamma)}(\varpi)
\end{eqnarray*}
where 
\[
G(\varpi) = \sum_{w\in W} \sgn(w) q^{-2(\rho,w(\varpi +\rho))},\qquad e_{\gamma} =\frac{\sum_{w\in W} \sgn(w) q^{-2(\rho,w\rho)}}{\sum_{w\in W_{\nu}} \hsgn(w) \epsilon_Q(\rho - w\rho)q^{2(\gamma - \rho,w\rho)}}
\]
and 
\[
H_w^{(\gamma)}(\varpi) = \sum_{w\in W_{\nu}} \hsgn(w) \epsilon_Q(\varpi + \rho - w(\varpi + \rho)) q^{2(\gamma - \rho,w(\varpi +\rho))}.
\]
On the other hand, by Theorem \ref{TheoSec} we can write 
\[
H_w^{(\gamma)}(\varpi)=  
\sum_{w\in W_{\nu}^-} \epsilon_Q(\varpi -w^{-1}\varpi) \epsilon_Q(\rho - w^{-1} \rho)\hsgn(w)   F_w^{(\gamma)}(\varpi) 
\]
where 
\[
F_w^{(\gamma)}(\varpi)  =  \sum_{v\in W_{\nu}^+} \hsgn(v) q^{2(\gamma - \rho,v^{-1}w^{-1}(\varpi +\rho))}.
\]
Note that in the formula for $F_w^{(\gamma)}$ we could remove a factor $\epsilon_Q(w^{-1}(\varpi + \rho) - v^{-1}w^{-1}(\varpi + \rho))$ by reducedness of $\epsilon$ and the positivity result in  Lemma \ref{LemPosWPlus}.

We have obtained so far the identity
\begin{multline*}
 \sum_{w\in W_{\nu}^-}\sum_{\alpha_+\in \hat{Q}^+}  c_w \epsilon_Q(\varpi - w^{-1}\varpi) q^{2(\varpi +\rho,\alpha_+ + \rho + w(\gamma -\rho))} \dim(V_{w\cdot_{\epsilon,q} q^{2\gamma}}(\alpha_+)) \\
=\frac{e_{\gamma}}{G(\varpi)}\sum_{w\in W_{\nu}^-} \epsilon_Q(\varpi -w^{-1}\varpi) \epsilon_Q(\rho - w^{-1} \rho)\hsgn(w)   F_w^{(\gamma)}(\varpi). 
\end{multline*}
Consider now for a function $g: W_{\nu}^- \rightarrow \{\pm\}$ the set
\[
P_{g}^+ = \{\varpi \in (P^+)^{\tau}\mid \forall w\in W_{\nu}^-: g(w) \epsilon_Q(\varpi -w^{-1}\varpi)>0\}.
\]
Then $P_g^+$ is either empty or contains a translate of $2(P^+)^{\tau}$ in $\mfa^{\tau}$. As all expressions involved in the above identity consist of finite linear combinations of analytic functions on the open unit ball of $\exp(\mfa^{\tau})$ multiplied with functies of the form $q^{\omega}\mapsto q^{(\omega,\kappa)}$ for $\kappa \in \mfa^{\tau}$, it then follows by the above property of each non-empty $P_g^+$ that in fact
\begin{multline*}
\sum_{w\in W_{\nu}^-}\sum_{\alpha_+\in \hat{Q}^+}  c_w \epsilon_Q(\varpi - w^{-1}\varpi)  q^{2(\omega +\rho,\alpha_+ + \rho + w(\gamma -\rho))} \dim(V_{w\cdot_{\epsilon,q} q^{2\gamma}}(\alpha_+))\\
=  \frac{e_{\gamma}}{G(\omega)} \sum_{w\in W_{\nu}^-} \epsilon_Q(\varpi -w^{-1}\varpi) \epsilon_Q(\rho - w^{-1} \rho)\hsgn(w)   F_{w}^{(\lambda)}(\omega)
\end{multline*}
for all $\varpi\in (P^+)^{\tau}$ and all dominant $\omega \in \mfa^{\tau}$. In particular, since the characters $\varpi \mapsto \epsilon_Q(\varpi - w^{-1}\varpi)$ are all distinct by Theorem \ref{TheoSec}, they must be linearly independent, and hence  
\[
c_w \sum_{\alpha_+\in \hat{Q}^+}   q^{2(\omega +\rho,\alpha_+ + \rho + w(\gamma -\rho))} \dim(V_{w\cdot_{\epsilon,q} q^{2\gamma}}(\alpha_+))
=  \frac{e_{\gamma}}{G(\omega)} \epsilon_Q(\rho - w^{-1} \rho)\hsgn(w)  F_{w}^{(\lambda)}(\omega)
\]
for all dominant $\omega \in \mfa^{\tau}$ and all $w\in W_{\nu}^-$. Rearranging, and using that we already know the $c_w$ to be positive, this becomes
\begin{equation}\label{EqFormcDepOm}
c_w \sum_{\alpha_+\in \hat{Q}^+}   q^{2(\omega +\rho,\alpha_+)} \dim(V_{w\cdot_{\epsilon,q} q^{2\gamma}}(\alpha_+))
=  |e_{\gamma}| \frac{\left|\sum_{v\in W_{\nu}^+} \hsgn(v) q^{2(\rho - \gamma,w^{-1}(\omega +\rho)-v^{-1}w^{-1}(\omega+\rho))}\right|}{\left|\sum_{w\in W} \sgn(w) q^{2(\omega + \rho,\rho- w^{-1}\rho)}\right|}
\end{equation}

We now want to take the limit of both sides, putting $\omega = n\rho$ and letting $n$ tending to infinity. However, we first want to exclude the possibility of having singularities on the right hand side which cancel each other out. We therefore state first the following proposition. 

\begin{Prop}\label{PropAlwaysPositive}
Let $\nu$ be a reduced twisting datum, and let $\lambda = q^{2\gamma} \in \Lambda_{\nu}^{\gg}$. Then 
\begin{equation}\label{EqPosBeta}
(\rho- \gamma,\beta)>0 \qquad \textrm{for all }\beta \in \hat{\Delta}_c^{+}.
\end{equation}
\end{Prop}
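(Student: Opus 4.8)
The plan is to reduce Proposition~\ref{PropAlwaysPositive} to a rank-two verification together with the structural results already obtained. By Lemma~\ref{LemInvPosDefDat}, Property~\eqref{EqPosnu} is stable under equivalence of twisting data, so by Lemma~\ref{LemStrongRed} it suffices to prove the estimate for a strongly reduced twisting datum. Moreover, since $\Lambda_{\nu}^{\gg}$ and $\hat{\Delta}_c^{+}$ are built $\tau$-connected-component by $\tau$-connected-component, one may assume $\Gamma$ is $\tau$-connected with $\supp(\epsilon)=I$; the direct product case is trivial (then $\hat{\Delta}_c^+ = \hat{\Delta}^+$ and $\gamma = \rho$ up to the torus directions, which give $(\rho-\gamma,\beta)=0$ — wait, no: in that case $W_\nu = W^\tau$ acts and $\epsilon>0$, so the positivity is the classical statement $(\rho,\beta)>0$ for $\beta\in\Delta^+$). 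So the core case is $\Gamma$ connected, $\nu$ strongly reduced with $\supp(\epsilon)=I$.

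The key reduction is to the simple roots of $\hat{\Delta}_c$. Since every $\beta\in\hat{\Delta}_c^+$ is a non-negative integer combination of simple roots $\beta_i$ of the root subsystem $\hat{\Delta}_c$, and since $(\rho-\gamma,\cdot)$ is linear, it is enough to show $(\rho-\gamma,\beta_i)\geq 0$ for each simple root $\beta_i$ of $\hat{\Delta}_c$, with at least one strict inequality forcing all of them to combine positively — more carefully, one needs $(\rho-\gamma,\beta_i)>0$ for all $i$. Each such $\beta_i$ is the positive root across which a generator of $W_\nu^+$ reflects. Now I would use the rank-one/rank-two analysis: applying Theorem~\ref{TheoFusion} and the classification results Corollary~\ref{CorRank1Comp}, Corollary~\ref{CorTypeA1A1}, Corollary~\ref{CorTypeA2} along the subgroup $\msA_r$ or $\msA_{r,\tau(r)}^{\loc}$ attached to a $\nu$-compact simple root tells us exactly which values $\lambda_P(\beta_i)$ can occur for $\lambda=q^{2\gamma}\in\Lambda_\nu^{\gg}$: in the compact (positive-$\epsilon$, or short-root) rank-one or rank-two situations the highest weights are of the form $q^{\frac{c-n}{2}}\epsilon^{1/2}$ with $n\in\N$, $c$ a fixed positive rational depending on the type. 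Translating "$\gamma$ restricted to the $\beta_i$-direction has the $q$-exponent bounded above by the value at $n=0$" into the inner-product language yields precisely $(\rho-\gamma,\beta_i^\vee)\geq 1>0$, or the corresponding strict inequality.

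Concretely, the steps in order: (1) invoke Lemmas~\ref{LemInvPosDefDat} and \ref{LemStrongRed} to pass to a strongly reduced $\nu$, then to $\Gamma$ connected with $\supp(\epsilon)=I$; (2) reduce the claimed inequality from all $\beta\in\hat{\Delta}_c^+$ to the simple roots $\beta_i$ of $\hat{\Delta}_c$, noting these are the roots of reflections generating $W_\nu^+$; (3) for each $\beta_i$ identify, via Lemma~\ref{LemSubsLongShort}, whether $\beta_i$ is a long root with $\hat\epsilon_{\hat Q}(\beta_i)>0$ or a short root, and in each case locate a rank-one subalgebra $\msA_r$ (for $r\in I^\tau$ with $\epsilon_r>0$) or a rank-two subalgebra $\msA_{r,\tau(r)}^{\loc}$ ($r\notin I^\tau$) such that $\beta_i = \alpha_{\hat r}$; (4) apply Corollary~\ref{CorRank1Comp}/\ref{CorTypeA1A1}/\ref{CorTypeA2} to constrain the spectrum of the relevant Cartan generators $a_{\varpi_r}$ on $\Hsp_\lambda$, reading off that $\lambda_P(\beta_i) \in q^{\ge 0}\cdot(\text{positive constant})$ in a way that forces $(\rho-\gamma,\beta_i)>0$; (5) assemble the simple-root inequalities to conclude $(\rho-\gamma,\beta)>0$ for all $\beta\in\hat{\Delta}_c^+$.

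The main obstacle I anticipate is step~(4): the rank-two corollaries as stated pin down only $|\lambda_P(\varpi_r)|$, i.e. the absolute value of a single fundamental-weight coordinate, whereas $(\rho-\gamma,\beta_i)$ involves the pairing of $\gamma$ with a (possibly short) root $\beta_i$ which in type $A_{2n}$ or at folded vertices is not simply $\varpi_{r,+}$ but carries the correction factor from \eqref{EqHatVarpi}. One must carefully track these normalizations — in particular distinguishing $\varpi_{\hat r} = \varpi_{r,+}$ from $\varpi_{\hat r}=\tfrac12\varpi_{r,+}$ in the $A_{2n}$ case — and confirm that the $q$-exponent bound $n\ge 0$ translates to the correct integrality/positivity of $(\rho-\gamma,\beta_i^\vee)$ in each case. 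Since we have assumed $\nu$ ungauged and strongly reduced, the $A_{2n}$ subtlety is mitigated (the relevant $\epsilon$-values are then automatically positive), but the bookkeeping of coroots versus roots at short simple roots of $\hat\Delta$ remains the delicate point.
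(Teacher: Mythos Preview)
There is a genuine gap in step~(3). You assert that each simple root $\beta_i$ of the subsystem $\hat{\Delta}_c$ can be realized as $\alpha_{\hat r}$ for some $r\in I$, i.e.\ as a simple root of $\hat{\Delta}$ itself. This is false in general. The simple roots of $\hat{\Delta}_c$ are simple only for the compact subsystem, not for $\hat{\Delta}$. Already for $\tau=\id$ with the $B_2$ Vogan diagram having the short root black ($\epsilon_1=1,\epsilon_2=-1$), the compact positive roots are $\alpha_1$ and $\beta=\alpha_1+2\alpha_2$; the latter is simple in $\Delta_c$ but has full support in $\Delta$, and there is no $r$ with $\beta=\alpha_r$. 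The subalgebras $\msA_r$ and $\msA_{r,\tau(r)}^{\loc}$ are attached to \emph{simple} roots of $\Delta$, so your proposed rank-one/rank-two restriction simply does not see $\beta$. The same obstruction recurs throughout the non-hermitian list (for $\mfsp_{p,l-p}$, $\mfso_{2p,2(l-p)+1}$, $G_2$, etc.), where the non-trivial compact simple roots are always genuinely composite.

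The paper's proof is organized around exactly this difficulty and uses a mechanism you do not invoke: the identity \eqref{EqFormcDepOmRho} (specialized to $w=e$) equates a \emph{convergent} series in $q^{2n}$ on the left with an expression on the right whose asymptotics in $n$ are governed by the exponents $(\rho-\gamma,w^{-1}\rho - v^{-1}w^{-1}\rho)$. After handling enough of the $\beta_i$ by restriction to smaller subdiagrams (or by hermitian-symmetric reduction), one isolates the remaining compact simple root $\beta$ as the only potentially divergent piece; convergence of the left side then forces $(\rho-\gamma,\beta)>0$. This has to be carried out case by case, propagating through Vogan-diagram equivalences, which is why the paper's argument is a lengthy induction over the classification rather than a uniform rank-reduction. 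Your plan would only go through in the hermitian symmetric case (where indeed every compact simple root is simple in $\Delta$); beyond that you need either the convergence trick or a substitute for it.
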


Before we prove this proposition, let us first show how it can be used to prove the following theorem. 

\begin{Theorem}\label{TheoFormInvFunct}
Assume that $\epsilon\in \mbH_{\tau}^{\red}$ is a reduced twisting datum. Then
\[
\int_{O_{x,q}^{\nu}}f = \frac{\sum_{w\in W_{\nu}^-} \Tr(\pi_{w}(f)A_w) \int_{\T^s} e^{2\pi i (\wt(f),\theta)}\rd \theta}{\sum_{w\in W_{\nu}^-} \Tr(A_w)},\qquad \forall f\in \mcO_q(Z_{\nu}). 
\]
\end{Theorem}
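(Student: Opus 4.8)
The strategy is to take the limit $n\to\infty$ (with $\omega=n\rho$) in the identity \eqref{EqFormcDepOm} and extract the value of $c_w$, up to the common constant $|e_{\gamma}|$ which will cancel in the final normalized formula. First I would rewrite the left-hand side of \eqref{EqFormcDepOm}: the sum $\sum_{\alpha_+\in\hat{Q}^+} q^{2(\omega+\rho,\alpha_+)}\dim(V_{w\cdot_{\epsilon,q}q^{2\gamma}}(\alpha_+))$ is, up to a shift by the highest weight, a specialization of the character of the highest weight $*$-representation $V_{w\cdot_{\epsilon,q}q^{2\gamma}}$, so as $\omega=n\rho\to\infty$ only the $\alpha_+=0$ term survives after dividing by the dominant exponential, and the coefficient is $\dim(V_{w\cdot_{\epsilon,q}q^{2\gamma}}(0))=1$. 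For the right-hand side, I would apply the Weyl denominator formula to $\sum_{w\in W}\sgn(w)q^{2(\omega+\rho,\rho-w^{-1}\rho)}$ and analyze the numerator $\sum_{v\in W_{\nu}^+}\hsgn(v)q^{2(\rho-\gamma,w^{-1}(\omega+\rho)-v^{-1}w^{-1}(\omega+\rho))}$; here Proposition \ref{PropAlwaysPositive} guarantees that the exponent $(\rho-\gamma,\beta)>0$ for all $\beta\in\hat{\Delta}_c^+$, which ensures the $v=e$ term dominates asymptotically and there is no hidden cancellation of singularities. Carefully matching leading terms then yields $c_w = |e_{\gamma}|\cdot c_w^{0}$ for an explicit $c_w^{0}$ expressible as a ratio of Weyl-type denominators, and crucially $c_w^{0}$ will turn out to equal (a fixed multiple of) $\Tr(A_w)=\Tr(|\pi_w(a_{\rho})|)$.

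The identification $c_w^{0}=\mathrm{const}\cdot\Tr(A_w)$ is the conceptual heart. I would establish it by computing $\Tr(A_w)=\Tr(|\pi_w(a_\rho)|)$ directly: on the highest weight module $V_{w\cdot_{\epsilon,q}q^{2\gamma}}$, the operator $a_\rho$ acts on the weight space at weight $(w\cdot_{\epsilon,q}q^{2\gamma})q^{2\alpha_+}$ by the scalar whose absolute value is $|(w\cdot_{\epsilon,q}q^{2\gamma})_P(\rho)|\,q^{2(\rho,\alpha_+)} = q^{2(\rho, w(\gamma-\rho)+\rho)}q^{2(\rho,\alpha_+)}$ (using that $\epsilon$ reduced makes the $\epsilon_Q$-factors have modulus one, and using Lemma \ref{LemDotAction}). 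Summing over weights gives $\Tr(A_w)= q^{2(\rho,w(\gamma-\rho)+\rho)}\sum_{\alpha_+\in\hat{Q}^+}q^{2(\rho,\alpha_+)}\dim(V_{w\cdot_{\epsilon,q}q^{2\gamma}}(\alpha_+))$, and the latter sum — a specialized character at $\omega=0$ — can be evaluated by the same character-formula manipulation as above, matching $F_w^{(\gamma)}$ evaluated at $\omega=0$ divided by the Weyl denominator at $\omega=0$. Comparing with the $c_w$ extracted from \eqref{EqFormcDepOm} shows $c_w/\Tr(A_w)$ is independent of $w$. Finally, since $\int_{O_{x,q}^{\nu}}1=1$ forces $\sum_{w\in W_{\nu}^-}c_w\Tr(A_w)\big(\int_{\T^s}1\,\rd\theta\big)=\sum_{w}c_w\Tr(A_w)=1$ (the $\wt(1)=0$ term), we get $c_w = \big(\sum_{w'\in W_{\nu}^-}\Tr(A_{w'})\big)^{-1}$ after absorbing the common constant, which substituted into \eqref{EqIntf} gives exactly the claimed formula.

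There is an alternative, cleaner route that avoids the delicate asymptotics: use the test element $a_\rho$ itself (rather than general $a_\varpi$) together with the modular theory. Since $\sigma_z$ is implemented by $\Ad(a_\rho)$ (see \eqref{EqSigmaAda}), the functional $f\mapsto \sum_w c_w\Tr(\pi_w(f)A_w)\int_{\T^s}e^{2\pi i(\wt(f),\theta)}\rd\theta$ is the \emph{unique} $\sigma$-invariant-compatible faithful normal state in its equivalence class once the ratios $c_w$ are pinned down, and the ratios are determined by evaluating on $a_\rho$ and its translates $a_{w^{-1}(\varpi+\tau(\varpi))}$-type elements, using the Corollary after Lemma \ref{LemGoFurther} (stability of $\Lambda_\nu^{\gg}$ under $\lambda\mapsto\lambda q^{-w^{-1}(\varpi+\tau(\varpi))}$) to move between the cells. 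I expect the main obstacle to be precisely the control of cancellations in the limit: one must rule out that numerator and denominator of the right side of \eqref{EqFormcDepOm} both vanish to the same order as $n\to\infty$ along $\omega=n\rho$, which is exactly why Proposition \ref{PropAlwaysPositive} (strict positivity $(\rho-\gamma,\beta)>0$ on $\hat{\Delta}_c^+$) is invoked — it forces the $v=e$ term of $F_w^{(\gamma)}$ to strictly dominate, so the limit is a genuine nonzero number and the extraction of $c_w$ is legitimate.
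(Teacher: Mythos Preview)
Your overall strategy --- take $\omega=n\rho$, let $n\to\infty$ in \eqref{EqFormcDepOm}, and invoke Proposition \ref{PropAlwaysPositive} to control the numerator --- is exactly the paper's approach. However, your execution of the limit is off, and this sends the second paragraph in the wrong direction.

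The limit of the right-hand side of \eqref{EqFormcDepOmRho} is simply $|e_\gamma|$, with no residual $w$-dependent factor $c_w^0$. In the denominator $\bigl|\sum_{w'\in W}\sgn(w')q^{2n(\rho,\rho-w'^{-1}\rho)}\bigr|$, every $w'\neq e$ contributes a term with strictly positive exponent, so the sum tends to $1$. In the numerator $\bigl|\sum_{v\in W_\nu^+}\hsgn(v)q^{2n(\rho-\gamma,\,w^{-1}\rho-v^{-1}w^{-1}\rho)}\bigr|$ you need not only Proposition \ref{PropAlwaysPositive} but also Lemma \ref{LemDecompRho}, which guarantees $w^{-1}\rho - v^{-1}w^{-1}\rho\in\hat{Q}_c^+\setminus\{0\}$ for $v\neq e$; combined with $(\rho-\gamma,\beta)>0$ on $\hat\Delta_c^+$ this forces each non-identity term to vanish as $n\to\infty$, so the numerator also tends to $1$. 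The left-hand side tends to $c_w$ (dominated convergence via the Kostant partition-function bound, as in the paper). Hence $c_w=|e_\gamma|$ for \emph{every} $w\in W_\nu^-$: the constants $c_w$ are all equal, not proportional to $\Tr(A_w)$.

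This makes your second paragraph both unnecessary and incorrect: the intermediate claim ``$c_w/\Tr(A_w)$ is independent of $w$'' is false in general --- it is $c_w$ itself that is independent of $w$. Once $c_w=|e_\gamma|$ is known, the normalization $\int_{O_{x,q}^{\nu}}1=1$ gives $|e_\gamma|\sum_{w}\Tr(A_w)=1$, and substituting $c_w=(\sum_{w'}\Tr(A_{w'}))^{-1}$ into \eqref{EqIntf} yields the stated formula directly. Your alternative modular-theory route is therefore also superfluous.
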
 
\begin{proof}
Consider in \eqref{EqFormcDepOm} the case where $\omega = (n-1)\rho$ for $n\in \Z_{>0}$, 
\begin{equation}\label{EqFormcDepOmRho}
c_w \sum_{\alpha_+\in \hat{Q}^+}   q^{2n(\rho,\alpha_+)} \dim(V_{w\cdot_{\epsilon,q} q^{2\gamma}}(\alpha_+))
=  |e_{\gamma}| \frac{\left|\sum_{v\in W_{\nu}^+} \hsgn(v) q^{2n(\rho - \gamma,w^{-1}\rho-v^{-1}w^{-1}\rho)}\right|}{\left|\sum_{w\in W} \sgn(w) q^{2n(\rho,\rho- w^{-1}\rho)}\right|}
\end{equation}
Since 
\[
\dim(V_{w\cdot_{\epsilon,q} q^{2\gamma}}(\alpha_+)) \leq \sum_{\alpha'_+ = \alpha_+} P(\alpha'),
\]
with $P$ Kostant's partition function, we have that
\[
x \mapsto \sum_{\alpha_+ \in \hat{Q}^+} x^{(\rho,\alpha_+)}  \dim(V_{w\cdot_{\epsilon,q} q^{2\gamma}}(\alpha_+))
\]
is uniformly convergent on $(-1,1)$. We hence see that, letting $n$ tend to infinity, the left hand side of \eqref{EqFormcDepOmRho} tends to $c_w$. On the other hand, by Lemma \ref{LemDecompRho} and Proposition \ref{PropAlwaysPositive}, we obtain upon letting $n$ tend to infinity that the right hand side tends to $|e_{\gamma}|$. We thus obtain
\[
c_w = |e_{\gamma}|,
\]
from which the theorem immediately follows. 
\end{proof}

Let us now come back to the proof of Proposition \ref{PropAlwaysPositive}. 

\begin{proof}[Proof (of Proposition \ref{PropAlwaysPositive})]

For $\Gamma' \subseteq \Gamma$ a $\tau$-stable Dynkin subdiagram we let $\nu'$ be the twisting datum on $\Gamma'$ obtained by restriction. For $\lambda \in \mbH_{\tau}^{\gg}$ we let $\lambda' \in \mbH_{\tau'}^{\gg}$ be the unique element such that $\lambda_{P'}(\alpha_r) = \lambda_P(\alpha_r)$ for all $r\in I'$. Then clearly 
\[
\lambda \in \Lambda_{\nu}^{\gg} \quad \Rightarrow \quad \lambda'\in \Lambda_{\nu'}^{\gg}.
\]
It is also clear that if $\beta \in \hat{\Delta}_{\nu,c}^+$ with $\beta$ in the integer span of the $\alpha_{r,+}$ with $r\in I'$, then $\beta \in \hat{\Delta}_{\nu',c}^+$. Since also $(\rho,\beta) =(\rho',\beta)$, it follows that it suffices to prove the proposition in the case where $\beta$ contains all $\alpha_r$ in its decomposition. In particular, we may restrict to the case where $I$ is $\tau$-connected and $\epsilon$ is regular. Finally, by Lemma \ref{LemInvPosDefDat} we may replace $\epsilon$ by any of its representatives within its equivalence class. We will however not always consider the strongly reduced versions, but will take `preferred choices' allowing for our induction process to go through. 

Fix now $\nu$ and $\lambda =q^{2\gamma} \in \Lambda_{\nu}^{\gg}$. It is enough to show that \eqref{EqPosBeta} holds for $\beta$ a simple root in $\hat{\Delta}_c^+$.

Assume first that $\tau = \id$ and $\mfu^{\nu}$ has non-trivial center (the hermitian symmetric case). Taking $\nu$ in its strongly reduced form, we have for this case that any simple compact root in $\hat{\Delta}_c^+$ is actually simple in $\Delta^+$. Hence \eqref{EqPosBeta} holds by the considerations in the beginning of the proof. 

Assume now that $\tau  =\id$ but $\mfu^{\nu}$ is semisimple. We recall that we use the Vogan diagram formalism for reduced symmetric deformation data, coloring the roots with negative value black. For specific implementations of root systems, we will use the conventions as in \cite[Appendix, \S 2, Table 1]{OV90}.

We will prove this case by induction. We start with the cases which will serve as the induction basis.
\begin{itemize}
\item  If our Vogan diagram is given by $\begin{tikzpicture}[scale=.4,baseline=-0.1cm]
\draw (2.8cm,0) circle (.2cm)node[below]{\small $1$};
\draw[fill = black] (3.8cm,0) circle (.2cm)  node[below]{\small $2$};
\draw
(3,-.06) --++ (0.6,0)
(3,+.06) --++ (0.6,0);
\draw
(3.4,0) --++ (60:-.2)
(3.4,0) --++ (-60:-.2);
\end{tikzpicture}$, with short root $\alpha_2$, then the positive compact roots are $\alpha_1,\beta = \alpha_1+2\alpha_2$. Clearly $(\rho- \gamma,\alpha_1) >0$. As $\alpha_1,\beta$ are orthogonal, it further easily follows that for $\lambda = q^{2\gamma} \in \Lambda_{\nu}^{\gg}$ we have
\[
\sum_{v\in W_{\nu}^+} \hsgn(v) q^{2n(\rho - \gamma,\rho-v^{-1}\rho)} = A_n + q^{2n(\rho- \gamma,\beta)(\rho,\beta^{\vee})}B_n 
\]
with $A_n$ convergent and $B_n \rightarrow 1$. Since the left hand side of \eqref{EqFormcDepOmRho} converges (for $w=e$), it follows that we must have $(\rho- \gamma,\beta) >0$.

\item If our Vogan diagram is given by  $\begin{tikzpicture}[scale=.4,baseline=-0.1cm]
\draw[fill=black] (1.8cm,0) circle (.2cm)node[below]{\small $1$}; 
\draw (2cm,0) --+ (0.6cm,0);
\draw (2.8cm,0) circle (.2cm)node[below]{\small $2$};
\draw[fill = black] (3.8cm,0) circle (.2cm)  node[below]{\small $3$};
\draw
(3,-.06) --++ (0.6,0)
(3,+.06) --++ (0.6,0);
\draw
(3.4,0) --++ (60:-.2)
(3.4,0) --++ (-60:-.2);
\end{tikzpicture}$, we note that the simple compact roots are $\alpha_1,\beta = \alpha_2+2\alpha_3$ and $\delta  = \alpha_1+\alpha_2+\alpha_3$, which are again mutually orthogonal. Using the case proven above, we then see that $(\rho- \gamma,\alpha_1)>0$, $(\rho- \gamma,\beta)>0$ and 
\[
\sum_{v\in W_{\nu}^+} \hsgn(v) q^{2n(\rho - \gamma,\rho-v^{-1}\rho)} = A_n + q^{2n(\rho- \gamma,\delta)(\rho,\delta^{\vee})}B_n 
\]
with $A_n$ convergent and $B_n \rightarrow 1$. We can conclude again that also $(\rho-\gamma,\delta)>0$.
\item Finally, if our Vogan diagram is given by the type $G_2$ diagram $\begin{tikzpicture}[scale=.4,baseline=-0.1cm]
\draw (2.8cm,0) circle (.2cm)node[below]{\small $1$};
\draw[fill = black] (3.8cm,0) circle (.2cm)  node[below]{\small $2$};
\draw
(3,-.1) --++ (0.6,0)
(3,-0) --++(0.6,0)
(3,+.1) --++ (0.6,0);
\draw
(3.25,0) --++ (120:-.3)
(3.25,0) --++ (-120:-.3);
\end{tikzpicture}$, our compact positive roots are easily seen to be $\alpha_1$ and $\beta = 3\alpha_1+2\alpha_2$. As these are again orthogonal, the same method as in the previous cases allows to conclude that \eqref{EqPosBeta} holds also for this case.
\end{itemize}

Let us now prove the remaining cases by induction on the rank. We will follow the list\footnote{This table rather contains the Kac diagrams, but one easily passes to the associated Vogan diagram by chopping of the simple root $\alpha_0$.} in \cite[Appendix, \S 2, Table 7, type I]{OV90}.
\begin{itemize}
\item Assume we are in the case of the Vogan diagram encoding the semisimple Lie algebra $\mfso_{2p,2(l-p)+1}$,
\[ 
\begin{tikzpicture}[scale=.4,baseline]
\draw (0cm,0) circle (.2cm) node[above]{\small $1$} ;
\draw (0.2cm,0) -- +(0.2cm,0);
\draw[dotted] (0.4cm,0) --+ (1cm,0);
\draw (1.4cm,0) --+ (0.2cm,0);
\draw (1.8cm,0) circle (.2cm); 
\draw (2cm,0) --+ (0.6cm,0);
\draw[fill = black] (2.8cm,0) circle (.2cm)  node[above]{\small $p$};
\draw (3cm,0) --+ (0.6cm,0);
\draw(3.8cm,0) circle (.2cm);
\draw (4.2cm,0) -- +(0.2cm,0);
\draw[dotted] (4.4cm,0) --+ (1cm,0);
\draw (5.4cm,0) --+ (0.2cm,0);
\draw (5.8cm,0) circle (.2cm);
\draw (6.8cm,0) circle (.2cm) node[above]{\small $\ell$};
\draw
(6,-.06) --++ (0.6,0)
(6,+.06) --++ (0.6,0);
\draw
(6.4,0) --++ (60:-.2)
(6.4,0) --++ (-60:-.2);
\end{tikzpicture},
\]
where we may assume $p\geq 2$ as we have already dealt with the Hermitian case. The only compact simple root which is not simple in $\Delta^+$ is given as $\beta = \alpha_{p-1} + 2\alpha_p + \ldots + 2\alpha_l$. We may hence restrict to the case $p=2$. In this case, we have 
\[ 
\begin{tikzpicture}[scale=.4,baseline=-0.1cm]
\draw (0.6cm,0) circle (.2cm) node[above]{\small $1$};  
\draw (0.8cm,0) --+ (0.6cm,0);
\draw[fill = black] (1.6cm,0) circle (.2cm)  node[above]{\small $2$};
\draw (1.8cm,0) --+ (0.6cm,0);
\draw(2.6cm,0) circle (.2cm);
\draw (2.8cm,0) -- +(0.6cm,0);
\draw (3.6cm,0) circle (.2cm);
\draw(3.8cm,0) --++ (0.4cm,0);
\draw[dotted] (4.4cm,0) --+ (1cm,0);
\draw (5.4cm,0) --+ (0.2cm,0);
\draw (5.8cm,0) circle (.2cm);
\draw (6.8cm,0) circle (.2cm) node[above]{\small $\ell$};
\draw
(6,-.06) --++ (0.6,0)
(6,+.06) --++ (0.6,0);
\draw
(6.4,0) --++ (60:-.2)
(6.4,0) --++ (-60:-.2);
\end{tikzpicture} \cong 
\begin{tikzpicture}[scale=.4,baseline=-0.1cm]
\draw[fill = black] (0.6cm,0) circle (.2cm) node[above]{\small $1$};  
\draw (0.8cm,0) --+ (0.6cm,0);
\draw (1.6cm,0) circle (.2cm)  node[above]{\small $2$};
\draw (1.8cm,0) --+ (0.6cm,0);
\draw[fill = black](2.6cm,0) circle (.2cm);
\draw (2.8cm,0) -- +(0.6cm,0);
\draw (3.6cm,0) circle (.2cm);
\draw(3.8cm,0) --++ (0.4cm,0);
\draw[dotted] (4.4cm,0) --+ (1cm,0);
\draw (5.4cm,0) --+ (0.2cm,0);
\draw (5.8cm,0) circle (.2cm);
\draw (6.8cm,0) circle (.2cm) node[above]{\small $\ell$};
\draw
(6,-.06) --++ (0.6,0)
(6,+.06) --++ (0.6,0);
\draw
(6.4,0) --++ (60:-.2)
(6.4,0) --++ (-60:-.2);
\end{tikzpicture},
\]
which apart from the trivial compact simple roots has also the compact roots $\alpha_1+\alpha_2+\alpha_3$ and $\alpha_2 + 2\alpha_3+\ldots + 2\alpha_l$. Hence we can chop our diagram into two pieces of lower rank to deal with these two components. The first component gives a hermitian symmetric pair, while by induction the second pair can be reduced to the case 
\[
\begin{tikzpicture}[scale=.4,baseline=-0.1cm]
\draw (1.8cm,0) circle (.2cm)node[below]{\small $1$}; 
\draw (2cm,0) --+ (0.6cm,0);
\draw[fill=black] (2.8cm,0) circle (.2cm)node[below]{\small $2$};
\draw (3.8cm,0) circle (.2cm)  node[below]{\small $3$};
\draw
(3,-.06) --++ (0.6,0)
(3,+.06) --++ (0.6,0);
\draw
(3.4,0) --++ (60:-.2)
(3.4,0) --++ (-60:-.2);
\end{tikzpicture} \cong
\begin{tikzpicture}[scale=.4,baseline=-0.1cm]
\draw[fill=black] (1.8cm,0) circle (.2cm)node[below]{\small $1$}; 
\draw (2cm,0) --+ (0.6cm,0);
\draw (2.8cm,0) circle (.2cm)node[below]{\small $2$};
\draw[fill = black] (3.8cm,0) circle (.2cm)  node[below]{\small $3$};
\draw
(3,-.06) --++ (0.6,0)
(3,+.06) --++ (0.6,0);
\draw
(3.4,0) --++ (60:-.2)
(3.4,0) --++ (-60:-.2);
\end{tikzpicture},
\]
which was already dealt with before. 
\item Assume that we are dealing with the Vogan diagram of $\mfsp_{p,l-p}$, given by 
\[ 
\begin{tikzpicture}[scale=.4,baseline]
\draw (0cm,0) circle (.2cm) node[above]{\small $1$} ;
\draw (0.2cm,0) -- +(0.2cm,0);
\draw[dotted] (0.4cm,0) --+ (1cm,0);
\draw (1.4cm,0) --+ (0.2cm,0);
\draw (1.8cm,0) circle (.2cm); 
\draw (2cm,0) --+ (0.6cm,0);
\draw[fill = black] (2.8cm,0) circle (.2cm)  node[above]{\small $p$};
\draw (3cm,0) --+ (0.6cm,0);
\draw(3.8cm,0) circle (.2cm);
\draw (4.2cm,0) -- +(0.2cm,0);
\draw[dotted] (4.4cm,0) --+ (1cm,0);
\draw (5.4cm,0) --+ (0.2cm,0);
\draw (5.8cm,0) circle (.2cm);
\draw (6.8cm,0) circle (.2cm) node[above]{\small $\ell$};
\draw
(6,-.06) --++ (0.6,0)
(6,+.06) --++ (0.6,0);
\draw
(6.3,0) --++ (120:-.2)
(6.3,0) --++ (-120:-.2);
\end{tikzpicture}
\]
with $1 \leq p \leq \lfloor l/2\rfloor$. Now the compact simple root which is not a simple root in $\Delta^+$ is given as $2\alpha_p+ 2\alpha_{p+1}+\ldots + 2\alpha_{l-1}+\alpha_l$. We may hence assume that $p=1$. Now 
\[
\begin{tikzpicture}[scale=.4,baseline=-0.1cm]
\draw[fill = black]  (0.6cm,0) circle (.2cm) node[above]{\small $1$};  
\draw (0.8cm,0) --+ (0.6cm,0);
\draw(1.6cm,0) circle (.2cm)  node[above]{\small $2$};
\draw (1.8cm,0) --+ (0.6cm,0);
\draw(2.6cm,0) circle (.2cm);
\draw (2.8cm,0) -- +(0.6cm,0);
\draw (3.6cm,0) circle (.2cm);
\draw(3.8cm,0) --++ (0.4cm,0);
\draw[dotted] (4.4cm,0) --+ (1cm,0);
\draw (5.4cm,0) --+ (0.2cm,0);
\draw (5.8cm,0) circle (.2cm);
\draw (6.8cm,0) circle (.2cm) node[above]{\small $\ell$};
\draw
(6,-.06) --++ (0.6,0)
(6,+.06) --++ (0.6,0);
\draw
(6.3,0) --++ (120:-.2)
(6.3,0) --++ (-120:-.2);
\end{tikzpicture} \cong 
\begin{tikzpicture}[scale=.4,baseline=-0.1cm]
\draw[fill = black] (0.6cm,0) circle (.2cm) node[above]{\small $1$};  
\draw (0.8cm,0) --+ (0.6cm,0);
\draw[fill = black] (1.6cm,0) circle (.2cm)  node[above]{\small $2$};
\draw (1.8cm,0) --+ (0.6cm,0);
\draw(2.6cm,0) circle (.2cm);
\draw (2.8cm,0) -- +(0.6cm,0);
\draw (3.6cm,0) circle (.2cm);
\draw(3.8cm,0) --++ (0.4cm,0);
\draw[dotted] (4.4cm,0) --+ (1cm,0);
\draw (5.4cm,0) --+ (0.2cm,0);
\draw (5.8cm,0) circle (.2cm);
\draw (6.8cm,0) circle (.2cm) node[above]{\small $\ell$};
\draw
(6,-.06) --++ (0.6,0)
(6,+.06) --++ (0.6,0);
\draw
(6.3,0) --++ (120:-.2)
(6.3,0) --++ (-120:-.2);
\end{tikzpicture},
\]
which has non-trivial compact simple roots given by $\alpha_1+\alpha_2$ and $2\alpha_2+ \ldots +2\alpha_{l-1}+\alpha_l$. Chopping up the diagram, we are by induction reduced to the case $\begin{tikzpicture}[scale=.4,baseline=-0.1cm]
\draw[fill = black] (2.8cm,0) circle (.2cm)node[below]{\small $1$};
\draw (3.8cm,0) circle (.2cm)  node[below]{\small $2$};
\draw
(3,-.06) --++ (0.6,0)
(3,+.06) --++ (0.6,0);
\draw
(3.3,0) --++ (120:-.2)
(3.3,0) --++ (-120:-.2);
\end{tikzpicture}$, which was already dealt with in the induction basis. 
\item Assume that we are dealing with the Vogan diagram of $\mfso_{2p,2(l-p)}$ for $2\leq p \leq \lfloor l/2\rfloor$, 
\[
\begin{tikzpicture}[scale=.4,baseline]
\node (v1) at (10,0.8) {};
\node (v2) at (10,-0.8) {};
\draw (0cm,0) circle (.2cm) node[above]{\small $1$} ;
\draw (0.2cm,0) -- +(1cm,0);
\draw (1.4cm,0) circle (.2cm);
\draw (1.6cm,0) -- +(0.2cm,0);
\draw[dotted] (1.8cm,0) --+ (1cm,0);
\draw (2.8cm,0) --+ (0.2cm,0);
\draw (3.2cm,0) circle (.2cm); 
\draw (3.6cm,0) --+ (1cm,0);
\draw[fill = black] (4.8cm,0) circle (.2cm)  node[above]{\small $p$};
\draw (5cm,0) --+ (1cm,0);
\draw (6.2cm,0) circle (.2cm);
\draw (6.4cm,0) -- +(0.2cm,0);
\draw[dotted] (6.6cm,0) --+ (1cm,0);
\draw (7.6cm,0) --+ (0.2cm,0);
\draw (8cm,0) circle (.2cm);
\draw (8.2cm,0) --+ (1.6,0.6);
\draw (8.2cm,0) --+ (1.6,-0.6);
\draw (10cm,0.6) circle (.2cm) node[above]{\small $\ell-1$} ;
\draw (10cm,-0.6) circle (.2cm) node[below]{\small $\ell$};
\end{tikzpicture}.
\]
The compact simple root which is not simple in $\Delta^+$ is given by $\alpha_{p-1}+2\alpha_p+ \ldots + 2\alpha_{l-2}+\alpha_{l-1}+\alpha_l$, so we can restrict to the case $p=2$. In this case 
\begin{table}[ht]
\begin{center}
\bgroup
\def\arraystretch{3}
{\setlength{\tabcolsep}{1.5em}
\begin{tabular}{ccc}
\begin{tikzpicture}[scale=.4,baseline=-0.1cm]
\draw (3.2cm,0) circle (.2cm); 
\draw (3.6cm,0) --+ (1cm,0);
\draw[fill = black] (4.8cm,0) circle (.2cm)  node[above]{\small $2$};
\draw (5cm,0) --+ (1cm,0);
\draw (6.2cm,0) circle (.2cm);
\draw (6.4cm,0) -- +(0.2cm,0);
\draw[dotted] (6.6cm,0) --+ (1cm,0);
\draw (7.6cm,0) --+ (0.2cm,0);
\draw (8cm,0) circle (.2cm);
\draw (8.2cm,0) --+ (1.6,0.6);
\draw (8.2cm,0) --+ (1.6,-0.6);
\draw (10cm,0.6) circle (.2cm) node[above]{\small $\ell-1$} ;
\draw (10cm,-0.6) circle (.2cm) node[below]{\small $\ell$};
\end{tikzpicture}
&$\cong$ & 
\begin{tikzpicture}[scale=.4,baseline=-0.1cm]
\draw[fill = black] (3.2cm,0) circle (.2cm); 
\draw (3.6cm,0) --+ (1cm,0);
\draw (4.8cm,0) circle (.2cm)  node[above]{\small $2$};
\draw (5cm,0) --+ (1cm,0);
\draw[fill = black] (6.2cm,0) circle (.2cm);
\draw (6.4cm,0) -- +(0.2cm,0);
\draw[dotted] (6.6cm,0) --+ (1cm,0);
\draw (7.6cm,0) --+ (0.2cm,0);
\draw (8cm,0) circle (.2cm);
\draw (8.2cm,0) --+ (1.6,0.6);
\draw (8.2cm,0) --+ (1.6,-0.6);
\draw (10cm,0.6) circle (.2cm) node[above]{\small $\ell-1$} ;
\draw (10cm,-0.6) circle (.2cm) node[below]{\small $\ell$};
\end{tikzpicture},
\end{tabular}
}
\egroup
\end{center}
\end{table}

where the non-trivial compact simple roots of the latter are given by $\alpha_1+\alpha_2+\alpha_3$ and $\alpha_2+2\alpha_3+\ldots + 2\alpha_{l-2} + \alpha_{l-1}+\alpha_l$. Chopping up the diagram, we can by induction reduce to the case 
\[
\begin{tikzpicture}[scale=.4,baseline=-0.1cm]
\draw (6.2cm,0) circle (.2cm) node[above]{\small $1$};
\draw (6.4cm,0) -- +(1.4cm,0);
\draw[fill = black] (8cm,0) circle (.2cm) node[above]{\small $2$};
\draw (8.2cm,0) --+ (1.6,0.6);
\draw (8.2cm,0) --+ (1.6,-0.6);
\draw (10cm,0.6) circle (.2cm) node[above]{\small $3$};
\draw (10cm,-0.6) circle (.2cm) node[below]{\small $4$};
\end{tikzpicture}
\quad
\cong 
\quad
\begin{tikzpicture}[scale=.4,baseline=-0.1cm]
\draw[fill = black] (6.2cm,0) circle (.2cm) node[above]{\small $1$};
\draw (6.4cm,0) -- +(1.4cm,0);
\draw (8cm,0) circle (.2cm) node[above]{\small $2$};
\draw (8.2cm,0) --+ (1.6,0.6);
\draw (8.2cm,0) --+ (1.6,-0.6);
\draw[fill = black] (10cm,0.6) circle (.2cm) node[above]{\small $3$};
\draw[fill = black] (10cm,-0.6) circle (.2cm) node[below]{\small $4$};
\end{tikzpicture},
\]
where the simple compact roots of the latter are $\alpha_2,\alpha_1+\alpha_2+\alpha_3, \alpha_1+\alpha_2+ \alpha_4,\alpha_2+\alpha_3+\alpha_4$, all of which live on components of hermitian symmetric type (that have already been dealt with).
\end{itemize} 

This already takes care of all Vogan diagrams of classical type with $\tau = \id$. We now deal with the exceptional cases. 
\begin{itemize}
\item For the diagram 
\[
 \begin{tikzpicture}[scale=.4,baseline]
\draw (0cm,0) circle (.2cm) node[above]{\small $1$};
\draw (0.2cm,0) -- +(0.6cm,0);
\draw[fill = black] (1cm,0) circle (.2cm) node[above]{\small $2$};
\draw (1.2cm,0) -- +(0.6cm,0);
\draw (2cm,0) circle (.2cm) node[above]{\small $3$};
\draw (2.2cm,0) -- +(0.6cm,0);
\draw (3cm,0) circle (.2cm) node[above]{\small $4$};
\draw (3.2cm,0) -- +(0.6cm,0);
\draw (4cm,0) circle (.2cm) node[above]{\small $5$};
\draw (2cm,-0.2) -- +(0cm,-0.6);
\draw (2cm,-1) circle (.2cm) node[below]{\small $6$};
\end{tikzpicture}
\]
we note that the root $\alpha_1+2\alpha_2 + 2\alpha_3 + \alpha_4+ \alpha_6$ is compact, hence we can reduce to a diagram of type $D_5$.
\item For the diagram
\[
 \begin{tikzpicture}[scale=.4,baseline=-6pt]
\draw (0cm,0) circle (.2cm) node[above]{\small $1$};
\draw (0.2cm,0) -- +(0.6cm,0);
\draw (1cm,0) circle (.2cm) node[above]{\small $2$};
\draw (1.2cm,0) -- +(0.6cm,0);
\draw (2cm,0) circle (.2cm) node[above]{\small $3$};
\draw (2.2cm,0) -- +(0.6cm,0);
\draw (3cm,0) circle (.2cm) node[above]{\small $4$};
\draw (3.2cm,0) -- +(0.6cm,0);
\draw (4cm,0) circle (.2cm) node[above]{\small $5$};
\draw (4.2cm,0) -- +(0.6cm,0);
\draw (5cm,0) circle (.2cm) node[above]{\small $6$};
\draw (3cm,-0.2) -- +(0cm,-0.6);
\draw[fill = black] (3cm,-1) circle (.2cm) node[below]{\small $7$};
\end{tikzpicture}
\]
we have the  compact root $\alpha_2 + 2\alpha_3 + 3\alpha_4 + 2\alpha_5 +\alpha_6+2\alpha_7$, so that we can reduce to the case $E_6$.
\item   For the diagram
\[
 \begin{tikzpicture}[scale=.4,baseline=-6pt]
\draw (0cm,0) circle (.2cm) node[above]{\small $1$};
\draw (0.2cm,0) -- +(0.6cm,0);
\draw[fill=black] (1cm,0) circle (.2cm) node[above]{\small $2$};
\draw (1.2cm,0) -- +(0.6cm,0);
\draw (2cm,0) circle (.2cm) node[above]{\small $3$};
\draw (2.2cm,0) -- +(0.6cm,0);
\draw (3cm,0) circle (.2cm) node[above]{\small $4$};
\draw (3.2cm,0) -- +(0.6cm,0);
\draw (4cm,0) circle (.2cm) node[above]{\small $5$};
\draw (4.2cm,0) -- +(0.6cm,0);
\draw (5cm,0) circle (.2cm) node[above]{\small $6$};
\draw (3cm,-0.2) -- +(0cm,-0.6);
\draw (3cm,-1) circle (.2cm) node[below]{\small $7$};
\end{tikzpicture}
\]
we have the compact root $2\alpha_2 + 2\alpha_3 + 3\alpha_4+2\alpha_5+\alpha_6+2\alpha_7$,  so that we can reduce to the case $E_6$.
\item For the diagram 
\[
 \begin{tikzpicture}[scale=.4,baseline=-6pt]
\draw (-1cm,0) circle (.2cm) node[above]{\small $1$};
\draw (-0.8cm,0) -- +(0.6cm,0);
\draw (0cm,0) circle (.2cm) node[above]{\small $2$};
\draw (0.2cm,0) -- +(0.6cm,0);
\draw (1cm,0) circle (.2cm) node[above]{\small $3$};
\draw (1.2cm,0) -- +(0.6cm,0);
\draw (2cm,0) circle (.2cm) node[above]{\small $4$};
\draw (2.2cm,0) -- +(0.6cm,0);
\draw (3cm,0) circle (.2cm) node[above]{\small $5$};
\draw (3.2cm,0) -- +(0.6cm,0);
\draw (4cm,0) circle (.2cm) node[above]{\small $6$};
\draw (4.2cm,0) -- +(0.6cm,0);
\draw[fill=black] (5cm,0) circle (.2cm) node[above]{\small $7$};
\draw (3cm,-0.2) -- +(0cm,-0.6);
\draw (3cm,-1) circle (.2cm) node[below]{\small $8$};
\end{tikzpicture},
\]
we have the  compact root $\alpha_2+2\alpha_3 + 3\alpha_4 + 4 \alpha_5 + 3\alpha_6 + 2\alpha_7+2\alpha_8$, so that we can reduce to the case $E_7$.
\item Using the equivalence
\[
 \begin{tikzpicture}[scale=.4,baseline=0.1pt]
\draw[fill=black] (-1cm,0) circle (.2cm) node[above]{\small $1$};
\draw (-0.8cm,0) -- +(0.6cm,0);
\draw (0cm,0) circle (.2cm) node[above]{\small $2$};
\draw (0.2cm,0) -- +(0.6cm,0);
\draw (1cm,0) circle (.2cm) node[above]{\small $3$};
\draw (1.2cm,0) -- +(0.6cm,0);
\draw (2cm,0) circle (.2cm) node[above]{\small $4$};
\draw (2.2cm,0) -- +(0.6cm,0);
\draw (3cm,0) circle (.2cm) node[above]{\small $5$};
\draw (3.2cm,0) -- +(0.6cm,0);
\draw (4cm,0) circle (.2cm) node[above]{\small $6$};
\draw (4.2cm,0) -- +(0.6cm,0);
\draw (5cm,0) circle (.2cm) node[above]{\small $7$};
\draw (3cm,-0.2) -- +(0cm,-0.6);
\draw (3cm,-1) circle (.2cm) node[below]{\small $8$};
\end{tikzpicture}
\quad
\cong
\quad
 \begin{tikzpicture}[scale=.4,baseline=0.1pt]
\draw (-1cm,0) circle (.2cm) node[above]{\small $1$};
\draw (-0.8cm,0) -- +(0.6cm,0);
\draw (0cm,0) circle (.2cm) node[above]{\small $2$};
\draw (0.2cm,0) -- +(0.6cm,0);
\draw (1cm,0) circle (.2cm) node[above]{\small $3$};
\draw (1.2cm,0) -- +(0.6cm,0);
\draw (2cm,0) circle (.2cm) node[above]{\small $4$};
\draw (2.2cm,0) -- +(0.6cm,0);
\draw (3cm,0) circle (.2cm) node[above]{\small $5$};
\draw (3.2cm,0) -- +(0.6cm,0);
\draw (4cm,0) circle (.2cm) node[above]{\small $6$};
\draw (4.2cm,0) -- +(0.6cm,0);
\draw[fill = black] (5cm,0) circle (.2cm) node[above]{\small $7$};
\draw (3cm,-0.2) -- +(0cm,-0.6);
\draw[fill = black] (3cm,-1) circle (.2cm) node[below]{\small $8$};
\end{tikzpicture},
\]
we have for the latter diagram the compact roots $\alpha_5+\alpha_6+\alpha_7+\alpha_8$ and $\alpha_2+2\alpha_3 + 3\alpha_4+4\alpha_5 + 3\alpha_6+\alpha_7+2\alpha_8$, so that we can chop the diagram up in the pieces $A_4$ and $E_7$, which have already been dealt with.
\item Using the equivalence
\[
\begin{tikzpicture}[scale=.4,baseline]
\draw (0cm,0) circle (.2cm) node[above]{\small $1$};
\draw (0.2cm,0) --+ (0.6cm,0);
\draw (1cm,0) circle (.2cm) node[above]{\small $2$};
\draw (2cm,0) circle (.2cm)  node[above]{\small $3$};
\draw
(1.2,-.06) --++ (0.6,0)
(1.2,+.06) --++ (0.6,0);
\draw
(1.5,0) --++ (60:.2)
(1.5,0) --++ (-60:.2);
\draw (2.2cm,0) --+ (0.6cm,0);
\draw[fill = black] (3cm,0) circle (.2cm) node[above]{\small $4$};
\end{tikzpicture}
\cong
\begin{tikzpicture}[scale=.4,baseline]
\draw[fill = black] (0cm,0) circle (.2cm) node[above]{\small $1$};
\draw (0.2cm,0) --+ (0.6cm,0);
\draw (1cm,0) circle (.2cm) node[above]{\small $2$};
\draw[fill = black] (2cm,0) circle (.2cm)  node[above]{\small $3$};
\draw
(1.2,-.06) --++ (0.6,0)
(1.2,+.06) --++ (0.6,0);
\draw
(1.5,0) --++ (60:.2)
(1.5,0) --++ (-60:.2);
\draw (2.2cm,0) --+ (0.6cm,0);
\draw (3cm,0) circle (.2cm) node[above]{\small $4$};
\end{tikzpicture},
\]
we have for the latter diagram the compact roots $\alpha_1 + \alpha_2 + \alpha_3$ and $2\alpha_2 + 2\alpha_3 + \alpha_4$, so that we can reduce to the cases $B_3$ and $C_3$, which have already been dealt with.
\item Using the equivalence
\[
\begin{tikzpicture}[scale=.4,baseline]
\draw[fill = black] (0cm,0) circle (.2cm) node[above]{\small $1$};
\draw (0.2cm,0) --+ (0.6cm,0);
\draw (1cm,0) circle (.2cm) node[above]{\small $2$};
\draw (2cm,0) circle (.2cm)  node[above]{\small $3$};
\draw
(1.2,-.06) --++ (0.6,0)
(1.2,+.06) --++ (0.6,0);
\draw
(1.5,0) --++ (60:.2)
(1.5,0) --++ (-60:.2);
\draw (2.2cm,0) --+ (0.6cm,0);
\draw (3cm,0) circle (.2cm) node[above]{\small $4$};
\end{tikzpicture}
\cong
\begin{tikzpicture}[scale=.4,baseline]
\draw (0cm,0) circle (.2cm) node[above]{\small $1$};
\draw (0.2cm,0) --+ (0.6cm,0);
\draw[fill = black] (1cm,0) circle (.2cm) node[above]{\small $2$};
\draw (2cm,0) circle (.2cm)  node[above]{\small $3$};
\draw
(1.2,-.06) --++ (0.6,0)
(1.2,+.06) --++ (0.6,0);
\draw
(1.5,0) --++ (60:.2)
(1.5,0) --++ (-60:.2);
\draw (2.2cm,0) --+ (0.6cm,0);
\draw (3cm,0) circle (.2cm) node[above]{\small $4$};
\end{tikzpicture},
\]
we see that the latter has the compact root $2\alpha_2 + \alpha_3$, so that we can reduce to the case $B_2$ already dealt with.
\end{itemize}

This deals with all the cases where $\tau = \id$. 

Assume now that $\tau \neq \id$. If $\epsilon = +$, then we need to prove the result for all $\beta = \alpha_{r,+}$. Since we only need to consider the case where the support of $\beta$ is the whole of $I$, this means that we are in the case of either $A_1\times A_1$ or $A_2$ with non-trivial involution. The theorem follows in this case by the concrete computations in Theorems \ref{TheoAppA1A1} and \ref{TheoClassIrrepA2}. 

Finally, assume that $\tau\neq \id$ and $\epsilon \neq +$. We take the following representative of $\epsilon$ within its equivalence class, listing in the final column the simple $\nu$-compact roots for this preferred representative.

\begin{table}[htpb!]
\begin{center}
\bgroup
\def\arraystretch{2.4}
{\setlength{\tabcolsep}{1.5em}
\begin{tabular}{|c|c|c|c|}
\hline
Diagram & Strong reduction & Preferred representative & Simple $\nu$-compact roots\\ \hline
$A_{2n+1}$& 
\begin{tikzpicture}[scale=.4,baseline]
\node (v1) at (0,0) {};
\node (v2) at (2,0) {};
\node (v3) at (4,0) {};
\node (v4) at (6,0) {};

\draw (0cm,0) circle (.2cm)  node[below]{\small $1$};
\draw (0.2cm,0) -- +(0.2cm,0);
\draw[dotted] (0.4cm,0) --+ (1cm,0);
\draw (1.6cm,0) --+ (0.2cm,0);
\draw (2cm,0) circle (.2cm) node[below]{\small $n$};
\draw (2.2cm,0) -- +(0.6cm,0);
\draw[fill=black] (3cm,0) circle (.2cm) node[below]{\small $n'$};
\draw (3.2cm,0) -- +(0.6cm,0);
\draw (4cm,0) circle (.2cm);
\draw (4.2cm,0) -- +(0.2cm,0);
\draw[dotted] (4.4cm,0) --+ (1cm,0);
\draw (5.6cm,0) --+ (0.2cm,0);
\draw (6cm,0) circle (.2cm);

\draw[<->]
(v1) edge[bend left] (v4);
\draw[<->]
(v2) edge[bend left=50] (v3);
\end{tikzpicture}
& 
\begin{tikzpicture}[scale=.4,baseline]
\node (v1) at (0,0) {};
\node (v2) at (2,0) {};
\node (v3) at (4,0) {};
\node (v4) at (6,0) {};

\draw (0cm,0) circle (.2cm);
\draw (0.2cm,0) -- +(0.2cm,0);
\draw[dotted] (0.4cm,0) --+ (1cm,0);
\draw (1.6cm,0) --+ (0.2cm,0);
\draw (2cm,0) circle (.2cm);
\draw (2.2cm,0) -- +(0.6cm,0);
\draw[fill=black] (3cm,0) circle (.2cm);
\draw (3.2cm,0) -- +(0.6cm,0);
\draw (4cm,0) circle (.2cm);
\draw (4.2cm,0) -- +(0.2cm,0);
\draw[dotted] (4.4cm,0) --+ (1cm,0);
\draw (5.6cm,0) --+ (0.2cm,0);
\draw (6cm,0) circle (.2cm); 

\draw[<->]
(v1) edge[bend left] (v4);
\draw[<->]
(v2) edge[bend left=50] (v3);
\end{tikzpicture}
& 
$\alpha_{\hat{1}},\ldots,\alpha_{\hat{n}},\alpha_{\hat{n}}+\alpha_{n'}$
\\ \hline
$D_{n}, p \textrm{ even} $& 
\begin{tikzpicture}[scale=.4,baseline]
\node (v1) at (8.8,0.8) {};
\node (v2) at (8.8,-0.8) {};
\draw (0cm,0) circle (.2cm) node[above]{\small $1$} ;
\draw (0.2cm,0) -- +(0.6cm,0);
\draw (1cm,0) circle (.2cm);
\draw (1.2cm,0) -- +(0.2cm,0);
\draw[dotted] (1.4cm,0) --+ (1cm,0);
\draw (2.4cm,0) --+ (0.2cm,0);
\draw (2.8cm,0) circle (.2cm); 
\draw (3cm,0) --+ (0.6cm,0);
\draw[fill =black] (3.8cm,0) circle (.2cm)  node[above]{\small $\lfloor \frac{n-p}{2} \rfloor$};
\draw (4cm,0) --+ (0.6cm,0);
\draw (4.8cm,0) circle (.2cm);
\draw (5cm,0) -- +(0.2cm,0);
\draw[dotted] (5.2cm,0) --+ (1cm,0);
\draw (6.2cm,0) --+ (0.2cm,0);
\draw (6.6cm,0) circle (.2cm)  node[above]{\small $n-2$};
\draw (6.8cm,0) --+ (1.6,0.6);
\draw (6.8cm,0) --+ (1.6,-0.6);
\draw (8.6cm,0.6) circle (.2cm)  ;
\draw (8.6cm,-0.6) circle (.2cm) node[below]{\small $n$} ;
\draw[<->]
(v1) edge[bend left] (v2);
\end{tikzpicture}
&
\begin{tikzpicture}[scale=.4,baseline]
\node (v1) at (8.8,0.8) {};
\node (v2) at (8.8,-0.8) {};
\draw (0cm,0) circle (.2cm) node[above]{\small $1$} ;
\draw (0.2cm,0) -- +(0.6cm,0);
\draw (1cm,0) circle (.2cm);
\draw (1.2cm,0) -- +(0.2cm,0);
\draw[dotted] (1.4cm,0) --+ (1cm,0);
\draw (2.4cm,0) --+ (0.2cm,0);
\draw (2.8cm,0) circle (.2cm); 
\draw (3cm,0) --+ (0.6cm,0);
\draw[fill =black] (3.8cm,0) circle (.2cm)  node[above]{\small $p$};
\draw (4cm,0) --+ (0.6cm,0);
\draw[fill =black] (4.8cm,0) circle (.2cm);
\draw (5cm,0) -- +(0.2cm,0);
\draw[dotted] (5.2cm,0) --+ (1cm,0);
\draw (6.2cm,0) --+ (0.2cm,0);
\draw[fill =black] (6.6cm,0) circle (.2cm)  node[above]{\small $n-2$};
\draw (6.8cm,0) --+ (1.6,0.6);
\draw (6.8cm,0) --+ (1.6,-0.6);
\draw (8.6cm,0.6) circle (.2cm)  ;
\draw (8.6cm,-0.6) circle (.2cm) node[below]{\small $n$} ;
\draw[<->]
(v1) edge[bend left] (v2);
\end{tikzpicture}
&
$
\overset{\alpha_1,\ldots,\alpha_{p-1},}{\underset{\alpha_{p}+\alpha_{p+1},\ldots,\alpha_{n-2}+\alpha_{n-1},\alpha_{\hat{n}}}{}}$
\\ \hline
$E_6$ & 
 \begin{tikzpicture}[scale=.4,baseline=-6pt]
\node (v1) at (0,0.2) {};
\node (v2) at (4,0.2) {};
\node (v3) at (1,0.2) {};
\node (v4) at (3,0.2) {};
\draw (0cm,0) circle (.2cm) node[below]{\small $1$};
\draw (0.2cm,0) -- +(0.6cm,0);
\draw (1cm,0) circle (.2cm) node[below]{\small $2$};
\draw (1.2cm,0) -- +(0.6cm,0);
\draw[fill = black]  (2cm,0) circle (.2cm) node[below]{\small $\quad3$};
\draw (2.2cm,0) -- +(0.6cm,0);
\draw (3cm,0) circle (.2cm);
\draw (3.2cm,0) -- +(0.6cm,0);
\draw (4cm,0) circle (.2cm);
\draw (2cm,-0.2) -- +(0cm,-0.6);
\draw(2cm,-1) circle (.2cm) node[below]{\small $6$};
\draw[<->]
(v1) edge[bend left] (v2);
\draw[<->]
(v3) edge[bend left] (v4);
\end{tikzpicture}
&
\begin{tikzpicture}[scale=.4,baseline=-6pt]
\node (v1) at (0,0.2) {};
\node (v2) at (4,0.2) {};
\node (v3) at (1,0.2) {};
\node (v4) at (3,0.2) {};
\draw (0cm,0) circle (.2cm);
\draw (0.2cm,0) -- +(0.6cm,0);
\draw (1cm,0) circle (.2cm);
\draw (1.2cm,0) -- +(0.6cm,0);
\draw[fill = black] (2cm,0) circle (.2cm);
\draw (2.2cm,0) -- +(0.6cm,0);
\draw (3cm,0) circle (.2cm);
\draw (3.2cm,0) -- +(0.6cm,0);
\draw (4cm,0) circle (.2cm);
\draw (2cm,-0.2) -- +(0cm,-0.6);
\draw (2cm,-1) circle (.2cm);
\draw[<->]
(v1) edge[bend left] (v2);
\draw[<->]
(v3) edge[bend left] (v4);
\end{tikzpicture}
&
$\alpha_{\hat{1}},\alpha_{\hat{2}},\alpha_6,\alpha_{\hat{2}}+\alpha_3$\\
\hline
\end{tabular}
}
\egroup
\end{center}
\end{table}

As one sees, the simple $\nu$-compact roots are all supported by twisting data which have been treated already, except for the case $\begin{tikzpicture}[scale=.4,baseline]
\node (v1) at (0,0) {};
\node (v2) at (2.4,0) {};

\draw (0cm,0) circle (.2cm) node[below]{\small $1$};
\draw (0.2cm,0) -- +(1cm,0);
\draw[fill=black] (1.2cm,0) circle (.2cm) node[below]{\small $2$};
\draw (1.4cm,0) -- +(0.8cm,0);
\draw (2.4cm,0) circle (.2cm) node[below]{\small $3$};

\draw[<->]
(v1) edge[bend left=50] (v2);
\end{tikzpicture}
$. By Theorem \ref{TheoAppA1A1}, we have $\gamma = \frac{1-m}{2}(\varpi_1+\varpi_3) + t\varpi_2$ for $m\in \N$ and $t\in \R$. Since $\hat{\Delta}_c^+ = \{\alpha_{\hat{1}},\alpha_{\hat{1}}+\alpha_2\}$ and $W_{\nu}^+ = \{e,\hat{s}_1,s_2\hat{s}_1s_2,\hat{s}_1s_2\hat{s}_1s_2\}$, we easily compute for $\omega = a(\varpi_1+\varpi_3) +b \varpi_2 \in (P^+)^{\tau}$ that, with $z_1 = q^{2a+2}$ and $z_2 = q^{2b+2}$,
\[
\sum_{v\in W_{\nu}^+} \hsgn(v) q^{2(\rho - \gamma,(\omega +\rho)-v^{-1}(\omega+\rho))} = (1-z_1^{m+1})(1-(z_1z_2)^{m+1+2(1-t)}).
\]
By \eqref{EqFormcDepOmRho} for $w = e$, we see that we must have 
\[
2(\rho-\gamma,\alpha_{\hat{1}}+\alpha_2) = m+1+2(1-t) >0,
\]
proving the last remaining case.
\end{proof}

\section{Elements in $G_{\nu,q}\backslash G_{\R,q}/U_q$}\label{SecExist}

In this section, we will give some results, complete or partial, concerning the elements $x\in G_{\nu,q}\backslash G_{\R,q}/U_q$ and the associated $\Lambda_{\nu}(x)$.

\subsection{Correspondence between $G_{\nu,q}\backslash G_{\R,q}/U_q$ and $G_{\nu',q}\backslash G_{\R,q}/U_q$ when $\nu$ and $\nu'$ are equivalent}

Let $\nu,\nu' \in  \mbH_{\tau}^{\ungauge}$ be equivalent ungauged twisting data, say $\epsilon' = g (w\epsilon)$ for $w\in W_{\nu}$ and $g\in \mbH_{\tau}^{>}$. In Lemma \ref{LemWeakEq}, we constructed an isomorphism $\chi_f:  U_q^{\nu'}(\mfu) \rightarrow U_q^{\nu}(\mfu)$ using an $f \in \mbH_{\tau}^{\times}$ such that \eqref{EqSpecCondIso} holds. We can transport $\chi_f$ to a $*$-isomorphism 
\[
\widetilde{\chi}_f: \mcO_q(Z_{\nu'}^{\reg}) \rightarrow \mcO_q(Z_{\nu}^{\reg}),\qquad a_{\omega} \mapsto f_P(\tau(\omega))T_{\omega},\quad x_r \mapsto x_r. 
\]
Since $\chi_f$ moreover respected the $U_q(\mfu)$-action, it follows that also
\[
\mcO_q(Z_{\nu'}) \underset{\widetilde{\chi}_f}{\overset{\cong}{\rightarrow}} \mcO_q(Z_{\nu}),\qquad \msZ(\mcO_q(Z_{\nu'})) \underset{\widetilde{\chi}_f}{\overset{\cong}{\rightarrow}} \msZ(\mcO_q(Z_{\nu})). 
\]
We hence obtain a bijection
\[
\Xi_f: G_{\nu,q}\dbbackslash G_{\R,q}\dbslash U_q \rightarrow G_{\nu',q}\dbbackslash G_{\R,q}\dbslash U_q,\qquad x \mapsto x' = x\circ \widetilde{\chi}_f. 
\]

\begin{Prop}
The map $\Xi_f$ determines a bijection
\[
\Xi_f: G_{\nu,q}\backslash G_{\R,q}/ U_q \rightarrow G_{\nu',q}\backslash G_{\R,q} / U_q.
\]
\end{Prop}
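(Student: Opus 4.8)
The map $\Xi_f$ is already known to be a bijection $G_{\nu,q}\dbbackslash G_{\R,q}\dbslash U_q \to G_{\nu',q}\dbbackslash G_{\R,q}\dbslash U_q$ at the level of the full double-quotient spaces; the content of the proposition is that it restricts to a bijection on the subsets of \emph{admissible} characters, i.e. it carries $G_{\nu,q}\backslash G_{\R,q}/U_q$ onto $G_{\nu',q}\backslash G_{\R,q}/U_q$. Since $\Xi_f^{-1}$ is of the same type (with roles of $\nu,\nu'$ swapped and $f$ replaced by the appropriate inverse gauge-type element), it suffices to prove one inclusion, say $\Xi_f(G_{\nu,q}\backslash G_{\R,q}/U_q) \subseteq G_{\nu',q}\backslash G_{\R,q}/U_q$. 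The natural strategy is to show that $\widetilde\chi_f$ (and hence $\Xi_f$) matches up highest weight $*$-representations: concretely, I would prove that $\widetilde\chi_f$ carries the highest weight $*$-representation $V_\lambda$ of $\mcO_q(Z_\nu^{\reg})$ at weight $\lambda$ to the highest weight $*$-representation of $\mcO_q(Z_{\nu'}^{\reg})$ at the weight $\lambda \circ (\text{Cartan part of }\widetilde\chi_f)$, and that this correspondence preserves admissibility of the weight.

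\textbf{Key steps.} First I would unwind the definitions: $\widetilde\chi_f$ sends $a_\omega \mapsto f_P(\tau(\omega)) a_\omega$ and $x_r \mapsto x_r$, so it intertwines the triangular decompositions $\msN^-\msA\msN^+$ of $\mcO_q(Z_\nu^{\reg})$ and $\mcO_q(Z_{\nu'}^{\reg})$ (it is the identity on $\msN^\pm$ and rescales $\msA$ by a character), and therefore intertwines the Harish-Chandra projections $\widetilde P$. Consequently, if $V_\lambda$ is a highest weight $*$-representation of $\mcO_q(Z_\nu^{\reg})$ at weight $\lambda$, pulling it back along $\widetilde\chi_f$ gives a highest weight module of $\mcO_q(Z_{\nu'}^{\reg})$ at weight $\lambda'$, where $\lambda'_P(\omega) = f_P(\tau(\omega))^{-1}\lambda_P(\omega)$ (so $\lambda' = \lambda$ up to the gauge/scaling twist encoded by $f$ — note condition \eqref{EqSpecCondIso} guarantees the sign pattern is controlled). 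The one subtlety is positivity: a priori pulling back a $*$-representation along a $*$-isomorphism gives again a $*$-representation, so the invariant hermitian form stays positive-definite; thus $\lambda' \in \widetilde\Lambda_{\nu'}$ whenever $\lambda \in \widetilde\Lambda_\nu$, and by the same token for the inverse, so $\widetilde\chi_f$ gives a bijection $\widetilde\Lambda_\nu \cong \widetilde\Lambda_{\nu'}$ compatible with the bijection $\Xi_f$ on central characters via $x_\lambda \mapsto x_{\lambda'}$. Next, I would transport the admissibility criterion: by the Proposition characterizing $\Lambda_\nu$, a weight $\lambda$ is admissible iff $x_\lambda \in G_{\nu,q}\backslash G_{\R,q}/U_q$ iff $\Hsp_\lambda$ is admissible as an $\mcO_q(G_\nu\backslash G_\R)$-representation, and this last condition, via Lemma \ref{LemTransOrb} and Lemma \ref{LemErgoIsFull}, amounts to $\pi_\lambda$ being weakly contained in $\pi_{\lambda''}*\pi_{\reg}$ for some $\lambda'' \in \Lambda_\nu^{\gg}$. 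Since $\widetilde\chi_f$ is $U_q(\mfu)$-equivariant (Lemma \ref{LemWeakEq}), it respects the coaction $\rho_\nu$ and hence the $\mcO_q(U)$-module structure, so it carries the fusion behaviour with $\pi_{\reg}$ (equivalently, with the $\mbs_r$ and $\chi_\theta^{(r)}$ of Theorem \ref{TheoFusion}) for $\nu$ to that for $\nu'$. Therefore $\lambda$ admissible implies $\lambda'$ admissible, i.e. $\Xi_f(x_\lambda) = x_{\lambda'} \in G_{\nu',q}\backslash G_{\R,q}/U_q$; this gives one inclusion, and applying the same to $\Xi_f^{-1}$ (using $\Xi_f \circ \Xi_{f'} = \Xi_{ff'}$-type bookkeeping with an inverse gauge element) gives the reverse inclusion, establishing the bijection.

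\textbf{Main obstacle.} The routine part is the triangular-decomposition bookkeeping; the part that requires genuine care is checking that the $U_q(\mfu)$-equivariance of $\widetilde\chi_f$ really does transport \emph{admissibility}, not just the algebraic highest-weight data — that is, that the weak-containment condition defining admissible $x$-representations is preserved. This is where one must invoke that $\widetilde\chi_f$ is an isomorphism of Yetter–Drinfeld $U_q(\mfu)$-module $*$-algebras (so it commutes with the operations $*\mbs_r$ and $*\chi_\theta^{(r)}$ up to the relabeling of $\nu$), and that it sends $\msP_\nu^+$ (positive representations of regular type) to $\msP_{\nu'}^+$ — the latter needing condition \eqref{EqSpecCondIso}, which is precisely engineered so that positivity of $\pi(a_{\varpi_r})$ for $r\in I^\tau$ is preserved under the rescaling $a_\omega \mapsto f_P(\tau(\omega))a_\omega$. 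Once that compatibility with the spectral structures $\mcO_q(Z_\nu^+)$ and $\mcO_q(Z_{\nu'}^+)$ is in hand, the extension to $\mcO_q(Z_\nu^+ U_q)$ via Lemma \ref{LemTransOrb} and the identification of the fibers is formal, and the proposition follows.
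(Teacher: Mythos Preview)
Your overall strategy is sound and matches the paper's in spirit: reduce to showing that for $x\in G_{\nu,q}\backslash G_{\R,q}/U_q$ one can exhibit a strongly positive highest weight over $x'=\Xi_f(x)$. The equivariance and fusion-compatibility observations are correct and useful. However, there is a genuine gap at the step where you assert that $\widetilde\chi_f$ sends $\msP_\nu^+$ to $\msP_{\nu'}^+$. Condition~\eqref{EqSpecCondIso} says
\[
f_P(\omega)\,\epsilon_Q(\omega - w^{-1}\omega)>0,\qquad \omega\in P^\tau,
\]
not $f_P(\omega)>0$. When $w\notin W_\nu^+$ there exist $\omega=\varpi_r$ with $r\in I^\tau$ for which $\epsilon_Q(\varpi_r - w^{-1}\varpi_r)<0$, and then $f_P(\varpi_r)<0$; in that case the pullback of a positive representation along $\widetilde\chi_f$ has $\pi(a_{\varpi_r})$ negative, so it is \emph{not} in $\msP_{\nu'}^+$. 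Thus the direct ``$\msP_\nu^+\to\msP_{\nu'}^+$'' claim fails, and with it the remainder of your argument (since you never independently establish that the pulled-back representation lies in $\msP_{\nu'}^+U_q$).

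The paper repairs exactly this point by not pulling back $\pi_{\lambda_x}$ itself. Instead one writes $w=uv$ with $u\in W_\nu^-$ and $v\in W_\nu^+$ (Theorem~\ref{TheoSec}), uses Theorem~\ref{TheoBijCorr} to know that $u\cdot_{\epsilon,q}\lambda_x\in\Lambda_\nu(x)$, and then pulls back $\pi_{u\cdot_{\epsilon,q}\lambda_x}$. The resulting highest weight $\lambda_{x'}$ satisfies, for $\omega\in P^\tau$,
\[
\sgn\big((\lambda_{x'})_P(\omega)\big)=\sgn\big(f_P(\omega)\,\epsilon_Q(\omega-u^{-1}\omega)\big).
\]
Now \eqref{EqSpecCondIso} gives $f_P(\omega)\epsilon_Q(\omega-w^{-1}\omega)>0$, and factoring $\epsilon_Q(\omega-w^{-1}\omega)=\epsilon_Q(\omega-u^{-1}\omega)\,\epsilon_Q(u^{-1}\omega-v^{-1}u^{-1}\omega)$ together with Lemma~\ref{LemPosWPlus} (which forces the second factor to be positive since $v\in W_\nu^+$) yields $f_P(\omega)\epsilon_Q(\omega-u^{-1}\omega)>0$. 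Hence $\lambda_{x'}\in\Lambda_{\nu'}^{\gg}(x')$, which is precisely what is needed. In short, the missing ingredient in your argument is the decomposition $W_\nu=W_\nu^-\cdot W_\nu^+$ combined with Lemma~\ref{LemPosWPlus}; condition~\eqref{EqSpecCondIso} alone is calibrated to $w$, not to the identity, so one must first move to the correct cell via $u\in W_\nu^-$ before the positivity becomes visible.
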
 
\begin{proof}
It is sufficient to prove that if $x\in G_{\nu,q}\backslash G_{\R,q}/ U_q$, then $x'\in G_{\nu',q}\backslash G_{\R,q}/ U_q$. 

Pick $\lambda_x \in \Lambda_{\nu}^{\gg}(x)$. Further write $w = uv$ with $u \in W_{\nu}^-$ and $v\in W_{\nu}^+$. By Theorem \ref{TheoBijCorr}, we have that also $u\cdot_{\epsilon} \lambda_x \in \Lambda_{\nu}(x)$, and then $\pi_{u\cdot_{\epsilon} \lambda_x}\circ \widetilde{\chi}_f$ is a highest weight representation of $\mcO_q(Z_{\nu'})$ with central character $x'$ at highest weight $\lambda_{x'}$ with
\[
(\lambda_{x'})_P(\omega) = f_P(\omega)(u\cdot_{\epsilon} \lambda_x)_P(\omega) = f_P(\omega) \epsilon_Q(\omega - u^{-1}\omega) q^{(\omega -u^{-1}\omega,2\rho)}\lambda_P(u^{-1}\omega),\qquad \omega \in P^{\tau}.
\]
However, the condition \eqref{EqSpecCondIso} on $f$ implies that
\[
f_P(\omega) \epsilon_Q(\omega - w^{-1}\omega)  = f_P(\omega)\epsilon_Q(\omega -u^{-1}\omega) \epsilon_Q(u^{-1}\omega - v^{-1}u^{-1}\omega) >0, 
\]
so by Lemma \ref{LemPosWPlus} also 
\[
 f_P(\omega)\epsilon_Q(\omega -u^{-1}\omega) >0,\qquad \omega \in P^{\tau}.
\]
Hence $\lambda_{x'} \in \Lambda_{\nu}^{\gg}(x')$, and $x'\in G_{\nu',q}\backslash G_{\R,q}/ U_q$. 
\end{proof}

\subsection{Characters of $\mcO_q(Z_{\nu})$}

In \cite{BK15b} the authors constructed universal solutions (= universal $K$-matrices) for the $\tau$-modified reflection equation for $U_q(\mfg)$, which by the philosophy in \cite{KoSt09} can be considered as giving characters on $\mcO_q(Z_{\nu})$ - see also \cite{DCM18} where it was shown how these solutions can be slightly modified to produce $*$-characters. One then has the following general result. 

\begin{Prop}
Let $\chi \in \Char_*(\mcO_q(Z_{\nu}))$ be a $*$-character, and let $x = x_{\chi} = \chi_{\mid \msZ(\mcO_q(Z_{\nu}))}$. Let $\pi_{\reg}$ be the regular representation of $\mcO_q(U)$. Then the $*$-representation $\rho_{\nu}^{\chi} := (\chi \otimes \pi_{\reg})\rho_{\nu}$ of $\mcO_q(Z_{\nu})$ on $L^2_q(U)$ is an $x$-representation of regular type.
\end{Prop}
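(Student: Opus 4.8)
The claim asserts two things about the $*$-representation $\rho_{\nu}^{\chi} = (\chi \otimes \pi_{\reg})\rho_{\nu}$ of $\mcO_q(Z_{\nu})$ on $L^2_q(U)$: that it restricts to the character $x$ on the center (equivalently on $\mcO_q(Z_{\nu}/U)$), and that it is of regular type, i.e. $\rho_{\nu}^{\chi}(a_{\rho})$ has trivial kernel. The first point is essentially formal: since $\rho_{\nu}$ is centrally coinvariant by Lemma \ref{LemInvPartZ}, any central element $z \in \msZ(\mcO_q(Z_{\nu}))$ lies in $\mcO_q(Z_{\nu})^{\rho_{\nu}}$, so $\rho_{\nu}(z) = z \otimes 1$, and hence $\rho_{\nu}^{\chi}(z) = \chi(z)\cdot 1 = x(z)\cdot 1$. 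So $\rho_{\nu}^{\chi}$ is an $x$-representation for $x = x_\chi$; this is the same mechanism used throughout the paper (e.g. in the proof that tensoring with an $\mcO_q(U)$-representation does not change the central character).

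**The substantive part** is showing regular type. The plan is to analyze $\rho_{\nu}^{\chi}(a_{\rho})$ explicitly using the global coaction formula \eqref{EqGlobalCoactRho}, which gives $(\id\otimes \rho_{\nu})Z_{\pi} = (\id\otimes \tau)(U_{\pi})_{13}^* Z_{\pi,12} U_{\pi,13}$. Taking matrix coefficients at the highest-weight vector $\xi_\rho$ of $V_\rho$ (using \eqref{EqInclU}, $i_{\nu}(U_{\rho}(\xi_\rho,\xi_\rho)) = c_\rho a_\rho$, so $a_\rho$ is, up to the scalar $c_\rho$, the image of the matrix coefficient $U_\rho(\xi_\rho,\xi_\rho)$), one can express $\rho_{\nu}^{\chi}(a_\rho)$ as a finite sum $\sum_i \chi(Z_\rho(\eta_i,\xi_\rho))\, \pi_{\reg}\big(\tau(U_\rho(\eta_i,\xi_\rho))^{*} U_\rho(\xi_\rho,\xi_\rho)\big)$ where $\{\eta_i\}$ runs over a weight basis of $V_\rho$. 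The idea is then to show this operator is injective on $L^2_q(U)$. The cleanest route is to exhibit the operator as (a scalar multiple of) a product of an invertible element of $L^\infty_q(U)$ coming from $\pi_{\reg}(U_\rho(\xi_\rho,\xi_\rho)^{*}U_\rho(\xi_\rho,\xi_\rho))$ — which is positive with trivial kernel, being related to a group-like type element in the highest/lowest weight corner — together with a correction term; concretely, since $\chi$ is multiplicative and $*$-preserving and $\chi(a_\rho) \neq 0$ for any $*$-character (one checks $\chi(a_\rho)$ is invertible because $\chi(a_\rho)\chi(a_{-\rho}) = \chi(a_0) = 1$ inside $\mcO_q(Z_\nu^{\reg})$, and $a_\rho$ normalizes $\mcO_q(Z_\nu)$ by \eqref{EqCommapi}), one reduces to showing that the localization-compatible element $\rho_{\nu}^{\chi}(a_\rho)$ inherits invertibility from $\chi(a_\rho)$ after the regular-representation tensor factor.

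**Alternative (cleaner) route.** Rather than a direct computation, I would invoke the fact that the regular representation $\pi_{\reg}$ of $\mcO_q(U)$ is faithful and factors through $L^\infty_q(U) \cong B(l^2(\N)^{\otimes N})\overline{\otimes}L^\infty(\T^{\mathrm{rk}})$ (as recorded for $SU_q(2)$ and in general in the proof of Theorem \ref{TheoDecompvN} via \cite{LS91}). The point is: $\rho_{\nu}^{\chi}$ is a tensor product of the one-dimensional $\chi$ with $\pi_{\reg}$ through the coaction, and $\pi_{\reg}$ has a cyclic-and-separating structure. Since $\chi$ is a $*$-character with $\chi(a_\rho)$ a nonzero scalar, the element $\rho_{\nu}^{\chi}(a_\rho)$ differs from $\chi(a_\rho)\otimes 1$ by a "triangular" correction built from strictly-lower-weight matrix coefficients $U_\rho(\eta_i,\xi_\rho)$ with $\wt(\eta_i) < \rho$; these are, under $\pi_{\reg}$, elements of a nilpotent-type ideal (they generate no unit), so $\rho_{\nu}^{\chi}(a_\rho)$ is a scalar plus a "strictly upper/lower triangular" perturbation in a suitable Hilbert-space filtration, hence injective. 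The main obstacle I anticipate is making this triangularity argument rigorous — pinning down the right filtration of $L^2_q(U)$ (presumably the PBW-type weight filtration coming from the $U_q(\mfn^-)$-action) on which the correction terms act strictly nilpotently, so that $\ker \rho_{\nu}^{\chi}(a_\rho) = 0$ follows. Once that filtration is set up, injectivity is immediate, and regularity of type — hence membership of $\rho_{\nu}^{\chi}$ in $\msP^+$-type families after a further positivity check on the $a_{\varpi_r}$ for $r \in I^\tau$ (which again follows since $\chi$ is a $*$-character making $\chi(a_{\varpi_r}) \geq 0$ there) — is concluded.
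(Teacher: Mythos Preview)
Your first paragraph (the $x$-representation part) is fine and matches the paper.

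The regular-type argument, however, has a genuine gap. You claim $\chi(a_{\rho}) \neq 0$ because ``$\chi(a_{\rho})\chi(a_{-\rho}) = \chi(a_0) = 1$ inside $\mcO_q(Z_{\nu}^{\reg})$''. But $a_{-\rho} = a_{\rho}^{-1}$ lives only in $\mcO_q(Z_{\nu}^{\reg})$, not in $\mcO_q(Z_{\nu})$, and $\chi$ is only a character on $\mcO_q(Z_{\nu})$; there is no reason it should extend to the localization. In fact it need not: already in rank one with $\epsilon < 0$ (Proposition~\ref{PropRank1Tensor}, case~(3)) there are $*$-characters $\chi_{\theta}$ on $\mcO_q(H_2(-c^2,t)) \cong \mcO_q(O_x^{\nu})$ with $\chi_{\theta}(z) = \chi_{\theta}(a_{\varpi}) = 0$. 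So $\chi(a_{\rho}) = 0$ is perfectly possible, and your leading term $\chi(a_{\rho}) \cdot (\ldots)$ then vanishes, leaving only the ``triangular correction'' --- which certainly can have nontrivial kernel. (Even when $\chi(a_{\rho}) \neq 0$, the operator $\pi_{\reg}\big(U_{\rho}(\xi_{\rho},\xi_{\rho})^* U_{\rho}(\xi_{\rho},\xi_{\rho})\big)$ is itself \emph{not} injective --- already for $SU_q(2)$, $\mbs(a^*a)e_0 = 0$ --- so the filtration argument you sketch would need much more care.)

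The paper's proof is structurally quite different and uses a bootstrap through the earlier structure theory. One first shows only the weaker fact that $\rho_{\nu}^{\chi}(a_{\rho}) \neq 0$: if it were zero, then since $\mcO_q(U)$ is a domain all $a_{\varpi}$ act by zero, hence by equivariance all $Z_{\varpi}(\xi,\eta)$ with $\varpi \neq 0$ act by zero, forcing the Haar integral to be a character on $\mcO_q(U)$ --- absurd. From $\rho_{\nu}^{\chi}(a_{\rho}) \neq 0$ one extracts a highest weight subrepresentation at some $\lambda$; after regauging to a $\nu'$ for which $\lambda$ becomes positive, the Haar state of $\mcO_q(U)$ (a vector state on $B(L^2_q(U))$) restricts to the invariant state on $\mcO_q^*(O_{x'}^{\nu'})$, giving an embedding $L^{\infty}_q(O_{x'}^{\nu'}) \hookrightarrow B(L^2_q(U))$. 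Theorem~\ref{TheoDecompvN} then says the support projection of $a_{\rho}$ in $L^{\infty}_q(O_{x'}^{\nu'})$ is $1$, hence $\rho_{\nu}^{\chi}(a_{\rho})$ has trivial kernel. The key idea you are missing is this passage from ``$a_{\rho}$ is nonzero somewhere'' to ``$a_{\rho}$ is injective'' via the von Neumann algebraic description of the orbit.
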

\begin{proof}
Let us first show that $\rho_{\nu}^{\chi}(a_{\rho})$ is not the zero operator. Assume to the contrary that $\rho_{\nu}^{\chi}(a_{\rho})=0$. It is well-known that $\mcO_q(U)$ is a domain \cite[Lemma 9.1.9]{Jos95}, so then all $a_{\varpi}$ with $\varpi \in P^+\setminus \{0\}$ act as zero. Since $\rho_{\nu}^{\chi}$ is  an $\mcO_q(U)$-comodule map, and the $a_{\varpi}$ generate $\mcO_q(Z_{\nu})$ as an $U_q(\mfu)$-module, it follows that $\rho_{\nu}^{\chi}(Z_{\varpi}(\xi,\eta)) = 0$ for all $\varpi\neq 0$, i.e.~ the Haar integral is a character on $\mcO_q(Z_{\nu})$. However, from \cite[Lemma 2.26]{DCM18} it would then follow that the Haar integral is also a character on $\mcO_q(U)$, which is absurd. 

It follows that $\rho_{\nu}^{\chi}$ contains a non-trivial subrepresentation of regular type. If $\lambda \in \mbH_{\tau}^{\times}$ is a highest weight appearing in the decomposition of this subrepresentation, write $\lambda_r = u_r |\lambda_r|$. Then it follows that we have produced an admissible (positive) highest weight for $\mcO_q(Z_{\nu'})$ where $\epsilon_r' = u_P(\alpha_r) \epsilon_r$. Let $x'$ be the corresponding central character. We then have a $*$-representation of $\mcO_q^*(O_{x'}^{\nu'})$ on $L^2_q(U)$. Since however the Haar state of $\mcO_q(U) \subseteq B(L^2_q(U))$ is given by a vector state, and since the Haar state of $\mcO_q(U)$ restricts to the invariant state on $\mcO_q^*(O_{x'}^{\nu'})$, it follows that the above must extend to an embedding $L^{\infty}_q(O_{x'}^{\nu'}) \subseteq B(L^2_q(U))$. From  Theorem \ref{TheoDecompvN}, we however know that the support projection of $a_{\rho}$ equals 1, so $\rho_{\nu}^{\chi}$ must be regular. 
\end{proof}

In general, determining if $x_{\chi} \in G_{\nu,q}\backslash G_{\R,q}/U_q$ involves computing some spectral information of the operator $\rho_{\nu}^{\chi}(a_{\rho})$, namely the existence of a positive eigenvalue. This can in principle be a difficult problem. We will show however that this problem can be circumvened by a deformation argument, exploiting the fact that the signature of $a_{\rho}$ in any $L^{\infty}_q(O_x^{\nu})$ only depends on $\nu$ and not on $q$ or $x$. We will only show how this phenomenon works for the case of symmetric twisting data, as they are the ones that have been treated explicitly\footnote{Also the `flag case' where $\epsilon^2 = \epsilon$ was dealt with in \cite{DCM18}, but the case of a general ungauged twisting datum, interpolating between the flag case and the symmetric case, was not.}  in \cite{DCM18}.

Since we will now need to work with variable $q$, we will index all notations with either $q$ as subscript or superscript. Recall then from 
\cite[Lemma 1.1]{NT11} that, for each $\varpi \in P^+$, one can create $*$-representations $\pi_{\varpi}^q$ of $U_q(\mfu)$ for $0<q\leq 1$ on a fixed finite-dimensional Hilbert space $V_{\varpi}$ such that each $\pi_{\varpi}^q$ is a highest weight representation at weight $\varpi$, such that the weight space decomposition of $V_{\varpi}$ is independent of $q$, and such that the $\pi_{\varpi}^q(E_r)$ depend continuously on $q \in (0,1]$ for each $r\in I$. In the following, we will fix for each $\varpi\in P^+$ such a continuously varying family of representations on the fixed Hilbert space $V_{\varpi}$. One then has the following theorem. 

\begin{Theorem}\cite[Theorem 1.2]{NT11}
There exists a unique structure of continuous field of C$^*$-algebras on $(0,1]$ with fibers given by the $C_q(U)$, such that the $U_{\varpi}^q(\xi,\eta) \in \mcO_q(U)$ form continuous sections.  
\end{Theorem}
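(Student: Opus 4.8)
This is a quoted result from \cite{NT11}, so the task is really to explain how one extracts a continuous field of C$^*$-algebras on $(0,1]$ from the data already assembled. The plan is to build the field by hand from the continuous sections $q \mapsto U_{\varpi}^q(\xi,\eta)$ and then invoke the general machinery of continuous fields (e.g.\ Dixmier). First I would fix, as in the statement, the continuously varying families $\pi_{\varpi}^q$ on the fixed spaces $V_{\varpi}$, so that for every matrix coefficient the scalar function $q \mapsto U_{\varpi}^q(\xi,\eta)$ is a well-defined element of $\prod_{q\in(0,1]} \mcO_q(U)$. Let $\Lambda$ be the $*$-subalgebra of $\prod_q C_q(U)$ generated by all these functions; by construction $\Lambda$ evaluates at each $q_0$ onto a dense $*$-subalgebra of $C_{q_0}(U)$ (namely $\mcO_{q_0}(U)$). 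The key analytic point to verify is that for each $a \in \Lambda$ the function $q \mapsto \|a(q)\|_{C_q(U)}$ is continuous on $(0,1]$; once this holds, one defines the field to be the completion of $\Lambda$ with respect to the sup-over-a-neighbourhood seminorms, and the $U_{\varpi}^q(\xi,\eta)$ are continuous sections by fiat.

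\textbf{Key steps.} The continuity of $q \mapsto \|a(q)\|$ splits into upper and lower semicontinuity. Upper semicontinuity is the easier half: one shows that the norm on $C_q(U)$ is computed, up to a controllable error, on a finite collection of finite-dimensional representations — using coamenability and type I-ness of $C_q(U)$ from Proposition \ref{PropCoamTyp1}, and the fact that the fusion rules of $U_q$ are $q$-independent — so that $\|a(q)\|$ is a $\sup$ of finitely many operator norms of matrices depending continuously on $q$, hence upper semicontinuous and in fact continuous on that finite set. For lower semicontinuity one uses that $C_q(U)$ is the universal (equivalently reduced, by coamenability) C$^*$-algebra of $\mcO_q(U)$: a state on $C_{q_0}(U)$ can be approximated by states on nearby $C_q(U)$ by transporting it through the fixed finite-dimensional models $\pi_{\varpi}^q$, giving $\liminf_{q\to q_0}\|a(q)\| \geq \|a(q_0)\|$. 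Combining the two gives continuity. Finally one appeals to the standard criterion (Dixmier, \emph{C$^*$-algebras}, \S10.2--10.3): a family of C$^*$-algebras $(A_q)_{q\in(0,1]}$ together with a $*$-subalgebra $\Lambda \subseteq \prod A_q$ such that each $\Lambda(q)$ is dense in $A_q$ and each $q\mapsto\|a(q)\|$ is continuous generates a unique continuous field in which $\Lambda$ consists of continuous sections; uniqueness is automatic since continuous sections are determined by their values on a dense family.

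\textbf{Main obstacle.} The genuine work is the continuity of the norm function, and within that the lower semicontinuity estimate, because it requires knowing that no irreducible representation of $C_{q_0}(U)$ "disappears" as $q\to q_0$ — equivalently, that the representation theory of $C_q(U)$ is uniformly controlled near each $q_0 \in (0,1]$, including at $q_0 = 1$ where $C_1(U) = C(U)$ is commutative while the nearby fibers are not. This is precisely where one leans on the detailed structure theory of \cite{LS91, Koo91} for the irreducible representations of $C_q(U)$ (built from the rank-one pieces $\mbs_r, \chi_\theta^{(r)}$ recalled in Section \ref{ExaRank1}), whose combinatorial skeleton is $q$-independent, so that the operators $\pi(a(q))$ vary norm-continuously in each fixed irreducible model. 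Since the present excerpt only needs the existence of this continuous field as an input to the later deformation argument, it suffices to cite \cite[Theorem 1.2]{NT11} for the precise verification; I would present the above as the sketch of why the cited result applies in our normalization, and then simply quote it.
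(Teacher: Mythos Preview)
The paper does not prove this theorem at all: it is stated with the citation \cite[Theorem 1.2]{NT11} and no argument is given. So there is nothing to compare on the level of proof strategy --- the paper's ``proof'' is simply to quote the reference.

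Your proposal correctly identifies this and ends by saying one should just cite \cite{NT11}; the sketch you give of how the continuous field is built (generate a $*$-subalgebra of sections, verify norm continuity via upper/lower semicontinuity using the $q$-independent representation theory, invoke Dixmier's criterion) is a reasonable outline of what happens in \cite{NT11}, but it is strictly more than what the present paper supplies. For the purposes of matching the paper, a single sentence citing \cite{NT11} is all that is needed.
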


Note that for any $f\in \mcO(U)$, one has a unique corresponding $f_q \in \mcO_q(U)$ using that the $U_{\varpi}(e_i,e_j)$ form a basis of $\mcO_q(U)$. The above theorem then states that the norm $q \mapsto \|f_q\|$ varies continuously. 

To proceed, note that also $\mcO_q(Z_{\nu})$ can be meaningfully defined as an algebra for $q=1$. Indeed, by \cite[Definition 2.24]{DCM18} one can view $\mcO_q(Z_{\nu})$ as a copy of $\mcO(U)$ with the new product 
\[
f *_q g = (g_{(2)}\otimes f_{(1)},\Omega_{\epsilon,q}) \mbr_q(f_{(2)},g_{(3)}) f_{(3)}\cdot_q g_{(4)}\mbr_q(f_{(4)},\tau(S_q(g_{(1)}))),
\]
where $\cdot_q$ denotes the product of $\mcO_q(U)$ transported to $\mcO(U)$, where $\mbr_q(f,g) = (f\otimes g)\msR_q$ and where $\Omega_{\epsilon,q}$ acts on each $V_{\varpi}\otimes V_{\varpi'}$ by 
\[
\Omega_{\epsilon,q} \circ \iota  = \iota \circ \msE,\qquad \forall \iota\in \Hom_{U_q(\mfu)}(V_{\varpi''},V_{\varpi}\otimes V_{\varpi'}),
\]
with $\msE$ as in \eqref{DefMsE}. Since $\msR_q (\xi\otimes \eta)$ is continuous in $q$, with limit $\xi\otimes \eta$ at $q=1$, this product then varies continuously in $q$ for the weak topology on $\mcO(U)$ induced by its duality with $\prod_{\varpi} \End(V_{\varpi})$ (to see that the $\Omega_{q,\epsilon}$ also vary continuously, use that $\xi_{\varpi}\otimes \eta_{w_0\varpi'}$ is cyclic, for $\eta_{w_0\varpi'}$ a unit lowest weight vector in $V_{\varpi'}$). Hence $\mcO(Z_{\nu}) = \mcO_1(Z_{\nu})$ may be identfied with $\mcO(U)$ with the product 
\begin{equation}\label{EqProdZCoc}
f * g = (g_{(1)}\otimes f_{(1)},\Omega_{\epsilon}) f_{(2)}g_{(2)}.
\end{equation}
We then denote by $\widetilde{f}_q$ the copy of $f\in \mcO(U)$ inside $\mcO_q(Z_{\nu})$.

Our key lemma will be the following.

\begin{Lem}\label{LemCharGivesPos}
Assume that $\chi_q: \mcO_q(Z_{\nu}) \rightarrow \C$ are a family of $*$-characters such that $q \mapsto \chi_q(\widetilde{f}_q)$ is continuous on $(0,1]$ for each $f\in \mcO(U)$. Assume that that there exists $v\in U$ such that $\chi_1(Z_{\alpha}^1(\tau(v)\xi_{\alpha},v\xi_{\alpha})) >0$ for all $\alpha\in Q^+$. Then $\Lambda_{\nu}^{\gg}(x_q) \neq \emptyset$ for all $0<q<1$, where $x_q = (\chi_q)_{\mid \msZ(\mcO_q(Z_{\nu}))}$.
\end{Lem}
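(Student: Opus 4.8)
The strategy is to use the continuity of the field of C$^*$-algebras to deform the positivity statement at $q=1$ to all $q\in(0,1)$. Here is the plan.

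\textbf{Step 1: Reformulate the $q=1$ hypothesis as a statement about the operator $a_\rho$.} Fix $0<q<1$ and consider the $*$-representation $\rho_{\nu,q}^{\chi_q} = (\chi_q\otimes \pi_{\reg}^q)\rho_{\nu,q}$ of $\mcO_q(Z_\nu)$ on $L^2_q(U)$ provided by the previous proposition; it is an $x_q$-representation of regular type. By Theorem \ref{TheoDecompvN} applied to $x_q$, the operator $\rho_{\nu,q}^{\chi_q}(a_\rho)$ has trivial kernel, and $\Lambda_{\nu}^{\gg}(x_q)\neq\emptyset$ is equivalent to the assertion that $\rho_{\nu,q}^{\chi_q}(a_\rho)$ has \emph{some positive} point in its spectrum (since, by Theorem \ref{TheoDecompvN} and the classification of highest weights, the signs occurring among the $\pi_{w\cdot_{\epsilon,q}\lambda_{x_q}}(a_\rho)$ are exactly the sign patterns $\epsilon_Q(\rho - w^{-1}\rho)$ for $w\in W_\nu^-$, and $w=e$ contributes the positive one precisely when $\lambda_{x_q}$ exists). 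So it suffices to show $\rho_{\nu,q}^{\chi_q}(a_\rho)$ is not a negative operator, i.e.\ that $\langle \zeta, \rho_{\nu,q}^{\chi_q}(a_\rho)\zeta\rangle>0$ for some $\zeta\in L^2_q(U)$.

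\textbf{Step 2: Produce a $q$-continuous family of vectors and matrix coefficients witnessing positivity.} Using $\rho_\nu$ and the pairing $i_\nu$, note that $a_\rho$ is a sum of terms $Z_{\varpi_r}(e_i,e_j)$, so $\rho_{\nu,q}^{\chi_q}(a_\rho)$ acting on $L^2_q(U)$ can, after applying $\eqref{EqGlobalCoactRho}$, be written in terms of the left regular representation applied to elements $\chi_q(Z_{\varpi_r}^q(\tau(u_{(1)})\cdot,u_{(2)}\cdot))\,U_{\varpi_r}^q(\cdot,\cdot)$. The key point is that for $v\in U$ the element $Z_\alpha^q(\tau(v)\xi_\alpha, v\xi_\alpha)$ (more precisely, the corresponding element $\widetilde f_q$ in the cocycle picture $\eqref{EqProdZCoc}$) depends continuously on $q$, since $\Omega_{\epsilon,q}$ and $\msR_q$ do; hence $q\mapsto \chi_q(Z_\alpha^q(\tau(v)\xi_\alpha,v\xi_\alpha))$ is continuous by hypothesis. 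Evaluating the vector state at the normalized image $\zeta_q\in L^2_q(U)$ of $U_{\alpha}^q(\tau(v)\xi_\alpha,v\xi_\alpha)$ — whose $q$-norm varies continuously by the continuous-field theorem — one gets a quantity $F(q)$ which is, up to the Haar integral $\int_{U_q}$, a finite sum of the continuously varying numbers $\chi_q(Z_\alpha^q(\tau(v)\xi_\alpha,v\xi_\alpha))$ weighted by continuously varying coefficients. Thus $q\mapsto F(q)$ is continuous on $(0,1]$, and $F(1)>0$ by the stated hypothesis, exactly because at $q=1$ the pairing recovers the classical evaluation $\sum_\alpha \chi_1(Z_\alpha^1(\tau(v)\xi_\alpha,v\xi_\alpha))$ weighted by positive Haar-orthogonality factors $\langle\xi,\pi(\hat\delta^{1/2})\xi\rangle/\Tr(\pi(\hat\delta^{1/2}))>0$.

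\textbf{Step 3: Upgrade continuity at $q=1$ to positivity for all $q$ near $1$, then everywhere.} From $F(1)>0$ and continuity of $F$ we get $F(q)>0$, hence $\langle\zeta_q,\rho_{\nu,q}^{\chi_q}(a_\rho)\zeta_q\rangle>0$, for all $q$ in an interval $(q_0,1]$; this already shows $\Lambda_\nu^{\gg}(x_q)\neq\emptyset$ there. To cover all of $(0,1)$ one invokes the $q$-independence, noted just before the lemma, of the signature of $a_\rho$: the multiset of signs $\{\epsilon_Q(\rho-w^{-1}\rho) : w\in W_\nu^-\}$ occurring in $\eqref{EqIsovN}$ depends only on $\nu$ (via Theorem \ref{TheoSec}), not on $q$ or on which $x$ is chosen — in particular whether the positive sign occurs is a property of $\nu$ alone. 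Since it occurs for some $q$ close to $1$ (any $x_q$ with $\Lambda_\nu^{\gg}(x_q)\neq\emptyset$ exhibits it), it occurs for all $q\in(0,1)$, and hence for each such $q$ and each $\chi_q$ as in the hypothesis, $a_\rho$ in the relevant representation has a positive spectral value, forcing $\Lambda_\nu^{\gg}(x_q)\neq\emptyset$.

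\textbf{Main obstacle.} The delicate point is Step 2: carefully identifying $\rho_{\nu,q}^{\chi_q}(a_\rho)$ (or a suitable localized piece of it) with an operator whose single matrix coefficient against a \emph{$q$-continuously varying} unit vector is a \emph{finite} continuous combination of the scalars $\chi_q(Z_\alpha^q(\tau(v)\xi_\alpha,v\xi_\alpha))$, and checking that the $q=1$ limit of that combination is genuinely the positive classical expression in the hypothesis rather than something that could degenerate. Making the bookkeeping of the cocycle product $\eqref{EqProdZCoc}$, the $R$-matrix normalization, and the Woronowicz-character weights in $\eqref{EqOrthoGen}$ cohere at $q=1$ is where the real work lies; everything else is a soft continuity/constancy argument.
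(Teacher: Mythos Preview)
There is a genuine circularity in your Step~1 that undermines the whole argument. You invoke Theorem~\ref{TheoDecompvN} to establish the equivalence ``$\rho_{\nu,q}^{\chi_q}(a_\rho)$ has positive spectrum $\iff \Lambda_\nu^{\gg}(x_q)\neq\emptyset$'', but that theorem presupposes $x_q\in G_{\nu,q}\backslash G_{\R,q}/U_q$, which is exactly the statement $\Lambda_\nu^{\gg}(x_q)\neq\emptyset$ that you are trying to prove. The same circularity reappears in Step~3: the $q$-independence of the signature of $a_\rho$ is a fact about $L^\infty_q(O_x^\nu)$, and that von Neumann algebra is only defined once $x$ is known to lie in the good set.

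More fundamentally, working with $a_\rho$ alone is too coarse. In a highest weight representation at weight $\lambda\in\mbH_\tau^\times$, the sign of $a_\rho$ is $\mathrm{sgn}\,\lambda_P(\rho)=\prod_{r\in I^\tau}\mathrm{sgn}\,\lambda_{\varpi_r}$, so positivity of $a_\rho$ only tells you that an \emph{even} number of the $\lambda_{\varpi_r}$ (for $r\in I^\tau$) are negative --- it does not force $\lambda\in\mbH_\tau^{\gg}$. Without the structure theorem you cannot rule out, say, a highest weight with two negative entries and positive $a_\rho$; and since that structure theorem is precisely what is at stake, the argument collapses.

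The paper avoids this by tracking the \emph{joint} spectrum of the commuting family $\{\rho_{\nu,q}^{\chi_q}(a_{\alpha_r,q})\}_{r\in I}$ inside the continuous field $C_q(U)$. The hypothesis at $q=1$ says the function $u\mapsto \chi_1(Z^1_{\alpha_r}(\tau(u)\xi_{\alpha_r},u\xi_{\alpha_r}))$ is positive at $u=v$, so the joint spectrum meets $\R_{>0}^{|I|}$. By continuity of the field this persists for $q$ near $1$; by regularity (the previous Proposition) the joint spectrum avoids all coordinate hyperplanes for every $q$, so an open--closed argument pushes the positivity to all $q\in(0,1)$. From a point in $\R_{>0}^{|I|}$ one extracts a highest weight with all $\lambda_P(\alpha_r)>0$, and then, since the defining commutation relations of $\mcO_q(Z_\nu^{\reg})$ only involve the $a_{\alpha_r}$ non-trivially, one can adjust the phases on $P/Q$ to manufacture a genuine $\lambda\in\Lambda_\nu^{\gg}(x_q)$. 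The essential point you are missing is this passage to the full family $\{a_{\alpha_r}\}$ rather than the single element $a_\rho$.
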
 
\begin{proof}
The hypotheses imply that the functions $u \mapsto \chi_1(Z_{\alpha_r}^1(\tau(u)\xi_{\alpha_r},u\xi_{\alpha_r}))$ have a point of $\R_{>0}^{|I|}$ in their joint spectrum. By continuity, it follows that also the operators
\[
(\chi_q \otimes \pi_{\reg,q})\rho_{\nu,q}(a_{\alpha_r,q}) \in \mcO_q(U) \subseteq C_q(U)
\]
have  a point of $\R_{>0}^{|I|}$ in their joint spectrum for $q$ close to $1$. However, by Theorem \ref{TheoDecompvN} the joint spectrum of the $a_{\alpha_r}$ must hit the same connected components of $(\R\setminus \{0\})^{|I^{\tau}|} \times \C^{2|I^*|}$ for all $q$. Hence the $(\chi_q \otimes \pi_{\reg,q})\rho_{\nu,q}(a_{\alpha_r,q})$ must have a point of $\R_{>0}^{|I|}$ in their joint spectrum for \emph{all} $0<q<1$. 

In particular, there must exist a highest weight representation of $\mcO_q(Z_{\nu}^{\reg})$ in which all $a_{\alpha_r,q}$ have positive spectrum. Since the commutation relations of $\mcO_q(Z_{\nu}^{\reg})$ only involve the $a_{\alpha_r,q}$ non-trivially, it follows easily that one can then construct a highest weight representation of $\mcO_q(Z_{\nu}^{\reg})$ at $\lambda \in \Lambda_{\nu}^{\gg}(x_q)$. 
\end{proof}

We are now ready to prove the following theorem.
\begin{Theorem}
Let $\nu = (\tau,\epsilon)$ be a symmetric twisting datum. Then $G_{\nu,q}\backslash G_{\R,q}/U_q\neq \emptyset$.
\end{Theorem}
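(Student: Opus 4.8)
The plan is to reduce, by the deformation technology of this section, the existence question for $q<1$ to a purely classical statement about the compact group $U$ and its real form $G_\nu$, and then to settle that classical statement by exhibiting a concrete point. First I would invoke Lemma~\ref{LemCharGivesPos}: it suffices to produce a family of $*$-characters $\chi_q$ on $\mcO_q(Z_\nu)$, continuous in $q\in(0,1]$ in the sense that $q\mapsto \chi_q(\widetilde{f}_q)$ is continuous for every $f\in\mcO(U)$, together with an element $v\in U$ such that $\chi_1(Z_\alpha^1(\tau(v)\xi_\alpha,v\xi_\alpha))>0$ for all $\alpha\in Q^+$. For symmetric twisting data the characters are supplied by the universal $K$-matrix construction of \cite{BK15b}, modified to be $*$-preserving as in \cite{DCM18}; since the universal $R$-matrix $\msR_q$ acts continuously in $q$ with limit the trivial braiding at $q=1$ (the input already used to define the continuous field of products \eqref{EqProdZCoc}), and since the $K$-matrix is built from $\msR_q$ and Cartan-type data depending continuously on $q$, the resulting characters $\chi_q$ form a continuous family, with a well-defined limit $\chi_1$ on $\mcO(Z_\nu)=\mcO_1(Z_\nu)$.

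Next I would identify $\chi_1$ geometrically. At $q=1$ the product \eqref{EqProdZCoc} on $\mcO(U)$ makes $\mcO_1(Z_\nu)$ the coordinate ring of the classical variety $Z_\nu$ with its twisted adjoint $U$-action, and the classical $K$-matrix/character $\chi_1$ is evaluation at the base point, i.e.\ $\chi_1(f)=f(h_0)$ for a specific $h_0\in Z_\nu$; concretely $h_0$ is the image of $[e]=G_\nu U$ under the equivariant embedding \eqref{EqEquivEmb}, so $\chi_1(Z_\alpha^1(\tau(v)\xi_\alpha,v\xi_\alpha))$ is the matrix coefficient of $\pi_\alpha(h_0)$ against the weight vectors $\pi_\alpha(\tau(v)^{-1})\xi_\alpha$ and $\pi_\alpha(v)\xi_\alpha$, up to the normalization built into $Z_\alpha$. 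By the very definition of $\msE$ in \eqref{DefMsE} and the formula $\pi_\varpi(\nu(u))=\msE_\varpi\pi_\varpi(u)\msE_\varpi$ recorded in Section~\ref{SecDefDat}, the operator $\pi_\alpha(h_0)$ equals $\msE_\alpha$ up to conjugation by an element of $U$, hence is selfadjoint, invertible, and its signature on $V_\alpha$ is prescribed by $\epsilon$; taking $v=e$ (the unit) one gets $\chi_1(Z_\alpha^1(\xi_\alpha,\xi_\alpha))=\langle\xi_\alpha,\msE_\alpha\xi_\alpha\rangle\cdot c_\alpha^{-1}$ times a positive normalization, and since $\xi_\alpha$ is the highest weight vector we have $\msE_\alpha\xi_\alpha=\epsilon_Q(0)\xi_\alpha=\xi_\alpha$, so this is strictly positive for every $\alpha\in Q^+$.

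With $v=e$ and this positivity of $\chi_1$ in hand, Lemma~\ref{LemCharGivesPos} gives $\Lambda_\nu^{\gg}(x_q)\neq\emptyset$ for all $0<q<1$, where $x_q=(\chi_q)_{\mid\msZ(\mcO_q(Z_\nu))}$; since a nonempty $\Lambda_\nu^{\gg}(x_q)$ means in particular that there is a positive admissible highest weight with central character $x_q$, we conclude $x_q\in G_{\nu,q}\backslash G_{\R,q}/U_q$, so this set is nonempty. I expect the main obstacle to be the bookkeeping in the second step: pinning down precisely which classical point the $q=1$ limit of the \cite{BK15b} $K$-matrix character corresponds to, and matching the normalization conventions (the factors $c_\varpi$ in \eqref{EqInclU}, the $*$-modification of the $K$-matrix, and the pairing used to define $Z_\alpha$) so that the inequality $\chi_1(Z_\alpha^1(\xi_\alpha,\xi_\alpha))>0$ comes out with the correct sign rather than $\sgn(\epsilon_Q(\text{something}))$. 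Once the base point is correctly identified as lying in the identity component of $Z_\nu$ — equivalently, as coming from $[e]\in G_\nu\backslash G/U$ via \eqref{EqEquivEmb}, for which $\msE h_0$ has the same signature as $\msE$ — the positivity is automatic and the rest is a direct appeal to the lemmas already proved.
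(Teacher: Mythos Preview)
Your overall plan matches the paper exactly: invoke Lemma~\ref{LemCharGivesPos} with the continuous family of $K$-matrix characters $\chi_q$ from \cite{BK15b,DCM18}, and verify the classical positivity hypothesis at $q=1$. The continuity input is as you say.

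The gap is in the second step. You assert that the classical limit $\chi_1$ is evaluation at the base point $[e]\in G_\nu\backslash G$ and that therefore $v=e$ works. This is not justified, and in fact fails. The paper computes the classical $K$-matrix explicitly as $\msK_1=\msE\,\widetilde{\epsilon}^{-1}\widetilde{z}\,m_0 m_X$, where $m_0,m_X$ are (lifts of) the longest Weyl group elements for $\Gamma$ and for the Satake subset $X$. In particular $m_0 m_X$ moves $\xi_\alpha$ out of the highest weight space, so $\chi_1(Z_\alpha^1(\xi_\alpha,\xi_\alpha))=\langle\xi_\alpha,\msK_1\xi_\alpha\rangle$ is typically zero, not positive. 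The point encoded by $\msK_1$ is not the identity coset.

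What the paper actually does is use that the $K$-matrix is built from a Satake involution $\theta$, not from $\nu$ directly; one has only that $\theta$ and $\nu$ are \emph{inner} equivalent, say $\nu(u)=k^{-1}\theta(kuk^{-1})k$ for some $k\in U$. Taking $v=k$ and unwinding gives that $\widetilde{\epsilon}^{-1}\tau(k)^{-1}\widetilde{z}\,m_0m_X k$ is central, from which $\chi_1(Z_\alpha^1(\tau(k)\xi_\alpha,k\xi_\alpha))=\langle\xi_\alpha,\msE\xi_\alpha\rangle=\|\xi_\alpha\|^2>0$. So the ``bookkeeping'' you flag is precisely the substance of the argument: the correct $v$ is dictated by the inner equivalence between $\nu$ and $\theta$, and without this input the positivity does not come out.
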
 

\begin{proof}
We follow the terminology and notation of \cite{DCM18}. 

We may assume that $\nu$ is reduced. From the discussion in \cite[Appendix B]{DCM18} one sees that one can find a concrete Satake diagram $(X,\tau\tau_0)$ such that $\epsilon$ is an $(X,\tau\tau_0)$-admissible sign function. We let $\theta$ be the associated Satake involution. 

Let $\chi_q$ be the character on $\mcO_q(Z_{\nu})$ given by 
\[
\chi_q(Z_{\varpi}^q(\xi,\eta)) = \langle \xi, \msK_q\eta\rangle,
\]
with $\msK_q$ determined by \cite[(4.31)]{DCM18}. As in the proof of \cite[Theorem 4.54]{DCM18}, one has that the $\msK_q$ in $B(V_{\varpi})$ depend continuously on $q$, with 
\[
\msK_1 = \msE \widetilde{\epsilon}^{-1}m_0m_X \widetilde{z}_{\tau\tau_0}^{-1} =  \msE \widetilde{\epsilon}^{-1}\widetilde{z}m_0m_X, 
\]
where for the second equality we have used \cite[Lemma 4.26]{DCM18} (note that we have interchanged the notations $\tau$ and $\tau\tau_0$ with respect to the convention in \cite{DCM18}). 

Let now $J_{\varpi}: V_{\varpi} \rightarrow V_{\tau(\varpi)}$ be the unique (unitary) intertwiner between $\pi_{\varpi}\circ \tau$ and $\pi_{\tau(\varpi)}$ sending $\xi_{\varpi}$ to $\xi_{\tau(\varpi)}$. Put 
\[
v_{\varpi} = J_{\varpi} \msE \widetilde{\epsilon}^{-1}\widetilde{z} m_0m_X.
\]
From the definition of $\theta$ and the centrality of $\msE\widetilde{\epsilon}^{-1}$, it follows that
\[
v_{\varpi} \pi_{\varpi}(u) = \pi_{\tau(\varpi)}(\theta(u))v_{\varpi},\qquad u\in U. 
\]
Since by construction $\theta$ and $\nu$ are inner equivalent, we can find $k\in U$ such that 
\[
\nu(u) = k^{-1}\theta(kuk^{-1})k,\qquad u\in U.
\]
In particular, since 
\[
\pi_{\tau(\varpi)}(\nu(u)) = \Ad(J_{\varpi} \msE)(\pi_{\varpi}(u)),\qquad u\in U,
\]
it follows that 
\[
\msE^{-1} J_{\varpi}^{-1} k^{-1}J_{\varpi} \msE \widetilde{\epsilon}^{-1}  \widetilde{z}m_0m_X k = \widetilde{\epsilon}^{-1}\tau(k)^{-1}\widetilde{z} m_0m_Xk.
\]
is central. Hence for all $\alpha \in Q^+$, one has
\begin{eqnarray*}
\chi_1(Z_{\alpha}^1(\tau(k)\xi_{\alpha},k\xi_{\alpha})) 
&=& \langle \tau(k)\xi_{\alpha},\msK_1 k\xi_{\alpha} \rangle \\
&=& \langle \xi_{\alpha},\tau(k)^{-1} \msE \widetilde{\epsilon}^{-1}\widetilde{z}m_0m_X k \xi_{\alpha}\rangle \\
&= & \langle \xi_{\alpha},\tau(k)^{-1} \msE \widetilde{\epsilon}^{-1}\widetilde{z}m_0m_X k \xi_{\alpha}\rangle \\
&=& \langle \xi_{\alpha},\msE \widetilde{\epsilon}^{-1}\tau(k)^{-1} \widetilde{z}m_0m_X k \xi_{\alpha}\rangle\\
&=&  \langle \xi_{\alpha},\msE  \xi_{\alpha}\rangle\\ 
&=& \|\xi_{\alpha}\|^2 >0.
\end{eqnarray*}
From Lemma \ref{LemCharGivesPos} it now follows that $\Lambda_{\nu}^{\gg}(x_{\chi_q}) \neq \emptyset$ for all $0<q<1$. 
\end{proof}

\begin{Rem}
Given the $*$-character $\chi_q$ as above with associated central character $x_q = x_{\chi_q}$, it would be interesting to know explicitly the value of the associated $\lambda_{x_q} \in \Lambda_{\nu,q}^{\gg}(x_q)$. It is not clear to us at the moment how to determine this value without going into the computation of eigenvalues of the corresponding operator $\rho_{\nu}^{\chi_q}(a_{\rho,q})$. 
\end{Rem}

\subsection{Symmetric spaces of the form $U\times U/\diag(U)$}\label{SecSpecExDiag}

Let as before $\mfu$ be a compact semisimple Lie algebra with Dynkin diagram $\Gamma$ built on the index set $I$. Consider the direct sum $\mfu \oplus \mfu$ with its Dynkin diagram $\Gamma\times \Gamma$ built on the set $I\times I$, and consider the associated ungauged twisting datum $\nu = (\tau,+)$ where $\tau(r,s) = (s,r)$. We can then interpret 
\[
\mcO_q(Z_{\nu}) =\mcO_q(G_{\R} \dbbackslash G_{\R}\times G_{\R}).
\]
The associated $*$-algebra $\mcO_q(Z_{\nu}^{\reg})$ is generated by elements $x_{(r,0)}, x_{(0,r)}$ and $a_{(\omega,\chi)}$ where the $x_{(r,0)}$ and $x_{(0,s)}$ satisfy the usual quantum Serre relations,  where any of the $x_{(r,0)}$ and $x_{(0,s)}$ commute and where
\[
a_{(\omega+\omega',\chi+\chi')}  =  a_{(\omega,\chi)}a_{(\omega',\chi')},\qquad a_{(\omega,\chi)}^* = a_{(\chi,\omega)},
\]
\[
a_{(\omega,\chi)} x_{(r,0)} = q^{-(\omega+\chi,\alpha_r)}x_ra_{\omega} ,\qquad a_{(\omega,\chi)}x_{(0,r)} = q^{(\omega+\chi,\alpha_r)}x_{(0,r)}a_{(\omega,\chi)} 
\]
and
\[
x_{(r,0)} x_{(s,0)}^*  - q^{-(\alpha_r,\alpha_s)}  x_{(s,0)}^*x_{(r,0)} = - \frac{\delta_{r,s}}{q_r-q_r^{-1}},\quad x_{(0,r)} x_{(0,s)}^*  - q^{-(\alpha_r,\alpha_s)}  x_{(0,s)}^*x_{(0,r)} = - \frac{\delta_{r,s}}{q_r-q_r^{-1}},
\]
\[
x_{(r,0)} x_{(0,s)}^*  - x_{(0,s)}^*x_{(r,0)} =  \frac{\delta_{r,s}q_r a_{(0,-\alpha_r)}} {q_r-q_r^{-1}}.
\]

On the other hand, consider for $\mfu$ the twisting datum $\mu = (\id,0)$ on $I$ with associated $*$-algebra $\mcO_q(Z_{\mu}^{\reg})$. Then we see that we have an embedding of $*$-algebras
\[
\mcO_q(Z_{\mu}^{\reg})\rightarrow \mcO_q(Z_{\nu}^{\reg}),\qquad a_{\omega}\mapsto a_{(\omega,\omega)},\quad x_r \mapsto x_{(r,0)}. 
\] 

\begin{Prop}
We have $\lambda = q^{\rho} \in \Lambda_{\nu}^{\gg}$. 
\end{Prop}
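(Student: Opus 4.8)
The plan is to exhibit an explicit admissible highest weight $*$-representation of $\mcO_q(Z_{\nu}^{\reg})$ at the weight $\lambda = q^{\rho}$ by building it out of rank-one pieces, using the embedding of $\mcO_q(Z_{\mu}^{\reg})$ into $\mcO_q(Z_{\nu}^{\reg})$ together with the fact that $\mcO_q(Z_{\nu})$ in this ``diagonal'' situation is essentially $\mcO_q(U)$ itself, for which the relevant spectral information is classical (Section \ref{ExaRank1}, Theorem \ref{TheoListAllSUq2}). Concretely, I would recall from the algebraic side that $\mcO_q(Z_{\nu})\cong \mcO_q(U)$ as a $*$-algebra with $\mcO_q(U)$-coaction (the remark in the introduction and the forthcoming identification in Section \ref{SecSpecExDiag}), so a highest weight $*$-representation at weight $\lambda$ corresponds to locating a weight vector in a suitable $*$-representation of $\mcO_q(U)$ annihilated by all $x_{(r,0)}$ and $x_{(0,r)}$, and then checking that the weight it carries is $q^{\rho}$ and that the representation has the central character belonging to $G_{\nu,q}\backslash G_{\R,q}/U_q$.

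First I would use the factorisation $\pi_{\reg}\cong \Hsp\otimes \mbs_{r_1}\ast\cdots\ast\mbs_{r_N}\ast\int_T\chi_\theta\,\rd\theta$ from \cite{LS91} (as recalled in the proof of Theorem \ref{TheoBijCorr}), applied to the regular representation of $\mcO_q(U)$, and track where a highest weight vector sits. For the trivial twisting datum $\nu = (\tau,+)$ the locus of non-degeneracy is empty, so $W_\nu^- = \{e\}$, and Theorem \ref{TheoFusion}(3) tells us that convolving with each $\mbs_r$ only spreads the gauge component over the circle without changing $|\lambda|$; combining this with Lemma \ref{LemGoFurther} and its corollary, the set $\Lambda_\nu^{\gg}$ is stable under $\lambda\mapsto \lambda q^{-w^{-1}(\varpi+\tau(\varpi))}$, and one checks that the ``largest'' admissible positive highest weight is forced, by the structure of the $\mbs_r$-fusion (each step multiplies the relevant Cartan eigenvalue by $q^{(2\rho,\cdot)}$-type factors as in Lemma \ref{LemDotAction} with $\epsilon = +$), to have Cartan eigenvalues $q^{(2\rho,\varpi_r)}$, i.e. $\lambda = q^{\rho}$ in the normalisation $\lambda_r = q^{(2\gamma,\varpi_r)}$ with $\gamma = \rho/2$ — wait, here $\lambda = q^{\rho}$ means $\lambda_P(\varpi) = q^{(\rho,\varpi)}$. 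The cleanest route is: exhibit the highest weight vector directly as $\xi_{\varpi}\otimes(\text{vacuum})$ inside the tensor product of the highest-weight module $V_\varpi$ with the big-cell representations $\mcS_r$, compute via \eqref{EqDefGammaTilde} (or \eqref{EqTwistedAdj}) that $a_{(\omega,\omega)}$ acts on it by $q^{(\rho,\omega)}$, and confirm $x_{(r,0)}$, $x_{(0,r)}$ annihilate it.

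The main obstacle — and the step deserving the most care — will be checking admissibility, i.e. that the central character $x_{q^{\rho}}$ of this highest weight representation actually lies in $G_{\nu,q}\backslash G_{\R,q}/U_q$ rather than merely in $\widetilde\Lambda_\nu$. For this I would invoke the criterion that $x\in G_{\nu,q}\backslash G_{\R,q}/U_q$ iff the invariant functional $\int_{Z_\nu(x)} = x\circ E_{\rho_\nu}$ is positive (the lemma characterising $\X/\U$), and observe that for the diagonal datum the candidate representation is realised \emph{inside} $B(L^2_q(U))$ via $\rho_\nu^{\chi}$ for the trivial character $\chi = \hat\varepsilon$ of $\mcO_q(Z_\nu)\cong\mcO_q(U)$ (the counit), so that the invariant state is just the Haar state of $\mcO_q(U)$ restricted along $i_\nu$, which is manifestly positive. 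Concretely: take $\chi$ to be the counit of $\mcO_q(U)$ transported through the isomorphism $\mcO_q(Z_\nu)\cong\mcO_q(U)$; by the proposition on characters of $\mcO_q(Z_\nu)$, $\rho_\nu^{\chi} = (\chi\otimes\pi_{\reg})\rho_\nu$ is then an $x$-representation of regular type, so $x = x_\chi \in G_{\nu,q}\backslash G_{\R,q}/U_q$; and by Theorem \ref{TheoBijCorr} together with $W_\nu^- = \{e\}$ and $\mbH_\tau^{\gauge}$-closedness (Lemma \ref{LemCloseGauge}), $\Lambda_\nu^{\gg}(x)$ is the single point $\lambda_x$, which a direct weight computation on the highest weight vector identifies with $q^{\rho}$. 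The only genuinely computational piece is verifying that the weight of the distinguished vector is $q^{\rho}$ and not some $q$-shift of it, which comes down to the normalisation \eqref{EqInclU} of $i_\nu$ (where $U_\varpi(\xi_\varpi,\xi_\varpi)\mapsto c_\varpi a_\varpi$ with $c_\varpi = q^{(\varpi_-,\varpi_-)}$) combined with the classical fact that the Haar state of $\mcO_q(U)$ evaluates the cyclic vector of $\pi_{\reg}$ to highest weight $\rho$ under the torus action — more precisely, that $a_\rho$ has a nonzero eigenvector of eigenvalue $q^{(\rho,\rho)}\cdot(\text{const})$ in the big cell.
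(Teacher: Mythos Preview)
Your proposal misidentifies where the work lies and relies on a circular reference. The hard part of showing $q^{\rho}\in\Lambda_{\nu}^{\gg}$ is \emph{not} admissibility: once you have a highest weight $*$-representation at a strongly positive weight $\lambda$, the completion $\Hsp_{\lambda}$ is automatically irreducible, positive, and of regular type, hence lies in $\msP^{+}$ and is admissible by definition of $\mcO_q(G_{\nu}\backslash G_{\R})=\mcO_q(Z_{\nu}^{+}U)$. The actual content is to show that $q^{\rho}$ is a highest weight at all, i.e.\ that the invariant hermitian form on the Verma module $M_{q^{\rho}}$ is positive semi-definite. Your sketch never addresses this directly; instead you appeal to the isomorphism $\mcO_q(Z_{\nu})\cong\mcO_q(U)$, but that identification is established \emph{after} this proposition in Section \ref{SecSpecExDiag}, and its proof (showing $\rho_{\nu}^{\chi}$ factors through $\pi_{q^{\rho}}$) uses the present proposition. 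So transporting the counit through that isomorphism is circular.

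The paper's argument is short and direct, and it does use the embedding $\mcO_q(Z_{\mu}^{\reg})\hookrightarrow\mcO_q(Z_{\nu}^{\reg})$ you mention in your first sentence --- but in a way you do not pursue. The key observation is that at the specific weight $\lambda=q^{\rho}$ (and only there), each $(x_{(r,0)}^{*}+x_{(0,r)}^{*})\xi_{\lambda}$ is again a highest weight vector in $M_{\lambda}$: one checks from the commutation relations listed just before the proposition that $x_{(s,0)}$ and $x_{(0,s)}$ annihilate it, using $a_{(0,-\alpha_r)}\xi_{\lambda}=q^{-(\rho,\alpha_r)}\xi_{\lambda}=q_{r}^{-1}\xi_{\lambda}$ to make the two cross-terms cancel. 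Hence in the irreducible quotient $V_{\lambda}$ one has $x_{(0,r)}^{*}\xi_{\lambda}=-x_{(r,0)}^{*}\xi_{\lambda}$, so $\xi_{\lambda}$ is already cyclic for the $*$-subalgebra generated by the $x_{(r,0)}$ and $a_{(\omega,\omega)}$, which is precisely the embedded copy of $\mcO_q(Z_{\mu}^{\reg})$. By uniqueness of the invariant hermitian form (Lemma \ref{LemIsoHW}), the form on $V_{\lambda}$ must agree with the one coming from the $\mcO_q(Z_{\mu}^{\reg})$-module structure, and the latter is positive-definite by \cite[Theorem 2.7]{DeC13}. That is the whole proof; no characters, no big-cell factorisation, and no forward reference are needed.
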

\begin{proof}
Let $M_{\lambda}$ be the Verma module of $\mcO_q(Z_{\nu}^{\reg})$ at $\lambda$ with highest weight vector $\xi_{\lambda}$, and let $V_{\lambda}$ be its irreducible quotient. An easy computation shows that all the $(x_{(r,0)}^* + x_{(0,r)}^*)\xi_{\lambda}$ are also highest weight vectors in $M_{\lambda}$, and hence 
\[
x_{(0,r)}^* \xi_{\lambda} = -x_{(r,0)}^*\xi_{\lambda} 
\]
in $V_{\lambda}$. It follows that the vector $\xi_{\lambda}\in V_{\lambda}$ is already cyclic for the $*$-algebra generated by the $x_{(r,0)}$ and the $a_{(\omega,\omega)}$. Since the latter is an isomorphic copy of the $*$-algebra $\mcO_q(Z_{\mu}^{\reg})$, it follows by uniqueness that the $\mcO_q(Z_{\nu}^{\reg})$-invariant inner product on $V_{\lambda}$ must coincide with the one coming from its representation of $\mcO_q(Z_{\mu}^{\reg})$. The latter is however positive-definite by \cite[Theorem 2.7]{DeC13}. 
\end{proof}

Let 
\[
x = x_{\lambda}\in G_{\R,q} \backslash G_{\R,q}\times G_{\R,q}/U_q
\]
be the associated central character of $\mcO_q(Z_{\nu})$. We will show that $\mcO_q(O_x^{\nu}) \cong \mcO_q(U)$ are isomorphic as $*$-algebras, and identify the corresponding coaction of $\mcO_q(U)\otimes \mcO_q(U)$. As the computations are somewhat tedious, and as the results are probably known to specialists already in one form or another, we will not specify all details. 

We start with recalling the following results from  \cite{ST09}, to which we refer for the ideas behind the computations. Up to a regauging (and change of conventions), the element $Y$ in the lemma below corresponds to the \emph{half-twist element} of \cite{ST09}. We follow the same conventions as \cite{DCM18}. 

\begin{Lem}
Let $T_{w_0}\in \prod_{\varpi} \End(V_{\varpi})$ be the braid group operator corresponding to the longest element $w_0$ in the Weyl group, and let $C \in  \prod_{\varpi} \End(V_{\varpi})$ be the operator given by 
\[
C \xi = q^{\frac{1}{2}(\wt(\xi),\wt(\xi)) + (\wt(\xi),\rho)}
\]
Let $\mcS_0 = e^{2\pi i \rho^{\vee}}$, where $\rho^{\vee}(\alpha_r) = 1$ for all $r\in I$, and define 
\[
Y = \mcS_0 C T_{w_0}. 
\]
Then the following hold: 
\begin{enumerate}
\item $\msR = (Y\otimes Y)\Delta(Y)^{-1}$. 
\item $Y^2 = \mcS_0 v^{-1}$, where $v \in \prod_{\varpi} \End(V_{\varpi})$ is the standard central ribbon element satisfying 
\[
v_{\varpi} = q^{-(\varpi,\varpi+2\rho)},\qquad \msR_{21}\msR = \Delta(v)(v^{-1}\otimes v^{-1}),
\]
\item $Y^* = R(Y) = Y \mcS_0$, where $R$ is the unitary antipode. 
\item $\Ad(Y)$ is an anti-comultiplicative $*$-algebra automorphism with 
\[
\Ad(Y)K_{\omega} = K_{-\tau_0(\omega)},\qquad \Ad(Y)E_r =-q_rF_{\tau_0(r)},\qquad \Ad(Y)F_r =-q_r^{-1}E_{\tau_0(r)},
\]
where $\tau_0$ is the involution on $\Gamma$ determined by the $-\Ad(w_0)$. 
\end{enumerate}
\end{Lem}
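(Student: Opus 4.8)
The plan is to establish the four asserted identities about the half-twist element $Y = \mcS_0 C T_{w_0}$ by reducing each to a known structural property of the universal $R$-matrix, the braid group action, and the ribbon element, exploiting that all the operators involved act diagonally or via well-understood braid formulas on each $V_{\varpi}$. First I would recall the standard fact (e.g. from \cite{KS97}) that the braid operator $T_{w_0}$ intertwines the Chevalley generators according to the $-w_0$-twist, i.e.\ $T_{w_0} K_\omega T_{w_0}^{-1} = K_{-\tau_0(\omega)}$, $T_{w_0} E_r T_{w_0}^{-1}$ is proportional to $F_{\tau_0(r)}$, and similarly for $F_r$; combining this with the obvious fact that $\Ad(\mcS_0)$ and $\Ad(C)$ fix all $K_\omega$, rescale $E_r, F_r$ by weight-dependent powers of $q$, and together with the precise normalisation of $C$ produce exactly the stated coefficients $-q_r$ and $-q_r^{-1}$. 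This gives item (4), and the anti-comultiplicativity of $\Ad(Y)$ follows since $T_{w_0}$ is known to be anti-comultiplicative up to the $R$-matrix, which will be absorbed once item (1) is in hand.

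Next I would prove item (2): since $T_{w_0}^2$ acts on $V_\varpi$ as multiplication by a scalar related to the ribbon element (the square of the longest braid is central and implements $S^2$ twisted appropriately), and $C$ is an explicit diagonal operator, a direct weight-by-weight computation on highest weight vectors, combined with $\mcS_0^2$ being trivial on each $V_\varpi$ (as $\rho^\vee$ pairs integrally with $P$), pins down $Y^2 = \mcS_0 v^{-1}$ against the formula $v_\varpi = q^{-(\varpi,\varpi+2\rho)}$. For item (3), I would use that the unitary antipode $R$ satisfies $R(T_{w_0}) = T_{w_0}^{-1}$ (or the appropriate variant), that $R$ fixes $C$ and $\mcS_0$ up to the known twist, and reconcile with the adjoint by using that $Y^*$ is computed from the $*$-structure $E_r^* = F_r K_r$; alternatively one checks $Y^* Y$ is central and grouplike, forcing $Y^* = Y \mcS_0 \cdot (\text{scalar})$, and the scalar is fixed by acting on $\xi_\varpi$.

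Finally, item (1), the identity $\msR = (Y\otimes Y)\Delta(Y)^{-1}$, is the genuine content and the main obstacle. The clean route is to invoke the characterisation of the half-twist from \cite{ST09}: $Y$ is the unique (up to scalar) invertible element implementing the ``square root of the Drinfeld element'' whose coproduct twist is $\msR$. Concretely, one verifies that $\Delta(C T_{w_0})$ relates to $(CT_{w_0}\otimes CT_{w_0})$ precisely by the inverse $R$-matrix: this follows from the coproduct formula for $T_{w_0}$ (which introduces an $\msR$-factor) together with the bicharacter property of $C$ encoding the quadratic form, and then the $\mcS_0$-factors cancel because $\mcS_0$ is grouplike. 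I expect the bookkeeping of $q$-powers in the $C$-coproduct and the sign/orientation of the $R$-matrix factor from $\Delta(T_{w_0})$ to be the fiddly part; I would organise it by checking the identity first on the pair of extreme weight vectors $\xi_\varpi \otimes \xi_{\varpi'}$ and $\eta_{w_0\varpi}\otimes \eta_{w_0\varpi'}$, where everything is explicitly diagonal and matches the normalisation $\msR(\xi\otimes\eta) = q^{-(\wt\xi,\wt\eta)}\xi\otimes\eta$ recorded in the excerpt, and then extend by the co-module algebra structure / cyclicity of these vectors. Since the statement is attributed implicitly to \cite{ST09}, I would in the write-up mostly cite that reference for item (1) and supply the short verifications for (2)--(4).
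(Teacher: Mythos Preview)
Your proposal is correct in content and, in fact, goes well beyond what the paper does: the paper gives no proof of this lemma at all, merely prefacing it with ``We start with recalling the following results from \cite{ST09}, to which we refer for the ideas behind the computations'' and noting that $Y$ is a regauged version of the Snyder--Tingley half-twist, following the conventions of \cite{DCM18}. Your concluding remark---to cite \cite{ST09} for the substantive identity (1) and supply the short checks for (2)--(4)---is thus exactly aligned with the paper's treatment, only more generous to the reader.
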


\begin{Prop}
The map 
\[
\chi = \chi_{\msK}: \mcO_q(Z_{\nu})\rightarrow \C,\qquad Z(\xi\otimes \eta,\xi'\otimes \eta') \mapsto \langle \xi\otimes \eta,\msK (\xi'\otimes \eta')\rangle
\]
where 
\[
\msK = (Y^{-1}\otimes 1)\Delta^{\opp}(Y)^{-1}(Y\mcS_0 v^{-1}\otimes v^{-1}),
\]
defines a $*$-character on $\mcO_q(Z_{\nu})$. 
\end{Prop}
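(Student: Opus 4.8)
The strategy is to recognize the proposed map $\chi_{\msK}$ as arising from the general machinery of \cite{BK15b,KoSt09}: it suffices to check that $\msK$, viewed as an element of $\prod_{\varpi,\varpi'} \End(V_{\varpi}\otimes V_{\varpi'})$, satisfies (i) the appropriate $\tau$-twisted reflection equation making $\chi_{\msK}$ an algebra homomorphism on $\mcO_q(Z_{\nu})$, and (ii) a unitarity/selfadjointness condition making $\chi_{\msK}$ a $*$-character. Since $\nu = (\tau,+)$ here is the diagonal twisting datum on $\mfu\oplus\mfu$, with $\tau$ the flip, the relevant reflection equation degenerates to a statement about the $R$-matrix of $U_q(\mfg)$ alone, and the element $\msK$ is built entirely out of the half-twist $Y$ and the ribbon element $v$, whose algebraic properties are recorded in the preceding lemma. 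So the computation reduces to formal manipulations with the identities
\[
\msR = (Y\otimes Y)\Delta(Y)^{-1},\qquad Y^2 = \mcS_0 v^{-1},\qquad Y^* = Y\mcS_0,\qquad \msR_{21}\msR = \Delta(v)(v^{-1}\otimes v^{-1}).
\]

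First I would write down, following \cite[Section 2]{DCM18}, the precise condition on an element $\msK \in \prod \End(V_{\varpi}\otimes V_{\varpi'})$ under which $Z(\xi\otimes\eta,\xi'\otimes\eta')\mapsto \langle \xi\otimes\eta,\msK(\xi'\otimes\eta')\rangle$ descends to a well-defined character on $\mcO_q(Z_{\nu})$; by \eqref{EqGlobalCoactRho} this is exactly the requirement that $\msK$ intertwine the two natural embeddings of each $V_{\varpi''}$ into $V_{\tau(\varpi)}^*\otimes V_{\varpi'}$-type spaces, which for $\nu=(\tau,+)$ and $\tau$ the flip becomes a reflection-type equation of the form $\msR_{\tau}\msK_1\msR\msK_2 = \msK_2\msR_{\tau}\msK_1\msR$ in a suitably interpreted completed tensor algebra, with $\msR_\tau = (\tau\otimes\id)\msR$. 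Then I would substitute $\msK = (Y^{-1}\otimes 1)\Delta^{\opp}(Y)^{-1}(Y\mcS_0 v^{-1}\otimes v^{-1})$ and use (1) of the lemma repeatedly: the factor $(Y^{-1}\otimes 1)\Delta^{\opp}(Y)^{-1}$ is, up to the flip, precisely $(Y^{-1}\otimes Y^{-1})\Delta(Y) = \msR^{-1}$ conjugated appropriately, so the reflection equation collapses to the hexagon/quasitriangularity axioms for $\msR$ together with the centrality of $v$ and the commutation rules for $\mcS_0 = e^{2\pi i\rho^{\vee}}$ with the $K_\omega$. The bookkeeping of which tensor legs carry $\tau$ is the fiddly part, but no genuinely new input is needed.

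For the $*$-character property I would compute $\overline{\chi_{\msK}(Z(\xi\otimes\eta,\xi'\otimes\eta'))}$ and compare it with $\chi_{\msK}$ applied to $Z(\xi\otimes\eta,\xi'\otimes\eta')^* = Z(\xi'\otimes\eta',\xi\otimes\eta)$ after the $\tau$-twist built into the $*$-structure of $\mcO_q(Z_{\nu})$; concretely this amounts to showing $\msK^* = (\tau\otimes\id \text{ or } \id\otimes\tau)(\msK^{-1})$ in the appropriate sense, equivalently that the operator $\msK$ is ``unitary up to the flip''. Here I would use $Y^* = Y\mcS_0$ and the fact that the unitary antipode $R$ sends $\msR$ to $\msR_{21}^{-1}$ (or the relevant variant), together with $R(v) = v$ and $R(\mcS_0) = \mcS_0$, and the anticomultiplicativity of $\Ad(Y)$ from item (4). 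Taking adjoints turns $\Delta^{\opp}(Y)^{-1}$ into $\Delta(Y^*)^{-1}$ up to reordering, and the $\mcS_0 v^{-1}$ and $v^{-1}$ factors recombine using $Y^2 = \mcS_0 v^{-1}$; the claim should fall out after tracking signs.

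The main obstacle I anticipate is purely organizational rather than conceptual: keeping straight the several completions (the $R$-matrix lives in $U_q(\mfb^+)\hat\otimes U_q(\mfb^-)$, $\msK$ lives in a product over pairs of irreducibles, and the coaction formula \eqref{EqGlobalCoactRho} involves the $\tau$-twist on one leg only), and making sure that the flip $\tau$ on $\mfu\oplus\mfu$ is correctly matched with the half-twist $Y$ for $\mfu$ — in particular that the involution $\tau_0$ appearing in $\Ad(Y)$ does not cause a mismatch. I would handle this by phrasing everything at the level of the universal $R$-matrix and $Y$ in a single $U_q(\mfg)$-type completion, deferring the $V_\varpi\otimes V_{\varpi'}$-evaluation to the very end, and by citing \cite{ST09} and \cite{DCM18} for the identities rather than rederiving them. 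Since the statement explicitly says ``we will not specify all details'', I would present the reduction to the lemma's identities and the one-line check of each axiom, and leave the sign-chasing to the reader.
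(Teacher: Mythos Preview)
Your plan is correct and matches the paper's approach: both reduce the claim to the half-twist identities from the preceding lemma and the $*$-compatible $K$-matrix framework of \cite{DCM18}. The one refinement the paper adds is that, rather than verifying the reflection equation from scratch, it observes that the quasi-$K$-matrix of \cite{BK15b} in this diagonal case coincides with $(\id\otimes \Ad(K_{\rho}Y))\widetilde{\msR}$, so $\msK$ is recognized directly as an instance of the $K$-matrices already shown in \cite{DCM18} to yield $*$-characters; this shortcuts the axiom-by-axiom check you outline.
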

\begin{proof}
Using the above properties of $Y$, this follows by an easy, albeit tedious computation following the description of the $*$-compatible $K$-matrix in \cite{DCM18}, after noting that the quasi-$K$-matrix of \cite{BK15b} coincides with $(\id\otimes \Ad(K_{\rho}Y))\widetilde{\msR}$, with $\widetilde{\msR} \in U_q(\mfn) \hat{\otimes} U_q(\mfn^-)$ the quasi-$R$-matrix. 
\end{proof} 

From \cite[Theorem 4.40]{DCM18}, it follows that the associated $*$-homomorphism $\rho_{\nu}^{\chi}: \mcO_q(Z_{\nu}) \rightarrow \mcO_q(U)\otimes \mcO_q(U)$ has as its range the right coideal dual to the left coideal written as $\mathscr{C} = R(\widetilde{\msB})$ in \cite{DCM18}, which in the case at hand is easily seen to be given as
\[
\mathscr{C} = (\Ad(Y)\otimes \id)\Delta^{\opp}(U_q(\mfu)) \subseteq U_q(\mfu)\otimes U_q(\mfu). 
\]
The dual coideal is then given by 
\[
\mcO_q(\diag(U) \backslash (U\times U)) = \{U_{\tau_0(\varpi)\otimes \varpi}(\sum_i YK_{\rho}\overline{e_i}\otimes e_i,\overline{\xi}\otimes \eta)\mid \varpi \in P^+\},
\]
where we identify $V_{\tau_0(\varpi)} \cong \overline{V_{\varpi}}$ with the usual conjugate Hilbert space structure and $U_q(\mfu)$-representation given by $x \overline{\xi} = \overline{R(x)^*\xi}$. Restricting the above $*$-subalgebra of $\mcO_q(U)\otimes \mcO_q(U)$ to functionals on $\C\otimes U_q(\mfu)$, we obtain an isomorphism of $*$-algebras
\[
\mcO_q(U) \cong \mcO_q(\diag(U) \backslash (U\times U)) \qquad U_{\varpi}(\xi,\eta) \mapsto U_{\tau_0(\varpi)\otimes \varpi}(\sum_i YK_{\rho}\overline{e_i}\otimes e_i, (Y^*)^{-1} K_{\rho}^{-1}\overline{\xi}\otimes \eta).
\]
Under this isomorphism, the right coaction of $\mcO_q(U)\otimes \mcO_q(U)$ is transported to the coaction 
\[
\widetilde{\Delta}_{\diag}(f) = f_{(2)}\otimes S(f_{(1)})\circ \Ad(Y)\otimes f_{(3)}. 
\]

Let now $\msZ_1$ be the span of the elements $1\otimes Z_{\varpi}(\xi,\eta)$ in $\mcO_q(Z_{\nu})$ corresponding to the component $1\otimes \mcO_q(U)$, and consider similarly the span $\msZ_2$ of the elements $Z_{\varpi}(\xi,\eta)\otimes 1$. From for example \eqref{EqProdZCoc} one sees that 
\[
\msZ_1 \msZ_2 = \mcO_q(Z_{\nu}) = \msZ_2\msZ_1.
\]
Moreover, it is easily seen that 
\[
\rho_{\nu}^{\chi}(1\otimes Z_{\varpi}(\xi,\eta)) =  U_{\varpi}(Y\xi,\eta),\qquad \rho_{\nu}^{\chi}(Z_{\varpi}(\xi,\eta)\otimes 1) = U_{\varpi}(Y\eta,\xi)^*,  
\]
where the second formula easily follows from the first using that $\rho_{\nu}^{\chi}$ is a $*$-homomorphism. 

We are now ready to prove the following theorem. 

\begin{Theorem}
The $\mcO_q(U)\otimes \mcO_q(U)$ equivariant $*$-homomorphism $\rho_{\nu}^{\chi}$ factors through $\pi_{q^{\rho}}$, inducing a $*$-isomorphism 
\[
\mcO_q(O_{x_{q^{\rho}}}^{\nu}) \cong \mcO_q(U).
\]
\end{Theorem}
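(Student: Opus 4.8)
The plan is to identify $\mcO_q(O_{x_{q^\rho}}^\nu)$ with the image of $\rho_\nu^\chi$ and then show that image is all of $\mcO_q(U)$. First I would recall that for any $*$-character $\chi$ of $\mcO_q(Z_\nu)$ the map $\rho_\nu^\chi := (\chi\otimes\id)\rho_\nu : \mcO_q(Z_\nu)\to\mcO_q(U)\otimes\mcO_q(U)$ is an $\mcO_q(U)\otimes\mcO_q(U)$-equivariant $*$-homomorphism whose kernel contains $\widetilde I_{x_\chi}$ (since a character kills $\{z-\chi(z)\mid z\in\mcO_q(Z_\nu/U)\}$ and hence the $*$-ideal it generates). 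So $\rho_\nu^\chi$ descends to an equivariant $*$-homomorphism $\overline{\rho_\nu^\chi}:\mcO_q(O_{x_\chi}^\nu)\to\mcO_q(U)\otimes\mcO_q(U)$. Taking $\chi=\chi_\msK$ and using the Proposition that $q^\rho\in\Lambda_\nu^{\gg}$, with $x_{q^\rho}$ the associated central character, one checks $x_{\chi_\msK}=x_{q^\rho}$: indeed by the preceding Proposition $\rho_\nu^{\chi_\msK}$ is an $x_{\chi_\msK}$-representation of regular type, and since $\Lambda_\nu^{\gg}(x)$ is a singleton (Corollary after Theorem \ref{TheoBijCorr}) the positive highest weight of $\rho_\nu^{\chi_\msK}$ must be $q^\rho$, forcing $x_{\chi_\msK}=x_{q^\rho}$. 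Alternatively, one verifies directly from the explicit formula $\rho_\nu^{\chi_\msK}(1\otimes Z_\varpi(\xi,\eta))=U_\varpi(Y\xi,\eta)$ that the $a_\varpi=Z_\varpi(\xi_\varpi,\xi_\varpi)$ act with weight $q^\rho$ on the vacuum of the regular representation.

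Next I would prove surjectivity of $\rho_\nu^{\chi_\msK}$ onto $\mcO_q(U)$, where we identify $\mcO_q(U)\cong\mcO_q(\diag(U)\backslash(U\times U))$ with the target, i.e.\ the range of $\rho_\nu^{\chi_\msK}$ as described in the excerpt. Using the decomposition $\mcO_q(Z_\nu)=\msZ_1\msZ_2$ where $\msZ_1,\msZ_2$ are the two copies of $\mcO_q(U)$ sitting inside via the two factors, and the computed formulas
\[
\rho_\nu^{\chi_\msK}(1\otimes Z_\varpi(\xi,\eta))=U_\varpi(Y\xi,\eta),\qquad \rho_\nu^{\chi_\msK}(Z_\varpi(\xi,\eta)\otimes 1)=U_\varpi(Y\eta,\xi)^*,
\]
I would argue that since $Y$ is invertible, the elements $U_\varpi(Y\xi,\eta)$ as $\xi,\eta$ range over $V_\varpi$ and $\varpi$ over $P^+$ already span all of $\mcO_q(U)$. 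Hence $\rho_\nu^{\chi_\msK}(\msZ_1)=\mcO_q(U)$, and a fortiori $\rho_\nu^{\chi_\msK}$ is surjective. So $\overline{\rho_\nu^{\chi_\msK}}:\mcO_q(O_{x_{q^\rho}}^\nu)\to\mcO_q(U)$ is a surjective equivariant $*$-homomorphism.

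For injectivity of $\overline{\rho_\nu^{\chi_\msK}}$ I would use a dimension/character count. On both sides the torus $T$ (via the residual $\mcO_q(U)$-coaction, or rather via the character $\chi_\theta^{(r)}$) acts, and $\mcO_q(O_x^\nu)$ decomposes into weight components; more usefully, both $\mcO_q(O_{x_{q^\rho}}^\nu)$ and $\mcO_q(U)$ are $\mcO_q(U)\otimes\mcO_q(U)$-comodule algebras, and by Theorem \ref{TheoDecompvN} together with the known Peter--Weyl decomposition of $\mcO_q(U)$ one sees that the multiplicities of each irreducible $\mcO_q(U)\otimes\mcO_q(U)$-corepresentation agree on the two sides. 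Concretely: $\mcO_q(O_{x_{q^\rho}}^\nu)$ is a quotient of $\mcO_q(Z_\nu)\cong\mcO_q(U)$ (as a coalgebra), so its corepresentation multiplicities are bounded above by those of $\mcO_q(U)$; surjectivity of $\overline{\rho_\nu^{\chi_\msK}}$ onto $\mcO_q(U)$ gives the reverse bound; hence equality, and a surjective equivariant map between comodules with matching finite multiplicities in each isotypic component is an isomorphism. An equivalent and perhaps cleaner route: the invariant state $\int_{O^\nu_{x_{q^\rho}}}$ is faithful on $\mcO_q^*(O^\nu_{x_{q^\rho}})$, and $\rho_\nu^{\chi_\msK}$ intertwines it with the Haar state of $\mcO_q(U)$ (since $\chi_\msK$ followed by the Haar state of the second leg computes $\int_{Z_\nu}$, which restricts to the invariant state on the fiber), so $\overline{\rho_\nu^{\chi_\msK}}$ is isometric for the respective GNS norms and in particular injective on $\mcO_q^*(O^\nu_{x_{q^\rho}})$; combined with surjectivity this yields the claimed $*$-isomorphism $\mcO_q(O_{x_{q^\rho}}^\nu)\cong\mcO_q(U)$.

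The main obstacle I anticipate is the bookkeeping in matching the coaction on the nose — verifying that under the identification $\mcO_q(U)\cong\mcO_q(\diag(U)\backslash(U\times U))$ the transported coaction is exactly $\widetilde\Delta_{\diag}(f)=f_{(2)}\otimes (S(f_{(1)})\circ\Ad(Y))\otimes f_{(3)}$, and that this is intertwined by $\overline{\rho_\nu^{\chi_\msK}}$ with the canonical $\mcO_q(U)\otimes\mcO_q(U)$-coaction on $\mcO_q(O_{x_{q^\rho}}^\nu)$ coming from $\rho_\nu$. This requires carefully tracking the half-twist $Y$ and the conjugate-module identifications $V_{\tau_0(\varpi)}\cong\overline{V_\varpi}$ through the formula for $\msK$, which is exactly the computation the authors flag as tedious; but it is routine given the listed properties of $Y$ (anti-comultiplicativity of $\Ad(Y)$, $Y^*=Y\mcS_0$, $\msR=(Y\otimes Y)\Delta(Y)^{-1}$) and the explicit description of the coideal $\mathscr C=(\Ad(Y)\otimes\id)\Delta^{\opp}(U_q(\mfu))$.
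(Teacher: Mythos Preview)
Your approach is genuinely different from the paper's, and it has real gaps.

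\textbf{The central character identification.} Your justification that $x_{\chi_\msK}=x_{q^\rho}$ does not go through. The first argument is circular: the singleton property of $\Lambda_\nu^{\gg}(x)$ does not pin down \emph{which} $x$ is attached to $\chi_\msK$ unless you already know the positive highest weight appearing in $\rho_\nu^{\chi_\msK}$ is $q^\rho$. The second argument is incorrect as stated: under $\rho_\nu^{\chi_\msK}$ the element $a_{(0,\varpi)}$ is sent to $q^{-\frac{1}{2}(\varpi,\varpi)}U_\varpi(Y\xi_\varpi,\xi_\varpi)\in\mcO_q(U)$, and the unit vector in $L^2_q(U)$ is not an eigenvector for this operator. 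The paper obtains the central character identification the other way round: it shows that $\pi_{q^\rho}$ factors through $\rho_\nu^{\chi}$, whence the central characters must agree.

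\textbf{The multiplicity argument.} You write ``$\mcO_q(Z_\nu)\cong\mcO_q(U)$ (as a coalgebra)'', but in the diagonal case the ambient Lie algebra is $\mfu\oplus\mfu$, so $i_\nu$ identifies $\mcO_q(Z_\nu)$ with $\mcO_q(U\times U)=\mcO_q(U)\otimes\mcO_q(U)$, whose $\mcO_q(U)\otimes\mcO_q(U)$-comodule multiplicities are strictly larger than those of $\mcO_q(U)$. So the upper bound you claim does not match the lower bound from surjectivity, and the count does not close.

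\textbf{The GNS argument.} This is the soundest part of your proposal, but it only yields $\mcO_q^*(O_{x_{q^\rho}}^\nu)\cong\mcO_q(U)$; to get the statement for $\mcO_q(O_{x_{q^\rho}}^\nu)$ you would still need to know that the invariant state is faithful on the algebraic fiber itself, which is not automatic.

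\textbf{What the paper does instead.} The paper localizes $\mcO_q(U)$ at the elements $b_\varpi=U_\varpi(Y\xi_\varpi,\xi_\varpi)$ and their adjoints to obtain $\mcO_q^{\loc}(U)$, which it identifies as the image of the extension of $\rho_\nu^{\chi}$ to $\mcO_q(Z_\nu^{\reg})$. The crucial structural input (citing \cite{DC18}) is that $\rho_\nu^{\chi}$ restricts to an \emph{isomorphism} from the subalgebra $\widetilde{\mcO}_q(Z_\mu)$ generated by the $a_{(\varpi,\varpi')},x_{(r,0)},x_{(r,0)}^*$ onto $\mcO_q^{\loc}(U)$. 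Combined with the earlier Proposition --- which showed that the highest weight vector $\xi_{q^\rho}$ is already cyclic for this smaller subalgebra, so that $\pi_{q^\rho}$ is entirely determined by its restriction there --- one concludes that $\pi_{q^\rho}$ factors through $\mcO_q^{\loc}(U)$, hence through $\rho_\nu^{\chi}$. This simultaneously pins down the central character and yields the claimed isomorphism, bypassing any separate multiplicity or GNS argument.
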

\begin{proof}
We see from the above that 
\[
\rho_{\nu}^{\chi}(a_{(0,\varpi)}) =q^{-\frac{1}{2}(\varpi,\varpi)}b_{\varpi},\qquad b_{\varpi} =  U_{\varpi}(Y\xi_{\varpi},\xi_{\varpi}).
\]
Localizing at $b_{\varpi},b_{\varpi}^*$, we obtain a $*$-algebra $\mcO_q^{\loc}(U)$ that is the image of $\mcO_q(Z_{\nu}^{\reg})$. In fact, $\mcO_q^{\loc}(U)$ is isomorphic by $\rho_{\nu}^{\chi}$ to the $*$-algebra $\widetilde{\mcO}_q(Z_{\mu})$ generated by the $a_{(\varpi,\varpi')}$, the $x_{(r,0)}$ and the $x_{(r,0)}^*$, see e.g.~ \cite{DC18}. It now follows immediately from the argument in the construction of $\pi_{q^{\rho}}$ that the latter $*$-representation must factor over $\mcO_q^{\loc}(U)$.  
\end{proof}

\begin{Rem}
Once this identification is made, we note that the specific instance of Theorem \ref{TheoFormInvFunct} in this case was obtained in \cite{RY01}. 
\end{Rem}

\subsection{Quantization of the real Grassmannian spaces $SU(N)/S(U(m)\times U(n))$}

To end, we return to the example we considered in the introduction. Put $\mfu = \mfsu(N)$, and consider the Hermitian symmetric pair $\mfs(\mfu(m) \oplus \mfu(n)) \subseteq \mfsu(N)$. It can be described by the Vogan diagram $(\id,\epsilon)$ where $\epsilon_r = (-1)^{\delta_{r,m}}$.   
\[
\begin{tikzpicture}[scale=.4,baseline=1cm]
\node (v1) at (8.8,0.8) {};
\node (v2) at (8.8,-0.8) {};
\draw (0cm,0) circle (.2cm) node[above]{\small $1$} ;
\draw (0.2cm,0) -- +(0.6cm,0);
\draw (1cm,0) circle (.2cm);
\draw (1.2cm,0) -- +(0.2cm,0);
\draw[dotted] (1.4cm,0) --+ (1cm,0);
\draw (2.4cm,0) --+ (0.2cm,0);
\draw (2.8cm,0) circle (.2cm); 
\draw (3cm,0) --+ (0.6cm,0);
\draw[fill = black] (3.8cm,0) circle (.2cm)  node[above]{\small $m$};
\draw (4cm,0) --+ (0.6cm,0);
\draw (4.8cm,0) circle (.2cm);
\draw (5cm,0) -- +(0.2cm,0);t
\draw[dotted] (5.2cm,0) --+ (1cm,0);
\draw (6.2cm,0) --+ (0.2cm,0);
\draw(6.6cm,0) circle (.2cm);
\draw (6.8cm,0) --+ (0.6cm,0);
\draw (7.6cm,0) circle (.2cm) node[above]{\footnotesize $N-1$};
\end{tikzpicture}
\]

In this case, the set $\Lambda_{\nu}^{\gg}$ was already determined in \cite{DeC13}.

\begin{Prop}\cite[Theorem 2.7]{DeC13} \label{PropPosChar}
Let $\nu = (\id,\epsilon)$ with $\epsilon_r = (-1)^{\delta_{r,m}}$. Then $\lambda \in \Lambda_{\nu}^{\gg}$ if and only if $\lambda = q^{-2\gamma}$ with $\gamma = \sum_{r=1}^{N-1} k_r\varpi_r$ for $k_r\in \N$ when $r\neq m$ and $k_m\in \R$. 
\end{Prop}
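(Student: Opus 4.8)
The statement to be established is Proposition \ref{PropPosChar} as cited from \cite{DeC13}: for $\nu = (\id,\epsilon)$ with $\epsilon_r = (-1)^{\delta_{r,m}}$ on $\mfsu(N)$, we have $\lambda \in \Lambda_{\nu}^{\gg}$ if and only if $\lambda = q^{-2\gamma}$ with $\gamma = \sum_{r=1}^{N-1} k_r\varpi_r$ where $k_r\in \N$ for $r\neq m$ and $k_m\in \R$. Since this is quoted from \cite[Theorem 2.7]{DeC13}, I will not reprove it from scratch but rather indicate how it fits into the present framework and how it can be recovered from the tools developed above. The plan is to reconcile the classification of \cite{DeC13} — which was phrased in terms of highest weight $*$-representations of the reflection equation algebra for the Hermitian symmetric pair — with the definition of $\Lambda_{\nu}^{\gg}$ given here via $\mcO_q(Z_{\nu}^{\reg})$, and then to verify the stated parametrization using the rank-one reduction lemmas.

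First I would recall that for this $\nu$ the twisting datum is reduced, $\tau = \id$, and $W_{\nu} = W$. The locus of non-degeneracy is all of $I = \{1,\ldots,N-1\}$, so $\mcO_q(Z_{\nu}^{\reg})$ has all $a_{\omega}$ invertible and the full triangular decomposition $\msN^-\msA\msN^+$ applies. A highest weight $\lambda$ lies in $\widetilde{\Lambda}_{\nu}$ precisely when the contravariant Hermitian form on the Verma module $M_{\lambda}$ (constructed in Lemma \ref{LemIsoHW}) is positive semidefinite; it lies in $\Lambda_{\nu}^{\gg}$ when moreover $\lambda \in \mbH_{\tau}^{\gg}$, i.e. all $\lambda_r > 0$. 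Writing $\lambda = q^{-2\gamma}$ with $\gamma \in \mfa$, positivity of all $\lambda_r$ forces $\gamma$ real, and the content of the proposition is the constraint on the coefficients $k_r = (\gamma,\alpha_r^{\vee})$ — wait, more precisely $\gamma = \sum k_r\varpi_r$ so $k_r$ is the coefficient in the fundamental weight basis. The key point is that the form is positive semidefinite iff, in each rank-one direction $\alpha_r$, the induced form is positive, and by Lemma \ref{LemRank1CompZ} and Corollary \ref{CorRank1Comp} this translates for $r\neq m$ (where $\epsilon_r = 1 > 0$) into the quantization condition $\lambda_P(\varpi_r) \in q^{-\N}$, i.e. $k_r \in \N$, and for $r = m$ (where $\epsilon_m = -1 < 0$) into no constraint at all beyond reality, i.e. $k_m \in \R$.

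The main work, and the main obstacle, is that a rank-one analysis at each simple root is \emph{necessary} but not obviously \emph{sufficient}: one must check that the local conditions at all the $\alpha_r$ together actually guarantee positivity of the full form on $M_{\lambda}$, rather than merely on the rank-one submodules. This is exactly where the Hermitian-symmetric structure is used in \cite{DeC13}: the parabolic associated to the black node $m$ makes the problem reduce, via a Harish-Chandra / Shapovalov-determinant type argument, to a product over positive roots, and the sign pattern of $\epsilon$ (a single $-1$) ensures that the only roots contributing a genuine positivity obstruction are the ones lying in the compact part, which are governed by the $k_r$ with $r \neq m$. Concretely I would invoke the computation of the Shapovalov form for $\mcO_q(Z_{\nu}^{\reg})$ along the lines of \cite[Theorem 2.7]{DeC13}, together with Lemma \ref{LemPosWPlus} and Lemma \ref{LemDecompRho} of the present paper, which give precisely the positivity statement $(\rho - \gamma,\beta) > 0$ for $\beta \in \hat\Delta_c^+$ that, combined with the explicit root system of $A_{N-1}$, pins down the admissible $\gamma$. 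Finally I would note consistency with the general results of this section — in particular Proposition \ref{PropAlwaysPositive}, whose conclusion $(\rho - \gamma,\beta)>0$ for $\beta \in \hat\Delta_c^+$ must hold for every $\lambda = q^{2\gamma}\in\Lambda_\nu^{\gg}$ — so that the set described in the proposition is exactly the one compatible with Theorem \ref{TheoFormInvFunct}.
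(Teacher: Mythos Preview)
The paper does not prove this proposition: it is simply cited from \cite[Theorem 2.7]{DeC13}, with the immediately following Remark pointing out that the cited theorem tacitly requires the hermitian symmetric hypothesis because its proof rests on \cite[Proposition 5.13]{JT02} (the quantum KPRV determinant), which needs the distinguished simple root to occur with multiplicity one in every positive root. You correctly recognize that the statement is imported rather than proved here, and your rank-one analysis via Corollary \ref{CorRank1Comp} does give exactly the stated \emph{necessary} conditions.

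Where your sketch drifts is in the sufficiency direction. Invoking Lemma \ref{LemPosWPlus}, Lemma \ref{LemDecompRho}, or Proposition \ref{PropAlwaysPositive} does not close the gap: those results extract consequences \emph{from} membership in $\Lambda_{\nu}^{\gg}$, they do not establish it. The actual mechanism in \cite{DeC13} is a Shapovalov-type determinant factorization (the Joseph--Todori\'{c} computation), and it is precisely this factorization that fails outside the hermitian case --- which is the content of the paper's Remark. So your identification of ``local rank-one conditions suffice because of the hermitian structure'' is the right intuition, but the concrete tool is the KPRV determinant from \cite{JT02}, not the lemmas of the present paper. Also watch the sign convention: here $\lambda = q^{-2\gamma}$, whereas in Proposition \ref{PropAlwaysPositive} the convention is $\lambda = q^{2\gamma}$, so the inequality $(\rho-\gamma,\beta)>0$ you quote would read $(\rho+\gamma,\beta)>0$ in the present parametrization.
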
 

\begin{Rem}
There is a missing assumption in the statement of \cite[Theorem 2.7]{DeC13}, in that it only applies to \emph{hermitian} symmetric spaces. Indeed, the theorem relies crucially on \cite[Proposition 5.13]{JT02}, which needs the property that the distinguished simple root appears only with single multiplicity in all other roots, a property satisfied precisely in the hermitian symmetric case. 
\end{Rem}

Note now that in this case $W_{\nu} = W = S_N$, while $W_{\nu}^+ = S_m\times S_n$. The decomposition $W = W^- \times W^+$ is then the standard one where $w = w_-w_+$ if $w_-$ is the element of minimal length in the $W^+$-coset class of $w$. Thus $W^-$ consists of those $w\in S_N$ with $w(i) \leq w(j)$ for all $1\leq i \leq j \leq m$ and all $m+1 \leq i\leq j \leq N$. We obtain then the following theorem.

\begin{Theorem}
Let $\nu = (\id,\epsilon)$ with $\epsilon_r = (-1)^{\delta_{r,m}}$, and let $\lambda \in \Lambda_{\nu}^{\gg}$ be given as in  Proposition \ref{PropPosChar}. Let $x = x_{\lambda}$ be the associated character. Then 
\[
L^{\infty}_q(O_x^{\nu}) \cong \oplus_{w\in W^-} B(\Hsp_{w\cdot_{\epsilon}\lambda}). 
\]
\end{Theorem}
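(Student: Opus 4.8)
The statement is an essentially immediate specialisation of Theorem \ref{TheoDecompvN} to the case $\mfu = \mfsu(N)$, $\nu = (\id,\epsilon)$ with $\epsilon_r = (-1)^{\delta_{r,m}}$, so the work is purely bookkeeping: I would simply record how the three ingredients of \eqref{EqIsovN} become concrete in this example. First, since $\tau = \id$ we have $\mfa^{-\tau} = 0$, hence $s = \dim\mfa^{-\tau} = 0$ and the factor $L^{\infty}(\T^s)$ in \eqref{EqIsovN} is trivial. Second, Proposition \ref{PropPosChar} guarantees that $\Lambda_{\nu}^{\gg} \neq \emptyset$, and the given $\lambda$ lies in it, so $x = x_{\lambda} \in G_{\nu,q}\backslash G_{\R,q}/U_q$ and $\lambda_x = \lambda$ is the positive highest weight associated to $x$ (using the corollary after Theorem \ref{TheoBijCorr} that $\Lambda_{\nu}^{\gg}(x)$ is a single point, together with Lemma \ref{LemCloseGauge}: the gauge group $\mbH_{\tau}^{\gauge}$ is trivial here since $I^* = \emptyset$, so $\lambda$ is the unique element of $\Lambda_{\nu}^{\gg}(x)$).

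Third, I would identify $W_{\nu}^-$. Since $J_{\nu} = \supp(\epsilon) = I$ (as $\epsilon_r \neq 0$ for all $r$) and $\tau = \id$, Definition \ref{DefWnu} gives $W_{\nu} = W^{\tau}\cap W_J = W = S_N$. The compact root subsystem is $\hat{\Delta}_c = \Delta_c = \{\alpha \in \Delta \mid \epsilon_Q(\alpha) > 0\}$; for $\mfsu(N)$ with $\Delta = \{\varepsilon_i - \varepsilon_j\}$ and the single black node at $m$, a positive root $\varepsilon_i - \varepsilon_j$ ($i<j$) has $\epsilon_Q(\varepsilon_i-\varepsilon_j) = (-1)^{\#\{r \in [i,j) : r = m\}}$, which is $+1$ exactly when $i,j$ lie on the same side of $m$, i.e. $\hat{\Delta}_c = \Delta_{\{1,\dots,m-1\}} \sqcup \Delta_{\{m+1,\dots,N-1\}}$. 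Hence $W_{\nu}^+$, the Weyl group of $\hat{\Delta}_c$ (Definition after Lemma \ref{LemSubsLongShort}), equals $S_m \times S_n$, the stabiliser of $\msE$, so we are in the classical ``minimal coset representatives'' situation; by Theorem \ref{TheoSec} the map $W_{\nu}^- \to W_{\nu}/W_{\nu}^+$ is a bijection, and as noted in the excerpt $W_{\nu}^-$ is identified with the set of $w \in S_N$ that are increasing on $\{1,\dots,m\}$ and on $\{m+1,\dots,N\}$. Plugging $s=0$, $\lambda_x = \lambda$ and these identifications into \eqref{EqIsovN} yields
\[
L^{\infty}_q(O_x^{\nu}) \cong \oplus_{w\in W_{\nu}^-} B(\Hsp_{w\cdot_{\epsilon,q}\lambda}),
\]
and since $\tau = \id$ the $q$-twisted dot action $\cdot_{\epsilon,q}$ only differs from $\cdot_{\epsilon}$ by a $W$-orbit-wise positive rescaling of the weight, so $\Hsp_{w\cdot_{\epsilon,q}\lambda}$ and $\Hsp_{w\cdot_{\epsilon}\lambda}$ denote the same von Neumann algebra factor $B(\Hsp)$ (the notation $w\cdot_{\epsilon}\lambda$ in the statement being the natural classical shorthand); this gives exactly the displayed formula.

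\textbf{Main obstacle.} There is essentially no analytic obstacle left: the heavy lifting — the decomposition \eqref{EqIsovN}, the existence of the positive highest weight, and the structure of $W_{\nu}^-$ — has already been done in Theorem \ref{TheoDecompvN}, Theorem \ref{TheoSec} and Proposition \ref{PropPosChar}. The only point requiring a little care is the cosmetic discrepancy between $\cdot_{\epsilon}$ and $\cdot_{\epsilon,q}$: one should remark (via Definition \ref{DefDotq} and Lemma \ref{LemDotAction}) that for $\tau = \id$ the two actions have the same orbits up to positive scalars, so the indexing of the summands by $w\cdot_{\epsilon}\lambda$ versus $w\cdot_{\epsilon,q}\lambda$ is immaterial for the von Neumann algebraic statement. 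Everything else is a direct substitution.
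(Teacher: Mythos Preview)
Your proposal is correct and matches the paper's approach exactly: the paper states this theorem without proof, simply as an immediate specialisation of Theorem \ref{TheoDecompvN} after the identification of $W_{\nu}^+ = S_m\times S_n$ and $W^-$ as minimal-length coset representatives, which is precisely the bookkeeping you carry out. Your observation about the cosmetic discrepancy between $\cdot_{\epsilon}$ and $\cdot_{\epsilon,q}$ is apt; the paper's notation here is a shorthand, and your justification that the difference is immaterial for the von Neumann algebraic statement (the summands being type I factors indexed by $W^-$) is the right way to handle it.
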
 

This gives a quantum analogue, in the measure-theoretic setting, of the decomposition in \eqref{EqCellDecomp}. Of course, this measure-theoretic setting does not see the lower-dimensional cells, for which finer C$^*$-algebraic techniques need to be used. A full understanding of $C_q(O_x^{\nu})$ is however out of reach at the moment. 

\begin{Rem}
The $*$-algebra $\mcO_q(Z_{\nu}^{\reg})$ is in this case just a copy, up to taking square roots of the Cartan part, of $U_q(\mfsl(N,\C))$ with a peculiar $*$-structure. This means that most of the algebraic machinery to study $U_q(\mfsl(N,\C))$ can also be used to study $\mcO_q(Z_{\nu}^{\reg})$. In particular, one can use the computations of \cite{UTS90} to compute explicit Gelfand-Zeitlin bases for Verma modules of $\mcO_q(Z_{\nu}^{\reg})$ for any ungauged twisting data. It is then not hard to deduce from this the precise form of $\Lambda_{\nu}^{\gg}$. 
\end{Rem}

\appendix

\setcounter{section}{-1}
\section{Computations in rank 2}\label{Ap3}

\renewcommand{\thesection}{A}

In this appendix we classify the highest weight $*$-representations of $\mcO_q(Z_{\nu}^{\reg})$ for the case of a rank $2$ Lie algebra $\mfu$ with $\nu = (\tau,\epsilon)$, $\epsilon$ ungauged and $\tau$ non-trivial. We write the associated simple roots $\alpha_1,\alpha_2$ so that $\tau(1) = 2$. We further write $\epsilon = \epsilon_1= \epsilon_2 \in \R$.  

A highest weight $*$-representation is completely determined by the scalar $\lambda  = \lambda_P(\varpi_1) \in \C^{\times}$. Since we know already that the class of $\lambda$'s appearing this way is stable under multiplication with the circle group $\mbH^{\gauge}$, cf. Lemma \ref{LemCloseGauge}, we can restrict to consdering $\lambda>0$. On the level of $\mcO_q(Z_{\nu}^{\reg})$ this means we may impose the extra relation $a_{\varpi_1} = a_{\varpi_2} = a_{\varpi_1}^*$. We denote the resulting quotient by $\mcO_q(\widetilde{Z}_{\nu}^{\reg})$ and write $a = a_{\varpi_1}$.

\subsection{Twisted conjugacy classe of type $A_1\times A_1$ with nontrivial automorphism}

Let $\mfu = \mfsu(2)\oplus \mfsu(2)$. The associated $*$-algebra $\mcO_q(\widetilde{Z}_{\nu}^{\reg})$ is generated by elements $a^{\pm 1},x_1,x_2$ with $a$ self-adjoint and
\begin{equation}\label{Eqx1ax1starA11}
ax_1 = q^{-1}x_1a,\quad x_1x_1^* = q^{-2}x_1^*x_1+ (q^{-1}-q)^{-1},
\end{equation}
\begin{equation}\label{Eqx2ax2starA11}
 ax_2 =q^{-1}x_2a,\quad x_2x_2^* = q^{-2}x_2^*x_2 +(q^{-1}-q)^{-1},
\end{equation}
\begin{equation}\label{Eqx1ax2starA11}
x_1x_2^* = x_2^*x_1+\frac{\epsilon q}{q-q^{-1}}a^{-2},
\end{equation}
and 
\begin{equation}\label{EqCommA11}
x_1x_2 = x_2x_1.
\end{equation}

\begin{Lem}
The following relations hold for $k,n\in \N$:
\[
x_1(x_1^*)^n = q^{-2n}(x_1^*)^n x_1 + q^{-n+1}\frac{q^{-n}-q^n}{(q^{-1}-q)^2}(x_1^*)^{n-1},
\]
\[
x_1(x_2^*)^k = (x_2^*)^kx_1 - \epsilon q^{-k+2} \frac{q^{-k}-q^k}{(q^{-1}-q)^2}(x_2^*)^{k-1}a^{-2},
\]
\[
x_2(x_1^*)^n = (x_1^*)^nx_2 - \epsilon q^{-n+2} \frac{q^{-n}-q^n}{(q^{-1}-q)^2}(x_1^*)^{n-1}a^{-2},
\]
\[
x_2(x_2^*)^k = q^{-2k}(x_2^*)^k x_2 + q^{-k+1}\frac{q^{-k}-q^k}{(q^{-1}-q)^2}(x_2^*)^{k-1}.
\]
\end{Lem}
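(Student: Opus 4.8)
The four identities are of the same shape: each says that moving one generator $x_i$ past the $n$-th (or $k$-th) power of a starred generator $x_j^*$ produces the obvious ``lower order'' correction term, and they follow from the four basic relations \eqref{Eqx1ax1starA11}--\eqref{EqCommA11} by a routine induction on the exponent. The only mildly nontrivial input is the $q$-number bookkeeping, i.e.\ keeping track of the coefficient $q^{-n+1}\frac{q^{-n}-q^n}{(q^{-1}-q)^2}$ as $n$ grows, so I will set up that arithmetic once and reuse it.

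First I would treat the ``diagonal'' cases $x_1(x_1^*)^n$ and $x_2(x_2^*)^k$, which are identical after swapping indices. The base case $n=1$ is precisely the second relation in \eqref{Eqx1ax1starA11} (noting $q^{0}\frac{q^{-1}-q}{(q^{-1}-q)^2}=(q^{-1}-q)^{-1}$, so the stated formula reduces correctly). For the inductive step, write $x_1(x_1^*)^{n+1}=\bigl(x_1(x_1^*)^n\bigr)x_1^*$, substitute the inductive hypothesis, and then push the single remaining $x_1$ past the final $x_1^*$ using \eqref{Eqx1ax1starA11} again. Collecting terms gives $q^{-2(n+1)}(x_1^*)^{n+1}x_1$ plus a multiple of $(x_1^*)^n$; the coefficient is $q^{-2n}(q^{-1}-q)^{-1}+q^{-n+1}\frac{q^{-n}-q^n}{(q^{-1}-q)^2}$, and a one-line simplification (common denominator $(q^{-1}-q)^2$, then $q^{-2n}(q^{-1}-q)+q^{-n+1}(q^{-n}-q^n)=q^{-n}\bigl(q^{-n-1}-q^{-n+1}+q^{-n+1}-q^{n+1}\bigr)=q^{-n}(q^{-n-1}-q^{n+1})$) shows it equals $q^{-(n+1)+1}\frac{q^{-(n+1)}-q^{n+1}}{(q^{-1}-q)^2}$, as required.

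Next, the two ``cross'' cases $x_1(x_2^*)^k$ and $x_2(x_1^*)^n$, again identical to each other under the index swap. Here the base case is \eqref{Eqx1ax2starA11}. The induction is slightly cleaner because $x_1$ commutes with $a$ up to the scalar $q^{-1}$ from \eqref{Eqx1ax1starA11} only, but $x_1$ does \emph{not} acquire a power of $q$ when passing a plain $a^{-2}$ except via that relation; more importantly, in the term $(x_2^*)^{k-1}a^{-2}$ the factor $x_1$ (once commuted all the way left) must be pushed past $a^{-2}$, producing $a^{-2}$ times a power of $q$. The key point is that the exponent $-2$ on $a$ in \eqref{Eqx1ax2starA11} is fixed and does not grow with $k$ — I should double check (using $ax_1=q^{-1}x_1a$, hence $x_1 a^{-2}=q^{2}a^{-2}x_1$, and similarly $x_1 (x_2^*)^{k-1}$ from the inductive hypothesis) that the powers of $q$ assemble into exactly $-\epsilon q^{-(k)+2}\frac{q^{-k}-q^{k}}{(q^{-1}-q)^2}$ at the next stage. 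The recursion on the scalar coefficient is formally the same sum as in the diagonal case (same $q$-number identity), so it closes identically.

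\textbf{Main obstacle.} There is no conceptual difficulty; the only thing that can go wrong is a sign or a stray power of $q$ in the cross relations, because \eqref{Eqx1ax2starA11} carries the extra $a^{-2}$ and one must commute both the surviving $x_i$ and the $a^{-2}$ through the accumulated $(x_j^*)^{\bullet}$ in the right order. So the ``hard part'' is purely organizational: fix the convention of always commuting $x_i$ fully to the left first, then commuting $a^{\pm}$, verify the base cases literally match the displayed formulas (including the degenerate $n=1$ coefficient), and confirm the one $q$-number identity $q^{-2n}(q^{-1}-q)+q^{-n+1}(q^{-n}-q^{n})=q^{-n-1}-q^{n+1}$ (up to the factor $q^{-n}$) that drives all four inductions. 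Once that is in place, each of the four statements is an immediate induction, and I would present them together, proving one diagonal case and one cross case in full and indicating that the remaining two follow by the symmetry $1\leftrightarrow 2$.
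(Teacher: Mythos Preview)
Your approach is correct and is exactly what the paper does: it simply says ``By straightforward induction.'' One small slip to fix in the cross case: after applying the inductive hypothesis to $x_1(x_2^*)^k$, there is no $x_1$ left in the correction term $(x_2^*)^{k-1}a^{-2}$, so the relevant commutation when you append the extra $x_2^*$ is $a^{-2}x_2^* = q^{-2}x_2^* a^{-2}$ (from $ax_2 = q^{-1}x_2 a$), not $x_1$ past $a^{-2}$; also note that $ax_1=q^{-1}x_1a$ gives $x_1 a^{-2}=q^{-2}a^{-2}x_1$, not $q^{2}$, though you never actually need this.
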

\begin{proof}
By straightforward induction.
\end{proof}

As in the proof of Lemma \ref{LemIsoHW}, let $M_{\lambda}$ be the Verma module at weight $\lambda$ with highest weight vector $\xi_{\lambda}$. Put
\[
\xi_{kn} = (x_2^*)^k(x_1^*)^n \xi_{\lambda}.
\]
Also the following lemma follows by straightforward induction. 

\begin{Lem}\label{LemActA11}
The generators act as follows on the $\xi_{kn}$: 
\[
a \xi_{kn} = \lambda q^{k+n} \xi_{kn},\qquad x_1^*\xi_{kn} = \xi_{k,n+1},\qquad x_2^*\xi_{kn} = \xi_{k+1,n},
\]
\[
x_1\xi_{kn} = q^{-n+1}\frac{q^{-n}-q^n}{(q^{-1}-q)^2}\xi_{k,n-1} -\epsilon\lambda^{-2} q^{-k-2n+2}\frac{q^{-k}-q^k}{(q^{-1}-q)^2} \xi_{k-1,n},
\]
\[
x_2 \xi_{kn} = -\epsilon\lambda^{-2} q^{-2k-n+2}\frac{q^{-n}-q^n}{(q^{-1}-q)^2}\xi_{k,n-1} +q^{-k+1} \frac{q^{-k}-q^k}{(q^{-1}-q)^2} \xi_{k-1,n}. 
\]
\end{Lem}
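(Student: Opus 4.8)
\textbf{Proof plan for Lemma \ref{LemActA11}.}

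The plan is to prove the four displayed formulas by induction, treating the action of $a$, $x_1^*$, $x_2^*$ first (these are immediate) and then the action of $x_1$ and $x_2$, which are the substantive ones. First I would dispose of the easy cases: the formula $x_1^*\xi_{kn} = \xi_{k,n+1}$ is the definition of $\xi_{k,n+1}$ together with the fact (established earlier, at the point where the $\xi_{kn}$ are introduced) that $(x_2^*)^k$ and $(x_1^*)^n$ act by the obvious word ordering; the formula $x_2^*\xi_{kn} = \xi_{k+1,n}$ requires first commuting $x_2^*$ past $(x_2^*)^k(x_1^*)^n\xi_\lambda$, but since $x_2^*$ already sits on the left of $(x_2^*)^k$ there is nothing to do. The eigenvalue formula $a\xi_{kn} = \lambda q^{k+n}\xi_{kn}$ follows by pushing $a$ through $(x_2^*)^k(x_1^*)^n$ using the relations $ax_1 = q^{-1}x_1a$ and $ax_2 = q^{-1}x_2a$ from \eqref{Eqx1ax1starA11}--\eqref{Eqx2ax2starA11}, which on adjoints give $ax_i^* = q\, x_i^* a$, so that each of the $k+n$ creation operators contributes a factor $q$, and $a\xi_\lambda = \lambda\xi_\lambda = \lambda_P(\varpi_1)\xi_\lambda$.

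For the action of $x_1$ on $\xi_{kn} = (x_2^*)^k(x_1^*)^n\xi_\lambda$, I would use the two commutation identities from the preceding lemma, namely $x_1(x_2^*)^k = (x_2^*)^k x_1 - \epsilon q^{-k+2}\frac{q^{-k}-q^k}{(q^{-1}-q)^2}(x_2^*)^{k-1}a^{-2}$ and $x_1(x_1^*)^n = q^{-2n}(x_1^*)^n x_1 + q^{-n+1}\frac{q^{-n}-q^n}{(q^{-1}-q)^2}(x_1^*)^{n-1}$. Applying the first to move $x_1$ past $(x_2^*)^k$ produces two terms; in the first, $x_1$ then meets $(x_1^*)^n$ and the second identity splits it again, but the term containing $x_1\xi_\lambda$ vanishes since $\xi_\lambda$ is a highest weight vector (killed by $x_1, x_2$); in the second term one must move $a^{-2}$ to the right past $(x_1^*)^n$, picking up a factor $q^{2n}$ from $a^{-1}x_1^* = q^{-1}x_1^* a^{-1}$ squared, i.e. $a^{-2}(x_1^*)^n = q^{2n}(x_1^*)^n a^{-2}$, and then $a^{-2}\xi_\lambda = \lambda^{-2}\xi_\lambda$ (here I use that $\lambda_P(\varpi_1) = \lambda_P(\varpi_2)$ so $a^{-2}$ acts as $\lambda^{-2}$). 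Collecting the surviving contributions and reindexing gives exactly the stated formula; the action of $x_2$ is handled symmetrically, swapping the roles of indices $1$ and $2$ and using $x_2(x_1^*)^n$ and $x_2(x_2^*)^k$ from the same lemma. Throughout one must be careful that the $q$-powers from commuting $a$-factors are tracked consistently; this bookkeeping is the only place an error could creep in, but it is routine.

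The main (minor) obstacle is purely organizational: one must verify that the word $\xi_{kn}=(x_2^*)^k(x_1^*)^n\xi_\lambda$ is the ``right'' normal form, i.e. that applying $x_2^*$ genuinely yields $\xi_{k+1,n}$ and not a corrected combination --- this is fine here because $x_2^*$ prepends to the leftmost block --- and that applying $x_1^*$ to $\xi_{kn}$ yields $\xi_{k,n+1}$, which requires that $x_1^*$ commute with $(x_2^*)^k$; this follows from the adjoint of \eqref{EqCommA11}, $x_1^* x_2^* = x_2^* x_1^*$. Once these structural points are noted, all four formulas reduce to the elementary inductions indicated, and I would simply write ``by straightforward induction, using the previous lemma and the relations \eqref{Eqx1ax1starA11}--\eqref{EqCommA11}'' in the same spirit as the proofs of the two preceding lemmas.
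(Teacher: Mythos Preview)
Your approach is correct and matches the paper's: the proof is a direct computation using the commutation identities from the preceding lemma together with the highest-weight and eigenvalue conditions on $\xi_\lambda$, exactly as the paper's one-line ``by straightforward induction'' indicates. One small slip in your bookkeeping: from $a^{-1}x_1^* = q^{-1}x_1^* a^{-1}$ you should get $a^{-2}(x_1^*)^n = q^{-2n}(x_1^*)^n a^{-2}$, not $q^{2n}$; with this correction the second term in $x_1\xi_{kn}$ comes out with the stated exponent $q^{-k-2n+2}$.
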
 

\begin{Lem}
For each $t\in \N$, the space of highest weight vectors for $x_1$ at weight $\lambda q^{t}$ is one-dimensional, and given by $\C \eta_{0}^{(t)}$ where $\eta_0^{(t)} = \sum_{k+n=t} c_n \xi_{k,n}$ with $c_0 = 1$ and 
\[
c_n =\epsilon \lambda^{-2} q^{-t+2}\frac{q^{-t+n-1}-q^{t-n+1}}{q^{-n}-q^n}c_{n-1},\qquad 1\leq n \leq t.
\] 
\end{Lem}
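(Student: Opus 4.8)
The statement is a concrete rank-two computation about the Verma module $M_\lambda$ for $\mcO_q(\widetilde{Z}_\nu^{\reg})$ in type $A_1\times A_1$ with the non-trivial involution. A vector $\eta$ is a highest weight vector for $x_1$ at weight $\lambda q^{t}$ precisely when it lies in the $a$-eigenspace with eigenvalue $\lambda q^{t}$ and is killed by $x_1$ (not by all the $x_r$ — just $x_1$). By Lemma \ref{LemActA11}, the $\xi_{k,n}$ with $k+n=t$ form a basis of the $a$-eigenspace at weight $\lambda q^{t}$, so write a general element as $\eta = \sum_{k+n=t} c_n \xi_{k,n}$ (indexing by $n$, with $k=t-n$). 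The plan is to apply $x_1$ to $\eta$ using the formula from Lemma \ref{LemActA11} and set the result to zero, which yields a two-term recursion among the $c_n$, then check that recursion has the claimed shape and a one-dimensional solution space.

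\textbf{Key steps.} First, from Lemma \ref{LemActA11},
\[
x_1 \xi_{k,n} = q^{-n+1}\frac{q^{-n}-q^{n}}{(q^{-1}-q)^2}\,\xi_{k,n-1} - \epsilon\lambda^{-2} q^{-k-2n+2}\frac{q^{-k}-q^{k}}{(q^{-1}-q)^2}\,\xi_{k-1,n}.
\]
Applying $x_1$ to $\eta=\sum_{k+n=t}c_n\xi_{k,n}$ and collecting the coefficient of $\xi_{k,n}$ (which receives a contribution from $\xi_{k,n+1}$ via the first term and from $\xi_{k+1,n}$ via the second term, and note these two source vectors both have level $t$ when $\eta$ has level $t$), one gets, for each pair $(k,n)$ with $k+n=t-1$,
\[
c_{n+1}\, q^{-n}\frac{q^{-n-1}-q^{n+1}}{(q^{-1}-q)^2} \;-\; c_{n}\,\epsilon\lambda^{-2} q^{-k-2n+1}\frac{q^{-k-1}-q^{k+1}}{(q^{-1}-q)^2} \;=\;0,
\]
where on the second term $k=t-1-n$ when it comes from $\xi_{k+1,n}$ with $(k+1)+n=t$, so $k+1=t-n$. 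Solving for $c_{n+1}$ in terms of $c_n$ gives exactly a relation of the form $c_{n} = \epsilon\lambda^{-2} q^{-t+2}\,\dfrac{q^{-t+n-1}-q^{t-n+1}}{q^{-n}-q^{n}}\,c_{n-1}$ after re-indexing; I would carry out this bookkeeping carefully, matching exponents, to confirm the stated formula. Second, observe the recursion is invertible at each step (the denominators $q^{-n}-q^n$ are nonzero for $1\le n\le t$ since $0<q<1$), so once $c_0$ is fixed the $c_n$ are determined; normalizing $c_0=1$ pins down $\eta_0^{(t)}$ uniquely. Third, note there is no further constraint "wrapping around" — the recursion runs over $n=1,\dots,t$ and terminates because $\xi_{k,n}$ with $n<0$ or $k<0$ do not occur — so the solution space is exactly one-dimensional, giving $\C\,\eta_0^{(t)}$. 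Finally, record that $\eta_0^{(t)}$ is indeed killed by $x_1$ and has $a$-eigenvalue $\lambda q^{t}$, which is immediate from the construction.

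\textbf{Main obstacle.} The only real work is the index/exponent bookkeeping in deriving the recursion: one must be careful that when extracting the coefficient of $\xi_{k,n}$ in $x_1\eta$, the first term of $x_1\xi_{k,n+1}$ and the second term of $x_1\xi_{k+1,n}$ both contribute, and that the powers of $q$ (which involve $k$, and hence implicitly $t$) are substituted consistently with $k+n=t$. Getting the precise constant $\epsilon\lambda^{-2}q^{-t+2}$ and the ratio $\dfrac{q^{-t+n-1}-q^{t-n+1}}{q^{-n}-q^n}$ exactly right is a matter of patient algebra rather than any conceptual difficulty; there is no subtlety about existence or uniqueness once the recursion is in hand.
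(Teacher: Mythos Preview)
Your proposal is correct and follows exactly the approach the paper intends: the paper's own proof is the single sentence ``The recursive formula follows at once from Lemma \ref{LemActA11},'' and you have simply written out that computation in detail. Your bookkeeping is right; after re-indexing $n'+1\mapsto n$ the recursion you derive matches the stated formula exactly.
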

\begin{proof}
The recursive formula follows at once from Lemma \ref{LemActA11}. 
\end{proof}

Put now 
\[
\eta_n^{(t)} = (x_1^*)^n\eta_0^{(t)}
\]
and consider the unique invariant hermitian form on $M_{\lambda}$ with $\langle \xi_{\lambda},\xi_{\lambda}\rangle = 1$. 
\begin{Lem}
The $\eta_n^{(t)}$ are mutually orthogonal and form a basis of $M_{\lambda}$.  
\end{Lem}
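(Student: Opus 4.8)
The claim is that the vectors $\eta_n^{(t)} = (x_1^*)^n \eta_0^{(t)}$, as $t$ ranges over $\N$ and $n$ over $\N$, are mutually orthogonal and form a linear basis of $M_\lambda$. The plan is to separate the two assertions (basis and orthogonality) and to exploit the bigrading already visible in Lemma~\ref{LemActA11}.

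First I would observe that $M_\lambda$ has a natural $\N^2$-grading with $\xi_{kn}$ in bidegree $(k,n)$; equivalently a single $\N$-grading by total degree $t = k+n$, with the degree-$t$ piece $(M_\lambda)_t = \Span\{\xi_{kn}\mid k+n=t\}$ being $(t+1)$-dimensional. From the defining formula $\eta_0^{(t)} = \sum_{k+n=t} c_n \xi_{kn}$ with $c_0 = 1$, the vector $\eta_0^{(t)}$ lies in $(M_\lambda)_t$ and is nonzero. Applying $x_1^*$ raises $n$ by one (Lemma~\ref{LemActA11}), hence raises total degree by one, so $\eta_n^{(t)} \in (M_\lambda)_{t+n}$. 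For fixed total degree $T$, the vectors contributing are exactly the $\eta_n^{(t)}$ with $t + n = T$, i.e.\ $n = 0,1,\dots,T$ with $t = T-n$; that is $T+1$ vectors sitting in a space of dimension $T+1$. So it suffices to prove these $T+1$ vectors are linearly independent, and for that I would compute the ``leading term'': writing $\eta_n^{(t)} = (x_1^*)^n \sum_{k+j=t} c_j \xi_{kj} = \sum_{k+j=t} c_j \xi_{k, j+n}$, the monomial $\xi_{k,j+n}$ with the \emph{smallest} value of the first index $k$ occurring is $\xi_{0, t+n} = \xi_{0,T}$ (from $k=0$, $j=t$, coefficient $c_t = 1$... wait, more precisely the term with $k=0$ has coefficient $c_t$). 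A cleaner bookkeeping: order the $\xi_{kj}$ in degree $T$ by the index $k$ (which runs $0$ to $T$); then $\eta_n^{(t)}$ with $t+n=T$ is supported on $\xi_{kj}$ with $k \le t = T-n$, and its $\xi_{T-n,\,\cdot}$-component is $c_0 \xi_{T-n,n} = \xi_{T-n,n}$, which is nonzero and distinct for distinct $n$. Hence the transition matrix from $\{\eta_n^{(t)}: t+n=T\}$ to $\{\xi_{kj}: k+j=T\}$ is triangular with unit diagonal, proving both linear independence and that the $\eta$'s span — so they form a basis.

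For orthogonality I would argue in two stages. Vectors in distinct total degrees are automatically orthogonal, since the hermitian form is invariant and $a$ is self-adjoint with $a\xi_{kn} = \lambda q^{k+n}\xi_{kn}$, so the bigraded (hence graded) pieces are mutually orthogonal eigenspaces of $a$ (here $0<q<1$ ensures the eigenvalues $\lambda q^T$ are distinct). Within a fixed total degree $T$, I must show $\langle \eta_m^{(s)}, \eta_n^{(t)}\rangle = 0$ for $(m,s)\ne(n,t)$ with $s+m = t+n = T$; WLOG $n > m$. Using invariance and the adjoint relation $(x_1^*)^* = x_1$, $\langle \eta_m^{(s)}, \eta_n^{(t)}\rangle = \langle \eta_m^{(s)}, (x_1^*)^n \eta_0^{(t)}\rangle = \langle x_1^n \eta_m^{(s)}, \eta_0^{(t)}\rangle$; but $\eta_m^{(s)} = (x_1^*)^m \eta_0^{(s)}$, and $x_1^n (x_1^*)^m \eta_0^{(s)}$ can be reduced: since $\eta_0^{(s)}$ is a highest weight vector for $x_1$ (i.e.\ $x_1 \eta_0^{(s)} = 0$), the commutation relation \eqref{Eqx1ax1starA11} lets us push $x_1^n$ past $(x_1^*)^m$, and for $n>m$ the result vanishes after at most $m$ uses of the relation (each pairing $x_1$ with an $x_1^*$ and leaving one extra $x_1$ that ultimately hits $\eta_0^{(s)}$ and kills it). Concretely, $x_1 (x_1^*)^m \eta_0^{(s)}$ is, by the first relation of the preceding Lemma applied with $\eta_0^{(s)}$ in place of the highest weight vector and $x_1\eta_0^{(s)}=0$, a scalar multiple of $(x_1^*)^{m-1}\eta_0^{(s)} = \eta_{m-1}^{(s)}$; iterating, $x_1^n (x_1^*)^m \eta_0^{(s)}$ is a scalar times $(x_1^*)^{m-n}\eta_0^{(s)}$, which is $0$ when $n > m$. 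Thus $\langle \eta_m^{(s)}, \eta_n^{(t)}\rangle = 0$, giving orthogonality.

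The routine but slightly delicate point — the one I'd expect to be the main obstacle — is the bookkeeping in the last step: I must be careful that pushing $x_1$ through $x_1^*$'s genuinely commutes past the $x_2^*$'s hidden inside $\eta_0^{(s)}$ (it does, because $x_1$ and $x_2^*$ satisfy the Heisenberg-type relation \eqref{Eqx1ax2starA11} involving $a^{-2}$, but $a$ only contributes scalars on graded pieces, so no new obstruction appears), and that the scalar factors produced — which involve the $q$-integers $\frac{q^{-j}-q^{j}}{(q^{-1}-q)^2}$ appearing in Lemma~\ref{LemActA11} — are all nonzero for $0<q<1$, so that in particular $\langle \eta_n^{(t)}, \eta_n^{(t)}\rangle \ne 0$ and the form is nondegenerate (indeed positive-definite after normalization, matching the proof strategy of Lemma~\ref{LemIsoHW}). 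I would record the explicit value $\langle \eta_n^{(t)}, \eta_n^{(t)}\rangle$ as a product of such $q$-integers times $\langle \eta_0^{(t)},\eta_0^{(t)}\rangle$, and compute $\langle \eta_0^{(t)},\eta_0^{(t)}\rangle = \sum_{k+n=t} |c_n|^2 \langle \xi_{kn},\xi_{kn}\rangle$ via the $\xi_{kn}$-norms, which follow by another straightforward induction from Lemma~\ref{LemActA11} — but that computation belongs to the subsequent lemma on positivity of the form, not to this one.
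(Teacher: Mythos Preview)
Your proof is correct and follows essentially the same two-step structure as the paper's: orthogonality from invariance of the hermitian form plus the self-adjointness of $a$, and the basis property from a dimension count within each weight space. Your triangular-matrix argument for linear independence is in fact more explicit than the paper's appeal to ``weight considerations,'' which on its own only separates different total degrees; and your final paragraph's worry about pushing $x_1$ past the $x_2^*$'s inside $\eta_0^{(s)}$ is unnecessary, since the argument only uses the algebra identity $x_1(x_1^*)^m = q^{-2m}(x_1^*)^m x_1 + c_m(x_1^*)^{m-1}$ together with $x_1\eta_0^{(s)}=0$, never looking inside $\eta_0^{(s)}$.
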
 
\begin{proof}
Orthogonality follows straightforwardly from invariance of the scalar product and weight considerations. Also the linear independence of the $\eta_n^{(t)}$ follows directly from weight considerations. They must hence form a basis since the dimension of the weight spaces of their span matches that of $M_{\lambda}$ itself. 
\end{proof}

\begin{Lem}\label{LemActionx2}
The following formulas hold: 
\[
x_2\eta_{0}^{(t)} =q^{-t+1}\frac{q^{-t}-q^t}{(q^{-1}-q)^2}(1-|\epsilon|^2\lambda^{-4} q^{-2t+4})\eta_0^{(t-1)},
\]
\[
x_2^*\eta_{0}^{(t)} = \eta_0^{(t+1)} - \epsilon \lambda^{-2}q^{-2t+1} \eta_1^{(t)}. 
\]
\end{Lem}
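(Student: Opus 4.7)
The plan is to treat the two formulas separately. The first will use a uniqueness argument for highest weight vectors, while the second will require a small linear-algebra argument on the kernel of $x_1^2$.

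For the first identity, I observe that $x_1$ and $x_2$ commute by \eqref{EqCommA11}, so $x_1(x_2 \eta_0^{(t)}) = x_2(x_1 \eta_0^{(t)}) = 0$, and by \eqref{Eqx2ax2starA11} we have $a(x_2 \eta_0^{(t)}) = q^{-1} x_2 a \eta_0^{(t)} = \lambda q^{t-1} (x_2 \eta_0^{(t)})$. Hence $x_2 \eta_0^{(t)}$ is a highest weight vector for $x_1$ at weight $\lambda q^{t-1}$, and by the preceding lemma on uniqueness, $x_2 \eta_0^{(t)} = c\, \eta_0^{(t-1)}$ for a scalar $c$, which I determine by matching the coefficient of $\xi_{t-1,0}$. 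That coefficient in $\eta_0^{(t-1)}$ is $c_0 = 1$, while on the left Lemma \ref{LemActA11} shows that only the terms $\xi_{t,0}$ and $\xi_{t-1,1}$ in the expansion $\eta_0^{(t)} = \sum_n c_n \xi_{t-n,n}$ contribute to $\xi_{t-1,0}$ under $x_2$. Substituting $c_0=1$ and the explicit value of $c_1$ and summing yields exactly the factor $q^{-t+1}\frac{q^{-t}-q^t}{(q^{-1}-q)^2}(1-|\epsilon|^2\lambda^{-4} q^{-2t+4})$ claimed.

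For the second identity a direct uniqueness argument is unavailable since $x_1$ and $x_2^*$ do not commute. Instead I decompose $x_2^*\eta_0^{(t)}$ inside $\ker(x_1^2)$ on the $\lambda q^{t+1}$-weight space. A short computation using \eqref{Eqx1ax2starA11} together with the commutation $x_1 a^{-2} = q^{-2} a^{-2} x_1$ (immediate from \eqref{Eqx1ax1starA11}) gives
\[
x_1 x_2^* \eta_0^{(t)} = \tfrac{\epsilon q}{q-q^{-1}}\, a^{-2} \eta_0^{(t)} = \tfrac{\epsilon \lambda^{-2} q^{1-2t}}{q-q^{-1}}\, \eta_0^{(t)},
\]
which is then killed by a second application of $x_1$. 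By the preceding uniqueness lemma, $x_1$ between consecutive weight spaces has a one-dimensional kernel and is therefore surjective (the weight space at $\lambda q^s$ has dimension $s+1$), so $\dim \ker(x_1^2) = (t+2)-t = 2$ on the $\lambda q^{t+1}$-weight space. Both $\eta_0^{(t+1)}$ and $\eta_1^{(t)}$ lie in this kernel (using $x_1 \eta_1^{(t)} = (q^{-1}-q)^{-1}\eta_0^{(t)}$, which follows from \eqref{Eqx1ax1starA11}), and are linearly independent by comparing $\xi_{t+1,0}$-coefficients, hence span it. So $x_2^*\eta_0^{(t)} = A\,\eta_0^{(t+1)} + B\,\eta_1^{(t)}$. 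Reading off the $\xi_{t+1,0}$-coefficient gives $A = 1$, while applying $x_1$ to both sides and comparing with the displayed formula gives $B = -\epsilon \lambda^{-2} q^{-2t+1}$.

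The main obstacle is the delicate bookkeeping for the coefficient $c$ in the first identity: omitting the $c_1 \xi_{t-1,1}$ contribution would produce only the simpler scalar $q^{-t+1}(q^{-t}-q^t)/(q^{-1}-q)^2$ and miss the signature-dependent factor $(1-|\epsilon|^2\lambda^{-4} q^{-2t+4})$, which is precisely the factor detecting when the Verma module fails to be irreducible. Everything else reduces to routine manipulation using the defining relations and the explicit recursion for the $c_n$.
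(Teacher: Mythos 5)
Your proof is correct and follows essentially the same strategy as the paper's. For the first identity, both argue that $x_2\eta_0^{(t)}$ is a highest weight vector for $x_1$ (you used commutativity $x_1x_2=x_2x_1$; the paper leaves this implicit), then identify it as a multiple of $\eta_0^{(t-1)}$ by uniqueness and read off the scalar from the $\xi_{t-1,0}$-coefficient. Your explicit bookkeeping — isolating the two contributing terms $\xi_{t,0}$ and $\xi_{t-1,1}$ in the expansion of $\eta_0^{(t)}$ and substituting the recursion for $c_1$ — is exactly what the paper's phrase ``follows from comparing the coefficient of $\xi_{t-1,0}$'' is hiding, and your remark at the end about missing the factor $(1-|\epsilon|^2\lambda^{-4}q^{-2t+4})$ if one forgets $c_1\xi_{t-1,1}$ is apt.

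For the second identity the only difference is cosmetic. The paper writes $x_2^*\eta_0^{(t)}$ in the basis $\{\eta_n^{(s)}\}$ on the weight-$\lambda q^{t+1}$ space (established in the preceding lemma), computes $x_1 x_2^*\eta_0^{(t)}$ via \eqref{Eqx1ax2starA11}, and observes that since $x_1$ carries each $\eta_n^{(s)}$ with $n\geq 1$ to a nonzero multiple of $\eta_{n-1}^{(s)}$, only the $\eta_1^{(t)}$-component can survive. You instead phrase this as a dimension count: $x_2^*\eta_0^{(t)}\in\ker(x_1^2)$, and $\ker(x_1^2)$ is exactly $2$-dimensional on that weight space, spanned by $\eta_0^{(t+1)}$ and $\eta_1^{(t)}$. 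Both arguments derive the surjectivity of $x_1$ from the $1$-dimensionality of highest-weight vectors, and both pin down the two scalars the same way (via the $\xi_{t+1,0}$-coefficient and via applying $x_1$), so the content is identical; your packaging via $\ker(x_1^2)$ is perhaps marginally more self-contained in that it does not require the full statement that the $\eta_n^{(s)}$ form a basis, only the kernel dimension.
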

\begin{proof}
We note that $x_2\eta_0^{(t)}$ must again be a highest weight vector for $x_1$, hence a scalar multiple of $\eta_{0}^{(t-1)}$. The precise coefficient follows from comparing the coefficient of $\xi_{t-1,0}$ in both expressions. 

By weight considerations $x_2^*\eta_0^{(t)}$ must be a linear combination of $\eta_0^{(t+1)}$ and an element $\zeta_t$ in the span of the elements $\eta_{n}^{(t')}$ for $t'\leq t$. Since 
\[
x_1\zeta_t = x_1 x_2^*\eta_0^{(t)} = -\frac{\epsilon q}{q^{-1}-q}a^{-2} \eta_0^{(t)} =  -\frac{\epsilon \lambda^{-2} q^{-2t+1}}{q^{-1}-q}\eta_0^{(t)}, 
\]
it follows again from weight considerations that we must have $\zeta_t =  - \epsilon \lambda^{-2}q^{-2t+1} \eta_1^{(t)}$. The coefficient for $\eta_0^{(t+1)}$ is found by comparing coefficients of $\xi_{t+1,0}$.  
\end{proof}

\begin{Theorem}\label{TheoAppA1A1}
There exists a unitarizable highest weight module at weight $\lambda$ if and only if $\epsilon =0$ or, in case $\epsilon\neq0$, $\lambda$ is of the form $\lambda = |\epsilon|^{1/2}q^{\frac{1-n}{2}}$ for $n\in \N$.  
\end{Theorem}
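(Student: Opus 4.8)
The plan is to analyze unitarity by computing the norms of the orthogonal basis vectors $\eta_n^{(t)}$ of $M_\lambda$ with respect to the unique invariant hermitian form, and determining when the irreducible quotient $V_\lambda$ is positive-definite. Since the $\eta_n^{(t)}$ are already known to be mutually orthogonal and to span $M_\lambda$, unitarizability of the highest weight module at weight $\lambda$ is equivalent to the statement that the set of vectors among the $\eta_n^{(t)}$ that survive in $V_\lambda$ (i.e.\ that are not in the radical) all have strictly positive norm, while the rest have norm zero and span the maximal proper submodule. So everything reduces to a recursion for $\|\eta_n^{(t)}\|^2$.

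\textbf{Key steps.} First I would record, using $\eta_n^{(t)} = (x_1^*)^n \eta_0^{(t)}$ together with \eqref{Eqx1ax1starA11} and Lemma \ref{LemActA11}, the recursion in $n$:
\[
\|\eta_{n}^{(t)}\|^2 = \langle x_1^* \eta_{n-1}^{(t)}, x_1^*\eta_{n-1}^{(t)}\rangle = \langle \eta_{n-1}^{(t)}, x_1 x_1^* \eta_{n-1}^{(t)}\rangle,
\]
and since $\eta_{n-1}^{(t)}$ lies at a definite $a$-weight (namely $\lambda q^{t}$, as all $\eta_n^{(t)}$ do by construction), $x_1 x_1^*$ acts on it as a scalar depending only on that weight and on $n$; this gives $\|\eta_n^{(t)}\|^2 = q^{-n+1}\frac{q^{-n}-q^{n}}{(q^{-1}-q)^2}\|\eta_{n-1}^{(t)}\|^2$ up to bookkeeping of the exact power of $q$, and in particular $\|\eta_n^{(t)}\|^2$ has a fixed sign (positive for $0<q<1$, since $q^{-n}-q^n>0$) relative to $\|\eta_0^{(t)}\|^2$. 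Hence the sign question in the $n$-direction never causes a problem, and the whole analysis collapses to a recursion for $\|\eta_0^{(t)}\|^2$ in $t$. For that, I would use Lemma \ref{LemActionx2}: from $x_2^* \eta_0^{(t)} = \eta_0^{(t+1)} - \epsilon\lambda^{-2} q^{-2t+1}\eta_1^{(t)}$ and orthogonality of $\eta_0^{(t+1)}$ to $\eta_1^{(t)}$, pair with $\eta_0^{(t+1)}$ to get $\|\eta_0^{(t+1)}\|^2 = \langle x_2^*\eta_0^{(t)}, \eta_0^{(t+1)}\rangle = \langle \eta_0^{(t)}, x_2 \eta_0^{(t+1)}\rangle$, and then apply the first formula of Lemma \ref{LemActionx2} with $t$ replaced by $t+1$:
\[
\|\eta_0^{(t+1)}\|^2 = q^{-t}\frac{q^{-t-1}-q^{t+1}}{(q^{-1}-q)^2}\bigl(1 - |\epsilon|^2\lambda^{-4} q^{-2t+2}\bigr)\,\|\eta_0^{(t)}\|^2
\]
(again up to checking the precise $q$-exponent). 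Normalizing $\|\eta_0^{(0)}\|^2 = \|\xi_\lambda\|^2 = 1$, this yields the closed product formula
\[
\|\eta_0^{(t)}\|^2 = \prod_{s=0}^{t-1} q^{-s}\frac{q^{-s-1}-q^{s+1}}{(q^{-1}-q)^2}\bigl(1 - |\epsilon|^2\lambda^{-4} q^{-2s+2}\bigr).
\]

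\textbf{Conclusion and main obstacle.} The prefactors $q^{-s}(q^{-s-1}-q^{s+1})/(q^{-1}-q)^2$ are strictly positive for $0<q<1$, so the sign of $\|\eta_0^{(t)}\|^2$ is governed entirely by the product of the factors $1 - |\epsilon|^2\lambda^{-4} q^{-2s+2}$, $0\le s\le t-1$. When $\epsilon=0$ all these factors are $1$, every $\|\eta_n^{(t)}\|^2>0$, the Verma module is already irreducible and unitary, so every $\lambda$ works. When $\epsilon\ne 0$: if $|\epsilon|^2\lambda^{-4}q^{-2s+2}=1$ for some $s$, i.e.\ $\lambda^4 = |\epsilon|^2 q^{-2s+2}$, i.e.\ $\lambda = |\epsilon|^{1/2} q^{(1-s)/2}$ with $s=n\in\N$, then the factor at $s=n$ vanishes and $\|\eta_0^{(n+1)}\|^2 = 0$ while $\|\eta_0^{(t)}\|^2>0$ for $t\le n$ (all earlier factors being positive — here one uses $q^{-2s+2}<q^{-2n+2}$ strictly increasing constraints and that $\lambda^4 = |\epsilon|^2 q^{-2n+2}>|\epsilon|^2 q^{-2s+2}$ for $s<n$, giving $1-|\epsilon|^2\lambda^{-4}q^{-2s+2}>0$); the vectors of weight $\ge \lambda q^{n+1}$ then form the radical and $V_\lambda$ is finite-dimensional and positive-definite. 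Conversely, if $\lambda$ is not of this form but $\epsilon\ne 0$, I must show some $\|\eta_0^{(t)}\|^2<0$: since $|\epsilon|^2\lambda^{-4}q^{-2s+2}\to\infty$ as $s\to\infty$ and never equals $1$, there is a first $s_0$ with $|\epsilon|^2\lambda^{-4}q^{-2s_0+2}>1$, so the factor at $s_0$ is negative and $\|\eta_0^{(s_0+1)}\|^2<0$, contradicting positivity. The main obstacle I anticipate is purely computational hygiene: getting every power of $q$ and $\lambda$ exactly right in the two recursions (especially reconciling the $q$-exponent in $x_2\eta_0^{(t)}$ from Lemma \ref{LemActionx2} with the normalization of $a$ on $\eta_0^{(t+1)}$), and being careful that $\eta_0^{(t)}$ genuinely has $a$-weight $\lambda q^t$ so that $x_1x_1^*$ and $x_2$ act by the expected scalars; there is no conceptual difficulty once the product formula for $\|\eta_0^{(t)}\|^2$ is in hand.
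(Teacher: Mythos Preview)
Your proposal is correct and follows essentially the same argument as the paper: reduce positivity of the form to the diagonal values $\|\eta_n^{(t)}\|^2$, use the Heisenberg relation for $x_1,x_1^*$ to see that the $n$-direction contributes only positive factors, and then use Lemma~\ref{LemActionx2} to obtain the recursion $\|\eta_0^{(t)}\|^2 = q^{-t+1}\frac{q^{-t}-q^t}{(q^{-1}-q)^2}(1-|\epsilon|^2\lambda^{-4}q^{-2t+4})\|\eta_0^{(t-1)}\|^2$, whose sign analysis gives exactly the stated condition on $\lambda$. The paper's proof is the same, only more tersely written.
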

\begin{proof}
As in the proof of Lemma \ref{LemIsoHW}, a unitarizable highest weight module at weight $\lambda$ exists if and only if the invariant hermitian form on $M_{\lambda}$ is positive semi-definite. This will be the case if and only if 
\[
c_{t,n} = \langle \eta_n^{(t)},\eta_n^{(t)}\rangle \geq 0 
\]
for all $t,n$. Now since the $x_1,x_1^*$ satisfy the Heisenberg relations, it is easily seen that we only need to check that $c_t = c_{t,0}\geq 0$ for all $t$. However, from Lemma \ref{LemActionx2} we see that 
\[
c_t = \langle \eta_0^{(t)},x_2^*\eta_0^{(t-1)}\rangle = \langle x_2 \eta_0^{(t)},\eta_0^{(t-1)}\rangle = q^{-t+1}\frac{q^{-t}-q^t}{(q^{-1}-q)^2}(1-|\epsilon|^2\lambda^{-4} q^{-2t+4})c_{t-1}. 
\]
Thus $c_t\geq 0$ for all $t$ if and only if $\lambda$ is as indicated in the theorem. 
\end{proof}

\subsection{Twisted conjugacy classe of type $A_2$ with nontrivial automorphism}

Consider $\mfu = \mfsu(3)$ with twisting datum $\nu= (\tau,\epsilon)$ where $\tau\neq \id$. We use the notation as explained at the beginning of the Appendix, and consider the $*$-algebra $\mcO_q(\widetilde{Z}_{\nu}^{\reg})$. It is generated by elements $a^{\pm 1},x_1,x_2$ with $a$ self-adjoint and
\begin{equation}\label{Eqx1ax1star}
ax_1 = q^{-1}x_1a,\quad x_1x_1^* = q^{-2}x_1^*x_1+ (q^{-1}-q)^{-1},
\end{equation}
\begin{equation}\label{Eqx2ax2star}
 ax_2 =q^{-1}x_2a,\quad x_2x_2^* = q^{-2}x_2^*x_2 +(q^{-1}-q)^{-1},
\end{equation}
\begin{equation}\label{Eqx1ax2star}
x_1x_2^* = qx_2^*x_1+\frac{\epsilon q^{3/2}}{q-q^{-1}}a^{-1},
\end{equation}
and such that, with $x_{12} = x_1x_2 - qx_2x_1$, 
\begin{equation}\label{Eqx1ax12}
x_1x_{12} = q^{-1}x_{12}x_1,
\end{equation}
\begin{equation}\label{Eqx2ax12}
x_2x_{12} = qx_{12}x_2. 
\end{equation}

\begin{Lem}
The following further relations hold in $\mcO_q(\widetilde{Z}_{\nu}^{\reg})$: 
\begin{equation}\label{Eqaxx}
ax_{12} = q^{-2}x_{12}a,\qquad ax_{12}^* = q^{2}x_{12}^*a,\qquad x_1x_{12}^* = q^{-1}x_{12}^*x_1,
\end{equation}
\begin{equation}\label{Eqx2bla}
x_2x_{12}^* = q^{-1}x_{12}^*x_2 + qx_1^*-q^{1/2}\epsilon x_2^*a^{-1},
\end{equation}
and
\begin{equation}\label{Eqx12x12star}
x_{12}x_{12}^* = q^{-2}x_{12}^*x_{12} +(q^{-1}-q)x_1^*x_1 +q(q^{-1}-q)^{-1} + \frac{|\epsilon|^2q^{2}}{q^{-1}-q}a^{-2}.
\end{equation}
\end{Lem}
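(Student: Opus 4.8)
The plan is to derive the six relations in the lemma purely algebraically from the presentation of $\mcO_q(\widetilde{Z}_{\nu}^{\reg})$ given by \eqref{Eqx1ax1star}--\eqref{Eqx2ax12}, using the definition $x_{12} = x_1x_2 - qx_2x_1$ and the standing conventions of the Appendix (so $a$ is the self-adjoint generator $a_{\varpi_1} = a_{\varpi_2}$, and $\epsilon = \epsilon_1 = \epsilon_2 \in \R$; note $\overline{\epsilon} = \epsilon$ here, so $|\epsilon|^2 = \epsilon^2$). I will treat each identity in turn, each by a short explicit computation.

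First I would dispatch the three $a$-commutations and the $x_1x_{12}^*$ relation in \eqref{Eqaxx}. For $ax_{12}$: apply \eqref{Eqx1ax1star} and \eqref{Eqx2ax2star} to $a(x_1x_2 - qx_2x_1)$, pulling $a$ through each $x_i$ with a factor $q^{-1}$, which gives $q^{-2}x_{12}a$. The relation $ax_{12}^* = q^2 x_{12}^*a$ is then just the adjoint (using $a^* = a$ and $q$ real). For $x_1x_{12}^* = q^{-1}x_{12}^*x_1$: take the adjoint of \eqref{Eqx1ax12}, which reads $x_{12}^*x_1^* = q^{-1}x_1^*x_{12}^*$; this is an identity between $x_1^*$ and $x_{12}^*$ rather than between $x_1$ and $x_{12}^*$, so I instead compute $x_1 x_{12}^* = x_1(x_1x_2-qx_2x_1)^* = x_1(x_2^*x_1^* - q x_1^*x_2^*)$ directly, commuting $x_1$ past $x_1^*$ via \eqref{Eqx1ax1star} and past $x_2^*$ via \eqref{Eqx1ax2star}, then collecting terms; the Heisenberg-type corrections and the $a^{-1}$ term should cancel to leave $q^{-1}x_{12}^*x_1$. (Alternatively, and more cleanly, one can take the full adjoint of the yet-to-be-proven \eqref{Eqx2bla}-style manipulation, but I expect the direct route is shortest.)

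Next I would prove \eqref{Eqx2bla}. Here I expand $x_2 x_{12}^* = x_2(x_2^*x_1^* - qx_1^*x_2^*)$. Move $x_2$ past $x_2^*$ using \eqref{Eqx2ax2star}, and $x_2$ past $x_1^*$ using the adjoint of \eqref{Eqx1ax2star}, namely $x_2 x_1^* = q^{-1}x_1^*x_2 - \frac{\epsilon q^{-1/2}}{q^{-1}-q}x_2^* a^{-1}$ (taking care with $a^* = a$ and $\overline{\epsilon} = \epsilon$ — the sign and power of $q$ must be tracked carefully, since $\left(\tfrac{\epsilon q^{3/2}}{q-q^{-1}}a^{-1}\right)^{*} = \tfrac{\epsilon q^{3/2}}{q-q^{-1}}a^{-1}$). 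After substitution one reassembles the leading term as $q^{-1}x_{12}^*x_2$ and is left with the correction terms $qx_1^* - q^{1/2}\epsilon x_2^*a^{-1}$; the matching of coefficients is the one genuinely fiddly bookkeeping step, and it is where I expect the main obstacle to lie — not conceptually hard, but the normalizations ($q^{1/2}$ versus $q^{3/2}$, signs from $q-q^{-1}$ versus $q^{-1}-q$) are easy to get wrong and must be checked against consistency, e.g. by also deriving the relation from the adjoint of the $x_1 x_{12}^*$ computation.

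Finally I would establish \eqref{Eqx12x12star}. Write $x_{12}x_{12}^* = (x_1x_2 - qx_2x_1)(x_2^*x_1^* - qx_1^*x_2^*)$, expand into four products, and reduce each using the relations already proved in this lemma together with \eqref{Eqx1ax1star}--\eqref{Eqx1ax2star}; in particular the relations \eqref{Eqaxx} and \eqref{Eqx2bla} just derived let me normal-order $x_2 x_{12}^*$ and $x_1 x_{12}^*$, and I will want the normal-ordered form $x_{12}^* x_{12} = (x_2^*x_1^* - qx_1^*x_2^*)(x_1x_2 - qx_2x_1)$ expanded the same way so the two can be compared. Collecting the leading term $q^{-2}x_{12}^*x_{12}$ and tracking every correction, the residual terms should organize into exactly $(q^{-1}-q)x_1^*x_1 + q(q^{-1}-q)^{-1} + \frac{\epsilon^2 q^2}{q^{-1}-q}a^{-2}$. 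A convenient shortcut for the $a^{-2}$-coefficient: apply the already-known commutation $ax_{12} = q^{-2}x_{12}a$ to see that $x_{12}x_{12}^* - q^{-2}x_{12}^*x_{12}$ commutes with $a$ appropriately, which constrains the weights of the allowed correction terms and limits the computation to matching a few scalar coefficients, e.g. by evaluating on a highest weight vector $\xi_\lambda$ as in Lemma \ref{LemActA11}. That reduces the whole lemma to a finite check of constants.
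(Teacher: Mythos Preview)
Your approach is exactly the paper's: the proof there reads in full ``By direct computation.'' Your plan of expanding $x_{12}^*$ and pushing generators past each other using \eqref{Eqx1ax1star}--\eqref{Eqx2ax12} is the right one, and the shortcut of taking adjoints for the $a$-commutations is fine.

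One concrete slip to fix before you carry it out: the adjoint of \eqref{Eqx1ax2star} is
\[
x_2 x_1^* = q\, x_1^* x_2 + \frac{\epsilon q^{3/2}}{q-q^{-1}}a^{-1},
\]
not the formula you wrote (the coefficient is $q$, not $q^{-1}$, and the inhomogeneous term is a scalar multiple of $a^{-1}$, with no $x_2^*$). With that corrected, the computations for \eqref{Eqx2bla} and \eqref{Eqx12x12star} go through as you describe.
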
 
\begin{proof}
By direct computation.
\end{proof}

\begin{Def}
We define $\msB$ to be the $*$-algebra generated by elements $a^{\pm 1},x_1,x_{12}$ with $a$ selfadjoint, satisfying the relations \eqref{Eqx1ax1star},\eqref{Eqx1ax12}, \eqref{Eqaxx},\eqref{Eqx12x12star}. 
\end{Def}

Clearly, we can define a notion of highest weight module and Verma module for $\msB$. 

\begin{Lem}
For any $\lambda >0$, the Verma module for $\msB$ at $\lambda$ is unitarizable. 
\end{Lem}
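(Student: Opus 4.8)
The statement claims that for any $\lambda > 0$ the Verma module for $\msB$ at weight $\lambda$ is unitarizable, i.e. the canonical invariant Hermitian form on it is positive-definite. The key structural observation is that $\msB$ is generated by $a^{\pm 1}, x_1, x_{12}$, and the relations \eqref{Eqx1ax1star}, \eqref{Eqx1ax12}, \eqref{Eqaxx}, \eqref{Eqx12x12star} are exactly of the type one encounters in the rank one algebras studied in Section \ref{ExaRank1}; more precisely, after a diagonal rescaling of $a$ the subalgebra generated by $x_1, a^{\pm 1}$ (and by $x_{12}, a^{\pm 1}$) should look like a localized copy of $\mcO_q(H_2^{\reg})$ or at least satisfy Heisenberg-type commutation relations with an $a$-dependent ``central'' term that is automatically positive in any highest weight module at a positive weight.

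First I would set up the Verma module $M_\lambda = \msB^- \xi_\lambda$ with highest weight vector $\xi_\lambda$ annihilated by $x_1$ and $x_{12}$ and satisfying $a\xi_\lambda = \lambda \xi_\lambda$ (with $\lambda > 0$), together with the unique invariant Hermitian form normalized by $\langle \xi_\lambda, \xi_\lambda\rangle = 1$, exactly as in the proof of Lemma \ref{LemIsoHW}. Using the commutation relations one sees that a monomial basis of $M_\lambda$ is given by $(x_{12}^*)^m (x_1^*)^n \xi_\lambda$, and that $a$ acts diagonally with positive eigenvalue $\lambda q^{m+n}$ on such a vector. The plan is then to compute the action of $x_1$ and $x_{12}$ on this basis, exploiting \eqref{Eqx1ax1star} to handle the $x_1$-direction (pure Heisenberg relations, whose unitarizability over a positive Cartan eigenvalue is classical and appears already via $\mcO_q(V_\R)$ / $\mcO_q(SU(2))$ in Section \ref{ExaRank1}) and \eqref{Eqx12x12star} to handle the $x_{12}$-direction.

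The heart of the argument is the commutation relation \eqref{Eqx12x12star}: $x_{12}x_{12}^* = q^{-2}x_{12}^*x_{12} + (q^{-1}-q)x_1^*x_1 + q(q^{-1}-q)^{-1} + \tfrac{|\epsilon|^2 q^2}{q^{-1}-q}a^{-2}$. I would argue inductively on the ``$x_{12}$-degree'' $m$: on the subspace spanned by $(x_1^*)^n\xi_\lambda$ (the $m=0$ part) the form is positive by the rank one Heisenberg analysis, and then each application of $x_{12}^*$ raises the norm by a factor that, by \eqref{Eqx12x12star}, equals $q(q^{-1}-q)^{-1} + (q^{-1}-q)\langle x_1^*x_1\rangle + |\epsilon|^2 q^2 (q^{-1}-q)^{-1}\lambda^{-2}q^{-2(m+n)}$ evaluated on the relevant weight vector. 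Since $0 < q < 1$ we have $(q^{-1}-q) > 0$, the operator $x_1^*x_1$ is positive on the already-unitarized $m=0$ slice, $\lambda^{-2}q^{-\text{(stuff)}} > 0$, and $|\epsilon|^2 \geq 0$; hence every such factor is strictly positive, and positivity propagates to all $m$. In contrast to the $A_1 \times A_1$ and $A_2$ cases (Theorems \ref{TheoAppA1A1} and \ref{TheoClassIrrepA2}) where the obstruction term appeared with a \emph{negative} sign forcing $\lambda$ to sit on a discrete set, here the $\epsilon$-dependent term enters \eqref{Eqx12x12star} with a positive sign, which is precisely why no constraint on $\lambda$ arises. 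The main obstacle I anticipate is the bookkeeping: carefully diagonalizing the invariant form in the mixed $(m,n)$-basis, checking that $x_1$ and $x_{12}$ interact correctly (the relation $x_1 x_{12}^* = q^{-1}x_{12}^* x_1$ from \eqref{Eqaxx} is what makes the two directions essentially decouple up to the scalar term), and verifying that all the intermediate norms one divides by are nonzero — but no genuine positivity threshold is crossed, so the induction goes through for every $\lambda > 0$.
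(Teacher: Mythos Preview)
Your approach is correct and essentially matches the paper's: the paper uses the same basis $\xi_{m,n} = (x_{12}^*)^m(x_1^*)^n\xi_\lambda$, computes the actions of $x_1$ and $x_{12}$ directly (finding $x_{12}\xi_{m,n}$ proportional to $(1+|\epsilon|^2\lambda^{-2}q^{-2m+3})\xi_{m-1,n}$), and reads off orthogonality and strict positivity of all norms from the positive sign on the $|\epsilon|^2$-term in \eqref{Eqx12x12star}, exactly as you identified. One small slip to fix when you write it up: since $a x_{12}^* = q^2 x_{12}^* a$, the eigenvalue of $a$ on $\xi_{m,n}$ is $\lambda q^{2m+n}$ rather than $\lambda q^{m+n}$.
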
 
\begin{proof}
Let $\xi_{m,n} = (x_{12}^*)^m(x_1^*)^n\xi_{\lambda}$. Then straightforward computations show that
\[
x_1^*\xi_{m,n}= q^m \xi_{m,n+1},\qquad x_{12}^*\xi_{m,n} = \xi_{m+1,n}, \qquad x_1\xi_{m,n} = q^{-m-n+1} \frac{q^{-n}-q^{n}}{(q^{-1}-q)^2}\xi_{m,n-1},
\]
\[
x_{12}\xi_{m,n} = q^{-m-2n+2}\frac{q^{-m}-q^{m}}{(q^{-1}-q)^2}(1+|\epsilon|^2\lambda^{-2}q^{-2m+3})\xi_{m-1,n}. 
\]
It follows easily from this that the $\xi_{m,n}$ are mutually orthogonal with respect to the invariant sesqui-linear form on the Verma module, and that the $\xi_{m,n}$ have positive norm squared.
\end{proof}

Note that the image of $\msB$ in its Verma module representation must then also be the image in the Verma module at corresponding weight of $\mcO_q(\widetilde{Z}_{\nu}^{\reg})$ through its natural embedding. 

\begin{Lem}
Let $\lambda>0$, and let $M_{\lambda}$ be the Verma module for $\mcO_q(\widetilde{Z}_{\nu}^{\reg})$ at $\lambda$. Put 
\[
\xi_{kmn} = (x_2^*)^k (x_{12}^*)^m(x_1^*)^n\xi_{\lambda}.
\]
Then
\begin{eqnarray*}
x_1\xi_{kmn} &=& q^{k-m-n+1}\frac{q^{-n}-q^n}{(q^{-1}-q)^2}\xi_{k,m,n-1} -\epsilon\lambda^{-1} q^{-2m-n+\frac{3}{2}}\frac{q^{-k}-q^k}{(q^{-1}-q)^2}\xi_{k-1,m,n},\\
x_1^*\xi_{kmn} &=& q^{-k+m}\xi_{k,m,n+1} -q^{-1} \frac{q^{-k}-q^k}{q^{-1}-q}\xi_{k-1,m+1,n},\\
x_2\xi_{kmn} &=& q^{-k+1}\frac{q^{-k}-q^k}{(q^{-1}-q)^2}\xi_{k-1,m,n} +q^{-2k+1} \frac{q^{-m}-q^m}{q^{-1}-q}\xi_{k,m-1,n+1} \\  
&&- \epsilon \lambda^{-1}q^{-2k-m +\frac{3}{2}} \frac{q^{-n}-q^n}{(q^{-1}-q)^2} \xi_{k,m,n-1} - \epsilon\lambda^{-1} q^{-2k-m-n+\frac{3}{2}}\frac{q^{-m}-q^m}{q^{-1}-q} \xi_{k+1,m-1,n},\\
x_2^*\xi_{kmn} &=& \xi_{k+1,m,n},\\
x_{12}\xi_{kmn} &=& q^{-k-m-2n+2}\frac{(q^{-m} -q^m)(1+|\epsilon|^2\lambda^{-2} q^{-2k-2m+3})}{(q^{-1}-q)^2}\xi_{k,m-1,n} \\ &&+ q^{2-m-n} \frac{(q^{-k}-q^{k})(q^{-n}-q^n)(1+|\epsilon|^2\lambda^{-2} q^{-2k-2m+3})}{(q^{-1}-q)^3}\xi_{k-1,m,n-1} \\
&&-\epsilon\lambda^{-1} q^{-2k-2m-n+\frac{9}{2}}\frac{(q^{-k}-q^k)(q^{-m}-q^m)}{(q^{-1}-q)^2} \xi_{k-1,m-1,n+1}\\
&&- \epsilon\lambda^{-1} q^{-k-2m-n+\frac{7}{2}}\frac{(q^{-k}-q^k)(q^{-k+1}-q^{k-1})}{(q^{-1}-q)^3}\xi_{k-2,m,n}\\
x_{12}^*\xi_{kmn} &=& q^k \xi_{k,m+1,n}.
\end{eqnarray*}
\end{Lem}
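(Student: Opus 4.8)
The final statement is a lemma giving explicit action formulas for the generators $x_1, x_1^*, x_2, x_2^*, x_{12}, x_{12}^*$ on the basis vectors $\xi_{kmn} = (x_2^*)^k(x_{12}^*)^m(x_1^*)^n\xi_\lambda$ of the Verma module $M_\lambda$ for $\mcO_q(\widetilde{Z}_\nu^{\reg})$ in the $A_2$ case. Here is my proof plan.

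\textbf{Strategy.} The proof is a direct but careful induction using the commutation relations already established: \eqref{Eqx1ax1star}--\eqref{Eqx2ax12} together with \eqref{Eqaxx}, \eqref{Eqx2bla}, \eqref{Eqx12x12star}, and the higher-power commutation identities one derives from them (as in the $A_1 \times A_1$ case, where an auxiliary lemma recorded the commutators of $x_i$ past powers of $x_j^*$). The formulas for $x_2^*$ and $x_{12}^*$ are immediate from the definition of $\xi_{kmn}$ (for $x_{12}^*$ one needs only to commute $x_{12}^*$ past $(x_2^*)^k$, which by the first relation in \eqref{Eqaxx} together with the relation $x_{12}$ vs.\ $x_2$ — or rather its adjoint form — introduces the factor $q^k$; I should double-check the sign/power from $x_2^* x_{12}^* = q^{?} x_{12}^* x_2^*$, obtained by conjugating \eqref{Eqx2ax12}). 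So the real content is the action of the lowering operators $x_1, x_2, x_{12}$ and of $x_1^*$.

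\textbf{Key steps, in order.} First I would establish the "past powers" commutation formulas: expressions for $x_1 (x_2^*)^k$, $x_1 (x_{12}^*)^m$, $x_{12}(x_2^*)^k$, $x_{12}(x_{12}^*)^m$, $x_2(x_2^*)^k$, $x_2(x_{12}^*)^m$, $x_2(x_1^*)^n$, and $x_1^*(x_2^*)^k$ modulo lower-order terms, each proved by a one-line induction on the exponent from the relevant quadratic relation (using the $q$-number identity $\sum$ telescoping that already appears implicitly in the $A_1\times A_1$ computations). Second, I would compute $x_1\xi_{kmn}$: commute $x_1$ rightward past $(x_2^*)^k$ (picking up the $\xi_{k-1,m,n}$ term with coefficient from \eqref{Eqx1ax2star}), then past $(x_{12}^*)^m$ (which only rescales, by \eqref{Eqaxx}, third relation, giving the powers of $q$ in $m$), then past $(x_1^*)^n$ using the Heisenberg-type relation \eqref{Eqx1ax1star} to get the $\xi_{k,m,n-1}$ term; finally apply the relation $a x_{12}^* = q^2 x_{12}^* a$, $a x_2^* = q x_2^* a$, $a x_1^* = q x_1^* a$ to move the residual $a^{-1}$ from \eqref{Eqx1ax2star} to the left of $\xi_\lambda$ where it acts by $\lambda^{-1}$ times the appropriate power of $q$. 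Third, repeat the same bookkeeping for $x_1^*$ (using the adjoint of \eqref{Eqx1ax12} to commute past $(x_1^*)^n$ trivially and the adjoint-of-\eqref{Eqx2ax12}-type relation to commute past $(x_2^*)^k$, producing the $\xi_{k-1,m+1,n}$ term), for $x_2$ (using \eqref{Eqx2ax2star}, \eqref{Eqx2bla}, and the relation $x_2 x_{12}^* = q x_{12}^* x_2$ from conjugating \eqref{Eqx2ax12}), and for $x_{12}$ (the longest one, using \eqref{Eqx12x12star} to commute past $(x_{12}^*)^m$ — which is where the $(1+|\epsilon|^2\lambda^{-2}q^{-2k-2m+3})$ factors and the $x_1^*x_1$ cross-term enter, the latter producing the second summand via $x_1^*x_1$ acting on $(x_1^*)^n\xi_\lambda$ — and then the relations $x_{12}$ vs.\ $x_1^*$ and $x_{12}$ vs.\ $x_2^*$ for the remaining factors). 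In each case, collecting the $q$-powers from every commutation and from moving $a^{\pm 1}$ to the front yields the stated exponents.

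\textbf{Main obstacle.} The genuine difficulty is purely combinatorial: correctly tracking the accumulated powers of $q$ and the $q$-integer prefactors $\frac{q^{-n}-q^n}{(q^{-1}-q)^2}$, etc., through three successive commutations and the final evaluation on $\xi_\lambda$, especially for $x_{12}$ where four terms arise and the relation \eqref{Eqx12x12star} itself has four terms on the right. There is no conceptual hurdle — one could in principle just cite "straightforward computation" as the paper does elsewhere — but since this is the key lemma feeding the classification (via the norm-positivity analysis), I would at minimum spell out the $x_{12}$ case in detail and indicate that the others follow by the same mechanism. One subtlety worth flagging: the cross-term $(q^{-1}-q)x_1^*x_1$ in \eqref{Eqx12x12star} must be commuted through the remaining $(x_1^*)^n$ before evaluating, which is where the $\xi_{k-1,m,n-1}$-type contributions to $x_{12}\xi_{kmn}$ originate; getting its coefficient right (the cubic denominator $(q^{-1}-q)^3$) is the most error-prone point. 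I would verify the whole lemma against the already-proven $\msB$-submodule formulas (setting $k=0$ should recover the $\xi_{m,n}$ action stated in the lemma for $\msB$) as an internal consistency check.

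\begin{proof}
All formulas follow by a direct induction on the exponents $k,m,n$, using the relations \eqref{Eqx1ax1star}--\eqref{Eqx2ax12}, \eqref{Eqaxx}, \eqref{Eqx2bla}, \eqref{Eqx12x12star}, together with the standard $q$-number identities for commuting a lowering operator past a power of the corresponding raising operator (exactly as in the $A_1\times A_1$ case). The formulas for $x_2^*$ and $x_{12}^*$ are immediate from the definition of $\xi_{kmn}$ once one commutes $x_{12}^*$ past $(x_2^*)^k$ using the conjugate of \eqref{Eqx2ax12}. For the remaining operators, one commutes each to the right past $(x_2^*)^k$, then past $(x_{12}^*)^m$, then past $(x_1^*)^n$, collecting at each stage the residual terms prescribed by the quadratic relations, and finally moves any surviving power of $a^{\pm1}$ to act on $\xi_\lambda$ via $a x_j^* = q\, x_j^* a$ (resp.\ $a x_{12}^* = q^2 x_{12}^* a$) and $a\xi_\lambda = \lambda\xi_\lambda$. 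The case of $x_{12}$ is the most involved, as commuting past $(x_{12}^*)^m$ via \eqref{Eqx12x12star} produces the factors $(1+|\epsilon|^2\lambda^{-2}q^{-2k-2m+3})$ together with the cross-term coming from $(q^{-1}-q)x_1^*x_1$, which after commuting $x_1^*x_1$ through the remaining $(x_1^*)^n$ yields the contribution proportional to $\xi_{k-1,m,n-1}$. Collecting the accumulated powers of $q$ in each case gives the stated exponents, and setting $k=0$ recovers the previously computed $\msB$-action on $\xi_{m,n}$, providing a consistency check.
\end{proof}
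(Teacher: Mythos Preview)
Your proposal is correct and takes essentially the same approach as the paper, which simply states that the formulas follow from ``tedious but straightforward computations using the defining relations.'' You have usefully spelled out the mechanics of that computation and added the $k=0$ consistency check against the $\msB$-module formulas, but the underlying method is identical.
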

\begin{proof}
This follows from some tedious but straightforward computations using the defining relations. 
\end{proof}

\begin{Theorem}\label{TheoClassIrrepA2}
If $\epsilon = 0$, then there exists a unitarizable highest weight module for all weights $\lambda>0$. If $\epsilon\neq 0$, this is the case if and only if $\lambda = |\epsilon|q^{\frac{3-n}{2}}$ for some $n\in \N$. 
\end{Theorem}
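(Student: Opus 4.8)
The strategy is to reduce the question of unitarizability of the highest weight module $V_\lambda$ for $\mcO_q(\widetilde Z_\nu^{\reg})$ to a finite set of positivity conditions on the norms of distinguished vectors, exactly as in the $A_1\times A_1$ case. Concretely, I would form the Verma module $M_\lambda$ with highest weight vector $\xi_\lambda$ and basis $\xi_{kmn}=(x_2^*)^k(x_{12}^*)^m(x_1^*)^n\xi_\lambda$, and carry the unique invariant Hermitian form along. By the preceding lemma, the sub-$\ast$-algebra $\msB$ generated by $a^{\pm1},x_1,x_{12}$ acts on $M_\lambda$ with the same image as on its own Verma module, which is always unitarizable for $\lambda>0$; so the $\xi_{0,m,n}$ span a positive-definite piece and the only obstruction can come from the $x_2$-direction.

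The key step is an analogue of Lemma \ref{LemActionx2}: I would identify the ``$x_1$- and $x_{12}$-highest weight vectors'' $\eta_0^{(s)}$ (those killed by both $x_1$ and $x_{12}$ in the appropriate weight), show that for each $s\in\N$ the space of such vectors is one-dimensional, and compute the action of $x_2$ on them. Weight considerations force $x_2\eta_0^{(s)}$ to be a scalar multiple of $\eta_0^{(s-1)}$, and comparing the coefficient of the extreme basis vector $\xi_{s,0,0}$ (or rather the relevant pure power) gives a recursion
\[
c_s = \langle \eta_0^{(s)},\eta_0^{(s)}\rangle = q^{\,\cdot}\,\frac{q^{-s}-q^s}{(q^{-1}-q)^2}\bigl(1-|\epsilon|^2\lambda^{-2}q^{-s+3}\bigr)\,c_{s-1}
\]
(the exact exponents to be read off from the $x_2\xi_{kmn}$ formula), together with the fact that the remaining weight directions contribute only positive factors because $\msB$ is unitarizable. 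Then $M_\lambda$ is positive semidefinite iff every $c_s\ge0$, which holds iff the factor $1-|\epsilon|^2\lambda^{-2}q^{-s+3}$ never goes strictly negative, i.e. iff $\epsilon=0$, or $\epsilon\neq0$ and $\lambda=|\epsilon|q^{(3-n)/2}$ for some $n\in\N$ (so that the bracket vanishes at $s=n$ and is positive for $s<n$, and the module truncates there). This is the content of the stated theorem.

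The main obstacle is organizing the commutation relations cleanly enough to (a) verify that $x_1$ and $x_{12}$ alone already act irreducibly/unitarily on the span of the $\xi_{0,m,n}$ — this is where the auxiliary algebra $\msB$ and the ``$q^{-2k}$-decoupling'' between the $x_2$-direction and the $(x_1,x_{12})$-plane are essential — and (b) pin down the single scalar recursion for $c_s$ without drowning in the five-term expressions for $x_2\xi_{kmn}$ and $x_{12}\xi_{kmn}$. I expect the bookkeeping to be streamlined by noting, as in the $A_1\times A_1$ proof, that one only needs to check positivity along the chain $\eta_0^{(s)}$, since the $x_1,x_1^*$ (Heisenberg) and $x_{12},x_{12}^*$ relations propagate positivity in the other two directions automatically. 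Once the recursion is in hand, the final ``if and only if'' is immediate, and the case $\epsilon=0$ is trivial because then all the mixed terms drop and $\mcO_q(\widetilde Z_\nu^{\reg})$ degenerates to a tensor product of Heisenberg-type algebras, all of whose Verma modules are unitarizable.
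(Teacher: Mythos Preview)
Your strategy is exactly the paper's: decompose $M_\lambda$ into $\msB$-highest weight pieces $\eta_{00}^{(t)}$, use the unitarizability of $\msB$-Verma modules to reduce everything to the single sequence $c_t=\langle\eta_{00}^{(t)},\eta_{00}^{(t)}\rangle$, and compute $x_2\eta_{00}^{(t)}$ as a scalar multiple of $\eta_{00}^{(t-1)}$ by comparing the $\xi_{t,0,0}$-coefficient. Two small corrections for when you carry it out: the actual scalar contains an additional always-positive factor $\dfrac{1+|\epsilon|^2\lambda^{-2}q^{-t+4}}{1+|\epsilon|^2\lambda^{-2}q^{-2t+5}}$, and the sign-controlling bracket is $(1-|\epsilon|^2\lambda^{-2}q^{-t+4})$ rather than $q^{-s+3}$; also, for $\epsilon=0$ the algebra is not literally a tensor product (the Serre relations survive), but the recursion trivializes for the reason you give.
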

\begin{proof}
Let $M_{\lambda}$ be the Verma module at weight $\lambda$. We claim that there exists, up to a scalar multiple, a unique non-zero vector $\eta_{00}^{(t)}$ at weight $q^{t}\lambda$ which is highest weight for $\msB$. Indeed, if 
\[
\eta_t = \sum_{k+2m+n=t} c_{k,m}\xi_{kmn}
\]
is such a vector, where again $\xi_{kmn} = (x_2^*)^k (x_{12}^*)^m(x_1^*)^n\xi_{\lambda}$, it follows easily from the commutation relations in the previous lemma that the $c_{k,m}$ must, by the vanishing condition for $x_1$, satisfy the relations
\begin{equation}\label{Eqckm1}
c_{k,m}= \epsilon\lambda^{-1} q^{-k-m+\frac{3}{2}}\frac{q^{-k-1}-q^{k+1}}{q^{-t+k+2m}-q^{t-k-2m}}c_{k+1,m},\qquad 0\leq k+2m <t.
\end{equation}
Writing out the vanishing under $x_{12}$ and using \eqref{Eqckm1}, we also obtain
\[
c_{k,m+1} = -|\epsilon|^2\lambda^{-2} q^{-t+3} \frac{(q^{-k-1}-q^{k+1})(q^{-t+k+2m+1}-q^{t-k-2m-1})}{(q^{-1}-q)(q^{-m-1}-q^{m+1})(1+|\epsilon|^2\lambda^{-2}q^{-2t+2m+5})}c_{k+1,m},\qquad 0\leq k+2m <t-1.
\]
From this, it is easily seen that any value of $c_{t,0}$ determines a unique solution. In the following, we fix a solution $\eta_{00}^{(t)}$ with $c_{t,0}=1$. 

Let now $V^{(t)}$ be the $\msB$-module spanned by $\eta_{00}^{(t)}$, and write $\eta_{mn}^{(t)} = (x_{12}^*)^m(x_2^*)^n\eta_{00}^{(t)}$. Then the invariant hermitian form on $M_{\lambda}$ must restrict to a scalar multiple, possibly negative or zero, of the unique normalized positive $\msB$-invariant inner product on $V^{(t)}$. Since the $V^{(t)}$ have different highest weights under $\msB$, it follows that the $\eta_{mn}^{(t)}$ must be mutually orthogonal for different indices. It is then sufficient to check that 
\[
\langle \eta_{00}^{(t)},\eta_{00}^{(t)}\rangle \geq0,\qquad \forall t\in \N.
\]

Now the vector $x_2\eta_{00}^{(t)}$ must again be a highest weight vector for $\msB$, if it is non-zero. Calculating the coefficient at $\xi_{t-1,0,0}$, we find by another tedious computation that 
\[
x_2 \eta_{00}^{(t)} = q^{-t+1}\frac{(q^{-t}-q^t)(1+|\epsilon|^2\lambda^{-2}q^{-t+4})}{(q^{-1}-q)^2(1+|\epsilon|^2\lambda^{-2}q^{-2t+5})}(1-|\epsilon|^2\lambda^{-2}q^{-t+4}) \eta_{00}^{(t-1)}.
\]
On the other hand, from weight considerations it follows that 
\[
x_2^*\eta_{00}^{(t)} = \eta_{00}^{(t+1)} +\xi_t,\qquad \xi_t \in \oplus_{k=0}^t V^{(k)}.
\]
Hence 
\begin{multline*}
\langle \eta_{00}^{(t)},\eta_{00}^{(t)}\rangle  = \langle \eta_{00}^{(t)},x_2^*\eta_{00}^{(t-1)}\rangle  = \langle x_2\eta_{00}^{(t)},\eta_{00}^{(t-1)}\rangle \\ = q^{-t+1}\frac{(q^{-t}-q^t)(1+|\epsilon|^2\lambda^{-2}q^{-t+4})}{(q^{-1}-q)^2(1+|\epsilon|^2\lambda^{-2}q^{-2t+5})}(1-|\epsilon|^2\lambda^{-2}q^{-t+4})\langle \eta_{00}^{(t-1)}, \eta_{00}^{(t-1)}\rangle,
\end{multline*}
and it follows that $\langle -,-\rangle$ is positive semi-definite if and only if $\lambda$ is of the form prescribed in the statement of the theorem.
\end{proof}

\end{document}